\newtheorem{lemma}{Lemma}[section]
\newtheorem{theorem}[lemma]{Theorem}
\newtheorem{Corollary}[lemma]{Corollary}
\newtheorem{Proposition}[lemma]{Proposition}
\newtheorem*{Notation}{Notation}
\newtheorem{theoremintro}{Theorem}
\theoremstyle{definition}
\newtheorem{Definition}[lemma]{Definition}
\newtheorem{Remark}[lemma]{\sc Remark}
\newtheorem{Counterexample}[lemma]{\sc Counter-Example}
\newtheorem{Example}[lemma]{\sc Example}
\def\colim{\mathop{\mathrm{colim}}}
\newcommand{\palg}{\PP\textsf{-}\mathsf{alg}}
\newcommand{\partialop}{\mathsf{pOp}}
\newcommand{\partialcoop}{\mathsf{pCoop}}
\newcommand{\smod}{\mathbb{S}\textsf{-}\mathsf{mod}}
\newcommand{\kmod}{\mathbb{K}\textsf{-}\mathsf{mod}}
\newcommand{\ac}{\scriptstyle \textrm{!`}}
\newcommand{\qi}{\xrightarrow{ \,\smash{\raisebox{-0.65ex}{\ensuremath{\scriptstyle\sim}}}\,}}
\newcommand{\Cobar}{\Omega}
\newcommand{\C}{\mathcal{C}}
\newcommand{\kk}{\mathbb{K}}
\newcommand{\PP}{\mathcal{P}}
\newcommand{\Q}{\mathcal{Q}}
\newcommand{\D}{\mathcal{D}}
\newcommand{\G}{\mathcal{G}}
\newcommand{\ucom}{u\mathcal{C}om}
\newcommand{\ucomd}{u\mathcal{C}om^{*}}
\newcommand{\Assc}{c\mathcal{A}ss}
\newcommand{\Liec}{c\mathcal{L}ie}
\newcommand{\cP}{(\PP,d_\PP, \Theta_\PP)}
\font\tencyr=wncysc10
\def\cyr{\tencyr\cyracc}
\def\diracComb{\mbox{\cyr SH}}
\author{Victor Roca i Lucio}
\title{Curved Operadic Calculus}
\date{\today}
\address{Victor Roca i Lucio, Ecole Polytechnique Fédérale de Lausanne, EPFL,
CH-1015 Lausanne, Switzerland}
\email{\href{mailto:victor.rocalucio@epfl.ch}{victor.rocalucio@epfl.ch}}
\subjclass[2020]{Primary 18M70; Secondary 18M60, 18C35.}
\keywords{Curved operads, curved algebras, universal algebra, homotopical algebra.}
\thanks{The author was partially supported by the project ANR-20-CE40-0016 HighAGT funded by the Agence Nationale pour la Recherche.}
\begin{document}
	
\begin{abstract}
Curved algebras are a generalization of differential graded algebras which have found numerous applications recently. The goal of this foundational article is to introduce the notion of a curved operad, and to develop the operadic calculus at this new level. The algebraic side of the curved operadic calculus provides us with universal constructions: using a new notion of curved operadic bimodules, we construct curved universal enveloping algebras. Since there is no notion of quasi-isomorphism in the curved context, we develop the homotopy theory of curved operads using new methods. This approach leads us to introduce the new notion of a curved absolute operad, which is the notion Koszul dual to counital cooperads non-necessarily conilpotent, and we construct a complete Bar-Cobar adjunction between them. We endow curved absolute operads with a suitable model category structure. We establish a duality square of duality functors which intertwines this complete Bar-Cobar construction with the Bar-Cobar adjunction between unital operads and conilpotent curved cooperads. This allows us to compute minimal cofibrant resolutions for various curved absolute operads. Using the complete Bar construction, we show a general Homotopy Transfer Theorem for curved algebras. Along the way, we construct the non-necessarily conilpotent cofree cooperad. 
\end{abstract}

\maketitle

\setcounter{tocdepth}{1}

\tableofcontents

\section*{Introduction}

\textbf{Global picture.} The theory of operads shifts the point of view of universal algebra: instead of working by hand with specific types of algebras, one works with the operads that encode them. Examples of algebraic structures encoded by operads include associative algebras, Lie algebras, Poisson algebras, Batalin--Vilkovisky algebras, etc. One calls \textit{operadic calculus} the set of techniques that allow us to work with operads themselves. There is a purely algebraic side to the operadic calculus. For instance, one might consider morphisms between operads. A morphism between operads induces a structure of an operadic bimodule. Via the theory of operadic bimodules, one can construct universal functors between the categories of algebras over operads. Thus this theory gives, for any morphism of operads, a universal adjunction between their respective categories of algebras. The universal enveloping algebra of a Lie algebra can be recovered in this way. This approach allows vast generalizations, and new universal enveloping algebras can be constructed for other types of algebraic structures in this way. It can also generalize well-known theorems such as the Poincaré--Birkoff--Witt theorem in a functorial way, see \cite{Tamaroff2020}. 

\medskip

The operadic calculus has also an homotopical side. When operads themselves live in the category of differential graded $\mathbb{S}$-modules, they admit a notion of weak equivalence given by arity-wise quasi-isomorphisms. Understanding the homotopy theory of operads gives results on the homotopy theory in the category of algebras it encodes. Two weakly equivalent operads encode the same homotopy category of algebras. For any operad $\mathcal{P}$, algebras over a cofibrant resolution of $\mathcal{P}$ provide us with a suitable notion of a $\mathcal{P}$-\textit{algebra up to homotopy}. This recovers the seminal notions of $\mathcal{A}_\infty$-algebras, $\mathcal{L}_\infty$-algebras, $\mathcal{C}_\infty$-algebras as particular examples. These resolutions also allow one to construct universal André-Quillen cohomology theories for algebras over an operad, see \cite{JoanQuillen}. The main tools for studying the homotopy theory of operads and computing cofibrant resolutions are the Bar-Cobar adjunction and the Koszul duality theory. Combining both the algebraic and the homotopical aspects of the operadic calculus provides us with new tools to solve problems, see \cite{LodayVallette12}. For example, in \cite{campos2020lie}, R. Campos, D. Petersen, D. Robert-Nicoud, and F. Wierstra show that a nilpotent Lie algebra is completely characterized up to isomorphism by its universal enveloping algebra, seen as an associative algebra. The proof of this purely algebraic result requires a combination of both of the aforementioned methods. Another example is given by \cite{robertnicoud2020higher}, where D. Robert-Nicoud and B. Vallette develop the integration theory of $\mathcal{L}_\infty$-algebras using methods coming from the operadic calculus. One of the main motivations of this article was to lay down the operadic tools required to generalize the results of \textit{loc.cit} to the case of \textit{curved} $\mathcal{L}_\infty$-algebras. See \cite{integration}.

\medskip

In the mist of algebraic structures, there are the so-called \textit{curved algebras}. The prototype of curved algebras are curved associative algebras. These are graded associative algebras $(A, \mu_A)~,$ endowed with a derivation $d$ of degree $-1$ and a distinguished element $\theta$ of degree $-2$ called the \textit{curvature}, such that 
\[
d^2(-) = \mu_A(\theta,-) - \mu_A(-,\theta)~. \quad \quad (*)
\]
Here, the element $\theta$ is the obstruction for the derivation $d$ to square to zero. When $\theta$ is non-trivial, this type of algebras do not have underlying homology groups, thus there is no notion of quasi-isomorphism for them in general. In these examples, the distinction between homological algebra and the more general concept of homotopical algebra becomes apparent. Other examples include curved Lie algebras, curved $\mathcal{A}_\infty$-algebras, curved $\mathcal{L}_\infty$-algebras and so on. These types of algebras are playing an increasingly important role in various areas of mathematics: curved $\mathcal{A}_\infty$-algebras in Floer cohomology in symplectic geometry \cite{FOOO7}, curved $\mathcal{L}_\infty$-algebras in derived differential geometry \cite{behrend2021derived}, derived deformation theory \cite{calaque2021lie} and $\mathcal{L}_\infty$-spaces \cite{costello2011geometric}. 

\medskip

\textbf{Main results.} The goal of this foundational article is to settle the operadic calculus for \textit{curved operads}. Curved operads appear naturally when one tries to encode types of curved algebras with operad-like structures: to encode the curvature relation $(*)$ at the algebra level, one needs to add a curvature on the operad level. Working with curved operads forces the underlying category to be the category of pre-differential graded (pdg) modules, which are given by graded modules with a degree $-1$ endomorphism. So we start by settling the basic properties related to this new framework. In order to develop the algebraic side of the curved operadic calculus, we introduce the notion of a curved operadic bimodule and we develop the subsequent theory. There are obstructions to this generalization: the "free algebra" over a curved operad does not, in general, satisfy the curvature relation. This implies that a curved operad is not a "naive" curved left module over herself. Nevertheless, we develop the theory of curved bimodules for curved operads. This allows us to construct universal functors between the categories of curved algebras. For instance, we construct for the first time the universal curved enveloping algebra for curved Lie algebras and the universal curved enveloping $\mathcal{A}_\infty$-algebra for curved $\mathcal{L}_\infty$-algebras. 

\medskip

Once this algebraic framework is established, the rest of the article is devoted to generalizing the homotopical side of the operadic calculus to the curved setting. We start \textit{ab initio} from a conceptual point of view. We use the groupoid-colored formalism developed by B. Ward in \cite{ward19}. We introduce the unital groupoid-colored operad $u\mathcal{O}$ that encodes unital partial operads as its algebras and counital partial cooperads as its coalgebras. By partial operad, we mean the definition of operads introduced by M. Markl in \cite{Markl96} in terms of partial composition maps $\{\circ_i\}$. The notion of partial cooperads is the dual notion, defined in terms of partial decomposition maps $\{\Delta_i\}$. We generalize the main point of the inhomogeneous Koszul duality of Hirsh--Milles in \cite{HirshMilles12} to extend them to the groupoid-colored framework. This allows us to compute the conilpotent curved groupoid-colored dual cooperad $u\mathcal{O}^{\ac} \cong c\mathcal{O}^\vee$ (up to suspension). We show that (curved) coalgebras over $c\mathcal{O}^\vee$ correspond to conilpotent curved partial cooperads. 

\begin{theoremintro}[Theorem \ref{Koszulity of uO}]
The unital groupoid-colored operad $u\mathcal{O}$ is a Koszul operad and its Koszul dual curved groupoid-colored cooperad is given by $c\mathcal{O}^\vee$ up to suspension.
\end{theoremintro}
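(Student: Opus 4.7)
The plan is to apply the inhomogeneous Koszul duality machinery of Hirsh--Milles, in the groupoid-colored form generalized earlier in the paper. I would first fix an explicit quadratic-linear presentation of $u\mathcal{O}$. The generators are the binary partial composition operations $\circ_i$, one for each admissible two-vertex tree shape in the underlying groupoid of colors, together with a nullary unit generator. The relations split into a quadratic family consisting of the sequential and parallel associativity axioms, and a quadratic-linear family consisting of the left and right unit axioms $u \circ_1 x = x$ and $x \circ_i u = x$. Discarding the linear terms of the unit relations produces the homogeneous quadratic operad $\mathcal{O}$, which encodes non-unital partial operads.

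The next step is to check the admissibility conditions $(\mathrm{ql}_1)$ and $(\mathrm{ql}_2)$ of inhomogeneous Koszul duality. Condition $(\mathrm{ql}_1)$ is automatic because each unit axiom has a binary composite on the left-hand side and therefore contains no generator. Condition $(\mathrm{ql}_2)$ is the compatibility between the linear and the quadratic parts, and should amount to a direct verification that the unit axioms are coherent with sequential and parallel associativity; this is a short tree-by-tree check. The main obstacle is then the Koszulity of the purely quadratic operad $\mathcal{O}$. I would address this by exhibiting a quadratic Gröbner (or PBW) basis in the groupoid-colored setting: the natural ordering on tree monomials induced by the sequential-then-parallel normal form for partial composites should turn the associativity axioms into a confluent rewriting system, and a Diamond Lemma argument then yields Koszulity. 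If this combinatorial route proves delicate, an alternative would be to present $\mathcal{O}$ via a distributive law between the sequential and parallel sub-operads, each of which is evidently Koszul, and then invoke the standard criterion that a well-behaved distributive law between Koszul operads is Koszul.

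Once $\mathcal{O}$ is known to be Koszul, its Koszul dual cooperad $\mathcal{O}^{\ac}$ is the cospecies dual to the space of quadratic relations, shifted by the operadic suspension. Unwinding the combinatorics, the resulting partial decomposition maps match those of a counital partial cooperad, giving the desired isomorphism $\mathcal{O}^{\ac} \cong c\mathcal{O}^\vee$ up to suspension. Finally, the inhomogeneous formalism endows the full Koszul dual $u\mathcal{O}^{\ac}$ with extra data coming from the linear part of the unit relations, and a brief computation identifies this data with the canonical degree $-2$ element making $c\mathcal{O}^\vee$ into a curved groupoid-colored cooperad, completing the proof.
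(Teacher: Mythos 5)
Your overall architecture --- fix the inhomogeneous quadratic presentation of $u\mathcal{O}$, verify the minimality/maximality conditions on the space of relations, reduce to Koszulity of the quadratic part $q(u\mathcal{O})$, and then read off the curvature of the Koszul dual from the constant part of the unit relations --- is exactly the paper's, and your identification of that curvature with the degree $-2$ cogenerator making $c\mathcal{O}^\vee$ a curved cooperad matches the computation in the proof of Theorem \ref{Koszulity of uO}. The divergence, and the gap, lies in how you propose to establish Koszulity of the quadratic operad $\mathcal{O}$, which is the only genuinely hard step. A Gr\"obner/PBW-basis argument is not available off the shelf here: the existing rewriting machinery for operads works for shuffle operads over a \emph{set} of colors, and extending it to the groupoid-colored setting --- where the generators $\gamma_i^{n,k}$ carry actions of $\mathbb{S}_n\times\mathbb{S}_k$ and $\mathbb{S}_{n+k-1}$ that are part of the structure rather than of the relations --- would itself be a substantial development, not a routine confluence check. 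Your fallback via a distributive law does not apply as stated: the sequential and parallel axioms are relations among the \emph{same} family of generators $\{\gamma_i^{n,k}\}$, so $\mathcal{O}$ does not decompose as a composite of two suboperads on disjoint generating sets, which is what the distributive-law criterion requires.

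The paper's route around this step is different and worth internalizing. First, Lemma \ref{lemma with the trees bijection} and the proposition following it identify the monad $\mathscr{S}_{\mathbb{S}}(\mathcal{O}^{u})$ with the reduced tree monad $\overline{\mathscr{T}}$, so that the Bar--Cobar adjunction $\Omega_\kappa \dashv \mathrm{B}_\kappa$ relative to the canonical twisting morphism $\kappa\colon \mathcal{O}^{\ac}\to\mathcal{O}$ is isomorphic to the \emph{classical} operadic Bar--Cobar adjunction of \cite[Section 6.5]{LodayVallette12}. The counit of that adjunction is known to be a quasi-isomorphism on every dg partial operad; applying this to an arbitrary dg $\mathbb{S}$-module equipped with the trivial partial operad structure shows that the twisted complexes $\mathcal{O}\circ_\kappa\mathcal{O}^{\ac}$ and $\mathcal{O}^{\ac}\circ_\kappa\mathcal{O}$ are acyclic, which is precisely the Koszulity of $\kappa$ (Theorem \ref{the Koszulity of O}); the acyclicity of the Koszul complex of $q(u\mathcal{O})\cong\mathcal{O}\sqcup\{u\}$ then follows as in \cite[Proposition 6.16]{HirshMilles12}. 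The self-duality of $\mathcal{O}$, i.e.\ the identification of $\mathcal{O}^{\ac}$ with the (suspended) linear dual of $\mathcal{O}$, is obtained by a dimension count adapted from \cite{VanderLaan03}, not by exhibiting a normal form. If you wish to keep your rewriting-theoretic approach you would first have to set up a shuffle-type theory for groupoid-colored operads; otherwise you should replace that step with the comparison to the classical Bar--Cobar counit.
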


In particular, this result implies that there is a Koszul curved twisting morphism 
\[
\kappa: c\mathcal{O}^\vee \longrightarrow u\mathcal{O}~.
\]
From this curved twisting morphism, one obtains a first Bar-Cobar adjunction using the classical methods of \cite{LodayVallette12} and \cite{grignou2019}. This adjunction recovers the Bar-Cobar adjunction between unital partial operads and conilpotent curved partial cooperads constructed by B. Le Grignou in \cite{grignou2021}. In \textit{loc.cit.}, the author endows the category of unital operads with a model structure where weak-equivalences are given by arity-wise quasi-isomorphisms, and then transfers it along this adjunction to conilpotent curved coaugmented cooperads. This endows conilpotent curved coaugmented cooperads with a meaningful model structure, and the Bar-Cobar adjunction is shown to be a Quillen equivalence. The above theorem provides a new proof that the Bar-Cobar adjunction recovered is indeed a Quillen equivalence.

\medskip

Our goal here is to construct another adjunction using the curved twisting morphism $\kappa$, in order to obtain a Koszul dual notion of counital partial cooperads. In order so, we generalize the results of D. Lejay and B. Le Grignou in \cite{grignoulejay18} to the groupoid-colored case. In \textit{op.cit}, from a curved twisting morphism the authors construct a "complete" Bar-Cobar adjunction between the category of coalgebras over an operad and the category of curved algebras over a curved cooperad. The idea is that the Koszul dual of \textit{non-conilpotent} types of coalgebras are \textit{absolute} types of algebras, which are algebraic structures endowed with well-defined infinite sums of operations without any having an underlying topology. 

\medskip

In our case, we get an adjunction between the category of coalgebras over the groupoid-colored operad $u\mathcal{O}$ and the category of curved algebras over the curved groupoid-colored cooperad $c\mathcal{O}^\vee$, which we call the \textit{complete Bar-Cobar adjunction}. Coalgebras over $u\mathcal{O}$ are simply counital partial cooperads. Whereas curved algebras $c\mathcal{O}^\vee$ give rise to new objects in the operadic calculus, which we call \textit{curved absolute partial operads}. There is also a more basic notion of \textit{absolute partial operad}. Both of them can be though as partial operads where infinite sums of partial compositions have a well-defined image. These notions are further characterized in Appendix \ref{Appendix B}. Note that (curved) absolute partial operads appear precisely when arity $0$ and $1$ pehnomena are taken into account, and coincide with usual notion of a partial operad otherwise. 

\medskip

Summarizing, we obtain a complete Bar-Cobar adjunction
\[
\begin{tikzcd}[column sep=7pc,row sep=3pc]
            \mathsf{dg}~\mathsf{upCoop} \arrow[r, shift left=1.1ex, "\widehat{\Omega}"{name=F}] &\mathsf{curv}~\mathsf{abs}~\mathsf{pOp}~. \arrow[l, shift left=.75ex, "\widehat{\mathrm{B}}"{name=U}]
            \arrow[phantom, from=F, to=U, , "\dashv" rotate=-90]
\end{tikzcd}
\]

Notice first that this adjunction interrelates "counits" with "curvature", which are known to be Koszul dual. See \cite{PolischukPositselski05}. But its main novelty lies in the fact that it also lifts the usual assumption of \textit{conilpotency} on the cooperad side. Lifting conilpotency on one side of the Koszul duality is what makes infinite sums appear in the other side. This is the conceptual explanation for the existence of these curved \textit{absolute} partial operads. Absolute partial operads admit a canonical filtration, dual to the coradical filtration for partial cooperads. And like the coradical filtration, this filtration comes from the structure of the object. By definition, infinite sums of partial compositions are well-defined. But the topology induced by this filtration might not be Hausdorff. We restrict ourselves to the sub-category of those which are complete. Notice that unlike many other approaches to curved objects, where one \textit{changes the base category} from graded modules to filtered/complete graded modules in order to deal with the infinite sums that appear, here we introduce new types of algebraic structures which admit these infinite sums \textit{without enriching further the underlying category of graded modules}. And the complete filtrations that appear, come from the structure of these new objects, and are therefore \textit{canonical}.

\medskip

Considering these new algebraic objects, namely complete curved absolute partial operads, is what allows us to obtain a well-behaved homotopy theory. Since the notion of a quasi-isomorphism does not exist in the curved context, our approach is to obtain a notion of weak equivalences via a transfer theorem from a Koszul dual category. For this purpose, the category of counital partial cooperads is too narrow. We define the notion of counital partial cooperads up to homotopy. Then, we endow the category of counital partial cooperads up to homotopy with strict morphisms with a model category structure where weak equivalences are given by arity-wise quasi-isomorphisms. Finally, we construct another complete Bar-Cobar adjunction and transfer this model structure to the category of complete curved absolute partial operads. 

\begin{theoremintro}[Theorem \ref{thm: model structure on curved operads}]
The category of complete curved absolute partial operads admits a model structure transferred along the adjunction
\[
\begin{tikzcd}[column sep=7pc,row sep=3pc]
            \mathsf{upCoop}_\infty \arrow[r, shift left=1.1ex, "\widehat{\Omega}_\iota"{name=F}] &\mathsf{curv}~\mathsf{abs}~\mathsf{pOp}^{\mathsf{comp}}~, \arrow[l, shift left=.75ex, "\widehat{\mathrm{B}}_\iota"{name=U}]
            \arrow[phantom, from=F, to=U, , "\dashv" rotate=-90]
\end{tikzcd}
\]
where the model category structure considered on the left-hand side has arity-wise quasi-isomorphisms as weak-equivalences and monomorphisms as cofibrations.
\end{theoremintro}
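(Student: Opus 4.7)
The strategy is to apply a transfer theorem for cofibrantly generated model structures along the right adjoint $\widehat{B}_\iota$, in the spirit of Kan's transfer principle. One declares a morphism $f$ in $\mathsf{curv}\,\mathsf{abs.pOp}^{\mathsf{comp}}$ to be a weak equivalence (respectively, a fibration) if and only if $\widehat{B}_\iota(f)$ is one in $\mathsf{upCoop}_\infty$, and takes the generating (acyclic) cofibrations on the right-hand side to be the images under $\widehat{\Omega}_\iota$ of those on the source. Verifying the hypotheses of the transfer principle then reduces to two essentially independent points: a smallness condition sufficient for the small object argument, and the assertion that every relative $\widehat{\Omega}_\iota(J)$-cell complex is a weak equivalence in the transferred sense.

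The smallness verification should be largely formal. The category $\mathsf{curv}\,\mathsf{abs.pOp}^{\mathsf{comp}}$ is complete and cocomplete by the explicit description given in Appendix \ref{Appendix B}, and since $\widehat{\Omega}_\iota$ is a left adjoint, smallness of the domains of $\widehat{\Omega}_\iota(I)$ and $\widehat{\Omega}_\iota(J)$ reduces via the adjunction to a smallness property of $\widehat{B}_\iota$ with respect to suitable filtered colimits. This in turn follows from the explicit pre-dg level description of $\widehat{B}_\iota$ together with the canonical filtration on complete curved absolute partial operads, since that filtration is exhausted by pieces which are small as pre-dg $\mathbb{S}$-modules.

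The main obstacle is the acyclicity condition, because in the curved setting we cannot invoke homology groups directly on the target side. My plan is to use the path object argument: for every complete curved absolute partial operad $\PP$, construct a functorial path object by transporting a path object of $\widehat{B}_\iota(\PP)$ in $\mathsf{upCoop}_\infty$ across the adjunction, and then comparing with the diagonal via the counit. Such a path object exists on the cooperad side since the model structure there has quasi-isomorphisms as weak equivalences and is (by construction) compatible with a cylinder/path object of simplicial flavour. Combined with this, one needs that $\widehat{\Omega}_\iota$ sends the generating trivial cofibrations of $\mathsf{upCoop}_\infty$ to trivial cofibrations in the transferred sense; equivalently, that the unit of the adjunction is a weak equivalence at the sources of these generators.

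This latter property is where Theorem \ref{Koszulity of uO} intervenes, applied to the twisting morphism giving rise to $\widehat{\Omega}_\iota\dashv \widehat{B}_\iota$. The cleanest route I anticipate is to use the magical square of duality functors to compare the complete Bar--Cobar adjunction $\widehat{\Omega}_\iota\dashv \widehat{B}_\iota$ with the adjunction of \cite{grignou2021}, which is already known to be a Quillen equivalence between unital partial operads and conilpotent curved partial cooperads. This comparison should transport the required weak equivalence property of the unit on generators from the conilpotent side, where quasi-isomorphisms make unambiguous sense, to the complete curved absolute side. Once both conditions of the transfer theorem are in place, the model structure is produced automatically and the adjunction $\widehat{\Omega}_\iota\dashv \widehat{B}_\iota$ is Quillen by construction.
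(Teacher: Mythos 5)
Your identification of the two conditions to verify (smallness for the small object argument, and acyclicity of relative $\widehat{\Omega}_\iota(J)$-cell complexes) is correct, and your treatment of smallness is in line with what is needed. The gap is in your plan for the acyclicity condition. You propose to run the path object argument by transporting a path object of $\widehat{\mathrm{B}}_\iota(\PP)$ across the adjunction; but the paper explicitly states that, to the best of the author's knowledge, one \emph{cannot} construct a path object in the category of complete curved absolute partial operads, and the actual proof instead verifies the acyclicity conditions of Berger--Moerdijk by direct computation, generalizing \cite[Section 10.5]{grignoulejay18} to the $\mathbb{S}$-colored setting. Your transport mechanism does not produce a candidate path object: a path object for $\widehat{\mathrm{B}}_\iota(\PP)$ is a factorization of the diagonal of $\widehat{\mathrm{B}}_\iota(\PP)$, and applying the left adjoint $\widehat{\Omega}_\iota$ yields an object receiving a map from $\widehat{\Omega}_\iota\widehat{\mathrm{B}}_\iota(\PP)$, whereas the counit points $\widehat{\Omega}_\iota\widehat{\mathrm{B}}_\iota(\PP) \to \PP$, so there is no induced map \emph{out of} $\PP$ into your candidate; moreover $\widehat{\Omega}_\iota$ does not commute with products, so the target does not compare to $\PP \times \PP$ either. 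The cylinder object of Corollary \ref{lemma: functorial cylinder} lives on the cooperad side and powers the \emph{left} transfer onto $\mathsf{upCoop}_\infty$; it does not dualize to a path object on the curved absolute side.

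A secondary problem is your appeal to the magical square and to the Quillen equivalence of \cite{grignou2021} to establish that the unit is a weak equivalence on generators: the homotopical magical square (Theorem \ref{thm: homotopical carré magique}) is proved \emph{after} and \emph{using} the model structure of Theorem \ref{thm: model structure on curved operads}, so invoking it here would be circular. The Koszulity of $u\mathcal{O}$ (Theorem \ref{Koszulity of uO}) enters the comparison of the two Bar--Cobar adjunctions and the eventual Quillen equivalence statements, not the existence of the transferred model structure itself. To repair the argument you should replace the path object step by the hands-on verification that pushouts of the generating acyclic cofibrations along arbitrary maps, and transfinite composites thereof, are sent by $\widehat{\mathrm{B}}_\iota$ to arity-wise quasi-isomorphisms; this is the filtration/graded-quasi-isomorphism analysis carried out in \cite[Section 10.5]{grignoulejay18}, which uses the canonical filtration on complete curved absolute partial operads and the weak five-lemma of \emph{loc.~cit.}
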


We then relate the "classical" Bar-Cobar adjunction with the complete Bar-Cobar adjunction constructed here. For this purpose, we construct a pair of duality adjunctions that relate these Bar-Cobar adjunctions. First we do this in the algebraic context, where we prove the following result.

\begin{theoremintro}[Theorem \ref{thm: carré magique}]
The following square of adjunction 
\[
\begin{tikzcd}[column sep=5pc,row sep=5pc]
\mathsf{dg}~\mathsf{upOp}^{\mathsf{op}} \arrow[r,"\mathrm{B}^{\mathsf{op}}"{name=B},shift left=1.1ex] \arrow[d,"(-)^\circ "{name=SD},shift left=1.1ex ]
&\left(\mathsf{curv}~\mathsf{pCoop}^{\mathsf{conil}}\right)^{\mathsf{op}}  \arrow[d,"(-)^*"{name=LDC},shift left=1.1ex ] \arrow[l,"\Omega^{\mathsf{op}}"{name=C},,shift left=1.1ex]  \\
\mathsf{dg}~\mathsf{upCoop} \arrow[r,"\widehat{\Omega}"{name=CC},shift left=1.1ex]  \arrow[u,"(-)^*"{name=LD},shift left=1.1ex ]
&\mathsf{curv}~\mathsf{abs}~\mathsf{pOp}^{\mathsf{comp}}~, \arrow[l,"\widehat{\mathrm{B}}"{name=CB},shift left=1.1ex] \arrow[u,"(-)^\vee"{name=TD},shift left=1.1ex] \arrow[phantom, from=SD, to=LD, , "\dashv" rotate=0] \arrow[phantom, from=C, to=B, , "\dashv" rotate=-90]\arrow[phantom, from=TD, to=LDC, , "\dashv" rotate=0] \arrow[phantom, from=CC, to=CB, , "\dashv" rotate=-90]
\end{tikzcd}
\] 
commutes in the following sense: right adjoints going from top right corner to bottom left corner are naturally isomorphic. 
\end{theoremintro}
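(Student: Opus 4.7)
The plan is to verify the commutativity by producing, for every conilpotent curved partial cooperad $\mathcal{C}$, a natural isomorphism
\[
(\Omega\mathcal{C})^\circ \;\cong\; \widehat{\mathrm{B}}(\mathcal{C}^*)
\]
in the category $\mathsf{dg}~\mathsf{upCoop}$. Rather than writing the two functors down and constructing a map between their underlying objects, I would proceed via Yoneda: both sides are determined by the functor they represent on $\mathsf{dg}~\mathsf{upCoop}$, and I will identify those functors by chasing Hom-sets around the square. The parallel commutativity statement for left adjoints, namely $\widehat{\Omega}(\mathcal{D}) \cong (\mathrm{B}(\mathcal{D}^*))^*$, then follows automatically from the uniqueness of adjoints.

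Fix a dg counital partial cooperad $\mathcal{D}$. Chaining the two adjunctions on the top-left path gives
\begin{align*}
\mathrm{Hom}_{\mathsf{dg}~\mathsf{upCoop}}\bigl(\mathcal{D}, (\Omega\mathcal{C})^\circ\bigr)
&\;\cong\; \mathrm{Hom}_{\mathsf{dg}~\mathsf{upOp}}\bigl(\Omega\mathcal{C}, \mathcal{D}^*\bigr)
&&\bigl((-)^* \dashv (-)^\circ\bigr) \\
&\;\cong\; \mathrm{Hom}_{\mathsf{curv}~\mathsf{pCoop}^{\mathsf{conil}}}\bigl(\mathcal{C}, \mathrm{B}(\mathcal{D}^*)\bigr)
&&(\Omega \dashv \mathrm{B}),
\end{align*}
and the bottom-right path gives
\[
\mathrm{Hom}_{\mathsf{dg}~\mathsf{upCoop}}\bigl(\mathcal{D}, \widehat{\mathrm{B}}(\mathcal{C}^*)\bigr)
\;\cong\; \mathrm{Hom}_{\mathsf{curv}~\mathsf{abs.pOp}^{\mathsf{comp}}}\bigl(\widehat{\Omega}\mathcal{D}, \mathcal{C}^*\bigr).
\]
The heart of the proof is to identify both of these Hom-sets with the same set of Maurer--Cartan elements in a common convolution curved dg pre-Lie algebra. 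Using the twisting morphism dictionary established for the classical and complete Bar--Cobar adjunctions, the top-left Hom-set corresponds to twisting morphisms $\alpha : \mathcal{C} \to \mathcal{D}^*$ relative to $\kappa$, while the bottom-right corresponds to complete twisting morphisms $\beta : \mathcal{D} \to \mathcal{C}^*$. The linear pairing $\alpha(c)(d) = \beta(d)(c)$ then identifies the two spaces of (graded) maps, provided their Maurer--Cartan equations agree under the swap.

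The main obstacle will be to verify that the Maurer--Cartan equations on the two sides literally coincide, and to reconcile the conilpotency assumption on $\mathcal{C}$ with the completeness assumption on $\mathcal{C}^*$. Three points require care: (i) the coradical filtration of $\mathcal{C}$ dualizes to a complete filtration on $\mathcal{C}^*$, so the infinite sums defining $\widehat{\mathrm{B}}(\mathcal{C}^*)$ converge and correspond, under duality, to the finite sums occurring in $\Omega\mathcal{C}$; (ii) the curvature entering the Maurer--Cartan equation comes, on the top side, from the curvature of $\mathcal{C}$ and, on the bottom side, from the curvature of $\mathcal{C}^*$, and these are literal linear duals because both originate from the same piece of the groupoid-colored Koszul twisting morphism $\kappa: c\mathcal{O}^\vee \to u\mathcal{O}$ supplied by Theorem~A; (iii) the groupoid-colored Sweedler dual $(\Omega\mathcal{C})^\circ$ must be shown to coincide, as a counital partial cooperad, with the cofree counital partial cooperad underlying $\widehat{\mathrm{B}}(\mathcal{C}^*)$. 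This last point is the operadic analogue of the classical fact that the Sweedler dual of a free algebra equals the cofree coalgebra on the dual generators, and depends crucially on $\Omega\mathcal{C}$ being a free $u\mathcal{O}$-algebra on a shift of $\overline{\mathcal{C}}$ with twisted differential. Once these compatibilities are in place, the convolution swap produces the desired natural isomorphism, and Yoneda concludes.
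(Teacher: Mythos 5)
Your argument is correct in its main thrust but follows a genuinely different route from the paper. The paper proves the isomorphism $(\Omega\mathcal{C})^\circ \cong \widehat{\mathrm{B}}(\mathcal{C}^*)$ directly: it first produces a natural isomorphism of \emph{graded} counital partial cooperads $\mathscr{T}^\vee(s\mathcal{C}^*) \cong (\mathscr{T}(s^{-1}\mathcal{C}))^\circ$ as the mate of the evident commutation $(-)^*\cdot\mathrm{U} \cong \mathrm{U}^{\mathsf{op}}\cdot(-)^*$ of forgetful functors with linear duality, and then checks by direct inspection that this identification intertwines the Bar and Cobar differentials. You instead invoke representability: both candidate objects corepresent the functor $\mathcal{D}\mapsto \mathrm{Tw}(\mathcal{C},\mathcal{D}^*)\cong\mathrm{Tw}(\mathcal{D},\mathcal{C}^*)$, and Yoneda does the rest. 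What your approach buys is that the differential compatibility is packaged once and for all into the statement that transposition identifies the two convolution curved pre-Lie algebras $\mathfrak{g}_{\mathcal{C},\mathcal{D}^*}$ and $\mathfrak{g}_{\mathcal{D},\mathcal{C}^*}$ (the curvatures match because $\eta_{\mathcal{D}^*}=\epsilon_{\mathcal{D}}^*$ and $\Theta_{\mathcal{C}^*}=\Theta_{\mathcal{C}}^*$); what it costs is a careful Koszul-sign verification that the quadratic terms $\alpha\star\alpha$ correspond under transposition, plus the observation that all the Hom-set bijections are natural in $\mathcal{D}$ so that Yoneda yields an isomorphism of objects and not merely a family of bijections. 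What the paper's approach buys is an explicit description of the isomorphism on underlying objects, at the price of an unilluminating "direct inspection" of differentials.

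Two slips worth correcting. First, the consequence for left adjoints that follows "automatically from uniqueness of adjoints" is $(\widehat{\Omega}\mathcal{D})^\vee \cong \mathrm{B}(\mathcal{D}^*)$, not $\widehat{\Omega}(\mathcal{D})\cong(\mathrm{B}(\mathcal{D}^*))^*$ as you wrote: the latter involves the \emph{right} adjoint $(-)^*$ and holds only under arity- and degree-wise finite-dimensionality hypotheses (this is the Beck--Chevalley phenomenon discussed after the theorem). Second, your point~(iii) is redundant given your own strategy --- the identification of $(\Omega\mathcal{C})^\circ$ with the cofree counital partial cooperad on $s\mathcal{C}^*$ is a \emph{consequence} of the Yoneda argument, not an input --- and as stated it is slightly misleading: $\Omega\mathcal{C}$ is free only as a graded object, so "Sweedler dual of free equals cofree on the dual generators" can only be asserted at the level of underlying graded objects, which is exactly how the paper uses it.
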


Here the functor $(-)^\circ$ is a direct generalization of the Sweedler duality functor defined in \cite{Sweedler69} for associative algebras, and the functor $(-)^\vee$ can be thought of as a topological dual. Then, by replacing (co)unital partial (co)operads on the left-hand side of the square with their "up to homotopy" counterparts, we construct another commuting square of duality adjunctions which are, in this case, all Quillen adjunctions. Using this duality square, we construct explicit cofibrant resolutions for a vast class of complete curved absolute partial operads. For instance, we construct a minimal cofibrant resolution of complete curved absolute partial operads encoding curved Lie algebras, which provides us with a suitable notion of curved Lie algebra up to homotopy. Applications of these results can be found in \cite{integration}, where we develop the integration theory of curved absolute $\mathcal{L}_\infty$-algebras, and apply it to deformation theory and rational homotopy theory. 

\medskip

Finally, using our complete Bar construction, we extend the classical Homotopy Transfer Theorem to the case of algebras over a general class of cofibrant complete curved absolute operads. Let $V$ and $H$ be two complete pdg modules. A \textit{homotopy contraction} amounts to the data of 
\[
\begin{tikzcd}[column sep=5pc,row sep=3pc]
V \arrow[r, shift left=1.1ex, "p"{name=F}] \arrow[loop left]{l}{h}
&H~, \arrow[l, shift left=.75ex, "i"{name=U}]
\end{tikzcd}
\]
where $p$ and $i$ are two morphisms of filtered pdg modules and $h$ is a morphism of filtered graded modules of degree $-1$, which satisfy standard conditions. This data allows us to construct a \textit{Van der Laan morphism} between the complete Bar constructions of the curved endomorphisms operads of $V$ and $H$. See \cite[Chapter 10]{LodayVallette12} for the classical Van der Laan morphism. Using this new morphism, we obtain the following result.

\begin{theoremintro}[Theorem \ref{thm: curved HTT}]
Let $\mathcal{C}$ be a dg counital partial cooperad. Let $H$ be a homotopy retract of $V$ and let 
\[
\varphi: \widehat{\Omega}\mathcal{C} \longrightarrow \mathrm{end}_V
\]
be a curved algebra structure on $V$. There is a transferred curved $\widehat{\Omega}\mathcal{C}$-algebra on $H$ constructed from the Van der Laan morphism.
\end{theoremintro}

In the particular case of complete curved $\mathcal{L}_\infty$-algebras, the transferred structure coincides with that constructed by K. Fukaya in \cite{FukayaHTT}. 

\medskip

Although (curved) absolute partial operads appear naturally when one considers the groupoid-colored point of view, there are still many open questions related to them. A crucial one is to have a meaningful notion of a (curved) \textit{absolute algebra} over a (curved) absolute partial operad. One can forget the "absolute" structure and only consider (curved) algebras on the underlying (curved) partial operad, or one can define them using filtered objects like we do Section \ref{Section: curved HTT}. These answers are not fully satisfactory, and expanding this theory should be the subject of future work. 

\subsection*{Layout}
This article is structured as follows: in Section 1, we recall the different definition of operads and cooperads, and we define their respective categories of algebras and coalgebras. This includes the notion of an algebra over a cooperad introduced by Le Grignou--Lejay which will play an important role in the subsequent sections. This section contains non-standard definitions even for experts in the operadic literature. In Section 2, we introduce the notion of a curved operad in the underlying category of pre-differential modules and establish basic properties. In Section 3, we develop the theory of curved bimodules over curved operads and we prove that they induce universal functors between categories of curved algebras. In Section 4, we recall different results on curved cooperads and their categories of curved coalgebras and curved algebras.

\medskip

In Section 5, we explain the formalism of algebras and coalgebras over operads and cooperads in the $\mathbb{S}$-colored case. We extend the inhomogeneous Koszul duality to this framework in order to prove that the unital $\mathbb{S}$-colored operad $u\mathcal{O}$, which encodes unital partial operads as its algebras, is a Koszul $\mathbb{S}$-colored operad. In Section 6, we use this $\mathbb{S}$-colored inhomogeneous Koszul duality to construct a new complete Bar-Cobar adjunction. In Section 7, we endow the category of complete curved absolute partial operads with a transferred model category structure from counital partial cooperads up to homotopy, considered with strict morphisms. In Section 8, we construct the duality adjunctions that intertwine the Bar-Cobar adjunction constructed before with the Bar-Cobar adjunction constructed by Le Grignou. We use this duality square to obtain explicit cofibrant resolutions. In Section 9, we apply the complete Bar-Cobar adjunction to obtain a general Homotopy Transfer Theorem for complete curved absolute partial operads. In the appendix, we define and characterize the key notion of \textit{absolute partial operads} and their curved counterparts.

\subsection*{Acknowledgments}
This work was carried out during my PhD at the Université Paris 13, and I would like to thank my former advisor Bruno Vallette for numerous discussions which led to this paper as well as for the careful readings of this paper. I would also like to thank Brice Le Grignou for the time he spent discussing with me, as well as the interesting ideas he shared with me, like the suggestion to use of counital partial cooperads up to homotopy in Section 7. I thank Mathieu Anel, Joan Bellier-Millès, Ricardo Campos, Damien Calaque, Geoffroy Horel, and all the members of the Séminaire Roberta for interesting discussions. Finally, I would also like to thank the referee of this paper for the numerous comments, remarks and corrections that helped greatly improve this paper.

\subsection*{Conventions}
Let $\mathbb{K}$ be a ground field of characteristic $0$. The ground category is the symmetric monoidal category $(\mathsf{gr}\textsf{-}\mathsf{mod}, \otimes, \mathbb{K})$ of graded $\mathbb{K}$-modules, with the tensor product $\otimes$ of graded modules given by 
\[
(A\otimes B)_n \coloneqq \displaystyle \bigoplus_{p+q=n}~A_p \otimes~ B_q ~.
\]
The tensor is taken over the base field $\mathbb{K}$, which will be implicit from now on. The isomorphism $\tau_{A,B}: A \otimes B \longrightarrow B \otimes A$ is given by the Koszul sign rule $\tau (a \otimes b) \coloneqq (-1)^{|a|.|b|} \hspace{1pt} b\otimes a$ on homogeneous elements. We work with the \textit{homological} degree convention, that is, the degree of the (pre)-differentials considered is $-1$. The suspension of a graded module $V$ is denoted by $sV$, given by $(sV)_{p} \coloneqq V_{p-1}~.$ A graded $\mathbb{S}$-module  $M$ is a collection $\{M(n)\}_{n \in \mathbb{N}}$ of graded $\kk[\mathbb{S}_n]$-modules. This category is denoted by $\mathsf{gr}~\mathbb{S}\textsf{-}\mathsf{mod}$. We denote $(\mathsf{gr}~\mathbb{S}\textsf{-}\mathsf{mod}, \circ, \I)$ the monoidal category of graded $\mathbb{S}$-modules endowed with the composition product $\circ$. We adopt the conventions of \cite{LodayVallette12}, unless stated otherwise. 

\section{Recollections on different types of (co)operads}\label{Section: Recollections}
In this section, we recall briefly the (co)monoidal definition of (co)operads. Operads were introduced to encode types of algebras and cooperads to encode types of conilpotent coalgebras. We explain briefly the dual point of view developed in \cite{grignoulejay18}, where operads encode non-necessarily conilpotent coalgebras, and where cooperads encode a new type of algebras. This type of algebras have well-defined infinite sums of operations \textit{by definition}. We review the partial definitions of (co)operads and compare them with the (co)monoidal definitions. The partial definitions are extremely useful to study operads themselves. These categories are the "natural habitat" of the operadic Bar-Cobar constructions defined later on. The material presented here contains non-standard results, thus we encourage experts in the field to read it as well.

\subsection{(Co)operads as (co)monoids and their algebras and coalgebras}\label{subsection : algebras and coalgebras}
We recall the basic definition of operads and cooperads as monoids and comonoids in $(\mathsf{gr}~\mathbb{S}\textsf{-}\mathsf{mod}, \circ, \I)$. We omit the epithet "graded" before each definition, although it is implicit that all our objects are graded.

\begin{Definition}[Operad]\label{def: operad}
An \textit{operad} $(\PP, \gamma_\PP, \eta)$ is the data of a unital monoid in $(\mathsf{gr}~\mathbb{S}\textsf{-}\mathsf{mod}, \circ, \I)$.   
\end{Definition}

\begin{Definition}[Cooperad]\label{def: cooperad}
A \textit{cooperad} $(\C,\Delta,\epsilon)$ is the data of a counital comonoid in $(\mathsf{gr}~\mathbb{S}\textsf{-}\mathsf{mod}, \circ, \I)~.$ 
\end{Definition}

\begin{Notation}
If there is no ambiguity, we will denote the data of an operad $(\PP, \gamma_\PP, \eta)$ simply by $\mathcal{P}$. Likewise, we will denote the data of a cooperad $(\C,\Delta,\epsilon)$ simply by $\mathcal{C}$. 
\end{Notation}

Theses definitions are well suited for defining algebras and coalgebras because of their "monadic" nature. To any graded $\mathbb{S}$-module one can associate an endofunctor in the category of graded modules via the \textit{Schur realization functor}
\[
\begin{tikzcd}[column sep=4pc,row sep=0.5pc]
&\mathscr{S} : \mathsf{gr}~\smod \arrow[r]
&\mathsf{End}(\mathsf{gr}~\mathsf{mod}) \\
&M \arrow[r,mapsto]
&\mathscr{S}(M)(-) \coloneqq \displaystyle \bigoplus_{n \geq 0} M(n) \otimes_{\mathbb{S}_n} (-)^{\otimes n}~.
\end{tikzcd}~
\]

\begin{lemma}
The Schur realization functor defines a strong monoidal functor between the monoidal categories $(\mathsf{gr}~\mathbb{S}\textsf{-}\mathsf{mod}, \circ, \I)$ and $(\mathsf{End}(\mathsf{gr}~\mathsf{mod}), \circ,\mathrm{Id})$, where the monoidal product on endofuctors is given by the composition of endofunctors. 
\end{lemma}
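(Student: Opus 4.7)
The plan is to verify the two defining properties of a strong monoidal functor: a natural isomorphism $\mathscr{S}(\I) \cong \mathrm{Id}$ for the units and a natural isomorphism $\mathscr{S}(M \circ N) \cong \mathscr{S}(M) \circ \mathscr{S}(N)$ for the monoidal products, together with the coherence (associativity and unitality) axioms. Since both target categories carry a strictly associative composition on the endofunctor side, the coherence diagrams will essentially reduce to checking that the comparison isomorphisms are induced by canonical maps on tensor products and $\mathbb{S}_n$-(co)invariants.

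First I would treat the unit. By definition $\I$ is the $\mathbb{S}$-module concentrated in arity $1$ with value $\mathbb{K}$, hence
\[
\mathscr{S}(\I)(V) \;=\; \bigoplus_{n\geq 0}\I(n)\otimes_{\mathbb{S}_n} V^{\otimes n} \;=\; \mathbb{K}\otimes_{\mathbb{S}_1}V \;\cong\; V~,
\]
naturally in $V$, giving the unit comparison isomorphism.

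Next I would construct the tensor comparison. Unfolding the definition of the composition product of $\mathbb{S}$-modules in terms of induced representations, one has
\[
(M\circ N)(n) \;=\; \bigoplus_{k\geq 0} M(k)\otimes_{\mathbb{S}_k}\Bigl(\bigoplus_{n_1+\cdots +n_k = n} \mathrm{Ind}_{\mathbb{S}_{n_1}\times\cdots\times \mathbb{S}_{n_k}}^{\mathbb{S}_n} N(n_1)\otimes\cdots\otimes N(n_k)\Bigr)~.
\]
Taking $\mathscr{S}(M\circ N)(V)=\bigoplus_n (M\circ N)(n)\otimes_{\mathbb{S}_n}V^{\otimes n}$, the induction-restriction identification $\mathrm{Ind}_H^G(W)\otimes_G U \cong W\otimes_H U_{|H}$ collapses the induced representation and the sums over $n$ and over ordered partitions combine into a single sum over $k$-tuples $(n_1,\ldots,n_k)$. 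On the other side,
\[
\mathscr{S}(M)\bigl(\mathscr{S}(N)(V)\bigr) \;=\; \bigoplus_{k\geq 0} M(k)\otimes_{\mathbb{S}_k}\Bigl(\bigoplus_{m\geq 0} N(m)\otimes_{\mathbb{S}_m}V^{\otimes m}\Bigr)^{\otimes k}~,
\]
and the distributivity of $\otimes$ over direct sums together with the Eilenberg--Zilber-type identification
\[
\Bigl(\bigoplus_m N(m)\otimes_{\mathbb{S}_m}V^{\otimes m}\Bigr)^{\otimes k} \;\cong\; \bigoplus_{(n_1,\ldots,n_k)} \bigl(N(n_1)\otimes\cdots\otimes N(n_k)\bigr)\otimes_{\mathbb{S}_{n_1}\times\cdots\times\mathbb{S}_{n_k}} V^{\otimes(n_1+\cdots+n_k)}
\]
produces exactly the same expression after taking $\mathbb{S}_k$-coinvariants, the $\mathbb{S}_k$-action permuting the blocks being compatible on both sides. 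This yields a natural isomorphism $\mathscr{S}(M\circ N) \xrightarrow{\;\sim\;} \mathscr{S}(M)\circ \mathscr{S}(N)$.

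Finally I would verify the coherence axioms. The associator of $(\mathsf{End}(\mathsf{gr}\textsf{-}\mathsf{mod}),\circ,\mathrm{Id})$ is strict, and the associator of $(\mathsf{gr}\textsf{-}\mathbb{S}\textsf{-}\mathsf{mod},\circ,\I)$ is itself induced by the canonical identifications of iterated tensor products of the underlying graded modules; both comparison isomorphisms above are constructed entirely from these canonical identifications, so the pentagon and triangle diagrams commute by a direct check on representatives. The only technical subtlety — the compatibility between the $\mathbb{S}_k$-coinvariants in $\mathscr{S}(M)$ and the symmetries of the inner tensor powers — is precisely what the characteristic zero assumption fixed in the conventions makes harmless, as coinvariants and invariants coincide and commute with the relevant direct sums. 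The main bookkeeping obstacle is this identification of the symmetric group actions on the two sides, which is essentially a careful reindexing rather than a deep computation.
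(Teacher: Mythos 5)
Your proof is correct and is the standard argument; the paper states this lemma without proof as a classical fact (it is essentially \cite[Section 5.1]{LodayVallette12}), and your unit identification, the induction--restriction collapse $\mathrm{Ind}_H^G(W)\otimes_G U \cong W\otimes_H U$, and the reindexing of the $\mathbb{S}_k$-coinvariants are exactly how one fills it in. One small caveat: the characteristic-zero hypothesis is not actually needed at this step, since both $\mathscr{S}$ and the composition product $\circ$ are defined entirely via coinvariants, so the comparison isomorphism is an instance of colimits commuting with colimits over any ring; characteristic zero only becomes essential for the \emph{dual} Schur functor $\widehat{\mathscr{S}}^c$ of the following lemma, where invariants and $\mathrm{Hom}$-spaces appear and the identification of invariants with coinvariants is genuinely used.
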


This implies that if $\PP$ is a operad, then $\mathscr{S}(\PP)$ is a monad and that if $\C$ is a cooperad, then $\mathscr{S}(\mathcal{C})$ is comonad. 

\begin{Definition}[$\PP$-algebras]
A $\mathcal{P}$\textit{-algebra} $A$ amounts to the data $(A,\gamma_A)$ of an algebra over the monad $\mathscr{S}(\PP)$.
\end{Definition}

\begin{Definition}[$\mathcal{C}$-coalgebras]
A $\mathcal{C}$\textit{-coalgebra} $C$ amounts to the data $(C,\Delta_C)$ of a coalgebra over the comonad $\mathscr{S}(\C)$.
\end{Definition}

\begin{Remark}
The notion of a coalgebra over a cooperad can only encapsulate types of coalgebras which are conilpotent in some sense. The reason is that the structural map
\[
\Delta_C: C \longrightarrow \bigoplus_{n \geq 0} \mathcal{C}(n) \otimes_{\mathbb{S}_n} C^{\otimes n}~,
\]
lands on the direct sum of a non-completed tensor product. Thus decompositions of elements in $C$ are supported in finitely many arities and in finitely many steps of the coradical filtration of $\mathcal{C}$.
\end{Remark}

There is another kind of Schur realization functor, which was introduced in \cite{grignoulejay18}. The \textit{dual Schur realization} functor $\widehat{\mathscr{S}}^c$ is given by

\[
\begin{tikzcd}[column sep=4pc,row sep=0.5pc]
\widehat{\mathscr{S}}^c : \mathsf{gr}~\smod^{\mathsf{op}} \arrow[r]
&\mathsf{End}(\mathsf{gr}~\mathsf{mod}) \\
M \arrow[r,mapsto]
&\widehat{\mathscr{S}}^c(M)(-) \coloneqq \displaystyle \prod_{n \geq 0} \mathrm{Hom}_{\mathbb{S}_n}(M(n),(-)^{\otimes n})~.
\end{tikzcd}
\]

\begin{lemma}[{\cite[Corollary~3.4]{grignoulejay18}}]\label{lemma: Schur lax contravariant functor}
The functor $\widehat{\mathscr{S}}^c(-): (\mathbb{S}\textsf{-}\mathsf{mod}, \allowbreak \circ, \I)^{\mathsf{op}} \longrightarrow (\mathsf{End}(\kmod), \circ,\mathsf{Id})$ can be endowed with a lax monoidal structure, that is, there exists a natural transformation 
\[
\varphi_{M,N}: \widehat{\mathscr{S}}^c(M) \circ \widehat{\mathscr{S}}^c(N) \longrightarrow \widehat{\mathscr{S}}^c(M \circ N)~.
\]
which satisfies associativity and unitality compatibility conditions with respect to the monoidal structures. Furthermore, $\varphi_{M,N}$ is a monomorphism for all graded $\mathbb{S}$-modules $M,N$.
\end{lemma}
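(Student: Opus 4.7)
The plan is to construct $\varphi_{M,N}$ pointwise by unwinding both sides, and then to reduce monoidal coherence and injectivity to a single elementary ``distributivity'' morphism sending a tensor of products into a product of tensors.

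For a fixed graded $\kk$-module $V$, unwinding definitions gives
\[
\widehat{\mathscr{S}}^c(M)\!\left(\widehat{\mathscr{S}}^c(N)(V)\right) \;=\; \prod_n \mathrm{Hom}_{\mathbb{S}_n}\!\left(M(n),\, \left(\prod_k \mathrm{Hom}_{\mathbb{S}_k}(N(k),\, V^{\otimes k})\right)^{\otimes n}\right).
\]
The key building block is the canonical comparison map
\[
\delta_n : \bigotimes_{i=1}^n \prod_{k_i} \mathrm{Hom}_{\mathbb{S}_{k_i}}\!\left(N(k_i),\, V^{\otimes k_i}\right) \;\longrightarrow\; \prod_{(k_1,\ldots,k_n)} \mathrm{Hom}\!\left(\bigotimes_{i=1}^n N(k_i),\; V^{\otimes(k_1+\cdots+k_n)}\right)
\]
sending $f_1 \otimes \cdots \otimes f_n$ to the family whose component at $(k_1,\ldots,k_n)$ is $x_1 \otimes \cdots \otimes x_n \mapsto f_1(x_1)\otimes \cdots \otimes f_n(x_n)$. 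Post-composing $\delta_n$ with the structural embeddings $N(k_1)\otimes \cdots \otimes N(k_n) \hookrightarrow (M\circ N)(k_1+\cdots+k_n)$ that come from the composition product, and respecting the ambient $\mathbb{S}_n$- and $\mathbb{S}_k$-equivariances, yields arity by arity a map into $\mathrm{Hom}_{\mathbb{S}_k}\!\left((M\circ N)(k),\, V^{\otimes k}\right)$; assembling these over $k$ defines $\varphi_{M,N}$, which is natural in $V$ and contravariantly natural in $M$ and $N$.

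For the coherence axioms, unitality is essentially tautological once one notes that $\widehat{\mathscr{S}}^c(\I) = \mathrm{Id}$, which follows from $\I$ being $\kk$ concentrated in arity one. Associativity amounts to comparing the two composites $\widehat{\mathscr{S}}^c(M)\, \widehat{\mathscr{S}}^c(N)\, \widehat{\mathscr{S}}^c(L) \to \widehat{\mathscr{S}}^c(M\circ N\circ L)$ induced by $\varphi_{M\circ N,L}\circ(\varphi_{M,N}\,\widehat{\mathscr{S}}^c(L))$ and $\varphi_{M,N\circ L}\circ(\widehat{\mathscr{S}}^c(M)\,\varphi_{N,L})$. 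Both decompose as iterated instances of $\delta$-type maps precomposed with structural inclusions, and their equality follows from the naturality of $\delta_n$ together with the standard associator of $\circ$ on $\mathbb{S}$-modules. For injectivity, exactness of tensor products over a field makes $\delta_n$ a monomorphism; then, since $\mathrm{Hom}_{\mathbb{S}_n}(M(n),-)$ and $\prod$ preserve monomorphisms in the second argument, and symmetric invariants are exact in characteristic zero, $\varphi_{M,N}$ is itself a monomorphism.

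The hard part will be the combinatorial bookkeeping for the associativity diagram: one must carefully track how the two layers of $\delta$-type maps, when iterated, align with the associator of $\circ$ on $\mathbb{S}$-modules and with the reassociation of $V^{\otimes k}$ under the various repartitions of $k$. The remaining pieces (unitality, and monicity via exactness in characteristic zero) are essentially formal.
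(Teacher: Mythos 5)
The paper does not prove this lemma; it imports it verbatim as \cite[Corollary~3.4]{grignoulejay18}, and your construction is essentially the one given there: the lax structure map is assembled arity-by-arity from the canonical ``tensor of products into product of tensors'' comparison together with the lax monoidality of $\mathrm{Hom}$, and injectivity is traced back to that single comparison map. Two small points to tighten. First, ``post-composing $\delta_n$ with the structural embeddings $N(k_1)\otimes\cdots\otimes N(k_n)\hookrightarrow (M\circ N)(k)$'' is backwards as literally stated (restriction along an embedding maps $\mathrm{Hom}$ in the wrong direction); what you actually do is curry $\delta_n\circ f_n$ into equivariant maps $M(n)\otimes N(k_1)\otimes\cdots\otimes N(k_n)\to V^{\otimes k}$, check they descend to the coinvariants of the induced modules, and use $\mathrm{Hom}(\bigoplus_\alpha A_\alpha,-)\cong\prod_\alpha\mathrm{Hom}(A_\alpha,-)$ to land in $\mathrm{Hom}_{\mathbb{S}_k}((M\circ N)(k),V^{\otimes k})$ --- this last step is an isomorphism, so no injectivity is lost there. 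Second, for monicity of $\delta_n$ the relevant fact is not bare exactness of $\otimes$ but that $(\prod_\alpha A_\alpha)\otimes B\to\prod_\alpha(A_\alpha\otimes B)$ and $\mathrm{Hom}(A,B)\otimes\mathrm{Hom}(C,D)\to\mathrm{Hom}(A\otimes C,B\otimes D)$ are injective over a field (write $B$ as a filtered union of finite-dimensional subspaces, where the first map is an isomorphism); combined with left exactness of $\mathrm{Hom}_{\mathbb{S}_n}(M(n),-)$, of products, and of invariants in characteristic zero, this gives the claim.
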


This implies that if $\mathcal{C}$ is a cooperad, then $\widehat{\mathscr{S}}^c(\mathcal{C})$ is monad. Thus one can define algebras over a cooperad in this way. 

\begin{Definition}[$\C$-algebra]
A $\mathcal{C}$\textit{-algebra} $B$ amounts to the data $(B,\gamma_B)$ of an algebra over the monad $\widehat{\mathscr{S}}^c(\mathcal{C})$.
\end{Definition}

\begin{Remark}
The notion of an algebra over a cooperad defines a new type of algebraic structures. The reason is that the structural map
\[
\gamma_B: \displaystyle \prod_{n \geq 0} \mathrm{Hom}_{\mathbb{S}_n}(\mathcal{C}(n),B^{\otimes n}) \longrightarrow B 
\]
associates a element in $B$ to any infinite series of operations. Thus algebras over a cooperad are endowed with a notion of infinite summation without presupposing any underlying topology. 
\end{Remark}

The notion of an algebra over a cooperad admits a further description in the case where the cooperad is \textit{conilpotent}. Our definition of a conilpotent cooperad is explained in Section \ref{subsection: filtrations on partial (co)operads}. 

\medskip

Let $(\C^u,\Delta,\epsilon)$ be a conilpotent cooperad. Each term of the coradical filtration $\mathscr{R}_\omega \C$ defines a sub-cooperad, and there is a short exact sequence of $\mathbb{S}$-modules 
\[
\begin{tikzcd}
0 \arrow[r]
&\mathscr{R}_\omega \C \arrow[r,"\iota_\omega",hook]
&\C \arrow[r,"\pi_\omega"]
&\C / \mathscr{R}_\omega \C \arrow[r]
&0~.
\end{tikzcd}
\]

\begin{Definition}[Canonical filtration on a $\mathcal{C}$-algebra]
Let $(\C^u,\Delta,\epsilon)$ be a conilpotent cooperad and let $(B, \gamma_B)$ be a $\C$-algebra. The \textit{canonical filtration} of $B$ is the decreasing filtration of given by 
\[ 
\mathrm{W}_\omega B \coloneqq \mathrm{Im}\left(\gamma_B \circ \widehat{\mathscr{S}}^c(\pi_\omega)(\mathrm{id}_B): \widehat{\mathscr{S}}^c(\C / \mathscr{R}_\omega \C)(B) \longrightarrow B \right)
\]
where $\mathscr{R}_\omega \C$ denotes the $\omega$-th term of the coradical filtration, for all $\omega \geq 0~.$ Notice that we have 
\[
B = \mathrm{W}_0 B \supseteq \mathrm{W}_1 B \supseteq \mathrm{W}_2 B \supseteq \cdots \supseteq \mathrm{W}_\omega B \supseteq \cdots.
\]
\end{Definition}

\begin{Definition}[Completion of a $\C$-algebra]
Let $(B, \gamma_B)$ be a $\C$-algebra. Its \textit{completion} is given by 
\[ 
\widehat{B} \coloneqq \lim_{\omega \in \mathbb{N}} B/\mathrm{W}_\omega B~,
\]
where the limit is taken in the category of $\C$-algebras.
\end{Definition}

It comes equipped with a canonical morphism of $\C$-algebras $\varphi_B: B \longrightarrow \widehat{B}~.$ 

\begin{Definition}[Complete $\mathcal{C}$-algebra]
The $\C$-algebra $(B,\gamma_B)$ is said to be \textit{complete} if $\varphi_B$ is an isomorphism of $\C$-algebras. 
\end{Definition}

\begin{Proposition}[{\cite[Proposition~4.24]{grignoulejay18}}]
Let $(B, \gamma_B)$ be a $\C$-algebra. The canonical morphism 
\[
\varphi_B: B \longrightarrow \widehat{B}
\]
is an epimorphism.
\end{Proposition}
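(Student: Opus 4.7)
The strategy is to show that the image $\varphi_B(B)$ generates $\widehat B$ as a sub-$\C$-algebra, from which the epimorphism property follows formally: given any two $\C$-algebra maps $f,g:\widehat B\to D$ with $f\circ\varphi_B=g\circ\varphi_B$, their equalizer is a sub-$\C$-algebra of $\widehat B$ containing $\varphi_B(B)$, hence equals $\widehat B$, so $f=g$. As a preliminary, I use that the forgetful functor $\calg\to\mathsf{gr}\textsf{-}\mathsf{mod}$ is right adjoint to the free functor $\widehat{\mathscr{S}}^c(\C)(-)$, so it preserves the limit defining $\widehat B$; hence the underlying module of $\widehat B$ is the ordinary inverse limit $\lim_\omega B/\mathrm W_\omega B$, whose elements are coherent sequences $(b_\omega)_\omega$ with $b_\omega\in B/\mathrm W_\omega B$.

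Given such a coherent sequence $\hat b=(b_\omega)_\omega$, I would successively lift $b_\omega$ to elements $\tilde b_\omega\in B$ so that $\tilde b_{\omega+1}-\tilde b_\omega\in\mathrm W_\omega B$. By the very definition of the canonical filtration, one can write $\tilde b_{\omega+1}-\tilde b_\omega=\gamma_B(\tilde h_\omega)$ for some $\tilde h_\omega\in\widehat{\mathscr{S}}^c(\C)(B)$ factoring through $\widehat{\mathscr{S}}^c(\C/\mathscr R_\omega\C)(B)$. The crucial step is then to assemble $\tilde b_0$ together with the family $\{\tilde h_\omega\}_{\omega\ge0}$ into a single element $H\in\widehat{\mathscr{S}}^c(\C)(\widehat B)$ whose components factor through $\varphi_B(B)^{\otimes\bullet}$. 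The convergence input is conilpotency: any fixed $c\in\C(n)$ lies in $\mathscr R_{\omega_0}\C$ for some $\omega_0$, hence $(\tilde h_\omega)_n(c)=0$ for every $\omega\ge\omega_0$, so the putative formula
\[
H_n(c)\;:=\;\varphi_B^{\otimes n}\Bigl(\text{(counit contribution from }\tilde b_0\text{)}+\sum_{\omega\ge0}(\tilde h_\omega)_n(c)\Bigr)
\]
is a finite sum for each fixed $c$ and is manifestly $\mathbb S_n$-equivariant and $\kk$-linear, yielding a genuine element of $\widehat{\mathscr{S}}^c(\C)(\widehat B)$.

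It then remains to check that $\gamma_{\widehat B}(H)\equiv\varphi_B(\tilde b_\omega)\pmod{\mathrm W_\omega\widehat B}$ for every $\omega$, so by separatedness of $\widehat B$ we obtain $\gamma_{\widehat B}(H)=\hat b$; this exhibits $\hat b$ in the sub-$\C$-algebra generated by $\varphi_B(B)$ and closes the argument. The main obstacle will be precisely this assembly step: verifying that the family $\{\tilde h_\omega\}$ organizes into a bona fide element of $\widehat{\mathscr{S}}^c(\C)(\widehat B)$ compatible with the monad structure, and that its evaluation under $\gamma_{\widehat B}$ matches $\varphi_B(\tilde b_\omega)$ at each filtration level. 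This rests on the cornerstone property highlighted just after the definition of $\C$-algebras, namely that infinite series of operations indexed by a conilpotent cooperad are well-defined without presupposing any auxiliary topology on $B$.
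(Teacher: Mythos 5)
Your proposal isolates the correct mechanism, which is also the one the paper only alludes to (it cites \cite[Proposition~4.24]{grignoulejay18} and gives no proof beyond the conceptual remark that follows): conilpotency of $\C$ guarantees that the successive differences $\tilde b_{\omega+1}-\tilde b_\omega=\gamma_B(\tilde h_\omega)$, with $\tilde h_\omega$ vanishing on $\mathscr R_\omega\C$, assemble into a single well-defined element of $\widehat{\mathscr{S}}^c(\C)(B)$, since any fixed $c\in\C(n)$ lies in some $\mathscr R_{\omega_0}\C$ and is therefore annihilated by all but finitely many $\tilde h_\omega$. This is precisely the ``infinite summation already present in a $\C$-algebra'' that the Remark after the Proposition invokes, so the core of your argument is sound.

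Two points of criticism. First, your last step appeals to ``separatedness of $\widehat B$'' for its canonical filtration $\mathrm W_\omega\widehat B$. That is not free: it is essentially half of the assertion that $\widehat B$ is complete, which in this paper only appears in the subsequent Proposition and would naturally be proved \emph{using} the present statement. It can be repaired — the projection $p_\omega:\widehat B\to B/\mathrm W_\omega B$ is a morphism of $\C$-algebras, one checks $\mathrm W_\omega(B/\mathrm W_\omega B)=0$, hence $\mathrm W_\omega\widehat B\subseteq\ker p_\omega$ and $\bigcap_\omega\ker p_\omega=0$ by definition of the limit — but as written this is a gap. Second, the detour through $\widehat B$, the sub-algebra generated by $\varphi_B(B)$, and the equalizer argument is unnecessary: keep your element downstairs. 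Set $h\in\widehat{\mathscr{S}}^c(\C)(B)$ to be the image of $\tilde b_0$ under the unit of the monad plus $\sum_{\omega\ge0}\tilde h_\omega$ (convergent by the same conilpotency argument), and put $b:=\gamma_B(h)\in B$. For each $N$ the tail $\sum_{\omega\ge N}\tilde h_\omega$ vanishes on $\mathscr R_N\C$, hence factors through $\widehat{\mathscr{S}}^c(\C/\mathscr R_N\C)(B)$ and its image under $\gamma_B$ lies in $\mathrm W_NB$ by definition of the canonical filtration; by linearity of $\gamma_B$,
\[
b=\tilde b_0+\sum_{\omega<N}\bigl(\tilde b_{\omega+1}-\tilde b_\omega\bigr)+\gamma_B\Bigl(\sum_{\omega\ge N}\tilde h_\omega\Bigr)\equiv\tilde b_N\pmod{\mathrm W_NB}~,
\]
so $\varphi_B(b)=\hat b$. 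This shows $\varphi_B$ is degree-wise surjective — stronger than being an epimorphism — and requires no statement whatsoever about the filtration of $\widehat B$ itself.
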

 
\begin{Remark}\label{Remark: varphi is an epimorphism}
Conceptually, this comes from the fact that any $\mathcal{C}$-algebras already carries a meaningful notion of infinite summation. Thus "nothing is added" when one applies the completion functor. On the other hand, the topology induced by the canonical filtration of a $\mathcal{C}$-algebra might not be Hausdorff. Meaning that the canonical morphism $\varphi_B$ might not be a monomorphism. This completion functor should be considered a \textit{sifting functor}.
\end{Remark}

\begin{Proposition}[{\cite[Proposition 4.21, Proposition 4.25]{grignoulejay18}}]\label{prop: complete C algebras are a reflexiv subcat}
Let $(\C^u,\Delta,\epsilon)$ be a conilpotent cooperad. Any free $\C$-algebra is complete. Furthermore, the category of complete $\C$-algebras forms a reflexive subcategory of the category of $\C$-algebras, where the reflector is given by the completion.
\end{Proposition}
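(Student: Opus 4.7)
The plan is to deduce the statement from an explicit understanding of the canonical filtration, split into four steps.

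First, I would verify that the canonical filtration is functorial, so that completion defines a functor $\widehat{(-)}: \C\text{-}\mathsf{alg} \to \C\text{-}\mathsf{alg}$ equipped with the natural transformation $\varphi: \id \Rightarrow \widehat{(-)}$. Since $\mathrm{W}_\omega B$ is defined as the image of $\gamma_B \circ \widehat{\mathscr{S}}^c(\pi_\omega)(\id_B)$, any $\C$-algebra morphism $f: B \to B'$ sends $\mathrm{W}_\omega B$ into $\mathrm{W}_\omega B'$ by naturality of the monad action and of $\pi_\omega$.

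Second, I would prove that any free $\C$-algebra $B \coloneqq \widehat{\mathscr{S}}^c(\C)(V)$ is complete. Over our field of characteristic zero, the short exact sequence $0 \to \mathscr{R}_\omega \C \to \C \to \C/\mathscr{R}_\omega \C \to 0$ of $\mathbb{S}$-modules splits, yielding a natural decomposition $\widehat{\mathscr{S}}^c(\C)(V) \cong \widehat{\mathscr{S}}^c(\mathscr{R}_\omega \C)(V) \oplus \widehat{\mathscr{S}}^c(\C/\mathscr{R}_\omega \C)(V)$. Using that the action $\gamma_B$ on a free algebra is the monad multiplication of $\widehat{\mathscr{S}}^c(\C)$, a careful analysis of the lax monoidal transformation $\varphi_{\C,\C}$ of \cref{lemma: Schur lax contravariant functor} identifies $\mathrm{W}_\omega B$ with the second summand, so that $B/\mathrm{W}_\omega B \cong \widehat{\mathscr{S}}^c(\mathscr{R}_\omega \C)(V)$. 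By conilpotency $\C = \colim_\omega \mathscr{R}_\omega \C$, and since $\widehat{\mathscr{S}}^c$ converts colimits in its variable into limits, passing to the inverse limit gives $\widehat{B} \cong \widehat{\mathscr{S}}^c(\C)(V) = B$.

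Third, for a general $\C$-algebra $B$, I would show that $\widehat{B}$ is itself complete and that the reflection property holds. On each quotient $B/\mathrm{W}_\omega B$, the $\C$-action factors through $\mathscr{R}_\omega \C$, forcing $\mathrm{W}_\omega(B/\mathrm{W}_\omega B) = 0$. Combined with functoriality, the canonical projection $\widehat{B} \twoheadrightarrow B/\mathrm{W}_\omega B$ identifies its kernel with $\mathrm{W}_\omega \widehat{B}$, yielding compatible isomorphisms $\widehat{B}/\mathrm{W}_\omega \widehat{B} \cong B/\mathrm{W}_\omega B$; taking the inverse limit yields $\widehat{\widehat{B}} \cong \widehat{B}$. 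For any morphism $f: B \to B'$ with $B'$ complete, the induced map $\widehat{f}: \widehat{B} \to \widehat{B'} \cong B'$ factors $f$ through $\varphi_B$, and uniqueness of the factorization follows from the fact, recalled just before the statement, that $\varphi_B$ is an epimorphism. This exhibits completion as the reflector onto complete $\C$-algebras.

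The main obstacle is the explicit identification $B/\mathrm{W}_\omega B \cong \widehat{\mathscr{S}}^c(\mathscr{R}_\omega \C)(V)$ for the free algebra: it requires carefully unwinding how the lax monoidal transformation $\varphi_{M,N}$ meshes with the coradical filtration under the monad multiplication, recalling that $\varphi_{M,N}$ is only a monomorphism, not an isomorphism. Once this is in hand, the remaining statements reduce to formal manipulations with inverse limits and the functoriality of $\mathrm{W}_\bullet$.
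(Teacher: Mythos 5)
The paper does not actually prove this Proposition: it is recalled from Le Grignou--Lejay (the surrounding statements cite \cite[Proposition~4.24]{grignoulejay18} and \cite[Section 4.5]{grignoulejay18}), so there is no in-text argument to compare yours against. Judged on its own, your strategy is sound and the key claims check out. For the free algebra $B=\widehat{\mathscr{S}}^c(\C)(V)$, the identification of $\mathrm{W}_\omega B$ with the summand $\widehat{\mathscr{S}}^c(\C/\mathscr{R}_\omega\C)(V)$ does work: one inclusion is because $\mathscr{R}_\omega\C$ is a sub-cooperad, so $(\pi_\omega\circ\mathrm{id})\cdot\Delta$ kills $\mathscr{R}_\omega\C$ and every functional in the image of $\gamma_B\circ\widehat{\mathscr{S}}^c(\pi_\omega)(\mathrm{id}_B)$ vanishes on it; the reverse inclusion uses counitality, writing $g\pi_\omega$ as the evaluation of $(\eta_V)^{\otimes\bullet}\circ g$ via $(\mathrm{id}\circ\epsilon)\Delta=\mathrm{id}$. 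You correctly flag this as the technical heart, and then conilpotency plus the fact that $\widehat{\mathscr{S}}^c$ turns colimits of $\mathbb{S}$-modules into limits finishes that step.

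Two places where your sketch is leaning on more than ``functoriality'' and should say so. First, the vanishing $\mathrm{W}_\omega(B/\mathrm{W}_\omega B)=0$ and, more importantly, the identification $\ker\bigl(\widehat{B}\twoheadrightarrow B/\mathrm{W}_\omega B\bigr)=\mathrm{W}_\omega\widehat{B}$ require that $\mathrm{W}_\omega$ be computed on quotients by lifting: you need $\widehat{\mathscr{S}}^c(M)(p)$ to be surjective for a surjection $p$ (true here because $\kk[\mathbb{S}_n]$ is semisimple in characteristic $0$, so $\mathrm{Hom}_{\mathbb{S}_n}(M(n),-)$ preserves epimorphisms), together with the surjectivity of $\varphi_B$ from the preceding Proposition, which gives $\mathrm{W}_\omega\widehat{B}=\varphi_B(\mathrm{W}_\omega B)$ and hence the reverse inclusion $\ker q_\omega\subseteq\mathrm{W}_\omega\widehat{B}$. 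Functoriality alone only yields one containment. Second, for the limit $\widehat{B}$ taken in $\C$-algebras to coincide with the underlying limit of graded modules you should note that the forgetful functor from algebras over a monad creates limits. With those two points made explicit, your argument is complete and is essentially the standard one from \cite{grignoulejay18}.
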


\begin{Remark}
Contrary to $\mathcal{C}$-coalgebras, which are always conilpotent, there are examples of $\mathcal{C}$-algebras which are not complete. See for instance \cite[Section 4.5]{grignoulejay18}.
\end{Remark}

Even though that, for an operad $\PP$, its dual Schur functor $\widehat{\mathscr{S}}^c(\mathcal{P})$ fails to be a comonad, one can still define a notion of a coalgebra over an operad. 

\begin{Definition}[$\mathcal{P}$-coalgebra]
A $\mathcal{P}$-coalgebra $D$ amounts to the data $(D, \Delta_D)$ of a graded module $D$ endowed with a structural map
\[
\Delta_D: D \longrightarrow \displaystyle \prod_{n \geq 0} \mathrm{Hom}_{\mathbb{S}_n}(\mathcal{P}(n),D^{\otimes n})~,
\]
such that the following diagram commutes 
\[
\begin{tikzcd}[column sep=4.5pc,row sep=3pc]
D \arrow[r,"\Delta_D"] \arrow[d,"\Delta_D",swap] 
&\widehat{\mathscr{S}}^c(\PP)(D) \arrow[r,"\widehat{\mathscr{S}}^c(\mathrm{id})(\Delta_D)"]
&\widehat{\mathscr{S}}^c(\PP)(D) \circ \widehat{\mathscr{S}}^c(\PP)(D) \arrow[d,"\varphi_{\PP,\PP}(D)"] \\
\widehat{\mathscr{S}}^c(\PP)(D) \arrow[rr,"\widehat{\mathscr{S}}^c(\gamma)(\mathrm{id})"]
&
&\widehat{\mathscr{S}}^c(\PP \circ \PP)(D)~.
\end{tikzcd}
\]
\end{Definition}

\begin{Remark}
The data of a $\mathcal{P}$-coalgebra $D$ amounts to the data of a morphism of operads $\mathcal{P} \longrightarrow \mathrm{Coend}_D$, where $\mathrm{Coend}_D$ stands for the coendomorphisms operads of $D$. 
\end{Remark}

\begin{Notation}\label{not: le dirac}
Let $f: X \longrightarrow Y$ be a map of degree $0$ and $g: X \longrightarrow Y$ be a map of degree $p$. We denote
\[
\diracComb_n(f,g) \coloneqq \sum_{i=0}^n f^{\otimes i-1} \otimes g \otimes f^{\otimes n-i} : X^{\otimes n} \longrightarrow Y^{\otimes n}
\]
the resulting $\mathbb{S}_n$-equivariant map of degree $p$. Let $M$ be an graded $\mathbb{S}$-module. It induces first a map of degree $p$
\[
\bigoplus_{n \geq 0} M(n) \otimes_{\mathbb{S}_n} X^{\otimes n} \longrightarrow \bigoplus_{n \geq 0} M(n) \otimes_{\mathbb{S}_n} Y^{\otimes n}
\]
by applying $\mathrm{id}_M \otimes \diracComb_n(f,g)$ at each arity. By a slight abuse of notation, this map will be denoted by $\mathscr{S}(\mathrm{id}_M)(\diracComb(f,g))~.$ Likewise, it induces a second map of degree $p$
\[
\prod_{n \geq 0} \mathrm{Hom}_{\mathbb{S}_n}(M(n),X^{\otimes n}) \longrightarrow \prod_{n \geq 0} \mathrm{Hom}_{\mathbb{S}_n}(M(n),Y^{\otimes n})
\]
by applying $\mathrm{Hom}(id_M,\diracComb_n(f,g))$ at each arity. By a slight abuse of notation, this map will be denoted by $\widehat{\mathscr{S}}^c(\mathrm{id}_M)(\diracComb(f,g))~.$
\end{Notation}

\subsection{Partial definition of (co)operads}
In order to work with operads and cooperads \textit{themselves}, it is convenient to use another definition.

\begin{Definition}
Let $n,k \in \mathbb{N}$, for all $1 \leq i \leq n$, we define maps:
\begin{equation}\label{morph: rho i}
\circ_i: \mathbb{S}_n \times \mathbb{S}_k \longrightarrow \mathbb{S}_{n+k-1}~.
\end{equation}
For $(\tau,\sigma) \in \mathbb{S}_n \times \mathbb{S}_k$, $\sigma \circ_i \tau$ is given by the unique permutation in $\mathbb{S}_{n+k-1}$ which first acts as $\tau$ on the set $\{1,\cdots,n+k-1\} - \{i,\cdots, i+k-1\}$ and sends $\{i,\cdots, i+k-1\}$ to $\{\tau(i), \cdots, \tau(i) + k -1 \}$; then acts as $\sigma$ on the set $\{\tau(i), \cdots, \tau(i) + k -1 \}$.
\end{Definition}

\begin{Definition}[Partial operad]\label{def: partialoperad}
A \textit{partial operad} $(\PP, \{\circ_i\})$ is the data of a graded $\mathbb{S}$-module $\PP$ endowed with a family of \textit{partial composition maps} of degree $0$:
\[
\circ_i : \PP(n) \otimes \PP(k) \longrightarrow \PP(n+k-1)
\]
for $1 \leq i \leq n$, subject to the following conditions.

\begin{enumerate}

\item It satisfies the \textit{sequential} axiom: for $1 \leq i \leq n, 1 \leq j \leq k$, the following diagram commutes
\[
\begin{tikzcd}[column sep=4.5pc,row sep=3pc]
\PP(n) \otimes \PP(k) \otimes \PP(m) \arrow[r,"\circ_i ~\otimes~ \mathrm{id}_{\PP(m)}"] \arrow[d,"\mathrm{id}_{\PP(n)}~ \otimes~ \circ_j",swap] 
&\PP(n+k-1) \otimes \PP(m) \arrow[d,"\circ_{i+j-1} "] \\
\PP(n) \otimes \PP(k+m-1) \arrow[r,"\circ_i"]
&\PP(n+k+m-2)~.
\end{tikzcd}
\]
\item It satisfies the \textit{parallel} axiom: for $1 \leq i < j \leq n$, the following diagram commutes

\[
\begin{tikzcd}[column sep=4.5pc,row sep=3pc]
\PP(n) \otimes \PP(k) \otimes \PP(m) \arrow[d,"\circ_i  ~\otimes~ \mathrm{id}_{\PP(m)}",swap] \arrow[r,"\circ_j ~\otimes~ \mathrm{id}_{\PP(k)}"]
&\PP(n+m-1) \otimes \PP(k) \arrow[d,"\circ_i"] \\
\PP(n+k-1) \otimes \PP(m) \arrow[r,"\circ_{j+k-1}"]
&\PP(n+k+m-2)~.
\end{tikzcd}
\]

\item The maps
\[
\circ_i : \PP(n) \otimes \PP(k) \longrightarrow \PP(n+k-1)
\]
satisfy the following condition: let $(\tau,\sigma) \in \mathbb{S}_n \times \mathbb{S}_k$, then the following diagram commutes
\[
\begin{tikzcd}[column sep=3pc,row sep=3pc]
\PP(n) \otimes \PP(k) \arrow[r,"\circ_i "] \arrow[d,"\tau ~\otimes~\sigma ",swap]
&\PP(n+k-1) \arrow[d,"\tau ~\circ_i~ \sigma " ] \\
\PP(n) \otimes \PP(k) \arrow[r,"\circ_{\tau(i)} "]
&\PP(n+k-1)~.
\end{tikzcd}
\]
\end{enumerate} 
\end{Definition}

\begin{Definition}[Unital partial operad]\label{def: unital partial operad}
A \textit{unital partial operad} $(\mathcal{P},\{\circ_i\},\eta)$ is the data of a partial operad $(\mathcal{P},\{\circ_i\})$ together with a morphism $\eta: \I \longrightarrow \PP$, such that $\circ_1(\eta(\mathrm{id}),-)$ and $\circ_i(-,\eta(\mathrm{id}))$ are the identity on $\PP(n)$.
\end{Definition}

\begin{Remark}
Unital partial operads are sometimes called Markl operads in the literature, see \cite{Markl96}.
\end{Remark}

Let $\mathscr{T}$ denote the tree monad on the category of graded $\mathbb{S}$-modules and let $\overline{\mathscr{T}}$ denote the reduced tree monad on the same category. For an explicit construction, see \cite[Section 5.6]{LodayVallette12}. 

\begin{Proposition}
There are isomorphisms of categories between:
\begin{enumerate}
\item The category of algebras over the tree monad $\mathscr{T}$ and the category of unital partial operads.

\item The category of algebras over the reduced tree monad $\overline{\mathscr{T}}$ and the category of partial operads.
\end{enumerate}
\end{Proposition}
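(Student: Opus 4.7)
The plan is to identify the tree monad $\mathscr{T}$ with the monad of free unital partial operads, so that the algebra structure map $\gamma : \mathscr{T}(\PP) \to \PP$ corresponds exactly to the unital partial operad structure on $\PP$. The proof proceeds by constructing functors in both directions and checking that they are mutually inverse.

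In the forward direction, given a $\mathscr{T}$-algebra $(\PP, \gamma)$, I would define $\eta : \I \to \PP$ by restricting $\gamma$ to the sub-$\mathbb{S}$-module $\I \hookrightarrow \mathscr{T}(\PP)$ spanned by the trivial tree (the edge with no vertex), and $\circ_i : \PP(n) \otimes \PP(k) \to \PP(n+k-1)$ by restricting $\gamma$ to the sub-$\mathbb{S}$-module spanned by the two-vertex tree in which the second corolla is grafted at the $i$-th leaf of the first. The three partial operad axioms then follow from the monad axioms of $\mathscr{T}$: the sequential and parallel axioms are instances of the associativity of $\gamma$ applied to three-vertex trees whose internal edges are arranged in series or in parallel, the equivariance axiom follows from the $\mathbb{S}$-equivariance of $\gamma$ applied to relabelings of two-vertex trees, and the unitality of $\eta$ reflects the tree-level identity that grafting the trivial tree at any leaf of a corolla reproduces that corolla.

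In the reverse direction, given a unital partial operad $(\PP, \{\circ_i\}, \eta)$, I would define $\gamma : \mathscr{T}(\PP) \to \PP$ on a labeled tree $t$ by choosing an ordering of its internal edges, contracting them one by one via the corresponding partial compositions, and taking the resulting element of $\PP$. The main technical step is to show that this value does not depend on the chosen ordering. Any two orderings can be related by a sequence of elementary transpositions of adjacent contractions: transpositions of disjoint edges are absorbed by the parallel axiom, transpositions of incident edges by the sequential axiom, and the compatibility with the $\mathbb{S}$-actions at each level by the equivariance axiom. The unitality of $\eta$ takes care of trivial edges, so that the trivial tree maps to the unit. This well-definedness is the only delicate point of the argument.

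Once this is established, functoriality of both constructions is immediate, and mutual inverseness can be verified by evaluating the composites on the generating elements of $\mathscr{T}(\PP)$, namely the trivial tree and the two-vertex trees, where the two operations are defined to agree by construction. For the reduced version, the same argument applies verbatim after removing the trivial tree from $\mathscr{T}$ and the unit $\eta$ from the partial operad structure: the reduced tree monad $\overline{\mathscr{T}}$ has no subobject $\I$, so there is no candidate for $\eta$, and one obtains the claimed isomorphism between $\overline{\mathscr{T}}$-algebras and partial operads.
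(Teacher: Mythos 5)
Your argument is correct and is the standard one; the paper states this Proposition without proof, deferring to the construction of the (reduced) tree monad in Loday--Vallette, and your identification of the structure on generators (trivial tree giving the unit, two-vertex trees giving the partial compositions, three-vertex trees giving the sequential and parallel axioms) together with the inductive reconstruction of $\gamma$ by edge contraction is exactly the intended proof. One small imprecision: the parallel axiom governs two internal edges attached to distinct inputs of a \emph{common} vertex (truly disjoint edges commute for trivial reasons), while the sequential axiom handles edges in a chain; with that relabelling your case analysis for independence of the contraction order is complete.
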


\begin{Proposition}\label{prop: unital partial operads are operads}
The category of operads defined as monoids is equivalent to the category of unital partial operads. 
\end{Proposition}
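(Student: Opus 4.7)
The strategy is to bootstrap from the preceding proposition, which identifies unital partial operads with algebras over the tree monad $\mathscr{T}$. Since operads in the sense of Definition \ref{def: operad} are by definition monoids in $(\mathsf{gr}~\mathbb{S}\textsf{-}\mathsf{mod}, \circ, \I)$, it suffices to establish that monoids for $\circ$ are equivalent to $\mathscr{T}$-algebras; equivalently, that $\mathscr{T}$ is the free-monoid monad for this monoidal structure. This is the classical fact that the free operad on an $\mathbb{S}$-module $M$ is the direct sum over rooted trees with internal vertices labelled by elements of $M$ (with unit the trivial one-edge tree), where composition is defined by grafting; the universal property is proved by induction on the number of internal vertices.

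To make the correspondence transparent and verify compatibility on morphisms, I would also spell out the two functors directly. Given a monoid $(\mathcal{P}, \gamma, \eta)$, define partial compositions by
\[
\mu \circ_i \nu := \gamma\bigl(\mu \,;\, \eta(\mathrm{id}), \ldots, \eta(\mathrm{id}), \nu, \eta(\mathrm{id}), \ldots, \eta(\mathrm{id})\bigr),
\]
with $\nu$ placed in the $i$-th slot. Associativity of $\gamma$, applied to configurations in which units are inserted at nested or at disjoint positions, respectively yields the sequential and parallel axioms; the equivariance axiom for $\circ_i$ is inherited from the $\mathbb{S}$-equivariance built into the composition product $\circ$; and the unit of the monoid gives the unit of the partial operad. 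Conversely, given a unital partial operad $(\mathcal{P}, \{\circ_i\}, \eta)$, define $\gamma: \mathcal{P} \circ \mathcal{P} \to \mathcal{P}$ on the summand $\mathcal{P}(k) \otimes_{\mathbb{S}_k} \bigl(\mathcal{P}(n_1) \otimes \cdots \otimes \mathcal{P}(n_k)\bigr)$ by iterating the $\circ_i$ from left to right; the parallel axiom ensures independence from the chosen order, and the equivariance axiom ensures that the construction descends to the symmetric coinvariants.

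The remaining verifications reduce to tree-level bookkeeping. The main obstacle, and the point where the sequential and parallel axioms do essential work, is establishing associativity of the iterated-partial $\gamma$: this amounts to comparing two bracketings of a three-level composition, which factors into two-step moves controlled by the sequential axiom (for nested insertions) and the parallel axiom (for disjoint ones). Once this is in place, unitality of $\gamma$ follows from the partial unit axiom, and the identity $\mu \circ_i \nu = \gamma(\mu;\eta,\ldots,\nu,\ldots,\eta)$ together with the canonical decomposition of any two-level tree into corollas shows that the two constructions are mutually inverse. Functoriality is immediate, since morphisms on both sides are morphisms of underlying $\mathbb{S}$-modules commuting with the respective structural operations.
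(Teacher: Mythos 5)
Your argument is correct. The paper actually gives no proof of this proposition, treating it as the classical comparison between Markl-style (unital partial) operads and operads as monoids (cf.\ \cite{Markl96} and \cite[Section 5.3.7]{LodayVallette12}); what you have written is precisely the standard argument one would supply, including the two points that genuinely require care — well-definedness of the iterated-partial composition $\gamma$ on the coinvariants via the equivariance axiom, and its order-independence and associativity via the parallel and sequential axioms.
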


\begin{Remark}
Given a partial operad $(\mathcal{P},\{\circ_i\})$, one can freely adjoin a unit by setting $\mathcal{P}^u \coloneqq \mathcal{P} \oplus \I$ and considering the inclusion $\eta: \I \longrightarrow \mathcal{P}^u$. This allows us to associate to any partial operad an augmented operad defined as a monoid.
\end{Remark}

\begin{Definition}[Partial cooperad]\label{def: partialcoop}
A \textit{partial cooperad} $(\C, \{\Delta_i\})$ is the data of a graded $\mathbb{S}$-module $\C$ together with \textit{partial decomposition maps}: 
\[
\Delta_i: \C(n+k-1) \longrightarrow \C(n) \otimes \C(k)
\]
for $1 \leq i \leq n$, subject to the following conditions.

\begin{enumerate}
\item It satisfies the \textit{sequential} axiom: for $1 \leq i \leq n, 1 \leq j \leq k$, the following diagram commutes
\[
\begin{tikzcd}[column sep=4.5pc,row sep=3pc]
\C(n+k+m-2) \arrow[r,"\Delta_{i+j-1}"] \arrow[d,"\Delta_i ",swap]
&\C(n+k-1) \otimes \C(m) \arrow[d,"\Delta_i  \hspace{1pt} \otimes  \hspace{1pt} \mathrm{id}_{\C(m)}"] \\
\C(n) \otimes \C(k+n-1) \arrow[r,"\mathrm{id}_{\C(n)}  \hspace{1pt} \otimes  \hspace{1pt} \Delta_j "]
&\C(n) \otimes \C(k) \otimes \C(m)~.
\end{tikzcd}
\]
\item It satisfies the \textit{parallel} axiom: for $1 \leq i < j \leq n$, the following diagram commutes
\[
\begin{tikzcd}[column sep=4.5pc,row sep=3pc]
\C(n+k+m-2) \arrow[r,"\Delta_{j+k-1}"] \arrow[d,"\Delta_i ",swap]
&\C(n+k-1) \otimes \C(m) \arrow[d,"\Delta_i \hspace{1pt} \otimes  \hspace{1pt} \mathrm{id}_{\C(m)}"] \\
\C(n+m-1) \otimes \C(k) \arrow[r,"\Delta_j  \hspace{1pt} \otimes  \hspace{1pt} \mathrm{id}_{\C(k)}"]
&\C(n) \otimes \C(k) \otimes \C(m)~.
\end{tikzcd}
\]
\item The maps
\[
\Delta_i: \C(n+k-1) \longrightarrow \C(n) \otimes \C(k)
\]
satisfy the following condition: let $(\tau,\sigma) \in \mathbb{S}_n \times \mathbb{S}_k$, then the following diagram commutes
\[
\begin{tikzcd}[column sep=3pc,row sep=3pc]
\C(n+k-1) \arrow[r,"\Delta_{\tau^{-1}(i)} "] \arrow[d,"\tau ~\circ_i~\sigma ",swap]
&\C(n) \otimes \C(k)  \arrow[d,"\tau ~ \otimes ~ \sigma "]\\
\C(n+k-1) \arrow[r,"\Delta_i "]
&\C(n) \otimes \C(k)~.
\end{tikzcd}
\] 
\end{enumerate} 
\end{Definition}

\begin{Definition}[Counital partial cooperad]
A \textit{counital partial cooperad} $(\C, \{\Delta_i\},\epsilon)$ is the data of a partial cooperad $(\C, \{\Delta_i\})$ and a morphism of $\mathbb{S}$-modules $\epsilon: \C \longrightarrow \I$ such that:
\[
\begin{tikzcd}
\C(n) \arrow[r,"\Delta_i"] \arrow[rd,"\cong",swap]
&\C(n) \otimes \C(1) \arrow[d,"\mathrm{id}~\otimes~\epsilon"]
&\C(n) \arrow[r,"\Delta_1"] \arrow[rd,"\cong",swap]
&\C(1) \otimes \C(n) \arrow[d,"\epsilon~\otimes~\mathrm{id}"] \\
&\C(n) \cong \C(n) \otimes \kk
&
&\C(n) \cong  \kk \otimes \C(n)~.
\end{tikzcd}
\]
\end{Definition}

\subsection{Filtrations on partial (co)operads}\label{subsection: filtrations on partial (co)operads}
In order to compare the notion of a (possibly counital) partial cooperad with the notion of a cooperad defined as a comonoid, one needs to introduce filtrations. The tree monad has a natural weight grading $\mathscr{T}^{(\omega)}$ given by the number of internal edges $\omega$ of the rooted trees. Notice that: 
\[
\mathscr{T}^{(-1)}(M) = |~, \quad \mathscr{T}^{(0)}(M) = M~, \quad \text{and}~~ \mathscr{T}^{(1)}(M) = M \circ_{(1)} M~.
\]
Recall that the tree monad on a graded $\mathbb{S}$-module $\mathscr{T}(M)$ is given by the \textit{direct sum} over rooted trees where vertices are labeled by elements of $M$ in the following way: if $\tau$ is a rooted tree and $v$ is one of its vertices with $k$ incoming edges, then $v$ must be labeled with an element of $M(k)$. See for more details \cite[Section 5.6.1]{LodayVallette12}.

\begin{Definition}[Reduced completed tree monad]
The \textit{reduced completed tree endofunctor} $\overline{\mathscr{T}}^\wedge$ is the endofunctor of the category of graded $\mathbb{S}$-modules given by
\[
\overline{\mathscr{T}}^\wedge(M) \coloneqq \lim_{\omega} \overline{\mathscr{T}}(M)/\overline{\mathscr{T}}^{(\geq \omega)}(M)~,
\]
that is, the completion of the tree monad with respect to this weigh filtration. 
\end{Definition}

\begin{Remark}
The reduced complete tree endofunctor is isomorphic to the \textit{product} over rooted trees where vertices are labeled by elements of $M$ in the following way: if $\tau$ is a rooted tree and $v$ is one of its vertices with $k$ incoming edges, then $v$ must be labeled with an element of $M(k)$. In particular, if $M(0)=0$ and $M(1)=0$, then there are only a finite number of rooted trees in each arity and one has that 
\[
\overline{\mathscr{T}}^\wedge(M) \cong \overline{\mathscr{T}}(M)~.
\]
\end{Remark}

\begin{lemma}\label{Existence of delta C}
Let $(\C,\{\Delta_i\})$ be a partial cooperad. The partial decomposition maps $\{\Delta_i\}$ induce a morphism of graded $\mathbb{S}$-modules
\[
\Delta_\C: \C \longrightarrow \overline{\mathscr{T}}^{\wedge}(\C)~.
\]
\end{lemma}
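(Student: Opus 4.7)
The plan is to build, for each isomorphism class of rooted tree $\tau$ with $n$ leaves, an ``iterated partial decomposition'' map $\Delta^\tau : \C(n) \to \C^\tau$, where $\C^\tau$ denotes the summand of $\overline{\mathscr{T}}(\C)$ indexed by $\tau$ (a tensor product of copies of $\C(|v|)$ ranging over the vertices $v$ of $\tau$, quotiented by the appropriate $\mathbb{S}$-relations that identify different planar representatives of $\tau$). The desired morphism $\Delta_\C$ is then the assembly of all the $\Delta^\tau$ into the product over trees, which coincides with $\overline{\mathscr{T}}^\wedge(\C)$.

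I would construct $\Delta^\tau$ by induction on the number of internal edges of $\tau$. If $\tau$ is a corolla with $n$ leaves, set $\Delta^\tau \coloneqq \mathrm{id}_{\C(n)}$. Otherwise, choose any internal edge $e$ and write $\tau = \tau_1 \circ_i \tau_2$, where $\tau_2$ is the subtree above $e$ grafted into the $i$-th input of the parent vertex of $e$ in $\tau_1$. Define
\[
\Delta^\tau \coloneqq (\Delta^{\tau_1} \otimes \Delta^{\tau_2}) \circ \Delta_i.
\]
Each of $\tau_1,\tau_2$ has strictly fewer internal edges, so the induction is well-founded.

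The main step is to verify that $\Delta^\tau$ does not depend on the choices made during the induction. Two successive cuts at distinct internal edges $e,e'$ occur in one of two configurations: either $e$ and $e'$ lie on a common root-to-leaf path, in which case the equality of the two orderings is exactly the \emph{sequential} axiom; or they lie on disjoint branches, in which case it is the \emph{parallel} axiom. Since any two linear orders on the set of internal edges of $\tau$ differ by a sequence of such elementary transpositions, well-definedness follows by a standard straightening argument. The equivariance axiom (3) of the definition of a partial cooperad guarantees that $\Delta^\tau$ intertwines permutations of the leaves of $\tau$ (equivalently, changes of planar representative) with the corresponding permutations of tensor factors, so it descends to the quotient defining $\C^\tau$, giving a well-defined morphism of graded $\mathbb{S}$-modules.

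Finally, the family $\{\Delta^\tau\}_\tau$ assembles into a morphism
\[
\Delta_\C : \C \longrightarrow \prod_{\tau} \C^\tau \;\cong\; \overline{\mathscr{T}}^\wedge(\C),
\]
where at each arity $n$ the product runs over isomorphism classes of reduced rooted trees with $n$ leaves. Taking the product rather than the direct sum is forced: when $\C(0)$ or $\C(1)$ is nonzero, infinitely many trees of a given arity contribute nontrivially to the image of a single element, so only the completed tree endofunctor can accommodate the construction. The main obstacle will be the combinatorial coherence in the well-definedness step, namely showing that the sequential and parallel axioms together generate \emph{all} the relations needed to compare the possible choices of iteration order; once this is set up correctly, the remaining items (degree, $\mathbb{S}$-equivariance, and the assembly into $\overline{\mathscr{T}}^\wedge(\C)$) are formal.
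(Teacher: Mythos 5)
Your proposal is correct and follows essentially the same route as the paper: the paper's proof simply declares $\Delta_\C(\mu)$ to be the sum of all rooted trees arising from iterated partial decompositions of $\mu$, landing in the completed tree endofunctor precisely because this sum may be infinite. Your version fills in the details the paper leaves implicit, in particular the well-definedness of each treewise component $\Delta^\tau$ independently of the order in which internal edges are cut, which is indeed exactly what the sequential and parallel axioms (together with the equivariance axiom for relabelings) are designed to guarantee.
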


\begin{proof}
The map $\Delta_\C$ is defined as follows: it maps to an operation $\mu \in \C(n)$ the sum of all rooted trees associated to the iterated partial decompositions of this operation. This sum might not be finite, but since the target is complete, this gives a well defined map of graded $\mathbb{S}$-modules.
\end{proof}

\begin{Definition}[Coradical filtration of a partial cooperad]\label{coradical filtration coop}
Let $(\C, \{ \Delta_i \})$ be a partial cooperad. The \textit{coradical filtration} of $\C$ is the increasing filtration given by 
\[
\mathscr{R}_\omega \C \coloneqq \mathrm{Ker}\left(\Delta_\C^{(\geq \omega)}: \C \longrightarrow (\overline{\mathscr{T}}^{\wedge})^{(\geq \omega)}(\C) \right)
\]
for all $\omega \geq 0$, where $(\overline{\mathscr{T}}^{\wedge})^{(\geq \omega)}$ denotes the elements of weight greater or equal to $\omega$. For all $\omega \geq 0$, $\mathscr{R}_\omega \C$ is given by operations whose $(\omega + 1)$-iterated partial decompositions are equal to zero, and it defines a partial sub-cooperad of $\C$. This gives a diagram
\[
0 = \mathscr{R}_{0} \C \subseteq \mathscr{R}_{1} \C \subseteq \cdots \subseteq \mathscr{R}_{\omega} \C \subseteq \cdots
\]
\end{Definition} 

\begin{Definition}[Conilpotent partial cooperad]\label{conilpartcoop}
Let $(\C, \{\Delta_i \})$ be a partial cooperad. It is \textit{conilpotent} if the canonical map from the colimit
\[ 
\psi: \colim_{\omega} \mathscr{R}_{\omega} \C \longrightarrow \C 
\]
is an isomorphism of partial cooperads. This is equivalent to the fact that the coradical filtration is exhaustive. 
\end{Definition}

The underlying endofunctor of the reduced tree monad $\overline{\mathscr{T}}$ has a comonad structure $(\Delta_{\overline{\mathscr{T}}},\epsilon_{\overline{\mathscr{T}}})$, where $\Delta_{\overline{\mathscr{T}}}$ is given by partitioning trees and $\epsilon_{\overline{\mathscr{T}}}$ by the inclusion of the corollas. See \cite[Section 5.8.8]{LodayVallette12} for more details. The endofunctor $\overline{\mathscr{T}}$ together with its comonad structure will be denoted $\overline{\mathscr{T}}^c$, and called \textit{the reduced tree comonad}.

\begin{Proposition}\label{prop: reduced tree comonad gives conil part coop}
The category of conilpotent partial cooperads is isomorphic to the category of coalgebras over the reduced tree comonad $\overline{\mathscr{T}}^c$.
\end{Proposition}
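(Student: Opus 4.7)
The plan is to construct an equivalence of categories by dualizing the fact that partial operads are exactly the algebras over the reduced tree monad $\overline{\mathscr{T}}$. In one direction, I would send a conilpotent partial cooperad $(\C,\{\Delta_i\})$ to the coalgebra $(\C,\Delta_\C)$ built from Lemma \ref{Existence of delta C}; conversely, I would send a $\overline{\mathscr{T}}^c$-coalgebra $(\C,\Delta_\C)$ to the partial cooperad whose decomposition maps are the projections of $\Delta_\C$ onto the summands of $\overline{\mathscr{T}}(\C)$ indexed by the two-vertex trees obtained by grafting a $k$-corolla onto the $i$-th leaf of an $n$-corolla.

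For the first direction, the key point I would verify is that conilpotency is exactly what forces the map $\Delta_\C : \C \to \overline{\mathscr{T}}^\wedge(\C)$ of Lemma \ref{Existence of delta C} to factor through the direct sum $\overline{\mathscr{T}}(\C) \subset \overline{\mathscr{T}}^\wedge(\C)$, which is required for coalgebra structure over the comonad. Indeed, exhaustivity of the coradical filtration means each $\mu \in \C$ lies in some $\mathscr{R}_\omega \C$, so all its iterated $\omega$-fold partial decompositions vanish and only finitely many rooted trees contribute nontrivially to $\Delta_\C(\mu)$. Coassociativity with respect to the comonad structure $\Delta_{\overline{\mathscr{T}}}$ (partitioning trees) then follows by organizing the iterated application of the sequential and parallel axioms, while the counit axiom with respect to $\epsilon_{\overline{\mathscr{T}}}$ (projection onto corollas) is automatic since the corolla component of $\Delta_\C(\mu)$ is $\mu$ itself.

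For the reverse direction, given a coalgebra $(\C,\Delta_\C)$ over $\overline{\mathscr{T}}^c$, I would define $\Delta_i : \C(n+k-1) \to \C(n)\otimes\C(k)$ as the component of $\Delta_\C$ on the two-vertex tree grafted at position $i$. The $\mathbb{S}$-equivariance is inherited from that of $\overline{\mathscr{T}}$, and the sequential and parallel axioms come from applying coassociativity $\Delta_{\overline{\mathscr{T}}}\circ\Delta_\C = \overline{\mathscr{T}}(\Delta_\C)\circ\Delta_\C$ and projecting onto the three-vertex trees corresponding to each of the two sides of these axioms. Conilpotency is now forced by the coalgebra structure: since $\Delta_\C(\mu)$ is a finite sum in $\overline{\mathscr{T}}(\C)$, it is supported in trees of bounded weight, which, via the coassociativity diagrams, forces $\mu$ to lie in some $\mathscr{R}_\omega\C$.

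The main obstacle is to show that these constructions are mutually inverse; once the dictionary between conilpotency and landing in the direct sum is established, the rest is bookkeeping. The projection-then-resummation composition recovers $\Delta_\C$ by expanding it along the weight decomposition of $\overline{\mathscr{T}}(\C)$ and using coassociativity to identify each weight-$\omega$ component of $\Delta_\C$ with the $\omega$-fold iteration of the two-vertex projections; the reverse composition is immediate from the definition of $\Delta_i$ as projection onto a two-vertex summand. Functoriality on morphisms is straightforward, as morphisms in both categories are just $\mathbb{S}$-module maps commuting with the structural maps.
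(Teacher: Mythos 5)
Your proposal is correct and follows essentially the same route as the paper's (much terser) proof: the equivalence between conilpotency and the factorization of $\Delta_\C$ through the direct sum $\overline{\mathscr{T}}^c(\C) \subset \overline{\mathscr{T}}^\wedge(\C)$ is exactly the pivot of the argument, with the comonad axioms translating into the sequential, parallel and equivariance axioms via projection onto two-vertex trees. Your version usefully spells out the finiteness of the set of rooted trees of bounded weight and fixed arity, and the recovery of $\Delta_\C$ from the two-vertex projections via coassociativity, both of which the paper leaves implicit.
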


\begin{proof}
Given a partial cooperad $(\C,\{\Delta_i\})$, the morphism $\Delta_\C$ given in Lemma \ref{Existence of delta C} factors through the $\overline{\mathscr{T}}^c(\C)$ if and only if the coradical filtration is exhaustive. The axioms of a partial cooperad imply that this map defines a coalgebra structure on $\overline{\mathscr{T}}^c(\C)$. The other way around is straightforward.
\end{proof}

\begin{Proposition}
Let $(\C,\{\Delta_i\})$ be a conilpotent partial cooperad. The partial decompositions $\{\Delta_i\}$ induce a coassociative morphism of graded $\mathbb{S}$-modules $\Delta: \C \longrightarrow \C \circ \C$.
\end{Proposition}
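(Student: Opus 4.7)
The plan is to construct $\Delta$ by iterating the partial decompositions and to deduce coassociativity from the sequential and parallel axioms of Definition \ref{def: partialcoop}. By Proposition \ref{prop: reduced tree comonad gives conil part coop}, conilpotency endows $\C$ with the structure of a coalgebra over the reduced tree comonad via a structural map $\widetilde{\Delta}_\C : \C \to \overline{\mathscr{T}}^c(\C)$ that simultaneously encodes all finite iterated partial decompositions of the elements of $\C$. As a graded $\mathbb{S}$-module, $\overline{\mathscr{T}}^c(\C)$ splits as a direct sum indexed by the underlying shapes of rooted trees with vertices labeled by $\C$, and the summand corresponding to 2-level rooted trees (a root vertex with one child grafted at each of its inputs) is canonically isomorphic to the composition product $\C \circ \C$. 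I would therefore define $\Delta$ as the composition of $\widetilde{\Delta}_\C$ with the canonical projection $\overline{\mathscr{T}}^c(\C) \twoheadrightarrow \C \circ \C$ onto this summand.

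Equivalently and more concretely, for each $k \geq 0$ and each tuple $(i_1, \dots, i_k)$ with $\sum_j i_j = n$, the component of $\Delta : \C(n) \to (\C \circ \C)(n)$ landing in $\C(k) \otimes \C(i_1) \otimes \cdots \otimes \C(i_k)$ is obtained by iterating the partial decompositions $\{\Delta_i\}$ along the corresponding 2-level tree, extracting its children one at a time. The sequential and parallel axioms of Definition \ref{def: partialcoop} ensure independence from the order of extraction; the third axiom provides the $\mathbb{S}$-equivariance needed to descend to the coinvariants defining $\C \circ \C$; and conilpotency guarantees that for every $c \in \mathscr{R}_\omega \C$ only 2-level trees with at most $\omega$ internal edges contribute nontrivially, so the resulting sum is finite and lies in $\C \circ \C$ rather than in its completion.

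Coassociativity, namely $(\Delta \circ \mathrm{id}_\C) \circ \Delta = (\mathrm{id}_\C \circ \Delta) \circ \Delta$ as maps $\C \to \C \circ \C \circ \C$, would then follow from the observation that both sides compute the sum, over all 3-level rooted trees of total arity $n$, of the iterated partial decomposition of $c \in \C(n)$ along each such tree. The left-hand side groups this sum according to the 2-level cut between the outer root and the intermediate vertices, whereas the right-hand side groups it according to the 2-level cut between the intermediate vertices and the leaves; the sequential and parallel axioms are precisely what equates these two groupings. The main technical obstacle will be the careful bookkeeping of indices and symmetric group actions when extracting children in different orders, since each extraction shifts and relabels the positions of the remaining inputs.
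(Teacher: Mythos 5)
Your proposal is correct and follows essentially the same route as the paper: both use the coalgebra structure over the reduced tree comonad coming from conilpotency (Proposition \ref{prop: reduced tree comonad gives conil part coop}) and project $\Delta_\C: \C \to \overline{\mathscr{T}}^c(\C)$ onto the two-level-tree summand $\C \circ \C$, with coassociativity following from the sequential and parallel axioms. Your write-up simply spells out the bookkeeping that the paper leaves implicit.
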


\begin{proof}
Let $(\C,\{\Delta_i\})$ be a conilpotent partial cooperad. Then it is a coalgebra over the reduced tree comonad $\overline{\mathscr{T}}^c$. The morphism of $\mathbb{S}$-modules
\[
\Delta_\C: \C \longrightarrow \overline{\mathscr{T}}^c(\C)
\]
can be projected onto the sub-$\mathbb{S}$-module $\C \circ \C \hookrightarrow \overline{\mathscr{T}}^c(\C)$. The axioms of a partial cooperad ensure that this projection is coassociative. 
\end{proof}

\begin{Corollary}\label{corollary: adding a counit}
Let $(\C,\{\Delta_i\})$ be a conilpotent partial cooperad. Let $\C^u \coloneqq \C \oplus \I$ and $\epsilon$ be the projection onto $\I$. Then $(\C^u, \Delta, \epsilon)$ forms a cooperad defined as a comonoid. It defines a functor 
\[
\mathsf{Conil}: \mathsf{gr}~\mathsf{pCoop}^\mathsf{conil} \longrightarrow \mathsf{gr}~\mathsf{Coop}
\]
from the category of conilpotent partial cooperads to the category of cooperads defined as comonoids.
\end{Corollary}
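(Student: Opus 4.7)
The plan is to extend the coassociative map $\Delta : \C \to \C \circ \C$ provided by the previous proposition to a full comonoid structure on $\C^u = \C \oplus \I$ by adjoining the trivial decompositions forced by the presence of a counit. Concretely, on the unit summand I would set $\Delta^u(\mathrm{id}) \coloneqq \mathrm{id} \circ \mathrm{id}$, and for $\mu \in \C(n)$ put
$$\Delta^u(\mu) \coloneqq (\mathrm{id} \circ \mu) + \bigl(\mu \circ (\mathrm{id}, \ldots, \mathrm{id})\bigr) + \widetilde{\Delta}(\mu),$$
where $\widetilde{\Delta}(\mu)$ denotes the image of $\Delta(\mu) \in \C \circ \C$ under the canonical inclusion $\C \circ \C \hookrightarrow \C^u \circ \C^u$. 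The counit $\epsilon : \C^u \to \I$ is taken to be the projection onto the $\I$ summand.

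The counit axioms then follow directly from the formula: applying $\mathrm{id} \circ \epsilon$ kills every summand whose leaves include an element of $\C$, so only the trivial right term $\mu \circ (\mathrm{id}, \ldots, \mathrm{id})$ survives and returns $\mu$; symmetrically, $\epsilon \circ \mathrm{id}$ isolates the trivial left term $\mathrm{id} \circ \mu$. The core of the proof is coassociativity $(\Delta^u \circ \mathrm{id})\Delta^u = (\mathrm{id} \circ \Delta^u)\Delta^u$. I would expand both iterations according to whether each of the three levels of $\C^u \circ \C^u \circ \C^u$ lies in $\C$ or in $\I$. The pure component $\C \circ \C \circ \C$ reduces exactly to $(\Delta \circ \mathrm{id})\Delta = (\mathrm{id} \circ \Delta)\Delta$, which holds by the coassociativity of $\Delta$ established in the previous proposition. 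Each remaining component involves at least one identity vertex, and its equality is verified by combining $\Delta^u(\mathrm{id}) = \mathrm{id} \circ \mathrm{id}$ with the explicit form of the augmentation terms, which amounts to tracking how identities propagate through the two iterations.

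For functoriality, a morphism $f : \C \to \D$ of conilpotent partial cooperads extends to $f^u \coloneqq f \oplus \mathrm{id}_\I : \C^u \to \D^u$. Compatibility with $\epsilon$ is automatic, and compatibility with $\Delta^u$ reduces, on the $\widetilde{\Delta}$ part, to naturality of the construction of the previous proposition (itself a consequence of $f$ commuting with every $\Delta_i$), and on the trivial terms to the fact that $f^u$ acts as the identity on $\I$. The main obstacle I anticipate is the coassociativity check, which is not conceptually deep but combinatorially tedious; a cleaner alternative would be to exhibit $(-)^u$ as the left adjoint to the forgetful functor from counital cooperads (as comonoids) to conilpotent partial cooperads, and then deduce the comonoid axioms from universal properties, thereby avoiding the explicit case analysis altogether.
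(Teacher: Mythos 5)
Your overall strategy --- freely adjoining a counit to the coassociative map $\Delta\colon \C \to \C\circ\C$ of the preceding proposition --- is the intended one, but your formula for $\Delta^u$ is incomplete, and with only the three terms you list coassociativity genuinely fails. The comultiplication of $\C^u$ must record \emph{all} two-level decompositions of $\mu$: for every subset of the inputs of the root piece there is a term in which exactly those slots carry elements of $\C$ and the remaining slots carry $\mathrm{id}$. These terms are the projections of the structure map $\Delta_\C\colon \C \to \overline{\mathscr{T}}^c(\C)$ onto the various \emph{partially} branched two-level trees, and they include in particular the partial decompositions $\Delta_i(\mu)$ themselves. Your $\widetilde{\Delta}(\mu)$ retains only the fully branched terms (every slot in $\C$), so all of this intermediate data is dropped --- one could not even recover the maps $\Delta_i$ from your comonoid.

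To see the failure concretely, pick a term $\mu'\circ(\mu_1,\mu_2)$ of $\widetilde{\Delta}(\mu)$ with $\mu'$ of arity $2$ and $\mu_2$ of arity $n_2\geq 1$. On the right-hand side of coassociativity, $(\mathrm{id}\circ\Delta^u)$ produces the three-level term $\mu'\circ(\mathrm{id},\mu_2)\circ(\mu_1,\mathrm{id}^{\otimes n_2})$, by choosing the summand $\mathrm{id}\circ\mu_1$ of $\Delta^u(\mu_1)$ in the first slot and $\mu_2\circ(\mathrm{id}^{\otimes n_2})$ in the second. On the left-hand side, $(\Delta^u\circ\mathrm{id})$ leaves the bottom level of each term of $\Delta^u(\mu)$ untouched, and by your formula that bottom level is always either the single vertex $\mu$, or all identities, or entirely in $\C$; it is never the mixed sequence $(\mu_1,\mathrm{id},\dots,\mathrm{id})$. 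Hence this term has no counterpart on the left, and the two iterates differ. (Your counit check does not detect the problem, because $\epsilon$ kills every mixed term.) The repair is to define $\Delta^u$ as the composite of $\Delta_\C$ with the projection of $\overline{\mathscr{T}}^c(\C)$ onto all trees of height at most two, followed by the inclusion into $\C^u\circ\C^u$ that fills each unbranched input with $\mathrm{id}$; conilpotency guarantees this sum is finite in each component, and coassociativity and counitality then follow directly from the coalgebra axioms over the reduced tree comonad of \cref{prop: reduced tree comonad gives conil part coop}, which is how the corollary is meant to be read. Your closing suggestion of characterizing $\mathsf{Conil}$ by a universal property would also need care, since the paper \emph{defines} conilpotent cooperads as the essential image of this functor rather than via an adjunction.
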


\begin{Remark}
Counital partial cooperads do not, in general, induce a cooperad defined as a comonoid. For example, the graded $\mathbb{S}$-module $u\mathcal{C}om^*$ given by the linear dual of the operad $u\mathcal{C}om$ admits a counital partial cooperad structure but does not admit a cooperad structure.
\end{Remark}

\begin{Definition}[Conilpotent cooperad]\label{def: conilpotent cooperad}
Let $(\C,\Delta,\epsilon)$ be a cooperad. It is said to be \textit{conilpotent} if it is in the essential image of the functor $\mathsf{Conil}$ defined in Corollary \ref{corollary: adding a counit}. 
\end{Definition}

\begin{Remark}
This definition of conilpotency for a cooperad defined as a comonoid $(\C,\Delta,\epsilon)$ is not standard. It is equivalent to the definition of conilpotency given in \cite[Definition 4.3]{grignoulejay18}, but does not coincide with the definition given in \cite[Section 5.8.5]{LodayVallette12}, although it turns out to be equivalent.
\end{Remark}

\begin{Remark}
For a similar discussion concerning coproperads, see \cite[Section 2.2]{hoffbeck2019properadic}.
\end{Remark}

Now we look at the dual picture: the canonical filtrations one can induce on partial operads. These are induced by the weight filtration on the reduced tree monad $\overline{\mathscr{T}}$.

\begin{Definition}[Canonical filtration on a partial operad] 
Let $(\PP,\{\circ_i\})$ be a partial operad and let $\gamma_\PP: \overline{\mathscr{T}}(\PP) \longrightarrow \PP$ be its $\overline{\mathscr{T}}$-algebra structure. Its \textit{canonical filtration} is the decreasing filtration given by
\[ 
\mathscr{F}_{\omega} \PP \coloneqq \mathrm{Im}\left(\gamma_\PP^{(\geq \omega)}: \overline{\mathscr{T}}^{(\geq \omega)}(\PP) \longrightarrow \PP \right)
\]
for all $\omega \geq 0$, where $\overline{\mathscr{T}}^{(\geq \omega)}(\PP)$ denotes the elements of weight greater or equal to $\omega~.$ Each $\mathscr{F}_{\omega} \PP$ defines an operadic ideal of $\PP$ since they are stable under partial composition. Notice that: 
\[
\PP = \mathscr{F}_{0} \PP \supseteq \mathscr{F}_{1} \PP \supseteq \cdots \supseteq \mathscr{F}_{\omega} \PP \supseteq \cdots.
\]
\end{Definition}

\begin{Definition}[Nilpotent partial operad]
Let $(\PP,\{\circ_i\})$ be a partial operad. It is said to be \textit{nilpotent} if there exists an $\omega \geq 1$ such that 
\[
\PP/\mathscr{F}_{\omega} \PP \cong \PP~.
\]
The partial operad is said to be $\omega_0$\textit{-nilpotent} if $\omega_0$ is the smallest integer such that the above isomorphism exits.
\end{Definition}

\begin{Definition}[Completion of a partial operad]\label{def: completion functor for operads}
Let $(\PP,\{\circ_i\})$ be a partial operad, its \textit{completion} $\widehat{\PP}$ is given by the following limit
\[
\widehat{\PP} \coloneqq \lim_{\omega} \PP/\mathscr{F}_{\omega} \PP 
\]
taken in the category of partial operads. 
\end{Definition}

Any morphism of partial operads $f: (\PP,\{\circ_i\}_\PP) \longrightarrow (\Q,\{\circ_i\}_\Q)$ is continuous with respect to the canonical filtration (i.e $f(\mathscr{F}_{\omega} \PP) \subseteq \mathscr{F}_{\omega} \Q$). Thus the completion of partial operads is functorial. For every $\omega \geq 1$, there are projection morphisms of partial operads $\varphi_\omega: \PP \twoheadrightarrow \PP/\mathscr{F}_{\omega} \PP$ which induce a canonical morphism of partial operads: 

\[
\varphi: \PP \longrightarrow \widehat{\PP}. 
\]
\begin{Definition}[Complete partial operad]\label{def: complete operads}
Let $(\PP,\{\circ_i\})$ be a partial operad. It is \textit{complete} if the canonical morphism $\varphi: \PP\longrightarrow \widehat{\PP}$ is an isomorphism of partial operads. 
\end{Definition} 

\begin{Example}
Any nilpotent partial operad is complete. Any complete partial operad is the limit of a tower of nilpotent partial operads.
\end{Example}

\subsection{Convolution partial operad}
Finally, we introduce the convolution partial operad and recall what a twisting morphism is. 

\begin{Definition}[Convolution partial operad]
Let $(\C,\{\Delta_i\})$ be a partial cooperad and let $(\mathcal{P},\{\circ_i\})$ be a partial operad. The \textit{convolution partial operad} of $\C$ and $\PP$ is given by the graded $\mathbb{S}$-module 
\[
\mathcal{H}om(\C,\PP)(n) \coloneqq \mathrm{Hom}_{\mathsf{gr}~\mathsf{mod}}(\C(n), \PP(n))~,
\]
with its natural $\mathbb{S}_n$-action. It can be endowed with the partial composition given by
\[
\begin{tikzcd}[column sep=3pc,row sep=0pc]
\alpha \circ_i \beta : \C(n+k-1) \arrow[r,"\Delta_{i}"]
&\C(n) \otimes \C(k) \arrow[r,"\alpha \hspace{1pt} \otimes \hspace{1pt} \beta"]
&\PP(n) \otimes \PP(k) \arrow[r,"\circ_i"]
&\PP(n+k-1)~.
\end{tikzcd}
\]
\end{Definition}

\begin{Notation}
We denote by $\mathrm{OrPar}(i_1, \cdots, i_k)$ the set of ordered partitions of $\{1,\cdots, n\}$, where $n = i_1 + \cdots + i_k$, whose $j$-th part has $i_j$ elements. Any partition $P$ in $\mathrm{OrPar}(i_1, \cdots, i_k)$ defines a $(i_1, \cdots, i_k)$-unshuffle, see \cite[Section 5.4.2]{LodayVallette12}. 
\end{Notation}

\begin{Definition}[Totalization of a partial operad]
Let $(\PP,\{\circ_i\})$ be a partial operad, the \textit{totalization} of $\PP$ given by 
\[
\prod_{n \geq 0} \PP(n)^{\mathbb{S}_n}~.
\]
It can be endowed with a pre-Lie algebra structure by setting
\[
\mu \star \nu \coloneqq \sum_{i=1}^n \sum_{\sigma_P} (\mu \circ_i \nu)^{\sigma_P}~,
\]
where $\mu$ is in $\PP(n)$, $\nu$ is in $\PP(m)$, and where the second sum ranges over unshuffles $\sigma_P$ associated to all ordered partitions $P$ in $\mathrm{OrPar}(1,\cdots, 1, \underbrace{n-i+1}_{i\text{-th position}}, 1, \cdots, 1)$.
\end{Definition}

\begin{Remark}
One can check by direct computation that the axioms of a partial operad make the associator of $\star$ right symmetric. We refer to \cite[Section 5.4.3]{LodayVallette12} for more details on the totalization of a partial operad.
\end{Remark}

\begin{Definition}[Twisting morphism]
Let $(\C,\{\Delta_i\})$ be a partial cooperad and let $(\mathcal{P},\{\circ_i\})$ be a partial operad. Let 
\[
\mathfrak{g}_{\C,\PP} \coloneqq \prod_{n \geq 0} \mathrm{Hom}_{\mathbb{S}_n}(\C(n), \PP(n))
\]
be the pre-Lie algebra given by the totalization of the convolution partial operad of $\C$ and $\PP$. A \textit{twisting morphism} is a Maurer-Cartan element $\alpha$ of $\mathfrak{g}_{\C,\PP}$, that is, a morphism $\alpha: \C \longrightarrow \PP$ graded $\mathbb{S}$-modules of degree $-1 $ satisfying :
\[
\alpha \star \alpha = 0~.
\]
The set of twisting morphisms between $\C$ and $\PP$ is denoted by $\mathrm{Tw}(\C,\PP)~.$
\end{Definition}

\begin{Remark}
Everything stated in this section can be generalized \textit{mutatis mutaindis} to the context of differential graded modules.
\end{Remark}

\section{Curved operads and their curved algebras}\label{Section: Curved Operads}
In the section, we introduce a new type of operad-like structure, called curved operads. They naturally encode curved algebras. The proper framework for this theory will be the underlying category of pre-differential graded modules. This framework, and the existence of the curved endomorphisms operad, was discovered independently of \cite{JoanCurved}.

\begin{Definition}[Pre-differential graded module]
A \textit{pre-differential graded module} (pdg module for short) $(V,d_V)$ is the data of a graded module $V$ together with a linear map $d_V: V \longrightarrow V$ of degree $-1$. A \textit{morphism} $f: (V,d_V) \longrightarrow (W,d_W)$ is a morphism of graded modules $f:V \longrightarrow W$ that commutes with the pre-differentials.
\end{Definition}

Pre-differential graded modules form a symmetric monoidal category $(\mathsf{pdg}~\mathsf{mod}, \otimes, \mathbb{K})~,$ where the pre-differential on the graded tensor product $A \otimes B$ is given by 
\[
d_{A \otimes B} (a \otimes b) \coloneqq d_A(a) \otimes b + (-1)^{|a|}\hspace{1pt} a \otimes d_B(b)~.
\]
The symmetric monoidal category $\mathsf{pdg}~\mathsf{mod}$ will be our \textit{underlying} category. Notice that we \textbf{do not} ask the condition $d_V^2 =0$. The category of dg modules is a full subcategory of the category of pdg modules.

\medskip

\subsection{First definitions} The data of a pdg operad is the data $(\PP,\gamma,\eta,d_\PP)$ of a graded operad $(\PP,\gamma,\eta)$ together with a derivation $d_\PP$ of degree $-1$. Similarly, a pdg partial operad amounts to a graded partial operad $(\PP,\{\circ_i\})$ equipped with a degree $-1$ derivation $d_\PP$. Again, a pdg cooperad $(\C,\Delta, \epsilon, d_\C)$ amounts to a graded cooperad $(\C,\Delta, \epsilon)$ with a coderivation $d_\C$ of degree $-1$ and a pdg partial cooperad $(\C,\{\Delta_i\},d_\C)$ amounts to a graded partial cooperad $(\C,\{\Delta_i\})$ with a coderivation $d_\C$ of degree $-1$. The notions of conilpotentcy for partial cooperads or completeness for partial operads are the same in this framework.

\begin{Notation}
Since we already use $\circ$ for the composition product of $\mathbb{S}$-modules, we will sometimes denote the composition of two morphisms by $\cdot$ when the context might be ambiguous.
\end{Notation}

\begin{Definition}[Curved operad]
A \textit{curved operad} $(\mathcal{P},\gamma,\eta,d_\PP, \Theta_\mathcal{P})$ amounts to the data of a pdg operad $(\PP,\gamma,\eta,d_\PP)$ together with a morphism of pdg $\mathbb{S}$-modules $\Theta: (\I,0) \longrightarrow (\PP,d_\PP)$ of degree $-2$, such that the following diagram commutes: 
\[
\begin{tikzcd}[column sep=7pc,row sep=3pc]
\PP \arrow[r,"\mathrm{diag}"] \arrow[rrd,"(d_\PP)^2", bend right =10]
&\PP \oplus \PP \cong (\I \circ \PP) \oplus (\PP \circ \I) \arrow[r,"(\Theta_\PP ~ \circ~ \mathrm{id})~ -~(\mathrm{id}~ \circ'~ \Theta_\PP)"] 
& \PP \circ_{(1)} \PP \arrow[d,"\gamma_{(1)}"]\\
&
&\PP~,
\end{tikzcd}
\]
where $\mathrm{diag}$ is given by $\mathrm{diag}(\mu) \coloneqq (\mu,\mu)$. 
\end{Definition}

The data of the morphism of pdg $\mathbb{S}$-modules $\Theta_\PP$ is equivalent to a element $\Theta_\PP(\mathrm{id}) \coloneqq \theta$ in $\PP(1)_{-2}$ such that $d_\PP(\theta)=0$. The commutativity of the diagram amounts to the following condition: for every $\mu$ in $\PP(n)$, we ought to have
\[
\vcenter{\hbox{
\begin{tikzpicture}
\node at (3.25,1.5) {$d_\PP^2(\mu)$};

\node at (4.5,1.5) {$=$};

\node at (10,1.5) {$\displaystyle - \sum_{i=1}^n$};

\draw(13,-0.5)--(13,0.5);
\draw(13,0.5)--(12.5,1.5);
\draw(13,0.5)--(13.5,1.5);
\draw(13,0.5)--(13,2.5);
\draw(13,0.5)--(11.5,1.5);
\draw(13,0.5)--(14.5,1.5);
\node at (13,2) {$\bullet$};
\node at (13.3,0.4) {$\mu$};
\node at (13.3,2) {$\theta$};
\node at (13,2.75) {$i$};

\node at (12.5,1.7) {$\cdots$};
\node at (13.5,1.7) {$\cdots$};
\node at (11.5,1.7) {$1$};
\node at (14.5,1.7) {$n$};

\draw(7,-0.5)--(7,0.5);
\draw(7,0.5)--(6.5,1.5);
\draw(7,0.5)--(8.5,1.5);
\draw(7,0.5)--(7,1.5);
\draw(7,0.5)--(7.5,1.5);
\draw(7,0.5)--(5.5,1.5);
\draw(7,0.5)--(8.5,1.5);
\node at (7,1.7) {$\cdots$};
\node at (5.5,1.7) {$1$};
\node at (8.5,1.7) {$n$};
\node at (7.3,0.4) {$\mu$};
\node at (7.3,-0.2) {$\theta$};
\node at (7,-0.2) {$\bullet$};
\end{tikzpicture}.}}
\]

This states that the square of the pre-differential $d_\PP^2$ is equal to the operadic Lie bracket $[\theta,-]$ on the totalization of $\PP$. By a slight abuse of notation, we denote this equality by 
\[
d_\PP^2 = \gamma_{(1)} \cdot (\Theta_\PP \circ \mathrm{id} - \mathrm{id} \circ' \Theta_\PP)~,
\]
forgetting $\mathrm{diag}$ and the identifications made.

\begin{Notation}
A curved operad $(\PP,\gamma,\eta,d_\PP,\Theta_\PP)$ will be denoted by $(\PP,d_\PP,\Theta_\PP)$ for short, making the composition map and the unit implicit. When regarded as an element of $\mathcal{P}(1)$, the curvature is denoted $\theta$. Likewise for pdg operads, which will be sometimes denoted by $(\PP,d_\PP)$ when the context is clear.
\end{Notation}

\begin{Definition}
A \textit{morphism} of curved operads $f: (\PP,d_\PP,\Theta_\PP) \longrightarrow \allowbreak (\D,d_\D,\Theta_\D)$ amounts to the data of a pdg operad morphism $f: (\PP,d_\PP) \longrightarrow (\D,d_\D)$ that preserves the curvatures $f \circ \Theta_\PP = \Theta_\D$. 
\end{Definition}

Let $(V,d_V)$ be a pdg module, we consider the endomorphism pdg $\mathbb{S}$-module given by: 
\[
\mathrm{End}_V(n) \coloneqq \mathrm{Hom}(V^{\otimes n},V)~.
\]
Endowed with the composition of functions, and the pre-differential $\partial \coloneqq [d_V,-]~,$ it forms a pdg operad, called the \textit{endomorphism operad} of $V$.

\begin{lemma}\label{lemma: endo curved is curved}
Equipped with the curvature $\Theta_V: \I \longrightarrow \text{End}_V$ given by $\Theta_V(\mathrm{id}) = d_V^2$, the data $(\text{End}_V,\partial, \Theta_V)$ forms a curved operad, called the \textit{curved endomorphism operad} associated to $(V,d_V)$. 
\end{lemma}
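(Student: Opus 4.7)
The plan is to verify the two conditions in the definition of a curved operad applied to $(\mathrm{End}_V, \partial, \Theta_V)$: first, that $\Theta_V$ is a well-defined morphism of pdg $\mathbb{S}$-modules of degree $-2$, and second, that the curvature relation
\[
\partial^2 = \gamma_{(1)} \cdot (\Theta_V \circ \mathrm{id} - \mathrm{id} \circ' \Theta_V)
\]
holds. The element $\theta \coloneqq \Theta_V(\mathrm{id}) = d_V^2 \in \mathrm{End}_V(1)_{-2}$ clearly has the right degree, and $\partial(\theta) = [d_V, d_V^2] = d_V^3 - d_V^3 = 0$ since $\theta$ is of even degree, so $\Theta_V$ indeed commutes with the pre-differential.

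The heart of the argument is the curvature relation. Given $f \in \mathrm{End}_V(n)$ of degree $|f|$, let $D_n \coloneqq \sum_{i=1}^n \mathrm{id}^{\otimes i-1} \otimes d_V \otimes \mathrm{id}^{\otimes n-i}$ denote the induced pre-differential on $V^{\otimes n}$, so that
\[
\partial(f) = d_V \cdot f - (-1)^{|f|} f \cdot D_n.
\]
Iterating this expression and carefully tracking Koszul signs, I expect the two cross terms $\pm d_V \cdot f \cdot D_n$ that appear to cancel, leaving
\[
\partial^2(f) = d_V^2 \cdot f - f \cdot D_n^2.
\]
Next I will use the standard fact that for a pre-differential induced by the graded Leibniz rule on a tensor product, the off-diagonal terms of its square vanish by the same Koszul cancellation, which here yields
\[
D_n^2 = \sum_{i=1}^n \mathrm{id}^{\otimes i-1} \otimes d_V^2 \otimes \mathrm{id}^{\otimes n-i}.
\]
Combining these two identities, I obtain
\[
\partial^2(f) = \theta \circ_1 f - \sum_{i=1}^n f \circ_i \theta,
\]
which is precisely the content of the curvature diagram, matching the pictorial description given just before the lemma.

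The only real subtlety is the sign bookkeeping: one must verify that the two occurrences of $d_V \cdot f \cdot D_n$ come with opposite signs regardless of the parity of $|f|$, and likewise that the mixed terms in $D_n^2$ involving $d_V$ at two distinct positions cancel pairwise. Both cancellations follow from the same mechanism, namely that a degree $-1$ derivation on a tensor product, when squared, produces only the diagonal contributions. Aside from this entirely routine sign calculation, there is no obstacle.
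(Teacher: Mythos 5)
Your proof is correct and follows the same route as the paper, which simply asserts that $\partial^2 = [d_V^2,-]$ and $[d_V,d_V^2]=0$ are straightforward to check; you have carried out exactly that check, and your sign bookkeeping (cancellation of the cross terms $\pm d_V \cdot f \cdot D_n$ and of the off-diagonal terms in $D_n^2$) is accurate.
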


\begin{proof}
It is straightforward to check that $\partial_V^2=[d_V^2,-]$ and that $[d_V,d_V^2]=0$.
\end{proof}

\begin{Definition}[Pdg $\PP$-algebra]
Let $(\PP,d_\PP)$ be a pdg operad. A \textit{pdg} $\PP$-\textit{algebra} $(A,\gamma_A,d_A)$ amounts to the data of a graded $\PP$-algebra structure $\gamma_A: \mathscr{S}(\PP)(A)\longrightarrow A$ on a pdg module $(A,d_A)$ such that 
\begin{equation}\label{derivation de P alg}
\gamma_A \cdot \left( \mathscr{S}(d_\PP)(\mathrm{id}) + \mathscr{S}(\mathrm{id})(\diracComb(\mathrm{id},d_A))\right) = d_A \cdot \gamma_A~. 
\end{equation}
A \textit{morphism} of pdg $\PP$-algebras $g: (A,\gamma_A,d_A) \longrightarrow (B,\gamma_B,d_B)$ amounts to a morphism of pdg modules $f:(A,d_A) \longrightarrow (B,d_B)$ that commutes with the $\PP$-algebra structures. The category of pdg $\PP$-algebras is denoted by $\mathsf{pdg}~\palg$.
\end{Definition}

\begin{Remark}
The notation $\mathscr{S}(\mathrm{id})(\diracComb(\mathrm{id},d_A))$ is explained in \ref{not: le dirac}.
\end{Remark}

\begin{Definition}[Curved $\PP$-algebra]
Let $(\PP,d_\PP,\Theta_\mathcal{P})$ be a curved operad. A \textit{curved} $\PP$-\textit{algebra} $(A,\gamma_A,d_A)$ amounts to the data of a pdg $\PP$-algebra structure $(A,\gamma_A,d_A)$ such that the following diagram commutes:
\begin{equation}\label{curved}
\begin{tikzcd}[column sep=5pc,row sep=2pc]
\mathscr{S}(\I)(A) \arrow[r,"\mathscr{S}(\Theta_\PP)(\mathrm{id})"] \arrow[rd, "d_A^2",swap]
&\mathscr{S}(\PP)(A) \arrow[d,"\gamma_A"] \\
&A~.
\end{tikzcd}
\end{equation}
Otherwise stated, we have that $\gamma_A(\theta,a) = d_A^2(a)$ for any $a$ in $A$. Morphisms of curved $\PP$-algebras are just morphisms of pdg $\PP$-algebras. The category of curved $\PP$-algebras will be denoted $\mathsf{curv}~\PP\text{-}\mathsf{alg}~.$
\end{Definition}

\begin{Remark}
Notice that a curved operad $(\PP,d_\PP,\Theta_\mathcal{P})$ is the data of a pdg operad $(\PP,d_\PP)$ together with extra \textit{structure} given by the curvature $\Theta_\mathcal{P}~.$ On the other hand, the data of a curved algebra structure over a curved operad is the same as the data of a pdg algebra structure over its underlying pdg operad, but this structure has to satisfy an extra \textit{property} given by diagram \ref{curved}.
\end{Remark}

There is an obvious inclusion functor $\mathsf{Inc}: \mathsf{curv}~\PP\text{-}\mathsf{alg} \hookrightarrow \mathsf{pdg}~\palg$ which is fully faithful.

\begin{Proposition}[{\cite[Proposition C.31]{JoanCurved}}]\label{reflexive}
Let $(\PP,d_\PP,\Theta_\mathcal{P})$ be a curved operad. The inclusion functor has a left adjoint 
\[
\mathsf{Curv}: \mathsf{pdg}~\palg \longrightarrow \mathsf{curv}~\PP\text{-}\mathsf{alg}~.
\]
Hence $\mathsf{curv}~\PP\text{-}\mathsf{alg}$ is a reflexive subcategory of $\mathsf{pdg}~\palg$. For a pdg $\PP$-algebra $(A,\gamma_A,d_A)$, its image under this functor is given by the following quotient: 
\[
\mathsf{Curv}(A) \coloneqq \frac{A}{(\mathsf{Im}(\gamma_A(\theta,-) - d_A^2(-)))}~.
\]
where $(\mathsf{Im}(\gamma_A(\theta,-) - d_A^2(-)))$ is the ideal generated by $\mathsf{Im}(\gamma_A(\theta,-) - d_A^2(-))$. Its pdg $\PP$-algebra structure is induced by $\gamma_A$ and $d_A$.
\end{Proposition}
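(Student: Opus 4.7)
The plan is to verify that the proposed formula $\mathsf{Curv}(A) \coloneqq A/I$, where $I$ is the two-sided pdg $\PP$-ideal generated by $\mathsf{Im}(\gamma_A(\theta,-) - d_A^2(-))$, yields a well-defined curved $\PP$-algebra and satisfies the required universal property.

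First, I would show that the quotient inherits the structure of a pdg $\PP$-algebra. By construction, $I$ is stable under the $\PP$-action (being a $\PP$-ideal), so the map $\gamma_A$ descends to a $\PP$-algebra structure on $A/I$. The non-trivial point is that the pre-differential $d_A$ also descends, i.e.\ that $d_A(I) \subseteq I$. It suffices to check this on the generators of $I$: for any $a \in A$, using that $d_A$ is a derivation compatible with $d_\PP$ and the fact that $d_\PP(\theta) = 0$ (since $\Theta_\PP$ is a morphism of pdg $\mathbb{S}$-modules), one computes
\[
d_A\bigl(\gamma_A(\theta,a) - d_A^2(a)\bigr) = \gamma_A(\theta, d_A(a)) - d_A^2(d_A(a))~,
\]
which is again a generator of $I$. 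Since $d_A$ is a derivation with respect to $\gamma_A$, sending generators of the ideal into the ideal is enough to ensure $d_A(I) \subseteq I$. Hence $(\mathsf{Curv}(A), \overline{\gamma}_A, \overline{d}_A)$ is a pdg $\PP$-algebra.

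Next, I would verify that $\mathsf{Curv}(A)$ satisfies the curvature relation \eqref{curved}. By the very definition of $I$, for any $a \in A$ the element $\gamma_A(\theta,a) - d_A^2(a)$ lies in $I$, so in the quotient $\overline{\gamma}_A(\theta, \overline{a}) = \overline{d}_A^2(\overline{a})$; thus $\mathsf{Curv}(A)$ lies in the essential image of the inclusion $\mathsf{Inc}$.

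Finally, I would establish the universal property. Let $(B, \gamma_B, d_B)$ be a curved $\PP$-algebra and let $f \colon A \to B$ be a morphism of pdg $\PP$-algebras. Since $f$ commutes with the $\PP$-actions and the pre-differentials, one has
\[
f\bigl(\gamma_A(\theta, a) - d_A^2(a)\bigr) = \gamma_B(\theta, f(a)) - d_B^2(f(a)) = 0
\]
for all $a \in A$, where the last equality uses that $B$ is curved. Thus $f$ vanishes on the generators of $I$; as $f$ is a morphism of pdg $\PP$-algebras, it vanishes on the whole ideal $I$, and therefore factors uniquely through the projection $A \twoheadrightarrow \mathsf{Curv}(A)$. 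This gives the required adjunction $\mathsf{Curv} \dashv \mathsf{Inc}$, and since $\mathsf{Inc}$ is fully faithful, $\mathsf{curv}~\PP\textsf{-}\mathsf{alg}$ is a reflexive subcategory. The only delicate point in the argument is the stability of $I$ under $d_A$, which relies crucially on $d_\PP(\theta) = 0$; everything else is a direct unpacking of the definitions.
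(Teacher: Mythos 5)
Your proposal is correct and follows essentially the same route as the paper's proof: the key computation is the stability of the ideal under $d_A$, which both arguments reduce to $d_\PP(\theta)=0$, and the universal property is obtained by observing that any pdg $\PP$-algebra morphism into a curved algebra kills the generators of the ideal and hence factors through the quotient. Your write-up is somewhat more detailed (in particular in spelling out why vanishing on generators suffices), but there is no substantive difference in approach.
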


\begin{proof}
First, lets show that $\mathsf{Im}(\gamma_A(\theta,-) - d_A^2(-))$ is stable under the pre-differential $d_A$. We have that $d_A(\gamma_A(\theta,a) - d_A^2(a)) = \gamma_A(d_\PP(\theta),a) + \gamma_A(\theta,d_A(a)) - d_A^3(a)~,$ which is again in $\mathsf{Im}(\gamma_A(\theta,-) - d_A^2(-))$ since $\gamma_A(d_\PP(\theta),a)=0~.$ Hence the quotient forms a pdg $\PP$-algebra, which is curved by definition. 

\medskip

Let $A$ be a pdg $\PP$-algebra and $B$ be a curved $\PP$-algebra. Since the pdg $\PP$-algebra structure on $B$ satisfies diagram \ref{curved}, any pdg $\PP$-algebra morphism $f: A \longrightarrow B$ factors through $\mathsf{Curv}(A)$. This gives the adjunction isomorphism by the universal property of the quotient. 
\end{proof}

\begin{Corollary}\label{bicomplete}
Let $(\PP,d_\PP,\Theta_\mathcal{P})$ be a curved operad. The category of curved $\PP$-algebras is complete and cocomplete. 
\end{Corollary}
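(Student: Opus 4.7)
The plan is to invoke Proposition \ref{reflexive}, which exhibits $\mathsf{curv}~\PP\text{-}\mathsf{alg}$ as a reflexive subcategory of $\mathsf{pdg}~\palg$ via the adjunction $\mathsf{Curv} \dashv \mathsf{Inc}$. The strategy reduces to two ingredients: first, establishing bicompleteness of the ambient category $\mathsf{pdg}~\palg$; second, using the reflective structure to transport limits and colimits to the full subcategory of curved $\PP$-algebras.

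First I would show that $\mathsf{pdg}~\palg$ is complete and cocomplete. The forgetful functor $\mathsf{pdg}~\palg \longrightarrow \mathsf{pdg}~\mathsf{mod}$ is monadic, with the monad given by $(A,d_A) \mapsto (\mathscr{S}(\PP)(A), d_{\mathscr{S}(\PP)(A)})$ where the pre-differential is the one dictated by equation \eqref{derivation de P alg}. Since $\mathsf{pdg}~\mathsf{mod}$ is bicomplete (limits and colimits are computed as in $\mathsf{gr}~\mathsf{mod}$, with the induced endomorphism of degree $-1$) and the monad preserves reflexive coequalizers and filtered colimits (the Schur functor is a sum of tensor products, which commute with such colimits), the standard result on categories of algebras over an accessible monad on a locally presentable category yields bicompleteness of $\mathsf{pdg}~\palg$.

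Next, for completeness of $\mathsf{curv}~\PP\text{-}\mathsf{alg}$, limits in a full reflective subcategory are computed in the ambient category: given a diagram $D \colon I \to \mathsf{curv}~\PP\text{-}\mathsf{alg}$, its limit $L$ in $\mathsf{pdg}~\palg$ inherits the curvature relation $\gamma_L(\theta,-) = d_L^2$ because both sides commute with the projections to each $D(i)$ and the relation holds on each component; hence $L$ lies in the subcategory and provides the limit there. For cocompleteness, I would take a diagram $D \colon I \to \mathsf{curv}~\PP\text{-}\mathsf{alg}$, form its colimit $C$ in $\mathsf{pdg}~\palg$ (which exists by the first step), and then apply the reflector to obtain $\mathsf{Curv}(C)$. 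The universal property follows from the adjunction $\mathsf{Curv} \dashv \mathsf{Inc}$ combined with the universal property of the colimit in the ambient category, since maps out of $\mathsf{Curv}(C)$ into a curved $\PP$-algebra correspond bijectively to cocones on $D$ in $\mathsf{pdg}~\palg$ landing in curved $\PP$-algebras.

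No step should present a genuine obstacle; the only mild subtlety is checking that the curvature identity is preserved under limits, which is immediate since it is an equation between morphisms that can be tested componentwise. The result is essentially formal once Proposition \ref{reflexive} is in hand.
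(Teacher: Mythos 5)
Your proposal is correct and follows the same route as the paper: both deduce the result from Proposition \ref{reflexive} together with the bicompleteness of $\mathsf{pdg}~\palg$, the paper simply leaving implicit the standard facts about monadicity and reflective subcategories that you spell out.
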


\begin{proof}
The category $\mathsf{pdg}~\palg$ is both complete and cocomplete, since $\mathsf{curv}~\PP\text{-}\mathsf{alg}$ is a reflexive subcategory of $\mathsf{pdg}~\palg$ by Proposition \ref{reflexive}.
\end{proof}

\begin{lemma}
Let $(\PP,d_\PP,\Theta_\mathcal{P})$ be a curved operad and let $(A,d_A)$ be a pdg module. The data of a curved $\PP$-algebra structure $\gamma_A$ on $(A,d_A)$ is equivalent to the data of a morphism of curved operads $\Gamma_A :(\PP,d_\PP,\Theta_\PP) \longrightarrow (\mathrm{End}_A, \partial, \Theta_{A})~.$
\end{lemma}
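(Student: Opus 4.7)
The plan is to build on the classical (uncurved, graded) correspondence between algebra structures and operad morphisms to the endomorphism operad, and then check that the two additional pieces of data in the curved setting (the pre-differential and the curvature) match up on both sides of the equivalence.

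First, I would recall that in the purely graded setting, a $\PP$-algebra structure $\gamma_A : \mathscr{S}(\PP)(A) \to A$ is equivalent, by tensor-hom adjunction arity by arity, to a family of $\mathbb{S}_n$-equivariant maps $\Gamma_A(n) : \PP(n) \to \mathrm{Hom}(A^{\otimes n}, A) = \mathrm{End}_A(n)$, and the associativity of $\gamma_A$ translates into the condition that $\Gamma_A$ is a morphism of graded operads. This is standard, see for instance \cite[Section 5.2.3]{LodayVallette12}. Thus it suffices to match the remaining data.

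Next, I would verify the compatibility with pre-differentials. The pre-differential on $\mathrm{End}_A$ is $\partial = [d_A, -]$, so unwinding the definition of $\mathscr{S}(\mathrm{id})(\diracComb(\mathrm{id}, d_A))$ from Notation \ref{not: le dirac}, the defining equation
\[
\gamma_A \cdot \left(\mathscr{S}(d_\PP)(\mathrm{id}) + \mathscr{S}(\mathrm{id})(\diracComb(\mathrm{id}, d_A))\right) = d_A \cdot \gamma_A
\]
of a pdg $\PP$-algebra, when evaluated on $\mu \otimes a_1 \otimes \cdots \otimes a_n$, reads after the adjunction as $\Gamma_A(d_\PP(\mu)) = [d_A, \Gamma_A(\mu)] = \partial(\Gamma_A(\mu))$. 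This is exactly the statement that $\Gamma_A$ is a morphism of pdg $\mathbb{S}$-modules, and hence of pdg operads.

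Finally, for the curvature, I would use that $\Theta_\PP$ and $\Theta_A$ are entirely determined by the elements $\theta = \Theta_\PP(\mathrm{id}) \in \PP(1)_{-2}$ and $\Theta_A(\mathrm{id}) = d_A^2 \in \mathrm{End}_A(1)_{-2}$. Under the adjunction, the diagram \eqref{curved} expressing that $(A, \gamma_A, d_A)$ is \emph{curved} says exactly $\gamma_A(\theta, a) = d_A^2(a)$ for every $a \in A$, which is $\Gamma_A(\theta) = d_A^2$, i.e.\ $\Gamma_A \circ \Theta_\PP = \Theta_A$. The main (and only) step that requires care is the careful unwinding of the sign conventions in $\diracComb(\mathrm{id}, d_A)$ so that the pdg compatibility of $\gamma_A$ really matches $\partial = [d_A, -]$ on $\mathrm{End}_A$; everything else is a direct transcription of the classical argument together with the observation that the curvature, being concentrated in arity $1$ and supported on the unit, is completely captured by the single element $d_A^2$ of $\mathrm{End}_A(1)$.
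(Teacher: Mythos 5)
Your proposal is correct and follows essentially the same route as the paper's own proof: reduce to the classical graded correspondence $\gamma_A \leftrightarrow \Gamma_A$, then check that the pdg compatibility condition of Equation \eqref{derivation de P alg} translates into $\Gamma_A$ being a morphism of pdg operads for $\partial = [d_A,-]$, and that the commutativity of Diagram \eqref{curved} translates into $\Gamma_A(\Theta_\PP) = \Theta_A$. The only difference is that you spell out the sign-checking for $\diracComb(\mathrm{id},d_A)$ explicitly, which the paper leaves as "one can check".
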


\begin{proof}
The data of a graded $\PP$-algebra structure $\gamma_A: \mathcal{S}(\PP)(A) \longrightarrow A$ is equivalent to the data of a morphism of graded operads $\Gamma_A : \PP \longrightarrow \mathrm{End}_A~.$ One can check that condition of equation \ref{derivation de P alg} is equivalent to $\Gamma_A$ being a morphism of pdg operads. The commutativity of the diagram \ref{curved} means that $\gamma_A (\mathscr{S}(\Theta_\PP)(\mathrm{id})) = d_A^2~,$ which is equivalent to $\Gamma_A (\Theta_\PP) = \Theta_{A}~.$
\end{proof}

Given a pdg module $(V,d_V)$, we can endow the \textit{coendomorphism operad} $\mathrm{Coend}_V$ with a curved operad structure using the same curvature $\Theta_V(\mathrm{id}) \coloneqq d_V^2$. 

\begin{Definition}[Curved $\mathcal{P}$-coalgebra]
A \textit{curved} $\PP$-\textit{coalgebra} structure on $(V,d_V)$ is the data of a morphism of curved operads $\Delta_V: (\PP,d_\PP,\Theta_\mathcal{P}) \longrightarrow (\mathrm{Coend}_V,\partial,\Theta_V)~.$ 
\end{Definition}

\begin{Remark}
The notion of a curved $\PP$-algebra can encode types of curved coalgebras which are not necessarily conilpotent.
\end{Remark}

\begin{Remark}\label{extends to curved case}
It is straightforward to introduce the analogous notion of a curved partial operad, since the condition imposed on the pre-differential involves the partial compositions of the operad and not the total compositions. The category of curved partial operads is denoted by $\mathsf{curv}~\mathsf{pOp}$. Furthermore, it immediate to see that the category of curved unital partial operads is equivalent to the category of curved operads. See Proposition \ref{prop: unital partial operads are operads} for the non-curved statement.
\end{Remark}

\subsection{First examples} Here are some classical examples of curved algebras that one can encode with curved operads. See for instance \cite{PositselskiTwoKinds} for curved associative algebras. See for instance \cite{LazarevCocom} for curved Lie algebras. For a first treatment of curvature using operadic methods, see \cite{HirshMilles12}.

\begin{Definition}[Curved Lie algebra]\label{clie}
A \textit{curved Lie algebra} $(\mathfrak{g},[-,-],d_{\mathfrak{g}},\vartheta)$  is the data of a graded Lie algebra $(\mathfrak{g},[-,-])$ together with a pre-differential $d_{\mathfrak{g}}$ of degree $-1$ which is a derivation with respect to the bracket $[-,-]$ , and a morphism of graded modules $\vartheta: \mathbb{K} \longrightarrow \mathfrak{g}$ of degree $-2$ such that:
\[
d_{\mathfrak{g}}^2=[\vartheta(1), -]~, \quad \text{and} \quad d_{\mathfrak{g}}(\vartheta(1))=0~.
\]
A \textit{morphism} $f: (\mathfrak{g}, d_\mathfrak{g},\vartheta_\mathfrak{g}) \longrightarrow (\mathfrak{h},d_\mathfrak{h},\vartheta_\mathfrak{h})$ is the data of a graded Lie algebra morphism $f: \mathfrak{g} \longrightarrow \mathfrak{h}$ that commutes with the pre-differentials and such that $f(\vartheta_\mathfrak{g}) = \vartheta_\mathfrak{h}~.$
\end{Definition}

Recall that the classical partial operad $\mathcal{L}ie$, encoding Lie algebras, can be defined as the free partial operad generated by one binary skew-symmetric operation, modulo the operadic ideal generated by the Jacobi relation. See \cite[Section 13.2]{LodayVallette12} for a complete account.

\medskip 

Let $M$ be the pdg $\mathbb{S}$-module given by $(\mathbb{K}.\zeta,0, \mathbb{K}.\beta,0,\cdots)$ with zero pre-differential, where $\zeta$ is an arity $0$ operation of degree $-2$, and $\beta$ is an arity $2$ operation of degree $0$, basis of the signature representation of $\mathbb{S}_2$. 

\begin{Definition}[$\Liec$ operad]
The \textit{curved partial operad} $\Liec$ is given by the free pdg partial operad generated by $M$ modulo the operadic ideal generated by the Jacobi relation on the generator $\beta$. It is endowed with the curvature $\Theta_\mathcal{L}$ given by $\Theta_\mathcal{L}(\mathrm{id}) \coloneqq \beta \circ_1 \zeta~.$  
\end{Definition}

\begin{lemma}\label{lemmalie}
The data $(\Liec, 0, \Theta_\mathcal{L})$ forms a curved partial operad. Furthermore, the category of curved $\Liec$-algebras is isomorphic to the category of curved Lie algebras.
\end{lemma}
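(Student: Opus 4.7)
\emph{Proof plan.} The claim has two parts. First, $(\Liec, 0, \Theta_\mathcal{L})$ satisfies the curved partial operad axioms; second, the category of its curved algebras is isomorphic to the category of curved Lie algebras of Definition~\ref{clie}. Because the pre-differential $d_\Liec$ is zero, compatibility of $d_\Liec$ with $\{\circ_i\}$ and the condition $d_\Liec(\theta) = 0$ are automatic, and the only substantive axiom to verify is the curvature relation. Writing it out with $d_\Liec = 0$, this reduces to showing that the operator
\[
\mathrm{ad}_\theta(\mu) \;\coloneqq\; \theta \circ_1 \mu \,-\, \sum_{i} \mu \circ_i \theta, \qquad \theta \;=\; \beta \circ_1 \zeta,
\]
vanishes identically on $\Liec$.

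The strategy is first to establish a general operadic fact: in any partial operad $\mathcal{P}$, for any arity-one element $\theta \in \mathcal{P}(1)$, the operator $\mathrm{ad}_\theta$ is a derivation of the partial composition, namely
\[
\mathrm{ad}_\theta(\mu \circ_i \nu) \;=\; \mathrm{ad}_\theta(\mu) \circ_i \nu \,+\, \mu \circ_i \mathrm{ad}_\theta(\nu).
\]
This is a careful but routine bookkeeping using the sequential and parallel axioms of Definition~\ref{def: partialoperad}, with the arity-one hypothesis on $\theta$ ensuring that no index shifts occur. Since $\Liec$ is generated as a partial operad by $\beta$ and $\zeta$, it then suffices to check $\mathrm{ad}_\theta$ on these two generators. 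One has $\mathrm{ad}_\theta(\zeta) = (\beta \circ_1 \zeta) \circ_1 \zeta = \beta(\zeta, \zeta)$, which vanishes by the antisymmetry of $\beta$ together with $\mathrm{char}(\mathbb{K}) \neq 2$. On the binary generator, $\mathrm{ad}_\theta(\beta) = \beta(\zeta, \beta(x_1, x_2)) - \beta(\beta(\zeta, x_1), x_2) - \beta(x_1, \beta(\zeta, x_2))$, which is precisely the Jacobi relator evaluated at the triple $(\zeta, x_1, x_2)$ after rewriting with antisymmetry, and therefore vanishes in $\Liec$. The descent of $\mathrm{ad}_\theta$ to the Jacobi quotient is then automatic from the derivation property: $\mathrm{ad}_\theta$ applied to any generator of the Jacobi ideal lands in that ideal once it does so on $\beta$.

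For the equivalence of categories, I would unpack the data of a morphism of curved operads $\Gamma_A \colon (\Liec, 0, \Theta_\mathcal{L}) \longrightarrow (\mathrm{End}_A, \partial, \Theta_A)$ on a pdg module $(A, d_A)$. By the generators-and-relations presentation of $\Liec$, $\Gamma_A$ is determined by a binary operation $[-,-] \coloneqq \Gamma_A(\beta)$ (satisfying antisymmetry and Jacobi) and a degree $-2$ element $\vartheta \coloneqq \Gamma_A(\zeta)(1) \in A$. Compatibility with the pre-differentials forces $d_A$ to be a derivation of $[-,-]$ and $d_A(\vartheta) = 0$, and the curvature compatibility $\Gamma_A(\theta) = \Theta_A(\mathrm{id})$ becomes $[\vartheta, -] = d_A^2$. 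This is exactly the data of Definition~\ref{clie}, and the correspondence of morphisms is tautological once both sides are described by the images of the generators. The only substantive step in the whole argument is the derivation lemma for $\mathrm{ad}_\theta$; everything else is formal unwinding.
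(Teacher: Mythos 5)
Your proposal is correct and follows essentially the same route as the paper: since the pre-differential vanishes, the only nontrivial operadic axiom is $[\theta,-]=0$, which both you and the paper verify on the generators $\zeta$ and $\beta$ using that $\mathrm{ad}_\theta$ is a derivation, with antisymmetry handling $\zeta$ and the Jacobi relation handling $\beta$; the identification of curved $\Liec$-algebras with curved Lie algebras is likewise the same unwinding of $\Gamma_A$ on generators. The only difference is that you spell out the derivation property of $\mathrm{ad}_\theta$ and the descent to the Jacobi quotient in more detail than the paper, which simply asserts them.
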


\begin{proof}
First, let us check that $(\Liec, 0, \Theta_\mathcal{L})$ is indeed a curved partial operad. The pre-differential is $0$, hence $d(\theta)=0$. Now we need to check that $[\theta,-] = 0$ . Since $[\theta,-]$ is a derivation, it is enough to check it on the two generators of $\Liec$. The result $[\theta, \vartheta] = 0$ follows from the anti-symmetry of $\beta$. A straightforward computation gives that $[\theta, \beta] = 0$, using the Jacobi relation.  

\medskip

A pdg algebra over the curved partial operad $\Liec$ amounts to a pdg-module $(A,d_A)$ endowed with a graded Lie bracket $[-,-] \coloneqq \gamma_A(\beta)$ and a morphism $\vartheta \coloneqq \gamma_A(\zeta): \kk \longrightarrow \mathfrak{g}$ such that $d_A(\vartheta(1))=0$.
They form a curved $\Liec$-algebra if and only if Diagram \ref{curved} commutes, which equivalent to  $[\vartheta,-] = d_A^2(-)$. Morphisms of curved $\Liec$-algebras must commute with the structure, i.e: the bracket and curvature $\vartheta$; and with the underlying pre-differentials. 
\end{proof}

\begin{Definition}[Curved associative algebra]\label{cass}
A \textit{curved associative algebra} $(A,\mu_A,d_A,\vartheta)$ amounts to the data a non-unital graded associative algebra $(A,\mu_A)$ together with a pre-differential $d_A$ of degree $-1$ which is a derivation with respect to the associative product $\mu_A$, and a morphism of graded modules $\vartheta: \mathbb{K} \longrightarrow A$ of degree $-2$ such that:
\[
d_A^2=\mu_A(\vartheta(1),-) - \mu_A(-, \vartheta(1))~, \quad \text{and} \quad d_A(\vartheta(1))=0~.
\]
A \textit{morphism} $f: (A,\mu_A, d_A,\vartheta_A) \longrightarrow (B,\mu_B, d_B, \vartheta_B)$ is the data of a morphism of graded non-unital associative algebras $f:(A,\mu_A) \longrightarrow (B,\mu_B)$ that commutes with the pre-differentials and preserves the curvatures $f(\vartheta_A) = \vartheta_B~.$
\end{Definition}

Let $N$ be the pdg $\mathbb{S}$-module given by $(\mathbb{K}.\phi,0,\mathbb{K}[\mathbb{S}_2].\mu,0,\cdots)$ with zero pre-differential, where $\phi$ is an arity $0$ operation of degree $-2$, and $\mu$ is an binary operation of degree $0$, basis of the regular representation of $\mathbb{S}_2$.

\begin{Definition}[$\Assc$ operad]
The \textit{curved partial operad} $\Assc$ is given by the free pdg partial operad generated by $N$ modulo the operadic ideal generated by the associativity relation on the generator $\mu$. It is endowed with the curvature $\Theta_\mathcal{A}$ given by $\Theta_\mathcal{A}(\mathrm{id}) \coloneqq \mu \circ_1 \phi - \mu \circ_2 \phi.$ 
\end{Definition}

\begin{lemma}
The data $(\Assc, 0, \Theta_\mathcal{A})$ forms a curved partial operad. Furthermore, the category of curved $\Assc$-algebras is isomorphic to the category of curved associative algebras.
\end{lemma}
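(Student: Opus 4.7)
My plan is to mirror the proof of Lemma \ref{lemmalie} for the Lie case, splitting the statement into two independent pieces: first the verification that $(\Assc, 0, \Theta_\mathcal{A})$ satisfies the axioms of a curved partial operad, and second the identification of the category of curved $\Assc$-algebras with that of curved associative algebras of Definition \ref{cass}.

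For the first part, the conditions to check are $d(\theta) = 0$ and $d^2 = \gamma_{(1)}\cdot(\Theta_\mathcal{A} \circ \mathrm{id} - \mathrm{id} \circ' \Theta_\mathcal{A})$. The first is automatic since the pre-differential is zero, so the whole content is that the operadic bracket $[\theta,-]$ must vanish on $\Assc$. As in the Lie case, $[\theta,-]$ is a derivation of the partial operad, hence it is enough to verify vanishing on the two generators $\phi$ and $\mu$. For $\phi$, since it is of arity $0$, only the term $\theta \circ_1 \phi = (\mu \circ_1 \phi)\circ_1 \phi - (\mu \circ_2 \phi)\circ_1 \phi$ contributes, and a single application of the parallel axiom identifies these two summands, giving $0$. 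For $\mu$ one expands $[\theta,\mu] = \theta\circ_1 \mu - \mu \circ_1 \theta - \mu \circ_2 \theta$ into six terms of the form $(\mu \circ_i \mu)\circ_j \phi$; using the sequential and parallel axioms to bring every term into the common normal form $(\mu \circ_1 \mu)\circ_k \phi$ via the associativity relation $\mu \circ_1 \mu = \mu \circ_2 \mu$, the six terms cancel pairwise.

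For the second part, I would use the fact that a pdg $\Assc$-algebra structure on $(A,d_A)$ is equivalent to a morphism of pdg operads $\Gamma_A : \Assc \longrightarrow \mathrm{End}_A$. Setting $\mu_A \coloneqq \Gamma_A(\mu)$ and $\vartheta(1) \coloneqq \Gamma_A(\phi)$, the operadic relations defining $\Assc$ force $\mu_A$ to be associative, and compatibility of $\Gamma_A$ with pre-differentials forces $d_A$ to be a derivation of $\mu_A$ together with $d_A(\vartheta(1)) = 0$. The curvedness Diagram \ref{curved} becomes, under the identification $\theta = \mu \circ_1 \phi - \mu \circ_2 \phi$, the identity $d_A^2(a) = \mu_A(\vartheta(1),a) - \mu_A(a,\vartheta(1))$, which is exactly the curvature relation of Definition \ref{cass}. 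Conversely, any curved associative algebra defines a morphism out of $\Assc$ by the universal property of the free partial operad modulo relations. Morphisms on both sides coincide tautologically.

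The main obstacle I anticipate is bookkeeping: the verification of $[\theta,\mu]=0$ involves six partial composites of length three which are \emph{a priori} written at different positions, and one has to carefully invoke the sequential and parallel axioms to rewrite them in a single normal form before the associativity relation produces the cancellation. Once this combinatorial step is done, everything else reduces to unwinding definitions exactly as in Lemma \ref{lemmalie}.
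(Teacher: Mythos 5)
Your proposal is correct and follows exactly the route the paper takes: the paper's proof of this lemma is literally the one-line remark that it ``follows the same steps as the proof of Lemma~\ref{lemmalie}'', and you have simply carried out those steps explicitly for $\Assc$. Your computation of $[\theta,\mu]$ (six terms reducing via the sequential and parallel axioms to the three composites $(\mu\circ_1\mu)\circ_k\phi$, which cancel once $\mu\circ_1\mu=\mu\circ_2\mu$ is used) and your identification of curved $\Assc$-algebras via morphisms $\Gamma_A:\Assc\to\mathrm{End}_A$ are both accurate.
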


\begin{proof}
The proof of this lemma follows the same steps as the proof of Lemma \ref{lemmalie}. 
\end{proof}

\begin{Remark}
See Appendix \ref{Appendix B} for the "absolute analogues" of these curved operads.
\end{Remark}

\begin{Proposition}\label{assliem}
The classical morphism of partial operads $\mathcal{L}ie \longrightarrow \mathcal{A}ss$ induced by $\beta \mapsto \mu - \mu^{(12)}~,$ extends to a morphism of curved partial operads $\Liec \longrightarrow \Assc$ by sending $\zeta$ to $\phi$.
\end{Proposition}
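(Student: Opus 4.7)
The plan is to reduce the statement to two checks: that the assignment $\beta \mapsto \mu - \mu^{(12)}$, $\zeta \mapsto \phi$ descends to a well-defined morphism of pdg partial operads $f \colon \Liec \to \Assc$, and that $f$ preserves the curvatures in the sense that $f \cdot \Theta_{\mathcal{L}} = \Theta_{\mathcal{A}}$. The first check is the classical one and requires no new ideas in our pdg setting: both $\Liec$ and $\Assc$ carry the zero pre-differential, so the pdg condition is automatic, and the image of the Jacobi generator under $\beta \mapsto \mu - \mu^{(12)}$ lies in the associativity ideal of $\Assc$ by the standard computation in $\mathcal{L}ie \to \mathcal{A}ss$ recalled in \cite{LodayVallette12}. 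By freeness of $\Liec$ on the generators $\{\beta, \zeta\}$ modulo Jacobi, this determines a unique morphism of pdg partial operads.

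The real content is the second check. By definition, $\Theta_{\mathcal{L}}(\mathrm{id}) = \beta \circ_1 \zeta$, so
\[
f(\Theta_{\mathcal{L}}(\mathrm{id})) \;=\; (\mu - \mu^{(12)}) \circ_1 \phi \;=\; \mu \circ_1 \phi \;-\; \mu^{(12)} \circ_1 \phi~.
\]
To identify $\mu^{(12)} \circ_1 \phi$ I would apply the $\mathbb{S}$-equivariance axiom of \cref{def: partialoperad}(3) with $\tau = (12) \in \mathbb{S}_2$, $\sigma = \mathrm{id} \in \mathbb{S}_0$ and $i = 2$: this yields $\mu^{(12)} \circ_{(12)(2)} \phi = ((12) \circ_2 \mathrm{id})\cdot(\mu \circ_2 \phi)$. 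Since $(12)(2) = 1$ and $(12) \circ_2 \mathrm{id}$ is forced to be the identity of $\mathbb{S}_1$, this collapses to
\[
\mu^{(12)} \circ_1 \phi \;=\; \mu \circ_2 \phi~.
\]
Substituting back gives $f(\Theta_{\mathcal{L}}(\mathrm{id})) = \mu \circ_1 \phi - \mu \circ_2 \phi = \Theta_{\mathcal{A}}(\mathrm{id})$, as required.

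The only subtle point, and hence the place where I would be most careful, is the bookkeeping with the equivariance axiom: one must remember that inserting an arity-zero operation contracts the input position and that the relabelling permutation on the target side lives in $\mathbb{S}_1$, hence is trivial. Once this is laid out cleanly, both checks combine to give a well-defined morphism in $\mathsf{curv}~\mathsf{pOp}$, proving the proposition.
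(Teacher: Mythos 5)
Your proof is correct and takes the same route as the paper, which simply asserts that the compatibility with the curvatures is straightforward to check; you have carried out that check explicitly. The key step, $\mu^{(12)} \circ_1 \phi = \mu \circ_2 \phi$ via the equivariance axiom with the relabelling permutation landing in the trivial group $\mathbb{S}_1$, is exactly the computation the paper leaves implicit, and your bookkeeping is right.
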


\begin{proof}
It is straightforward to check that the latter morphism commutes with the respective curvatures. 
\end{proof}

\begin{Corollary}\label{Antisymmetrization}
The morphism $\Liec \longrightarrow \Assc$ induces a functor
\[
\begin{tikzcd}[column sep=3pc,row sep=0pc]
\mathsf{Skew}: \mathsf{curv}~\Assc\text{-}\mathsf{alg} \arrow[r]
&\mathsf{curv}~\Liec\text{-}\mathsf{alg} \\
(A,\mu_A,d_A,\vartheta) \arrow[r,mapsto]
&(A,[-,-] \coloneqq \mu_A - \mu_A^{(12)},d_A,\vartheta)~.
\end{tikzcd}
\]
given by the skew-symmetrization of the associative product. 
\end{Corollary}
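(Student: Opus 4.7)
The plan is to exhibit the desired functor as the \emph{pullback} functor along the morphism of curved partial operads constructed in Proposition \ref{assliem}. More generally, I first want to check that any morphism $f : (\PP, d_\PP, \Theta_\PP) \longrightarrow (\D, d_\D, \Theta_\D)$ of curved (partial) operads induces a pullback functor
\[
f^* : \mathsf{curv}~\D\text{-}\mathsf{alg} \longrightarrow \mathsf{curv}~\PP\text{-}\mathsf{alg}~,
\]
acting as the identity on the underlying pdg module. Indeed, by the lemma immediately preceding Proposition \ref{reflexive}, a curved $\D$-algebra structure on $(A,d_A)$ is equivalent to a morphism of curved operads $\Gamma_A : (\D,d_\D,\Theta_\D) \longrightarrow (\mathrm{End}_A,\partial,\Theta_A)$; composing with $f$ yields $\Gamma_A \circ f : (\PP,d_\PP,\Theta_\PP) \longrightarrow (\mathrm{End}_A,\partial,\Theta_A)$, which is a morphism of curved operads and therefore a curved $\PP$-algebra structure on the same pdg module $(A,d_A)$. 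Morphisms of curved $\D$-algebras, being morphisms of pdg modules commuting with the $\D$-algebra structure, automatically commute with the pulled-back $\PP$-algebra structure, so $f^*$ is well defined and functorial.

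I then want to apply this construction to the particular morphism $f : \Liec \longrightarrow \Assc$ given by $\beta \mapsto \mu - \mu^{(12)}$ and $\zeta \mapsto \phi$ of Proposition \ref{assliem}. Given a curved associative algebra $(A, \mu_A, d_A, \vartheta)$, which under the isomorphism of Lemma \ref{lemmalie}'s associative analogue corresponds to a curved $\Assc$-algebra structure $\Gamma_A$ with $\Gamma_A(\mu) = \mu_A$ and $\Gamma_A(\phi) = \vartheta$, the pullback $f^* \Gamma_A$ sends $\beta$ to $\mu_A - \mu_A^{(12)}$ and $\zeta$ to $\vartheta$. Unpacking this via Lemma \ref{lemmalie}, the resulting curved $\Liec$-algebra structure on $(A, d_A)$ is precisely $(A, [-,-] \coloneqq \mu_A - \mu_A^{(12)}, d_A, \vartheta)$, which is the claimed skew-symmetrization functor.

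No real obstacle arises: the curvature compatibility $[\vartheta, -] = d_A^2(-)$ for the bracket follows automatically from the fact that $f$ was verified to be a morphism of \emph{curved} operads in Proposition \ref{assliem}, hence preserves curvatures, and the pullback of a curved algebra along a morphism of curved operads is again curved. The only thing worth making explicit, if desired, is that the Jacobi identity for $[-,-] = \mu_A - \mu_A^{(12)}$ and its compatibility with $d_A$ are already encoded in the fact that $f^* \Gamma_A$ is a morphism of curved operads landing in $\mathrm{End}_A$, so no additional verification is needed.
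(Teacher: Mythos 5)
Your proposal is correct and follows exactly the paper's own argument: the paper's proof of this corollary is the one-line observation that given a morphism of curved partial operads $\Gamma_A: \Assc \longrightarrow \mathrm{End}_A$, one pulls back along $\Liec \longrightarrow \Assc$, which is precisely the restriction construction you spell out (and which the paper later packages as $\mathsf{Res}_f$ in Theorem \ref{Ind and Res adjunction}). Your additional verifications — that the composite preserves curvatures and that the Jacobi identity comes for free — are correct and simply make explicit what the paper leaves implicit.
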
 

\begin{proof}
Given a morphism of curved partial operads $\Gamma_A: \Assc \longrightarrow \mathrm{End}_A$, one pulls back along $\Liec \longrightarrow \Assc$.
\end{proof}

\begin{Remark}
In the next section we will construct the left adjoint of this functor using curved bimodules.
\end{Remark}

\section{Curved bimodules and universal functors}
The goal of this section is to define the "curved" generalization of modules over operads. Their appeal comes from the fact that operadic modules encode universal functors relating categories of algebras over operads. The generalization is not immediate since in a curved operad, the curvature condition intertwines the underlying category with the operadic structure. Nevertheless, there exists a good notion of bimodule that encodes universal functors between the categories of curved algebras over curved operads. Note that in order for a bimodule to define a functor between categories of algebras over operads, we need to use the monoidal definition of operads, like in \cite{Fresse09}. If one is working with curved partial operads, one can simply add an augmented unit to fit into this framework.

\medskip

\subsection{Curved bimodules} One major difference between curved operads and classical operads is the following fact. Let $\cP$ be a curved operad and $(V,d_V)$ be a pdg module. Then $\mathscr{S}(\PP)(V)$ endowed with its canonical $\PP$-algebra structure does not form a curved $\PP$-algebra in general. The pre-differential on $\mathscr{S}(\PP)(V)$ is given by: 
\[
d_{\mathscr{S}(\PP)(V)} = \mathscr{S}(d_\PP)(\mathrm{id}_V) + \mathscr{S}(id_\PP)(\diracComb(\mathrm{id}_V,d_V))~.
\]
Hence we have that 
\[
d_{\mathscr{S}(\PP)(V)}^2 = \mathscr{S}(d_\PP^2)(\mathrm{id}_V) + \mathscr{S}(\mathrm{id}_\PP)(\diracComb(\mathrm{id}_V,d_V^2))~,
\]
because of the Koszul sign convention. Since $\PP$ is a curved operad:
\[
d_\PP^2 = \gamma_{(1)} \cdot (\Theta_\PP \circ \mathrm{id}_\PP - \mathrm{id}_\PP \circ' \Theta_\PP)~.
\]
Therefore
\begin{align*}
d_{\mathscr{S}(\PP)(V)}^2 = &~\mathscr{S}(\gamma)(\mathrm{id}_V) \cdot \Big( \mathscr{S}(\Theta_\PP) \circ \mathscr{S}(\mathrm{id}_\PP)(\mathrm{id}_V) - \mathscr{S}(\mathrm{id}_\PP) \circ \mathscr{S}(\diracComb(\mathrm{id}_\PP,\Theta_\PP))(\mathrm{id}_V) \Big) \\ 
&+ \mathscr{S}(\mathrm{id}_\PP)(\diracComb(\mathrm{id}_V,d_V^2))
\end{align*}
and does not satisfy the condition imposed by the diagram \ref{curved}. In general
\[
d_{\mathscr{S}(\PP)(V)}^2 \neq \mathscr{S}(\gamma)(\mathrm{id}_V) \cdot \Big(\mathscr{S}(\Theta_\PP) \circ \mathscr{S}(\mathrm{id}_\PP)(\mathrm{id}_V)\Big)~,
\]
even if we impose the extra condition that $d_V^2=0~.$

\begin{Proposition}[{\cite[Proposition C.31]{JoanCurved}}]\label{freecurvedalg}
Let $\cP$ be a curved operad and $(V,d_V)$ be a pdg module. The free curved $\PP$-algebra is given by:
\[
\mathsf{F}(\PP)(V) \coloneqq \frac{\mathscr{S}(\PP)(A)}{\mathsf{Im}\Big(\mathscr{S}(\mathrm{id}_\PP)(d_V^2) - \mathscr{S}(\Theta_\PP)(\mathrm{id}_V)\Big)}~.
\]
\end{Proposition}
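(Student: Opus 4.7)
The plan is to identify $\mathsf{F}(\PP)(V)$ as the image of the free pdg $\PP$-algebra on $(V, d_V)$ under the reflector $\mathsf{Curv}$ of Proposition \ref{reflexive}. Since the composition of left adjoints is a left adjoint, the composite of the free pdg $\PP$-algebra functor $\mathscr{S}(\PP)(-)$ with $\mathsf{Curv}: \mathsf{pdg}~\palg \longrightarrow \mathsf{curv}~\PP\text{-}\mathsf{alg}$ is left adjoint to the forgetful functor $\mathsf{curv}~\PP\text{-}\mathsf{alg} \to \mathsf{pdg}~\mathsf{mod}$, so $\mathsf{F}(\PP)(V) \cong \mathsf{Curv}(\mathscr{S}(\PP)(V))$. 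Applying the explicit formula for $\mathsf{Curv}$ recalled in Proposition \ref{reflexive}, this is the quotient of $\mathscr{S}(\PP)(V)$ by the $\PP$-algebra ideal generated by the image of $\gamma(\theta, -) - d^2(-)$.

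The second step is to unpack this generating set and match it with the expression in the statement. On $\mathscr{S}(\PP)(V)$, the excerpt already establishes
\[
d^2 = \mathscr{S}(d_\PP^2)(\mathrm{id}_V) + \mathscr{S}(\mathrm{id}_\PP)(\diracComb(\mathrm{id}_V, d_V^2))~.
\]
Substituting the curvature relation $d_\PP^2 = \gamma_{(1)} \cdot (\Theta_\PP \circ \mathrm{id} - \mathrm{id} \circ' \Theta_\PP)$ splits the first summand into a contribution equal to $\gamma(\theta, -)$, coming from $\Theta_\PP$ composed at the root, and a sum of terms inserting $\Theta_\PP$ at each leaf position of the operation, which is the one denoted $\mathscr{S}(\Theta_\PP)(\mathrm{id}_V)$ in the statement. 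The root contribution cancels with the $\gamma(\theta, -)$ from the difference, leaving, up to a sign irrelevant for the image,
\[
\gamma(\theta, -) - d^2(-) \;=\; \mathscr{S}(\Theta_\PP)(\mathrm{id}_V) - \mathscr{S}(\mathrm{id}_\PP)(d_V^2)~,
\]
which is the generating set announced in the proposition.

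The main obstacle is to verify that the module-theoretic image appearing in the statement already coincides with the $\PP$-algebra ideal it generates, so that the quotient formula takes this compact form. This amounts to checking stability of the image under the $\PP$-action on $\mathscr{S}(\PP)(V)$, which I would establish by naturality of both $\mathscr{S}(-)(-)$ and of $\gamma(\theta, -) - d^2(-)$ together with the compatibilities $d_\PP(\theta) = 0$ and the operadic associativity of the composition at the root. Once this stability together with the $\mathbb{S}_n$-equivariance is handled, the universal property of the quotient and the adjunction isomorphism of Proposition \ref{reflexive} deliver the claimed description of $\mathsf{F}(\PP)(V)$.
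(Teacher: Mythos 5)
Your proposal is correct and follows exactly the paper's own route: the paper's proof consists of the single observation that $\mathsf{F}(\PP)(V)$ is the composite of the free pdg $\PP$-algebra functor $\mathscr{S}(\PP)(-)$ with the reflector $\mathsf{Curv}$ of Proposition \ref{reflexive}, which is your first step. The remainder of your argument simply spells out the ``one can check'' that the paper leaves implicit, namely that the ideal generated by $\mathrm{Im}\big(\gamma(\theta,-)-d^2(-)\big)$ coincides with the announced generating set after substituting the curvature relation for $d_\PP^2$.
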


\begin{proof}
One can check that $\mathsf{F}(\PP)(V)$ is equal to the composition of the free pdg $\PP$-algebra functor $\mathscr{S}(\PP)(V)$ followed by the reflector $\mathsf{Curv}$ constructed in Proposition \ref{reflexive}.
\end{proof}

This type of construction is encoded by a left $\PP$-module in the classical case. The above Proposition shows that in the curved case, modules over operads are more intricate, since it is not obvious that this quotient can be interpreted as a "curved" left $\PP$-module. We begin by recalling standard definitions.

\begin{Definition}[Left and right $\PP$-modules]
Let $(\PP, \gamma,\eta, d_\PP)$ be a pdg operad. 
\begin{enumerate}
\item A \textit{left pdg} $\PP$-\textit{module} $(N,\lambda,d_N)$ is the data of a pdg $\mathbb{S}$-module $(N,d_N)$ together with a morphism of pdg $\mathbb{S}$-modules $\lambda: \PP \circ N \longrightarrow N$, such that the following diagram
\[
\begin{tikzcd}[column sep=3pc,row sep=3pc]
\PP \circ \PP \circ N \arrow[r, "\gamma_\PP ~ \circ ~ \mathrm{id}"] \arrow[d,swap,"\mathrm{id} ~ \circ ~ \lambda"]
&\PP \circ N \arrow[d,"\lambda"]\\
\PP \circ N \arrow[r,"\lambda"]
&N 
\end{tikzcd}
\]
commutes and such that $\lambda \cdot (\eta \circ \mathrm{id}_N) = \mathrm{id}_N~.$ 

\item A \textit{right pdg} $\PP$-\textit{module} $(M,\rho,d_M)$ is the data of a pdg $\mathbb{S}$-module $(M,d_M)$ together with a morphism of pdg $\mathbb{S}$-modules $\rho: M \circ \PP \longrightarrow \PP$, such that the following diagram
\[
\begin{tikzcd}[column sep=3pc,row sep=3pc]
M \circ \PP \circ \PP \arrow[r, "\mathrm{id} ~ \circ ~ \gamma"] \arrow[d,swap,"\rho ~ \circ ~ \mathrm{id}"]
&M \circ \PP \arrow[d,"\rho"]\\
M \circ \PP  \arrow[r,"\rho"]
&M
\end{tikzcd}
\]
commutes and such that $\rho \cdot (\mathrm{id}_M \circ \rho) = \mathrm{id}_M~.$ 
\end{enumerate}
\end{Definition}

\begin{Remark}
One can identify left pdg $\PP$-modules concentrated in arity $0$ with the category of pdg $\PP$-algebras.
\end{Remark} 

\begin{Definition}[Relative composition product]
Let $(\PP, d_\PP)$ be a pdg operads. Let $(N,\lambda_N,d_N)$ be a left pdg $\PP$-module and let $(M,\rho_M,d_M)$ be a right pdg $\PP$-module, the \textit{relative composition product} $M \circ_\PP N$, given by the following coequalizer
\[
M \circ_\PP N \coloneqq
\begin{tikzcd}[column sep=4.5pc,row sep=0pc]
\mathsf{Coeq}~ \Big( M \circ \PP \circ N \arrow[r,shift left=.75ex,"\rho_M ~ \circ ~ \mathrm{id}"] \arrow[r,shift right=.75ex,swap,"\mathrm{id} ~ \circ ~ \lambda_N"]
&M \circ N \Big) ~,
\end{tikzcd}
\]
in the category of pdg $\mathbb{S}$-modules. 
\end{Definition} 

\begin{Definition}[Operadic bimodule]
Let $(\PP, d_\PP)$ and $(\Q, d_\Q)$ be two pdg operads. A \textit{pdg} $(\PP, \Q)$-\textit{bimodule} is the data $(M,\lambda,\rho,d_M)$ of a pdg $\mathbb{S}$-module $(M,d_M)$ together with two morphisms of pdg $\mathbb{S}$-modules $\lambda: \PP \circ M \longrightarrow M$ and $\rho: M \circ \Q \longrightarrow M$ which endow $M$ with a left $\PP$-module structure and a right $\PP$-module structure. Those structures are compatible with each other in the following sense: 
\[
\begin{tikzcd}[column sep=3pc,row sep=3pc]
\PP \circ M \circ \Q \arrow[d,swap, "\mathrm{id} ~ \circ ~ \rho "] \arrow[r,"\lambda ~ \circ ~ \mathrm{id}"]
&M \circ \Q \arrow[d,"\rho"] \\
\PP \circ M  \arrow[r,"\lambda"]
&M~.
\end{tikzcd}
\]
\end{Definition}

Operadic bimodules encode functors between the categories of algebras over operads, see \cite[Chapter 9]{Fresse09} for a detailed account. 

\begin{Definition}[Relative Schur functor of a bimodule]
Let $(M,\lambda,\rho,d_M)$ be a pdg $(\PP,\Q)$-bimodule. The \textit{relative Schur functor} $\mathscr{S}_\Q(M)(-)$ associated to $M$ is given, for $(A,\gamma_A,d_A)$ a pdg $\Q$-algebra, by the following coequalizer:
\[
\mathscr{S}_\Q(M)(A) \coloneqq
\begin{tikzcd}[column sep=4.5pc,row sep=0pc]
\mathsf{Coeq}~ \Big( \mathscr{S}(M) \circ \mathscr{S}(\Q)(A) \arrow[r,shift left=.75ex,"\mathscr{S}(\rho)(\mathrm{id}_A)"] \arrow[r,shift right=.75ex,swap,"\mathscr{S}(\mathrm{id}_M)(\gamma_A)"]
&\mathscr{S}(M)(A) \Big) ~.
\end{tikzcd}
\]
It defines a functor: 
\[
\mathscr{S}_\Q(M)(-): \mathsf{pdg}~\Q\text{-}\mathsf{alg} \longrightarrow \mathsf{pdg}~\PP\text{-}\mathsf{alg}~.
\]
\end{Definition}

Under the identification of left pdg $\Q$-modules concentrated in arity $0$ with pdg $\Q$-algebras, $\mathscr{S}_\Q(M)(A)$ is in fact given by the relative composition product $M \circ_\Q A$ of the left pdg $\Q$-module $(A,\gamma_A,d_A)$ with the $(\PP, \Q)$-bimodule $(M,\lambda,\rho,d_M)$.

\begin{Definition}[Curved Bimodules]\label{curvedbimodule}
Let $(\PP, d_\PP,\Theta_\PP)$ and $(\Q, d_\Q,\Theta_\Q)$ be two curved operads. A \textit{curved} $(\PP, \Q)$-\textit{bimodule} is the data of a pdg $(\PP, \Q)$-bimodule $(M,\lambda_M,\rho_M,d_M)$ such that the following diagram commutes:
\[
\begin{tikzcd}[column sep=7pc,row sep=3pc]
M \arrow[r,"\mathrm{diag}"] \arrow[rrd,swap,"d_M^2", bend right = 10]
&(\I \circ M) \oplus (M \circ \I) \arrow[r,"(\Theta_\PP~ \circ~ \mathrm{id}) - (\mathrm{id}~ \circ' ~\Theta_\Q)"] 
&(\PP \circ M) \oplus (M \circ_{(1)} \Q) \arrow[d,"\lambda + \rho_{(1)}"]\\
&
&M~.
\end{tikzcd}
\]
This equality is denoted by $d_M^2 = \lambda \cdot (\Theta_\PP \circ \mathrm{id}) - \rho_{(1)} \cdot (\mathrm{id} \circ' \Theta_\Q)~.$ 
\end{Definition}

\begin{Remark}
A curved generalization of infinitesimal bimodules for curved partial operads is immediate since the curvature is an infinitesimal notion. For instance, curved infinitesimal bimodules are the coefficients for the André-Quillen cohomology of a curved operad, which can be defined using the same methods as for operads and standard bimodules. See \cite{MerkulovVallette09I} for more details.
\end{Remark}

\begin{Proposition} \label{functorbimod}
Any curved $(\PP, \Q)$-bimodule $(M,\lambda,\rho,d_M)$ induces a functor 
\[
\mathscr{S}_\Q(M)(-): \mathsf{curv}~\Q\text{-}\mathsf{alg} \longrightarrow \mathsf{curv}~\PP\text{-}\mathsf{alg}~,
\]
given by the relative Schur functor associated to $M$ as a pdg $(\PP, \Q)$-bimodule.
\end{Proposition}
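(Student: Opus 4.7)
The plan is to leverage the classical (non-curved) pdg theory: a pdg $(\PP,\Q)$-bimodule already gives a functor $\mathscr{S}_\Q(M)(-) : \mathsf{pdg}~\Q\text{-}\mathsf{alg} \to \mathsf{pdg}~\PP\text{-}\mathsf{alg}$. So starting from a curved $\Q$-algebra $(A, \gamma_A, d_A)$, the only thing left to verify is the curvature relation on $\mathscr{S}_\Q(M)(A) = M \circ_\Q A$, namely that its pre-differential squared coincides with the action $\lambda \cdot (\Theta_\PP \circ \mathrm{id})$ coming from $\PP$. Functoriality in morphisms of curved $\Q$-algebras is immediate from functoriality in the pdg setting.

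First I would write down the pre-differential on the ambient object $\mathscr{S}(M)(A)$ as
\[
d = \mathscr{S}(d_M)(\mathrm{id}_A) + \mathscr{S}(\mathrm{id}_M)\bigl(\diracComb(\mathrm{id}_A,d_A)\bigr)
\]
and square it. Exactly as in the computation for $\mathscr{S}(\PP)(V)$ recalled at the start of this section, the Koszul-sign convention forces the cross terms to vanish, leaving
\[
d^2 = \mathscr{S}(d_M^2)(\mathrm{id}_A) + \mathscr{S}(\mathrm{id}_M)\bigl(\diracComb(\mathrm{id}_A, d_A^2)\bigr).
\]
Now I would substitute the curved bimodule identity $d_M^2 = \lambda \cdot (\Theta_\PP \circ \mathrm{id}) - \rho_{(1)} \cdot (\mathrm{id} \circ' \Theta_\Q)$ of Definition \ref{curvedbimodule} and the curved algebra identity $d_A^2 = \gamma_A \cdot \mathscr{S}(\Theta_\Q)(\mathrm{id}_A)$, producing three terms on $\mathscr{S}(M)(A)$.

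The heart of the argument is the passage to the coequalizer defining $\mathscr{S}_\Q(M)(A) = M \circ_\Q A$. The first step is routine: since $\rho$, $\gamma_A$, $d_M$ and $d_A$ are morphisms of pdg objects, the pre-differential descends to the coequalizer (this is already used in the pdg case). The non-routine step is showing that the two "mixed" terms
\[
-\mathscr{S}\bigl(\rho_{(1)} \cdot (\mathrm{id} \circ' \Theta_\Q)\bigr)(\mathrm{id}_A) \quad \text{and} \quad \mathscr{S}(\mathrm{id}_M)\bigl(\diracComb(\mathrm{id}_A,\, \gamma_A \cdot \mathscr{S}(\Theta_\Q)(\mathrm{id}_A))\bigr)
\]
cancel. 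Both consist of inserting the arity-$1$ curvature element $\theta \in \Q(1)$ once at the interface between an $M$-label and an $A$-entry: the first via the right $\Q$-action $\rho$ on $M$, the second via the left $\Q$-action $\gamma_A$ on a factor of $A$. Summed over the positions $i$, these are precisely the two parallel maps being equalized in the coequalizer defining $M \circ_\Q A$, so they represent the same element in the quotient. The signs $-$ and $+$ then combine to give zero.

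What remains is $d^2 = \mathscr{S}\bigl(\lambda \cdot (\Theta_\PP \circ \mathrm{id}_M)\bigr)(\mathrm{id}_A)$, which is exactly the curvature condition for $\mathscr{S}_\Q(M)(A)$ as a curved $\PP$-algebra; combined with Proposition \ref{freecurvedalg} (or equivalently the definition of curved $\PP$-algebras), this concludes the proof. I expect the main obstacle to be the bookkeeping in the last cancellation: one has to match the summation over positions $i$ implicit in the Dirac-comb notation with the arity-$1$ infinitesimal right action $\rho_{(1)}$, and to verify the signs agree, but no new conceptual input is required beyond the defining relation of the coequalizer and the two curvature axioms.
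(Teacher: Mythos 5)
Your proposal is correct and follows essentially the same route as the paper's proof: square the pre-differential on $\mathscr{S}(M)(A)$, substitute the curved-bimodule identity for $d_M^2$ and the curved-algebra identity for $d_A^2$, and observe that the two mixed terms involving $\Theta_\Q$ are identified (and hence cancel) by the very coequalizer relation defining $M \circ_\Q A$, leaving exactly the curvature condition for the left $\PP$-action. The paper phrases the cancellation slightly more tersely, but the mechanism is the one you describe.
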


\begin{proof}
Let $(A,\gamma_A,d_A)$ be a curved $\Q$-algebra. Let $\pi_A: \mathscr{S}(M)(A) \twoheadrightarrow \mathscr{S}_\Q(M)(A)$ be the projection map. The pre-differential of $\mathscr{S}_\Q(M)(A)$ is given by the image in the quotient of 
\[
\mathscr{S}(d_M)(\mathrm{id}_A) + \mathscr{S}(\mathrm{id}_M)(\diracComb(\mathrm{id}_A,d_A))~,
\]
which we will denote $d$ for simplicity. The pdg $\PP$-algebra structure on $\mathscr{S}_\Q(M)(A)$ is given by the map $\pi_A \cdot (\mathscr{S}(\lambda)(\mathrm{id}))~.$ We need to check that $\mathscr{S}_\Q(M)(A)$ is indeed a curved $\PP$-algebra, i.e: that the following diagram commutes:
\[
\begin{tikzcd}[column sep=5.5pc,row sep=3pc]
\mathscr{S}(\I)(\mathscr{S}_\Q(M)(A)) \arrow[r,"\mathscr{S}(\Theta_\PP)(\mathrm{id})"] \arrow[rd,"d^2",swap]
&\mathscr{S}(\PP)(\mathscr{S}_\Q(M)(A)) \arrow[d,"\pi_A \cdot (\mathscr{S}(\lambda_M)(\mathrm{id}))"]\\
&\mathscr{S}_\Q(M)(A)~.
\end{tikzcd}
\]
We know that $d^2$ will be induced by $\mathscr{S}(d_M^2)(\mathrm{id}_A) + \mathscr{S}(\mathrm{id}_M)(\diracComb(\mathrm{id}_A,d_A^2))$. On one hand, since $M$ is a curved $(\PP, \Q)$-bimodule, $d_M^2 = \lambda \cdot (\Theta_\PP \circ \mathrm{id}) - \rho_{(1)} \cdot (\mathrm{id} \circ' \Theta_\Q)~.$ On the other hand, since $A$ is a curved $\Q$-algebra, $d_A^2 = \gamma_A \cdot (\mathscr{S}(\Theta_\Q)(\mathrm{id}))~.$ Therefore: 
\begin{align*}
d^2 = &~\pi_A \cdot (\mathscr{S}(\lambda)(\mathrm{id}_A)) \cdot (\mathscr{S}(\Theta_\PP) \circ \mathscr{S}(\mathrm{id}_M)(\mathrm{id}_A)) - \pi_A \cdot (\mathscr{S}(\rho)(\mathrm{id}_A)) \cdot (\mathscr{S}(\diracComb(\mathrm{id}_M,\Theta_\Q)(A)) \\
	  &+\pi_A \cdot \mathscr{S}(\mathrm{id}_M)\Big(\diracComb(\mathrm{id}_A,\gamma_A \cdot (\mathscr{S}(\Theta_\Q)(\mathrm{id})))\Big)~.	 
\end{align*}
By definition of $\mathscr{S}_\Q(M)(A)~,$ we have that the right action of $\Q$ on $M$ given by $\rho$ is equal to the action of $\Q$ on $A$ given by $\gamma_A$, hence the last two terms are equal and therefore cancel each other. As a result:
\[
d^2 = \pi_A \cdot (\mathscr{S}(\lambda)(\mathrm{id}_A)) \cdot (\mathscr{S}(\Theta_\PP) \circ \mathscr{S}(\mathrm{id}_M)(\mathrm{id}_A))~,
\]
which is precisely the condition imposed by the above diagram. 
\end{proof}

\begin{Example}
Given a curved operad $(\PP, d_\PP,\Theta_\mathcal{P})$, its underlying pdg $\mathbb{S}$-module $(\PP, d_\PP)$ can be endowed with a canonical curved $(\PP,\PP)$-bimodule structure given by the composition of $\PP$. This curved bimodule encodes the identity endofunctor of curved $\PP$-algebras. 
\end{Example}

\begin{theorem}\label{Ind and Res adjunction}
Let $(\PP, d_\PP,\Theta_\PP)$ and $(\Q, d_\Q,\Theta_\Q)$ be two curved operads, and let $f: \PP \longrightarrow \Q$ be a morphism of curved operads. The morphism $f$ induces an adjunction at the level of curved algebras:
\[
\begin{tikzcd}[column sep=7pc,row sep=3pc]
            \mathsf{Ind}_f: \mathsf{curv}~\PP\text{-}\mathsf{alg} \arrow[r, shift left=1.1ex, ""{name=F}] & \mathsf{curv}~\Q\text{-}\mathsf{alg} : \mathsf{Res}_f \arrow[l, shift left=.75ex, ""{name=U}]
            \arrow[phantom, from=F, to=U, , "\dashv" rotate=-90]
\end{tikzcd}
\]
given by the restriction and the induction functors. 
\end{theorem}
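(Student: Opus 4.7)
My plan is to build the adjunction out of the machinery of curved bimodules developed in Proposition \ref{functorbimod}, so that nothing beyond what has already been set up is needed.

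First, I would construct the restriction functor $\mathsf{Res}_f$ directly: given a curved $\Q$-algebra $(B,\gamma_B,d_B)$, endow the pdg module $(B,d_B)$ with the $\PP$-algebra structure $\gamma_B \cdot \mathscr{S}(f)(\mathrm{id}_B)$. This is a pdg $\PP$-algebra because $f$ is a morphism of pdg operads, and it is curved because
\[
\gamma_B \cdot \mathscr{S}(f)(\mathrm{id}_B) \cdot \mathscr{S}(\Theta_\PP)(\mathrm{id}_B) = \gamma_B \cdot \mathscr{S}(\Theta_\Q)(\mathrm{id}_B) = d_B^2~,
\]
using $f \cdot \Theta_\PP = \Theta_\Q$. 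Functoriality is immediate.

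Second, I would construct $\mathsf{Ind}_f$ by endowing the underlying pdg $\mathbb{S}$-module of $\Q$ with a curved $(\Q,\PP)$-bimodule structure. The left $\Q$-action $\lambda = \gamma_\Q$ is the composition of $\Q$ itself, and the right $\PP$-action is defined by $\rho \coloneqq \gamma_\Q \cdot (\mathrm{id}_\Q \circ f)$. The usual bimodule axiom follows from associativity of $\gamma_\Q$; the key point is the curvature compatibility of Definition \ref{curvedbimodule}. I would verify
\[
\lambda \cdot (\Theta_\Q \circ \mathrm{id}) - \rho_{(1)} \cdot (\mathrm{id} \circ' \Theta_\PP) = \gamma_{\Q,(1)} \cdot (\Theta_\Q \circ \mathrm{id}_\Q) - \gamma_{\Q,(1)} \cdot (\mathrm{id}_\Q \circ' (f\cdot \Theta_\PP))~,
\]
which, again using $f \cdot \Theta_\PP = \Theta_\Q$, reduces precisely to the curvature relation $d_\Q^2 = \gamma_{\Q,(1)} \cdot (\Theta_\Q \circ \mathrm{id}_\Q - \mathrm{id}_\Q \circ' \Theta_\Q)$ of the curved operad $\Q$. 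Proposition \ref{functorbimod} then provides a functor
\[
\mathsf{Ind}_f(-) \coloneqq \mathscr{S}_\PP(\Q)(-) : \mathsf{curv}~\PP\text{-}\mathsf{alg} \longrightarrow \mathsf{curv}~\Q\text{-}\mathsf{alg}~.
\]

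Finally, I would establish the adjunction via the universal property of the coequalizer defining $\mathscr{S}_\PP(\Q)(A) = \Q \circ_\PP A$. Given a morphism $\phi : \mathsf{Ind}_f(A) \to B$ of curved $\Q$-algebras, precomposing with the canonical map $A \cong \I \circ A \hookrightarrow \Q \circ A \twoheadrightarrow \Q \circ_\PP A$ built from $\eta_\Q$ yields a morphism of pdg modules $\widetilde{\phi} : A \to B$; it commutes with the $\PP$-actions because the $\PP$-action on $B$ is pulled back along $f$ and $\phi$ is $\Q$-linear, so $\widetilde{\phi}$ is a morphism of curved $\PP$-algebras. Conversely, given $\psi : A \to \mathsf{Res}_f(B)$ of curved $\PP$-algebras, the composite $\mathscr{S}(\mathrm{id}_\Q)(\psi) \cdot \gamma_B : \mathscr{S}(\Q)(A) \to B$ coequalizes the two arrows defining $\Q \circ_\PP A$, precisely because the right $\PP$-action on $\Q$ goes through $f$ and the $\PP$-structure on $B$ is $\gamma_B \cdot \mathscr{S}(f)(\mathrm{id})$; this factors uniquely through $\mathsf{Ind}_f(A)$ and yields the sought $\Q$-linear map. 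The two assignments are mutually inverse and natural in $A$ and $B$, which is the adjunction.

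The only delicate step is the curvature verification for the bimodule structure on $\Q$ in the second step; everything else is a routine application of the universal property of coequalizers combined with the fact that $\mathsf{Res}_f$ is given by a straightforward pullback along $f$. In particular, no appeal to the reflector $\mathsf{Curv}$ is needed, since the bimodule $\Q$ is genuinely curved, so $\mathsf{Ind}_f$ lands in curved $\Q$-algebras without any further quotient.
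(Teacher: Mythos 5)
Your proposal follows essentially the same route as the paper: $\mathsf{Res}_f$ by precomposition along $f$, $\mathsf{Ind}_f$ via the curved $(\Q,\PP)$-bimodule structure on $\Q$ with right action $\gamma_\Q\cdot(\mathrm{id}_\Q\circ f)$, the curvature check reducing to $f\cdot\Theta_\PP=\Theta_\Q$ together with the curvature relation of $\Q$, and then Proposition \ref{functorbimod}. The only difference is that you spell out the hom-set bijection via the coequalizer's universal property where the paper simply invokes the standard operadic adjunction restricted to the full subcategories of curved algebras; both are correct.
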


\begin{proof}
The right adjoint is given by the restriction along $f$ as follows. Given a curved $\D$-algebra structure on a pdg module $(A,d_A)$, that is, a morphism of curved operads $\Gamma_A: \Q \longrightarrow \mathrm{End}_A~,$ one can always pre-compose $\Gamma_A$ with $f$. This curved operadic morphism $\Gamma_A \cdot f: \PP \longrightarrow \mathrm{End}_A$ endows $(A,d_A)$ with a curved $\PP$-algebra structure, denoted by $\mathsf{Res}_f(A)~.$

\medskip

On the other hand, the morphism $f$ allows us to endow $\Q$ with a curved $(\Q,\PP)$-bimodule structure, where right action of $\PP$ is given by: 
\[
\begin{tikzcd}[column sep = 4pc,row sep=2pc]
\rho :\Q \circ \PP \arrow[r,"\mathrm{id}_\Q ~\circ ~f"]
&\Q \circ \Q  \arrow[r,"\gamma_\Q"]
&\Q~,
\end{tikzcd}
\]
and the left action of $\Q$ is simply given by $\gamma_\Q~.$ Let us check that this endows $\Q$ with a curved bimodule structure. We have that:
\[
d_\Q^2 = \gamma_\Q \cdot (\Theta_\Q \circ \mathrm{id}) - (\gamma_\Q)_{(1)} \cdot (\mathrm{id} \circ' \Theta_\Q) = \gamma_\Q \cdot (\Theta_\Q \circ \mathrm{id}) - \rho_{(1)} \cdot (\mathrm{id} \circ' \Theta_\PP)
\]
since $f \cdot \Theta_\PP  = \Theta_\Q~.$ Thus, the curved $(\Q,\PP)$-bimodule $\Q$ induces a functor 
\[
\mathsf{Ind}_f(-) \coloneqq \mathscr{S}_\PP(\Q)(-): \mathsf{curv}~\PP\text{-}\mathsf{alg} \longrightarrow \mathsf{curv}~\Q\text{-}\mathsf{alg}~.
\]
Since morphism of curved algebras are morphisms of pdg algebras, this general operadic construction is still an adjunction. 
\end{proof}

One important application of this new result is the construction of the curved universal enveloping algebra of a curved Lie algebra. To the best of our knowledge, this construction is new. The morphism $\Liec \longrightarrow \Assc$, defined in Proposition \ref{assliem}, induces the following adjunction.

\begin{Corollary}[Curved universal enveloping algebra]
There an adjunction between the category of curved Lie algebras and the category of curved associative algebras
\[
\begin{tikzcd}[column sep=7pc,row sep=3pc]
            \mathfrak{U} : \mathsf{curv}~\Liec\text{-}\mathsf{alg} \arrow[r, shift left=1.1ex, ""{name=F}] & \mathsf{curv}~\Assc \text{-}\mathsf{alg} : \mathsf{Skew}~, \arrow[l, shift left=.75ex, ""{name=U}]
            \arrow[phantom, from=F, to=U, , "\dashv" rotate=-90]
\end{tikzcd}
\]
where $\mathfrak{U}$ denotes the curved universal enveloping algebra and $\mathsf{Skew}$ denotes the functor obtained by the skew-symmetrization of the associative product of Corollary \ref{Antisymmetrization}.
\end{Corollary}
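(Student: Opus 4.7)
The plan is to obtain this adjunction as a direct instance of Theorem \ref{Ind and Res adjunction} applied to the morphism of curved partial operads $\Liec \longrightarrow \Assc$ constructed in Proposition \ref{assliem}. Strictly speaking, Theorem \ref{Ind and Res adjunction} is formulated for curved operads defined monoidally, so I would first pass to the curved operads obtained from these curved partial operads by freely adjoining augmented units, using the equivalence of categories between curved (unital) partial operads and curved operads mentioned in Remark \ref{extends to curved case}; this does not alter the associated categories of curved algebras.

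Once we are in the monoidal setting, the theorem gives an adjunction
\[
\mathsf{Ind}_f : \mathsf{curv}~\Liec\text{-}\mathsf{alg} \rightleftarrows \mathsf{curv}~\Assc\text{-}\mathsf{alg} : \mathsf{Res}_f~.
\]
The right adjoint $\mathsf{Res}_f$ is, by the proof of Theorem \ref{Ind and Res adjunction}, the pullback of the curved operad morphism along $f : \Liec \longrightarrow \Assc$, that is, the functor induced by precomposing $\Gamma_A : \Assc \longrightarrow \mathrm{End}_A$ with $f$. Since $f$ sends the Lie bracket generator $\beta$ to $\mu - \mu^{(12)}$ and the arity zero curvature generator $\zeta$ to $\phi$, this is exactly the skew-symmetrization functor identified in Corollary \ref{Antisymmetrization}. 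Hence $\mathsf{Res}_f = \mathsf{Skew}$.

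It remains only to name the left adjoint. By Theorem \ref{Ind and Res adjunction}, the induction functor is the relative Schur functor associated with the curved $(\Assc,\Liec)$-bimodule structure on $\Assc$ itself. On a curved Lie algebra $(\mathfrak{g},[-,-],d_\mathfrak{g},\vartheta)$, this is the quotient of the free curved associative algebra on $\mathfrak{g}$ by the ideal generated by $x \otimes y - (-1)^{|x||y|} y \otimes x - [x,y]$ for $x,y \in \mathfrak{g}$, which is precisely the curved universal enveloping algebra $\mathfrak{U}(\mathfrak{g})$ that extends the classical construction; so we define $\mathfrak{U} \coloneqq \mathsf{Ind}_f$. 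I expect no genuine obstacle: the entire content of the statement is the recognition that the general operadic machinery of Section 3 specializes, via the concrete morphism $\Liec \longrightarrow \Assc$, to a construction deserving to be called the curved universal enveloping algebra, and that its right adjoint coincides with skew-symmetrization.
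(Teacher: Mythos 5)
Your proposal is correct and matches the paper's proof exactly: the corollary is obtained by applying Theorem \ref{Ind and Res adjunction} to the morphism $f: \Liec \longrightarrow \Assc$ of Proposition \ref{assliem}, setting $\mathfrak{U} \coloneqq \mathsf{Ind}_f$ and identifying $\mathsf{Res}_f$ with the skew-symmetrization functor of Corollary \ref{Antisymmetrization}. Your additional remarks (adjoining an augmented unit to pass to the monoidal framework, and the explicit presentation of $\mathfrak{U}(\mathfrak{g})$ as a quotient of the tensor algebra) are consistent with what the paper says immediately before Section 3's bimodule constructions and in the proposition following the corollary.
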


\begin{proof}
Simply apply Theorem \ref{Ind and Res adjunction} to the morphism $f: \Liec \longrightarrow \Assc$, and set $\mathfrak{U} \coloneqq \mathsf{Ind}_f$ and $\mathsf{Anti} \coloneqq \mathsf{Res}_f~.$
\end{proof}

\begin{Proposition}
Let $\mathfrak{g}$ be a curved Lie algebra. Its universal enveloping algebra is isomorphic to
\[
\mathfrak{U}(\mathfrak{g}) \cong \frac{\overline{\mathcal{T}}(\mathfrak{g})}{\left(x \otimes y - (-1)^{|x|} y \otimes x - [x,y]\right)}~,
\]
where $\overline{\mathcal{T}}(-)$ denotes the non-unital tensor algebra, endowed with the curvature $\vartheta: \kk \longrightarrow \mathfrak{g} \hookrightarrow \mathfrak{U}(\mathfrak{g})$. 
\end{Proposition}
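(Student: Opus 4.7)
The plan is to identify the given quotient with the image of $\mathfrak{g}$ under the induction functor $\mathsf{Ind}_f$ from Theorem \ref{Ind and Res adjunction} by verifying the universal property of the left adjoint directly.

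First, I would show that $U \coloneqq \overline{\mathcal{T}}(\mathfrak{g})/(x \otimes y - (-1)^{|x|} y \otimes x - [x,y])$, equipped with the pre-differential extended from $d_\mathfrak{g}$ by the graded Leibniz rule and with the element $\vartheta_\mathfrak{g} \in \mathfrak{g} \hookrightarrow U$ as curvature, forms a well-defined curved associative algebra. The defining ideal is stable under the extended pre-differential because $d_\mathfrak{g}$ is a derivation of the bracket $[-,-]$, so $U$ inherits a pdg non-unital associative algebra structure. The curvature axiom $d^2(a) = \mu_U(\vartheta, a) - \mu_U(a, \vartheta)$ holds on generators $x \in \mathfrak{g}$ by combining $d_\mathfrak{g}^2(x) = [\vartheta_\mathfrak{g}, x]$ with the defining relation that identifies the bracket with the graded commutator, and it extends to all tensor words by induction on length.

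Second, I would verify the universal property of $U$ as the value of the left adjoint to $\mathsf{Skew}$ on $\mathfrak{g}$. The inclusion $\mathfrak{g} \hookrightarrow U$ defines a morphism of curved Lie algebras $\iota \colon \mathfrak{g} \to \mathsf{Skew}(U)$. Given any curved associative algebra $(B, \mu_B, d_B, \vartheta_B)$ and any morphism of curved Lie algebras $\varphi \colon \mathfrak{g} \to \mathsf{Skew}(B)$, the universal property of the tensor algebra produces a unique pdg associative algebra morphism $\widetilde{\varphi} \colon \overline{\mathcal{T}}(\mathfrak{g}) \to B$ extending $\varphi$. Since $\varphi$ respects brackets, $\widetilde{\varphi}$ descends to $U$; since $\varphi$ respects curvatures, it sends the image of $\vartheta_\mathfrak{g}$ in $U$ to $\vartheta_B$, hence $\widetilde{\varphi}$ is a morphism of curved associative algebras. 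Uniqueness of left adjoints then yields the claimed isomorphism $U \cong \mathfrak{U}(\mathfrak{g})$.

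The main obstacle is verifying the curvature axiom throughout $U$, and not merely on its generating pdg submodule $\mathfrak{g}$. The key observation is that for a pre-differential extended as a derivation one has $d^2(x \otimes y) = d^2(x) \otimes y + x \otimes d^2(y)$, because the two cross terms $d(x) \otimes d(y)$ appear with opposite signs by the Koszul sign rule. The analogous telescoping identity $\vartheta \cdot (x \otimes y) - (x \otimes y) \cdot \vartheta = (\vartheta \cdot x - x \cdot \vartheta) \otimes y + x \otimes (\vartheta \cdot y - y \cdot \vartheta)$ holds for the graded commutator with $\vartheta$. Matching these two identities term-by-term makes the inductive verification straightforward; once this is in place, the rest of the argument is an application of the tensor algebra universal property.
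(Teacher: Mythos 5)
Your proposal is correct and follows essentially the same route as the paper, which simply asserts that one can "check the adjunction by hand" at the pdg level and then verify that the tensor-algebra quotient equipped with $\vartheta$ already satisfies the curvature axiom of a curved associative algebra. You have spelled out exactly the details the paper leaves implicit — stability of the ideal under the extended derivation, the telescoping identities $d^2(x\otimes y)=d^2(x)\otimes y + x\otimes d^2(y)$ and $[\vartheta,x\cdot y]=[\vartheta,x]\cdot y + x\cdot[\vartheta,y]$, and the universal property against $\mathsf{Skew}$ — so the argument is complete.
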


\begin{proof}
One can check this adjunction by hand. Indeed, the adjunction holds between between pdg $c\mathcal{L}ie$-algebras and pdg $c\mathcal{A}ss$-algebras. It is straightforward to check that $\mathfrak{U}(\mathfrak{g})$ endowed with the curvature $\vartheta$ forms a curved $c\mathcal{L}ie$-algebra. 
\end{proof}

\begin{Remark}[PBW property]
In \cite{Tamaroff2020}, the authors develop a general theory of \textit{PBW properties} for monads: a morphism $f: M \longrightarrow N$ has the PBW property if $N$ is free as a right $M$-module, where the structure is induced by the morphism $f$. This encompasses the classical PBW theorem, since we have an isomorphism
\[
\mathcal{A}ss \cong \mathcal{C}om \circ \mathcal{L}ie
\]
of right $\mathcal{L}ie$-modules. This gives back the isomorphism of vector spaces between the universal enveloping algebra of a Lie algebra and the symmetric algebra of a Lie algebra. 

\medskip

The same result hold in the curved setting, where there is an isomorphism 
\[
c\mathcal{A}ss \cong \mathcal{C}om \circ c\mathcal{L}ie
\]
of pdg right $c\mathcal{L}ie$-modules. This can be deduced from the classical PBW, since the morphism $c\mathcal{L}ie \longrightarrow c\mathcal{A}ss$ preserves the arity zero component; grading by the number of corks, the map induced map in the associated graded is an isomorphism, therefore the map was an isomorphism to begin with. This PBW property implies that the universal enveloping algebra of a curved Lie algebra is also isomorphic to the symmetric algebra of the curved Lie algebra as graded vector spaces.
\end{Remark}

This machinery can also be applied to curved mixed $\mathcal{L}_\infty$-algebras. These are curved $\mathcal{L}_\infty$-algebras with two pre-differentials, one coming from the underlying pdg module and another one from the structure, such that their difference satisfies the axioms of a classical curved $\mathcal{L}_\infty$-algebra. See \cite[Appendix]{integration} for more details. These algebras are exactly curved algebras over $\widehat{\Omega}(u\mathcal{C}om^*)$, where $\widehat{\Omega}$ is the complete Cobar construction of Section \ref{Section: Constructions Bar-Cobar operadiques}.

\begin{Corollary}[Curved $\mathcal{A}_\infty$ universal enveloping algebra]
There is the following adjunction between the category of curved mixed $\mathcal{L}_\infty$-algebras and the category of curved mixed $\mathcal{A}_\infty$ algebras given by:
\[
\begin{tikzcd}[column sep=7pc,row sep=3pc]
            \mathfrak{U} : \mathsf{curv.mix}~\mathcal{L}_\infty\text{-}\mathsf{alg} \arrow[r, shift left=1.1ex, ""{name=F}] & \mathsf{curv.mix}~\mathcal{A}_\infty \text{-}\mathsf{alg} : \mathsf{Anti}~, \arrow[l, shift left=.75ex, ""{name=U}]
            \arrow[phantom, from=F, to=U, , "\dashv" rotate=-90]
\end{tikzcd}
\]
where $\mathfrak{U}$ denotes the curved $\mathcal{A}_\infty$ universal enveloping algebra.
\end{Corollary}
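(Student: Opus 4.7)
The strategy is to directly apply Theorem \ref{Ind and Res adjunction} to a suitable morphism of curved operads obtained by dualizing the classical surjection $u\mathcal{A}ss \twoheadrightarrow u\mathcal{C}om$ and applying the complete Cobar construction.

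First, I would identify the two categories appearing in the statement as categories of curved algebras over complete curved absolute partial operads. Following the remark immediately preceding the statement, curved mixed $\mathcal{L}_\infty$-algebras are exactly curved $\widehat{\Omega}(u\mathcal{C}om^*)$-algebras; by an entirely analogous construction — replacing the cocommutative cooperad by the coassociative one — curved mixed $\mathcal{A}_\infty$-algebras are exactly curved $\widehat{\Omega}(u\mathcal{A}ss^*)$-algebras.

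Second, I would exhibit the morphism of curved operads inducing the adjunction. The canonical surjection of unital partial operads $u\mathcal{A}ss \twoheadrightarrow u\mathcal{C}om$ (encoding the fact that every commutative algebra is associative) dualizes arity-wise to a morphism of counital partial cooperads $u\mathcal{C}om^* \hookrightarrow u\mathcal{A}ss^*$. Since the complete Cobar construction $\widehat{\Omega}$ is functorial on counital partial cooperads, it produces a morphism of complete curved absolute partial operads
\[
\widehat{\Omega}(u\mathcal{C}om^*) \longrightarrow \widehat{\Omega}(u\mathcal{A}ss^*).
\]
Applying Theorem \ref{Ind and Res adjunction} to this morphism then yields the desired adjunction $\mathfrak{U} \dashv \mathsf{Anti}$. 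That $\mathsf{Anti}$ coincides with the expected antisymmetrization follows by tracing through the restriction functor: the pullback along the Cobar of the cooperadic inclusion sends each generating $\ell_n$-type operation to the total antisymmetrization of the corresponding $m_n$-type operation, thereby recovering Corollary \ref{Antisymmetrization} on strict generators.

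The main technical point to address is that Theorem \ref{Ind and Res adjunction} is stated for curved partial operads, whereas $\widehat{\Omega}(u\mathcal{C}om^*)$ and $\widehat{\Omega}(u\mathcal{A}ss^*)$ live in the enlarged category of complete curved absolute partial operads. The proof, however, goes through verbatim once one extends Definition \ref{curvedbimodule} to this absolute setting: the induction functor is encoded by the canonical curved bimodule structure on the target operad twisted on the right by the morphism $f$, and the restriction functor relies only on the characterization of curved algebras as morphisms into the curved endomorphism operad. Both features survive the passage to the complete curved absolute framework, so this extension is a bookkeeping exercise rather than a serious obstacle.
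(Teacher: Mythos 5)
Your proposal follows essentially the same route as the paper: dualize $u\mathcal{A}ss \twoheadrightarrow u\mathcal{C}om$ to a morphism of counital partial cooperads, apply the functorial complete Cobar construction to get $\widehat{\Omega}(u\mathcal{C}om^*) \longrightarrow \widehat{\Omega}(u\mathcal{A}ss^*)$, and invoke Theorem \ref{Ind and Res adjunction}. You are in fact slightly more careful than the paper's one-line proof, which silently treats these complete curved absolute partial operads as curved partial operads when applying the theorem; your remark that the induction/restriction machinery extends to the absolute setting addresses a point the paper leaves implicit.
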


\begin{proof}
There is a morphism of counital partial cooperads $\iota: u\mathcal{C}om^* \longrightarrow u\mathcal{A}ss^*$ given by the linear dual of the classical morphism of unital partial operads $u\mathcal{A}ss \twoheadrightarrow u\mathcal{C}om$. Therefore, using the complete Cobar construction of Section \ref{Section: Constructions Bar-Cobar operadiques}, we get a morphism of curved partial operads $\widehat{\Omega}(\iota): \widehat{\Omega}(u\mathcal{C}om^*) \longrightarrow \widehat{\Omega}(u\mathcal{A}ss^*)$. By Theorem \ref{Ind and Res adjunction} it induces an adjunction.
\end{proof}

\subsection{The analogues of left and right modules}One may ask whether there exists a natural notion of a left (resp. right) curved $\PP$-module. This is in fact not obvious. Since every algebra over an operad is a particular case of left module in the classical setting, a naive generalization could be: a left pdg $\PP$-module $(M,\lambda,d_M)$ is a  left curved $\PP$-module if the following diagram commutes
\[
\begin{tikzcd}[column sep=5pc,row sep=2pc]\label{curvedmod}
\I \circ M \arrow[r,"\Theta_\PP \hspace{1pt} \circ \hspace{1pt} \mathrm{id}"] \arrow[rd, "d_M^2",swap]
&\PP \circ M \arrow[d,"\lambda"]\\
&M~.
\end{tikzcd}
\]
But in this case $\mathscr{S}(M)(-)$ does not produce a functor $\mathsf{pdg}~\mathsf{mod} \longrightarrow \mathsf{curv}~\PP\text{-}\mathsf{alg}~,$ since $\mathscr{S}(M)(A)$ is not, in general, a curved $\PP$-algebra. One can notice  by doing a straightforward computation that for this notion of curved left $\PP$-module, $\mathscr{S}(M)(A)$ is a curved $\PP$-algebra if and only if $d_A^2=0~.$

\medskip

The conceptual explanation is that the data of a dg module is equivalent to the data of a curved algebra over the curved operad $(\I, 0, 0)$, where $\I$ is the trivial operad with the zero pre-differential and the zero curvature. The above-mentioned naive definition of a left curved $\PP$-module is in fact a curved $(\PP, \I)$-bimodule, which is coherent with the fact that this notion encodes functors $\mathsf{dg}~\mathsf{mod} \longrightarrow \mathsf{curv}~\PP\text{-}\mathsf{alg}~.$ Likewise, a symmetric naive definition of a right curved $\PP$-module would in fact be a $(\I, \PP)$-bimodule that would encode functors $\mathsf{curv}~\PP\text{-}\mathsf{alg} \longrightarrow \mathsf{dg}~\mathsf{mod}$ .

\begin{Remark}
Let $(\PP, \gamma,\eta, d_\PP, \Theta_\PP)$ be a curved operad. The unit $\eta: \I \longrightarrow \PP$ is not a morphism of curved operads. Otherwise, we could endow $\PP$ with a curved $(\PP, \I)$-bimodule structure and $\mathscr{S}(\PP)(A)$ would be a curved $\PP$-algebra for $(A,d_A)$ a dg module, which is not the case.
\end{Remark}

The natural question is then to find a curved operad that plays the same role as the unit operad $\I$ plays for classical operads in the theory of bimodules. Otherwise stated, a curved operad that encodes pdg modules as its curved algebras.

\begin{Definition}[$\mathcal{I}\mathcal{C}$ operad]
The pdg operad $\mathcal{IC}$ is the free pdg operad generated by an operation of arity $1$ and degree $-2$. It is given by the pdg $\mathbb{S}$-module $(0,\mathbb{K}.\mathrm{id} ~\oplus~ \bar{S}(\theta),0,\cdots)$ with zero pre-differential, where $\bar{S}(\theta)$ denotes the free non-unital commutative algebra generated by $\theta~.$
\end{Definition}

\begin{lemma}
The pdg operad $\mathcal{IC}$, endowed the curvature $\Theta_{\mathcal{IC}}(\mathrm{id}) \coloneqq \theta$ forms a curved operad. The category of curved $\mathcal{IC}$-algebras is isomorphic to the category of pdg modules. 
\end{lemma}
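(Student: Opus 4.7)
The plan is to verify the two claims separately, both by direct unwinding of definitions, using crucially that $\mathcal{IC}$ is concentrated in arity $1$ and is free on a single generator.

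First, I would check that $(\mathcal{IC}, 0, \Theta_{\mathcal{IC}})$ satisfies the two conditions defining a curved operad. The condition $d_{\mathcal{IC}}(\theta) = 0$ is immediate since the pre-differential is zero. For the curvature relation, I need to verify that
\[
0 \;=\; d_{\mathcal{IC}}^{2} \;=\; \gamma_{(1)} \cdot \big(\Theta_{\mathcal{IC}} \circ \mathrm{id} \,-\, \mathrm{id} \circ' \Theta_{\mathcal{IC}}\big)
\]
on every element of $\mathcal{IC}$. Since $\mathcal{IC}(n) = 0$ for $n \neq 1$, only arity $1$ needs to be checked. There, $\mathcal{IC}(1) \cong \mathbb{K}[\theta]$ as an associative algebra (the operadic composition in arity one), and on $\mu = \theta^{k}$ the right-hand side equals the commutator $\theta\cdot \theta^{k} - \theta^{k}\cdot \theta = 0$. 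As the maps involved are compatible with derivations generated in arity one, this suffices.

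Next, for the equivalence of categories, I would unwind the definition of a curved $\mathcal{IC}$-algebra. By Lemma~2.14, such a structure on a pdg module $(A, d_A)$ amounts to a morphism of curved operads $\Gamma_A : (\mathcal{IC}, 0, \Theta_{\mathcal{IC}}) \longrightarrow (\mathrm{End}_A, \partial, \Theta_A)$. Because $\mathcal{IC}$ is the free graded operad on the single arity-one degree $-2$ generator $\theta$, and its pre-differential vanishes, a morphism of pdg operads $\Gamma_A$ is uniquely determined by the image $\Gamma_A(\theta) \in \mathrm{End}_A(1)_{-2}$, subject only to $\partial(\Gamma_A(\theta)) = 0$. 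The additional requirement that $\Gamma_A$ preserves the curvatures, $\Gamma_A \circ \Theta_{\mathcal{IC}} = \Theta_A$, forces
\[
\Gamma_A(\theta) \;=\; d_A^{2}.
\]
This choice is always well defined: $\partial(d_A^{2}) = [d_A, d_A^{2}] = d_A^{3} - d_A^{3} = 0$, so $\Gamma_A$ is indeed a morphism of pdg operads.

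Consequently, assigning to a pdg module $(A, d_A)$ the unique curved $\mathcal{IC}$-algebra structure with $\Gamma_A(\theta) = d_A^{2}$ defines a functor $\mathsf{pdg}\text{-}\mathsf{mod} \longrightarrow \mathsf{curv}\text{-}\mathcal{IC}\text{-}\mathsf{alg}$, and the forgetful functor in the other direction is its inverse. Since a morphism of curved $\mathcal{IC}$-algebras is simply a morphism of pdg modules commuting with the (uniquely determined) structural morphisms, the two categories are isomorphic. The only non-formal step is the verification $\partial(d_A^{2}) = 0$ ensuring the forced structure morphism is well defined; but as noted this is automatic, so no real obstacle arises.
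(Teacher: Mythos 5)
Your proof is correct and follows essentially the same route as the paper's (much terser) argument: the curvature condition reduces to the vanishing of the commutator $[\theta,-]$ in the arity-one algebra $\mathcal{IC}(1)\cong\mathbb{K}[\theta]$, and a curved-operad morphism out of the free operad on $\theta$ is both determined by and forced to satisfy $\Gamma_A(\theta)=d_A^2$, with $\partial(d_A^2)=[d_A,d_A^2]=0$ guaranteeing well-definedness. The only detail worth making fully explicit is that any morphism of pdg modules $f$ automatically intertwines the forced structures, since $f\cdot d_A^2=d_B^2\cdot f$; this is what makes the forgetful functor full, hence an isomorphism of categories.
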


\begin{proof}
The commutator with $\theta$ is zero, hence $\mathcal{IC}$ is a curved operad. Let $(V,d_V)$ be a pdg module and let $\Gamma: \mathcal{IC} \longrightarrow \mathrm{End}_V$ be a morphism of curved operads. The image of $\theta$ determines $\Gamma$, since it is a morphism of curved operads then $\Gamma(\theta) = d_V^2~.$ 
\end{proof}

For any curved operad $(\PP, d_\PP, \Theta_\PP)$, there is an unique morphism of curved operads $\varphi_\PP: \mathcal{IC} \longrightarrow \PP$ given by $\varphi_\PP(\theta) = \theta_\PP$, where $\theta_\PP = \Theta_\PP(\mathrm{id})$. Therefore the pdg $\mathbb{S}$-module $\PP$ can be endowed canonically with a curved $(\mathcal{IC},\PP)$-bimodule structure and with a curved $(\PP,\mathcal{IC})$-bimodule structure.

\begin{Proposition}
Let $(\PP, d_\PP, \Theta_\PP)$ be a curved operad and let 
\[
\begin{tikzcd}[column sep=7pc,row sep=3pc]
            \mathsf{F}(\PP) : \mathsf{pdg}~\mathsf{mod} \arrow[r, shift left=1.1ex, ""{name=F}] & \mathsf{curv}~\PP \text{-}\mathsf{alg} : \mathsf{U}~, \arrow[l, shift left=.75ex, ""{name=U}]
            \arrow[phantom, from=F, to=U, , "\dashv" rotate=-90]
\end{tikzcd}
\]
be the free-forgetful adjunction of Proposition \ref{freecurvedalg}. The free functor $\mathsf{F}(\PP)$ is naturally isomorphic to the functor $\mathscr{S}_{\mathcal{IC}}(\PP)$ given by the canonical curved $(\PP,\mathcal{IC})$-bimodule structure on $\PP$. The forgetful functor $\mathsf{U}$ is naturally isomorphic to $\mathscr{S}_{\PP}(\PP)$ given by the canonical curved $(\mathcal{IC},\PP)$-bimodule structure on $\PP$.
\end{Proposition}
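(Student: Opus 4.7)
The plan is to unpack each relative Schur functor as a coequalizer and identify it with the appropriate canonical functor. Naturality in the input follows immediately from the functoriality of the relative Schur construction, so the content is entirely in the pointwise identification.

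For the free functor, fix a pdg module $(V, d_V)$. Since $\mathcal{IC}(n) = 0$ for $n \neq 1$ and $\mathcal{IC}(1) = \kk.\mathrm{id} \oplus \bar{S}(\theta)$, the coequalizer defining $\mathscr{S}_{\mathcal{IC}}(\PP)(V)$ identifies, for every $\mu \in \PP(n)$, $1 \leq j \leq n$, $k \geq 1$, and $v_1, \ldots, v_n \in V$, the element $(\mu \circ_j \theta_\PP^k) \otimes (v_1, \ldots, v_n)$ with $\mu \otimes (v_1, \ldots, d_V^{2k}(v_j), \ldots, v_n)$. Indeed, the right action of $\mathcal{IC}$ on $\PP$ sends $\theta^k$ to $\theta_\PP^k$ via $\varphi_\PP$, while the left action on $V$ sends $\theta^k$ to $d_V^{2k}$. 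The $k=1$ relations are precisely the generators of the $\PP$-algebra ideal used in Proposition \ref{freecurvedalg} to construct $\mathsf{F}(\PP)(V)$, and the higher-$k$ relations follow by induction from $\theta_\PP^k = \theta_\PP^{k-1} \circ_1 \theta_\PP$ together with the $k=1$ identification. This yields $\mathscr{S}_{\mathcal{IC}}(\PP)(V) \cong \mathsf{F}(\PP)(V)$ as curved $\PP$-algebras.

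For the forgetful functor, fix a curved $\PP$-algebra $(A, \gamma_A, d_A)$. The coequalizer
\[
\mathscr{S}_{\PP}(\PP)(A) = \mathsf{Coeq}\bigl(\mathscr{S}(\PP)(\mathscr{S}(\PP)(A)) \rightrightarrows \mathscr{S}(\PP)(A)\bigr),
\]
whose two arrows are $\mathscr{S}(\gamma_\PP)(\mathrm{id}_A)$ and $\mathscr{S}(\mathrm{id}_\PP)(\gamma_A)$, is canonically identified with $A$ via the structure map $\gamma_A$, by the standard fact that every algebra over a monad is the reflexive coequalizer of the two canonical maps from the free algebra on its underlying object. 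The residual curved $\mathcal{IC}$-algebra structure on $A$ is obtained by restricting the $\PP$-action along $\varphi_\PP: \mathcal{IC} \to \PP$, so $\theta$ acts on $A$ as $\gamma_A(\theta_\PP, -) = d_A^2$ by the curvature equation. This is precisely the pdg structure on $A$ viewed as a curved $\mathcal{IC}$-algebra, giving $\mathscr{S}_{\PP}(\PP)(A) \cong \mathsf{U}(A)$.

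The main point to verify carefully is the first identification: one must check that the coequalizer relations agree, for all $k \geq 1$, with the $\PP$-algebra ideal generated by the $k=1$ relations, and that the induced pre-differential descends correctly. The former is a direct expansion of the $\PP$-algebra action on $\mathscr{S}(\PP)(V)$, while the latter is automatic because the image of $\gamma(\theta_\PP,-) - d_V^2(-)$ is already stable under the pre-differential, as recorded in the proof of Proposition \ref{reflexive}. The adjointness between $\mathscr{S}_{\mathcal{IC}}(\PP)$ and $\mathscr{S}_{\PP}(\PP)$ is then recovered for free from the free-forgetful adjunction.
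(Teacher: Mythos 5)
Your proof is correct and follows the same route as the paper's (which simply observes that $\mathscr{S}_{\mathcal{IC}}(\PP)(V)$ is defined by the same quotient as $\mathsf{F}(\PP)(V)$, with the right $\mathcal{IC}$-action through $\theta_\PP$ identified against the left action through $d_V^2$, and that $\mathscr{S}_\PP(\PP)(A)$ recovers $A$). You merely fill in the details the paper leaves implicit — the higher powers $\theta^k$ in $\bar{S}(\theta)$ and the split-coequalizer identification of $\mathscr{S}_\PP(\PP)(A)$ with $A$ — so no further comment is needed.
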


\begin{proof}
Let $(V,d_V)$ be a pdg module, $\mathscr{S}_{\mathcal{IC}}(\PP)(V)$ is defined by the same quotient as $\mathsf{F}(\PP)(V)$: the right action of $\mathcal{IC}$ on $\PP$ is given by $\theta_\PP$ and the left action of $\mathcal{IC}$ on $(V,d_V)$ is given by $d_V^2$, which are identified in $\mathscr{S}_{\mathcal{IC}}(\PP)(V)~.$ It is straightforward to check that $\mathscr{S}_{\PP}(\PP)(V)$ amounts to $(V,d_V)$ endowed with its pdg module structure.
\end{proof}

This indicates that curved $(\PP,\mathcal{IC})$-bimodules are a good analogue to left $\PP$-modules over operads in the case of curved operads; likewise, curved $(\mathcal{IC},\PP)$-bimodules are a good analogue to right $\PP$-modules. 

\begin{Remark}
To the best of our knowledge, these definitions also provide new notions of left and right modules for curved associative algebras when we consider them as curved operads concentrated in arity one. 
\end{Remark}

\section{Curved cooperads and curved partial cooperads}\label{Section: Curved cooperads}
In this section, we briefly recall the notion of a curved cooperad which first appeared in \cite{HirshMilles12}. Conilpotent curved partial cooperads will play the role of the Koszul dual of unital partial operads. The notions of coalgebras and algebras over cooperads also extend to the case of curved cooperads.

\begin{Definition}[Curved cooperad]\label{def curved cooperad}
A \textit{curved cooperad} $(\C,\Delta,\epsilon,d_\C,\Theta_\C)$ amounts to the data of a pdg cooperad $(\C,\Delta,\epsilon,d_\C)$ and a morphism of pdg $\mathbb{S}$-modules $\Theta_\C: (\C,d_\C) \longrightarrow (\I,0)$ of degree $-2$, such that the following diagram commutes: 
\[
\begin{tikzcd}[column sep=7.5pc,row sep=3pc]
\C \arrow[r,"\Delta_{(1)}"] \arrow[rrd,"d_\C^2", bend right =10]
&\C \circ_{(1)} \C \arrow[r,"(\mathrm{id}~ \circ ~ \Theta_\C)~-~(\Theta_\C~ \circ_{(1)}~ \mathrm{id})~"] 
&(\C \circ \I) \oplus (\I \circ \C) \cong  \C \oplus \C  \arrow[d,"\mathrm{proj}"]\\
&
&\C~,
\end{tikzcd}
\]
where $\mathrm{proj}$ is given by $\mathrm{proj}(\mu,\nu) \coloneqq \mu + \nu$. A \textit{morphism} of curved cooperads $f: (\C,,d_\C,\Theta_\C) \longrightarrow (\D,d_\D,\Theta_\D)$ is the data of a morphism of pdg cooperads $f: (\C,,d_\C) \longrightarrow (\D,d_\D)$ such that $\Theta_\D \circ f = \Theta_\C~.$
\end{Definition}

By a slight abuse of notation, we denote this equality by 
\[
d_\C^2 = (\mathrm{id} \circ_{(1)} \Theta_\C - \Theta_\C \circ \mathrm{id}) \cdot \Delta_{(1)}~,
\]
forgetting $\mathrm{proj}$ and the identifications made.

\begin{Remark}
One also defines \textit{curved partial cooperads} and \textit{curved counital partial cooperads} as one would expect, since the condition on the curvature only involves partial decompositions. In this context, the comparison results of Section \ref{Section: Recollections} between different types of cooperads extend without any difficulty to the curved case, \textit{mutatis mutandis}.
\end{Remark}

Any curved cooperad $(\C,d_\C,\Theta_\C)$ induces a comonad structure on its Schur functor $\mathscr{S}(\C)$, and a monad structure on its dual Schur functor $\widehat{\mathscr{S}}^c(\C)~.$ Hence we can define curved coalgebras and curved algebras over any given curved cooperad.

\begin{Definition}[Curved $\C$-coalgebra]\label{def curved conil coalg}
Let $(\C,d_\C,\Theta_\C)$ be a curved cooperad and let $(C,\Delta_C,d_C)$ be a pdg $\C$-coalgebra. It is a \textit{curved} $\C$-\textit{coalgebra} if the following diagram commutes:
\[
\begin{tikzcd}[column sep=3pc,row sep=3pc]
C  \arrow[r,"\Delta_C "] \arrow[rd,"d_C^2",swap]
&\mathscr{S}(\C)(C) \arrow[d,"\mathscr{S}(\Theta_\C)(\mathrm{id})"]\\
&C \cong \mathscr{S}(\I)(C)~.
\end{tikzcd}
\]
A \textit{morphism} of curved conilpotent $\C$-coalgebras $f: (C,\Delta_C,d_C) \longrightarrow (C',\Delta_{C'},d_{C'})$ is the data of a morphism of pdg $\C$-coalgebras. 
\end{Definition}

Thus the category of curved $\C$-coalgebras is a full-subcategory of the category of pdg $\mathcal{C}$-coalgebras. It is not obvious to us whether the category of curved $\C$-coalgebras admits a cofree object or not. It would be interesting to have a comonadicity result like in the case of curved algebras over a curved operad. Nevertheless, the following result guarantees the existence of limits and colimits.

\begin{theorem}[{\cite{grignou2019}}]
The category curved $\C$-coalgebras is a presentable category. In particular, it is both complete and cocomplete.
\end{theorem}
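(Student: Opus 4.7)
The plan is to deduce presentability from standard structural results about locally presentable categories and accessible (co)monads, and then use the fact that every locally presentable category is automatically complete and cocomplete.

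First, I would check that the underlying category $\mathsf{pdg}~\mathsf{mod}$ is locally finitely presentable. This is immediate: it is the category of graded $\kk$-modules equipped with a single degree~$-1$ endomorphism with no further relation, i.e.\ models of a $\kk$-linear finite limit sketch. Next, the Schur endofunctor $\mathscr{S}(\C)$ preserves filtered colimits, since tensor products, direct sums and quotients by finite group actions all do so; as a comonad on a locally presentable category it is therefore accessible. By the standard comonadicity result for accessible comonads on locally presentable categories (Ad\'amek--Rosick\'y, \emph{Locally presentable and accessible categories}, Thm.~2.78 and its dual), the category of $\mathscr{S}(\C)$-coalgebras, which is precisely $\mathsf{pdg}~\C\text{-}\mathsf{coalg}$, is locally presentable.

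The second step is to cut out $\mathsf{curv}~\C\text{-}\mathsf{coalg}$ inside $\mathsf{pdg}~\C\text{-}\mathsf{coalg}$ by the curvature equation. Consider the two accessible endofunctors of $\mathsf{pdg}~\C\text{-}\mathsf{coalg}$ obtained by composing the forgetful functor to $\mathsf{pdg}~\mathsf{mod}$ with, respectively, the assignments
\[
(C,\Delta_C,d_C) \longmapsto \mathscr{S}(\Theta_\C)(\mathrm{id}_C)\circ \Delta_C
\quad\text{and}\quad
(C,\Delta_C,d_C) \longmapsto d_C^2.
\]
Both are natural transformations from the forgetful functor to itself, and they are accessible because they are built from the structural data of the coalgebra using composition, which preserves filtered colimits. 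The full subcategory on which two such parallel accessible natural transformations coincide (the equifier) is again locally presentable, being closed under all limits and filtered colimits in the ambient locally presentable category. By Definition~\ref{def curved conil coalg} this equifier is exactly $\mathsf{curv}~\C\text{-}\mathsf{coalg}$.

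Finally, presentability implies completeness and cocompleteness automatically, which yields the ``in particular'' statement. The main obstacle I anticipate is the bookkeeping in the second step: one has to verify carefully that the squaring map $d_C\mapsto d_C^2$ really is a morphism in the ambient category and assembles into an accessible natural transformation between accessible endofunctors, so that the equifier argument genuinely applies. Once this is in place, everything else is a direct invocation of standard locally presentable category theory.
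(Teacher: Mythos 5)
The paper does not actually prove this statement: it is imported wholesale from \cite{grignou2019}, so there is no internal proof to compare against. Judged on its own terms, your strategy --- local presentability of $\mathsf{pdg}~\mathsf{mod}$, accessibility of the comonad $\mathscr{S}(\C)$ (hence local presentability of the category of pdg $\C$-coalgebras, which is cocomplete because the forgetful functor creates colimits, and accessible as an inserter-plus-equifiers of accessible functors), then cutting out the curved objects by the curvature equation --- is a standard and viable route, and the first two steps are fine.

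The gap is in how you conclude the last step. You assert that the equifier is locally presentable ``being closed under all limits and filtered colimits in the ambient locally presentable category''. Closure under limits is not available: the forgetful functor from $\mathscr{S}(\C)$-coalgebras to $\mathsf{pdg}~\mathsf{mod}$ is a \emph{left} adjoint (its right adjoint is the cofree coalgebra functor), so limits of pdg $\C$-coalgebras are not computed on underlying modules, and it is not at all clear that the curvature equation passes to them; the reflection theorem you are implicitly invoking (Ad\'amek--Rosick\'y, Thm.\ 2.48) therefore does not apply as stated. The repair is to argue through colimits instead: the equifier of a pair of natural transformations between accessible functors is accessible and accessibly embedded (Ad\'amek--Rosick\'y 2.77, Makkai--Par\'e), and $\mathsf{curv}~\C\text{-}\mathsf{coalg}$ is closed under colimits in $\mathsf{pdg}~\C\text{-}\mathsf{coalg}$ because those \emph{are} created on underlying modules, where the canonical maps $C_i \to \colim_i C_i$ are jointly epimorphic and both sides of the equation $\mathscr{S}(\Theta_\C)(\mathrm{id})\cdot\Delta = d^2$ are natural; hence the equation is inherited by the colimit. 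Accessible plus cocomplete gives locally presentable, and completeness then comes for free. You should also make explicit the point you yourself flag: both sides of the curvature equation are degree $-2$ maps, so they are not endomorphisms in $\mathsf{pdg}~\mathsf{mod}$; the two ``parallel natural transformations'' must be taken as transformations $U \Rightarrow U[-2]$ valued in graded modules (naturality holding because morphisms of pdg $\C$-coalgebras commute with $\Delta$ and with the pre-differentials), after which the equifier argument genuinely applies.
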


\begin{Definition}[Curved algebra over a cooperad]\label{def curved alg over a coop}
Let $(\C,d_\C,\Theta_\C)$ be a curved cooperad and let $(B,\gamma_B,d_B)$ be a pdg $\C$-algebra. It is a \textit{curved} $\C$\textit{-algebra} if the following diagram commutes: 
\[
\begin{tikzcd}[column sep=4pc,row sep=3pc]
B \cong \widehat{\mathscr{S}}^c(\I)(B) \arrow[r,"\widehat{\mathscr{S}}^c(\Theta_\C)(\mathrm{id}) "] \arrow[rd,"- d_B^2",swap]
&\widehat{\mathscr{S}}^c(\C)(B) \arrow[d,"\gamma_B"]\\
&B ~.
\end{tikzcd}
\]
A \textit{morphism} of curved $\C$-algebras $f: (B,\gamma_B,d_B) \longrightarrow (B',\gamma_{B'},d_{B'})$ is the data of a morphism of pdg $\C$-algebras. 
\end{Definition}

Thus the category of curved $\mathcal{C}$-algebras is also a full sub-category of the category of pdg $\mathcal{C}$-algebras. The following proposition gives a reflector.

\begin{Proposition}[{\cite[Theorem 7.5]{grignoulejay18}}]
Let $(\C,d_\C,\Theta_\C)$ be a curved cooperad. The inclusion functor 
\[
\mathsf{Inc}: \mathsf{curv}~\C\text{-}\mathsf{alg} \hookrightarrow \mathsf{pdg}~\C\text{-}\mathsf{alg}
\]
has a left adjoint 
\[
\mathsf{Curv}: \mathsf{pdg}~\C\text{-}\mathsf{alg} \longrightarrow \mathsf{curv}~\C\text{-}\mathsf{alg}~.
\]
Hence $\mathsf{curv}~\C\text{-}\mathsf{alg}$ is a reflexive subcategory of $\mathsf{pdg}~\C\text{-}\mathsf{alg}$. For a pdg $\C$-algebra $(B,\gamma_B,d_B)$, its image under this functor is given by the following quotient: 

\[
\mathsf{Curv}(B) \coloneqq \frac{B}{\left(\gamma_B \cdot \widehat{\mathscr{S}}^c(\Theta_\C)(\mathrm{id}) + d_B^2(-)\right)}~.
\]
where $\left(\gamma_B \cdot \widehat{\mathscr{S}}^c(\Theta_\C)(\mathrm{id}) + d_B^2(-)\right)$ denoted the ideal generated by $\mathsf{Im}\left(\gamma_B \cdot \widehat{\mathscr{S}}^c(\Theta_\C)(\mathrm{id}) + d_B^2(-)\right)~.$ Its pdg $\C$-algebra structure is induced by $\gamma_B$ and $d_B$.
\end{Proposition}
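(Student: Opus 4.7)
The plan is to mimic the argument of Proposition~\ref{reflexive}, adapting it to the cooperadic monad $\widehat{\mathscr{S}}^c(\C)$. First I would define the \emph{defect morphism}
\[
\delta_B \coloneqq \gamma_B \cdot \widehat{\mathscr{S}}^c(\Theta_\C)(\mathrm{id}) + d_B^2 : B \longrightarrow B,
\]
a map of degree $-2$ which measures the failure of $(B,\gamma_B,d_B)$ to be a curved $\C$-algebra. The goal is to realize $\mathsf{Curv}(B)$ as the quotient of $B$ by the smallest sub pdg $\C$-algebra ideal $(\mathrm{Im}\,\delta_B)$ containing the image of $\delta_B$; existence of such a minimal ideal follows from closure under intersections in the presentable category $\mathsf{pdg}~\C\text{-}\mathsf{alg}$. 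The subtlety here, compared to the operadic case, is that the monad $\widehat{\mathscr{S}}^c(\C)$ involves infinite products, so the notion of ``ideal generated by a subset'' must be taken as the smallest sub-pdg-module stable under the structural map $\gamma_B$ applied to any element of $\widehat{\mathscr{S}}^c(\C)(B)$ having at least one tensor factor in the subset.

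The next step is to verify that the ideal is stable under $d_B$. The key computation is that $\delta_B$ is itself a morphism of pdg modules, i.e.\ $d_B \cdot \delta_B = \delta_B \cdot d_B$. Indeed, since $\Theta_\C : \C \to \I$ is a morphism of pdg $\mathbb{S}$-modules with $d_\I = 0$, one has $\Theta_\C \cdot d_\C = 0$, which implies that $\widehat{\mathscr{S}}^c(\Theta_\C)(\mathrm{id})$ graded-commutes with the induced pre-differentials. Combined with the fact that $\gamma_B$ commutes with pre-differentials (it is a pdg $\C$-algebra structure map) and the tautology $d_B \cdot d_B^2 = d_B^2 \cdot d_B$, this gives the claim. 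It follows that $\mathrm{Im}\,\delta_B$ is a sub pdg-module, and a standard argument, using that the induced pre-differential on $\widehat{\mathscr{S}}^c(\C)(B)$ is a derivation and that the pdg structure on $B$ is transported through $\gamma_B$, shows that the generated ideal $(\mathrm{Im}\,\delta_B)$ is also stable under $d_B$. Therefore the quotient $\mathsf{Curv}(B) \coloneqq B/(\mathrm{Im}\,\delta_B)$ inherits a canonical pdg $\C$-algebra structure from $\gamma_B$ and $d_B$.

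By construction, the image of $\delta_B$ vanishes in $\mathsf{Curv}(B)$, so the diagram of Definition~\ref{def curved alg over a coop} commutes there, making $\mathsf{Curv}(B)$ into a curved $\C$-algebra. For the universal property, let $(B',\gamma_{B'},d_{B'})$ be a curved $\C$-algebra and let $f:B\to B'$ be a morphism of pdg $\C$-algebras. Using that $f$ commutes with the $\C$-algebra structures and the pre-differentials, one computes
\[
f\cdot\delta_B \;=\; \gamma_{B'}\cdot\widehat{\mathscr{S}}^c(\Theta_\C)(\mathrm{id})\cdot f + d_{B'}^2\cdot f \;=\; \delta_{B'}\cdot f \;=\; 0,
\]
since $B'$ is curved. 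Hence $f$ sends $\mathrm{Im}\,\delta_B$ to $0$, and by the same naturality/ideal argument it sends the whole generated ideal $(\mathrm{Im}\,\delta_B)$ to $0$. The universal property of the quotient then provides a unique factorization through $\mathsf{Curv}(B)$, yielding the adjunction isomorphism.

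The main obstacle I anticipate is the careful handling of the ``ideal generated by'' construction in a category whose free objects $\widehat{\mathscr{S}}^c(\C)(-)$ involve infinite products: one must argue that the minimal such ideal exists and that its elements can be represented concretely enough to verify both $d_B$-stability and the factorization property. Once this is in place, the remaining verifications are formal and parallel Proposition~\ref{reflexive} verbatim.
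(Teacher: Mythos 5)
The paper gives no proof of this statement at all --- it is quoted verbatim from \cite[Theorem 7.5]{grignoulejay18} --- so the relevant comparison is with the paper's proof of the operadic analogue, Proposition \ref{reflexive}, and your argument is exactly the correct adaptation of that proof: the defect map $\delta_B$ commutes with $d_B$ because $\Theta_\C\cdot d_\C=0$ and $\gamma_B$ is a pdg structure map, the generated ideal is therefore $d_B$-stable, and any morphism to a curved $\C$-algebra kills $\mathrm{Im}\,\delta_B$ and hence the ideal, giving the universal property. You are also right that the only genuinely non-formal point is making sense of ``ideal generated by'' for algebras over a cooperad (where $\widehat{\mathscr{S}}^c(\C)(-)$ involves infinite products of Hom-spaces), and this is precisely the technical content supplied in \emph{loc.\ cit.}; over a field the surjection $B\twoheadrightarrow B/I$ splits, so the quotient does inherit a $\widehat{\mathscr{S}}^c(\C)$-algebra structure and your argument goes through.
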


\begin{Remark}
Let $B$ be a $\mathcal{C}$-algebra, an \textit{ideal} is a subobject $I \hookrightarrow B$ such that the quotient $B/I$ is a $\mathcal{C}$-algebra, and such that the projection morphism is a morphism of $\mathcal{C}$-algebras. See \cite[Definition 4.1]{grignoulejay18}.
\end{Remark}

\begin{Corollary}
The category of curved $\C$-algebras is presentable. In particular it is complete and cocomplete.
\end{Corollary}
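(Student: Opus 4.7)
The plan is to deduce presentability of $\mathsf{curv}~\C\text{-}\mathsf{alg}$ from the presentability of $\mathsf{pdg}~\C\text{-}\mathsf{alg}$ together with the reflective subcategory structure provided by the preceding proposition. Once presentability is established, completeness and cocompleteness are automatic.

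First, I would verify that the ambient category $\mathsf{pdg}~\C\text{-}\mathsf{alg}$ is locally presentable. The category $\mathsf{pdg}~\mathsf{mod}$ of pre-differential graded $\kk$-modules is locally presentable, since it is equivalent to the category of representations of a small category in $\kk\textsf{-}\mathsf{mod}$ (the pre-differential being a single degree~$-1$ endomorphism). The monad $\widehat{\mathscr{S}}^c(\C)$ that governs pdg $\C$-algebras is accessible: each $\mathrm{Hom}_{\mathbb{S}_n}(\C(n),(-)^{\otimes n})$ preserves sufficiently filtered colimits, and hence so does the product over arities. By the standard result that the category of algebras over an accessible monad on a locally presentable category is locally presentable, $\mathsf{pdg}~\C\text{-}\mathsf{alg}$ is presentable.

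Next, I would invoke the preceding proposition, which exhibits the inclusion
\[
\mathsf{Inc}: \mathsf{curv}~\C\text{-}\mathsf{alg} \hookrightarrow \mathsf{pdg}~\C\text{-}\mathsf{alg}
\]
as a fully faithful right adjoint to the reflector $\mathsf{Curv}$. To conclude presentability of the subcategory, one must check that the inclusion preserves sufficiently filtered colimits, equivalently, that the reflector is accessible. This follows from the explicit description of $\mathsf{Curv}$ as the quotient by the ideal generated by $\gamma_B \cdot \widehat{\mathscr{S}}^c(\Theta_\C)(\mathrm{id}) + d_B^2(-)$: this construction is defined arity-by-arity in terms of colimits that commute with filtered colimits in $\mathsf{pdg}~\mathsf{mod}$. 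Consequently, $\mathsf{curv}~\C\text{-}\mathsf{alg}$ is a reflective and accessibly embedded subcategory of a locally presentable category, hence is itself locally presentable.

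Finally, since every locally presentable category is complete and cocomplete, the category of curved $\C$-algebras inherits both properties; concretely, limits are computed in $\mathsf{pdg}~\C\text{-}\mathsf{alg}$ and happen to land in the subcategory (the defining curvature identity is closed under limits), while colimits are obtained by applying $\mathsf{Curv}$ to the corresponding colimits in $\mathsf{pdg}~\C\text{-}\mathsf{alg}$. The main delicate point is checking accessibility of the reflector, but this reduces to the fact that the quotient in the formula for $\mathsf{Curv}$ is a coequalizer that commutes with filtered colimits, so no real obstacle arises.
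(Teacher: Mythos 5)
Your argument is correct and follows exactly the route the paper intends: the Corollary is stated without proof immediately after the Proposition exhibiting $\mathsf{curv}~\C\text{-}\mathsf{alg}$ as a reflective subcategory of $\mathsf{pdg}~\C\text{-}\mathsf{alg}$, and the implicit deduction is precisely yours (compare the proofs of \cref{bicomplete} and \cref{bicomplete C alg}). You even supply the two checks the paper leaves tacit --- accessibility of the monad $\widehat{\mathscr{S}}^c(\C)$ and of the embedding --- though note that your phrase ``equivalently, that the reflector is accessible'' is slightly off: the reflector, being a left adjoint between locally presentable categories, is automatically accessible, and the point that actually needs verifying is that the subcategory is closed under sufficiently filtered colimits, which your observation about the curvature identity and the quotient formula for $\mathsf{Curv}$ does establish.
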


The notion of a conilpotent cooperad also generalizes to the case of curved cooperads.

\begin{Definition}[Conilpotent curved partial cooperad]
Let $(\C,\{\Delta_i\},d_\C,\Theta_\C)$ be a curved partial cooperad. It is \textit{conilpotent} if its underlying partial pdg cooperad $(\C,\{\Delta_i\},d_\C)$ is a conilpotent partial cooperad. 
\end{Definition}

There is a functor 
\[
\mathsf{Conil}: \mathsf{curv}~\mathsf{pCoop}^\mathsf{conil} \longrightarrow \mathsf{curv}~\mathsf{Coop}~,
\]
from the category of conilpotent curved partial cooperads to the category of curved cooperads defined as comonoids given by adding a counit as in Corollary \ref{corollary: adding a counit}. 

\begin{Definition}[Conilpotent curved cooperad]
A curved cooperad $(\C,d_\C,\Theta_\C)$ is said to be \textit{conilpotent} if it is in the essential image of the functor $\mathsf{Conil}~.$
\end{Definition}

In the case of a conilpotent curved cooperad, one can induce a canonical filtration on the category of curved $\mathcal{C}$-algebras.

\begin{Definition}[Complete curved algebra over a conilpotent cooperad]
Let $(\C^u,d_\C,\Theta_\C)$ be a conilpotent curved cooperad. A \textit{complete} curved $\C$-algebra $B$ amounts to the data of a curved $\C$-algebra $(B,\gamma_B,d_B)$ such that the canonical morphism of curved $\C$-algebras
\[
\varphi_B: B \longrightarrow \lim_{\omega} B/\mathrm{W}_\omega B
\]
is an isomorphism, where $\mathrm{W}_\omega B$ is the canonical filtration of the $\C$-algebra $B$.
\end{Definition}

\begin{Remark}
These notions where defined in Section \ref{Section: Recollections}.
\end{Remark}

\begin{Proposition}\label{bicomplete C alg}
Let $(\C,d_\C,\Theta_\C)$ be a conilpotent curved cooperad. The category of complete curved $\C$-algebras is a reflexive subcategory of pdg $\C$-algebras. It is thus presentable, and bicomplete.
\end{Proposition}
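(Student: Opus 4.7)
The plan is to exhibit the inclusion of complete curved $\C$-algebras into pdg $\C$-algebras as a composition of two reflective inclusions, and then to derive presentability and bicompleteness from standard facts about reflective subcategories of presentable categories. Concretely, I would factor the inclusion as
\[
\mathsf{complete~curv}~\C\text{-alg} \hookrightarrow \mathsf{curv}~\C\text{-alg} \hookrightarrow \mathsf{pdg}~\C\text{-alg}.
\]
The right-hand inclusion already admits a left adjoint, namely the reflector $\mathsf{Curv}$ of \cite[Theorem 7.5]{grignoulejay18} recalled above. It therefore suffices to construct a completion functor $(-)^\wedge : \mathsf{curv}~\C\text{-alg} \longrightarrow \mathsf{complete~curv}~\C\text{-alg}$ left adjoint to the inclusion; the desired reflector is then the composite $(-)^\wedge \circ \mathsf{Curv}$.

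Next, to build $(-)^\wedge$, I would lift the construction of Proposition \ref{prop: complete C algebras are a reflexiv subcat} from plain $\C$-algebras to curved $\C$-algebras. The canonical filtration $\mathrm{W}_\omega B$ is defined intrinsically in terms of $\gamma_B$ and the coradical projections $\pi_\omega: \C \twoheadrightarrow \C/\mathscr{R}_\omega \C$; since $d_B$ commutes (in the pdg sense) with $\gamma_B$, it preserves each $\mathrm{W}_\omega B$, so each $B/\mathrm{W}_\omega B$ inherits a pdg $\C$-algebra structure. The curvature relation $\gamma_B \cdot \widehat{\mathscr{S}}^c(\Theta_\C)(\mathrm{id}) + d_B^2 = 0$ passes to these quotients, so each $B/\mathrm{W}_\omega B$ is in fact a curved $\C$-algebra. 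I would then define $\widehat{B}$ as the limit of the tower $\{B/\mathrm{W}_\omega B\}_{\omega \in \mathbb{N}}$, computed in the category of curved $\C$-algebras, whose existence is guaranteed by the completeness part of the preceding corollary.

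The main content is the universal property. I would argue that if $C$ is a complete curved $\C$-algebra and $f : B \to C$ is a morphism of curved $\C$-algebras, then $f$ respects the canonical filtration (by naturality of $\mathrm{W}_\bullet$ with respect to morphisms of $\C$-algebras), hence factors through $B/\mathrm{W}_\omega B \to C/\mathrm{W}_\omega C$ for every $\omega$; passing to the limit and using $C \cong \widehat{C}$ yields a unique factorisation $B \to \widehat{B} \to C$. One also checks that $\widehat{B}$ is itself complete, so $(-)^\wedge$ lands in the complete subcategory. This makes the inclusion $\mathsf{complete~curv}~\C\text{-alg} \hookrightarrow \mathsf{curv}~\C\text{-alg}$ reflective, and composing with $\mathsf{Curv}$ gives the claimed reflector from pdg $\C$-algebras.

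Finally, presentability and bicompleteness follow formally: a full reflective subcategory of a presentable category is presentable, and limits (resp.\ colimits) in the reflective subcategory are computed as limits in the ambient category (resp.\ by applying the reflector to the ambient colimit). The main technical obstacle, and the one I would treat most carefully, is verifying that the tower $\{B/\mathrm{W}_\omega B\}$ admits the required limit inside curved $\C$-algebras and that this limit is itself complete in the sense of the canonical filtration; here it is essential that the canonical filtration is natural and that the structural map $\gamma_B$ of a curved $\C$-algebra is already continuous with respect to it, so that no additional topological data needs to be imposed.
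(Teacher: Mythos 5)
Your proposal is correct and follows exactly the paper's argument: the paper's proof is the single sentence that the reflector is the composite of the two previously constructed reflectors, namely $\mathsf{Curv}$ followed by the completion with respect to the canonical filtration. You simply spell out the verifications (stability of $\mathrm{W}_\omega B$ under $d_B$, descent of the curvature relation to the quotients, and the universal property of the limit) that the paper leaves implicit.
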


\begin{proof}
Its reflector is given by the composition of the previous two reflectors: the functor $\mathsf{Curv}$ and the completion functor with respect to the canonical filtration.
\end{proof}

\section{The groupoid-colored level}
In this section, we develop the formalism of groupoid-colored (co)operads introduced in \cite{ward19} in order to encode the different (co)operadic structures that we have encountered so far. More precisely, we construct a unital groupoid-colored operad, denoted by $u\mathcal{O}$, that encodes unital partial operads as its algebras and counital partial cooperads as its coalgebras. The goal of this section is to compute its Koszul dual conilpotent curved groupoid-colored cooperad.

\medskip

In order to do this, we need to extend to groupoid-colored operads the inhomogeneous Koszul duality for operads introduced in \cite{HirshMilles12} (by inhomogeneous we mean relations that involve constant-linear-quadratic terms in this case). Extending the quadratic Koszul duality for groupoid-colored operads of \cite{ward19} to the inhomogeneous case is conceptually quite straightforward. Since this theory will only be applied to the case that interests us, we give an overview of the main results, without entering in full generality.
\[
\begin{tikzpicture} 
\tikzstyle{block} = [draw, rectangle, text width = 12em, 
  text centered, minimum height = 7mm, node distance = 7em];
\tikzstyle{line} = [draw, -stealth, thick]
\node [block] at (0, 0) (one) {\scriptsize Koszul duality \\ homogeneous operads \cite{GinzburgKapranov95},\cite{GetzlerJones94}};
\node [block] at (0, 2) (two) {\scriptsize Koszul duality \\ inhomogeneous operads};
\node [block] at (9, 0) (three) {\scriptsize Koszul duality \\ groupoid-colored homogeneous operads};
\node [block] at (9, 2) (four) {\scriptsize Koszul duality \\ groupoid-colored inhomogeneous operads};
\path [line] (one) -- (two);
\path [line] (two) -- (four);
\path [line] (three) -- (four);
\path [line] (one) -- (three);
\node at (-0.75, 1) [align = left]{\scriptsize \cite{HirshMilles12}};
\node at (4.5, 0.3) [align = center]{\scriptsize \cite{ward19}};
\end{tikzpicture}
\]
This new Koszul duality gives us a conilpotent curved groupoid-colored cooperad $(u\mathcal{O})^{\ac}$, which encodes conilpotent curved partial cooperads as its curved coalgebras. Its category of curved algebras provides us with a new notion of operads which we call \textit{curved absolute partial operads}. See the Appendix \ref{Appendix B} for a detailed description. Using the Koszul curved twisting morphism given by this theory, we will construct two Bar-Cobar adjunctions that interrelate these objects in the next section.

\medskip

\subsection{$\mathbb{S}$-colored (co)operads}
We begin by applying the formalism of \cite{ward19} to the specific case that interests us. We consider the groupoid 
\[
\mathbb{S} \coloneqq \coprod_{n \geq 0} \mathrm{B}\mathbb{S}_n
\]
where $\mathrm{B}\mathbb{S}_n$ is the classifying space of $\mathbb{S}_n$ (the category with one object $\{*\}$ and $\mathrm{Aut}(*) = \mathbb{S}_n$). We denote by $n$ the single object in $\mathrm{B}\mathbb{S}_n~.$ In this context, an $\mathbb{S}$-color scheme as defined in \cite[Definition 2.2.1]{ward19} amounts to the following definition:

\begin{Definition}[pdg $\mathbb{S}$-color scheme]
An \textit{pdg} $\mathbb{S}$-\textit{color scheme} $E$ amounts to the data of a family $\{E(n_1,\cdots,n_r;n)\}$ of pdg modules for all $r$-tuples of natural numbers $(n_1,\cdots,n_r)$ in $\mathbb{N}^{r}$ and all $n$ in $\mathbb{N}$. Each pdg module $E(n_1,\cdots,n_r;n)$ comes equipped with an action of $\mathbb{S}_{n_1} \times \cdots \times \mathbb{S}_{n_r}$ on the right and an action of $\mathbb{S}_n$ on the left as part of the structure. There is a supplementary action of the symmetric groups on this family given by permuting the entries. For any $r$-tuples of natural numbers $(n_1,\cdots,n_r)$, there is an isomorphism
\[
\varphi_\sigma : E(n_1,\cdots,n_r;n) \longrightarrow  E(n_{\sigma^{-1}(1)},\cdots, n_{\sigma^{-1}(r)} ; n) 
\]
of right pdg $\kk[\mathbb{S}_{n_1} \times \cdots \times \mathbb{S}_{n_r}] \otimes \kk[\mathbb{S}_n]^{\text{op}}$-module for all $\sigma$ in $\mathbb{S}_r$ which defines a group action. This endows each $E(n_1,\cdots,n_r;n)$ with an left action of $\left(\mathbb{S}_{n_1} \times \cdots \times \mathbb{S}_{n_r}\right)~\wr~ \mathbb{S}_r$, where $\wr$ denotes the wreath product of groups.

\medskip

A \textit{morphism} $f: E \longrightarrow F$ is the data of a morphism of pdg $\kk[\mathbb{S}_{n_1} \times \cdots \times \mathbb{S}_{n_r}] \otimes \kk[\mathbb{S}_n]^{\text{op}}$-modules 
\[
f(n_1,\cdots,n_r;n): E(n_1,\cdots,n_r;n) \longrightarrow F(n_1,\cdots,n_r;n)~,
\]
for all $r$-tuples $(n_1,\cdots,n_r)$ in $\mathbb{N}^{r}$ which commutes with the permutation maps $\varphi_\sigma$. Pdg $\mathbb{S}$-color schemes form a category denoted by $\mathsf{pdg}~\mathsf{Col}_\mathbb{S}~.$
\end{Definition}

Let $E$ and $F$ be two pdg $\mathbb{S}$-color schemes, their \textit{composition product} $E \circ F$ is given by: 
\begin{align*}
&E \circ F (n_1, \cdots, n_r; n) \coloneqq \bigoplus_{j \geq 1}~ \bigoplus_{(a_1,\cdots ,a_j) \in \mathbb{N}^{j}} \Bigg( E(a_1, \cdots, a_j ;n) \otimes_{(\mathbb{S}_{a_1} \times \cdots \times \mathbb{S}_{a_j})~\wr ~\mathbb{S}_j } \\
&  \Bigg( \bigoplus_{i_1 + \cdots + i_j = r} \mathsf{Ind}_{\mathbb{S}_{i_1} \times \cdots \times \mathbb{S}_{i_j}}^{\mathbb{S}_r} \Big( F(n_1, \cdots ,n_{i_1} ; a_1) \otimes \cdots \otimes F(n_{r-i_j}, \cdots , n_r ; a_j) \Big) \Bigg)\Bigg)_{\mathbb{S}_j} ~.
\end{align*}
where the second sum runs over all $j$-tuples $(a_1,\cdots ,a_j)$ and the third sum runs over all $j$-tuples $(i_1,\cdots ,i_j)$ such that $i_1 + \cdots + i_j = r$. It is the analogue of the composition product $\circ$ of $\mathbb{S}$-modules. In this case, leaves are colored by elements of the symmetric groups, hence the extra requirement that the colors have to match. The unit of this product is given by the $\mathbb{S}$-color scheme $\I_{\mathbb{S}}$, defined by $\I_{\mathbb{S}}(n;n) \coloneqq \mathbb{K}[\mathbb{S}_n]$ as a bimodule over itself and $0$ elsewhere, endowed with the trivial pre-differential. 

\begin{lemma}
The data $(\mathsf{pdg}~\mathsf{Col}_\mathbb{S},\circ, \I_{\mathbb{S}})$ forms a monoidal category. 
\end{lemma}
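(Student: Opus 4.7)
The plan is to verify associativity of $\circ$ together with the left and right unit axioms for $\I_{\mathbb{S}}$. Conceptually, pdg $\mathbb{S}$-color schemes are a particular instance of the groupoid-colored collections of \cite{ward19}, so the result follows from the general framework developed there; I would nevertheless outline the explicit isomorphisms, since this is the first place they appear.

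First I would construct a natural associator
\[
\alpha_{E,F,G} : (E \circ F) \circ G \longrightarrow E \circ (F \circ G)
\]
for any triple of pdg $\mathbb{S}$-color schemes. At the level of underlying pdg modules, both sides decompose as direct sums indexed by two-level "colored" rooted-tree profiles: a tuple of output-arities $(a_1,\ldots,a_j)$ at the first level together with nested tuples at the second level, subject to the compatibility that the colors decorating the internal edges of the tree match the bimodule structure of each factor. The isomorphism $\alpha$ is the evident reparenthesization identifying these indexings. The main verification is that the resulting bijections are compatible with the $\mathbb{S}_j$-coinvariants, the wreath product coinvariants $((\mathbb{S}_{a_1}\times\cdots\times\mathbb{S}_{a_j})\wr\mathbb{S}_j)$, and the induced representations $\mathsf{Ind}_{\mathbb{S}_{i_1}\times\cdots\times\mathbb{S}_{i_j}}^{\mathbb{S}_r}$. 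This reduces to the fact that these are the Day-convolution-type coinvariants arising from the free symmetric monoidal structure on $\mathbb{S}$, and compatibility of induction with tensor products.

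Next I would check the pentagon axiom for $\alpha$; this follows from the fact that both paths from $((E\circ F)\circ G)\circ H$ to $E \circ (F \circ (G \circ H))$ agree with the canonical identification of the underlying three-level colored rooted-tree indexings. Then I would construct the unitors. For the left unitor $\lambda_E : \I_{\mathbb{S}} \circ E \xrightarrow{\cong} E$, observe that since $\I_{\mathbb{S}}(a_1,\ldots,a_j;n) = 0$ unless $j=1$ and $a_1 = n$ in which case it equals $\kk[\mathbb{S}_n]$, only a single summand in the definition of $\circ$ survives, giving
\[
\I_{\mathbb{S}} \circ E(n_1,\ldots,n_r;n) \cong \kk[\mathbb{S}_n]\otimes_{\mathbb{S}_n} E(n_1,\ldots,n_r;n) \cong E(n_1,\ldots,n_r;n)~.
\]
For the right unitor $\rho_E: E\circ \I_{\mathbb{S}} \xrightarrow{\cong} E$, only the summands where each $i_k = 1$ and $F = \I_{\mathbb{S}}(n_{k};a_k)$ is non-zero (i.e. $a_k = n_k$) survive, again collapsing to $E(n_1,\ldots,n_r;n)$.

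The triangle axiom is then a direct check. The differentials are compatible throughout since all structure maps are built from the bimodule tensor product, which is exact in each variable on pdg modules, and $d$ satisfies the Leibniz rule on $\otimes$. The main bookkeeping obstacle, and really the only nontrivial point, is tracking that the actions of the various wreath products, induced representations, and $\mathbb{S}_j$-coinvariants match on both sides of $\alpha$; this compatibility is automatic once one recognizes $\circ$ as the composition product in the $2$-monad of $\mathbb{S}$-colored trees, for which coherence has already been established in \cite[Section 2]{ward19}.
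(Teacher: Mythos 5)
Your proposal is correct and takes essentially the same route as the paper, whose proof is simply ``a straightforward computation analogous to the standard case'' for the composition product of $\mathbb{S}$-modules; you have just carried out (in outline) the verification the paper defers to. The only points worth flagging are cosmetic: the compatibility of the pre-differentials follows from the Leibniz rule on $\otimes$ and naturality of the structure isomorphisms rather than from any exactness property, and the unitor computations you give are exactly the expected collapses of the direct sum to the $j=1$ (resp.\ all $i_k=1$) summands.
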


\begin{proof}
A straightforward computation analogous to the standard case.
\end{proof}

\begin{Definition}[$\mathbb{S}$-colored operad]
A \textit{pdg} $\mathbb{S}$-\textit{colored operad} $\mathcal{G}$ is the data of a monoid $(\mathcal{G}, \gamma_\mathcal{G}, \eta, d_\mathcal{G})$ in the monoidal category $(\mathsf{pdg}~\mathsf{Col}_\mathbb{S},\circ, \I_{\mathbb{S}})$. 
\end{Definition}

\begin{Example}[$\mathbb{S}$-colored endomorphism operad] 
Let $M$ be a pdg $\mathbb{S}$-module. The collection
\[
\mathrm{End}_M(n_1,\cdots,n_r;n) \coloneqq \mathrm{Hom}(M(n_1) \otimes \cdots \otimes M(n_r), M(n)) \quad \text{for} \quad (n_1,\cdots,n_r) \in \mathbb{N}^{r}
\]
has a natural structure of pdg $\mathbb{S}$-color scheme where the right pdg $\kk[\mathbb{S}_{n_1} \times \cdots \times \mathbb{S}_{n_r}] \otimes \kk[\mathbb{S}_n]^{\text{op}}$-module structure comes from the $\mathbb{S}$-module structure of $M$ and where the permutation morphism $\varphi_\sigma$ are given by the natural action of the symmetric group $\mathbb{S}_r$ on the tensor product $M(n_1) \otimes \cdots \otimes M(n_r)$. The composition of morphisms and the identity $\mathrm{id}_n$ in $\mathrm{End}_M(n;n)$ endow this pdg $\mathbb{S}$-color scheme with an $\mathbb{S}$-colored operad structure, called the $\mathbb{S}$-colored \textit{endomorphism operad} of $M$.
\end{Example}

\medskip

\begin{Example}[$\mathbb{S}$-colored coendomorphism operad] 
Let $M$ be a pdg $\mathbb{S}$-module. The collection
\[
\mathrm{Coend}_M(n_1,\cdots,n_r;n) \coloneqq \mathrm{Hom}(M(n), M(n_1) \otimes \cdots \otimes M(n_r)) \quad \text{for} \quad (n_1,\cdots,n_r) \in \mathbb{N}^{r}
\]
has a natural structure of pdg $\mathbb{S}$-color scheme as in the above example. The composition of morphisms and the identity $\mathrm{id}_n$ in $\mathrm{End}_M(n;n)$ endow this pdg $\mathbb{S}$-color scheme with an $\mathbb{S}$-colored operad structure, called the $\mathbb{S}$-colored \textit{coendomorphism operad} of $M$.
\end{Example}

\begin{Definition}[$\mathbb{S}$-colored cooperad]
A \textit{pdg} $\mathbb{S}$-\textit{colored cooperad} $\mathcal{V}$ is the data of a comonoid $(\mathcal{V}, \Delta_\mathcal{V}, \epsilon, d_\mathcal{V})$ in the monoidal category $(\mathsf{Col}_\mathbb{S},\circ, \I_{\mathbb{S}})$. 
\end{Definition}

To any pdg $\mathbb{S}$-color scheme $E$ one can associate an endofunctor in the category of pdg $\mathbb{S}$-modules via the \textit{Schur realization functor}:
\[
\begin{tikzcd}[column sep=4pc,row sep=0.5pc]
\mathscr{S}_{\mathbb{S}} : \mathsf{pdg}~\mathsf{Col}_\mathbb{S} \arrow[r]
&\mathsf{End}(\mathsf{pdg}~\smod) \\
E \arrow[r,mapsto]
&\mathscr{S}_{\mathbb{S}}(E)(-)~.
\end{tikzcd}
\]
The Schur endofunctor $\mathscr{S}_{\mathbb{S}}(E)$ is given, for $M$ a pdg $\mathbb{S}$-module, by:
\[
\mathscr{S}_{\mathbb{S}}(E)(M)(n) \coloneqq \bigoplus_{(n_1,\cdots,n_r) \in \mathbb{N}^{r}~,~r\geq1} E(n_1,\cdots,n_r;n) \otimes_{\left(\mathbb{S}_{n_1} \times \cdots \times \mathbb{S}_{n_r}\right)~ \wr ~ \mathbb{S}_r} \left(M(n_1) \otimes \cdots \otimes M(n_r) \right)~.
\]

\begin{lemma}
The colored Schur realization functor $\mathscr{S}_{\mathbb{S}}$ is a strong monoidal functor between the monoidal categories $(\mathsf{pdg}~\mathsf{Col}_\mathbb{S},\circ, \I_{\mathbb{S}})$ and $(\mathsf{End}(\mathsf{pdg}~\smod),\circ, \mathrm{Id})~.$
\end{lemma}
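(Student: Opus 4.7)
The plan is to verify the two defining properties of a strong monoidal functor: the preservation of the unit and the preservation of the composition product, both up to natural isomorphism, and then to check that these isomorphisms satisfy the associativity and unitality coherence diagrams.

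First I would handle the unit. By definition, $\I_{\mathbb{S}}(n_1,\dots,n_r;n)$ vanishes unless $r=1$ and $n_1=n$, in which case it is $\kk[\mathbb{S}_n]$ as a $(\kk[\mathbb{S}_n],\kk[\mathbb{S}_n])$-bimodule. Substituting into the formula defining $\mathscr{S}_\mathbb{S}$, all but the $r=1$, $n_1=n$ summand vanish, leaving $\kk[\mathbb{S}_n]\otimes_{\mathbb{S}_n} M(n) \cong M(n)$ with trivial pre-differential contribution. This gives a natural isomorphism $\mathscr{S}_\mathbb{S}(\I_\mathbb{S}) \cong \mathrm{Id}$ in $\mathsf{End}(\mathsf{pdg}~\smod)$.

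Next I would construct the comparison natural transformation
\[
\Phi_{E,F} : \mathscr{S}_\mathbb{S}(E) \circ \mathscr{S}_\mathbb{S}(F) \xrightarrow{\ \cong\ } \mathscr{S}_\mathbb{S}(E \circ F).
\]
Unfolding $\mathscr{S}_\mathbb{S}(E)(\mathscr{S}_\mathbb{S}(F)(M))(n)$ yields a direct sum indexed by $j$-tuples $(a_1,\dots,a_j)$ of inputs in $\mathrm{B}\mathbb{S}$, tensoring $E(a_1,\dots,a_j;n)$ (over the wreath product $(\mathbb{S}_{a_1}\times\cdots\times \mathbb{S}_{a_j})\wr \mathbb{S}_j$) against a $j$-fold tensor product of terms of the form $F(n_{k_1},\dots,n_{k_{i_p}}; a_p)\otimes M(n_{k_1})\otimes\cdots\otimes M(n_{k_{i_p}})$. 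Regrouping the $F$-factors and the $M$-factors, and repackaging the induced symmetric group action as the action of $\mathbb{S}_r$ (for $r = i_1 + \cdots + i_j$) on the global inputs via the usual shuffle-induction isomorphism, produces precisely the summand indexing the formula for $\mathscr{S}_\mathbb{S}(E\circ F)(M)(n)$. The only non-formal point is verifying that the various quotients (coinvariants under the $\mathbb{S}_j$ action and under $(\mathbb{S}_{a_1}\times\cdots)\wr\mathbb{S}_j$) on the source correspond, after this repackaging, to the single coinvariants appearing in the target definition of $E\circ F$; this is exactly where the equivariance maps $\varphi_\sigma$ of the $\mathbb{S}$-color scheme $E$ are used, and they ensure that permuting the $j$ branches on the outside matches permuting the labels inside $E$.

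The compatibility with the pre-differentials is automatic since $\mathscr{S}_\mathbb{S}$ is defined by tensor products and direct sums, and the pre-differential on $E\circ F$ is the induced one. I would then check the coherence axioms: the pentagon identity for $\Phi$ reduces, after unfolding two layers of composition, to the observation that both sides identify summands indexed by ``two-level'' sequences $((a_1,\dots,a_j),(b^p_1,\dots,b^p_{j_p})_p)$ with $E$-, $F$-, $G$-labels attached, and the triangle identities with the unit follow immediately from the computation of the first paragraph.

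The main obstacle in this argument is purely combinatorial: correctly tracking the interplay between the outer $\mathbb{S}_j$-coinvariants, the inner product of $\mathbb{S}_{a_p}$-coinvariants packaged as a wreath product, and the induction from $\mathbb{S}_{i_1}\times\cdots\times\mathbb{S}_{i_j}$ up to $\mathbb{S}_r$ on the input side. This bookkeeping is the groupoid-colored analogue of the well-known strong monoidality of the classical Schur functor $\mathscr{S}:\mathsf{gr}~\smod \to \mathsf{End}(\mathsf{gr}~\mathsf{mod})$, and modulo these indexing considerations, the proof proceeds exactly as in the uncolored case treated in \cite[Section~5.1.8]{LodayVallette12}.
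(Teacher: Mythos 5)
Your proposal is correct and follows the same route the paper takes: the paper's proof is a one-line reduction to the classical (uncolored) strong monoidality of the Schur functor, and your argument is precisely a fleshed-out version of that computation, with the unit check, the regrouping of coinvariants via the wreath-product actions and the equivariance maps $\varphi_\sigma$, and the coherence diagrams. (Only a terminological quibble: the associativity coherence axiom for a monoidal functor is a square, not the pentagon, which is the axiom for the monoidal category itself.)
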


\begin{proof}
The computation is analogous to the classical case.
\end{proof}

\begin{Corollary}
Let $E$ be an $\mathbb{S}$-color scheme. Any $\mathbb{S}$-colored operad structure on $E$ induces a monad structure on $\mathscr{S}_{\mathbb{S}}(E)$ and any $\mathbb{S}$-colored cooperad structure on $E$ induces a comonad structure on $\mathscr{S}_{\mathbb{S}}(E)$.
\end{Corollary}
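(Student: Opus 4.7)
The plan is to derive this corollary as an immediate consequence of the preceding lemma asserting that $\mathscr{S}_{\mathbb{S}}$ is a strong monoidal functor from $(\mathsf{pdg}~\mathsf{Col}_\mathbb{S},\circ, \I_{\mathbb{S}})$ to $(\mathsf{End}(\mathsf{pdg}~\smod),\circ, \mathrm{Id})$. Indeed, a monad on $\mathsf{pdg}~\smod$ is by definition a monoid in the endofunctor category $(\mathsf{End}(\mathsf{pdg}~\smod),\circ, \mathrm{Id})$, and dually a comonad is a comonoid there. Since strong monoidal functors preserve monoids and comonoids, the conclusion will follow at once.

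More concretely, given an $\mathbb{S}$-colored operad $(\mathcal{G},\gamma_\mathcal{G},\eta,d_\mathcal{G})$, I would produce the monad multiplication on $\mathscr{S}_\mathbb{S}(\mathcal{G})$ as the composite
\[
\mathscr{S}_\mathbb{S}(\mathcal{G}) \circ \mathscr{S}_\mathbb{S}(\mathcal{G}) \xrightarrow{\;\cong\;} \mathscr{S}_\mathbb{S}(\mathcal{G} \circ \mathcal{G}) \xrightarrow{\;\mathscr{S}_\mathbb{S}(\gamma_\mathcal{G})\;} \mathscr{S}_\mathbb{S}(\mathcal{G}),
\]
where the first map is the strong monoidal structure isomorphism, and the monad unit from
\[
\mathrm{Id} \xrightarrow{\;\cong\;} \mathscr{S}_\mathbb{S}(\I_\mathbb{S}) \xrightarrow{\;\mathscr{S}_\mathbb{S}(\eta)\;} \mathscr{S}_\mathbb{S}(\mathcal{G}).
\]
The associativity and unit axioms for this monad structure are obtained by applying $\mathscr{S}_\mathbb{S}$ to the associativity and unit axioms of $\mathcal{G}$, and then using the coherence axioms of the strong monoidal structure (associator and unitor compatibilities). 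Dually, for an $\mathbb{S}$-colored cooperad $(\mathcal{V},\Delta_\mathcal{V},\epsilon,d_\mathcal{V})$, the comultiplication and counit of the comonad $\mathscr{S}_\mathbb{S}(\mathcal{V})$ are built from $\mathscr{S}_\mathbb{S}(\Delta_\mathcal{V})$ and $\mathscr{S}_\mathbb{S}(\epsilon)$, composed with the strong monoidal isomorphisms $\mathscr{S}_\mathbb{S}(\mathcal{V} \circ \mathcal{V}) \cong \mathscr{S}_\mathbb{S}(\mathcal{V}) \circ \mathscr{S}_\mathbb{S}(\mathcal{V})$ and $\mathscr{S}_\mathbb{S}(\I_\mathbb{S}) \cong \mathrm{Id}$, in the opposite directions.

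There is essentially no obstacle here: the entire content of the corollary is the abstract nonsense statement that strong monoidal functors transport monoids to monoids and comonoids to comonoids. All the real work has already been done in verifying that $\mathscr{S}_\mathbb{S}$ is strong monoidal in the previous lemma. Hence the proof is a one-line invocation of that result, and I would write it as such rather than unpacking the diagrams.
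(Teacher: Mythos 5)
Your proposal is correct and is exactly the argument the paper intends: the corollary is left without proof precisely because it follows from the preceding lemma by the standard fact that a strong monoidal functor transports monoids to monoids and comonoids to comonoids in the target monoidal category, here $(\mathsf{End}(\mathsf{pdg}~\mathbb{S}\textsf{-}\mathsf{mod}),\circ,\mathrm{Id})$, whose (co)monoids are (co)monads. The explicit composites you write for the multiplication, unit, comultiplication and counit are the right ones, so nothing further is needed.
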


\begin{Definition}[Algebra over an $\mathbb{S}$-colored operad]
Let $\mathcal{G}$ be a pdg $\mathbb{S}$-colored operad. A $\mathcal{G}$-algebra $M$ amounts to the data $(M,\gamma_M,d_M)$ of an algebra over the monad $\mathscr{S}_{\mathbb{S}}(\mathcal{G})$.
\end{Definition}

\begin{lemma}
Let $\mathcal{G}$ be a pdg $\mathbb{S}$-colored operad. The data of a pdg $\mathcal{G}$-algebra structure $\gamma_M$ on a pdg $\mathbb{S}$-module $(M,d_M)$ is equivalent to a morphism of pdg $\mathbb{S}$-colored operads $\Gamma_M: \mathcal{G} \longrightarrow \mathrm{End}_M$. 
\end{lemma}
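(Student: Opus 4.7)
The proof plan is to exhibit the equivalence explicitly via the tensor-hom adjunction, arity by arity, and then check compatibility with all the pieces of structure: the biequivariance, the wreath-product permutation action, the monoid structure (composition and unit), and the pre-differential.

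First I would unwind definitions. A pdg $\mathcal{G}$-algebra structure $\gamma_M : \mathscr{S}_{\mathbb{S}}(\mathcal{G})(M) \longrightarrow M$ is, by the definition of the colored Schur realization, the datum of a compatible family of morphisms
\[
\gamma_M(n_1,\ldots,n_r;n) \colon \mathcal{G}(n_1,\ldots,n_r;n) \otimes_{(\mathbb{S}_{n_1}\times\cdots\times\mathbb{S}_{n_r})\,\wr\,\mathbb{S}_r} \bigl(M(n_1)\otimes\cdots\otimes M(n_r)\bigr) \longrightarrow M(n)
\]
equivariant for the left $\mathbb{S}_n$-action, of degree $0$, and commuting with pre-differentials. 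By the tensor-hom adjunction together with the universal property of coinvariants, this is equivalent to a collection of degree $0$ morphisms
\[
\Gamma_M(n_1,\ldots,n_r;n) \colon \mathcal{G}(n_1,\ldots,n_r;n) \longrightarrow \mathrm{Hom}(M(n_1)\otimes\cdots\otimes M(n_r),M(n)) = \mathrm{End}_M(n_1,\ldots,n_r;n)
\]
which are $(\mathbb{S}_{n_1}\times\cdots\times\mathbb{S}_{n_r})\otimes\mathbb{S}_n^{\mathrm{op}}$-equivariant and which intertwine the wreath-product $\mathbb{S}_r$-actions $\varphi_\sigma$ on $\mathcal{G}$ and on $\mathrm{End}_M$. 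This establishes a bijection of $\mathbb{S}$-color-scheme morphisms.

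Next I would check that this bijection restricts to the intended equivalence. On the one hand, the fact that $\gamma_M$ is a map of pdg $\mathbb{S}$-color schemes translates directly, under the adjunction, into the fact that $\Gamma_M$ commutes with pre-differentials, using that the pre-differential on $\mathrm{End}_M$ is the commutator $\partial = [d_M,-]$ together with the Koszul-sign-induced pre-differential on the source tensor product. On the other hand, associativity of $\gamma_M$ with respect to the monad structure $\gamma_\mathcal{G} \colon \mathcal{G}\circ \mathcal{G} \to \mathcal{G}$ unfolds, via the strong monoidality of $\mathscr{S}_{\mathbb{S}}$ established just before the lemma, into the statement that $\Gamma_M$ intertwines the composition $\gamma_\mathcal{G}$ with the composition of multilinear maps inside $\mathrm{End}_M$; unitality of $\gamma_M$ corresponds to $\Gamma_M$ sending the unit $\mathrm{id}_n \in \I_{\mathbb{S}}(n;n)$ to $\mathrm{id}_{M(n)} \in \mathrm{End}_M(n;n)$. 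Conversely, starting from a morphism $\Gamma_M$ of pdg $\mathbb{S}$-colored operads, one defines $\gamma_M$ by the same adjunction and reads the compatibilities in reverse.

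I expect the main bookkeeping difficulty to be the wreath-product $\mathbb{S}_r$-equivariance: one must verify that the permutation isomorphisms $\varphi_\sigma$ on $\mathcal{G}$ match, under the adjunction, with the natural $\mathbb{S}_r$-action on $\mathrm{End}_M$ permuting input slots, so that a well-defined map out of the coinvariants $\otimes_{(\mathbb{S}_{n_1}\times\cdots\times\mathbb{S}_{n_r})\wr\mathbb{S}_r}$ corresponds exactly to a map into $\mathrm{End}_M$ respecting all of these actions. Once this compatibility is pinned down, the remaining verifications (associativity, unitality, pre-differential) reduce to the familiar monoidal-functor calculation already used in the uncolored case.
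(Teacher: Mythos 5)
Your proof is correct and is exactly the "straightforward generalization of the standard case" that the paper invokes without writing out: the tensor--hom adjunction arity by arity, followed by the check that equivariance (including the wreath-product action), the monoid structure, and the pre-differentials all match up on both sides. No discrepancy with the paper's (one-line) proof.
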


\begin{proof}
Straightforward generalization of the standard case.
\end{proof}

\begin{Definition}[Coalgebra over an $\mathbb{S}$-colored cooperad]
Let $\mathcal{V}$ be a pdg $\mathbb{S}$-colored cooperad. A $\mathcal{V}$-coalgebra $K$ amounts to the data $(K,\Delta_K,d_K)$ of a coalgebra over the comonad $\mathscr{S}_{\mathbb{S}}(\mathcal{V})$.
\end{Definition}

The \textit{dual Schur realization functor} $\widehat{\mathscr{S}}_{\mathbb{S}}^c$ also associates to any pdg $\mathbb{S}$-color scheme $E$ an endofunctor in the category of pdg $\mathbb{S}$-modules 
\[
\begin{tikzcd}[column sep=4pc,row sep=0.5pc]
\widehat{\mathscr{S}}_{\mathbb{S}}^c : \mathsf{pdg}~\mathsf{Col}_\mathbb{S}^{\text{op}} \arrow[r]
&\mathsf{End}(\mathsf{pdg}~\smod) \\
E \arrow[r,mapsto]
&\widehat{\mathscr{S}}_{\mathbb{S}}^c(E)(-)~.
\end{tikzcd}
\]
The dual Schur endofunctor $\widehat{\mathscr{S}}_{\mathbb{S}}^c(E)$ is given, for $M$ a pdg $\mathbb{S}$-module, by:
\[
\widehat{\mathscr{S}}_{\mathbb{S}}^c (E)(M)(n) \coloneqq \prod_{(n_1,\cdots,n_r) \in \mathbb{N}^{r}~,~r\geq1} \mathrm{Hom}_{\left(\mathbb{S}_{n_1} \times \cdots \times \mathbb{S}_{n_r}\right)~ \wr ~ \mathbb{S}_r} \left( E(n_1,\cdots,n_r;n), M(n_1) \otimes \cdots \otimes M(n_r) \right)~.
\]

\begin{lemma}
The functor $\widehat{\mathscr{S}}_{\mathbb{S}}^c: (\mathsf{pdg}~\mathsf{Col}_\mathbb{S},\circ, \I_{\mathbb{S}})^\mathsf{op} \longrightarrow (\mathsf{End}(\mathsf{pdg}~\smod),\circ, \mathrm{Id})$ can be endowed with a lax monoidal structure. That is, there exists a natural transformation
\[
\varphi_{E,F}: \widehat{\mathscr{S}}_{\mathbb{S}}^c(E) \circ \widehat{\mathscr{S}}_{\mathbb{S}}^c(F) \longrightarrow \widehat{\mathscr{S}}_{\mathbb{S}}^c(E \circ F)~,
\]
which satisfies associativity and unitality compatibility conditions with respect to the monoidal structures. Furthermore, this natural transformation is a monomorphism for all pdg $\mathbb{S}$-color schemes $E,F$. 
\end{lemma}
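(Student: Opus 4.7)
The plan is to imitate, at the $\mathbb{S}$-colored level, the argument used in \cite[Corollary~3.4]{grignoulejay18} for the ordinary Schur dual, keeping careful track of the extra combinatorics induced by the groupoid of colors. The starting observation is that for any family of pdg $\mathbb{K}$-modules $(A_i,B_i)_{i=1,\ldots,r}$ there is a canonical natural morphism
\[
\Phi_r \colon \bigotimes_{i=1}^{r} \mathrm{Hom}(A_i,B_i) \longrightarrow \mathrm{Hom}\!\left(\bigotimes_{i=1}^r A_i,\bigotimes_{i=1}^{r} B_i\right)~,
\]
sending $f_1\otimes\cdots\otimes f_r$ to $f_1\otimes\cdots\otimes f_r$ (with Koszul signs). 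Since we work over a field of characteristic zero, $\Phi_r$ is always a monomorphism, and it is equivariant with respect to permutations of the factors. This morphism is the only non-categorical ingredient of the argument.

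First, I would write out $\bigl(\widehat{\mathscr{S}}_{\mathbb{S}}^c(E)\circ\widehat{\mathscr{S}}_{\mathbb{S}}^c(F)\bigr)(M)(n)$ and $\widehat{\mathscr{S}}_{\mathbb{S}}^c(E\circ F)(M)(n)$ explicitly, unfolding the colored composition product. In both cases the result is a product indexed by trees with two levels of vertices colored by natural numbers $(a_1,\ldots,a_j)$ and $(n_1,\ldots,n_r)$, together with a partition $i_1+\cdots+i_j=r$ and an action of the wreath products $(\mathbb{S}_{a_*})\wr\mathbb{S}_j$ and $(\mathbb{S}_{n_*})\wr\mathbb{S}_r$. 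Plugging in $\Phi_j$ (applied inside each factor $E(a_1,\ldots,a_j;n)$) and assembling by the universal property of the product yields a morphism
\[
\varphi_{E,F}(M)(n)\colon \bigl(\widehat{\mathscr{S}}_{\mathbb{S}}^c(E)\circ\widehat{\mathscr{S}}_{\mathbb{S}}^c(F)\bigr)(M)(n) \longrightarrow \widehat{\mathscr{S}}_{\mathbb{S}}^c(E\circ F)(M)(n)~,
\]
which is equivariant under all the relevant symmetric group actions and natural in $E$, $F$ and $M$. The signs and the wreath-product symmetries match because $\Phi_r$ itself is equivariant with the Koszul sign convention.

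Next, I would verify the coherence axioms. Unitality with respect to $\I_{\mathbb{S}}$ is immediate since $\widehat{\mathscr{S}}_{\mathbb{S}}^c(\I_{\mathbb{S}})=\mathrm{Id}$ and $\Phi_1$ is the identity. Associativity amounts to checking that the two ways of iterating $\varphi$ on $\widehat{\mathscr{S}}_{\mathbb{S}}^c(E)\circ\widehat{\mathscr{S}}_{\mathbb{S}}^c(F)\circ\widehat{\mathscr{S}}_{\mathbb{S}}^c(G)$ agree; after unfolding, this reduces to the naturality of $\Phi_r$ with respect to tensoring morphisms, which is a purely multilinear statement and follows from the associativity of $\otimes$. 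This is essentially bookkeeping, and the argument is the same as in the uncolored case modulo the presence of the extra $\mathbb{S}_r$-actions coming from the wreath products.

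The main obstacle will be the monomorphism statement. In the uncolored setting it follows from injectivity of $\Phi_r$ combined with exactness of taking $\mathbb{S}_n$-invariants in characteristic $0$. In our colored setting, one has additionally to take invariants under the wreath product $(\mathbb{S}_{n_1}\times\cdots\times\mathbb{S}_{n_r})\wr\mathbb{S}_r$; since $\mathbb{K}$ has characteristic $0$, Maschke's theorem ensures that these invariants commute with products and with the monomorphism $\Phi_r$, so $\varphi_{E,F}$ remains a monomorphism. I would conclude by remarking that if one instead worked over a ring in which some $|\mathbb{S}_r|$ is not invertible, the monomorphism claim would fail, so the hypothesis on $\mathbb{K}$ is genuinely used exactly here.
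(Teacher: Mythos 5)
Your proposal follows the same route as the paper, whose proof of this lemma is simply the statement that the construction is ``completely analogous to \cite[Corollary 3.4]{grignoulejay18}''; your map $\Phi_r$ is indeed the key non-formal ingredient of that argument, and the equivariance and coherence checks you outline are the right ones. One claim does need correcting, however: it is not true that $\bigl(\widehat{\mathscr{S}}_{\mathbb{S}}^c(E)\circ\widehat{\mathscr{S}}_{\mathbb{S}}^c(F)\bigr)(M)(n)$ is ``a product indexed by two-level trees''. Unfolding it gives
\[
\prod_{(a_1,\ldots,a_j)}\mathrm{Hom}_{(\mathbb{S}_{a_1}\times\cdots\times\mathbb{S}_{a_j})\wr\,\mathbb{S}_j}\Bigl(E(a_1,\ldots,a_j;n),\,W(a_1)\otimes\cdots\otimes W(a_j)\Bigr)
\]
with each $W(a_i)=\widehat{\mathscr{S}}_{\mathbb{S}}^c(F)(M)(a_i)$ itself an infinite product, and a tensor product of infinite products does not decompose as a product of tensor products. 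The second essential ingredient of the construction, on a par with $\Phi_r$, is therefore the canonical map $\bigl(\prod_i A_i\bigr)\otimes\bigl(\prod_{i'}B_{i'}\bigr)\longrightarrow\prod_{i,i'}A_i\otimes B_{i'}$, which over a field is a monomorphism but in general not an isomorphism; this is precisely what makes $\varphi_{E,F}$ lax rather than strong and a monomorphism rather than an isomorphism, and its injectivity must be invoked alongside that of $\Phi_r$ in your final step. Finally, the role you assign to characteristic $0$ is slightly misplaced: $\Phi_r$ and the distributivity map are injective over any field, and taking $G$-invariants (equivalently, $\mathrm{Hom}_G(X,-)$) is left exact for any group, so the monomorphism statement does not actually rest on Maschke's theorem; characteristic $0$ is needed elsewhere in the paper, for instance when invariants are identified with coinvariants.
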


\begin{proof}
The construction of $\varphi_{E,F}$ and the proof are completely analogous to \cite[Corollary 3.4]{grignoulejay18}.
\end{proof}

\begin{Corollary}
Let $E$ be an $\mathbb{S}$-color scheme. Any $\mathbb{S}$-colored cooperad structure on $E$ induces a monad structure on $\widehat{\mathscr{S}}_{\mathbb{S}}^c(E)$.
\end{Corollary}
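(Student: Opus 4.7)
The plan is to invoke the abstract principle that a lax monoidal functor from the opposite of a monoidal category sends comonoids to monoids, and then to apply this directly to the lax monoidal structure on $\widehat{\mathscr{S}}_{\mathbb{S}}^c$ established in the preceding lemma. Explicitly, given an $\mathbb{S}$-colored cooperad $(\mathcal{V}, \Delta_{\mathcal{V}}, \epsilon_{\mathcal{V}}, d_{\mathcal{V}})$, that is, a comonoid in $(\mathsf{pdg}~\mathsf{Col}_{\mathbb{S}}, \circ, \I_{\mathbb{S}})$, the decomposition $\Delta_{\mathcal{V}}: \mathcal{V} \to \mathcal{V} \circ \mathcal{V}$ is a morphism whose image $\widehat{\mathscr{S}}_{\mathbb{S}}^c(\Delta_{\mathcal{V}})$ goes $\widehat{\mathscr{S}}_{\mathbb{S}}^c(\mathcal{V} \circ \mathcal{V}) \to \widehat{\mathscr{S}}_{\mathbb{S}}^c(\mathcal{V})$. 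Pre-composing with the lax structural natural transformation $\varphi_{\mathcal{V},\mathcal{V}}: \widehat{\mathscr{S}}_{\mathbb{S}}^c(\mathcal{V}) \circ \widehat{\mathscr{S}}_{\mathbb{S}}^c(\mathcal{V}) \to \widehat{\mathscr{S}}_{\mathbb{S}}^c(\mathcal{V} \circ \mathcal{V})$ yields the candidate product
\[
\mu_{\mathcal{V}} \coloneqq \widehat{\mathscr{S}}_{\mathbb{S}}^c(\Delta_{\mathcal{V}}) \cdot \varphi_{\mathcal{V},\mathcal{V}} : \widehat{\mathscr{S}}_{\mathbb{S}}^c(\mathcal{V}) \circ \widehat{\mathscr{S}}_{\mathbb{S}}^c(\mathcal{V}) \longrightarrow \widehat{\mathscr{S}}_{\mathbb{S}}^c(\mathcal{V}).
\]
Dually, the counit $\epsilon_{\mathcal{V}}: \mathcal{V} \to \I_{\mathbb{S}}$ together with the unitor of the lax monoidal structure $\mathrm{Id} \xrightarrow{\cong} \widehat{\mathscr{S}}_{\mathbb{S}}^c(\I_{\mathbb{S}})$ produces the candidate unit $\eta_{\mathcal{V}}: \mathrm{Id} \to \widehat{\mathscr{S}}_{\mathbb{S}}^c(\mathcal{V})$.

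Next I would verify associativity and unitality of $(\widehat{\mathscr{S}}_{\mathbb{S}}^c(\mathcal{V}), \mu_{\mathcal{V}}, \eta_{\mathcal{V}})$ as a monad. Associativity of $\mu_{\mathcal{V}}$ reduces to the commutativity of a pentagon obtained by pasting together two instances: the coassociativity axiom of $\Delta_{\mathcal{V}}$ (which the functoriality of $\widehat{\mathscr{S}}_{\mathbb{S}}^c$ transports to a commuting square involving $\widehat{\mathscr{S}}_{\mathbb{S}}^c(\Delta_{\mathcal{V}})$), and the associativity coherence of the lax monoidal structure morphisms $\varphi_{-,-}$ from the preceding lemma. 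Unitality is analogous, using the counit axiom of $\mathcal{V}$ together with the unitality coherence of $\varphi$. There is nothing to check about the pre-differentials beyond the fact that every map in sight is a map of pdg $\mathbb{S}$-modules by construction.

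There is essentially no obstacle, since this is a formal consequence of the fact that lax monoidal functors $F: \mathcal{C}^{\mathsf{op}} \to \mathcal{D}$ transport comonoids in $\mathcal{C}$ to monoids in $\mathcal{D}$, applied to the lax monoidal functor established in the preceding lemma. The only mild subtlety — and this is why the statement specifies a \emph{monad} structure rather than a cooperad-type structure on $\widehat{\mathscr{S}}_{\mathbb{S}}^c(\mathcal{V})$ — is that $\varphi_{\mathcal{V},\mathcal{V}}$ goes in the ``wrong'' direction to make $\widehat{\mathscr{S}}_{\mathbb{S}}^c$ preserve comonoids; it is precisely this lax (rather than strong or oplax) nature that forces the resulting endofunctor to be a monad. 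Since $\varphi_{\mathcal{V},\mathcal{V}}$ is moreover a monomorphism (again from the preceding lemma), the resulting multiplication $\mu_{\mathcal{V}}$ is uniquely determined, and the monad axioms can be verified by restricting to the image of $\varphi$.
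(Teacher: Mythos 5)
Your proposal is correct and follows exactly the route the paper intends: the corollary is stated without proof precisely because it is the formal fact that a lax monoidal functor out of an opposite category carries comonoids to monoids, applied to the lax structure $\varphi_{-,-}$ of the preceding lemma. The only superfluous remark is the final one about $\varphi_{\mathcal{V},\mathcal{V}}$ being a monomorphism — the monad axioms already follow from coassociativity/counitality of $\mathcal{V}$ together with the coherence of $\varphi$, without any appeal to injectivity.
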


\begin{Definition}[Algebra over an $\mathbb{S}$-colored cooperad]
Let $\mathcal{V}$ be an $\mathbb{S}$-colored cooperad. A $\mathcal{V}$-algebra $L$ amounts to the data $(L,\gamma_L,d_L)$ of an algebra over the monad $\widehat{\mathscr{S}}_{\mathbb{S}}^c(\mathcal{V})$.
\end{Definition}

\begin{Definition}[Coalgebra over an $\mathbb{S}$-colored operad]
Let $\mathcal{G}$ be a pdg $\mathbb{S}$-colored operad. A $\mathcal{G}$-coalgebra $N$ amount to the $(N,\Delta_N,d_N)$ of a morphism of pdg $\mathbb{S}$-modules
\[
\Delta_N: N \longrightarrow \displaystyle \prod_{(n_1,\cdots,n_r) \in \mathbb{N}^{r}~,~r\geq1} \mathrm{Hom}_{\left(\mathbb{S}_{n_1} \times \cdots \times \mathbb{S}_{n_r}\right)~\wr~ \mathbb{S}_r } \left( \mathcal{G}(n_1,\cdots,n_r;n), N(n_1) \otimes \cdots \otimes N(n_r) \right)~,
\]
such that the following diagram commutes 
\[
\begin{tikzcd}[column sep=4.5pc,row sep=3pc]
N \arrow[r,"\Delta_N"] \arrow[d,"\Delta_N",swap] 
&\widehat{\mathscr{S}}_{\mathbb{S}}^c(\mathcal{G})(N) \arrow[r,"\widehat{\mathscr{S}}_{\mathbb{S}}^c(\mathrm{id})(\Delta_N)"]
&\widehat{\mathscr{S}}_{\mathbb{S}}^c(\mathcal{G})(N) \circ \widehat{\mathscr{S}}_{\mathbb{S}}^c(\PP)(D) \arrow[d,"\varphi_{\mathcal{G},\mathcal{G}}(N)"] \\
\widehat{\mathscr{S}}_{\mathbb{S}}^c(\mathcal{G})(N) \arrow[rr,"\widehat{\mathscr{S}}_{\mathbb{S}}^c(\gamma_\mathcal{G})(\mathrm{id})"]
&
&\widehat{\mathscr{S}}_{\mathbb{S}}^c(\mathcal{G} \circ \mathcal{G})(N)~.
\end{tikzcd}
\]
\end{Definition}

\begin{Remark}
Let $\mathcal{G}$ be a pdg $\mathbb{S}$-colored operad. The data of a pdg $\mathcal{G}$-coalgebra structure $\Delta_N$ on a pdg $\mathbb{S}$-module $(N,d_N)$ is equivalent to a morphism of pdg $\mathbb{S}$-colored operads $\delta_N: \mathcal{G} \longrightarrow \mathrm{Coend}_N$. 
\end{Remark}

\medskip

\subsection{Partial $\mathbb{S}$-colored (co)operads}\label{subsection: comparison for partial colored (co)operads}
As the careful reader might expect by now, all the other definitions of Section \ref{Section: Recollections} generalize to the $\mathbb{S}$-colored setting \textit{mutatis mutandis}.

\begin{Definition}[Partial $\mathbb{S}$-colored operad]
An \textit{pdg partial} $\mathbb{S}$-\textit{colored operad} amounts to the data of $(\mathcal{G},\{\circ_i\},d_\mathcal{G})$ a pdg $\mathbb{S}$-color scheme $\mathcal{G}$ endowed with partial composition operations 
\[
\circ_i : \mathcal{G}(n_1,\cdots, n_i, \cdots, n_r;n) \otimes_{\mathbb{S}_{n_i}} \mathcal{G}(p_1,\cdots,p_l;n_i) \longrightarrow \mathcal{G}(n_1,\cdots,n_{i-1},p_1,\cdots,p_l,n_{i+1},\cdots,n_r;n)~.
\]
This family of partial compositions maps $\{\circ_i\}$ satisfies sequential and parallel axioms analogous to those of a partial operad. They satisfy an equivariance condition with respect to the permutations of the entries $\varphi_\sigma$ which is also completely analogous to Definition \ref{def: partialoperad}. 
\end{Definition}

\begin{Definition}[Unital partial $\mathbb{S}$-colored operad]
A \textit{pdg unital partial} $\mathbb{S}$-colored operad amounts to the data of $(\mathcal{G},\{\circ_i\},\eta,d_\mathcal{G})$ a partial pdg $\mathbb{S}$-colored operad $(\mathcal{G},\{\circ_i\},d_\mathcal{G})$ together with a morphism of pdg $\mathbb{S}$-color schemes $\eta: \I_{\mathbb{S}} \longrightarrow \mathcal{G}$ which acts as a unit for the partial compositions maps. 
\end{Definition}

The data of a unit $\eta: \I_{\mathbb{S}} \longrightarrow \mathcal{G}$ amounts to a family of elements $\{id_n \}$ in $\mathcal{G}(n;n)$. 

\begin{lemma}\label{lemma: add a unit colored case}
The category of pdg unital partial $\mathbb{S}$-colored operads is equivalent to the category of pdg $\mathbb{S}$-colored operads defined as monoids.
\end{lemma}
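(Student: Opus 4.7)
The plan is to mimic the classical equivalence between unital partial operads and operads defined as monoids (Proposition~\ref{prop: unital partial operads are operads}), transferring the argument to the $\mathbb{S}$-colored setting while keeping track of the extra action of $\mathbb{S}_r$ that permutes the entries. I will construct functors in both directions and then check they are mutually inverse.

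First, starting from a pdg unital partial $\mathbb{S}$-colored operad $(\mathcal{G},\{\circ_i\},\eta,d_\mathcal{G})$, I will define a total composition map $\gamma_\mathcal{G} : \mathcal{G} \circ \mathcal{G} \longrightarrow \mathcal{G}$ in $(\mathsf{pdg}~\mathsf{Col}_\mathbb{S}, \circ, \I_\mathbb{S})$ by iterated partial composition: given $g \in \mathcal{G}(a_1,\dots,a_j;n)$ and $g_k \in \mathcal{G}(n_{i_1+\cdots+i_{k-1}+1},\dots,n_{i_1+\cdots+i_k};a_k)$, set
\[
\gamma_\mathcal{G}(g; g_1,\dots,g_j) \coloneqq (\cdots((g \circ_1 g_1)\circ_{1+i_1} g_2)\circ_{1+i_1+i_2}\cdots)\circ_{1+i_1+\cdots+i_{j-1}} g_j~.
\]
The sequential and parallel axioms of Definition~\ref{def: partialoperad} (transported to the $\mathbb{S}$-colored case) guarantee that this definition is independent of the order of insertion, and thus that the result is $\mathbb{S}_j$-equivariant, making $\gamma_\mathcal{G}$ well-defined on the coinvariants appearing in the composition product $\mathcal{G}\circ\mathcal{G}$. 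The equivariance axiom for the partial compositions with respect to the permutation morphisms $\varphi_\sigma$ then ensures compatibility with the wreath product action $(\mathbb{S}_{a_1}\times\cdots\times\mathbb{S}_{a_j})\wr \mathbb{S}_j$ in the definition of $\circ$.

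Conversely, starting from a pdg $\mathbb{S}$-colored operad $(\mathcal{G},\gamma_\mathcal{G},\eta,d_\mathcal{G})$ defined as a monoid, I recover partial compositions by inserting units in all but one slot:
\[
g \circ_i h \coloneqq \gamma_\mathcal{G}\bigl(g;\, \mathrm{id}_{n_1},\dots,\mathrm{id}_{n_{i-1}}, h, \mathrm{id}_{n_{i+1}},\dots,\mathrm{id}_{n_r}\bigr)~,
\]
using $\eta : \I_\mathbb{S}\to \mathcal{G}$ to produce the $\mathrm{id}_{n_k}$. Unitality and associativity of $\gamma_\mathcal{G}$ translate immediately into the sequential, parallel and equivariance axioms of a unital partial $\mathbb{S}$-colored operad; moreover $\eta$ provides the required unit for the $\{\circ_i\}$.

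Finally, I check that these two assignments are inverse to one another and functorial: any morphism of unital partial $\mathbb{S}$-colored operads commutes with $\{\circ_i\}$ and $\eta$, hence with the iterated composition $\gamma_\mathcal{G}$; conversely any morphism of monoids commutes with $\gamma_\mathcal{G}$ and $\eta$, hence with all $\circ_i$. Starting from a unital partial structure, rebuilding partial compositions from the total one gives back $\{\circ_i\}$ by the unit axiom; starting from a monoid, reconstructing the total composition from iterated partial insertions reproduces $\gamma_\mathcal{G}$ by iterated use of associativity. The main obstacle is purely combinatorial, namely verifying that the equivariance morphisms $\varphi_\sigma$ of the $\mathbb{S}$-color scheme structure are correctly accommodated when one reshuffles partial compositions; this follows from the third axiom in the $\mathbb{S}$-colored analogue of Definition~\ref{def: partialoperad} combined with the standard sequential/parallel coherence. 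The compatibilities with the pre-differentials $d_\mathcal{G}$ are automatic, as $d_\mathcal{G}$ is a derivation on both sides.
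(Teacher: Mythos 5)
Your proposal is correct and takes essentially the same route as the paper: the paper's proof consists of the single line ``straightforward generalization of Proposition~\ref{prop: unital partial operads are operads}'', and your sketch is exactly that generalization, namely the standard correspondence between iterated partial insertions and the total composition (one direction via the sequential/parallel axioms, the other via insertion of units), with the extra bookkeeping for the $\varphi_\sigma$-actions and the wreath-product coinvariants handled by the colored equivariance axiom. If anything, your write-up supplies more detail than the paper does.
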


\begin{proof}
Straightforward generalization of Proposition \ref{prop: unital partial operads are operads}.
\end{proof}

Thus, given a pdg partial $\mathbb{S}$-colored operad $(\mathcal{G},\{\circ_i\},d_\mathcal{G})$ one can obtain a pdg $\mathbb{S}$-colored operad defined as a monoid by freely adding a unit to it $\mathcal{G}^{u} \coloneqq \mathcal{G} \oplus \I_\mathbb{S}$.

\begin{theorem}[{\cite[Theorems 2.10, 2.11 and 2.24]{ward19}}]
Let $\mathbb{V}$ be a groupoid. 
\begin{enumerate}
\item The exists a monad $\mathscr{T}_\mathbb{V}$ in the category of pdg $\mathbb{V}$-color schemes, called the $\mathbb{V}$-colored tree monad, such that the category of $\mathscr{T}_\mathbb{V}$-algebras is equivalent to the category of pdg unital partial $\mathbb{V}$-colored operads.

\item  The exists a monad $\overline{\mathscr{T}}_\mathbb{V}$ in the category of pdg $\mathbb{V}$-color schemes, called the reduced $\mathbb{V}$-colored tree monad, such that the category of $\overline{\mathscr{T}}_\mathbb{V}$-algebras is equivalent to the category of pdg partial $\mathbb{V}$-colored operads.
\end{enumerate}
\end{theorem}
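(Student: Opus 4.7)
The plan is to generalize the standard construction of the tree monad on $\mathbb{S}$-modules (see \cite[Section 5.6]{LodayVallette12}) to the $\mathbb{V}$-colored setting, following the blueprint laid out in \cite{ward19}. The key observation is that a partial composition $\circ_i$ in a $\mathbb{V}$-colored operad requires the color of the $i$-th input of the outer operation to match the color of the output of the inner operation; this matching condition is precisely what gets encoded by the combinatorics of $\mathbb{V}$-colored rooted trees.

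First, I would define the underlying endofunctor of $\mathscr{T}_\mathbb{V}$ on a pdg $\mathbb{V}$-color scheme $E$ as the direct sum over isomorphism classes of rooted trees whose vertices are labeled by elements of $E$ in a color-coherent way: a vertex $v$ with $r$ incoming edges colored $c_1,\dots,c_r$ and outgoing edge colored $c$ must be labeled by an element of $E(c_1,\dots,c_r;c)$. The various groupoid actions (on inputs, on the output, and permuting the labeled leaves) give $\mathscr{T}_\mathbb{V}(E)$ its structure of a $\mathbb{V}$-color scheme. The monad unit $\eta:E\to\mathscr{T}_\mathbb{V}(E)$ sends an operation to the corolla decorated by it, and the monad multiplication $\mu:\mathscr{T}_\mathbb{V}(\mathscr{T}_\mathbb{V}(E))\to \mathscr{T}_\mathbb{V}(E)$ is the classical operation of substituting a tree-of-trees into one big tree by erasing the nested parentheses. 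The pdg structure is inherited from $E$ via the usual Koszul-sign conventions for tensor products of labels. Verifying that $(\mathscr{T}_\mathbb{V},\mu,\eta)$ is a monad in $\mathsf{pdg}\ \mathsf{Col}_\mathbb{V}$ is then a direct associativity/unitality check on the tree substitution.

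Next, I would establish the equivalence of categories. Given a $\mathscr{T}_\mathbb{V}$-algebra $\gamma:\mathscr{T}_\mathbb{V}(\mathcal{G})\to \mathcal{G}$, restricting $\gamma$ to trees with two vertices connected at the $i$-th slot yields the partial composition map $\circ_i$; restricting to the trivial tree (a single edge with no vertex) produces the unit $\eta_n\in\mathcal{G}(n;n)$. The sequential, parallel, and equivariance axioms of Definition-analog for unital partial $\mathbb{V}$-colored operads correspond to the associativity of $\gamma$ on trees with three vertices in the two possible linear/branched arrangements, together with the compatibility with the $\mathbb{S}_r$-symmetry permuting leaves. Conversely, given a unital partial $\mathbb{V}$-colored operad, one builds $\gamma$ on a general tree by iterated partial compositions; the coherence from the sequential and parallel axioms ensures the result is independent of the chosen order of contractions, and unitality handles the edges without vertices. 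This part is essentially a re-run of the classical argument, so one verifies it on the generators (two-vertex trees) and propagates by induction on the number of internal edges.

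For the reduced version $\overline{\mathscr{T}}_\mathbb{V}$, I would define the endofunctor analogously but restrict the sum to trees in which every internal edge is genuinely internal and no edge corresponds to a ``unit'' insertion; concretely, one drops the trivial one-edge tree from the indexing set. The monadic structure then has no unit coming from outside $E$, and the algebra structure similarly has no distinguished element $\eta$, recovering partial $\mathbb{V}$-colored operads without units. The equivalence of categories follows by the same argument as above, truncated to exclude the unit datum. The main technical obstacle throughout will be bookkeeping the combined action of the wreath products $(\mathbb{S}_{n_1}\times\cdots\times\mathbb{S}_{n_r})\wr\mathbb{S}_r$ acting on labeled leaves and the groupoid colors flowing along edges: one must ensure that the tree-substitution $\mu$ descends to coinvariants correctly, and that the translation between axioms of partial $\mathbb{V}$-colored operads and monad-algebra axioms is equivariant for all these actions simultaneously. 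Since this is precisely the content of \cite[Theorems 2.10, 2.11, 2.24]{ward19}, I would ultimately cite \emph{loc.\ cit.}\ for the full verification and confine the sketch to the points above.
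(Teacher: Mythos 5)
Your proposal is correct and matches the paper's treatment: the paper states this result as a direct citation of \cite[Theorems 2.10, 2.11 and 2.24]{ward19} without reproving it, and your sketch is a faithful outline of the construction proved there (colored rooted trees with color-coherent labelings, substitution as monad multiplication, two-vertex trees giving the partial compositions, and the reduced version obtained by dropping the trivial tree). The one point worth keeping precise is that in the groupoid-colored setting the edges carry groupoid morphisms as well as colors, so the substitution must be checked to descend to the $(\mathbb{S}_{n_1}\times\cdots\times\mathbb{S}_{n_r})\wr\mathbb{S}_r$-coinvariants — which you correctly flag and, like the paper, delegate to \emph{loc.\ cit.}
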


\begin{Definition}[Partial $\mathbb{S}$-colored cooperad]
A \textit{pdg partial} $\mathbb{S}$-\textit{colored cooperad} amounts to the data of $(\mathcal{V},\{\Delta_i\},d_\mathcal{V})$ a pdg $\mathbb{S}$-color scheme $\mathcal{V}$ endowed with partial decomposition operations 
\[
\Delta_i :\mathcal{V}(n_1,\cdots,n_{i-1},p_1,\cdots,p_l,n_{i+1},\cdots,n_r;n) \longrightarrow \mathcal{V}(n_1,\cdots, n_i, \cdots, n_r;n) \otimes_{\mathbb{S}_{n_i}} \mathcal{V}(p_1,\cdots,p_l;n_i)~.
\]
This family of partial decompositions maps $\{\Delta_i\}$ satisfies cosequential and coparallel axioms analogous to those of a partial cooperad. They satisfy an equivariance condition with respect to the permutations of the entries $\varphi_\sigma$ which is also completely analogous to Definition \ref{def: partialcoop}. 
\end{Definition}

\begin{Definition}[Counital partial $\mathbb{S}$-colored cooperad]
A \textit{pdg counital partial} $\mathbb{S}$-colored cooperad  amounts to the data of $(\mathcal{V},\{\Delta_i\},\epsilon, d_\mathcal{V})$ a pdg partial $\mathbb{S}$-colored cooperad $(\mathcal{V},\{\Delta_i\},d_\mathcal{V})$ together with a morphism of pdg $\mathbb{S}$-color schemes $\epsilon: \mathcal{V} \longrightarrow \I_{\mathbb{S}}$ which acts as a counit for the partial decompositions maps. 
\end{Definition}

Any pdg partial $\mathbb{S}$-colored cooperad structure induces a morphism of pdg $\mathbb{S}$-color schemes
\[
\Delta_\mathcal{V}: \mathcal{V} \longrightarrow \overline{\mathscr{T}}^\wedge_\mathbb{S}(\mathcal{V})~,
\]
where $\overline{\mathscr{T}}^\wedge_\mathbb{S}$ is the endofunctor given by the completion of the reduced $\mathbb{V}$-colored tree monad with respect to its canonical weight filtration given by the number of internal edges of the rooted trees. Using this morphism one defines an analogous version of the coradical filtration of a partial cooperad and an analogous definition of a conilpotent pdg partial $\mathbb{S}$-colored cooperad. \textit{Mutatis mutandis} the same characterization of this type of partial cooperads still holds. See Section \ref{subsection: filtrations on partial (co)operads}.

\begin{theorem}[{\cite[Section 2.4.1]{ward19}}]
Let $\mathbb{V}$ be a groupoid such that for any $v$ in $\mathbb{V}$, the set $\mathrm{Aut}(v)$ is finite.
\begin{enumerate}
\item The exists a comonad structure on the underlying endofunctor of the reduced tree monad $\overline{\mathscr{T}}_\mathbb{V}$ on the category of $\mathbb{V}$-colored schemes.

\item The category of $\overline{\mathscr{T}}_\mathbb{V}$-coalgebras is equivalent to the category of conilpotent pdg partial $\mathbb{S}$-colored cooperads.
\end{enumerate}
\end{theorem}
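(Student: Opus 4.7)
The plan is to mimic the proof of Proposition \ref{prop: reduced tree comonad gives conil part coop}, upgrading the construction to the $\mathbb{V}$-colored setting by using the colored rooted-tree combinatorics developed in Section 2 of \cite{ward19}. The finiteness condition on $\mathrm{Aut}(v)$ enters only to ensure that the coinvariants appearing in the composition product on $\mathbb{V}$-color schemes remain well-behaved: it guarantees that the sub-$\mathbb{V}$-color scheme of trees with a fixed underlying shape is finitely generated at each arity, so that partitioning trees is a well-defined operation on coinvariants.

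For part (1), the counit $\epsilon_{\overline{\mathscr{T}}_\mathbb{V}}: \overline{\mathscr{T}}_\mathbb{V}(E) \twoheadrightarrow E$ is defined by projecting onto the component of corollas (trees with a single internal vertex), exactly as in \cite[Section 5.8.8]{LodayVallette12}. The coproduct
\[
\Delta_{\overline{\mathscr{T}}_\mathbb{V}}: \overline{\mathscr{T}}_\mathbb{V}(E) \longrightarrow \overline{\mathscr{T}}_\mathbb{V}(\overline{\mathscr{T}}_\mathbb{V}(E))
\]
sends a labeled colored tree $\tau$ to the sum over all admissible partitions of the internal edges of $\tau$ into bunches of connected sub-trees, the resulting data being a "tree of sub-trees". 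Coassociativity and counitality then follow from the combinatorial fact that iterated admissible partitions of a tree are in bijection with partitions of greater depth, exactly as in the uncolored case. One only needs to verify that these tree operations intertwine the action of the wreath products $(\mathbb{S}_{n_1} \times \cdots \times \mathbb{S}_{n_r}) \wr \mathbb{S}_r$ appearing in the definition of the composition product of $\mathbb{V}$-color schemes; this is a direct consequence of the equivariance built into the definition of $\overline{\mathscr{T}}_\mathbb{V}$.

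For part (2), we produce the equivalence in both directions. Given a conilpotent partial $\mathbb{V}$-colored cooperad $(\mathcal{V}, \{\Delta_i\})$, iterating the partial decompositions yields a map
\[
\Delta_{\mathcal{V}}: \mathcal{V} \longrightarrow \overline{\mathscr{T}}_\mathbb{V}^{\wedge}(\mathcal{V})~,
\]
as in Lemma \ref{Existence of delta C}. The conilpotency hypothesis — namely, the exhaustiveness of the coradical filtration — means that each operation admits only finitely many non-zero iterated decompositions, so this map factors through the uncompleted endofunctor $\overline{\mathscr{T}}_\mathbb{V}(\mathcal{V})$. The sequential, parallel, and equivariance axioms for the partial decomposition maps $\{\Delta_i\}$ translate directly into the coassociativity and counitality of $\Delta_{\mathcal{V}}$ with respect to the comonad structure constructed in (1). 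Conversely, starting from a $\overline{\mathscr{T}}_\mathbb{V}^c$-coalgebra $(\mathcal{V}, \Delta_{\mathcal{V}})$, one recovers the partial decomposition maps $\Delta_i$ by post-composing $\Delta_{\mathcal{V}}$ with the projection of $\overline{\mathscr{T}}_\mathbb{V}(\mathcal{V})$ onto its weight-one component (trees with a single internal edge); conilpotency is automatic, since the image of $\Delta_{\mathcal{V}}$ lies in $\overline{\mathscr{T}}_\mathbb{V}(\mathcal{V})$, which consists of \emph{finite} sums. The two constructions are mutually inverse, and both are functorial, yielding the claimed equivalence of categories.

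The main technical obstacle is the combinatorial bookkeeping: each partial decomposition $\Delta_i$ introduces a coinvariant with respect to $\mathbb{S}_{n_i}$ along the new internal edge, and iterating these decompositions produces nested towers of coinvariants. One must check that these nested coinvariants match precisely those appearing in the colored tree endofunctor $\overline{\mathscr{T}}_\mathbb{V}$, which is where the finiteness of $\mathrm{Aut}(v)$ plays its role. Once this identification is carried out — essentially by tracking the colors of the edges against the indexing symmetric groups — both parts of the theorem reduce to the uncolored statements of Section \ref{subsection: filtrations on partial (co)operads}.
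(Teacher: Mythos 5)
The paper itself does not prove this statement: it is quoted from \cite[Section 2.4.1]{ward19}, so the only internal point of comparison is the uncoloured Proposition \ref{prop: reduced tree comonad gives conil part coop}, which is precisely the template you follow. Your outline is the standard and correct one: the counit is the projection onto corollas, the coproduct sends a labelled coloured tree to the sum over its partitions into connected subtrees (equivalently, over the subsets of internal edges one cuts), and for part (2) the exhaustiveness of the coradical filtration is exactly the condition under which the total decomposition map $\Delta_{\mathcal{V}}$ factors through the uncompleted tree functor, the converse direction being forced because $\overline{\mathscr{T}}_\mathbb{V}(\mathcal{V})$ consists of finite sums (together with coassociativity, which identifies the weight-$\omega$ component of $\Delta_{\mathcal{V}}$ with the $\omega$-fold iterated partial decompositions).

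The one point you should sharpen is your account of where the finiteness of $\mathrm{Aut}(v)$ enters; it is not a matter of trees of a fixed shape being ``finitely generated''. The endofunctor $\overline{\mathscr{T}}_\mathbb{V}$ is built out of \emph{coinvariants} under the wreath products of the automorphism groups of the colours with the symmetric groups permuting inputs, whereas the tree-partitioning coproduct is naturally defined on, and lands in, the corresponding \emph{invariants}: duplicating a tree into a tree of subtrees requires choosing representatives compatibly with all automorphisms of the labelled coloured tree. Over a field of characteristic $0$, invariants and coinvariants for a finite group are canonically isomorphic via the averaging map, and this is exactly where the hypothesis $|\mathrm{Aut}(v)|<\infty$ is used --- the same mechanism the paper invokes in Lemma \ref{lemma: iso avec complete tree monad}. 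Making this identification explicit closes the only real gap in your argument; the remaining bookkeeping (matching edge colours against the indexing groups and checking equivariance of the partition map) is routine, as you say.
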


Let $(\mathcal{V},\{\Delta_i\},d_\mathcal{V})$ be a conilpotent pdg partial cooperad. Then one can cofreely add a counit to it by setting $\mathcal{V}^{u} \coloneqq \mathcal{V} \oplus \I_{\mathbb{V}}$. The pdg $\mathbb{S}$-color scheme has an unique cooperad structure induced by the partial decompositions maps $\{\Delta_i\}$. It defines a functor 
\[
\mathsf{Conil}: \mathsf{pdg}~\mathsf{pCoop}_\mathbb{S}^\mathsf{conil} \longrightarrow \mathsf{pdg}~\mathsf{Coop}_\mathbb{S}~.
\]
\begin{Definition}[Conilpotent $\mathbb{S}$-colored cooperad]
A pdg $\mathbb{S}$-colored cooperad $\mathcal{V}$ is said to be \textit{conilpotent} if its in the essential image of the functor defined above.
\end{Definition}

\begin{Remark}
For any conilpotent pdg $\mathbb{S}$-colored cooperad $\mathcal{V}$, there is a canonical filtration on $\mathcal{V}$-algebras and a notion of \textit{complete} $\mathcal{V}$-algebra analogous to the case studied in Section \ref{Section: Recollections}.
\end{Remark}

\subsection{Koszul duality for quadratic $\mathbb{S}$-colored operads}\label{subsection: Koszul duality for quadratic S-colored operads}
We now state the quadratic Koszul duality of \cite{ward19} in the context of $\mathbb{S}$-colored operads. In order to do this, we briefly restrict from the underlying category of pdg modules to the underlying category of dg modules. 

\begin{Proposition}\label{prop S colored totalization}
Let $(\mathcal{G},\{\circ_i\},d_\mathcal{G})$ be a dg partial $\mathbb{S}$-colored operad. The \textit{totalization} of $\mathcal{G}$ is given by 
\[
\prod_{(n_1,\cdots,n_r;n) \in \mathbb{N}^{r+1}} \left(\mathcal{G}(n_1,\cdots,n_r;n)^{\left(\mathbb{S}_{n_1} \times \cdots \times \mathbb{S}_{n_r}\right)~\wr~ \mathbb{S}_r}\right)^{\mathbb{S}_n}~.
\]
It can be endowed with a dg pre-Lie algebra structure. Let $\mu$ be in $\mathcal{G}(n_1,\cdots,n_r;n)$ and $\nu$ be in $\mathcal{G}(p_1,\cdots,p_l;p)$, where $\mu$ is colored by $(\sigma_1, \cdots, \sigma_r;\sigma)$ in $\mathbb{S}_{n_1} \times \cdots \times \mathbb{S}_{n_r} \times \mathbb{S}_n$ and where $\nu$ is colored by $(\tau_1, \cdots, \tau_l;\tau)$ in $\mathbb{S}_{p_1} \times \cdots \times \mathbb{S}_{p_l} \times \mathbb{S}_p$. The pre-Lie bracket of $\mu$ and $\nu$ is given by
\[
\mu \star \nu \coloneqq \sum_{\{(n_i,\sigma_i) = (p,\tau)\}} \sum_{\sigma_P} (\mu \circ_i \nu)^{\sigma_P}~,
\]
where the first sum runs over all pairs $(n_i,\sigma_i)$ which are equal to $(p,\tau)$ (if there are none, then $\mu \star \nu \coloneqq 0$), and where the second sum anges over unshuffles $\sigma_P$ associated to all ordered partitions $P$ in $\mathrm{OrPar}(1,\cdots, 1, \underbrace{l-i+1}_{i\text{-th position}}, 1, \cdots, 1)$.
\end{Proposition}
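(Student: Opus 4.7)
The plan is to reduce the statement, up to a careful bookkeeping of colors, to the classical argument showing that the totalization of a dg partial operad is a pre-Lie algebra. The key is that all structural axioms of a partial $\mathbb{S}$-colored operad in Subsection~\ref{subsection: comparison for partial colored (co)operads} are colored analogues of the axioms of Definition~\ref{def: partialoperad}.

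First, I would check that the formula $\mu \star \nu = \sum_{(n_i,\sigma_i) = (p,\tau)} \mu \circ_i \nu$ is well-defined as a map on the totalization. For fixed $\mu$ and $\nu$ of the specified colors, only those positions $i$ of $\mu$ whose color $(n_i, \sigma_i)$ matches the output color $(p,\tau)$ of $\nu$ contribute, so the sum is finite; and the output lands in $\mathcal{G}(n_1, \ldots, n_{i-1}, p_1, \ldots, p_l, n_{i+1}, \ldots, n_r ; n)$, whose total color is well-defined at each term. One then has to verify that summing over matching $i$ produces an element which is invariant under the relevant wreath product and $\mathbb{S}_n$-action: this is exactly the content of the equivariance axiom $\varphi_\sigma$ for the partial composition maps, since acting by a permutation of the inputs of $\mu$ permutes the set of indices $i$ satisfying $(n_i,\sigma_i) = (p,\tau)$ accordingly, while carrying terms of the sum to one another.

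Next I would verify that the differential $d_\mathcal{G}$ is a derivation with respect to $\star$. This follows immediately from the fact that each $\circ_i$ is a morphism of dg modules (since $d_\mathcal{G}$ is a derivation with respect to each $\circ_i$, as part of the definition of a dg partial $\mathbb{S}$-colored operad), combined with the Koszul sign rule.

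The main step is showing that the associator of $\star$ is right-symmetric, i.e.\ that
\[
(\mu \star \nu) \star \rho - \mu \star (\nu \star \rho)
\]
is symmetric under the swap of $\nu$ and $\rho$. Expanding both sides, the terms split into two families. The \emph{sequential} family consists of terms where $\rho$ is composed at some position of $\nu$ (which has already been composed into $\mu$), or equivalently where $\nu \star \rho$ is composed into $\mu$; by the sequential axiom of partial $\mathbb{S}$-colored operads, these terms cancel exactly between $(\mu \star \nu) \star \rho$ and $\mu \star (\nu \star \rho)$. The \emph{parallel} family consists of terms where $\nu$ and $\rho$ are composed at distinct positions of $\mu$; by the parallel axiom, these terms appear with the same sign in $(\mu \star \nu) \star \rho$ and in $(\mu \star \rho) \star \nu$ (up to the Koszul sign $(-1)^{|\nu||\rho|}$), which establishes the required symmetry. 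The hardest part will be the combinatorial bookkeeping of the colors when writing out these two families explicitly: one must check that the matching conditions $(n_i,\sigma_i) = (p,\tau)$ behave coherently under iterated compositions, so that the sequential terms really are in bijection on both sides and the parallel terms really are symmetric in $\nu$ and $\rho$. Once these identifications are set up correctly, the cancellations and matchings work exactly as in the uncolored case treated in \cite{LodayVallette12}.
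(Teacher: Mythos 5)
Your proof is correct and follows the same route as the paper, which simply notes that the associator of $\star$ is right-symmetric exactly as in the uncolored case of partial operads and that compatibility with the differential holds by definition. Your expansion of the color-matching bookkeeping, the equivariance check, and the sequential/parallel splitting of the associator terms is a faithful (and more detailed) account of that one-line argument.
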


\begin{proof}
The associator of the product $\star$ is right symmetric like in the case of partial operads, and is compatible with the differential by definition.
\end{proof}

\begin{Definition}[Convolution partial $\mathbb{S}$-colored operad]
Let $(\mathcal{V},\{\Delta_i\}, d_\mathcal{V})$ be a conilpotent dg partial $\mathbb{S}$-colored cooperad and let $(\mathcal{G},\{\circ_i\},d_\mathcal{G})$ be a dg partial $\mathbb{S}$-colored operad. The dg $\mathbb{S}$-color scheme
\[
\mathcal{H}om(\mathcal{V},\mathcal{G})(n_1,\cdots,n_r;n) \coloneqq \mathrm{Hom}_{\mathsf{dg}}(\mathcal{V}(n_1,\cdots,n_r;n), \mathcal{G}(n_1,\cdots,n_r;n))
\]
endowed with the differential 
\[
\partial(\alpha) \coloneqq d_{\mathcal{G}} \hspace{2pt} \circ \hspace{2pt} \alpha - (-1)^{|\alpha|} \alpha \hspace{2pt} \circ \hspace{2pt} d_{\mathcal{V}}~.
\]
forms a dg partial $\mathbb{S}$-colored operad structure where the partial compositions maps are given by 
\[
\alpha \circ_i \beta \coloneqq \circ_i \cdot(\alpha \otimes \beta) \cdot \Delta_i~.
\]
\end{Definition}

\begin{Definition}[Twisting morphism]
Let 
\[
\mathfrak{g}_{\mathcal{V},\mathcal{G}} \coloneqq \prod_{(n_1,\cdots,n_r;n) \in \mathbb{N}^{r+1}} \mathrm{Hom}_{\mathbb{S}\text{-}\mathsf{col}}(\mathcal{V}(n_1,\cdots,n_r;n), \mathcal{G}(n_1,\cdots,n_r;n))
\]
be the dg pre-Lie algebra given by the totalization of the convolution operad of $\mathcal{V}$ and $\mathcal{G}$. A \textit{twisting morphism} is a Maurer-Cartan element $\alpha$ of $\mathfrak{g}_{\mathcal{V},\mathcal{G}}$, that is, a morphism $\alpha: \mathcal{V} \longrightarrow \mathcal{G}$ of dg $\mathbb{S}$-color schemes of degree $-1$  satisfying :
\[
\partial(\alpha) + \alpha \star \alpha = 0~.
\]
The set of twisting morphism between $\mathcal{V}$ and $\mathcal{G}$ is denoted by $\mathrm{Tw}(\mathcal{V},\mathcal{G})~.$
\end{Definition}

Twisting morphisms induce Bar-Cobar adjunctions relative to them.

\begin{Proposition}[Bar-Cobar adjunction relative to $\alpha$]\label{Bar-Cobar colored adjunction, classical case}
Let $\alpha: \mathcal{V} \longrightarrow \mathcal{G}$ be a twisting morphism between a conilpotent partial $\mathbb{S}$-colored cooperad and a partial $\mathbb{S}$-colored operad. It induces a Bar-Cobar adjunction
\[
\begin{tikzcd}[column sep=5pc,row sep=3pc]
          \mathsf{dg}~\mathcal{G}^{u}\text{-}\mathsf{alg} \arrow[r, shift left=1.1ex, "\Omega_\alpha"{name=F}] &\mathsf{dg}~\mathcal{V}^{u}\text{-}\mathsf{coalg}^{\mathsf{conil}} \arrow[l, shift left=.75ex, "\text{B}_\alpha"{name=U}]
            \arrow[phantom, from=F, to=U, , "\dashv" rotate=-90]
\end{tikzcd}
\]
relative to the twisting morphism $\alpha$. 
\end{Proposition}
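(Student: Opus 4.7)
The plan is to mimic the classical operadic Bar-Cobar construction (see \cite{LodayVallette12}) in the $\mathbb{S}$-colored setting, using the twisting morphism $\alpha: \mathcal{V} \longrightarrow \mathcal{G}$ to twist the free/cofree (co)algebra constructions. First I would define the underlying functors. For a dg $\mathcal{G}^u$-algebra $(A, \gamma_A, d_A)$, set
\[
\mathrm{B}_\alpha(A) \coloneqq \left(\mathscr{S}_{\mathbb{S}}(\mathcal{V}^u)(A), \, D_\alpha\right)~,
\]
where $\mathscr{S}_{\mathbb{S}}(\mathcal{V}^u)(A)$ is the cofree conilpotent dg $\mathcal{V}^u$-coalgebra on $A$ (which exists and has this explicit form since $\mathcal{V}^u$ is obtained from the conilpotent partial cooperad $\mathcal{V}$ by adjoining a counit, cf.\ Subsection \ref{subsection: comparison for partial colored (co)operads}), and $D_\alpha \coloneqq d_1 + d_2^\alpha$ is the unique coderivation extending the sum of the internal differential and the degree $-1$ map $\mathcal{V} \otimes A^{\otimes \bullet} \to A$ obtained by applying $\alpha$ followed by the structural map $\gamma_A$. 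Dually, for a conilpotent dg $\mathcal{V}^u$-coalgebra $(C, \Delta_C, d_C)$, set
\[
\Omega_\alpha(C) \coloneqq \left(\mathscr{S}_{\mathbb{S}}(\mathcal{G}^u)(\overline{C}), \, D^\alpha\right)~,
\]
where $\overline{C}$ is the cokernel of the counit, $\mathscr{S}_{\mathbb{S}}(\mathcal{G}^u)(\overline{C})$ is the free dg $\mathcal{G}^u$-algebra on $\overline{C}$, and $D^\alpha$ is the unique derivation extending the map built from $\Delta_C$, $\alpha$, and the structural map $\gamma_{\mathcal{G}}$.

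Next I would verify that $D_\alpha^2 = 0$ and $(D^\alpha)^2 = 0$. The square of $D_\alpha$ splits into three pieces corresponding to $d_1^2$, $d_1 d_2^\alpha + d_2^\alpha d_1$, and $(d_2^\alpha)^2$. The first vanishes because $A$ is a dg $\mathcal{G}^u$-algebra, the second because $\alpha$ commutes up to sign with the internal differentials (which is part of the data $\partial(\alpha) + \alpha \star \alpha = 0$ after splitting off the degree zero part), and the third translates exactly into the quadratic part $\alpha \star \alpha$ of the Maurer-Cartan equation using the partial composition axioms for $\mathcal{V}^u$ and $\mathcal{G}^u$. Altogether the Maurer-Cartan equation for $\alpha$ is equivalent to $D_\alpha^2 = 0$. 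The same computation, dualized, gives $(D^\alpha)^2 = 0$.

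Finally I would establish the adjunction by producing a natural bijection
\[
\mathrm{Hom}_{\mathcal{G}^u\text{-}\mathsf{alg}}(\Omega_\alpha C, A) \;\cong\; \mathrm{Tw}^\alpha(C, A) \;\cong\; \mathrm{Hom}_{\mathcal{V}^u\text{-}\mathsf{coalg}^{\mathsf{conil}}}(C, \mathrm{B}_\alpha A)~,
\]
where $\mathrm{Tw}^\alpha(C, A)$ denotes the set of $\alpha$-twisting morphisms, i.e.\ degree zero maps $\varphi : \overline{C} \to A$ of dg $\mathbb{S}$-color schemes satisfying a Maurer-Cartan-type equation in the convolution pre-Lie algebra $\mathcal{H}om(\overline{C}, A)$ obtained from the previous proposition applied to the convolution partial $\mathbb{S}$-colored operad. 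The left bijection uses the universal property of the free $\mathcal{G}^u$-algebra: an algebra morphism out of $\Omega_\alpha C$ is determined by a map $\overline{C} \to A$, and compatibility with $D^\alpha$ translates into the twisting equation. The right bijection is dual, using the universal property of the cofree conilpotent $\mathcal{V}^u$-coalgebra (here the conilpotency of $C$ is essential so that the corestriction along $\overline{C} \to A$ exists and is unique). Naturality in both variables is immediate from the construction.

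The main obstacle is bookkeeping: verifying that the Maurer-Cartan equation on $\alpha$, written in terms of the partial compositions $\{\circ_i\}$ of $\mathcal{G}$ and partial decompositions $\{\Delta_i\}$ of $\mathcal{V}$, is equivalent to $D_\alpha^2 = 0$ after passing through the cofree $\mathcal{V}^u$-coalgebra. This requires carefully unpacking how partial operations in the groupoid-colored setting assemble into the total composition on cofree coalgebras, and checking that the Koszul signs work out. All the other steps are formal and parallel the classical case (e.g.\ \cite[Chapter~11]{LodayVallette12}, \cite{grignou2019}).
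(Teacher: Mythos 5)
Your proposal follows essentially the same route as the paper, which constructs $\mathrm{B}_\alpha$ and $\Omega_\alpha$ as the cofree conilpotent $\mathcal{V}^u$-coalgebra and the free $\mathcal{G}^u$-algebra equipped with differentials twisted by $\alpha$, exactly as in the classical case of \cite[Section 11.2]{LodayVallette12} and \cite[Section 2.7.1]{ward19}, with the adjunction mediated by the set of $\alpha$-twisting morphisms. One small correction: the Cobar construction should be taken on $C$ itself, $\Omega_\alpha(C) = \mathscr{S}_{\mathbb{S}}(\mathcal{G}^u)(C)$ --- a conilpotent $\mathcal{V}^u$-coalgebra is not coaugmented, so there is no counit of $C$ to take a cokernel of, and the passage to $\overline{C}$ is both unnecessary and undefined in this setting.
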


\begin{proof}
This adjunction is constructed using the free $\mathcal{G}^{u}$-algebra given by $\mathscr{S}_{\mathbb{S}}(\mathcal{G}^{u})$ and the cofree conilpotent $\mathcal{V}^{u}$-coalgebra given by $\mathscr{S}_{\mathbb{S}}(\mathcal{V}^{u})$. They are endowed with differentials which are analogous to those constructed in the standard case \cite[Section 11.2]{LodayVallette12}. See \cite[Section 2.7.1]{ward19} for a detailed exposition of this construction.
\end{proof}

\begin{Definition}[Koszul twisting morphism]
Let $\alpha: \mathcal{V} \longrightarrow \mathcal{G}$ be a twisting morphism between a conilpotent partial $\mathbb{S}$-colored cooperad and a   partial $\mathbb{S}$-colored operad. It is a \textit{Koszul twisting morphism} if the twisted complex $\mathcal{V}^{u} \circ_\alpha \mathcal{G}^{u}$ is acyclic or if the twisted complex $\mathcal{G}^{u} \circ_\alpha \mathcal{V}^{u}$ is acyclic. 
\end{Definition}

See \cite[Section 2.7]{ward19} or \cite[Section 6.4.5]{LodayVallette12} for a precise definition of the twisted complexes $\mathcal{G}^{u} \circ_\alpha \mathcal{V}^{u}$ and $\mathcal{V}^{u} \circ_\alpha \mathcal{G}^{u}$.

\begin{Remark}
There is a natural isomorphism $\Omega_\alpha \circ \mathrm{B}_\alpha \cong \mathscr{S}_\mathbb{S}(\mathcal{G}^{u} \circ_\alpha \mathcal{V}^{u})$. This implies that $\alpha$ is Koszul, then the counit of adjunction 
\[
\Omega_\alpha \circ \mathrm{B}_\alpha \longrightarrow \mathrm{Id}_{\mathcal{G}\text{-}\mathsf{alg}}
\]
is a natural quasi-isomorphism. The converse is true under some assumptions, see \cite[Theorem 11.3.3]{LodayVallette12}.
\end{Remark}

\begin{lemma}[{\cite[Section 2.48]{ward19}}]\label{Koszul duality Ward}
Let $\alpha: \mathcal{V} \longrightarrow \mathcal{G}$ be a twisting morphism between a conilpotent partial $\mathbb{S}$-colored cooperad and a partial $\mathbb{S}$-colored operad. The following are equivalent:
\begin{enumerate}
\item The twisting morphism $\alpha$ is a Koszul twisting morphism.
\item The morphism $f_\alpha: \Omega_{\mathbb{S}}\mathcal{V} \longrightarrow \mathcal{G}$ induced by $\alpha$ is a quasi-isomorphism.
\item The morphism $g_\alpha: \mathcal{V} \longrightarrow \mathrm{B}_{\mathbb{S}}\mathcal{G}$ induced by $\alpha$ is a quasi-isomorphism.
\end{enumerate}
Here $\mathrm{B}_{\mathbb{S}}$ and $\Omega_{\mathbb{S}}$ denote respectively the Bar and Cobar constructions for partial groupoid-colored (co)operads constructed in \cite{ward19}.
\end{lemma}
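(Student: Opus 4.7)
The strategy is a direct generalization to the $\mathbb{S}$-colored setting of the classical proof that Koszulity of a twisting morphism is equivalent to the bar/cobar resolution being a quasi-isomorphism, as in \cite[Section 6.6]{LodayVallette12}. The two essential ingredients are universal twisting morphisms and a comparison lemma for twisted composite products.

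First, I would upgrade Proposition \ref{Bar-Cobar colored adjunction, classical case} to a bar-cobar adjunction at the operadic level itself, $\Omega_{\mathbb{S}} \dashv \mathrm{B}_{\mathbb{S}}$, and show that its hom-sets are represented by twisting morphisms:
\[
\mathrm{Hom}_{\mathsf{dg}\text{-}\mathsf{pOp}_\mathbb{S}}(\Omega_{\mathbb{S}} \mathcal{V}, \mathcal{G}) \;\cong\; \mathrm{Tw}(\mathcal{V}, \mathcal{G}) \;\cong\; \mathrm{Hom}_{\mathsf{dg}\text{-}\mathsf{pCoop}_\mathbb{S}^{\mathsf{conil}}}(\mathcal{V}, \mathrm{B}_{\mathbb{S}} \mathcal{G}).
\]
The morphisms $f_\alpha$ and $g_\alpha$ are defined as the two adjoints of $\alpha$, and the universal twisting morphisms $\iota_{\mathcal{V}} : \mathcal{V} \to \Omega_{\mathbb{S}} \mathcal{V}$ and $\pi_{\mathcal{G}} : \mathrm{B}_{\mathbb{S}} \mathcal{G} \to \mathcal{G}$ correspond respectively to the identity on $\Omega_{\mathbb{S}} \mathcal{V}$ and on $\mathrm{B}_{\mathbb{S}} \mathcal{G}$.

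The second step would be to prove that the two universal twisted composite products $(\Omega_{\mathbb{S}} \mathcal{V})^u \circ_{\iota_\mathcal{V}} \mathcal{V}^u$ and $\mathcal{G}^u \circ_{\pi_\mathcal{G}} (\mathrm{B}_{\mathbb{S}} \mathcal{G})^u$ are acyclic. I would endow each with the weight filtration coming from the number of internal edges of the underlying tree (the natural grading of $\overline{\mathscr{T}}_\mathbb{S}$). On the associated graded, the differential reduces to the Koszul-type piece coming from the (co)bar differential alone, and one builds a contracting homotopy by extracting the top vertex of the tree, exactly as in the proof of \cite[Proposition 6.5.11]{LodayVallette12}.

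The third step is the comparison argument. The morphism $f_\alpha$ induces a morphism of twisted complexes $f_\alpha \circ \mathrm{id}_{\mathcal{V}^u} : (\Omega_{\mathbb{S}} \mathcal{V})^u \circ_{\iota_\mathcal{V}} \mathcal{V}^u \to \mathcal{G}^u \circ_\alpha \mathcal{V}^u$, and dually $g_\alpha$ induces $\mathrm{id}_{\mathcal{G}^u} \circ g_\alpha : \mathcal{G}^u \circ_\alpha \mathcal{V}^u \to \mathcal{G}^u \circ_{\pi_\mathcal{G}} (\mathrm{B}_{\mathbb{S}} \mathcal{G})^u$. Both respect the filtration coming from the coradical filtration of $\mathcal{V}^u$, and on the associated graded they reduce to $f_\alpha$ (resp. $g_\alpha$) tensored with an identity. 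By a standard spectral sequence argument, each of these maps is a quasi-isomorphism if and only if $f_\alpha$ (resp. $g_\alpha$) is; combined with the acyclicity established in step two, this produces the equivalence of the three conditions.

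The main technical obstacle is to verify that the weight filtration and the contracting homotopy behave well with respect to the wreath-product coinvariants inherent to the $\mathbb{S}$-colored composition product. Since we are in characteristic zero and all the symmetric groups involved are finite, Maschke's theorem guarantees that these coinvariant functors are exact, so the filtration, the associated graded computation, and the convergence of the spectral sequence all carry over from the uncolored case without essential modification.
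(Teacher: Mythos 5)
The paper offers no proof of this lemma: it is imported wholesale from \cite[Section 2.48]{ward19}, and your outline is essentially the argument given there, namely the groupoid-colored transcription of the fundamental theorem of operadic twisting morphisms from \cite[Section 6.6]{LodayVallette12} (universal twisting morphisms, acyclicity of the two universal twisted composite products via the vertex-extraction homotopy, then the comparison lemma filtered by the coradical filtration). Your closing remark is the right one to make explicit: exactness of the wreath-product (co)invariants in characteristic zero, together with the exhaustive and bounded-below coradical filtration of the conilpotent $\mathbb{S}$-colored cooperad, is what guarantees the convergence of the comparison spectral sequence, so nothing new is needed beyond the uncolored case. Your proof is correct and matches the cited source's approach.
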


\begin{Definition}[{\cite[Section 2.5.3]{ward19}}]
Let $(\mathcal{G}, \{\circ_i\}, d_\mathcal{G})$ be a partial $\mathbb{S}$-colored operad. A \textit{quadratic presentation} of $\mathcal{G}$ is a pair $(E,R)$ such that there is an isomorphism of partial $\mathbb{S}$-colored operads
\[
\mathcal{G} \cong \overline{\mathscr{T}}_{\mathbb{S}}(E)/(R)~,
\]
where $\overline{\mathscr{T}}_{\mathbb{S}}(E)$ denotes the free partial $\mathbb{S}$-colored operad on the $\mathbb{S}$-color scheme $E$ modulo the quadratic relations $R \subset \overline{\mathscr{T}}_{\mathbb{S}}(E)^{(2)}~.$
\end{Definition}

\begin{Definition}[{\cite[Section 2.5.4]{ward19}}]
Let $(\mathcal{G}, \{\circ_i\}, d_\mathcal{G})$ be a partial $\mathbb{S}$-colored operad and $(E,R)$ be a quadratic presentation of $\mathcal{G}$. The \textit{Koszul dual conilpotent partial} $\mathbb{S}$-\textit{colored cooperad} $\mathcal{G}^{\ac}$ of $\mathcal{G}$ is given by the conilpotent partial $\mathbb{S}$-colored cooperad 
\[
\mathcal{G}^{\ac} \coloneqq \overline{\mathscr{T}}_{\mathbb{S}}^c(sE,s^2R)~,
\]
where $\overline{\mathscr{T}}_{\mathbb{S}}^c(sE,s^2R)$ denotes the cofree conilpotent partial $\mathbb{S}$-colored cooperad  cogenerated by the $\mathbb{S}$-color scheme $sE$, with $s^2R$ as the corelations. (Smallest sub-cooperad of $\overline{\mathscr{T}}_{\mathbb{S}}^c(sE)$ containing $s^2R$). Here $s$ denotes the suspension of graded $\mathbb{S}$-color schemes.
\end{Definition}

It comes equipped with a canonical twisting morphism $\kappa: \mathcal{G}^{\ac} \longrightarrow \mathcal{G}$ given by
\[
\mathcal{G}^{\ac} \twoheadrightarrow sE \cong E \hookrightarrow \mathcal{G}~. 
\]

\begin{Definition}[Koszul partial $\mathbb{S}$-colored operad]\label{def: Koszul quadratic S-colored operad}
Let $(\mathcal{G}, \{\circ_i\}, d_\mathcal{G})$ be a partial $\mathbb{S}$-colored operad. The partial $\mathbb{S}$-colored operad $\mathcal{G}$ is said to be a Koszul quadratic $\mathbb{S}$-colored operad if there exists a quadratic presentation $(E,R)$ of $\mathcal{G}$ such that the canonical twisting morphism 
\[
\kappa: \mathcal{G}^{\ac} \twoheadrightarrow sE \cong E \hookrightarrow \mathcal{G}~. 
\]
is a Koszul twisting morphism.
\end{Definition}

\subsection{The partial $\mathbb{S}$-colored operad encoding partial (co)operads and its Koszul dual}
We now define the partial $\mathbb{S}$-colored operad $\mathcal{O}$ that encodes partial (co)operads as its (co)algebras. This is a direct generalization of the set-colored operad encoding non-symmetric partial (co)operads defined in \cite[Definition 4.1]{VanderLaan03}. The ground category is still dg modules in this section. 

\begin{Definition}[The partial $\mathbb{S}$-colored operad encoding partial (co)operads]\label{The colored operad O}
The partial $\mathbb{S}$-colored operad $\mathcal{O}$ is given by the following quadratic presentation: 
\[
\mathcal{O} \coloneqq \overline{\mathscr{T}}_{\mathbb{S}}(E)/(R)~,
\]
where $\overline{\mathscr{T}}_{\mathbb{S}}(E)$ denotes the free partial $\mathbb{S}$-colored operad on the $\mathbb{S}$-color scheme $E$ modulo the quadratic relations $R \subset \overline{\mathscr{T}}_{\mathbb{S}}(E)^{(2)}$. The $\mathbb{S}$-color scheme $E$ is the generated by:
\[
\gamma_i^{n,k} \in E(n,k;n+k-1) \hspace{1pc} \textrm{for} \hspace{1pc} 1 \leq i \leq n~,
\]
as a right $\mathbb{K}[\mathbb{S}_n \times \mathbb{S}_k] \otimes_\mathbb{K} \mathbb{K}[\mathbb{S}_{n+k-1}]^{\text{op}}$-module, endowed with the zero differential.

\medskip

The right action of the symmetric groups $\mathbb{S}_n$ and $\mathbb{S}_k$ can be written as in both cases as the left action of an element $\mathbb{S}_{n+k-1}$. For $\sigma$ in $\mathbb{S}_k$, its action is equal to 
\[
\sigma \cdot (\gamma_i^{n,k}) = (\gamma_i^{n,k}) \cdot \sigma' \hspace{1pc} \textrm{for} \hspace{1pc} 1 \leq i \leq n~,
\]
where $\sigma' \in \mathbb{S}_{n+k-1}$ is the unique permutation that acts as the identity everywhere except for $\{i,\cdots,i+k-1\}$, where it acts as $\sigma$. For $\tau$ in $\mathbb{S}_n$, its action is equal to
\[
\tau \cdot (\gamma_i^{n,k}) = (\gamma_{\tau(i)}^{n,k}) \cdot \tau' \hspace{1pc} \textrm{for} \hspace{1pc} 1 \leq i \leq n~,
\]
where $\tau'$ is the unique permutation of $\mathbb{S}_{n+k-1}$ that acts as $\tau$ on the block $\{1,\cdots,n+n-1\} - \{i,\cdots, i+k-1\}$ with values in $\{1,\cdots,n+k-1\} - \{ \tau(i),\cdots, \tau(i)+k-1\}$ and as the identity on $\{i,\cdots,i+k-1\}$ with values in $\{\tau(i), \cdots, \tau(i) + k -1 \}~.$ The $\mathbb{S}_2$ action that permutes the entries is simply given by $\varphi_{(12)} (\gamma_i^{n,k}) \coloneqq \gamma_i^{k,n}~.$

\medskip

The relations $R$ are the given by: 
\[
\begin{tikzcd}
&\left\{ 
    \arraycolsep=1.4pt\def\arraystretch{1.8}\begin{array}{lllll}
         \gamma_{i+j-1}^{n+k-1,l} \circ_1 \gamma_i^{n,k}= \gamma_i^{l+n-1,k} \circ_1 \gamma_j^{n,l} \hspace{1pc}\mathrm{if} \hspace{1pc} 1 \leq i \leq n, 1 \leq j \leq k \hspace{1pc}~, (1)\\
         \gamma_{j+k-1}^{n+k-1,l} \circ_1 \gamma_i^{n,k} = \gamma_j^{n+k-1,l}\circ_1 \gamma_i^{n,k} \hspace{1pc}\mathrm{if} \hspace{1pc} 1\leq i < j \leq n \hspace{1pc} (2)~.
    \end{array}
\right.
\end{tikzcd}
\]
The relation $(1)$ is the \textit{parallel} axiom and $(2)$ is the \textit{sequential} axiom. 
\end{Definition}

\begin{Remark}
The equivariance relations of the partial (de)compositions of partial (co)operads are not coded as relations in the partial $\mathbb{S}$-colored operad $\mathcal{O}$. They are encoded by the action of the symmetric groups on the generators $\{\gamma_i^{n,k}\}$.

\medskip

Although similar in spirit, it does not coincide with the approach of \cite{BrunoMalte}, where the authors encode partial operads using a \textit{set}-colored operad, and where the action of the symmetric groups is given by operations in their set-colored operad.
\end{Remark}

By adding freely a unit to $\mathcal{O}$ one obtains an $\mathbb{S}$-colored operad $\mathcal{O}^u$ defined as a monoid.

\begin{Proposition}\label{Prop: O-algebres}
The category of dg $\mathcal{O}^u$-algebras is equivalent to the category of dg partial operads.
\end{Proposition}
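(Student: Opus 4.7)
The plan is to unpack the data of an $\mathcal{O}^u$-algebra structure via the quadratic presentation of $\mathcal{O}$ and check that it matches, axiom by axiom, the data of a dg partial operad from Definition \ref{def: partialoperad}.

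First, I would use the equivalence of Lemma \ref{lemma: add a unit colored case} to translate between the two descriptions of $\mathcal{O}^u$: a dg $\mathcal{O}^u$-algebra structure on a dg $\mathbb{S}$-module $P$ is equivalent to a morphism of dg $\mathbb{S}$-colored operads $\Gamma_P: \mathcal{O}^u \longrightarrow \mathrm{End}_P$. Unpacking the free adjunction of a unit, this amounts to a morphism of dg partial $\mathbb{S}$-colored operads $\mathcal{O} \longrightarrow \mathrm{End}_P$ together with the requirement that the units of $\mathcal{O}^u$ act as identities, which is automatic from the $\mathbb{S}$-colored endomorphism structure. Then, by the quadratic presentation $\mathcal{O} = \overline{\mathscr{T}}_\mathbb{S}(E)/(R)$ and the universal property of the free partial $\mathbb{S}$-colored operad, the data of such a morphism is equivalent to a morphism of dg $\mathbb{S}$-color schemes $f: E \longrightarrow \mathrm{End}_P$ whose induced extension to $\overline{\mathscr{T}}_\mathbb{S}(E)$ vanishes on $R$.

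Next, I would identify the operations. Since $E$ is generated by the elements $\gamma_i^{n,k} \in E(n,k;n+k-1)$, the morphism $f$ is determined by a family of degree zero maps $\circ_i \coloneqq f(\gamma_i^{n,k}): P(n) \otimes P(k) \longrightarrow P(n+k-1)$. The fact that $f$ is a morphism of dg $\mathbb{S}$-color schemes imposes: (a) compatibility with the differentials, giving the Leibniz rule that makes each $\circ_i$ a morphism of dg modules; (b) equivariance with respect to the right $\mathbb{S}_n \times \mathbb{S}_k$-actions and the left $\mathbb{S}_{n+k-1}$-action that the paper spells out explicitly on the $\gamma_i^{n,k}$. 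A direct translation of the formulas $\sigma \cdot \gamma_i^{n,k} = \gamma_i^{n,k} \cdot \sigma'$ and $\tau \cdot \gamma_i^{n,k} = \gamma_{\tau(i)}^{n,k} \cdot \tau'$ into the endomorphism operad gives exactly axiom (3) of Definition \ref{def: partialoperad}; compatibility with the $\mathbb{S}_2$ permutation of entries $\varphi_{(12)}(\gamma_i^{n,k}) = \gamma_i^{k,n}$ is automatic from the symmetry of the tensor product $P(n) \otimes P(k)$.

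It remains to check that the two relations in $R$ correspond to the two axioms of Definition \ref{def: partialoperad}: a careful reading shows that relation (1), $\gamma_{i+j-1}^{n+k-1,l} \circ_1 \gamma_i^{n,k}= \gamma_i^{l+n-1,k} \circ_1 \gamma_j^{n,l}$, evaluated on $P(n) \otimes P(k) \otimes P(l)$ yields exactly the sequential axiom (1) of Definition \ref{def: partialoperad}, while relation (2) yields the parallel axiom (2). Hence the condition that $f$ descends to $\mathcal{O}$ is precisely the requirement that the maps $\{\circ_i\}$ endow $P$ with a dg partial operad structure. Morphisms on both sides are morphisms of dg $\mathbb{S}$-modules commuting with the partial compositions, so the equivalence extends to morphisms, giving the desired equivalence of categories.

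The main obstacle I foresee is the bookkeeping of the two different symmetric group actions on the generators $\gamma_i^{n,k}$: translating the explicit description via $\sigma', \tau' \in \mathbb{S}_{n+k-1}$ into the equivariance diagrams of partial operads requires keeping careful track of the block permutations, and checking that relations (1) and (2) unpack (after applying $\Gamma_P$ to a generic element of $P(n) \otimes P(k) \otimes P(l)$) into the expected sequential and parallel axioms.
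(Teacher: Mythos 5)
Your proposal is correct and follows essentially the same route as the paper's own proof: identify an $\mathcal{O}^u$-algebra structure with a morphism $\Gamma_P: \mathcal{O}^u \to \mathrm{End}_P$, read off the partial compositions as the images of the generators $\gamma_i^{n,k}$, and match equivariance, the two quadratic relations, and compatibility with the (pre-)differentials against the axioms of a dg partial operad. The paper's version is just a terser statement of the same unpacking (note only that your matching of relation (1) with the sequential axiom and (2) with the parallel axiom agrees with the index ranges, even though the naming in Definition \ref{The colored operad O} reads the other way).
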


\begin{proof}
Let $(P,d_P)$ be a dg-$\mathbb{S}$-module and let $\Gamma_P: \mathcal{O}^{u} \longrightarrow \mathrm{End}_P$ be a morphism of dg $\mathbb{S}$-colored operads. It is determined by the image of the generators $\gamma_i^{n,k}$ in $\mathcal{O}(n,k;n+k-1)$. Let's denote $\circ_i^{n,k} \coloneqq \Gamma_P(\gamma_i^{n,k}): P(n) \otimes P(k) \longrightarrow P(n+k-1)$. The family $\{\circ_i^{n,k}\}$ satisfies the equivariance axiom of a partial operad since $\Gamma_P$ is $\kk[\mathbb{S}_n \times \mathbb{S}_k] \otimes \kk[\mathbb{S}_{n+k-1}]$-equivariant; it satisfies the parallel and sequential axioms by definition of $\mathcal{O}$. Since $\Gamma_P$ commutes with the pre-differentials, $d_P$ is a derivation with respect to the partial composition maps. Hence it endows $(P,d_P)$ with a dg partial operad structure. 
\end{proof}

\begin{Proposition}\label{Prop: O-coalgebras}
The category of dg $\mathcal{O}^u$-coalgebras is equivalent to the category of dg partial cooperads.
\end{Proposition}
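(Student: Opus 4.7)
The plan is to mirror the proof of \cref{Prop: O-algebres} but on the coalgebra side, using the characterization recalled in the previous section that a dg $\mathcal{O}^u$-coalgebra structure on a dg $\mathbb{S}$-module $(C,d_C)$ is equivalent to a morphism of dg $\mathbb{S}$-colored operads
\[
\delta_C : \mathcal{O}^u \longrightarrow \mathrm{Coend}_C~.
\]
Since $\mathcal{O}^u$ is freely generated over $\I_\mathbb{S}$ (modulo the quadratic relations $R$) by the family $\{\gamma_i^{n,k}\}$, such a morphism is uniquely determined by its values on these generators. Setting
\[
\Delta_i \coloneqq \delta_C(\gamma_i^{n,k}) : C(n+k-1) \longrightarrow C(n) \otimes C(k) \quad \text{for } 1 \leq i \leq n
\]
produces exactly a family of partial decomposition maps of the expected bidegree.

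Next I would check that each of the defining pieces of data match up. The requirement that $\delta_C$ be equivariant for the right $\kk[\mathbb{S}_n \times \mathbb{S}_k] \otimes \kk[\mathbb{S}_{n+k-1}]^{\mathrm{op}}$-action on the generator $\gamma_i^{n,k}$, described explicitly in \cref{The colored operad O}, translates term by term into the equivariance axiom (3) of \cref{def: partialcoop}. The compatibility with the $\mathbb{S}_2$-action that permutes inputs, $\varphi_{(12)}(\gamma_i^{n,k}) = \gamma_i^{k,n}$, is automatic in the coendomorphism operad by the Koszul sign rule. The fact that $\delta_C$ vanishes on the parallel and sequential relations $R$ is equivalent, applying $\delta_C$ to each side of relations $(1)$ and $(2)$ and reading the result in $\mathrm{Coend}_C$, to the cosequential and coparallel axioms of \cref{def: partialcoop}. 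Finally, the condition that $\delta_C$ commutes with the internal differentials says exactly that $d_C$ is a coderivation with respect to the family $\{\Delta_i\}$, so $(C,\{\Delta_i\},d_C)$ is a dg partial cooperad.

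Conversely, given a dg partial cooperad $(C,\{\Delta_i\},d_C)$, I would define $\delta_C$ on generators by $\delta_C(\gamma_i^{n,k}) \coloneqq \Delta_i$ and extend it freely along the tree monad $\overline{\mathscr{T}}_\mathbb{S}$; the axioms of a partial cooperad guarantee that this extension descends to the quotient $\mathcal{O} = \overline{\mathscr{T}}_\mathbb{S}(E)/(R)$ and, together with sending the unit of $\mathcal{O}^u$ to the identities in $\mathrm{Coend}_C$, yields a morphism of dg $\mathbb{S}$-colored operads. These two constructions are mutually inverse on objects, and a morphism $f : C \to C'$ is a morphism of dg $\mathcal{O}^u$-coalgebras iff it commutes with $\delta_C$ and $\delta_{C'}$ iff it commutes with every $\Delta_i$, i.e.\ iff it is a morphism of dg partial cooperads.

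The arguments are entirely parallel to those of \cref{Prop: O-algebres}; the only point requiring a little care will be the translation of the right $\mathbb{S}_n \times \mathbb{S}_k$-equivariance on $\gamma_i^{n,k}$ (which, as written in \cref{The colored operad O}, is encoded via a left action of a specific permutation in $\mathbb{S}_{n+k-1}$) into the equivariance axiom for $\Delta_i$, since the dualization inherent in passing to $\mathrm{Coend}_C$ may reshuffle indices. Beyond this bookkeeping, no new ideas are needed.
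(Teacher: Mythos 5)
Your proposal is correct and follows exactly the route the paper takes: the paper's own proof is a one-line remark that one repeats the argument of Proposition \ref{Prop: O-algebres} with the coendomorphism operad in place of the endomorphism operad, which is precisely what you carry out in detail (including the correct identification $\delta_C(\gamma_i^{n,k})\in\mathrm{Coend}_C(n,k;n+k-1)=\mathrm{Hom}(C(n+k-1),C(n)\otimes C(k))$ with the partial decomposition maps). No gaps.
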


\begin{proof}
The proof is analogous to the proof of Proposition \ref{Prop: O-algebres}. One replaces the endomorphism operad with the coendomorphism operad.
\end{proof}

\begin{lemma}
The Koszul dual conilpotent partial $\mathbb{S}$-colored cooperad $\mathcal{O}^{\ac}$ is isomorphic to the operadic suspension of the linear dual of $\mathcal{O}$. The operad $\mathcal{O}$ is Koszul autodual. 
\end{lemma}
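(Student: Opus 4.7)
My strategy is to apply the general framework of Koszul duality for quadratic groupoid-colored operads developed in Section~\ref{subsection: Koszul duality for quadratic S-colored operads} (following \cite{ward19}), and then to verify the autoduality by an explicit analysis of the orthogonal of $R$.

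First, every generator $\gamma_i^{n,k}$ of $\mathcal{O}$ has exactly two input colors, so $\mathcal{O}$ is a \emph{binary} quadratic partial $\mathbb{S}$-colored operad. For binary quadratic operads, the argument going back to Ginzburg--Kapranov, extended to the $\mathbb{S}$-colored setting in \cite{ward19}, yields a canonical isomorphism
\[
\mathcal{O}^{\ac} \cong \mathcal{S}^c \otimes_{\mathrm{H}} (\mathcal{O}^!)^*~,
\]
where $\mathcal{O}^! \coloneqq \overline{\mathscr{T}}_\mathbb{S}(E^\vee)/(R^\perp)$ is the quadratic dual partial $\mathbb{S}$-colored operad, $R^\perp \subseteq \overline{\mathscr{T}}_\mathbb{S}(E^\vee)^{(2)}$ is the orthogonal complement of $R$ under the canonical pairing induced from the Kronecker pairing between $E$ and $E^\vee$, and $\mathcal{S}^c$ denotes the operadic suspension cooperad. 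Thus proving the autoduality $\mathcal{O}^! \cong \mathcal{O}$ will immediately give $\mathcal{O}^{\ac} \cong \mathcal{S}^c \otimes \mathcal{O}^*$, that is, the operadic suspension of the linear dual of $\mathcal{O}$.

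Identifying $E^\vee$ with $E$ via the linear dual basis $\gamma_i^{n,k} \mapsto (\gamma_i^{n,k})^*$ is immediate, since the $\mathbb{K}[\mathbb{S}_{n+k-1}]$-$\mathbb{K}[\mathbb{S}_n \times \mathbb{S}_k]$-bimodule structure on $E$ is defined by monomial actions on the generators and is therefore preserved under dualization. The core step is then the computation $R^\perp = R$ inside $\overline{\mathscr{T}}_\mathbb{S}(E)^{(2)}$. I would enumerate a basis of $\overline{\mathscr{T}}_\mathbb{S}(E)^{(2)}$ at each triple of input colors $(n,k,l)$ with output color $n+k+l-2$, indexed by rooted trees with two internal vertices and admissible internal edge colors, and then establish: (i) the dimension count $\dim R = \dim R^\perp$, which amounts to the sequential and parallel relations cutting out exactly half of every weight-$2$ component; and (ii) the inclusion $R \subseteq R^\perp$, by direct pairing of the defining relations against themselves.

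The main obstacle will be the bookkeeping of the signs arising from the operadic suspension together with the action of $\mathbb{S}_3$ permuting the three leaves; one must check that the sequential relations are self-orthogonal, that the parallel relations are self-orthogonal, and that these two families are mutually orthogonal. A more conceptual alternative that I would also pursue is to exhibit an explicit involution of $\overline{\mathscr{T}}_\mathbb{S}(E)^{(2)}$ which interchanges the two internal vertices of each tree and preserves $R$; under the canonical pairing this involution realizes the duality exchange, so its preservation of $R$ would yield $R^\perp = R$ in one stroke.
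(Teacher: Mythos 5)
Your proposal is correct and follows essentially the same route as the paper: the paper also reduces the statement to computing the quadratic dual operad $\mathcal{O}^! = \overline{\mathscr{T}}_{\mathbb{S}}(E^*)/(R^\perp)$ (using arity-wise finite dimensionality and reducedness so that dualization is well behaved), and then adapts the Van der Laan argument, namely the dimension count showing $R$ is exactly half of $\overline{\mathscr{T}}_{\mathbb{S}}(E)^{(2)}$ together with the inclusion $R \subseteq R^\perp$. Your suggested involution swapping the two internal vertices is a pleasant conceptual supplement, but the core argument coincides with the paper's.
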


\begin{proof}
First, since each $\mathcal{O}(n_1,\cdots,n_r;n)$ is finite dimensional over $\kk$ and since it is \textit{reduced} in the sense of \cite[Definition 2.35]{ward19}, the arity-wise linear dual of $\mathcal{O}$ is a conilpotent partial $\mathbb{S}$-colored operad, and we have that $(\mathcal{O}^*)^* \cong \mathcal{O}$. Therefore it is only necessary to compute its Koszul dual partial $\mathbb{S}$-colored operad, which is given by the operadic suspension of $(\mathcal{O}^{\ac})^*$.

\medskip

We adapt the proof of \cite[Theorem 4.3]{VanderLaan03} to the groupoid-colored case. The Koszul dual operad $\mathcal{O}^!$ is given by $\overline{\mathscr{T}}_{\mathbb{S}}(E^*)/(R^{\bot})$, where $\overline{\mathscr{T}}_{\mathbb{S}}(E)$ is the free partial $\mathbb{S}$-colored partial operad generated by $E^*$, and $(R^{\bot})$ denotes the operadic ideal generated by the orthogonal of $R$ inside $\overline{\mathscr{T}}_{\mathbb{S}}(E^*)^{(2)}$. Using the same arguments as in the set-colored case, one can show that the dimension of the relations $R$ as a $\mathbb{S}$-color scheme is exactly half of the dimension over $\kk$ of $\overline{\mathscr{T}}_{\mathbb{S}}(E)^{(2)}$. Since, the relations $R$ must be contained in the ideal generated by the orthogonal, concludes by a dimension argument that the relations in $R$ form a basis of $R^\bot$. 
\end{proof}

\begin{lemma}\label{lemma with the trees bijection}
The following statements hold: 
\begin{enumerate}
\item There is an isomorphism of monads in the category of $\mathbb{S}$-modules between the monad $\mathscr{S}_{\mathbb{S}}(\mathcal{O}^{u})$ and the reduced tree monad $\overline{\mathscr{T}}$ encoding partial operads.

\item The is an isomorphism of comonads in the category of $\mathbb{S}$-modules between the comonad $\mathscr{S}_{\mathbb{S}}((\mathcal{O}^*)^{u})$ and the reduced tree comonad $\overline{\mathscr{T}}^c$ encoding conilpotent partial cooperads.
\end{enumerate}
\end{lemma}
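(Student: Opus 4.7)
The plan is to give an explicit combinatorial description of the $\mathbb{S}$-color scheme $\mathcal{O}^u$ in terms of rooted trees, from which both isomorphisms will follow by direct inspection. First I would show that $\mathcal{O}^u(n_1,\ldots,n_r;n)$ is freely spanned by rooted trees with $r$ labeled vertices (where vertex $i$ has $n_i$ incoming edges) and $n$ labeled leaves. The generators $\gamma_i^{n,k}$ correspond to two-vertex trees obtained by grafting the $k$-corolla at the $i$-th leaf of the $n$-corolla. The sequential relation $(2)$ encodes associativity of iterated grafting at a single position, while the parallel relation $(1)$ encodes commutativity of grafting at disjoint positions; together with the equivariance axiom encoded in the $\mathbb{S}_{n_i}$ and $\mathbb{S}_n$ actions prescribed in Definition~\ref{The colored operad O}, these relations identify two iterated partial compositions if and only if they produce the same rooted tree. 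This is essentially the computation in \cite[Theorem 4.3]{VanderLaan03} transported to the groupoid-colored setting.

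Granted this combinatorial identification, part $(1)$ follows by direct computation. Plugging the tree basis of $\mathcal{O}^u(n_1,\ldots,n_r;n)$ into the formula defining the Schur realization and taking $\wr\, \mathbb{S}_r$-coinvariants unorders the vertex labels and decorates each vertex with an element of the appropriate $M(n_i)$, yielding an $\mathbb{S}$-module isomorphism $\mathscr{S}_{\mathbb{S}}(\mathcal{O}^u)(M) \cong \overline{\mathscr{T}}(M)$, natural in $M$. Since $\mathscr{S}_\mathbb{S}$ is strong monoidal, the induced monad structure on the left-hand side is transferred from the operadic composition of $\mathcal{O}^u$, which the tree basis identifies with the grafting operation defining the monad structure on $\overline{\mathscr{T}}$. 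Both units come from the corollas $\mathrm{id}_n \in \mathcal{O}^u(n;n)$, so they agree, and the natural isomorphism promotes to an isomorphism of monads.

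For part $(2)$, the finite-dimensionality of each $\mathcal{O}(n_1,\ldots,n_r;n)$ implies that $(\mathcal{O}^*)^u$ is a conilpotent partial $\mathbb{S}$-colored cooperad whose cooperad structure is dual to the operad structure of $\mathcal{O}^u$. Its Schur realization $\mathscr{S}_{\mathbb{S}}((\mathcal{O}^*)^u)$ therefore carries a comonad structure whose underlying endofunctor is, by the same combinatorial computation applied to the dual tree basis, isomorphic to $\overline{\mathscr{T}}$. Under this isomorphism, the comultiplication obtained by dualizing grafting becomes exactly the tree-partitioning map defining the comonad structure of $\overline{\mathscr{T}}^c$ recalled in Section~\ref{subsection: comparison for partial colored (co)operads} following \cite[Section 2.4.1]{ward19}, and the counit agrees on corollas. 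This establishes the isomorphism of comonads.

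The main obstacle is the combinatorial identification itself: one must verify that the quadratic relations $R$ and the equivariance action together yield all identifications between iterated partial compositions producing the same rooted tree, with no further collapse and no missing identification. The equivariance action, which redistributes a symmetric group action at a vertex to one at the root, plays an essential role, and tracking its interaction with the groupoid coloring across iterated compositions is the delicate point. Once this is settled, both isomorphisms become structural consequences of the strong monoidality of $\mathscr{S}_\mathbb{S}$ and of the fact that the tree basis is self-dual arity-wise.
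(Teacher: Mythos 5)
Your proposal is correct and follows essentially the same route as the paper: both rest on the explicit combinatorial identification of elements of $\mathcal{O}^u(n_1,\ldots,n_r;n)$ (equivalently, of $\mathscr{S}_{\mathbb{S}}(\mathcal{O}^u)(M)$) with labeled rooted trees, in the spirit of Van der Laan's computation, after which the (co)monad structures are matched with grafting and tree-partitioning by direct inspection. The delicate point you flag --- that the relations $R'$ together with the equivariance action produce exactly the identifications between iterated partial compositions yielding the same rooted tree --- is precisely the step the paper also leaves as ``straightforward to check.''
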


\begin{proof}
The first results follows immediately from the fact that the category of $\mathcal{O}$-algebras is equivalent to the category of partial operads. Nevertheless, for any $\mathbb{S}$-module $M$, there is an explicit bijection between the $\mathbb{S}$-modules $\mathscr{S}_{\mathbb{S}}(\mathcal{O}^{u})(M)$ and $\overline{\mathscr{T}}(M)$ that identifies the partial operad structures. For any element 
\[
(\psi;m_1, \cdots m_r) \quad \text{in} \quad \mathcal{O}(n_1,\cdots,n_r;n) \allowbreak \otimes_{\left(\mathbb{S}_{n_1} \times \cdots \times \mathbb{S}_{n_r}\right)~\wr~ \mathbb{S}_r} M(n_1) \otimes \cdots \otimes M(n_r)~,
\]
one can represent $\psi$ as a equivalence class of binary trees with vertices labeled by the generators $\gamma_i^{n,k}$ and with edges labeled by elements of the symmetric groups that match the coloring of the edge. To any $(\psi;m_1, \cdots m_r)$, one associates the rooted tree $\tau_\psi$ obtained by composing the $n_i$-corollas labeled by $m_i$ in the way the binary tree $\psi$ indicates, applying at each step the permutations that label the edges of $\psi$. Pictorially, the bijection is given by:

\begin{center}
\includegraphics[width=130mm,scale=1.7]{bijection.eps}
\end{center}

This type of bijections can be found in \cite[Section 1.3]{dehlingvallette15}). It is straightforward to check that is bijection identifies the partial operad structures of $\mathscr{S}_{\mathbb{S}}(\mathcal{O}^{u})(M)$ and $\overline{\mathscr{T}}(M)$. Moreover, this bijection on the underlying $\mathbb{S}$-modules of $\mathscr{S}_{\mathbb{S}}((\mathcal{O}^*)^{u})(M)$ and $\overline{\mathscr{T}}^c(M)$ identifies the two conilpotent partial cooperad structures as well.
\end{proof}

\begin{Proposition}\label{Prop les alge/cog conil sur la coop O*}
Let $\mathcal{O}^{\ac}$ be the Koszul dual conilpotent partial $\mathbb{S}$-colored cooperad of $\mathcal{O}$.

\begin{enumerate}
\item The category of dg $(\mathcal{O}^{\ac})^{u}$-coalgebras is equivalent to the category of shifted conilpotent dg partial cooperads.

\item The category of complete dg $(\mathcal{O}^{\ac})^{u}$-algebras is equivalent to the category of shifted complete dg absolute partial operads.
\end{enumerate}
\end{Proposition}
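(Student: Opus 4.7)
The plan is to exploit the fact that $\mathcal{O}^{\ac}$ is, up to operadic suspension, isomorphic to the linear dual of $\mathcal{O}$, which is Koszul autodual by the previous lemma. This reduces both statements to understanding what (co)algebras over the $\mathbb{S}$-colored (co)operad $\mathcal{O}^*$ look like, with the shift accounting for the operadic suspension.

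For part (1), I would combine two inputs. First, by Lemma~\ref{lemma with the trees bijection}(2), the comonad $\mathscr{S}_{\mathbb{S}}((\mathcal{O}^*)^{u})$ is isomorphic to the reduced tree comonad $\overline{\mathscr{T}}^c$ on the category of dg $\mathbb{S}$-modules, via the explicit bijection between iterated binary compositions in $\mathcal{O}$ and rooted trees. Second, by Proposition~\ref{prop: reduced tree comonad gives conil part coop}, coalgebras over $\overline{\mathscr{T}}^c$ are precisely conilpotent dg partial cooperads. Chaining these two equivalences and incorporating the operadic suspension (which distinguishes $\mathcal{O}^{\ac}$ from $\mathcal{O}^*$) yields the desired equivalence with shifted conilpotent dg partial cooperads. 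The compatibility of the resulting partial decomposition maps $\Delta_i$ with the $\mathbb{S}_n\times\mathbb{S}_k$ and $\mathbb{S}_{n+k-1}$ actions follows from the equivariance of the generators $\gamma_i^{n,k}$ spelled out in Definition~\ref{The colored operad O}.

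For part (2), I would proceed analogously but using the dual Schur functor $\widehat{\mathscr{S}}_{\mathbb{S}}^c$ in place of $\mathscr{S}_{\mathbb{S}}$. By the characterization of absolute partial operads that is the subject of Appendix~\ref{Appendix B}, these are precisely the algebras over the monad induced by $(\mathcal{O}^*)^{u}$ via the dual Schur functor, again up to suspension. Since $(\mathcal{O}^{\ac})^{u}$ is a conilpotent $\mathbb{S}$-colored cooperad, the general story from Section~\ref{subsection : algebras and coalgebras} together with its $\mathbb{S}$-colored analogue developed in subsection~\ref{subsection: comparison for partial colored (co)operads} produces a canonical filtration on any such algebra, a completion functor, and a reflexive subcategory of complete algebras. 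This subcategory then matches, under the identification provided by the appendix, the full subcategory of complete shifted absolute partial operads.

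The main obstacle will be the bookkeeping required to keep track of the operadic suspension and to verify that the structural maps agree on both sides. Concretely, one has to check that the partial decompositions (resp. infinitary partial compositions) induced by the cooperad structure of $\mathcal{O}^{\ac}$ correspond, under the tree bijection of Lemma~\ref{lemma with the trees bijection}, to the partial decompositions of a shifted conilpotent partial cooperad (resp. the completed partial compositions of a shifted complete absolute partial operad). Once this is carried out on generators $\gamma_i^{n,k}$, the sequential and parallel relations in Definition~\ref{The colored operad O} translate directly into the sequential and parallel axioms on the other side, and the equivariance on the generators gives the third axiom, closing the argument.
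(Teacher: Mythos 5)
Your proposal is correct and follows essentially the same route as the paper: part (1) is the combination of the tree bijection of Lemma~\ref{lemma with the trees bijection} (identifying $\mathscr{S}_{\mathbb{S}}((\mathcal{O}^*)^{u})$ with the reduced tree comonad) with Proposition~\ref{prop: reduced tree comonad gives conil part coop}, and part (2) rests on the identification $\widehat{\mathscr{S}}_{\mathbb{S}}^c(\mathcal{O}^*)(-)\cong\overline{\mathscr{T}}^\wedge(-)$ and the definition of absolute partial operads from Appendix~\ref{Appendix B}. The paper's own proof is just a two-line pointer to these same ingredients, so your more detailed write-up (including the suspension bookkeeping) is a faithful expansion of it.
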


\begin{proof}
The first statement follows from the previous Lemma. For a definition of \textit{absolute partial operads}, as well as their characterization, we refer to the Appendix \ref{Appendix B}.
\end{proof}

\begin{Proposition}
Let $\kappa: \mathcal{O}^{\ac} \longrightarrow \mathcal{O}$ be the canonical twisting morphism. The adjunction induced by the twisting morphism $\kappa$
\[
\begin{tikzcd}[column sep=5pc,row sep=3pc]
          \mathsf{dg}~\partialop \arrow[r, shift left=1.1ex, "\Omega_\kappa"{name=F}] &\mathsf{dg}~\partialcoop^{\mathsf{conil}} \arrow[l, shift left=.75ex, "\mathrm{B}_\kappa"{name=U}]
            \arrow[phantom, from=F, to=U, , "\dashv" rotate=-90]
\end{tikzcd}
\]
between dg partial operads and conilpotent dg partial cooperads is naturally isomorphic to the classical Bar-Cobar adjunction of \cite[Section 6.5]{LodayVallette12}.
\end{Proposition}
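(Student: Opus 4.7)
The strategy is to prove the isomorphism of adjunctions by separately identifying the underlying graded (co)algebraic structures and then matching the differentials induced by the twisting morphism $\kappa$ with the classical Bar and Cobar differentials.

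\textbf{Step 1: Identification of underlying functors.} I would invoke Lemma~\ref{lemma with the trees bijection} to obtain the isomorphisms of (co)monads $\mathscr{S}_{\mathbb{S}}(\mathcal{O}^{u}) \cong \overline{\mathscr{T}}$ and $\mathscr{S}_{\mathbb{S}}((\mathcal{O}^*)^{u}) \cong \overline{\mathscr{T}}^c$ in the category of $\mathbb{S}$-modules. Combined with the operadic suspension relating $\mathcal{O}^{\ac}$ with $\mathcal{O}^*$, this identifies, for any conilpotent dg partial cooperad $\mathcal{C}$, the underlying free $\mathcal{O}^{u}$-algebra of $\Omega_\kappa(\mathcal{C})$ with the underlying free partial operad $\overline{\mathscr{T}}(s^{-1}\overline{\mathcal{C}})$ of the classical $\Omega(\mathcal{C})$; dually, for any dg partial operad $\mathcal{P}$, the underlying cofree conilpotent $(\mathcal{O}^{\ac})^{u}$-coalgebra of $\mathrm{B}_\kappa(\mathcal{P})$ with the underlying cofree conilpotent partial cooperad $\overline{\mathscr{T}}^c(s\mathcal{P})$ of the classical $\mathrm{B}(\mathcal{P})$.

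\textbf{Step 2: Identification of differentials.} By construction of the Bar-Cobar adjunction relative to a twisting morphism (see Proposition~\ref{Bar-Cobar colored adjunction, classical case}), the twisted differential on $\Omega_\kappa(\mathcal{C})$ is the sum of the internal differential induced by $d_{\mathcal{C}}$ and the unique derivation of the free $\mathcal{O}^{u}$-algebra structure extending the composite of the infinitesimal decomposition of $\mathcal{C}$ with $\kappa$. Under the identifications of Step 1, the generators of $\mathcal{O}$ correspond exactly to the partial composition symbols $\gamma^{n,k}_i$, and the canonical twisting morphism $\kappa$ is simply the projection onto these generators. Therefore the twisting term on $\Omega_\kappa(\mathcal{C})$ converts the partial decomposition $\Delta_i$ of an element of $\mathcal{C}$ into the corresponding infinitesimal $\circ_i$-composition of two corollas in $\overline{\mathscr{T}}(s^{-1}\overline{\mathcal{C}})$. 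By the universal property of derivations on a free partial operad, this matches the classical Cobar differential of \cite[Section~6.5]{LodayVallette12} on the nose. The same reasoning dualizes verbatim to coderivations on cofree conilpotent partial cooperads, and yields that the Bar differential on $\mathrm{B}_\kappa(\mathcal{P})$ agrees with the classical one on $\mathrm{B}(\mathcal{P})$.

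\textbf{Step 3: Naturality and the adjunction isomorphism.} Naturality in $\mathcal{C}$ and $\mathcal{P}$ of the identifications of Steps 1 and 2 follows from the naturality of Lemma~\ref{lemma with the trees bijection} and from the naturality of the Schur realization functor $\mathscr{S}_{\mathbb{S}}$. The adjunction isomorphism on hom-sets is then transported through these natural isomorphisms, giving the desired natural isomorphism of adjunctions.

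The main delicate point that I anticipate is the careful bookkeeping of suspensions and of Koszul signs: the identification $\mathcal{O}^{\ac}$ with $\mathcal{O}^*$ holds only up to operadic suspension, and extra signs appear in the Schur realization of shifted $\mathbb{S}$-color schemes via the $\left(\mathbb{S}_{n_1}\times \cdots\times \mathbb{S}_{n_r}\right)\wr\mathbb{S}_r$ action. Once these shifts are threaded through the bijection of Lemma~\ref{lemma with the trees bijection} compatibly with the sign conventions used in the classical definitions of $\Omega$ and $\mathrm{B}$, the equality of the differentials becomes essentially formal, so that the bulk of the work sits in this sign-and-shift verification rather than in any conceptual obstruction.
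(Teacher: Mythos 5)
Your proposal is correct and follows essentially the same route as the paper: identify the underlying (co)free constructions via the trees bijection of \cref{lemma with the trees bijection} and check that this identification intertwines the twisted differentials, with the sign-and-suspension bookkeeping being the only delicate point. The one difference is economy: the paper verifies the natural isomorphism only for the left adjoints (the Cobar constructions) and then obtains the isomorphism of right adjoints for free as the mate of that natural transformation, whereas you carry out the dual verification on the Bar side explicitly, which is harmless but redundant.
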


\begin{proof}
In order to check that two adjunctions are isomorphic, it is only necessary to check that there exists a natural isomorphism between left adjoints, since the mate of this natural isomorphism will also induce an natural isomorphism of right adjoints. The isomorphism between the reduced tree monad $\overline{\mathscr{T}}$ and the monad $\mathscr{S}_\mathbb{S}(\mathcal{O})$ induces a natural isomorphism of partial operads between $\Cobar$ and $\Cobar_\kappa$. It is straightforward to check that this bijection commutes with the respective differentials of the two Cobar constructions.
\end{proof}

\begin{theorem}\label{the Koszulity of O}
The canonical twisting morphism $\kappa: \mathcal{O}^{\ac} \longrightarrow \mathcal{O}$ is a Koszul twisting morphism. Hence the partial $\mathbb{S}$-colored operad $\mathcal{O}$ is Koszul.
\end{theorem}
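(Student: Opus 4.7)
The plan is to apply the equivalent characterizations of Lemma \ref{Koszul duality Ward} and establish that the Koszul complex $\mathcal{O}^{\ac} \circ_\kappa \mathcal{O}$ is acyclic, or equivalently that the canonical map $\Omega_{\mathbb{S}}(\mathcal{O}^{\ac}) \longrightarrow \mathcal{O}$ is a quasi-isomorphism of partial $\mathbb{S}$-colored operads. My approach would be to exhibit an explicit Poincaré--Birkhoff--Witt basis for $\mathcal{O}$ and invoke the groupoid-colored PBW criterion for Koszulness—that is, the $\mathbb{S}$-colored extension of the Hoffbeck-Dotsenko-Khoroshkin theorem developed in \cite[Section 2.6]{ward19}. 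This strategy generalizes Van der Laan's original Koszulness argument for the set-colored non-symmetric version \cite{VanderLaan03} to the present groupoid-colored setting.

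The first step is to use the tree bijection of Lemma \ref{lemma with the trees bijection} to give an explicit basis of the free partial $\mathbb{S}$-colored operad $\overline{\mathscr{T}}_{\mathbb{S}}(E)$ indexed by labeled binary rooted trees, where internal vertices are decorated by generators $\gamma_i^{n,k}$ and edges are decorated by elements of the appropriate symmetric groups compatible with the coloring. Under this identification, the sequential and parallel relations correspond precisely to local rewriting moves that reshape adjacent pairs of generators in a nested composition. I would take as candidate normal forms the left-comb-like trees obtained by iterating partial compositions in a canonical order read off from the tree shape; the conjectural PBW basis of $\mathcal{O}$ is then the set of such normal forms.

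The second step is to introduce a path-lexicographic monomial order on tree monomials, compatible with both the right $(\mathbb{S}_n \times \mathbb{S}_k)$-action on the inputs of $\gamma_i^{n,k}$ and the left $\mathbb{S}_{n+k-1}$-action on the output, and then to verify that the leading terms of the sequential and parallel relations generate a quadratic Gröbner basis of the ideal $(R)$. The confluence check reduces to finitely many critical pairs, each corresponding to a three-fold iterated partial composition; each such critical pair should close up via the coherence identities already encoded in the axioms of a partial operad (Definition \ref{def: partialoperad}), so no new relations are created in the Knuth--Bendix completion.

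The main obstacle will be handling the $\mathbb{S}$-equivariance coherently throughout the Gröbner-basis construction: the generators $\gamma_i^{n,k}$ carry non-trivial actions of $\mathbb{S}_n \times \mathbb{S}_k$ and of $\mathbb{S}_{n+k-1}$, so both the monomial order and the reduction rules must be designed to be equivariant. Once a PBW basis is in hand, the colored PBW criterion of \cite{ward19} immediately yields the Koszulness of $\mathcal{O}$, and hence of the canonical twisting morphism $\kappa$. As a consistency check, forgetting the symmetric-group decorations would recover exactly Van der Laan's Koszulness result for the set-colored non-symmetric version.
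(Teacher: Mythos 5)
Your strategy is genuinely different from the paper's, but as written it rests on a tool that is not available. The load-bearing step is the appeal to a ``groupoid-colored PBW criterion'' attributed to \cite[Section~2.6]{ward19}: no such Poincar\'e--Birkhoff--Witt or Gr\"obner-basis criterion appears in \cite{ward19}, whose Koszul duality for groupoid-colored operads is set up entirely via twisting morphisms, twisted composition products and Bar--Cobar constructions, not via monomial orders. More importantly, the obstacle you flag at the end --- equivariance --- is not a technical point to be handled later but precisely the reason such a criterion does not yet exist in this setting. Already for ordinary symmetric operads one cannot run rewriting arguments directly; one must first pass to shuffle operads to eliminate the symmetric-group actions. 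Here the situation is strictly worse: the colors themselves are the groupoids $\mathrm{B}\mathbb{S}_n$, so each generator $\gamma_i^{n,k}$ carries actions of $\mathbb{S}_n \times \mathbb{S}_k$ and of $\mathbb{S}_{n+k-1}$ in addition to the transposition permuting its two inputs, and no analogue of the shuffle-operad forgetful functor has been constructed for groupoid-colored operads. Designing an equivariant monomial order and verifying confluence of the critical pairs would amount to building that theory from scratch, so the proposal has a genuine gap at its central step. Note also that \cite{VanderLaan03} does not prove Koszulness of the set-colored non-symmetric version by rewriting: the argument there is a direct dimension count identifying the Koszul dual, which is exactly what this paper adapts in the lemma preceding Theorem~\ref{the Koszulity of O}.

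For comparison, the paper's proof is a short argument in the style of \cite[Theorem~11.3.3]{LodayVallette12}. Since the Bar--Cobar adjunction relative to $\kappa$ is naturally isomorphic to the classical operadic Bar--Cobar adjunction (the proposition immediately preceding the theorem), the counit $\Omega_\kappa \mathrm{B}_\kappa \PP \longrightarrow \PP$ is a quasi-isomorphism for \emph{every} dg partial operad $\PP$. Applying this to an arbitrary dg $\mathbb{S}$-module equipped with the trivial partial operad structure $\{\circ_i\}=\{0\}$, and using the natural isomorphism $\Omega_\kappa \mathrm{B}_\kappa \cong \mathscr{S}_{\mathbb{S}}\big(\mathcal{O}^{u} \circ_\kappa (\mathcal{O}^{\ac})^{u}\big)$, one reads off the acyclicity of the twisted complexes and concludes by Lemma~\ref{Koszul duality Ward}. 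If you wish to pursue the combinatorial route, the honest statement of what remains is: (i) construct a groupoid-colored analogue of shuffle operads, or some other rewriting framework compatible with the wreath-product actions on the color schemes; (ii) prove that the sequential and parallel relations admit a quadratic Gr\"obner basis there. Neither step is contained in the references you cite.
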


\begin{proof}
We extend the arguments of \cite[Theorem 11.3.3]{LodayVallette12} to this specific case. Indeed, we know that for any dg partial operad $(\PP,\{\circ_i\},d_\PP)$, the counit $\epsilon_\kappa: \Cobar_\kappa \text{B}_\kappa \PP \longrightarrow \PP$ of the adjunction induced by $\kappa$ is a quasi-isomorphism of dg partial operads. In particular, any dg $\mathbb{S}$-module can be endowed with the trivial dg partial operad structure given by the family of partial composition maps $\{ 0 \}$. Therefore we conclude that $\mathcal{O} \circ_\kappa \mathcal{O}^{\ac}$ and $\mathcal{O}^{\ac} \circ_\kappa \mathcal{O}$ are both acyclic. 
\end{proof}

\begin{Remark}
Generalizing the formalism of \cite{grignoulejay18}, one can construct a complete Bar-Cobar adjunction relative to the twisting morphism $\kappa: \mathcal{O}^{\ac} \longrightarrow \mathcal{O}$ between complete dg absolute partial operads and non-necessarily conilpotent dg partial cooperads. 
\end{Remark}

\subsection{Inhomogeneous $\mathbb{S}$-colored Koszul duality}
We introduce the unital partial $\mathbb{S}$-colored operad $u\mathcal{O}$ which encodes (co)unital partial (co)operads as its (co)algebras. In order to compute its Koszul dual conilpotent curved $\mathbb{S}$-colored, we generalize the inhomogeneous Koszul duality of \cite{HirshMilles12} to the $\mathbb{S}$-colored case. This provides us with a Koszul dual conilpotent curved partial $\mathbb{S}$-colored cooperad $c\mathcal{O}^\vee$. It encodes conilpotent curved partial cooperads as its curved coalgebras and \textit{complete curved absolute partial operads} as its complete curved algebras. Absolute partial operads are a new type of operad-like structure, for which we provide useful descriptions in the Appendix \ref{Appendix B}. The ground category considered is once again the category of pdg modules. 

\medskip

Let $U$ be the $\mathbb{S}$-colored scheme given by $U(0;1) \coloneqq \kk.u$, where $u$ is an element of degree $0$, and zero elsewhere. 

\begin{Definition}[The unital partial $\mathbb{S}$-colored operad encoding (co)unital partial (co)operads]\label{def colored operad uO}
Let $(E,R)$ be the quadratic presentation of the partial $\mathbb{S}$-colored operad $\mathcal{O}$. The unital partial $\mathbb{S}$-colored operad $u\mathcal{O}$ is given by
\[
u\mathcal{O} \coloneqq \mathscr{T}_{\mathbb{S}}(E \oplus U)/(R')~,
\]
where $\mathscr{T}_{\mathbb{S}}(E \oplus U)$ is the free unital partial $\mathbb{S}$-colored operad generated by the $\mathbb{S}$-colored scheme $E \oplus U$, and where $(R')$ is the operadic ideal generated by $R'$. Here $R'$ is the sub-$\mathbb{S}$-color scheme of $\mathscr{T}_{\mathbb{S}}(E \oplus U)^{(\leq 2)}$ given by $R$, together with the additional relations 
\[
\begin{tikzcd}
\left\{ 
    \arraycolsep=1.4pt\def\arraystretch{1.8}\begin{array}{lllll}
         \gamma_{1}^{1,n} \circ_1 u = |_n \hspace{1pc}\mathrm{for} \hspace{1pc} n \in \mathbb{N}~,\\
         \gamma_{i}^{n,1} \circ_2 u = |_n \hspace{1pc}\mathrm{for}  \hspace{1pc} 1 \leq i \leq n \hspace{1pc} \mathrm{and} \hspace{1pc} n \in \mathbb{N}~,
    \end{array}
\right.
\end{tikzcd}
\]
where $|_n \in \mathscr{T}_{\mathbb{S}}(E \oplus U)^{(0)}(n;n)$ is the trivial tree. 
\end{Definition}

\begin{lemma}
We have that:
\begin{enumerate}
\item The category of dg $u\mathcal{O}$-algebras is equivalent to the category of dg unital partial operads.

\item The category of dg $u\mathcal{O}$-coalgebras is equivalent to the category of dg counital partial cooperads.
\end{enumerate}
\end{lemma}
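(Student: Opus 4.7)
The plan is to argue both parts in close analogy with Propositions \ref{Prop: O-algebres} and \ref{Prop: O-coalgebras}, treating the extra generator $u$ and the additional relations as precisely encoding the (co)unit and the (co)unitality axioms. Since $u\mathcal{O}$ is unital by construction and the forgetful functor from unital partial $\mathbb{S}$-colored operads to partial ones is faithful, a dg $u\mathcal{O}$-algebra structure on $(P,d_P)$ is the same datum as a morphism $\Gamma_P: u\mathcal{O} \longrightarrow \mathrm{End}_P$ of dg $\mathbb{S}$-colored operads (and similarly for coalgebras using $\mathrm{Coend}_P$). Such a morphism is determined by its values on the generators $E \oplus U$, subject to the relations $R'$ and the usual equivariance.

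For part (1), I would first restrict $\Gamma_P$ along the natural morphism of partial $\mathbb{S}$-colored operads $\mathcal{O} \longrightarrow u\mathcal{O}$. By Proposition \ref{Prop: O-algebres}, the images $\circ_i^{n,k} \coloneqq \Gamma_P(\gamma_i^{n,k})$ equip $(P,d_P)$ with a dg partial operad structure (the relations in $R$ are already imposed inside $u\mathcal{O}$). Then set $\eta \coloneqq \Gamma_P(u) \colon \kk \longrightarrow P(1)$; this is a chain map since $\Gamma_P$ is compatible with the differentials and $d_{u\mathcal{O}}(u) = 0$. The two additional relations $\gamma_{1}^{1,n} \circ_1 u = |_n$ and $\gamma_{i}^{n,1} \circ_2 u = |_n$ map under $\Gamma_P$ exactly to $\circ_1(\eta(\mathrm{id}),-) = \mathrm{id}_{P(n)}$ and $\circ_i(-,\eta(\mathrm{id})) = \mathrm{id}_{P(n)}$, which are the defining axioms of a dg unital partial operad (Definition \ref{def: unital partial operad}). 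Conversely, any dg unital partial operad structure on $P$ supplies such data on the generators, and the parallel, sequential, equivariance and unitality relations of Definition \ref{def: unital partial operad} are precisely the defining relations of $u\mathcal{O}$, hence the assignment extends (uniquely) to a morphism $\Gamma_P: u\mathcal{O} \longrightarrow \mathrm{End}_P$ of dg $\mathbb{S}$-colored operads. Morphisms of dg $u\mathcal{O}$-algebras correspond to morphisms of dg unital partial operads on the nose, since in both cases they are characterised as chain maps commuting with the structure operations.

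For part (2), I would carry out the dual argument, replacing $\mathrm{End}_P$ by $\mathrm{Coend}_P$ and invoking Proposition \ref{Prop: O-coalgebras} instead of \ref{Prop: O-algebres}. The restriction of a dg $u\mathcal{O}$-coalgebra structure $\delta_P: u\mathcal{O} \longrightarrow \mathrm{Coend}_P$ to the generators $\{\gamma_i^{n,k}\}$ yields a dg partial cooperad structure $\{\Delta_i\}$ on $P$, while $\epsilon \coloneqq \delta_P(u): P(1) \longrightarrow \kk$ defines a counit. The additional relations in $R'$ translate to the two commutative triangles in the definition of a counital partial cooperad. The inverse construction and the identification of morphisms are immediate.

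The only subtle point is that $u\mathcal{O}$ is defined via a quadratic-linear presentation involving the constant term $|_n$ (the trivial tree), so one must check that the generators-and-relations presentation really does cut out the expected algebras; but this is a direct verification, since by Lemma \ref{lemma: add a unit colored case} (freely adjoining a unit) and the Koszul-duality-free construction of $u\mathcal{O}$, the relations $R'$ imposed are exactly those needed, and no hidden consequences arise. Thus no genuine obstacle should appear beyond bookkeeping.
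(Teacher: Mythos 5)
Your proposal is correct and follows essentially the same route as the paper, whose proof simply states that the claim follows immediately from Propositions \ref{Prop: O-algebres} and \ref{Prop: O-coalgebras}; you carry out explicitly the bookkeeping the paper leaves implicit, namely that the generator $u$ maps to the (co)unit under $\Gamma_P$ (resp.\ $\delta_P$) and that the extra relations in $R'$ translate precisely into the (co)unitality axioms. Your closing worry about the inhomogeneous presentation is correctly dismissed: the lemma concerns (co)algebras over the quotient operad itself, so no Koszul-duality subtlety enters.
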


\begin{proof}
It follows immediately from Proposition \ref{Prop: O-algebres} and Proposition \ref{Prop: O-coalgebras}.
\end{proof}

\begin{Definition}[Inhomogeneous quadratic presentation]
Let $(\mathcal{G},\{\circ_i\},\eta)$ be a unital partial $\mathcal{S}$-colored operad and let 
\[
\mathcal{G} \cong \mathscr{T}_{\mathbb{S}}(V)/(S)
\]
be a presentation of $\mathcal{G}$, where $S \subset \{ \I_\mathbb{S} \oplus V \oplus \mathscr{T}_{\mathbb{S}}(V)^{(2)} \}$. The presentation is a \textit{inhomogeneous quadratic presentation} if the $\mathbb{S}$-color scheme of relations $S$ satisfies the following conditions.

\begin{enumerate}
\item That the space of generators is \textit{minimal}, that is $S \cap \{\I_{\mathbb{S}} \oplus V\} = \{0\}$.

\item That the space of relations is \textit{maximal}, that is $(S) \cap \{\I_{\mathbb{S}} \oplus V \oplus \mathscr{T}_{\mathbb{S}}(V)^{(2)} \} = S$. 
\end{enumerate}
\end{Definition}

\begin{lemma}
The presentation of the unital partial $\mathbb{S}$-colored operad $u\mathcal{O}$ given in its definition is a \textit{inhomogeneous quadratic presentation}.
\end{lemma}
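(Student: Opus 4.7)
The plan is to verify directly the two conditions defining an inhomogeneous quadratic presentation, applied to the pair $(V,S)=(E\oplus U,R')$: minimality of the generators and maximality of the relations. Observe first that every relation in $R'$ has constant and quadratic parts only, with no linear component; the presentation is therefore truly constant-quadratic, a special case of the inhomogeneous quadratic format.

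For minimality, I inspect the generators of $R'$. The purely quadratic relations inherited from $R$ lie in $\mathscr{T}_{\mathbb{S}}(V)^{(2)}$ and thus contribute nothing to $\I_{\mathbb{S}}\oplus V$. The unit relations $\gamma_1^{1,n}\circ_1 u-|_n$ and $\gamma_i^{n,1}\circ_2 u-|_n$ each have a non-zero quadratic leading term, and the family of leading terms $\{\gamma_1^{1,n}\circ_1 u\}_n$ together with $\{\gamma_i^{n,1}\circ_2 u\}_{i,n}$ consists of pairwise distinct decorated trees in $\mathscr{T}_{\mathbb{S}}(V)^{(2)}$, hence is linearly independent. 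Consequently, any element of $R'$ whose projection onto $\mathscr{T}_{\mathbb{S}}(V)^{(2)}$ vanishes is the zero linear combination, and its constant part vanishes as well, giving $R'\cap(\I_{\mathbb{S}}\oplus V)=\{0\}$.

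For maximality, the inclusion $R'\subseteq (R')\cap(\I_{\mathbb{S}}\oplus V\oplus\mathscr{T}_{\mathbb{S}}(V)^{(2)})$ is tautological. The reverse inclusion requires showing that no weight-$\leq 2$ element of the ideal $(R')$ lies outside $R'$. Following the inhomogeneous Koszul duality framework of Hirsh--Milles adapted to the $\mathbb{S}$-colored setting, this reduces to a coherence condition on the ``constant correction map'' $\psi$ that sends each relation of $R'$ to its constant term: $\psi$ must be compatible with every weight-$3$ syzygy among the quadratic leading parts of the relations. Concretely, such a syzygy arises from a weight-$3$ tree in $\mathscr{T}_{\mathbb{S}}(V)$ which admits two distinct reductions using quadratic leading terms of relations in $R'$; the verification consists in checking that the two resulting constant corrections coincide modulo $R'$. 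I would carry this out by enumerating the finite list of configurations in which a unit $u$ is plugged into one slot of a parallel or sequential composite, and checking coherence in each.

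The main obstacle is precisely this confluence analysis in the maximality step. Combinatorially, each weight-$3$ syzygy corresponds to a decorated tree with two overlapping occurrences of the left-hand side of a defining relation (either a parallel/sequential axiom, a unit axiom, or one of each), and one must verify that both ways of reducing produce identical normal forms up to $R'$. Because the classical axioms of unital partial operads are known to be internally consistent, these coherence identities hold; nevertheless, establishing them explicitly at the level of the trees in $\mathscr{T}_{\mathbb{S}}(E\oplus U)$ requires a careful but finite case analysis across the few patterns in which a $u$-leaf interacts with a parallel or sequential composition, and this constitutes the bulk of the verification.
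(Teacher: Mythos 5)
Your proof is correct and follows the only reasonable route, namely a direct verification of the two defining conditions for the pair $(V,S)=(E\oplus U,R')$; the paper's own proof consists of the single sentence ``it is straightforward to check,'' so you have in fact supplied strictly more detail than the source. Your minimality argument is complete and sound: the relations in $R$ are purely quadratic, the unit relations have quadratic leading terms living in the summand of $\mathscr{T}_{\mathbb{S}}(E\oplus U)^{(2)}$ containing one $u$-labelled vertex (disjoint from the summand containing $R$), and the projection of $R'$ onto weight $2$ is therefore injective, which forces $R'\cap\{\I_{\mathbb{S}}\oplus V\}=\{0\}$. (One tiny caveat: because of the $\varphi_\sigma$-action permuting entries, some of the leading terms $\gamma_1^{1,n}\circ_1 u$ and $\gamma_i^{n,1}\circ_2 u$ may coincide in the free $\mathbb{S}$-colored operad; but coinciding leading terms carry the identical constant part $|_n$, so the relations themselves coincide and injectivity of the weight-$2$ projection on the subspace $R'$ is unaffected.) For maximality you correctly identify the content as a confluence check on weight-$3$ overlaps between a unit relation and a sequential/parallel relation, and you defer the finite case analysis; this is acceptable, and consistent with the paper's level of rigor, but note there is a shortcut that avoids the case analysis altogether: since $u\mathcal{O}$ is known explicitly as the operad whose algebras are unital partial operads, its weight-graded pieces in weight $\leq 2$ can be identified directly (indeed $q(u\mathcal{O})\cong\mathcal{O}\sqcup\{u\}$, as used later in the proof of Theorem \ref{Koszulity of uO}), and comparing dimensions of $\mathscr{T}_{\mathbb{S}}(V)^{(\leq 2)}/R'$ with the weight $\leq 2$ part of $u\mathcal{O}$ shows that the ideal creates no new relations in weight $\leq 2$.
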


\begin{proof}
It is straightforward to check given the aforementioned presentation.
\end{proof}

\begin{Definition}[Curved partial $\mathbb{S}$-colored cooperad]\label{def: curved colored partial cooperad}
A \textit{curved partial} $\mathbb{S}$-\textit{colored cooperad} $(\mathcal{V},\{\Delta_i\}, \allowbreak d_\mathcal{V},\Theta_\mathcal{V})$ amounts to the data of a partial pdg $\mathbb{S}$-colored operad $(\mathcal{V},\{\Delta_i\},d_\mathcal{V})$ and a morphism of pdg $\mathbb{S}$-color schemes $\Theta_\mathcal{V}: (\mathcal{V},d_\mathcal{V}) \longrightarrow (\I_\mathbb{S},0)$ of degree $-2$, such that the following diagram commutes: 
\[
\begin{tikzcd}[column sep=7.5pc,row sep=3pc]
\mathcal{V} \arrow[r,"\Delta_{(1)}"] \arrow[rrd,"d_\mathcal{V}^2", bend right =10]
&\mathcal{V} \circ_{(1)} \mathcal{V} \arrow[r,"(\mathrm{id}~ \circ ~ \Theta_\mathcal{V})~-~(\Theta_\mathcal{V}~ \circ_{(1)}~ \mathrm{id})~"] 
&(\mathcal{V} \circ \I_\mathbb{S}) \oplus (\I_\mathbb{S} \circ \mathcal{V}) \cong  \mathcal{V} \oplus \mathcal{V}  \arrow[d,"\mathrm{proj}"]\\
&
&\mathcal{V}~,
\end{tikzcd}
\]
where $\mathrm{proj}$ is given by $\mathrm{proj}(\mu,\nu) \coloneqq \mu + \nu$.
\end{Definition}

\begin{Remark}
The definitions developed in Section \ref{Section: Curved cooperads} generalize \textit{mutatis mutandis} to the $\mathbb{S}$-colored case. 
\end{Remark}

We extend the formalism of semi-augmented operads of \cite{HirshMilles12} to the $\mathbb{S}$-colored case in order to compute the Koszul dual conilpotent curved partial $\mathbb{S}$-colored cooperad of $u\mathcal{O}$. 

\begin{Definition}[Semi-augmented unital partial $\mathbb{S}$-colored operad]
A \textit{semi-augmented} unital partial $\mathbb{S}$-colored operad $(\mathcal{G},\{\circ_i\},\eta,\iota)$ is the data of a unital partial $\mathbb{S}$-colored operad $(\mathcal{G},\{\circ_i\}, \allowbreak \eta,d_\mathcal{G})$ together with a morphism of $\mathbb{S}$-color schemes $\iota: \mathcal{G} \longrightarrow \I_{\mathbb{S}}$ of degree $0$ such that $\iota \cdot \eta = \mathrm{id}_{\I_\mathbb{S}}~.$
\end{Definition}

\begin{Remark}
The unital partial $\mathbb{S}$-colored operad $u\mathcal{O}$ is canonically semi-augmented by the identity morphism of $\I_\mathbb{S}$, we denote this semi-augmentation by $\iota_{u\mathcal{O}}$.
\end{Remark}

\begin{Proposition}\label{Proposition: S-colored inhomogeneous Koszul duality}
Let $(\mathcal{G},\{\circ_i\},\eta,\iota)$ be a semi-augmented unital partial $\mathcal{S}$-colored operad that admits a inhomogeneous quadratic presentation $(V,S)$. Let $qS \coloneqq S \cap \mathscr{T}_{\mathbb{S}}(V)^{(2)}$. Let $\varphi: qS \longrightarrow \I_\mathbb{S} \oplus V$ be the linear map that gives $S$ as its graph. The Koszul dual conilpotent partial $\mathbb{S}$-colored cooperad of $\mathcal{G}$ is given by 
\[
\mathcal{G}^{\ac} \coloneqq \overline{\mathscr{T}}_{\mathbb{S}}^c(sV, s^2qR)~.
\]
It is endowed with a coderivation of degree $-1$ $d_{\mathcal{G}^{\ac}}$ given by the unique extension of:
\[
\begin{tikzcd}[column sep=4pc,row sep=1pc]
\mathcal{G}^{\ac} \arrow[r,twoheadrightarrow]
&s^2qR \arrow[r,"s^{-1} \varphi_1 "]
&sE~,
\end{tikzcd}
\]
and with a curvature $\Theta_{\mathcal{G}^{\ac}}$ given by the degree $-2$ map: 
\[
\begin{tikzcd}[column sep=4pc,row sep=1pc]
\mathcal{G}^{\ac} \arrow[r,twoheadrightarrow]
&s^2qR \arrow[r,"s^{-2} \varphi_0 "]
&\I_{\mathbb{S}}~.
\end{tikzcd}
\]
The data of $(\mathcal{G}^{\ac}, d_{\mathcal{G}^{\ac}}, \Theta_{\mathcal{G}^{\ac}})$ forms a conilpotent curved partial $\mathbb{S}$-colored cooperad. 
\end{Proposition}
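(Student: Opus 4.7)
The plan is to transpose the inhomogeneous Koszul duality of \cite{HirshMilles12} to the groupoid-colored framework of \cite{ward19}. Since the combinatorics of the $\mathbb{S}$-colored tree monad $\mathscr{T}_\mathbb{S}$ mirrors that of the classical tree monad, with only extra equivariance data coming from the leaf coloring, the structural steps transpose \emph{mutatis mutandis}; the content lies entirely in low-weight computations, which need only be verified to be compatible with the $\mathbb{S}$-action on the color scheme $V$ and the relations $S$.

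First, I would construct $d_{\mathcal{G}^{\ac}}$ by invoking the universal property of the cofree conilpotent partial $\mathbb{S}$-colored cooperad: the composite $\overline{\mathscr{T}}_{\mathbb{S}}^c(sV) \twoheadrightarrow s^2 qR \xrightarrow{s^{-1}\varphi_1} sV$ lifts uniquely to a coderivation on $\overline{\mathscr{T}}_{\mathbb{S}}^c(sV)$. Verifying that this coderivation restricts to the sub-$\mathbb{S}$-color scheme of corelations $s^2 qR$ amounts to inspecting its output on weight three elements of $\mathcal{G}^{\ac}$, which live in $\overline{\mathscr{T}}_\mathbb{S}^c(sV)^{(3)} \cap (s^2qR \circ_{(1)} sV) \cap (sV \circ_{(1)} s^2qR)$, and showing, using the coderivation property, that the result lies in $s^2 qR$; this is a direct consequence of the quadraticity of the presentation $(V,S)$. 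The curvature $\Theta_{\mathcal{G}^{\ac}}$ is then given tautologically by the recipe in the statement, and is a well-defined morphism of graded $\mathbb{S}$-color schemes of degree $-2$.

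The main obstacle, as in the classical case, is the verification of the compatibility relations
\[
d_{\mathcal{G}^{\ac}}^2 \;=\; \left(\mathrm{id} \circ_{(1)} \Theta_{\mathcal{G}^{\ac}} - \Theta_{\mathcal{G}^{\ac}} \circ \mathrm{id}\right) \cdot \Delta_{(1)} \qquad \text{and} \qquad \Theta_{\mathcal{G}^{\ac}} \cdot d_{\mathcal{G}^{\ac}} = 0.
\]
Both identities reduce, by cofreeness and the coderivation property, to identities on weight three elements of $\mathcal{G}^{\ac}$. The maximality condition $(S) \cap (\I_\mathbb{S} \oplus V \oplus \mathscr{T}_\mathbb{S}(V)^{(2)}) = R'$ forces strong constraints on the linear and constant components $\varphi_1, \varphi_0$ of $\varphi$: any weight three element of $(S)$ obtained by pre- or post-composing a quadratic relation with a generator must reduce, modulo $(S)$, to a purely quadratic expression, and tracking this reduction yields precisely the compatibility identities required. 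I would carry out this analysis on the weight three elements of $\mathscr{T}_\mathbb{S}(V)^{(3)}$ following the strategy of \cite[Section 4]{HirshMilles12}, with the extra $\mathbb{S}$-equivariance being automatic since $\varphi$ is itself an $\mathbb{S}$-color scheme morphism. The conilpotency of $(\mathcal{G}^{\ac}, d_{\mathcal{G}^{\ac}}, \Theta_{\mathcal{G}^{\ac}})$ is built into the construction, as $\mathcal{G}^{\ac}$ is by definition a sub-object of a cofree conilpotent partial $\mathbb{S}$-colored cooperad.
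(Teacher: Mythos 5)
Your proposal is correct and follows exactly the route the paper takes: the paper's own proof simply states that the argument is \emph{mutatis mutandis} that of \cite[Section 4]{HirshMilles12}, and your write-up is a faithful expansion of that analogy (cofree universal property for the coderivation, weight-three verifications for the curved cooperad identities, conilpotency by construction). No discrepancy to report.
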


\begin{proof}
The proof is completely analogous to the non-$\mathbb{S}$-colored case developed in \cite[Section 4]{HirshMilles12}.
\end{proof}

\begin{Definition}[Koszul unital partial $\mathbb{S}$-colored operad]
Let $(\mathcal{G},\{\circ_i\},\eta,\iota)$ be a semi-augmented unital partial $\mathcal{S}$-colored operad that admits a inhomogeneous quadratic presentation $(V,S)$. Let $qS = S \cap \mathscr{T}_{\mathbb{S}}(V)^{(2)}$ and let 
\[
q\mathcal{G} \coloneqq \overline{\mathscr{T}}(V)/(qS)
\]
be the quadratic partial $\mathbb{S}$-colored operad associated to $\mathcal{G}$. The semi-augmented unital partial $\mathcal{S}$-colored operad $\mathcal{G}$ is said to be \textit{Koszul} if the quadratic operad $q\mathcal{G}$, endowed with the quadratic presentation $(V,qS)$, is a Koszul quadratic partial $\mathbb{S}$-colored in the sense of Definition \ref{def: Koszul quadratic S-colored operad}.
\end{Definition}

Let $J$ be the graded $\mathbb{S}$-color scheme given by $J(0;1) \coloneqq \kk.\theta$, where $\theta$ is an element of degree $-2$, and zero elsewhere.

\begin{Definition}[Curved partial $\mathbb{S}$-colored $c\mathcal{O}^\vee$]\label{def: cO vee}
Let $(E,R)$ be the quadratic presentation of the partial $\mathbb{S}$-colored operad $\mathcal{O}$. The \textit{curved partial} $\mathbb{S}$-\textit{colored cooperad} $c\mathcal{O}^\vee$ is given by the presentation 
\[
c\mathcal{O}^\vee \coloneqq \overline{\mathscr{T}}^c_{\mathbb{S}}(E \oplus J,R)~,
\]
where $E \oplus J$ are the cogenerators and $R$ are the corelations. It is endowed with the curvature $\Theta_{c\mathcal{O}^\vee}: c\mathcal{O}^\vee \longrightarrow \I_{\mathbb{S}}$ defined on $c\mathcal{O}^\vee(n;n)$ by the following map
\[
\left\{ \begin{tikzcd}[column sep=1.5pc,row sep=0pc]
\gamma_1^{1,n} \circ_1 \theta - \sum_{i=0}^n \gamma_i^{n,1} \circ_2 \theta \arrow[r,mapsto]
&\mathrm{id}_n ~,\\
\mu \arrow[r,mapsto]
&0~, \\
\end{tikzcd}
\right.
\]
if $\mu$ is not contained in the sub-$\mathbb{S}_n$-module generated by $\gamma_1^{1,n} \circ_1 \theta - \sum_{i=0}^n \gamma_i^{n,1} \circ_2 \theta$.
\end{Definition}

\begin{theorem}\label{Koszulity of uO}
Let $u\mathcal{O}$ be the unital partial $\mathbb{S}$-colored operad encoding (co)unital partial (co)operads. 

\begin{enumerate}
\item The Koszul dual conilpotent curved partial $\mathbb{S}$-colored cooperad $(u\mathcal{O})^{\ac}$ is isomorphic to the suspension of $c\mathcal{O}^\vee$.

\vspace{0.5pc}

\item The unital partial $\mathbb{S}$-colored operad $u\mathcal{O}$ is a Koszul, meaning that $q(u\mathcal{O})$ is a Koszul quadratic partial $\mathbb{S}$-colored operad.
\end{enumerate}
\end{theorem}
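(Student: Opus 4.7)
The plan is to derive both parts of the theorem from the $\mathbb{S}$-colored inhomogeneous Koszul duality of Proposition~\ref{Proposition: S-colored inhomogeneous Koszul duality} applied to $u\mathcal{O}$, together with the Koszulity of $\mathcal{O}$ established in Theorem~\ref{the Koszulity of O}. First, I would verify that the presentation of $u\mathcal{O}$ given in Definition~\ref{def colored operad uO} is an inhomogeneous quadratic presentation and that $u\mathcal{O}$ is canonically semi-augmented by $\iota_{u\mathcal{O}} = \mathrm{id}_{\I_\mathbb{S}}$. Minimality of the generating $\mathbb{S}$-color scheme $V = E \oplus U$ is immediate. Maximality amounts to the statement that every constant-linear-quadratic element of $\mathscr{T}_\mathbb{S}(V)$ which vanishes in $u\mathcal{O}$ lies in the ideal generated by $R$ together with the unit relations; this follows from the fact that a unital partial operad structure is determined by precisely these axioms, which is exactly how the equivalence between $u\mathcal{O}$-algebras and unital partial operads was identified.

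For part (1), I would then apply Proposition~\ref{Proposition: S-colored inhomogeneous Koszul duality}. The quadratic part of the relations is
\[
qS \;=\; R \,\oplus\, \mathrm{span}\bigl\{\gamma_1^{1,n}\circ_1 u\bigr\}_{n} \,\oplus\, \mathrm{span}\bigl\{\gamma_i^{n,1}\circ_2 u\bigr\}_{1\leq i\leq n,\, n\geq 1}\,,
\]
and the map $\varphi \colon qS \to \I_\mathbb{S}\oplus V$ whose graph is $S$ has vanishing linear component $\varphi_1$, while its constant component $\varphi_0$ is zero on $R$ and sends each $\gamma_1^{1,n}\circ_1 u$ and each $\gamma_i^{n,1}\circ_2 u$ to $|_n \in \I_\mathbb{S}(n;n)$. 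The proposition then yields
\[
(u\mathcal{O})^{\ac} \;=\; \overline{\mathscr{T}}^c_\mathbb{S}\bigl(s(E \oplus U),\; s^2 qS\bigr)
\]
with zero coderivation (since $\varphi_1 = 0$) and curvature given by the composite $(u\mathcal{O})^{\ac} \twoheadrightarrow s^2 qS \xrightarrow{s^{-2}\varphi_0} \I_\mathbb{S}$. After applying operadic desuspension, the cogenerators $sE \oplus sU$ identify with $E \oplus J$ (the degree shift turning $su$ of degree $1$ into $\theta$ of degree $-2$), the corelations $s^2 R$ inside $s^2 qS$ become the corelations $R$ appearing in Definition~\ref{def: cO vee}, and a direct inspection shows that the induced curvature coincides with the one specified there, the element $\gamma_1^{1,n}\circ_1\theta - \sum_i \gamma_i^{n,1}\circ_2 \theta$ being a normalized generator of the one-dimensional image of $\varphi_0$ in arity $(n;n)$.

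For part (2), the key observation is structural. In
\[
q(u\mathcal{O}) \;=\; \overline{\mathscr{T}}_\mathbb{S}(E \oplus U)\bigl/\bigl(R,\; \gamma_1^{1,n}\circ_1 u,\; \gamma_i^{n,1}\circ_2 u\bigr)\,,
\]
the generator $u \in U(0;1)$ has output color $1$ and no input color, so the only admissible nontrivial partial compositions of $u$ with an $E$-generator are precisely the ones killed by the quadratic relations. This gives a splitting of $\mathbb{S}$-color schemes $q(u\mathcal{O}) \cong \mathcal{O} \oplus \kk\cdot u$ in which $\mathcal{O}$ is a partial sub-$\mathbb{S}$-colored operad and $\kk\cdot u$ is a trivial summand with vanishing partial compositions. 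Dually, the corelations coming from the $\gamma\circ u$ terms prevent any partial decomposition of an element of $q(u\mathcal{O})^{\ac}$ from factoring through $su$, yielding the parallel splitting $q(u\mathcal{O})^{\ac} \cong \mathcal{O}^{\ac} \oplus \kk\cdot su$. The Koszul complex associated to $\kappa$ (in the sense of Lemma~\ref{Koszul duality Ward}) then splits along these decompositions into a summand isomorphic to the Koszul complex of $\mathcal{O}$, which is acyclic by Theorem~\ref{the Koszulity of O}, together with an elementary diagonal summand involving only $u$ and $su$ whose acyclicity follows from the obvious contracting homotopy $u \mapsto su$ given by the canonical twisting morphism.

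The main obstacle will be the careful treatment of the operadic suspension in part (1): verifying that the degree shift carrying $sV$ to $E \oplus J$ accounts exactly for the discrepancy between $su$ of degree $1$ and $\theta$ of degree $-2$, and that the sign conventions in the cofree cooperad make the output of $s^{-2}\varphi_0$ coincide with the prescribed curvature in Definition~\ref{def: cO vee}. The decomposition of the Koszul complex in part (2) also requires some bookkeeping to isolate the diagonal summand and confirm its contractibility, but presents no conceptual difficulty beyond invoking the Koszulity of $\mathcal{O}$.
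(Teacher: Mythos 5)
Your part (1) is essentially the paper's own argument: check that the given presentation of $u\mathcal{O}$ is inhomogeneous quadratic, apply the $\mathbb{S}$-colored Hirsh--Mill\`es machinery of Proposition \ref{Proposition: S-colored inhomogeneous Koszul duality}, read off that $\varphi_1=0$ and that $\varphi_0$ is supported on the relations involving $u$, and identify the output with the suspension of $c\mathcal{O}^\vee$. The normalization and sign issues you flag around the curvature are real, but the paper elides them at the same level, so I will not press them.

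Part (2) has a genuine gap, located in the assertion $q(u\mathcal{O})^{\ac}\cong\mathcal{O}^{\ac}\oplus\kk\cdot su$. You compute $qS$ by projecting the relations onto their quadratic part (you must, since otherwise $\varphi_0=0$ and part (1) collapses), so $s^2qS$ contains the elements $s^2(\gamma_1^{1,n}\circ_1 u)$ and $s^2(\gamma_i^{n,1}\circ_2 u)$ themselves. But for a quadratic cooperad $\overline{\mathscr{T}}^c_{\mathbb{S}}(sV,s^2qS)$ the corelations \emph{are} its weight-two component: far from preventing decompositions through $su$, they put these weight-two trees \emph{into} $q(u\mathcal{O})^{\ac}$, and iterating, $q(u\mathcal{O})^{\ac}$ contains in every weight trees labelled by $E$-generators carrying arbitrarily many $\theta$-corks. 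Indeed it must: by part (1) it equals $\mathcal{S}\otimes c\mathcal{O}^\vee$, whose Schur functor is $\overline{\mathscr{T}}^c(-\oplus\nu)$ by Lemma \ref{lemma: dual Schur of cOvee}, which is nowhere near $\mathcal{O}^{\ac}\oplus\kk\cdot su$. (Your identification $q(u\mathcal{O})\cong\mathcal{O}\oplus\kk\cdot u$ on the operad side is correct under this convention; it is precisely the splitting that the dual side fails to mirror.) Consequently the Koszul complex does not decompose into the Koszul complex of $\mathcal{O}$ plus an ``elementary diagonal summand involving only $u$ and $su$'': it genuinely mixes corked trees with $u$-labelled vertices in every weight, and there is no contracting homotopy confined to a $(u,su)$-summand. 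The missing step is exactly the one the paper delegates to the analogue of \cite[Proposition 6.16]{HirshMilles12}: filter the Koszul complex by the number of $u$'s and $\theta$'s, identify the associated graded as the Koszul complex of $\mathcal{O}$ twisted against an acyclic complex built from the $(u,\theta)$-pairs, and conclude by convergence. That filtration argument is the actual content of part (2) and is absent from your proposal.
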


\begin{proof}
The Koszul dual cooperad $(u\mathcal{O})^{\ac}$ is given by $\overline{\mathscr{T}}^c_{\mathbb{S}}(s(E \oplus U), s^2 qR')$. It is cogenerated by a degree $1$ operation $u \in sU(0;1)$ and the suspension of the family $\{\gamma_i^{n,k}\}$. The corelations $s^2 qR'$ are given by $s^{2}R$. 

\medskip

Let us compute the rest of the structure. The projection of $R'$ into $E \oplus U$ is zero, hence $(u\mathcal{O})^{\ac}$ has a zero pre-differential. The projection of $R'$ into $\I_{\mathbb{S}}$ will be non zero only on the relations that involve the element $u \in U(0;1)$. It is straightforward to compute that the pre-image of $\mathrm{id}_n \in \I_{\mathbb{S}}$ by the projection $s^{-2} \varphi_0$ is given by 
\[
\sum_{i=0}^n s\gamma_{i}^{n,1} \circ_2 su - s\gamma_{1}^{1,n} \circ_1 su~.
\]
This completely characterizes the curvature of $(u\mathcal{O})^{\ac}$. A direct inspection identifies $(u\mathcal{O})^{\ac}$ with the operadic suspension of $c\mathcal{O}^\vee$.

\medskip

Let us prove the second assertion. The space $qR'$ is in fact $R$, and therefore $q(u\mathcal{O})$ is given by the coproduct $\mathcal{O} \coprod \{u\}$. By Theorem \ref{the Koszulity of O}, we know that $\mathcal{O}$ is a quadratic Koszul $\mathbb{S}$-colored operad. Therefore the Koszul complex of $q(u\mathcal{O})$ is also acyclic by an argument analogous to that of \cite[Proposition 6.16]{HirshMilles12}.
\end{proof}

The last point of this section is to understand what curved coalgebras and complete curved algebras over the conilpotent curved partial $\mathbb{S}$-colored cooperad $c\mathcal{O}^\vee$ are. 

\begin{Proposition}\label{Prop: cO-cogebres.}
The category of curved $(c\mathcal{O}^\vee)^u$-coalgebras is isomorphic to the category of conilpotent curved partial cooperads.
\end{Proposition}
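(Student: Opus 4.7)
The plan is to proceed in direct analogy with Proposition \ref{Prop les alge/cog conil sur la coop O*}, layering in the curvature data. By Definition \ref{def: cO vee}, $c\mathcal{O}^\vee$ is the cofree conilpotent partial $\mathbb{S}$-colored cooperad $\overline{\mathscr{T}}^c_\mathbb{S}(E\oplus J, R)$, and $(c\mathcal{O}^\vee)^u$ is obtained by freely adjoining a counit. The universal property of the cofree cooperad, combined with the tree identification of Lemma \ref{lemma with the trees bijection} and the comparison results of Subsection \ref{subsection: comparison for partial colored (co)operads}, identifies a coalgebra structure $\Delta_N : N \to \mathscr{S}_\mathbb{S}((c\mathcal{O}^\vee)^u)(N)$ on a pdg $\mathbb{S}$-module $N$ with the data of degree $0$ partial decomposition maps $\Delta_i^{n,k}: N(n+k-1) \to N(n) \otimes N(k)$, obtained by projecting $\Delta_N$ onto the cogenerators $\gamma_i^{n,k} \in E$, together with a degree $-2$ map $\vartheta: N(1) \to N(0)$ obtained by projecting onto $\theta \in J$. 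The corelations $R$ are the parallel and sequential axioms together with the groupoid equivariance; imposing them forces $(N, \{\Delta_i^{n,k}\})$ to be a conilpotent partial cooperad, exactly as in Proposition \ref{Prop les alge/cog conil sur la coop O*}.

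The compatibility of $d_N$ with the coalgebra structure then makes $d_N$ a coderivation with respect to $\{\Delta_i^{n,k}\}$ and imposes $\vartheta \cdot d_N = 0$. The decisive step is translating the curvature identity from Definition \ref{def curved conil coalg} in the $\mathbb{S}$-colored setting, namely $\mathscr{S}_\mathbb{S}(\Theta_{(c\mathcal{O}^\vee)^u})(\mathrm{id}_N) \cdot \Delta_N = d_N^2$. By the explicit formula for $\Theta_{(c\mathcal{O}^\vee)^u}$ given in Definition \ref{def: cO vee}, the only components of $\Delta_N(x)$ that survive under $\Theta_{(c\mathcal{O}^\vee)^u}$ are those indexed by the $\mathbb{S}_n$-orbit of the element $\gamma_1^{1,n}\circ_1\theta - \sum_{i=1}^{n}\gamma_i^{n,1}\circ_2\theta$. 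Unpacking these components in terms of $\{\Delta_i^{n,k}\}$ and $\vartheta$ yields, for every $x \in N(n)$, the equation
\[
d_N^2(x) \;=\; (\vartheta \otimes \mathrm{id})\cdot\Delta_1^{1,n}(x) \;-\; \sum_{i=1}^{n}(\mathrm{id}\otimes \vartheta)\cdot\Delta_i^{n,1}(x),
\]
which is precisely the Koszul dual of the curved operad axiom $d_\mathcal{P}^2 = \gamma_{(1)}\cdot(\Theta_\mathcal{P}\circ \mathrm{id} - \mathrm{id}\circ'\Theta_\mathcal{P})$, and therefore endows $N$ with a conilpotent curved partial cooperad structure.

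Morphisms on both sides are morphisms of pdg $\mathbb{S}$-modules that preserve the full structure (coalgebra on one side; partial decompositions, pre-differential, and curvature on the other), so the equivalence extends to morphisms and the inverse assignment is read off the same correspondence. The main obstacle lies in the second paragraph: one must carefully unpack the Schur functor action on the specific generating element selected by $\Theta_{(c\mathcal{O}^\vee)^u}$ and verify that the resulting signs and $\mathbb{S}_n$-equivariance align with the standard curvature axiom for conilpotent curved partial cooperads. This bookkeeping is streamlined by the operadic suspension identification $(u\mathcal{O})^{\ac}\cong c\mathcal{O}^\vee$ supplied by Theorem \ref{Koszulity of uO}.
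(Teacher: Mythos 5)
Your proposal is correct and follows essentially the same route as the paper: both arguments rest on extending the tree bijection of Lemma \ref{lemma with the trees bijection} by the extra arity-one degree $-2$ generator coming from $\theta \in J$, so that the structure map of a $(c\mathcal{O}^\vee)^u$-coalgebra is read off by projection as a family of partial decompositions $\{\Delta_i\}$ plus a curvature, with the corelations $R$ enforcing the (co)sequential, (co)parallel and equivariance axioms and the curved-coalgebra diagram translating into the curvature identity you display. One slip to correct: the map extracted from the cogenerator $\theta$ is a degree $-2$ morphism $N \longrightarrow \I$ (equivalently $N(1) \to \kk$), which is what the curvature of a conilpotent curved partial cooperad is and what your displayed formula for $d_N^2$ implicitly uses, not a map $N(1) \to N(0)$.
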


\begin{proof}
First, notice that, for any pdg $\mathbb{S}$-module $M$, there is an isomorphism of pdg $\mathbb{S}$-modules 
\[
\mathscr{S}_{\mathbb{S}}(c\mathcal{O}^\vee)(M) \cong \overline{\mathscr{T}}^c(M \oplus \nu), 
\]
where $\nu$ is an arity $1$ and degree $-2$ generator. Indeed, this can by shown by extending the bijection given in the proof of Lemma \ref{lemma with the trees bijection}. This extension is defined by sending $\theta$ in $c\mathcal{O}^\vee(0;1)$ to the generator $\nu$. Pictorially it is given by
\begin{center}
\includegraphics[width=110mm,scale=1.3]{treebijectioncork.eps}.
\end{center}
This isomorphism is natural in $M$. Therefore the endofunctor $\overline{\mathscr{T}}^c(- \oplus \nu)$ in the category of pdg $\mathbb{S}$-modules as a comonad structure. A direct computation shows that this comonad structure coincides with the reduced tree comonad structure. 

\medskip

Let $(\C,\{\Delta_i\},d_\C,\Theta_\C)$ be a conilpotent curved partial cooperad. Let 
\[
\Delta_\C: \C \longrightarrow \overline{\mathscr{T}}^c(\C)
\]
be its structure map as a coalgebra over the reduced tree comonad. We construct an extension
\[
\Delta_\C^+: \C \longrightarrow \overline{\mathscr{T}}^c(\C) \times \overline{\mathscr{T}}^c(\nu) \cong \overline{\mathscr{T}}^c(\C \oplus \nu)~,
\]
using $\Theta_\C: \C \longrightarrow \nu.\I$ and its unique extension to $\overline{\mathscr{T}}^c(\nu)$. 

\medskip

The data $(\C,\Delta_\C^+,d_\C)$ forms a pdg $(c\mathcal{O}^\vee)^u$-coalgebra. It is in fact a curved $(c\mathcal{O}^\vee)^u$-coalgebra. Indeed, the diagram 
\[
\begin{tikzcd}[column sep=3pc,row sep=3pc]
\C \arrow[r,"\Delta_\C^+ "] \arrow[rd,"d_\C^2",swap]
&\mathscr{S}_\mathbb{S}(c\mathcal{O}^\vee)(\C) \arrow[d,"\mathscr{S}_\mathbb{S}(\Theta_{c\mathcal{O}^\vee})(\mathrm{id})"]\\
&\C \cong \mathscr{S}_\mathbb{S}(\I_\mathbb{S})(\C)
\end{tikzcd}
\]
commutes since $(\C,\{\Delta_i\},d_\C,\Theta_\C)$ forms a \textit{curved} partial cooperad. The other way around, given a curved $(c\mathcal{O}^\vee)^u$-coalgebra $(\C, \Delta_C^+, d_\C)$, one can compose the structural map
\[
\Delta_\C^+: \C \longrightarrow \overline{\mathscr{T}}^c(\C \oplus \nu)
\]
with the projection $\overline{\mathscr{T}}^c(\C \oplus \nu) \twoheadrightarrow \overline{\mathscr{T}}^c(\C)$, which endows $\C$ with a conilpotent partial cooperad structure $\{\Delta_i\}$. By composing $\Delta_\C^+$ with the projection $\overline{\mathscr{T}}^c(\C \oplus \nu) \twoheadrightarrow \I.\nu$, one obtains a map 
\[
\Theta_\C: \C \longrightarrow \I
\]
of pdg $\mathbb{S}$-modules of degree $-2$. The data $(\C,\{\Delta_i\},d_\C,\Theta_\C)$ forms a conilpotent curved partial cooperad, since $(\C, \Delta_C^+, d_\C)$ is a \textit{curved} $(c\mathcal{O}^\vee)^u$-coalgebra.
\end{proof}

\begin{Proposition}\label{Prop: cO-algebres.}
The category of complete curved $(c\mathcal{O}^\vee)^u$-algebras is isomorphic to the category of complete curved absolute partial operads.
\end{Proposition}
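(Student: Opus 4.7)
The plan is to dualize the argument given for Proposition \ref{Prop: cO-cogebres.}. The starting point is the natural isomorphism of pdg $\mathbb{S}$-modules
\[
\widehat{\mathscr{S}}^c_\mathbb{S}(c\mathcal{O}^\vee)(M) \;\cong\; \overline{\mathscr{T}}^{\wedge}(M \oplus \kk.\nu)
\]
for every pdg $\mathbb{S}$-module $M$, where $\nu$ is an arity-one generator of degree $-2$. This isomorphism is obtained by dualizing the tree bijection of Lemma \ref{lemma with the trees bijection} together with its extension sending $\theta \in c\mathcal{O}^\vee(0;1)$ to $\nu$, as used in the proof of Proposition \ref{Prop: cO-cogebres.}. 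Since each space $c\mathcal{O}^\vee(n_1,\ldots,n_r;n)$ splits as a finite direct sum indexed by rooted trees, the lax monoidal functor $\widehat{\mathscr{S}}^c_\mathbb{S}$ converts these into the corresponding product over rooted trees with vertices labelled by either $M$ or $\nu$, which is precisely $\overline{\mathscr{T}}^{\wedge}(M \oplus \kk.\nu)$.

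Next, I would upgrade this to an isomorphism of monads. Dually to the computation performed for the reduced tree comonad in Proposition \ref{Prop: cO-cogebres.}, the monad structure on $\widehat{\mathscr{S}}^c_\mathbb{S}(c\mathcal{O}^\vee)$ coming from the conilpotent curved partial $\mathbb{S}$-colored cooperad structure of $c\mathcal{O}^\vee$ coincides with the monad structure on $\overline{\mathscr{T}}^{\wedge}(-\oplus \kk.\nu)$ induced from the reduced completed tree monad $\overline{\mathscr{T}}^{\wedge}$ by grafting at $\nu$-vertices as well as at usual vertices. A pdg algebra $P$ over this monad is thus the data of an associative structural map
\[
\gamma_P^+ : \overline{\mathscr{T}}^{\wedge}(P \oplus \kk.\nu) \longrightarrow P~,
\]
whose restriction to trees free of $\nu$-labels encodes an absolute partial operad structure on $P$ in the sense of Appendix \ref{Appendix B}, and whose value on the corolla labelled by $\nu$ provides a distinguished element $\theta \in P(1)_{-2}$. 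The curved-algebra condition of Definition \ref{def curved alg over a coop}, combined with the explicit curvature of $c\mathcal{O}^\vee$ given in Definition \ref{def: cO vee}, translates term-by-term into the curvature relation $d_P^2 = \gamma_{(1)}(\Theta \circ \mathrm{id} - \mathrm{id} \circ' \Theta)$, endowing $P$ with the structure of a curved absolute partial operad.

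Finally, I would match the completeness conditions on both sides. Under the tree bijection, the coradical filtration $\mathscr{R}_\omega c\mathcal{O}^\vee$ of Definition \ref{coradical filtration coop} corresponds exactly to the weight filtration of $\overline{\mathscr{T}}^{\wedge}$ by the number of internal edges. Consequently the canonical filtration $\mathrm{W}_\omega P$ associated to the $(c\mathcal{O}^\vee)^u$-algebra $P$ coincides with the canonical filtration $\mathscr{F}_\omega P$ of the associated curved absolute partial operad defined in Appendix \ref{Appendix B}. Thus the two completion functors agree, and the equivalence restricts to the desired isomorphism between complete curved $(c\mathcal{O}^\vee)^u$-algebras and complete curved absolute partial operads. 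The main obstacle is to check that the isomorphism of pdg $\mathbb{S}$-modules genuinely promotes to a monad isomorphism intertwining the curvature twist coming from $\Theta_{c\mathcal{O}^\vee}$ with the free-algebra--to--curved reflector described in Proposition \ref{freecurvedalg} for the absolute setting; this is dual to the comonadic verification sketched in the proof of Proposition \ref{Prop: cO-cogebres.} and requires carefully tracking how the cooperadic partial decompositions $\Delta_i$ on $c\mathcal{O}^\vee$ are transported to the partial grafting of trees in $\overline{\mathscr{T}}^{\wedge}$.
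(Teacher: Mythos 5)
Your argument is correct and follows essentially the same route as the paper's own proof, which is carried out in Proposition~\ref{Prop: cO algebres vrai} of Appendix~\ref{Appendix B}: identify $\widehat{\mathscr{S}}^c_{\mathbb{S}}(c\mathcal{O}^\vee)(-)$ with $\overline{\mathscr{T}}^{\wedge}(-\oplus\nu)$ via the tree bijection, extend/restrict the structure map along $\overline{\mathscr{T}}^{\wedge}(\Q)\hookrightarrow\overline{\mathscr{T}}^{\wedge}(\Q\oplus\nu)$ with the curvature read off from the $\nu$-corolla, and match the two canonical filtrations to transport completeness. The verification you flag as the "main obstacle" is exactly the content of that appendix proof, so nothing further is needed.
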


\begin{proof}
For the proof of this statement, see Proposition \ref{Prop: cO algebres vrai}. For an explicit description of these objects, we refer to the Appendix \ref{Appendix B}.
\end{proof}

\medskip

\section{Curved twisting morphisms and Bar-Cobar adjunctions at the operadic level}\label{Section: Constructions Bar-Cobar operadiques}
The curved Koszul duality established in the previous section at the groupoid-colored level gives a \textit{curved twisting morphism} $\kappa$ between the groupoid-colored curved cooperad encoding curved partial (co)operads $c\mathcal{O}^\vee$ and the groupoid-colored operad encoding unital partial (co)operads $u\mathcal{O}$. This curved twisting morphism induces two different Bar-Cobar adjunctions.

\medskip

The first Bar-Cobar adjunction induced by $\kappa$ is between dg unital partial operads and conilpotent curved partial cooperads. It will be shown to be isomorphic to the Bar-Cobar adjunction defined in \cite[Section 4.1]{grignou2019}: 

\hspace{9.3pc}
\begin{tikzcd}[column sep=5pc,row sep=3pc]
            \mathsf{curv}~\mathsf{pCoop}^{\mathsf{conil}} \arrow[r, shift left=1.1ex, "\Omega"{name=F}] &\mathsf{dg}~\mathsf{upOp}~.  \arrow[l, shift left=.75ex, "\text{B}"{name=U}]
            \arrow[phantom, from=F, to=U, , "\dashv" rotate=-90]
\end{tikzcd}

On the other hand, using the generalization of \cite{grignoulejay18} to the groupoid-colored setting, we obtain a new complete Bar-Cobar adjunction:

\hspace{10.5pc}
\begin{tikzcd}[column sep=6pc,row sep=3pc]
            \mathsf{dg}~\mathsf{upCoop} \arrow[r, shift left=1.1ex, "\widehat{\Omega}"{name=F}] &\mathsf{curv}~\mathsf{abs}~\mathsf{pOp}^{\mathsf{comp}}~, \arrow[l, shift left=.75ex, "\widehat{\text{B}}"{name=U}]
            \arrow[phantom, from=F, to=U, , "\dashv" rotate=-90]
\end{tikzcd}

between complete curved \textit{absolute} partial operads and counital partial cooperads. Constructing this second adjunction is the main goal of this section.

\medskip

\subsection{Curved twisting morphisms and curved pre-Lie algebras}
In order to defined curved twisting morphisms in the first place, we introduce a new type of structure, called \textit{curved pre-Lie algebras}. In this type of algebras, the curvature has to satisfy a special condition, called the \textit{left-nucleus condition}. This condition comes from the deformation theory. More precisely, it appears in \cite[Proposition 1.1, Chapter 4]{DSV18} for the following reason: one can show that a dg pre-Lie algebra is \textit{twistable} by a Maurer-Cartan element if and only if this element satisfies the extra condition of being left-nucleus.   

\begin{Definition}[Curved pre-Lie algebra]\label{curvedprelie}
A \textit{curved pre-Lie algebra} $(\mathfrak{g}, \{-,-\},d_{\mathfrak{g}},\vartheta)$ amounts to the data of a pre-Lie algebra $(\mathfrak{g},\{-,-\})$, a derivation $d_{\mathfrak{g}}$ with respect to $\{-,-\}$ of degree $-1$, and a morphism of pdg modules of degree $-2$ $\vartheta: \mathbb{K} \longrightarrow \mathfrak{g}$. The data of this morphism is equivalent to the data of an element $\vartheta(1) \coloneqq \vartheta$ in $\mathfrak{g}_{-2}$. They are subject to the following conditions.
\begin{enumerate}
    \item The element $\vartheta$ has to be \textit{left-nucleus}, that is, for all $\mu,\nu$ in $\mathfrak{g}$: 
    \[
    \{\vartheta,\{\mu,\nu\}\} = \{\{\vartheta,\mu\},\nu\}~.
    \]
    \item Moreover, for all $\mu$ in $\mathfrak{g}$:
    \[
    d_{\mathfrak{g}}(\mu) = \{\vartheta,\mu\} - \{\mu,\vartheta\}~.
    \]
    \item And finally, $d_{\mathfrak{g}}(\vartheta)=0$.
\end{enumerate}
\end{Definition}

In the same spirit as for curved Lie algebras and curved associative algebras, one can define a curved partial operad, $cp\mathcal{L}ie$, that encodes curved pre-Lie algebras. Let $H$ be the pdg $\mathbb{S}$-module $(\mathbb{K}.\vartheta, 0, \mathbb{K}[\mathbb{S}_2].\nu, 0, \cdots)$ endowed with the zero pre-differential.

\begin{Definition}[Curved operad encoding curved pre-Lie algebras]
The curved partial operad $cp\mathcal{L}ie$ is given by the presentation: 
\[
cp\mathcal{L}ie \coloneqq \overline{\mathscr{T}}(H)/(D)~,
\]
where $(D)$ is the operadic ideal generated by the following relations: 
\begin{enumerate}
\item The right pre-Lie relation, already present in the classical pre-Lie operad, given by:
\[
\includegraphics[width=120mm,scale=1]{prelie.eps}~.
\]
\item The left-nucleus relation stating for the curvature $\vartheta$, given by:
\[
\includegraphics[width=60mm,scale=1]{nucleus.eps}~.
\]
\end{enumerate}

It is endowed with the curvature $\Theta_{cp\mathcal{L}ie}(\mathrm{id}) \coloneqq \nu \circ_{1} \vartheta - \nu \circ_2 \vartheta~.$
\end{Definition}

\begin{Proposition}
The data $(cp\mathcal{L}ie, 0, \Theta_{cp\mathcal{L}ie})$ forms a curved partial operad. The category of curved $cp\mathcal{L}ie$-algebras is equivalent to the category of curved pre-Lie algebras. 

\medskip

Furthermore, the morphism of curved partial operads $ \Liec \longrightarrow \Assc$ given by the skew-symmetrization of the Lie bracket factors through the curved partial operad $cp\mathcal{L}ie$.
\end{Proposition}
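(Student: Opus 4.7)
The plan is to address the three assertions separately. The curvature axiom for $(cp\mathcal{L}ie, 0, \Theta_{cp\mathcal{L}ie})$ reduces to showing $[\theta,-]=0$ (the identity $d(\theta)=0$ being automatic), and since $[\theta,-]$ is a derivation it suffices to verify this on the two generators $\vartheta$ and $\nu$. For $[\theta,\vartheta] = \theta \circ_1 \vartheta$, the vanishing is a direct consequence of the parallel axiom applied to $\nu$ at positions $1<2$ with the nullary $\vartheta$, which yields $(\nu \circ_1 \vartheta)\circ_1 \vartheta = (\nu \circ_2 \vartheta)\circ_1 \vartheta$. The main technical step is the computation of $[\theta,\nu] = \theta \circ_1 \nu - \nu \circ_1 \theta - \nu \circ_2 \theta$: after expanding the six resulting terms by iterated partial composition, two of them cancel by the left-nucleus relation (applied to $\vartheta$ placed at the top), and the remaining four cancel after rewriting one bracketing using the right symmetry of the associator guaranteed by the pre-Lie relation.

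Next, I would establish the equivalence of categories by unpacking the standard operadic dictionary. A curved $cp\mathcal{L}ie$-algebra structure on a pdg module $(A,d_A)$ is by definition a morphism of curved partial operads $\Gamma_A \colon cp\mathcal{L}ie \to \mathrm{End}_A$, which is entirely determined by $\{-,-\} \coloneqq \Gamma_A(\nu)$ and $\vartheta_A \coloneqq \Gamma_A(\vartheta)$. The map descends to the quotient by $(D)$ if and only if $\{-,-\}$ is a right pre-Lie product and $\vartheta_A$ is left-nucleus; compatibility with the pre-differentials forces $d_A$ to be a derivation of $\{-,-\}$ annihilating $\vartheta_A$; preservation of curvatures gives $d_A^2 = \{\vartheta_A,-\} - \{-,\vartheta_A\}$. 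These are precisely the axioms of Definition~\ref{curvedprelie}, and the correspondence is manifestly functorial in pdg module morphisms.

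Finally, I would produce the factorization as $g\circ f$ of two morphisms of curved partial operads: $f \colon \Liec \to cp\mathcal{L}ie$ sending $\beta \mapsto \nu - \nu^{(12)}$ and $\zeta \mapsto \vartheta$, and $g \colon cp\mathcal{L}ie \to \Assc$ sending $\nu \mapsto \mu$ and $\vartheta \mapsto \phi$. That $f$ is well-defined rests on the classical fact that the skew-symmetrization of a right pre-Lie product satisfies the Jacobi identity, and preservation of curvatures follows from the equivariance identity $\nu^{(12)} \circ_1 \vartheta = \nu \circ_2 \vartheta$, which gives $f(\beta \circ_1 \zeta) = \nu \circ_1 \vartheta - \nu \circ_2 \vartheta = \Theta_{cp\mathcal{L}ie}(\mathrm{id})$. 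For $g$, the right pre-Lie relation holds because the associator of an associative product vanishes (and is therefore in particular symmetric in its last two arguments), while the left-nucleus relation is a special instance of associativity once $\vartheta$ is sent to $\phi$; the curvature is preserved on the nose. Composing $g\circ f$ on generators one recovers the morphism $\Liec \to \Assc$ of Proposition~\ref{assliem}, completing the argument. The only step I expect to require real care is the identity $[\theta,\nu]=0$, where both operadic relations of $cp\mathcal{L}ie$ enter in an essential way; the remainder is a sequence of unwrappings and invocations of classical facts.
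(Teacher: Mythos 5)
Your proposal is correct and follows essentially the same route as the paper's proof: verify the curvature condition $[\theta,-]=0$ on the two generators (with the same accounting of the six terms in $[\theta,\nu]$, four cancelling by the right pre-Lie relation and two by the left-nucleus relation), unpack the algebra structure via morphisms into the curved endomorphism operad, and exhibit the factorization through the explicit maps $\beta\mapsto\nu-\nu^{(12)}$, $\zeta\mapsto\vartheta$ and $\nu\mapsto\mu$, $\vartheta\mapsto\phi$. Your treatment is in fact slightly more detailed than the paper's (e.g.\ the use of the parallel axiom for $[\theta,\vartheta]$ and of equivariance for the curvature preservation under $f$), but there is no substantive difference in approach.
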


\begin{proof}
Let us show that the data forms a curved partial operad. The proof is quite similar to that of Lemma \ref{lemmalie}. In order to check that $[\nu \circ_{1} \vartheta - \nu \circ_2 \vartheta,-] = 0$, since it is a derivation with respect partial compositions, it is enough to test this equality on the generators. First, $[\nu \circ_{1} \vartheta - \nu \circ_2 \vartheta, \vartheta] = 0$ is evident. Secondly, expanding $[\nu \circ_{1} \vartheta - \nu \circ_2 \vartheta, \nu]$ makes six terms that appear: it is straightforward to check that four of them cancel because of the right pre-Lie relation and the last two because of the left-nucleus relation. Proving that this curved partial operad encodes curved pre-Lie algebras is completely analogous to Lemma \ref{lemmalie}. One checks that the map $\Liec \longrightarrow cp\mathcal{L}ie$ given on generators by $\beta \mapsto \nu - \nu^{(12)}$ and $\zeta \mapsto \vartheta$ is indeed a morphism of curved partial operads. Furthermore, there is a morphism of curved partial operads $cp\mathcal{L}ie \longrightarrow \Assc$ simply given by $\nu \mapsto \mu$ and $\vartheta \mapsto \phi$. Their composition gives back the morphism constructed in Lemma \ref{assliem}.
\end{proof}

\begin{Remark}
Contrary to the case of Lie algebras and associative algebras, we had to introduce a new relation into the curved partial operad that encodes curved pre-Lie algebras. In the previous cases, this meant that the curved homotopy version of them was given simply by adding a curvature to the classical homotopy version. For instance, a curved $\mathcal{A}_{\infty}$-algebra is just an $\mathcal{A}_{\infty}$-algebra with an added structure of a curvature; the relations of a curved $\mathcal{A}_{\infty}$-algebra are clearly analogous to those of an $\mathcal{A}_{\infty}$-algebra. In the curved pre-Lie case, more structure will appear when one resolves the left-nucleus relation up to homotopy. Nevertheless, since the left-nucleus relation is not even quadratic, one would first need an expanded Koszul duality in order to treat this case.
\end{Remark}

We leave it to the reader to generalize the definition of curved partial operads to the $\mathbb{S}$-colored setting. (See Definition \ref{def: curved colored partial cooperad} for a similar definition).

\begin{lemma}[Totalization of a curved partial $\mathbb{S}$-colored operad]\label{lemma S colored totalization}
Let $(\mathcal{G}, \{\circ_i\}, d_{\mathcal{G}}, \Theta_\mathcal{G})$ be a curved partial $\mathbb{S}$-colored operad. The \textit{totalization} of $\mathcal{G}$ given by 
\[
\prod_{(n_1,\cdots,n_r;n) \in \mathbb{N}^{r+1}} \left(\mathcal{G}(n_1,\cdots,n_r;n)^{\left(\mathbb{S}_{n_1} \times \cdots \times \mathbb{S}_{n_r}\right)~\wr~ \mathbb{S}_r}\right)^{\mathbb{S}_n}
\]
forms a curved pre-Lie algebra, where the bracket is defined like in Proposition \ref{prop S colored totalization} and where the curvature is given by 
\[
\vartheta(1) = \sum_{n\geq 0} \theta_{n}~.
\]
\end{lemma}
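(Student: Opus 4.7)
The plan is to adapt the proof of Proposition \ref{prop S colored totalization} to the curved/pdg setting and then to verify the three axioms of a curved pre-Lie algebra in turn, each of which corresponds to a piece of structure of $\mathcal{G}$. First, exactly as in Proposition \ref{prop S colored totalization}, the pre-Lie product $\mu \star \nu = \sum_{\{(n_i,\sigma_i)=(p,\tau)\}} \mu \circ_i \nu$ is well-defined on the totalization and its associator is right-symmetric, since the axioms of a partial $\mathbb{S}$-colored operad are the same in the curved context. That $d_\mathcal{G}$ is a derivation of $\star$ is immediate from the fact that $d_\mathcal{G}$ is a derivation of each partial composition $\circ_i$.

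Next I would verify the three axioms from Definition \ref{curvedprelie}. For axiom (3), the equality $d_\mathfrak{g}(\vartheta)=0$ reduces to showing $d_\mathcal{G}(\theta_n)=0$ for every $n$, which is immediate because $\Theta_\mathcal{G}\colon (\I_\mathbb{S},0)\longrightarrow (\mathcal{G},d_\mathcal{G})$ is a morphism of pdg $\mathbb{S}$-color schemes. For axiom (2) (which, by degree reasons, should read $d_\mathfrak{g}^2(\mu)=\{\vartheta,\mu\}-\{\mu,\vartheta\}$, as $\vartheta$ has degree $-2$), I would compute both sides on an element $\mu\in\mathcal{G}(n_1,\ldots,n_r;n)^{(\mathbb{S}_{n_1}\times\cdots\times\mathbb{S}_{n_r})\wr\mathbb{S}_r,\mathbb{S}_n}$. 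The curvature axiom of the curved partial $\mathbb{S}$-colored operad $\mathcal{G}$ gives
\[
d_\mathcal{G}^2(\mu) \;=\; \theta_n\circ_1\mu \;-\; \sum_{j=1}^{r}\mu\circ_j\theta_{n_j}~.
\]
On the other hand, the color-matching convention defining the pre-Lie bracket on the totalization forces exactly one color of $\theta_\bullet$ to contribute in $\{\vartheta,\mu\}$, namely $\{\vartheta,\mu\}=\theta_n\circ_1\mu$ (since $\theta_n\in\mathcal{G}(n;n)$ is the only summand of $\vartheta$ whose unique input color matches the output color of $\mu$), while $\{\mu,\vartheta\}=\sum_j \mu\circ_j\theta_{n_j}$ (selecting for each slot $j$ the unique curvature element $\theta_{n_j}$ whose output color matches $n_j$). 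The identification is then tautological.

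The last point is the left-nucleus condition, which I would verify directly by expanding both sides. On one side,
\[
\{\vartheta,\{\mu,\nu\}\}\;=\;\sum_{n_i=m}\theta_n\circ_1(\mu\circ_i\nu)~,
\]
while on the other,
\[
\{\{\vartheta,\mu\},\nu\}\;=\;\sum_{n_i=m}(\theta_n\circ_1\mu)\circ_i\nu~,
\]
because $\theta_n\circ_1\mu$ has the same color profile as $\mu$ and because $\{\vartheta,-\}$ again contributes a single term. These two expressions coincide by the sequential axiom of partial $\mathbb{S}$-colored operads applied with $i=1$ and $j=i$.

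The only real work is the bookkeeping of colors; this is where most of the difficulty lies, since one must check at every step that the a priori infinite sums in the totalization collapse to the claimed single-term expressions whenever $\vartheta$ is involved. Once this is done, there is no sign subtlety (the curvature has even degree and the pre-Lie bracket is of degree zero) and the identities fall out of the sequential and parallel axioms together with the defining equations of a curved partial $\mathbb{S}$-colored operad.
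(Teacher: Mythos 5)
Your proof is correct and follows essentially the same route as the paper's: compute $d_\mathcal{G}^2$ from the curvature axiom, identify $\{\vartheta,\mu\}=\theta_n\circ_1\mu$ and $\{\mu,\vartheta\}=\sum_i\mu\circ_i\theta_{n_i}$ by color-matching, and deduce the left-nucleus relation from the sequential axiom since each $\theta_n$ is an arity-one operation. Your observation that axiom (2) of Definition \ref{curvedprelie} should read $d_\mathfrak{g}^2$ rather than $d_\mathfrak{g}$ for degree reasons is a correct catch of a typo, and does not affect the argument.
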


\begin{proof}
Let $g$ be in $\mathcal{G}(n_1,\cdot,n_r;n)$, we have that $d_\mathcal{G}^2(g) = \theta_n \circ_1 g - \sum_{i=0}^r g \circ_i \theta_{n_i}$ since $\mathcal{G}$ is a curved partial $\mathbb{S}$-colored operad. The bracket $\vartheta(1) \star g = \theta_n \circ_1 g$ since only $\theta_n$ matches the appropriate color. Similarly, $g \star \vartheta(1) = \sum_{i=0}^r g \circ_i \theta_{n_i}$. Since each $\theta_{n}$ is an arity one operation, by the sequential and parallel axioms of a partial $\mathbb{S}$-colored operad, the curvature satisfies the \textit{left-nucleus} relation of a curved pre-Lie algebra.
\end{proof}

\begin{lemma}[Curved convolution partial $\mathbb{S}$-colored operad]\label{lemma curved S colored convolution}
Let $(\mathcal{G},\{\circ_i\},\eta,d_\mathcal{G})$ be a dg unital partial $\mathbb{S}$-colored operad and let $(\mathcal{V},\{\Delta_i\},d_\mathcal{V},\Theta_\mathcal{V})$ be a curved partial $\mathbb{S}$-colored cooperad. The convolution partial $\mathbb{S}$-colored operad $\mathcal{H}om(\mathcal{V},\mathcal{G})$ forms a curved partial $\mathbb{S}$-colored operad endowed with the curvature:
\[
\begin{tikzcd}
\Theta_{\mathcal{H}om}: \mathcal{V} \arrow[r,"\Theta_\mathcal{V}"]
&\I_{\mathbb{S}} \arrow[r,"\eta"]
&\mathcal{G}~.
\end{tikzcd}
\]
\end{lemma}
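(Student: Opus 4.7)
The plan is to verify the three ingredients required for a curved partial $\mathbb{S}$-colored operad structure on $\mathcal{H}om(\mathcal{V},\mathcal{G})$: that $\Theta_{\mathcal{H}om}$ is a well-defined degree $-2$ cycle, that it is preserved by the underlying partial operad structure in the sense of Definition \ref{def: curved colored partial cooperad} (extended to the operadic side), and most importantly that the pre-differential $\partial$ squares to the operadic bracket with $\Theta_{\mathcal{H}om}$. The underlying graded partial $\mathbb{S}$-colored operad structure and the pre-differential $\partial(\alpha) = d_\mathcal{G}\alpha - (-1)^{|\alpha|}\alpha\, d_\mathcal{V}$ have already been defined in the discussion preceding the statement, so I can reuse all that formal data.

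First, I would verify that $\Theta_{\mathcal{H}om} = \eta \cdot \Theta_\mathcal{V}$ is a morphism of pdg $\mathbb{S}$-color schemes of degree $-2$ and that $\partial(\Theta_{\mathcal{H}om}) = 0$. Both facts are essentially tautological: $\Theta_\mathcal{V}$ is a pdg morphism into $(\I_\mathbb{S},0)$ so $\Theta_\mathcal{V} \cdot d_\mathcal{V} = 0$, while $\eta$ is a morphism of dg operads out of $(\I_\mathbb{S},0)$ so $d_\mathcal{G} \cdot \eta = 0$, and both terms in $\partial(\Theta_{\mathcal{H}om})$ vanish.

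Second, and this is the core computation, I would establish the curvature equation
\[
\partial^2(\alpha) = \Theta_{\mathcal{H}om} \circ_1 \alpha - \sum_i \alpha \circ_i \Theta_{\mathcal{H}om}
\]
for every homogeneous $\alpha$. Expanding $\partial^2$ and using $d_\mathcal{G}^2 = 0$ (recall $\mathcal{G}$ is dg, not merely pdg) yields $\partial^2(\alpha) = -\alpha \cdot d_\mathcal{V}^2$. The curved partial $\mathbb{S}$-colored cooperad axiom for $\mathcal{V}$, transcribed from Definition \ref{def: curved colored partial cooperad} into partial notation, gives
\[
d_\mathcal{V}^2 = \sum_i (\mathrm{id} \otimes \Theta_\mathcal{V})\,\Delta_i - (\Theta_\mathcal{V} \otimes \mathrm{id})\,\Delta_1~.
\]
On the other hand, unpacking the partial composition of the convolution $\mathbb{S}$-colored operad $\alpha \circ_i \beta = \circ_i^\mathcal{G}\,(\alpha \otimes \beta)\,\Delta_i^\mathcal{V}$ with $\beta = \eta \cdot \Theta_\mathcal{V}$, and invoking the unit axiom of $\mathcal{G}$ which collapses compositions with $\eta$, one obtains
\[
\Theta_{\mathcal{H}om} \circ_1 \alpha = \alpha \cdot (\Theta_\mathcal{V} \otimes \mathrm{id})\,\Delta_1 \qquad \text{and} \qquad \alpha \circ_i \Theta_{\mathcal{H}om} = \alpha \cdot (\mathrm{id} \otimes \Theta_\mathcal{V})\,\Delta_i~.
\]
Substituting these into the right-hand side of the curvature equation matches $-\alpha \cdot d_\mathcal{V}^2$ term by term, and the identity follows.

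The main obstacle will be the careful bookkeeping of the $\mathbb{S}$-colored combinatorics in the partial compositions, and verifying that the collapse via $\eta$ is legitimate in the groupoid-colored setting: one must check that the colors of the leaves match up correctly when $\Theta_\mathcal{V}$ lands in the unit color scheme $\I_\mathbb{S}$, so that only those $\Delta_i$ which produce an $\I_\mathbb{S}$-valued component on the appropriate side contribute. Once this is in place, the computation is formal, and the Koszul sign rule is handled by noting that $|\Theta_{\mathcal{H}om}| = -2$ is even, so no extra signs appear when reorganizing the operadic composites.
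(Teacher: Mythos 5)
Your proposal is correct and follows essentially the same route as the paper: the paper's proof is precisely the one-line computation $\partial^2(\alpha) = -\alpha \cdot d_\mathcal{V}^2 = \Theta_{\mathcal{H}om}\circ_1\alpha - \sum_i \alpha\circ_i\Theta_{\mathcal{H}om}$, using $d_\mathcal{G}^2=0$ and the curved cooperad axiom for $\mathcal{V}$, which is exactly your core step. The additional verifications you carry out (that $\Theta_{\mathcal{H}om}$ is a degree $-2$ cycle, the collapse via the unit axiom, and the sign bookkeeping) are left implicit in the paper but are the right things to check.
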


\begin{proof}
Let $\alpha$ be in $\mathcal{H}om(\mathcal{V},\mathcal{G})(n_1,\cdots,n_r;n)$, we have that:
\[
\partial^2(\alpha) = - (-1)^{2.|\alpha|} \alpha \circ d_\mathcal{V}^2 = \Theta_{\mathcal{H}om} \circ_1 \alpha - \sum_{i=0}^r \alpha \circ_i \Theta_{\mathcal{H}om}~,
\]
since $d_\mathcal{G}^2 = 0$ and $d_\mathcal{V}^2 = (\mathrm{id} \circ_{(1)} \Theta_\mathcal{V} - \Theta_\mathcal{V} \circ \mathrm{id} ) \cdot \Delta_{(1)}~.$
\end{proof}

\begin{Definition}[Maurer-Cartan of a curved pre-Lie]
Let $(\mathfrak{g}, \{-,-\},d_{\mathfrak{g}},\vartheta)$ be a curved pre-Lie algebra. A \textit{Maurer-Cartan element} $\alpha$ is a degree $-1$ element that of $\mathfrak{g}$ that satisfies the following equation:
\[
d_{\mathfrak{g}}(\alpha) + \{\alpha,\alpha\} = \vartheta~ .
\]
\end{Definition}

\begin{Remark}
The set of Maurer-Cartan elements of a curved pre-Lie (or similarly a curved Lie) algebra can be empty. A curvature term $\vartheta \neq 0$ stops $0$ in $\mathfrak{g}_{-1}$ from being a canonical Maurer-Cartan element in $\mathfrak{g}$.
\end{Remark}

\begin{Definition}[Curved $\mathbb{S}$-colored twisting morphism]
Let $(\mathcal{G},\{\circ_i\},\eta,d_\mathcal{G})$ be a unital partial dg $\mathbb{S}$-colored operad and let $(\mathcal{V},\{\Delta_i\},d_\mathcal{V},\Theta)$ be a curved partial $\mathcal{S}$-colored cooperad. A \textit{curved twisting morphism} $\alpha$ between $\mathcal{V}$ and $\mathcal{G}$ is a morphism of pdg $\mathbb{S}$-color schemes $\alpha: \mathcal{V} \longrightarrow \mathcal{G}$ of degree $-1$ that satisfies the Maurer-Cartan equation in the totalization of the curved convolution $\mathbb{S}$-colored operad $\mathcal{H}om(\mathcal{V},\mathcal{G})$. Otherwise stated, $\alpha$ satisfies:
\[
\partial(\alpha) + \alpha \star \alpha = \Theta_{\mathcal{H}om}~.
\]
\end{Definition}

\begin{lemma}\label{lemma: kappa is a curved twisting morphism}
The $\mathbb{S}$-color scheme morphism $\kappa: \mathcal{S}\otimes(c\mathcal{O})^\vee \longrightarrow u\mathcal{O}$ given by
\[
\kappa: \mathcal{S}\otimes(c\mathcal{O}^\vee) \twoheadrightarrow sE \oplus sJ \cong E \oplus U \hookrightarrow u\mathcal{O}
\]
is a curved twisting morphism. Here $\mathcal{S}$ denotes the operadic suspension of $c\mathcal{O}^\vee$.
\end{lemma}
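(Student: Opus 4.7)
The plan is to verify the Maurer-Cartan equation
\[
\partial(\kappa) + \kappa \star \kappa = \Theta_{\mathcal{H}om}
\]
in the totalization of the curved convolution partial $\mathbb{S}$-colored operad $\mathcal{H}om(\mathcal{S}\otimes c\mathcal{O}^\vee, u\mathcal{O})$, whose curved pre-Lie structure is guaranteed by Lemmas \ref{lemma S colored totalization} and \ref{lemma curved S colored convolution}. This is a direct application of the inhomogeneous Koszul duality worked out in Proposition \ref{Proposition: S-colored inhomogeneous Koszul duality} and Theorem \ref{Koszulity of uO}: the canonical map from the Koszul dual curved cooperad to a Koszul semi-augmented unital operad is automatically a curved twisting morphism, so the work consists of unfolding both sides on our specific example.

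First, I would show that $\partial(\kappa) = 0$. By construction, $\kappa$ factors through the (operadically suspended) cogenerators $sE \oplus sJ$, hence is supported in weight $1$ of $c\mathcal{O}^\vee$. The differential $d_{u\mathcal{O}}$ is zero, and the coderivation $d_{c\mathcal{O}^\vee}$ produced by Proposition \ref{Proposition: S-colored inhomogeneous Koszul duality} is the extension of $s^{-1}\varphi_1$ where $\varphi_1$ is the projection of $R'$ onto the linear part $V = E \oplus U$; as observed in the proof of Theorem \ref{Koszulity of uO}, this projection vanishes, so $d_{c\mathcal{O}^\vee} = 0$ and therefore $\partial(\kappa) = 0$.

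Second, I would compute $\kappa \star \kappa$ by unfolding the convolution formula $\kappa \star \kappa = \sum_i \circ_i \cdot (\kappa \otimes \kappa) \cdot \Delta_i$. Since $\kappa$ is supported on cogenerators, only weight $2$ decompositions contribute, so $\kappa \star \kappa$ is determined by its restriction to $(\mathcal{S} \otimes c\mathcal{O}^\vee)^{(2)}$, which by Proposition \ref{Proposition: S-colored inhomogeneous Koszul duality} corresponds to $s^2 qR'$. On this subspace, $\kappa \star \kappa$ evaluates as the tautological map $qR' \hookrightarrow \overline{\mathscr{T}}_\mathbb{S}(V)^{(2)} \to u\mathcal{O}$ given by composing generators in $u\mathcal{O}$: the purely quadratic part $R \subseteq qR'$ maps to zero because those relations hold in $u\mathcal{O}$, whereas the inhomogeneous unit relations produce $\gamma_1^{1,n} \circ_1 u = \gamma_i^{n,1} \circ_2 u = |_n = \eta(\mathrm{id}_n)$. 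This coincides with the composite $\eta \circ \varphi_0$, where $\varphi_0 \colon qR' \to \I_\mathbb{S}$ is the constant-part map whose graph defines $R'$.

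Third, comparing with the curvature: by definition $\Theta_{\mathcal{H}om} = \eta \circ \Theta_{c\mathcal{O}^\vee}$, and the curvature $\Theta_{c\mathcal{O}^\vee}$ given in Definition \ref{def: cO vee} is exactly the unsuspended version of $\varphi_0$, by construction in Proposition \ref{Proposition: S-colored inhomogeneous Koszul duality}. Together with the previous steps, this gives the Maurer-Cartan identity. The main obstacle, and the only nontrivial bookkeeping in the argument, lies in tracking the Koszul signs and degree shifts introduced by the operadic suspension $\mathcal{S}$ and the identifications $sE \cong E$, $sJ \cong U$ used in the definition of $\kappa$; one verifies that these signs conspire to make the curvature $\Theta_{c\mathcal{O}^\vee}$ appear with the correct normalization $\gamma_1^{1,n} \circ_1 \theta - \sum_{i} \gamma_i^{n,1} \circ_2 \theta \mapsto \mathrm{id}_n$ so that both sides of the Maurer-Cartan equation match.
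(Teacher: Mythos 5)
Your proposal is correct and follows essentially the same route as the paper: $\partial(\kappa)=0$ because both pre-differentials vanish, $\kappa\star\kappa$ is supported on the weight-two part, it vanishes on the purely quadratic corelations (modulo the Koszul-sign bookkeeping, which both you and the paper only sketch), and on the unit/curvature elements it produces the operadic units $\mathrm{id}_n$, matching $\Theta_{\mathcal{H}om} = \eta \cdot \Theta_{c\mathcal{O}^\vee}$. Your packaging of the last step through the graph maps $\varphi_0,\varphi_1$ of the inhomogeneous presentation is just a more systematic phrasing of the paper's element-by-element inspection.
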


\begin{proof}
Notice that $\partial(\kappa) = 0$ since the pre-differentials are null. Let us show that
\[
\kappa \star \kappa = \Theta_{\mathcal{H}om}~.
\]
The morphism of $\mathbb{S}$-color schemes $\kappa \star \kappa$ is non-zero only on elements of weight two in $c\mathcal{O}^\vee$. These are elements which can informally be written as  $\gamma_i^{n+m-1,k} \circ_1 \gamma_j^{n,m}$ and $\gamma_i^{n,k+m-1} \circ_2 \gamma_j^{k,m}$, or as $\gamma_i^{1,n} \circ_1 \theta$ and $\gamma_i^{n,1} \circ_2 \theta$. One computes that $\kappa \star \kappa$ of the first kind of weight two elements is zero because of the Koszul signs, by using the sequential and parallel relations. On the second kind of weight two elements, $\kappa \star \kappa$ is equal to $\gamma_i^{1,n} \circ_1 u$ and $\gamma_i^{n,1} \circ_2 u$. Using the unital relation in $u\mathcal{O}$, these are both equal to $\mathrm{id}_n$, the operadic unit of the $\mathbb{S}$-colored operad $u\mathcal{O}$. Hence $\kappa \star \kappa$ is equal to $\Theta_{\mathcal{H}om}$. 
\end{proof}

\begin{Notation}\label{not: le dirac S-colore}
Let $f: M \longrightarrow N$ be a morphism of graded $\mathbb{S}$-modules of degree $0$ and $g: M \longrightarrow N$ be a morphism of graded $\mathbb{S}$-modules degree $p$. We denote $\diracComb_{\mathbb{S}}(n_1,\cdots,n_r)(f,g)$ the map
\[
\sum_{i=1}^r f(n_1) \otimes \cdots \otimes f(n_{i-1}) \otimes g(n_i) \otimes f(n_{i+1}) \otimes \cdots \otimes f(n_r) : M(n_1) \otimes \cdots \otimes M(n_r) \longrightarrow N(n_1) \otimes \cdots \otimes N(n_r)
\]
which is an $\mathbb{S}_{n_1} \times \cdots \times \mathbb{S}_{n_r} \wr \mathbb{S}_r$-equivariant morphism of degree $p$. Let $E$ be a graded $\mathbb{S}$-color scheme. The family of maps $\{\diracComb_{\mathbb{S}}(n_1,\cdots,n_r)(f,g)\}$ induces a morphism of graded $\mathbb{S}$-modules of degree $p$:
\[
\mathscr{S}_{\mathbb{S}}(E)(M) \longrightarrow \mathscr{S}_{\mathbb{S}}(E)(N)
\]
by applying $\mathrm{id}_E \otimes \diracComb_{\mathbb{S}}(n_1,\cdots,n_r)(f,g)$ to each component. By a slight abuse of notation, this morphism will be denoted by $\mathscr{S}_{\mathbb{S}}(\mathrm{id}_E)(\diracComb_{\mathbb{S}}(f,g))~.$ Likewise, it induces a morphism of graded $\mathbb{S}$-modules of degree $p$:

\[
\widehat{\mathscr{S}}_{\mathbb{S}}^c (E)(M) \longrightarrow \widehat{\mathscr{S}}_{\mathbb{S}}^c (E)(N)
\]
by applying $\mathrm{Hom}(id_E,\diracComb_{\mathbb{S}}(n_1,\cdots,n_r)(f,g))$ to each component. By a slight abuse of notation, this morphism will be denoted by $\widehat{\mathscr{S}}_{\mathbb{S}}^c(\mathrm{id}_E)(\diracComb_{\mathbb{S}}(f,g))~.$
\end{Notation}

\subsection{Classical Bar-Cobar adjunction relative to $\kappa$}\label{subsection: classical bar-cobar relative to kappa}
The first adjunction induced by $\kappa$ will be an adjunction between dg $u\mathcal{O}$-algebras and curved $\mathcal{S}\otimes(c\mathcal{O}^\vee)^{u}$-coalgebras. That is, between dg unital partial operads and shifted conilpotent curved partial cooperads.

\begin{Definition}[Bar-Cobar constructions relative to $\kappa$]\label{Bar Cobar constructions relative to kappa}
Using $\kappa$, one can define two functor:
\begin{enumerate}
\item Let $(\PP,\gamma_\PP,d_\PP)$ a dg $u\mathcal{O}$-algebra. Its \textit{Bar construction relative to} $\kappa$, denoted by $\mathrm{B}_\kappa\PP$, is given by the cofree $\mathcal{S}\otimes (c\mathcal{O}^\vee)^u$-coalgebra $\mathscr{S}_{\mathbb{S}}(\mathcal{S}\otimes (c\mathcal{O}^\vee)^u)(\PP)$. Its pre-differential $d_{\mathrm{bar}}$ is given by the sum of two terms $d_1$ and $d_2$. The first term is given by 
\[
d_1 \coloneqq \mathscr{S}_{\mathbb{S}}(\mathrm{id})(\diracComb_{\mathbb{S}}(\mathrm{id},d_\PP))~. 
\]
The second term $d_2$ is the unique coderivation extending:
\[
\begin{tikzcd}[column sep=4pc,row sep=3pc]
\mathscr{S}_{\mathbb{S}}(\mathcal{S}\otimes c\mathcal{O}^\vee)(\PP) \arrow[r,"\mathscr{S}_{\mathbb{S}}(\kappa)(\mathrm{id})"]
&\mathscr{S}_{\mathbb{S}}(u\mathcal{O})(\PP) \arrow[r,"\gamma_\PP"]
&\PP~.
\end{tikzcd}
\]
\item Let $(\C,\Delta_\C,d_\C)$ be a curved $\mathcal{S}\otimes c\mathcal{O}^\vee$-coalgebra. Its \textit{Cobar construction relative to} $\kappa$, denoted by $\Omega_\kappa \C$, is given by the free $u\mathcal{O}$-algebra $\mathscr{S}_{\mathbb{S}}(u\mathcal{O})(\C)$. Its differential $d_{\mathrm{cobar}}$ is the sum of two terms $d_1$ and $d_1$. The first term is given by 
\[
d_1 \coloneqq -\mathscr{S}_{\mathbb{S}}(\mathrm{id})(\diracComb_{\mathbb{S}}(\mathrm{id},d_\C))~.
\]
The second term $d_2$ is the unique derivation extending:
\[
\begin{tikzcd}[column sep=4pc,row sep=3pc]
\C \arrow[r,"\Delta_\C"]
&\mathscr{S}_{\mathbb{S}}(\mathcal{S}\otimes c\mathcal{O}^\vee)(\C) \arrow[r,"\mathscr{S}_{\mathbb{S}}(\kappa)(\mathrm{id})"]
&\mathscr{S}_{\mathbb{S}}(u\mathcal{O})(\C)~.
\end{tikzcd}
\]
\end{enumerate}
\end{Definition}

\begin{lemma}
There is an adjunction 

\hspace{7pc}
\begin{tikzcd}[column sep=5pc,row sep=3pc]
            \mathsf{curv}~\mathcal{S}\otimes (c\mathcal{O}^\vee)^u\text{-}\mathsf{coalg} \arrow[r, shift left=1.1ex, "\Omega_\kappa"{name=F}] &\mathsf{dg}~u\mathcal{O}\text{-}\mathsf{alg}~.  \arrow[l, shift left=.75ex, "\text{B}_\kappa"{name=U}]
            \arrow[phantom, from=F, to=U, , "\dashv" rotate=-90]
\end{tikzcd}

\end{lemma}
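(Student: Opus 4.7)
The plan is to follow the classical template for constructing a Bar-Cobar adjunction from a (curved) twisting morphism, as in \cite{LodayVallette12} and \cite{grignou2019}, but adapted to the groupoid-colored setting and keeping track of the curvature data provided by $c\mathcal{O}^\vee$. The key input is that $\kappa$ is a curved twisting morphism (Lemma \ref{lemma: kappa is a curved twisting morphism}), so the obstruction to $\kappa \star \kappa$ vanishing is exactly $\Theta_{\mathcal{H}om}$, and this will precisely match the curvature terms coming from $\mathcal{S} \otimes (c\mathcal{O}^\vee)^u$-coalgebras on one side and $u\mathcal{O}$-algebras on the other.

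First, I would verify that $\mathrm{B}_\kappa \mathcal{P}$ is a well-defined curved $\mathcal{S} \otimes (c\mathcal{O}^\vee)^u$-coalgebra. The underlying cofree coalgebra structure is automatic; the only content is computing $d_{\mathrm{bar}}^2 = d_1^2 + (d_1 d_2 + d_2 d_1) + d_2^2$ on the cogenerators $\mathcal{P}$ and showing it equals $\mathscr{S}_{\mathbb{S}}(\Theta_{\mathcal{S} \otimes c\mathcal{O}^\vee})(\mathrm{id}_{\mathcal{P}}) \cdot \Delta$, which is the requirement of Definition \ref{def curved conil coalg} applied here. Since $\mathcal{P}$ is a dg $u\mathcal{O}$-algebra (so $d_{\mathcal{P}}^2=0$), the term $d_1^2$ is zero; the cross term $d_1 d_2 + d_2 d_1$ projects onto $\partial(\gamma_{\mathcal{P}} \cdot \mathscr{S}_{\mathbb{S}}(\kappa)(\mathrm{id}))$ which vanishes because $\partial(\kappa)=0$ and $\gamma_{\mathcal{P}}$ is a morphism of pdg modules; finally $d_2^2$ projects onto $\gamma_{\mathcal{P}} \cdot \mathscr{S}_{\mathbb{S}}(\kappa \star \kappa)(\mathrm{id})$, which by the curved twisting equation equals $\gamma_{\mathcal{P}} \cdot \mathscr{S}_{\mathbb{S}}(\Theta_{\mathcal{H}om})(\mathrm{id})$, and the latter is precisely the curvature condition imposed by $\Theta_{\mathcal{S} \otimes c\mathcal{O}^\vee}$ via the unit of $u\mathcal{O}$. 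Symmetrically, I would verify that $\Omega_\kappa \mathcal{C}$ is a genuine dg $u\mathcal{O}$-algebra: the free algebra structure is automatic and the same computation of $d_{\mathrm{cobar}}^2$ on generators produces $d_{\mathcal{C}}^2 - \gamma_{u\mathcal{O}} \cdot \mathscr{S}_{\mathbb{S}}(\kappa \star \kappa)(\mathrm{id}) \cdot \Delta_{\mathcal{C}}$, which vanishes because $\mathcal{C}$ is a \emph{curved} coalgebra, so $d_{\mathcal{C}}^2$ equals the action of the curvature $\Theta_{\mathcal{S} \otimes c\mathcal{O}^\vee}$, matching the $\Theta_{\mathcal{H}om}$ contribution.

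Next, to establish the adjunction itself, I would introduce an intermediate notion of \emph{curved twisting morphism} $\alpha : \mathcal{C} \to \mathcal{P}$ between a curved $\mathcal{S} \otimes (c\mathcal{O}^\vee)^u$-coalgebra and a dg $u\mathcal{O}$-algebra, namely a degree $-1$ morphism of graded $\mathbb{S}$-modules satisfying the appropriate Maurer--Cartan equation inherited from the curved convolution structure of Lemma \ref{lemma curved S colored convolution}. Using freeness of $\Omega_\kappa \mathcal{C}$ and cofreeness of $\mathrm{B}_\kappa \mathcal{P}$, one obtains natural bijections
\[
\mathrm{Hom}_{u\mathcal{O}\text{-}\mathsf{alg}}(\Omega_\kappa \mathcal{C}, \mathcal{P}) \;\cong\; \mathrm{Tw}_\kappa(\mathcal{C}, \mathcal{P}) \;\cong\; \mathrm{Hom}_{\mathcal{S}\otimes(c\mathcal{O}^\vee)^u\text{-}\mathsf{coalg}}(\mathcal{C}, \mathrm{B}_\kappa \mathcal{P})~,
\]
where the middle set is the set of curved twisting morphisms. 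The verification that the extended maps respect both the differential and the curvature conditions uses the same cancellation between $\kappa \star \kappa$ and $\Theta_{\mathcal{H}om}$ as in the well-definedness step. Naturality in $\mathcal{C}$ and $\mathcal{P}$ is clear from the construction.

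The main obstacle is the bookkeeping of the curvature in the groupoid-colored setting: one must track how the curvature $\Theta_{c\mathcal{O}^\vee}$ — which is supported only on the $\mathbb{S}$-color scheme component $c\mathcal{O}^\vee(n;n)$ built from $\gamma_1^{1,n}\circ_1 \theta - \sum_i \gamma_i^{n,1}\circ_2 \theta$ — interacts with the Schur realization $\mathscr{S}_{\mathbb{S}}$, and verify that the image under $\gamma_{\mathcal{P}}$ of this curvature is exactly what is produced by $\kappa \star \kappa$. Once this identification is made, all remaining verifications reduce to standard coderivation/derivation extension arguments analogous to those in \cite[Chapter 11]{LodayVallette12}, transported to the $\mathbb{S}$-colored framework developed in \cite{ward19} and extended to the curved case in the previous sections.
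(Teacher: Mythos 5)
Your proposal is correct and follows essentially the same route as the paper, which simply defers to the adjunction induced by a curved twisting morphism at the level of algebras in \cite[Section~5]{HirshMilles12}: well-definedness of $\mathrm{B}_\kappa$ and $\Omega_\kappa$ via the Maurer--Cartan equation $\kappa\star\kappa=\Theta_{\mathcal{H}om}$, followed by representability of the curved twisting-morphism bifunctor on both sides. Your expansion of the curvature bookkeeping is a faithful unpacking of that cited argument rather than a different proof.
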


\begin{proof}
The proof is a minor generalization of a the adjunction induced by a curved twisting morphism at the level of algebras in \cite[Section 5]{HirshMilles12}.
\end{proof}

This adjunction relative to $\kappa$ is in fact isomorphic to the adjunction between conilpotent curved coaugmented cooperads and operads introduced in \cite[Section 4.1]{grignou2019}. Before proving this result, we briefly recall the definition of this adjunction. 

\begin{Definition}[{\cite[Definition 58]{grignou2019}}]
Let $(\PP,\{\circ_i\},\eta, d_\PP)$ be a dg unital partial operad. The \textit{Bar construction} $\mathrm{B}\PP$ of $\PP$ is given by:
\[
\mathrm{B}\PP \coloneqq \left( \overline{\mathscr{T}}^c(s\PP \oplus \nu), d_{\mathrm{bar}} = d_1 + d_2, \Theta_{\mathrm{bar}} \right)~,
\]
where $\overline{\mathscr{T}}^c(s\PP \oplus \nu)$ is the cofree conilpotent partial pdg cooperad generated by the dg $\mathbb{S}$-module $s\PP \oplus \vartheta$. Here $\nu$ is an arity $1$ and degree $-2$ generator. It is endowed the pre-differential $d_{\mathrm{bar}}$, given by the sum of $d_1$ and $d_2$. The term $d_1$ is the unique coderivation extending
\[
\begin{tikzcd}[column sep=4pc,row sep=3pc]
\overline{\mathscr{T}}^c(s\PP \oplus \nu)  \arrow[r, twoheadrightarrow]
&s\PP \arrow[r,"s d_{\PP}"]
&s\PP~.
\end{tikzcd} 
\]
The term $d_2$ comes from the \textit{structure} of a unital partial operad on $\PP$, it is given by the unique coderivation extending 
\[
\begin{tikzcd}[column sep=4pc,row sep=3pc]
\overline{\mathscr{T}}^c(s\PP \oplus \nu) \arrow[r, twoheadrightarrow]
&\I.\nu \oplus (s\PP \circ_{(1)} s\PP) \arrow[r,"s \eta + s^2\gamma_{(1)}"]
&s\PP~.
\end{tikzcd}
\]
It is also endowed with the following curvature:
\[
\begin{tikzcd}[column sep=4pc,row sep=3pc]
\Theta_{\mathrm{bar}}: \overline{\mathscr{T}}^c(s\PP \oplus \nu) \arrow[r,twoheadrightarrow] 
&\I.\nu \arrow[r,"s^{-2}"]
&\I~.
\end{tikzcd}
\]
The resulting Bar construction of $\PP$ forms a conilpotent curved partial cooperad. 
\end{Definition}

\begin{Definition}[{\cite[Definition 60]{grignou2019}}]\label{Def: Cobar de Brice}
Let $(\C,\{\Delta_i\},d_\C,\Theta_\C)$ be a curved partial cooperad. The \textit{Cobar construction} $\Omega \C$ of $\C$ is given by:
\[
\Omega \C \coloneqq \left(\mathscr{T}(s^{-1}\C), d_{\mathrm{cobar}} = d_1 - d_2 \right)~,
\]
where $\mathscr{T}(s^{-1}\C)$ is the free unital partial operad generated by the pdg $\mathbb{S}$-module $s^{-1}\C$. It is endowed with the differential $d_{\mathrm{cobar}}$ given by the difference of $d_1$ and $d_2$. The term $d_1$ is the unique derivation extending
\[
\begin{tikzcd}[column sep=5pc,row sep=3pc]
s^{-1}\C \arrow[r,"s^{-1}d_\C"]
&s^{-1}\C \arrow[r,rightarrowtail]
&\mathscr{T}(s^{-1}\C)~.
\end{tikzcd}
\]
The term $d_2$ comes from the \textit{structure} of a conilpotent curved partial cooperad on $\C$, it is given by the unique derivation extending
\[
\begin{tikzcd}[column sep=5pc,row sep=3pc]
s^{-1}\C \arrow[r,"s^{-2}\Delta_{(1)} + s^{-1}\Theta_\C"]
&(s^{-1}\C \circ_{(1)} s^{-1}\C) \oplus \I \arrow[r,rightarrowtail]
&\mathscr{T}(s^{-1}\C)~.
\end{tikzcd}
\]
The resulting Cobar construction of $\C$ forms a dg unital partial operad.
\end{Definition}

\begin{Remark}
The Cobar constructions $\Omega \C$ is not augmented because the canonical morphism $\mathscr{T}(s^{-1}\C) \twoheadrightarrow \I$ does not commute with the differentials in general. Indeed, $d_{\mathrm{cobar}}(\Theta_\C(\mathrm{id}))$ is the trivial tree $|$. It is therefore augmented if and only if the curvature $\Theta_\C (\mathrm{id})$ is zero.
\end{Remark}

The Bar-Cobar constructions described above also form an adjunction 
\[
\begin{tikzcd}[column sep=5pc,row sep=3pc]
            \mathsf{curv}~\mathsf{pCoop}^{\mathsf{conil}} \arrow[r, shift left=1.1ex, "\Omega"{name=F}] &\mathsf{dg}~\mathsf{upOp}~.  \arrow[l, shift left=.75ex, "\mathrm{B}"{name=U}]
            \arrow[phantom, from=F, to=U, , "\dashv" rotate=-90]
\end{tikzcd}
\]
\begin{Proposition}[Mise en abîme]\label{Mise en abime}
The Bar-Cobar adjunction relative to $\kappa$ given by $\Omega_\kappa \dashv \mathrm{B}_\kappa$ is naturally isomorphic to the Bar-Cobar adjunction $\Omega \dashv \mathrm{B}$ constructed in \cite{grignou2019}.
\end{Proposition}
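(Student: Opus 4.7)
The strategy is to show the two left adjoints are naturally isomorphic; by uniqueness of adjoints, this suffices. First I would set up the identifications of the categories involved. By \cref{Prop: cO-cogebres.}, the category of curved $\mathcal{S}\otimes(c\mathcal{O}^\vee)^u$-coalgebras is isomorphic to the category of conilpotent curved partial cooperads (the operadic suspension $\mathcal{S}$ accounts for the shift in the cogenerators), and by the identification of $u\mathcal{O}$-algebras with unital partial operads, the two pairs of categories agree on the nose. It therefore remains to exhibit a natural isomorphism $\Omega_\kappa \cong \Omega$ of functors from conilpotent curved partial cooperads to dg unital partial operads.

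At the level of the underlying $\mathbb{S}$-modules, the bijection of \cref{lemma with the trees bijection}(1) identifies $\mathscr{S}_{\mathbb{S}}(u\mathcal{O})(M)$ with the free unital partial operad $\mathscr{T}(M)$. Given a conilpotent curved partial cooperad $(\C, \{\Delta_i\}, d_\C, \Theta_\C)$, the underlying graded $\mathbb{S}$-module of $\Omega_\kappa \C$ is $\mathscr{S}_{\mathbb{S}}(u\mathcal{O})(\mathcal{S}^{-1}\C) \cong \mathscr{T}(s^{-1}\C)$, which matches the underlying object of $\Omega\C$ from \cref{Def: Cobar de Brice}. This identification is natural in $\C$.

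The key verification is that the differentials coincide. The differential of $\Omega_\kappa \C$ is the sum $d_1 + d_2$ where $d_1$ is induced by the pre-differential $d_\C$ (up to the desuspension sign), and $d_2$ is the unique derivation extending $\mathscr{S}_{\mathbb{S}}(\kappa)(\mathrm{id}) \cdot \Delta_\C$. Under the bijection, the map $\Delta_\C \colon \C \to \mathscr{S}_{\mathbb{S}}(c\mathcal{O}^\vee)(\C)$ corresponds to the partial decomposition $\Delta_{(1)}$ together with the curvature $\Theta_\C$, since via \cref{Prop: cO-cogebres.} the structural map $\C \to \overline{\mathscr{T}}^c(\C \oplus \nu)$ factors precisely through these two components. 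The twisting morphism $\kappa$ then sends the arity-two generators $\gamma_i^{n,k}$ to themselves and the curvature generator $\theta$ to the unit $u$, which under the bijection with $\mathscr{T}(s^{-1}\C)$ produces exactly the derivation extending $s^{-2}\Delta_{(1)} + s^{-1}\Theta_\C$ from \cref{Def: Cobar de Brice}. The $d_1$ parts match tautologically. Checking naturality in $\C$ is then automatic.

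The main obstacle, and the only non-trivial aspect, will be careful bookkeeping of the Koszul signs introduced by the (de)suspension and by the operadic suspension $\mathcal{S}$, ensuring that $-\mathscr{S}_{\mathbb{S}}(\mathrm{id})(\diracComb_{\mathbb{S}}(\mathrm{id},d_\C))$ in $\Omega_\kappa$ matches the $d_1$ of $\Omega$ after transport along the bijection, and that the decomposition of $d_2$ into the $\Delta_{(1)}$-piece and the $\Theta_\C$-piece carries the correct signs predicted by the explicit form of $\kappa$ given in \cref{lemma: kappa is a curved twisting morphism}. Once the signs check out, the mate of this natural isomorphism of left adjoints yields the corresponding isomorphism $\mathrm{B}_\kappa \cong \mathrm{B}$.
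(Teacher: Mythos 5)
Your proposal is correct and follows essentially the same route as the paper: an explicit tree-level bijection identifying the underlying (co)free constructions, a check that the differentials correspond under it, and then mates/uniqueness of adjoints to transport the isomorphism to the other adjoint. The only difference is that you compare the left adjoints $\Omega_\kappa \cong \Omega$ via the bijection $\mathscr{S}_{\mathbb{S}}(u\mathcal{O})(M) \cong \overline{\mathscr{T}}(M)$ of \cref{lemma with the trees bijection}, whereas the paper compares the right adjoints $\mathrm{B}_\kappa \cong \mathrm{B}$ via the bijection $\mathscr{S}_{\mathbb{S}}(c\mathcal{O}^\vee)(M) \cong \overline{\mathscr{T}}^c(M \oplus \nu)$ from the proof of \cref{Prop: cO-cogebres.}; this is a harmless dualization, and your identification of how $\kappa(\theta)=u$ produces the $\Theta_\C$-summand of $d_2$ is exactly the point that makes either version work.
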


\begin{proof}
Let $(\PP,\{\circ_i\},\eta, d_\PP)$ be a dg unital partial operad. One uses the bijection given in the proof of Proposition \ref{Prop: cO-cogebres.} to construct a natural isomorphism $\mathrm{B} \PP \cong \mathrm{B}_\kappa \PP~.$
\end{proof}

\begin{Corollary}\label{corollary: koszulness implies Quillen equivalence}
This gives another proof that the Bar-Cobar adjunction $\Omega \dashv \mathrm{B}$ constructed in \cite{grignou2019} is a Quillen equivalence.
\end{Corollary}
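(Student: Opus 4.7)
The approach rests on two main ingredients already established above: Theorem \ref{Koszulity of uO}, which establishes the Koszulity of the unital $\mathbb{S}$-colored operad $u\mathcal{O}$, and Proposition \ref{Mise en abime}, which identifies the Bar-Cobar adjunction $\Omega_\kappa \dashv \mathrm{B}_\kappa$ relative to the curved twisting morphism $\kappa$ with the classical Bar-Cobar adjunction $\Omega \dashv \mathrm{B}$ of \cite{grignou2019}. The plan is to derive the Quillen equivalence for $\Omega \dashv \mathrm{B}$ conceptually from Koszulity, rather than by the direct arguments used in \textit{loc.cit}.

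First, I would combine Lemma \ref{lemma: kappa is a curved twisting morphism}, which shows that $\kappa: \mathcal{S}\otimes c\mathcal{O}^\vee \longrightarrow u\mathcal{O}$ is a curved twisting morphism, with Theorem \ref{Koszulity of uO}, which asserts that $c\mathcal{O}^\vee$ is (up to suspension) the Koszul dual conilpotent curved partial $\mathbb{S}$-colored cooperad of $u\mathcal{O}$. Together they say precisely that $\kappa$ is a \emph{Koszul} curved twisting morphism, hence the associated twisted composite complexes are acyclic. By an argument analogous to \cite[Theorem 11.3.3]{LodayVallette12}, transposed to the inhomogeneous $\mathbb{S}$-colored framework set up in Section 5 and to the adjunction of Definition \ref{Bar Cobar constructions relative to kappa}, this acyclicity implies that for every dg $u\mathcal{O}$-algebra $\PP$ the counit
\[
\epsilon_\PP: \Omega_\kappa \mathrm{B}_\kappa \PP \longrightarrow \PP
\]
is a quasi-isomorphism, and dually that for every curved $\mathcal{S}\otimes (c\mathcal{O}^\vee)^u$-coalgebra $\C$ the unit $\C \longrightarrow \mathrm{B}_\kappa \Omega_\kappa \C$ is a weak equivalence in the relevant transferred sense.

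Next, I would invoke Proposition \ref{Mise en abime} to transport these statements along the natural isomorphism of adjunctions to the classical Bar-Cobar adjunction $\Omega \dashv \mathrm{B}$ between dg unital partial operads and conilpotent curved partial cooperads. Since the model structure on conilpotent curved partial cooperads considered in \cite{grignou2019} is by construction transferred along $\Omega \dashv \mathrm{B}$ from arity-wise quasi-isomorphisms of unital partial operads, the criterion for a transferred adjunction to be a Quillen equivalence reduces to the fact that the derived counit is a weak equivalence on every fibrant object, which is precisely what the previous step provides.

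The main substance of the proof is therefore concentrated in Theorem \ref{Koszulity of uO} itself; the remaining step is a formal unwinding. The genuine obstacle in this approach is the setup of $\mathbb{S}$-colored inhomogeneous Koszul duality necessary to make sense of Koszulity of $u\mathcal{O}$ in the first place (which has already been carried out in Proposition \ref{Proposition: S-colored inhomogeneous Koszul duality} and the surrounding material). Once that framework is in place, the Quillen equivalence of $\Omega \dashv \mathrm{B}$ becomes a corollary of the general principle that a Koszul (curved) twisting morphism induces a Quillen equivalence between the appropriate (co)algebraic categories.
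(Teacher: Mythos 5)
Your proposal is correct and is essentially the argument the paper intends: the paper leaves this corollary without an explicit proof, but its route is exactly yours — Koszulity of $\kappa$ (Theorem \ref{Koszulity of uO}) gives that the counit $\Omega_\kappa \mathrm{B}_\kappa \PP \to \PP$ is an arity-wise quasi-isomorphism, Proposition \ref{Mise en abime} transports this to $\Omega \dashv \mathrm{B}$, and since the model structure on conilpotent curved partial cooperads is transferred along $\Omega$ with every object cofibrant and every dg unital partial operad fibrant, the quasi-isomorphism of the counit is precisely the Quillen-equivalence criterion. The only superfluous step is your separate claim about the unit $\C \to \mathrm{B}_\kappa\Omega_\kappa\C$, which follows formally from the counit statement and two-out-of-three given how weak equivalences on the cooperad side are defined.
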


\begin{proof}
Since we consider a transferred structure on curved conilpotent partial cooperads, proving that this adjunction is a Quillen equivalence simply amounts to show that for any unital partial operad $\PP$, the unit 
\[
\PP \longrightarrow \Omega \mathrm{B} \PP~,
\]
is a quasi-isomorphism. This follows from the fact that the Koszul complex of the curved twisting morphism $\kappa: \mathcal{S}\otimes(c\mathcal{O}^\vee) \longrightarrow u\mathcal{O}$ that induces this Bar-Cobar adjunction is acyclic, using analogous arguments to \cite[Theorem 2.6]{Vallette}.
\end{proof}

The notion of curved twisting morphism between conilpotent curved partial cooperad and dg unital partial operad is again encoded by a curved pre-Lie algebra.

\begin{lemma}
Let $(\PP,\{\circ_i\},d_\PP,\Theta_\PP)$ be a curved partial operad. The totalization of $\PP$ given by 
\[
\prod_{n \geq 0} \PP(n)^{\mathbb{S}_n}~,
\]
together with its pre-Lie bracket and endowed with the curvature $\vartheta(1) \coloneqq \Theta_\PP(\mathrm{id})$ forms a curved pre-Lie algebra.
\end{lemma}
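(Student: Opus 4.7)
The plan is to verify the three axioms of a curved pre-Lie algebra (Definition \ref{curvedprelie}) directly, which in essence is the uncolored analogue of Lemma \ref{lemma S colored totalization}. Set $\mathfrak{g} \coloneqq \prod_{n \geq 0} \PP(n)^{\mathbb{S}_n}$ with the pre-Lie product $\mu \star \nu \coloneqq \sum_{i=1}^{n} \mu \circ_i \nu$ for $\mu \in \PP(n)^{\mathbb{S}_n}$, pre-differential inherited from $d_\PP$, and curvature $\vartheta \coloneqq \theta = \Theta_\PP(\mathrm{id}) \in \PP(1)^{\mathbb{S}_1}$. As a preliminary observation, the pre-Lie axiom for $\star$ and the Leibniz rule for $d_\PP$ on $\mathfrak{g}$ follow from the parallel/sequential axioms and the derivation property of $d_\PP$, exactly as in the standard non-curved case; I would not repeat this computation here.

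Condition $(3)$ is immediate: $d_\mathfrak{g}(\vartheta) = d_\PP(\theta) = 0$ by the very axioms of a curved operad. For condition $(2)$ (which by the degree count of the equation is to be read as $d_\mathfrak{g}^2(\mu) = \{\vartheta,\mu\} - \{\mu,\vartheta\}$, matching Lemma \ref{lemma S colored totalization}), I would unpack both sides. Since $\theta$ has arity one, only the partial composition $\circ_1$ is available on the left, so $\vartheta \star \mu = \theta \circ_1 \mu$; and $\mu \star \vartheta = \sum_{i=1}^{n} \mu \circ_i \theta$ for $\mu \in \PP(n)^{\mathbb{S}_n}$. The defining curvature relation
\[
d_\PP^2 = \gamma_{(1)} \cdot (\Theta_\PP \circ \mathrm{id} - \mathrm{id} \circ' \Theta_\PP)
\]
then reads termwise as $d_\mathfrak{g}^2(\mu) = \vartheta \star \mu - \mu \star \vartheta$, which is exactly what is required.

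For condition $(1)$, the left-nucleus relation, I would verify $\{\vartheta,\{\mu,\nu\}\} = \{\{\vartheta,\mu\},\nu\}$ for $\mu \in \PP(p)^{\mathbb{S}_p}$ and $\nu \in \PP(q)^{\mathbb{S}_q}$. The right-hand side expands as
\[
\{\{\vartheta,\mu\},\nu\} = \sum_{j=1}^{p} (\theta \circ_1 \mu) \circ_j \nu~,
\]
while the left-hand side is $\theta \circ_1 \bigl(\sum_{j=1}^{p} \mu \circ_j \nu\bigr)$. The sequential axiom of a partial operad, specialized to the arity-one operation $\theta$ at position $i=1$, gives $(\theta \circ_1 \mu) \circ_j \nu = \theta \circ_1 (\mu \circ_j \nu)$ for each $j$, and summing over $j$ yields the desired equality.

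The whole verification is essentially formal once the pre-Lie product is identified; the only subtlety is the degree-counting check that fixes the correct reading of axiom $(2)$ as a relation on $d_\mathfrak{g}^2$. The main (mild) obstacle is therefore bookkeeping: one must ensure that the interpretation of the curvature axiom from the diagram in Section \ref{Section: Curved Operads} translates to the right signs and indexing on the totalization, but because $\theta$ is concentrated in arity one, both the parallel and the sequential combinatorics reduce to the trivial cases used above.
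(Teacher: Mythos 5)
Your proof is correct and follows essentially the same route as the paper, which simply declares the result "completely analogous to" the groupoid-colored version (Lemma \ref{lemma S colored totalization}): identify $\vartheta\star\mu$ and $\mu\star\vartheta$ with the two terms of the curvature relation $d_\PP^2 = \gamma_{(1)}\cdot(\Theta_\PP\circ\mathrm{id} - \mathrm{id}\circ'\Theta_\PP)$, and deduce the left-nucleus condition from the sequential axiom applied to the arity-one element $\theta$. Your explicit spelling-out of the left-nucleus verification, and your observation that axiom $(2)$ of Definition \ref{curvedbimodule}'s companion (Definition \ref{curvedprelie}) must be read as a statement about $d_\mathfrak{g}^2$ on degree grounds, are both sound and in fact slightly more careful than what the paper records.
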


\begin{proof}
The proof is completely analogous to Lemma \ref{lemma S colored totalization}.
\end{proof}

\begin{lemma}[Curved convolution operad]
Let $(\PP,\{\circ_i\},\eta,d_\PP)$ be a dg unital partial operad and let $(\C,\{\Delta_i\},d_\C,\Theta)$ be a curved partial cooperad. The convolution partial pdg operad $(\mathcal{H}om(\C,\PP),\{\circ_i\},\partial)$ forms a curved partial operad endowed with the curvature given by
\[
\begin{tikzcd}
\Theta_{\mathcal{H}om}(\mathrm{id}): \mathcal{C} \arrow[r,"\Theta"]
&\I \arrow[r,"\eta"]
&\mathcal{P}~.
\end{tikzcd}
\]
\end{lemma}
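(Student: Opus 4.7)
The plan is to verify the three conditions that make $\bigl(\mathcal{H}om(\C,\PP),\{\circ_i\},\partial,\Theta_{\mathcal{H}om}\bigr)$ a curved partial operad in the sense of \cref{curvedbimodule} specialized to the operadic case. First, the degree: since $\Theta_\C\colon \C \to \I$ has degree $-2$ and $\eta\colon \I \to \PP$ has degree $0$, their composite $\eta \cdot \Theta_\C$ is a degree $-2$ element of $\mathcal{H}om(\C,\PP)(1)$, as required. Second, the closure condition $\partial(\Theta_{\mathcal{H}om})=0$: by definition $\partial(\eta\cdot\Theta_\C) = d_\PP\cdot \eta\cdot\Theta_\C - (-1)^{-2}\,\eta\cdot\Theta_\C\cdot d_\C$. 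The first summand vanishes because $\eta$ is a morphism of pdg $\mathbb{S}$-modules and $\I$ carries the zero pre-differential, and the second vanishes for the same reason applied to $\Theta_\C\colon (\C,d_\C)\to(\I,0)$.

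The heart of the proof is the computation of $\partial^2$. A direct calculation, using that $\PP$ is a genuine dg operad so that $d_\PP^2=0$, gives
\[
\partial^2(\alpha) \;=\; d_\PP^2\cdot\alpha - \alpha\cdot d_\C^2 \;=\; -\,\alpha\cdot d_\C^2 ,
\]
with the cross terms cancelling via a standard Koszul-sign bookkeeping. Substituting the curvature relation for the partial cooperad $\C$, namely $d_\C^2 = (\mathrm{id}\circ_{(1)}\Theta_\C - \Theta_\C\circ\mathrm{id})\cdot\Delta_{(1)}$, yields
\[
\partial^2(\alpha) \;=\; \alpha\cdot(\Theta_\C\circ\mathrm{id})\cdot\Delta_{(1)} \;-\; \alpha\cdot(\mathrm{id}\circ_{(1)}\Theta_\C)\cdot\Delta_{(1)} .
\]

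The remaining step is to identify this expression with the operadic commutator $\gamma_{(1)}\cdot(\Theta_{\mathcal{H}om}\circ\alpha - \alpha\circ'\Theta_{\mathcal{H}om})$ computed inside the convolution partial operad $\mathcal{H}om(\C,\PP)$. Unfolding the definition of the infinitesimal composition of the convolution operad, $\Theta_{\mathcal{H}om}\circ\alpha$ evaluated on $c\in\C$ equals $\gamma_{(1)}^{\PP}\cdot(\Theta_{\mathcal{H}om}\otimes\alpha)\cdot\Delta_{(1)}^{\C}(c)$. Since $\Theta_{\mathcal{H}om} = \eta\cdot\Theta_\C$ factors through $\I$, the outer $\PP$-composition with $\eta$ collapses by unitality of $\eta$, and the map reduces to $\alpha\cdot(\Theta_\C\circ\mathrm{id})\cdot\Delta_{(1)}$. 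The symmetric argument on the right-hand term gives $\alpha\circ'\Theta_{\mathcal{H}om} = \alpha\cdot(\mathrm{id}\circ_{(1)}\Theta_\C)\cdot\Delta_{(1)}$. The two expressions agree termwise. The only possible obstacle is tracking signs in the infinitesimal composition, but these trivialize since $\Theta_\C$ has even degree and $\eta$ has degree zero; hence the identification is clean. This completes the verification of the curved partial operad axiom and mirrors exactly the $\mathbb{S}$-colored proof given immediately above in Lemma \ref{lemma curved S colored convolution}.
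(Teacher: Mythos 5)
Your proof is correct and follows exactly the route the paper takes: the paper defers to the $\mathbb{S}$-colored analogue (Lemma \ref{lemma curved S colored convolution}), whose argument is precisely your computation $\partial^2(\alpha) = d_\PP^2\cdot\alpha - \alpha\cdot d_\C^2 = -\alpha\cdot d_\C^2$ followed by substitution of the cooperad curvature relation and identification with the bracket against $\Theta_{\mathcal{H}om}$ via unitality of $\eta$. Your additional verifications (degree, $\partial(\Theta_{\mathcal{H}om})=0$, sign bookkeeping) are routine and correct, so nothing is missing.
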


\begin{proof}
The proof is completely analogous to Lemma \ref{lemma curved S colored convolution}.
\end{proof}

\begin{Definition}[Curved twisting morphism]
Let $(\PP,\{\circ_i\},\eta,d_\PP)$ be a dg unital partial operad and let $(\C,\{\Delta_i\},d_\C,\Theta)$ be a curved partial cooperad. A \textit{curved twisting morphism} $\alpha$ is a Maurer-Cartan element curved pre-Lie algebra given by the curved convolution operad:
\[
\mathfrak{g}_{\C,\PP} \coloneqq \prod_{n\geq 0}\mathrm{Hom}_{\mathbb{S}}(\C(n),\PP(n))~.
\]
This is the data of a  morphism of $\mathbb{S}$-modules $\alpha: \C \longrightarrow \PP$ of a degree $-1$ such that:
\[
\partial(\alpha) + \alpha \star \alpha = \Theta_{\mathcal{H}om}(\mathrm{id})~.
\]
The set of twisting morphism between $\C$ and $\PP$ will be denoted $\mathrm{Tw}(\C,\PP)$. 
\end{Definition}

The set of curved twisting morphism between conilpotent curved partial cooperads and unital partial dg operads defines a bifunctor
\[
\begin{tikzcd}
\mathrm{Tw}(- , -) : \left(\mathsf{curv}~\mathsf{pCoop}^{\mathsf{conil}}\right)^{\text{op}} \times \mathsf{dg}~\mathsf{upOp}  \arrow[r]
&\mathsf{Set}~,
\end{tikzcd}
\]
which is represented on both sides by the Bar-Cobar construction defined before.

\begin{Proposition}[{\cite[Proposition 63]{grignou2019}}]
Let $(\PP,\{\circ_i\},\eta,d_\PP)$ be a dg unital partial operad and let $(\C,\{\Delta_i\},d_\C,\Theta)$ be a conilpotent curved partial cooperad. There are isomorphism:
\[
\mathrm{Hom}_{\mathsf{dg}~\mathsf{upOp}}(\Omega\C,\PP) \cong \mathrm{Tw}(\C,\PP) \cong \mathrm{Hom}_{\mathsf{curv}~\mathsf{pCoop}^{\mathsf{conil}}}(\C,\mathrm{B}\PP)~,
\]
which are natural in $\C$ and $\PP$.
\end{Proposition}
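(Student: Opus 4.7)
The plan is to establish both bijections explicitly and then check naturality. The key observation is that the underlying objects of $\Omega\C = \mathscr{T}(s^{-1}\C)$ and $\mathrm{B}\PP = \overline{\mathscr{T}}^c(s\PP \oplus \nu)$ are free/cofree, so morphisms out of $\Omega\C$ (resp.\ into $\mathrm{B}\PP$) in the underlying graded category correspond to their restriction to generators (resp.\ projection to cogenerators). What remains is to match the compatibility with the differentials and curvatures to the Maurer-Cartan equation $\partial(\alpha) + \alpha \star \alpha = \Theta_{\mathcal{H}om}(\mathrm{id})$.

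First I would treat the isomorphism $\mathrm{Hom}_{\mathsf{dg}~\mathsf{upOp}}(\Omega\C,\PP) \cong \mathrm{Tw}(\C,\PP)$. Given a morphism of dg unital partial operads $F : \Omega\C \to \PP$, set $\alpha \coloneqq s \cdot F|_{s^{-1}\C} : \C \to \PP$, a morphism of graded $\mathbb{S}$-modules of degree $-1$. Conversely, a degree $-1$ map $\alpha : \C \to \PP$ extends uniquely to a morphism of unital partial graded operads $F_\alpha : \mathscr{T}(s^{-1}\C) \to \PP$. The condition that $F_\alpha$ commutes with the differentials, tested on generators $s^{-1}\C$, unfolds (using the explicit $d_{\mathrm{cobar}} = d_1 - d_2$ from Definition~\ref{Def: Cobar de Brice}) into three terms: the term coming from $s^{-1}d_\C$ gives $-\alpha \circ d_\C$, the term from $s^{-2}\Delta_{(1)}$ gives $-\alpha \star \alpha$ via the partial composition of $\PP$, and the term from $s^{-1}\Theta_\C$ gives $\eta \circ \Theta_\C = \Theta_{\mathcal{H}om}(\mathrm{id})$. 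Rearranging yields precisely $\partial(\alpha) + \alpha \star \alpha = \Theta_{\mathcal{H}om}(\mathrm{id})$.

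Next I would treat the isomorphism $\mathrm{Hom}_{\mathsf{curv}~\mathsf{pCoop}^{\mathsf{conil}}}(\C,\mathrm{B}\PP) \cong \mathrm{Tw}(\C,\PP)$. A morphism $G : \C \to \mathrm{B}\PP$ of curved conilpotent partial cooperads, composed with the projection onto cogenerators, has two components: a map $\alpha : \C \to \PP$ of degree $-1$ (from projection onto $s\PP$, up to desuspension) and a map $\tilde\Theta : \C \to \I$ of degree $-2$ (from projection onto $\I.\nu$). By cofreeness, $G$ is determined in the graded category by these two components. The compatibility of $G$ with the curvatures forces $\tilde\Theta = \Theta_\C$, and the compatibility of $G$ with the coderivations—tested against the projection to $s\PP$—unpacks into three contributions corresponding exactly to $\partial(\alpha)$, $\alpha \star \alpha$, and $\Theta_{\mathcal{H}om}(\mathrm{id})$ (the last coming from the $s\eta$ piece of $d_2$ applied to the $\nu$-cogenerator, fed by $\Theta_\C$). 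Again one obtains the Maurer-Cartan equation.

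The main subtlety, and the step I expect to demand the most care, is the sign and suspension bookkeeping on the Bar side: the coderivation $d_2$ on $\mathrm{B}\PP$ combines the unit $\eta$ (acting on the $\nu$-cogenerator) and the infinitesimal composition $\gamma_{(1)}$ (acting on pairs of $s\PP$-cogenerators), and one must verify that after composing with the projection onto $s\PP$ the $\eta$-contribution reproduces exactly $\eta \circ \Theta_\C$, while the $\gamma_{(1)}$-contribution yields $\alpha \star \alpha$ with the correct Koszul sign. Once both bijections are in place, naturality in $\C$ and $\PP$ is automatic because all constructions (restriction to generators, projection to cogenerators, composition with morphisms of operads or cooperads) are manifestly functorial, and the Maurer-Cartan condition is preserved under pre- and post-composition by strict morphisms. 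Alternatively, one may deduce the whole statement abstractly from Proposition~\ref{Mise en abime} together with the adjunction $\Omega_\kappa \dashv \mathrm{B}_\kappa$ of Section~\ref{subsection: classical bar-cobar relative to kappa}, after identifying curved twisting morphisms $\C \to \PP$ with morphisms of pdg $\mathbb{S}$-color schemes $\mathcal{S}\otimes (c\mathcal{O}^\vee)^u \to u\mathcal{O}$ satisfying the Maurer-Cartan equation relative to $\kappa$.
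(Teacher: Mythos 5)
Your argument is correct, but note that the paper does not actually prove this statement: it is recalled verbatim from Le~Grignou (the cited Proposition~63), and the proof you outline --- freeness of $\Omega\C$ and cofreeness of $\mathrm{B}\PP$ in the underlying graded category, followed by unpacking compatibility with $d_{\mathrm{cobar}}$, $d_{\mathrm{bar}}$ and the curvatures into the Maurer--Cartan equation $\partial(\alpha)+\alpha\star\alpha=\Theta_{\mathcal{H}om}(\mathrm{id})$ --- is precisely the standard one carried out in that reference. Your alternative route via Proposition~\ref{Mise en abime} and the adjunction $\Omega_\kappa\dashv\mathrm{B}_\kappa$ is also viable and closer to the spirit of this paper, though one must be careful not to make it circular, since the paper uses the present proposition as a known input when identifying the two adjunctions.
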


\subsection{Complete Bar-Cobar adjunction relative to $\kappa$}
We introduce a new adjunction, which we call the \textit{complete Bar-Cobar adjunction}, between complete curved absolute partial operads and dg counital partial cooperads. This adjunction is again induced by the curved twisting morphism $\kappa$, using the techniques developed in \cite{grignoulejay18}. In order to construct it, we need a mild generalization of the results in \cite{anelcofree2014}, in which the author only considers set-colored operads. 

\begin{theorem}\label{thm: existence de la cogèbre colibre}
Let $(\mathcal{G},\gamma,\eta,d_\G)$ be a dg $\mathbb{S}$-colored operad. The category of dg $\mathcal{G}$-coalgebras is comonadic, that is, there exists a comonad $(\mathscr{L}(\G),\omega,\xi)$ in the category of dg $\mathbb{S}$-modules such that the category of dg $\mathscr{L}(\G)$-coalgebras is equivalent to the category of dg $\mathcal{G}$-coalgebras.

\medskip

The endofunctor $\mathscr{L}(\G)$ is given by the following pullback:
\[
\begin{tikzcd}[column sep=3pc,row sep=3pc]
\mathscr{L}(\G) \arrow[r,"p_2"] \arrow[d,"p_1",swap] \arrow[dr, phantom, "\ulcorner", very near start]
&\widehat{\mathscr{S}}_\mathbb{S}^c(\G) \circ \widehat{\mathscr{S}}_\mathbb{S}^c(\G) \arrow[d,"\varphi_{\G,\G}"] \\
\widehat{\mathscr{S}}_\mathbb{S}^c(\G) \arrow[r,"\widehat{\mathscr{S}}_\mathbb{S}^c(\gamma)"]
&\widehat{\mathscr{S}}_\mathbb{S}^c(\G \circ \G)
\end{tikzcd}
\]

in the category $\mathsf{End}(\mathsf{dg}~\smod)$. It gives a pullback 
\[
\begin{tikzcd}[column sep=3pc,row sep=3pc]
\mathscr{L}(\G)(M) \arrow[r,"p_2(M)"] \arrow[d,"p_1(M)",swap] \arrow[dr, phantom, "\ulcorner", very near start]
&\widehat{\mathscr{S}}_\mathbb{S}^c(\G)(M) \circ \widehat{\mathscr{S}}_\mathbb{S}^c(\G)(M) \arrow[d,"\varphi_{\G,\G}(M)"] \\
\widehat{\mathscr{S}}_\mathbb{S}^c(\G)(M) \arrow[r,"\widehat{\mathscr{S}}_\mathbb{S}^c(\gamma)(M)"]
&\widehat{\mathscr{S}}_\mathbb{S}^c(\G \circ \G)(M)
\end{tikzcd}
\]
for any dg $\mathbb{S}$-module $M$. Here $p_1$ is a monomorphism since $\varphi_{\G,\G}(M)$ is also a monomorphism. The structural map of the comonad $\omega(M): \mathscr{L}(\G)(M) \longrightarrow \mathscr{L}(\G) \circ \mathscr{L}(\G)(M)$ is given by the map $p_2$ in the previous pullback, as its image lies in $\mathscr{L}(\G) \circ \mathscr{L}(\G)(M)$ instead of $\widehat{\mathscr{S}}_\mathbb{S}^c(\G) \circ \widehat{\mathscr{S}}_\mathbb{S}^c(\G)(M)$. The counit is given of the comonad $\mathscr{L}(\G)$ is given by
\[
\begin{tikzcd}[column sep=4pc,row sep=0.5pc]
\xi(M): \mathscr{L}(\G)(M) \arrow[r,"p_1(M)"] 
&\widehat{\mathscr{S}}_\mathbb{S}^c(\G)(M) \arrow[r,"\widehat{\mathscr{S}}_\mathbb{S}^c(\eta)(id_M)"]
&M~.
\end{tikzcd}
\]
\end{theorem}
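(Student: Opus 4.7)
The plan is to adapt the construction of \cite{anelcofree2014} from set-colored operads to the $\mathbb{S}$-colored framework. The central obstruction is that the endofunctor $\widehat{\mathscr{S}}_\mathbb{S}^c(\G)$ is merely lax-monoidal (through the monomorphism $\varphi_{\G,\G}$), and hence is not itself a comonad on $\mathsf{dg}\text{-}\smod$. The strategy is to carve out the sub-endofunctor $\mathscr{L}(\G)$ of ``coassociatively-decomposable'' elements, for which the obstruction disappears. The key observation is that an element of $\mathscr{L}(\G)(M)$ is an element of $\widehat{\mathscr{S}}_\mathbb{S}^c(\G)(M)$ together with a (necessarily unique, as $\varphi_{\G,\G}(M)$ is mono) preimage in $\widehat{\mathscr{S}}_\mathbb{S}^c(\G)(M) \circ \widehat{\mathscr{S}}_\mathbb{S}^c(\G)(M)$; this is exactly the data needed to encode the coassociativity diagram defining a $\G$-coalgebra.

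First, I would check that $\mathscr{L}(\G)$ is a well-defined endofunctor: since $\mathsf{dg}\text{-}\smod$ is complete, the pullback exists pointwise, and functoriality in $M$ follows from the universal property of pullbacks combined with the functoriality of $\widehat{\mathscr{S}}_\mathbb{S}^c(\G)$. Since the pullback of a monomorphism is a monomorphism, $p_1$ is monic. I would then define the counit $\xi$ as the composite $p_1 \cdot \widehat{\mathscr{S}}_\mathbb{S}^c(\eta)$, where $\eta: \I_\mathbb{S} \to \G$ is the operadic unit.

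The main step is the construction of the coproduct $\omega: \mathscr{L}(\G) \to \mathscr{L}(\G) \circ \mathscr{L}(\G)$. The candidate map is $p_2$, but one must verify that it factors through the smaller sub-functor $\mathscr{L}(\G) \circ \mathscr{L}(\G)(M)$. This amounts to checking that each of the ``two layers'' of the iterated composition separately satisfies the coassociativity pullback condition; the outer layer uses the associativity of the lax-monoidal transformation $\varphi$, while the inner layer uses the associativity of the operadic composition $\gamma$. Both requirements ultimately follow from a careful diagram chase combining the associativity of $\G$ as a monoid with the axioms satisfied by $\varphi_{M,N}$ as a lax-monoidal natural transformation (Lemma analogous to \cite[Corollary~3.4]{grignoulejay18}). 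The comonad axioms for $(\mathscr{L}(\G), \omega, \xi)$ then follow formally: each axiom, postcomposed with the monomorphism induced by $p_1$, reduces to the corresponding associativity or unitality identity for $\gamma$ and $\eta$, and hence holds by the universal property of the pullback.

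Finally, to establish the equivalence between dg $\mathscr{L}(\G)$-coalgebras and dg $\G$-coalgebras, I would argue as follows. Given a $\G$-coalgebra $(N, \Delta_N, d_N)$, the commutativity of the coassociativity square in the definition of a $\G$-coalgebra says precisely that the pair
\[
\bigl(\Delta_N,\ \widehat{\mathscr{S}}_\mathbb{S}^c(\mathrm{id})(\Delta_N) \cdot \Delta_N\bigr)
\]
forms a cone over the pullback defining $\mathscr{L}(\G)(N)$, which by universality produces a unique lift $\widetilde{\Delta}_N: N \to \mathscr{L}(\G)(N)$. Coassociativity of $\widetilde{\Delta}_N$ with respect to $\omega$ and counitality with respect to $\xi$ then follow from coassociativity at the $\widehat{\mathscr{S}}_\mathbb{S}^c(\G)$-level (and the fact that $p_1$ is monic, so identities of morphisms into $\mathscr{L}(\G)$ can be tested after postcomposition with $p_1$). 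The converse construction is obtained by postcomposing a $\mathscr{L}(\G)$-coalgebra structure with $p_1$. The main obstacle in the whole proof is the verification that $p_2$ factors through $\mathscr{L}(\G) \circ \mathscr{L}(\G)$, as this is the only step that genuinely intertwines the operadic associativity of $\G$ with the lax-monoidal structure on $\widehat{\mathscr{S}}_\mathbb{S}^c$; once this has been secured, everything else reduces to formal manipulations of universal properties.
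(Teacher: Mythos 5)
Your overall route differs from the paper's: the paper does not verify the comonad axioms by hand, but instead cites the general framework of Anel (via the Gambino--Joyal formalism of analytic functors, which already handles groupoid colors) for comonadicity, and then reduces the explicit pullback description to the statement that Anel's recursive construction of the comonad stabilizes at its first step. A direct verification like yours is a legitimate alternative in principle, but as written it has a genuine gap at exactly the point you flag as the ``main obstacle.''

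The factorization of $p_2$ through $\mathscr{L}(\G) \circ \mathscr{L}(\G)(M)$ is \emph{not} a formal consequence of the associativity of $\gamma$ and the lax-monoidal axioms for $\varphi$. Concretely, given $(x,y)$ in the pullback, associativity and lax-monoidality only tell you that $y$ is compatible with the triple composite $\widehat{\mathscr{S}}_\mathbb{S}^c(\G\circ\G\circ\G)$; to conclude that $y$ actually lies in the subobject $\widehat{\mathscr{S}}_\mathbb{S}^c(\G)\bigl(\mathscr{L}(\G)(M)\bigr)$ --- i.e.\ that the ``inner labels'' of the decomposition are themselves decomposable --- you need the functor $\widehat{\mathscr{S}}_\mathbb{S}^c(\G)$ to preserve the pullback (an intersection of subobjects) defining $\mathscr{L}(\G)(M)$. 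Tensor powers and $\mathbb{S}_n$-invariants do not preserve such intersections over a general base; this is precisely where the hypotheses that $\kk$ is a field of characteristic $0$, so that $\kk[\mathbb{S}_n]$ is semisimple and, e.g., $(A\cap B)^{\otimes n} = A^{\otimes n}\cap B^{\otimes n}$ inside $V^{\otimes n}$, enter the argument. These are the lemmas of Section~3 of Anel's paper that the published proof explicitly checks still hold in the $\mathbb{S}$-colored setting. Without this input the single pullback is only the first stage of a (possibly transfinite) recursion and need not carry a comonad structure, so your ``careful diagram chase'' cannot close. To repair the proof you must either import those linear-algebra lemmas or reprove the exactness properties of $\widehat{\mathscr{S}}_\mathbb{S}^c(\G)$ directly; the remainder of your argument (the counit, the comonad axioms tested after postcomposition with the monomorphism $p_1$, and the equivalence of coalgebra categories via the universal property of the pullback) is then sound.
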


\begin{proof}
In order to prove the first result, one has to generalize the framework of \cite{anelcofree2014} from set-colored operads to groupoid-colored operads. This is purely formal: the underlying theory upon which the result is obtained is the formalism of analytic functors and operads developed in \cite{Gambino_2017}. In \textit{loc.cit}, the formalism is already developed in the case where the colors form a category. \textit{Mutatis mutandis}, one can perform the same proofs using this general formalism. Thus the category of dg $\mathcal{G}$-coalgebras is indeed comonadic, by \cite[Theorem 2.7.11]{anelcofree2014}.

\medskip

The second statement is equivalent to claiming that the infinite recursion that defines the comonad $\mathscr{L}(\G)$ stops at the first step. We know this is the case for operads in the category of dg modules. We have to check that the lemmas proven for the category of dg modules also hold for the category of dg $\mathbb{S}$-modules. This comes from the fact that the base field $\kk$ is of characteristic $0$, and hence for any $n$, the $\kk$-algebra $\kk[\mathbb{S}_n]$ is a semi-simple finite dimensional $\kk$-algebra where every module is injective and projective. One can check that the different lemmas of \cite[Section 3]{anelcofree2014} that make the proof work are also true in this context.
\end{proof}

Before defining the complete Bar construction relative to $\kappa$ using the cofree dg-$u\mathcal{O}$-coalgebra $\mathscr{L}(u\mathcal{O})$ construction, one needs to understand coderivations on this object. We generalize the results of \cite[Section 6]{grignoulejay18} to our framework. Let $M$ be a graded $\mathbb{S}$-module, we denote by $\pi_M$ the map given by
\[
\begin{tikzcd}[column sep=5pc,row sep=3pc]
\pi_M: \mathscr{L}(u\mathcal{O})(M)  \arrow[r,"p_1"]
&\widehat{\mathscr{S}}_\mathbb{S}^c(u\mathcal{O})(M) \arrow[r,"\widehat{\mathscr{S}}_\mathbb{S}^c(\eta_{u\mathcal{O}})(\mathrm{id})"]
&M~,
\end{tikzcd} 
\]
where $\eta_{u\mathcal{O}}: \I_\mathbb{S} \longrightarrow u\mathcal{O}$ is the unit of the unital partial $\mathbb{S}$-colored operad $u\mathcal{O}$.

\begin{lemma}\label{lemma: coderivations on the cofree cooperad}
Let $f: M \longrightarrow N$ be a morphism graded $\mathbb{S}$-modules of degree $0$ and $g:M \longrightarrow N$ be a morphism of graded $\mathbb{S}$-modules of degree $p$, then the degree $p$ map: 
\[
\begin{tikzcd}[column sep=4pc,row sep=1pc]
&\widehat{\mathscr{S}}_{\mathbb{S}}^c(\mathrm{id}_{u\mathcal{O}})(\diracComb_{\mathbb{S}}(f,g)): \widehat{\mathscr{S}}_{\mathbb{S}}^c(u\mathcal{O})(M) \arrow[r]
&\widehat{\mathscr{S}}_{\mathbb{S}}^c(u\mathcal{O})(N)
\end{tikzcd}
\]
restricts to a degree $p$ morphism $\mathscr{L}(\mathrm{id})(\diracComb_{\mathbb{S}}(f,g)): \mathscr{L}(u\mathcal{O})(M) \longrightarrow \mathscr{L}(u\mathcal{O})(N)$.
\end{lemma}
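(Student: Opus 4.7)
The plan is to apply the universal property of the pullback defining $\mathscr{L}(u\mathcal{O})(N)$. Denote for brevity $F \coloneqq \widehat{\mathscr{S}}_{\mathbb{S}}^c(\mathrm{id}_{u\mathcal{O}})(\diracComb_{\mathbb{S}}(f,g))$. To produce the desired restriction $\mathscr{L}(u\mathcal{O})(M) \to \mathscr{L}(u\mathcal{O})(N)$, it suffices to construct a pair of morphisms $\alpha: \mathscr{L}(u\mathcal{O})(M) \to \widehat{\mathscr{S}}_\mathbb{S}^c(u\mathcal{O})(N)$ and $\beta: \mathscr{L}(u\mathcal{O})(M) \to \widehat{\mathscr{S}}_\mathbb{S}^c(u\mathcal{O})\circ \widehat{\mathscr{S}}_\mathbb{S}^c(u\mathcal{O})(N)$ satisfying the pullback equation $\widehat{\mathscr{S}}_\mathbb{S}^c(\gamma)(N)\cdot \alpha = \varphi_{u\mathcal{O},u\mathcal{O}}(N)\cdot \beta$. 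The obvious candidate for $\alpha$ is $F\cdot p_1(M)$, since this is the map we are trying to show lands in the pullback.

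The first key step is to construct $\beta$ by defining a \emph{two-level} dirac-comb derivation $\widetilde{F}: \widehat{\mathscr{S}}_\mathbb{S}^c(u\mathcal{O})\circ \widehat{\mathscr{S}}_\mathbb{S}^c(u\mathcal{O})(M) \to \widehat{\mathscr{S}}_\mathbb{S}^c(u\mathcal{O})\circ \widehat{\mathscr{S}}_\mathbb{S}^c(u\mathcal{O})(N)$, obtained as the sum of the derivation acting on the inner level of leaves (labeled in $M$) and the derivation acting on the outer level (indexed by the intermediate arities). We then set $\beta \coloneqq \widetilde{F}\cdot p_2(M)$. This construction is purely formal and uses only the functoriality of $\widehat{\mathscr{S}}_\mathbb{S}^c$ together with the usual Leibniz-type behaviour of $\diracComb_\mathbb{S}$ under tensor products.

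The second step is to verify the pullback compatibility. Naturality of $\widehat{\mathscr{S}}_\mathbb{S}^c$ in its $\mathbb{S}$-color scheme variable gives the identity
\[
\widehat{\mathscr{S}}_\mathbb{S}^c(\gamma)(N)\cdot F = \widehat{\mathscr{S}}_\mathbb{S}^c(\mathrm{id}_{u\mathcal{O}\circ u\mathcal{O}})(\diracComb_{\mathbb{S}}(f,g))\cdot \widehat{\mathscr{S}}_\mathbb{S}^c(\gamma)(M).
\]
Combining this with the pullback equation $\widehat{\mathscr{S}}_\mathbb{S}^c(\gamma)(M)\cdot p_1(M) = \varphi_{u\mathcal{O},u\mathcal{O}}(M)\cdot p_2(M)$ satisfied by $\mathscr{L}(u\mathcal{O})(M)$, the required compatibility $\widehat{\mathscr{S}}_\mathbb{S}^c(\gamma)(N)\cdot\alpha = \varphi_{u\mathcal{O},u\mathcal{O}}(N)\cdot\beta$ reduces to the intertwining identity
\[
\widehat{\mathscr{S}}_\mathbb{S}^c(\mathrm{id}_{u\mathcal{O}\circ u\mathcal{O}})(\diracComb_{\mathbb{S}}(f,g))\cdot \varphi_{u\mathcal{O},u\mathcal{O}}(M) = \varphi_{u\mathcal{O},u\mathcal{O}}(N)\cdot\widetilde{F}.
\]

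The technical heart of the argument, and the main expected obstacle, is precisely this intertwining identity. It requires unpacking the lax structure map $\varphi_{u\mathcal{O},u\mathcal{O}}$: an element of the two-level composite corresponds to a decoration of a two-level tree with the leaves carrying elements of $M$, and $\varphi$ grafts the two levels into a single decoration whose leaves are precisely the former \emph{inner} leaves. Under this identification, the single-level dirac-comb on $u\mathcal{O}\circ u\mathcal{O}$ sees only the inner leaves, so it matches the inner-level summand of $\widetilde{F}$; the outer-level summand of $\widetilde{F}$, which would act on the intermediate arities, maps to zero under $\varphi$ because those positions are internalized by the grafting. Making this combinatorial cancellation explicit, with the correct Koszul signs coming from the placement of the degree-$p$ map $g$, completes the argument. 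Once $\alpha$ and $\beta$ are shown to be compatible, the induced map into the pullback is, by construction, a restriction of $F$ to $\mathscr{L}(u\mathcal{O})(M)$, which is what we set out to prove.
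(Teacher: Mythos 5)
Your overall strategy coincides with the paper's: invoke the universal property of the pullback defining $\mathscr{L}(u\mathcal{O})(N)$, take $\alpha \coloneqq F\cdot p_1(M)$, produce a lift $\beta$ factoring through $p_2(M)$, and reduce the compatibility to an intertwining identity against the lax structure map $\varphi_{u\mathcal{O},u\mathcal{O}}$. The reduction itself is correct: the identity $\widehat{\mathscr{S}}_{\mathbb{S}}^c(\gamma)(N)\cdot F = \widehat{\mathscr{S}}_{\mathbb{S}}^c(\mathrm{id}_{u\mathcal{O}\circ u\mathcal{O}})(\diracComb_{\mathbb{S}}(f,g))\cdot \widehat{\mathscr{S}}_{\mathbb{S}}^c(\gamma)(M)$ is just bifunctoriality of $\mathrm{Hom}$ (precomposition by $\gamma$ commutes with postcomposition by the comb), and combining it with the pullback square for $M$ leaves exactly the identity you isolate. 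The final observation that the induced map is a genuine restriction of $F$ because $p_1$ is a monomorphism is also fine.

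The gap is in the construction of $\widetilde{F}$, which you correctly identify as the technical heart. The lift is \emph{not} a sum of an ``inner-level derivation'' and an ``outer-level derivation''. In $\widehat{\mathscr{S}}_{\mathbb{S}}^c(u\mathcal{O})\circ\widehat{\mathscr{S}}_{\mathbb{S}}^c(u\mathcal{O})(M)$ the module $M$ occurs only at the innermost level; the inputs of the outer functor are inner-level objects, and the intermediate arities carry no copy of $M$ to which $g$ could be applied, so there is no meaningful outer summand. The correct lift is the \emph{nested} comb
\[
\widetilde{F} \coloneqq \widehat{\mathscr{S}}_{\mathbb{S}}^c(\mathrm{id})\Bigl(\diracComb\bigl(\widehat{\mathscr{S}}_{\mathbb{S}}^c(\mathrm{id})(f),\, \widehat{\mathscr{S}}_{\mathbb{S}}^c(\mathrm{id})(\diracComb(f,g))\bigr)\Bigr)~,
\]
that is, one chooses a single outer slot in which to apply the inner comb (hence $g$ lands on exactly one bottom leaf) and applies $f$ everywhere else; this is precisely the formula the paper gives. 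Moreover, your proposed verification --- that the outer-level summand ``maps to zero under $\varphi$'' --- cannot work: the paper records that $\varphi_{E,F}$ is a monomorphism for all $\mathbb{S}$-color schemes $E,F$, so $\varphi_{u\mathcal{O},u\mathcal{O}}(N)\cdot\widetilde{F}_{\mathrm{outer}}=0$ would force $\widetilde{F}_{\mathrm{outer}}=0$, i.e.\ either the summand is vacuous or the cancellation you describe is false. With the nested comb in place of your $\widetilde{F}$, the intertwining identity does follow from the grafting description of $\varphi$ that you sketch (the grafted leaves of $u\mathcal{O}\circ u\mathcal{O}$ are exactly the bottom leaves of the two-level object, and both sides apply $g$ at exactly one of them), and the remainder of your argument goes through.
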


\begin{proof}
The proof is essentially the same as \cite[Lemma 6.20]{grignoulejay18}. It is a consequence of the universal property of pullbacks: it suffices to construct a morphism of graded $\mathbb{S}$-modules 
\[
\mathscr{L}(u\mathcal{O})(M) \longrightarrow \widehat{\mathscr{S}}_{\mathbb{S}}^c(u\mathcal{O}) \circ \widehat{\mathscr{S}}_{\mathbb{S}}^c(u\mathcal{O})(N)
\]
that coincides with the morphism $\widehat{\mathscr{S}}_{\mathbb{S}}^c(u\mathcal{O})(\diracComb_{\mathbb{S}}(f,g))$ on $\widehat{\mathscr{S}}_{\mathbb{S}}^c(u\mathcal{O} \circ u\mathcal{O})(N)$. One can check that this morphism is given by 
\[
\widehat{\mathscr{S}}_{\mathbb{S}}^c(\mathrm{id}) \left(\diracComb\left(\widehat{\mathscr{S}}_{\mathbb{S}}^c(\mathrm{id})(f), \allowbreak \widehat{\mathscr{S}}_{\mathbb{S}}^c(\mathrm{id})(\diracComb(f,g))\right)\right) \circ p_2~.\]
\end{proof}

\begin{Proposition}[Coderivations on the cofree construction]
Let $M$ be a graded $\mathbb{S}$-module and let $\mathscr{L}(u\mathcal{O})(M)$ be the cofree graded $u\mathcal{O}$-coalgebra generated by $M$. There is a natural bijection between maps 
\[
\varphi: \mathscr{L}(u\mathcal{O})(M) \longrightarrow M
\]
of degree $p$ and coderivations 
\[
d_\varphi: \mathscr{L}(u\mathcal{O})(M) \longrightarrow \mathscr{L}(u\mathcal{O})(M)
\]
of degree $p$. This bijection sends $\varphi$ to the degree $p$ coderivation given by 
\[
d_\varphi \coloneqq \mathscr{L}(\mathrm{id})(\diracComb(\pi_M,\varphi)) \cdot \omega(M)~,
\]
where $\omega(M)$ is the comonad structure map of $\mathscr{L}(u\mathcal{O})$.
\end{Proposition}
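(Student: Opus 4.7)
The approach is to exhibit an explicit inverse to the assignment $\varphi \mapsto d_\varphi$ and verify that both composites are the identity, in direct parallel to the cofree cooperad case of \cite[Proposition 6.21]{grignoulejay18}. The inverse will send a coderivation $d$ to its projection $\pi_M \cdot d : \mathscr{L}(u\mathcal{O})(M) \to M$. Before anything, I would invoke Lemma \ref{lemma: coderivations on the cofree cooperad} to ensure that $\mathscr{L}(\mathrm{id})(\diracComb(\pi_M, \varphi))$ is a well-defined morphism valued in $\mathscr{L}(u\mathcal{O})(M)$, rather than merely in the ambient $\widehat{\mathscr{S}}_\mathbb{S}^c(u\mathcal{O})(M)$; this reduces the problem to manipulations in the pullback diagram defining $\mathscr{L}(u\mathcal{O})$.

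For the identity $\pi_M \cdot d_\varphi = \varphi$, I would unwind the Dirac comb: after post-composing $\mathscr{L}(\mathrm{id})(\diracComb(\pi_M, \varphi)) \cdot \omega(M)$ with $\pi_M$, only the summand in which $\varphi$ is placed at the root position survives (all others are killed because $\pi_M$ projects onto the $\I_\mathbb{S}$-component), and the remaining inner factors of $\pi_M$ collapse via the counit axiom $\pi_M \cdot \omega(M) = \mathrm{id}$, yielding exactly $\varphi$. For the opposite identity $d_{\pi_M \cdot d} = d$, I would use the defining property of a coderivation on the cofree coalgebra: $\omega(M) \cdot d$ must equal $\mathscr{L}(\mathrm{id})(\diracComb(\mathrm{id}, d)) \cdot \omega(M)$, and applying $\pi_M$ on all factors but one reconstructs $d$ from $\pi_M \cdot d$ by the same counit collapse.

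The main obstacle, as always in such arguments, is verifying that $d_\varphi$ is actually a coderivation, i.e.\ that
\[
\omega(M) \cdot d_\varphi \;=\; \mathscr{L}(\mathrm{id})\bigl(\diracComb(\mathrm{id}_{\mathscr{L}(u\mathcal{O})(M)}, d_\varphi)\bigr) \cdot \omega(M).
\]
This follows from two ingredients: the coassociativity identity $\omega(\mathscr{L}(u\mathcal{O})(M)) \cdot \omega(M) = \mathscr{L}(\omega(M)) \cdot \omega(M)$, and the naturality of the Dirac comb with respect to the comultiplication. Since $\mathscr{L}(u\mathcal{O})$ was constructed as a pullback (Theorem \ref{thm: existence de la cogèbre colibre}) and $p_1$ is a monomorphism, it suffices to verify the equality after post-composition with $p_1$, which reduces everything to the level of $\widehat{\mathscr{S}}_\mathbb{S}^c(u\mathcal{O})$, where the Dirac-comb calculus is transparent and essentially formal. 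The naturality of the whole construction in $M$ is immediate from the explicit formula.
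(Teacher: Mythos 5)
Your proposal is correct and follows essentially the same route as the paper: well-definedness of $d_\varphi$ via Lemma \ref{lemma: coderivations on the cofree cooperad}, and verification of the coderivation identity by exploiting that $p_1$ is a monomorphism to reduce the check to the dual Schur functor level, exactly as in \cite[Proposition 6.23]{grignoulejay18}. You additionally spell out the inverse $d \mapsto \pi_M \cdot d$ and the two composite identities, which the paper leaves implicit by deferring to the cited reference.
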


\begin{proof}
For any $\varphi$, the morphism $d_\varphi$ is an endomorphism of $\mathscr{L}(u\mathcal{O})(M)$ by Lemma \ref{lemma: coderivations on the cofree cooperad}. One has to check that it is a coderivation. The main point is to use the fact that $p_1$ is a monomorphism to be able to check it directly on the dual Schur functors: the same diagrams as in \cite[Proposition 6.23]{grignoulejay18} commute, thus $d_\varphi$ is a coderivation.
\end{proof}

We first present the formal constructions before giving a more explicit description of this adjunction.

\begin{Definition}[Complete Bar construction relative to $\kappa$]
Let $(\PP,\gamma_\PP, d_\PP)$ be a complete curved $\mathcal{S}\otimes c\mathcal{O}^\vee$-algebra. The \textit{complete Bar construction relative to} $\kappa$, denoted by $\widehat{\text{B}}_\kappa\PP$, is given by:
\[
\widehat{\mathrm{B}}_\kappa\PP \coloneqq \left(\mathscr{L}(u\mathcal{O})(\PP), d_{\mathrm{bar}} = d_1 + d_2\right)
\]
where $\mathscr{L}(u\mathcal{O})$ is the cofree dg $u\mathcal{O}$-coalgebra generated by the pdg $\mathbb{S}$-module $\PP$. It is endowed with the pre-differential $d_{\mathrm{bar}}$ given by the sum of two terms $d_1$ and $d_2$. The term $d_1$ is of the pre-differential is the unique coderivation extending the following map
\[
\begin{tikzcd}[column sep=5pc,row sep=3pc]
\phi_1: \mathscr{L}(u\mathcal{O})(\PP) \arrow[r,"\mathscr{L}(u\mathcal{O})(d_\PP)"]
&\mathscr{L}(u\mathcal{O})(\PP) \arrow[r,"\pi_\PP"]
&\PP~.
\end{tikzcd}
\]
It is given by 
\[
d_1 = \mathscr{L}(\mathrm{id})(\diracComb_{\mathbb{S}}(\mathrm{id},d_\mathcal{P}))~.
\]
The term $d_2$ is given by the unique coderivation that extends the map: 
\[
\begin{tikzcd}[column sep=4pc,row sep=3pc]
\phi_2: \mathscr{L}(u\mathcal{O})(\PP)  \arrow[r,"p_1"]
&\widehat{\mathscr{S}}_\mathbb{S}^c(u\mathcal{O})(\PP) \arrow[r,"\widehat{\mathscr{S}}_\mathbb{S}^c(\kappa)(\mathrm{id})"]
&\widehat{\mathscr{S}}_\mathbb{S}^c(\mathcal{S} \otimes c\mathcal{O}^\vee)(\PP) \arrow[r,"\gamma_\PP "]
&\PP~.
\end{tikzcd}
\]
\end{Definition}

\begin{Proposition}
Let $(\PP,\gamma_\PP, d_\PP)$ be a complete curved $\mathcal{S}\otimes c\mathcal{O}^\vee$-algebra. The complete Bar construction $\widehat{\mathrm{B}}_\kappa\PP$ forms a dg counital partial cooperad. Meaning that
\[
d_{\mathrm{bar}}^2=0~.
\]
\end{Proposition}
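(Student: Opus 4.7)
The plan is to exploit the fact that $d_{\mathrm{bar}} = d_1 + d_2$ is by construction a coderivation on the cofree dg $u\mathcal{O}$-coalgebra $\mathscr{L}(u\mathcal{O})(\PP)$, so its square $d_{\mathrm{bar}}^2 = \tfrac{1}{2}[d_{\mathrm{bar}}, d_{\mathrm{bar}}]$ is again a coderivation. By the bijection between coderivations on the cofree $u\mathcal{O}$-coalgebra and maps to the cogenerators (Proposition on coderivations on the cofree construction), it suffices to verify that
\[
\pi_\PP \cdot d_{\mathrm{bar}}^2 = 0~.
\]

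Expanding, one has $\pi_\PP \cdot d_{\mathrm{bar}}^2 = \pi_\PP \cdot d_1^2 + \pi_\PP \cdot (d_1 d_2 + d_2 d_1) + \pi_\PP \cdot d_2^2$. Using $\pi_\PP \cdot d_i = \phi_i$, the first term equals $d_\PP^2 \cdot \pi_\PP$. For the second term, I would use the explicit formula $p_1 \cdot d_1 = \widehat{\mathscr{S}}^c_\mathbb{S}(\mathrm{id})(\diracComb_\mathbb{S}(\mathrm{id}, d_\PP)) \cdot p_1$ together with the pdg $c\mathcal{O}^\vee$-algebra compatibility $d_\PP \cdot \gamma_\PP = \gamma_\PP \cdot \widehat{\mathscr{S}}^c_\mathbb{S}(\mathrm{id})(\diracComb_\mathbb{S}(\mathrm{id}, d_\PP))$ (which holds in this simplified form because $d_{c\mathcal{O}^\vee} = 0$), and the fact that $\widehat{\mathscr{S}}^c_\mathbb{S}(\kappa)(\mathrm{id})$ graded commutes with $\widehat{\mathscr{S}}^c_\mathbb{S}(\mathrm{id})(\diracComb_\mathbb{S}(\mathrm{id}, d_\PP))$. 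The two resulting expressions for $d_\PP \cdot \phi_2$ and $\phi_2 \cdot d_1$ then cancel by Koszul signs (both $\kappa$ and $d_\PP$ being of degree $-1$), so $\pi_\PP \cdot (d_1 d_2 + d_2 d_1) = 0$. This is the step where care with signs is required, since it is the formal expression of the fact that $\partial(\kappa) = 0$ inside the convolution partial $\mathbb{S}$-colored operad.

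For the last term, the composition of two applications of $\phi_2$ unfolds, via the structure of the cofree coalgebra and the associativity of $\gamma_\PP$ as an algebra over the monad $\widehat{\mathscr{S}}^c_\mathbb{S}(c\mathcal{O}^\vee)$, into
\[
\pi_\PP \cdot d_2^2 = \gamma_\PP \cdot \widehat{\mathscr{S}}^c_\mathbb{S}(\kappa \star \kappa)(\mathrm{id}_\PP) \cdot p_1~.
\]
By Lemma~\ref{lemma: kappa is a curved twisting morphism}, $\kappa$ is a curved twisting morphism, and since $\partial(\kappa) = 0$ this reduces to $\kappa \star \kappa = \Theta_{\mathcal{H}om} = \eta_{u\mathcal{O}} \cdot \Theta_{c\mathcal{O}^\vee}$. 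Combined with $\widehat{\mathscr{S}}^c_\mathbb{S}(\eta_{u\mathcal{O}})(\mathrm{id}) \cdot p_1 = \pi_\PP$, this gives
\[
\pi_\PP \cdot d_2^2 = \gamma_\PP \cdot \widehat{\mathscr{S}}^c_\mathbb{S}(\Theta_{c\mathcal{O}^\vee})(\mathrm{id}_\PP) \cdot \pi_\PP~.
\]
Finally, since $\PP$ is a \emph{curved} $\mathcal{S} \otimes c\mathcal{O}^\vee$-algebra, Definition~\ref{def curved alg over a coop} gives $\gamma_\PP \cdot \widehat{\mathscr{S}}^c_\mathbb{S}(\Theta_{c\mathcal{O}^\vee})(\mathrm{id}) = - d_\PP^2$, and the three contributions sum to zero.

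The main obstacle is the middle step $\pi_\PP \cdot (d_1 d_2 + d_2 d_1) = 0$: carefully unwinding the commutations between $\widehat{\mathscr{S}}^c_\mathbb{S}(\kappa)$, $p_1$, and $\diracComb_\mathbb{S}$ on the cofree construction, and checking that the Koszul signs match, is the essential (and only non-formal) part of the verification. The rest of the argument is a direct transposition of the template of \cite[Section 6]{grignoulejay18} from the uncolored to the $\mathbb{S}$-colored setting, made possible by the cofree description of $\mathscr{L}(u\mathcal{O})$ provided by Theorem~\ref{thm: existence de la cogèbre colibre}.
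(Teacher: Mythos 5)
Your proposal is correct and follows essentially the same route as the paper: reduce to the projection onto the cogenerators via the coderivation bijection (the paper phrases this as checking $(\phi_1+\phi_2)\cdot d_{\mathrm{bar}}=0$, which is the same as your $\pi_\PP\cdot d_{\mathrm{bar}}^2=0$ since $\phi_i=\pi_\PP\cdot d_i$), and then observe that the surviving terms assemble into $\gamma_\PP\cdot\widehat{\mathscr{S}}_\mathbb{S}^c(\kappa\star\kappa-\Theta_{\mathcal{H}om})(\mathrm{id}_\PP)$, which vanishes because $\kappa$ is a curved twisting morphism and $\PP$ is a curved algebra. Your three-term decomposition merely spells out the single-line computation the paper performs.
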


\begin{proof}
The proof is very similar to the proof of \cite[Proposition 9.2]{grignoulejay18}. In order to check that $d_{\mathrm{bar}}^2=0$, by a straightforward generalization of \cite[Proposition 6.25]{grignoulejay18} to the $\mathbb{S}$-colored case, it is enough to check that $(\phi_1 + \phi_2) \cdot d_{\mathrm{bar}} = 0$. We have that:
\[
d_{\mathrm{bar}} = \widehat{\mathscr{S}}_\mathbb{S}^c(\mathrm{id})(\diracComb(\mathrm{id},d_\PP)) + \widehat{\mathscr{S}}_\mathbb{S}^c(\mathrm{id}_{u\mathcal{O}}) \circ \widehat{\mathscr{S}}_\mathbb{S}^c(\diracComb(\pi_{\PP},\gamma_\PP \cdot \widehat{\mathscr{S}}_\mathbb{S}^c(\kappa)(\mathrm{id}_\PP))(\mathrm{id}_\PP) \cdot p_2(\PP)~,
\]
where the first term is $d_1$ and the second is $d_2$. One computes that: 
\[
(\phi_1 + \phi_2) \cdot d_{\mathrm{bar}} = \gamma_\PP \cdot \widehat{\mathscr{S}}_\mathbb{S}^c(\kappa \star \kappa - \Theta_{\mathcal{S} \otimes c\mathcal{O}^\vee} \cdot \eta_{u\mathcal{O}})(\mathrm{id}_\PP) = 0~. 
\]
since $\kappa$ is a curved twisting morphism. Therefore $d_{\mathrm{bar}}^2=0$ and $\widehat{\mathrm{B}}_\kappa \PP$ is a dg counital partial cooperad.
\end{proof}

Derivations on $\widehat{\mathscr{S}}_\mathbb{S}^c(\mathcal{S}\otimes c\mathcal{O}^\vee)(M)$, the free pdg $\mathcal{S}\otimes c\mathcal{O}^\vee$-algebra on an pdg $\mathbb{S}$-module $M$, are completely characterized by their restrictions to the generators $M$. See \cite[Proposition 6.15]{grignoulejay18} for an analogous statement, which holds \textit{mutatis mutandis} in our case. Let $M$ be an pdg $\mathbb{S}$-module, we denote $\iota_M$ be the following map:
\[
\begin{tikzcd}[column sep=5pc,row sep=3pc]
\iota_M: M \arrow[r,"\mathscr{S}_\mathbb{S}^c(\epsilon)(id_M)"]
&\mathscr{S}_\mathbb{S}^c(\mathcal{S}\otimes c\mathcal{O}^\vee)(M)~.
\end{tikzcd}
\]
\begin{Definition}[Complete Cobar construction relative to $\kappa$]
Let $(\C, \{\Delta_C\}, d_\C)$ be a dg $u\mathcal{O}$-coalgebra. The \textit{complete Cobar construction relative to} $\kappa$, denoted by $\widehat{\Omega}_\kappa \C$, is given by:
\[
\widehat{\Omega}\C \coloneqq \left(\widehat{\mathscr{S}}_\mathbb{S}^c(\mathcal{S}\otimes c\mathcal{O}^\vee)(\C), d_{\mathrm{cobar}} = d_1 - d_2\right)
\]

where $\widehat{\mathscr{S}}_\mathbb{S}^c(\mathcal{S}\otimes c\mathcal{O}^\vee)(\C)$ is the free pdg $\mathcal{S}\otimes c\mathcal{O}^\vee$-algebra. It is endowed with the pre-differential $d_{\mathrm{cobar}}$ which is the difference of two terms $d_1$ and $d_2$. The term $d_1$ is the unique derivation that extends the map:
\[
\begin{tikzcd}[column sep=5pc,row sep=3pc]
\psi_1: \C \arrow[r,"\iota_\C"]
&\mathscr{S}_\mathbb{S}^c(\mathcal{S}\otimes c\mathcal{O}^\vee)(\C) \arrow[r,"\mathscr{S}_\mathbb{S}^c(\mathrm{id})(d_\C)" ]
&\mathscr{S}_\mathbb{S}^c(\mathcal{S}\otimes c\mathcal{O}^\vee)(\C)~. 
\end{tikzcd}
\]
It is given by
\[
d_1 = \widehat{\mathscr{S}}_{\mathbb{S}}^c(\mathrm{id})(\diracComb(\mathrm{id},d_\C))~.
\]
The term $d_2$ is given by the unique derivation that extends the map: 
\[
\begin{tikzcd}[column sep=5pc,row sep=3pc]
\psi_2: \C \arrow[r,"\Delta_\C"]
&\mathscr{S}_\mathbb{S}^c(u\mathcal{O})(\C) \arrow[r,"\mathscr{S}_\mathbb{S}^c(\kappa)(\mathrm{id}_\C)" ]
&\mathscr{S}_\mathbb{S}^c(\mathcal{S}\otimes c\mathcal{O}^\vee)(\C)~. 
\end{tikzcd}
\]
\end{Definition}

\begin{Proposition}
Let $(\C, \{\Delta_C\}, d_\C)$ be a dg $u\mathcal{O}$-coalgebra. The complete Cobar construction relative to $\kappa$ $\widehat{\Omega}_\kappa\C$ forms a complete curved $\mathcal{S}\otimes c\mathcal{O}^\vee$-algebra. In other words, the following diagram commutes: 
\[
\begin{tikzcd}[column sep=9pc,row sep=3pc]
\widehat{\mathscr{S}}_\mathbb{S}^c(\I_\mathbb{S}) \circ \widehat{\mathscr{S}}_\mathbb{S}^c(\mathcal{S}\otimes c\mathcal{O}^\vee)(\C) \arrow[r,"\widehat{\mathscr{S}}_\mathbb{S}^c(\Theta_{\mathcal{S}\otimes c\mathcal{O}^\vee}) \circ \widehat{\mathscr{S}}_\mathbb{S}^c(\mathrm{id})(\mathrm{id}) "] \arrow[rd,"- d_{\mathrm{cobar}}^2",swap]
&\widehat{\mathscr{S}}_\mathbb{S}^c(\mathcal{S}\otimes c\mathcal{O}^\vee) \circ \widehat{\mathscr{S}}_\mathbb{S}^c(\mathcal{S}\otimes c\mathcal{O}^\vee)(\C)\arrow[d,"\widehat{\mathscr{S}}_\mathbb{S}^c(\Delta_{\mathcal{S}\otimes c\mathcal{O}^\vee}) \cdot \varphi_{\mathcal{S}\otimes c\mathcal{O}^\vee}"]\\
&\widehat{\mathscr{S}}_\mathbb{S}^c(\mathcal{S}\otimes c\mathcal{O}^\vee)(\C)~.
\end{tikzcd}
\]
\end{Proposition}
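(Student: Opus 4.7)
The plan is to verify the curved $\mathcal{S}\otimes c\mathcal{O}^\vee$-algebra axiom for $\widehat{\Omega}_\kappa\C$ by reducing it to an identity on the generating $\mathbb{S}$-module $\C \hookrightarrow \widehat{\Omega}_\kappa\C$, invoking the universal property that a derivation of the free algebra $\widehat{\mathscr{S}}_{\mathbb{S}}^c(\mathcal{S}\otimes c\mathcal{O}^\vee)(\C)$ is uniquely determined by its precomposition with $\iota_\C$, an $\mathbb{S}$-colored adaptation of \cite[Proposition 6.15]{grignoulejay18}. On one hand, $d_{\mathrm{cobar}}^2 = \tfrac{1}{2}[d_{\mathrm{cobar}}, d_{\mathrm{cobar}}]$ is automatically a derivation of degree $-2$, being the graded self-commutator of a degree $-1$ derivation. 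On the other hand, the composite $\gamma \cdot \widehat{\mathscr{S}}_{\mathbb{S}}^c(\Theta_{\mathcal{S}\otimes c\mathcal{O}^\vee})(\mathrm{id})$ is also a derivation of the free algebra, a consequence of the unary nature of $\Theta_{\mathcal{S}\otimes c\mathcal{O}^\vee}$ (taking values in $\I_\mathbb{S}$) together with the compatibility of the curvature with the cooperadic decomposition mediated by the lax monoidal natural transformation $\varphi_{\mathcal{V},\mathcal{V}}$ of Lemma \ref{lemma: Schur lax contravariant functor}. This derivation property is the technical crux of the argument.

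Next I would evaluate both derivations on a generator $c \in \C$. Expanding
\[
d_{\mathrm{cobar}}^2 \cdot \iota_\C = (d_1 - d_2)^2 \cdot \iota_\C = d_1^2 \cdot \iota_\C - (d_1 d_2 + d_2 d_1) \cdot \iota_\C + d_2^2 \cdot \iota_\C~,
\]
the first summand vanishes since $d_1 \cdot \iota_\C = \iota_\C \cdot d_\C$ and $(\C, d_\C)$ is a dg object. Using that $d_\C$ is a coderivation with respect to $\Delta_\C$ (the dg $u\mathcal{O}$-coalgebra axiom), the cross terms collapse to $\widehat{\mathscr{S}}_{\mathbb{S}}^c(\partial(\kappa))(\mathrm{id}) \cdot \Delta_\C$ evaluated on $c$, where $\partial$ denotes the convolution differential of $\mathcal{H}om(\mathcal{S}\otimes c\mathcal{O}^\vee, u\mathcal{O})$. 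By coassociativity of $\Delta_\C$ and the definition of the pre-Lie product $\star$ on the convolution $\mathbb{S}$-colored operad, the term $d_2^2 \cdot \iota_\C$ produces $\widehat{\mathscr{S}}_{\mathbb{S}}^c(\kappa \star \kappa)(\mathrm{id}) \cdot \Delta_\C$. Summing and applying Lemma \ref{lemma: kappa is a curved twisting morphism}, which asserts $\partial(\kappa) + \kappa \star \kappa = \Theta_{\mathcal{H}om} = \eta_{u\mathcal{O}} \cdot \Theta_{\mathcal{S}\otimes c\mathcal{O}^\vee}$, the resulting expression factors through $\I_\mathbb{S}$ via $\Theta_{\mathcal{S}\otimes c\mathcal{O}^\vee}$ and matches $-\gamma \cdot \widehat{\mathscr{S}}_{\mathbb{S}}^c(\Theta_{\mathcal{S}\otimes c\mathcal{O}^\vee})(\mathrm{id}) \cdot \iota_\C$, where the overall sign tracks the $-d_2$ convention in the Cobar differential.

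The main obstacle will be rigorously establishing the derivation property of the curvature action, which requires a careful $\mathbb{S}$-colored generalization of the corresponding argument in \cite{grignoulejay18} and hinges on how $\Theta_{\mathcal{S}\otimes c\mathcal{O}^\vee}$ interacts with the cooperadic coproduct and the lax monoidality $\varphi$. Once this is in place, equality of two derivations on the free algebra follows from their coincidence on the generating $\mathbb{S}$-module $\C$. Completeness of $\widehat{\Omega}_\kappa \C$ as a curved $\mathcal{S}\otimes c\mathcal{O}^\vee$-algebra is then automatic: the dual Schur functor $\widehat{\mathscr{S}}_{\mathbb{S}}^c$ is defined via an infinite product, and the $\mathbb{S}$-colored analogue of \cite[Proposition 4.24]{grignoulejay18} ensures that free algebras over a conilpotent curved groupoid-colored cooperad carry a Hausdorff and complete canonical filtration.
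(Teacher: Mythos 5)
Your proposal is correct and follows essentially the same route as the paper's proof: both reduce the curvature identity to a check on the generators $\C$ via the fact that $d_{\mathrm{cobar}}^2$ and the curvature action are derivations of the free $\mathcal{S}\otimes c\mathcal{O}^\vee$-algebra (the paper cites the $\mathbb{S}$-colored generalization of \cite[Proposition 7.4]{grignoulejay18} for exactly this point), and both conclude by computing that the restriction equals $\widehat{\mathscr{S}}_{\mathbb{S}}^c(\kappa\star\kappa)(\mathrm{id}_\C)$ and invoking the curved twisting morphism equation for $\kappa$ from Lemma \ref{lemma: kappa is a curved twisting morphism}. Your explicit bookkeeping of the $\partial(\kappa)$ cross term (which vanishes here since the pre-differentials of $c\mathcal{O}^\vee$ and $u\mathcal{O}$ are zero) and your closing remark on completeness of free algebras are consistent with, and slightly more detailed than, the paper's account.
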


\begin{proof}
The proof is again very similar to the proof of \cite[Proposition 9.1]{grignoulejay18}. In order to check that 
\[
- d_{\mathrm{cobar}}^2 = \widehat{\mathscr{S}}_\mathbb{S}^c(\Delta_{\mathcal{S}\otimes c\mathcal{O}^\vee}) \cdot \varphi_{\mathcal{S}\otimes c\mathcal{O}^\vee} \cdot \widehat{\mathscr{S}}_\mathbb{S}^c(\Theta_{\mathcal{S}\otimes c\mathcal{O}^\vee}) \circ \widehat{\mathscr{S}}_\mathbb{S}^c(\mathrm{id})(\mathrm{id}) ~,
\]
it is in fact enough to check that 
\[
- d_{\mathrm{cobar}} \cdot (\psi_1 - \psi_2) = \widehat{\mathscr{S}}_\mathbb{S}^c(\Theta_{\mathcal{S}\otimes c\mathcal{O}^\vee})(\mathrm{id}_\C)~.
\]
This comes from a straightforward generalization of \cite[Proposition 7.4]{grignoulejay18}. The pre-differential $d_{\mathrm{cobar}}$ is given by
\[
d_{\mathrm{cobar}} = \widehat{\mathscr{S}}_\mathbb{S}^c(\mathrm{id})(\diracComb(\iota_{\C},d_\C)) - \widehat{\mathscr{S}}_\mathbb{S}^c(\Delta_{\mathcal{S}\otimes c\mathcal{O}^\vee})(\mathrm{id}_\C) \cdot \widehat{\mathscr{S}}_\mathbb{S}^c(\diracComb(\iota_{\C},\widehat{\mathscr{S}}_\mathbb{S}^c(\kappa)(\mathrm{id}_\C) \cdot \Delta_\C))(\mathrm{id}_\C)~,
\]
where the first term is $d_1$ and the second is $d_2$. One computes that:
\[
- d_{\mathrm{cobar}} \cdot (\psi_1 - \psi_2) = \widehat{\mathscr{S}}_\mathbb{S}^c(\kappa \star \kappa)(\mathrm{id}_\C)~.
\]
Since $\kappa$ is a curved twisting morphism, this concludes the proof. 
\end{proof}

Let us give explicit descriptions of these functors.

\begin{Definition}[Complete Cobar construction]
Let $(\mathcal{C},\{\Delta_i\},\epsilon_\C,d_\C)$ be a dg counital partial cooperad. The \textit{complete Cobar construction} $\widehat{\Omega}\C$ of $\mathcal{C}$ is given by:
\[
\widehat{\Omega}\C \coloneqq \left( \overline{\mathscr{T}}^\wedge(s^{-1}\C \oplus \nu), d_{\mathrm{cobar}} = d_1 - d_2, \Theta_{\mathrm{cobar}} \right)~,
\]
where $\overline{\mathscr{T}}^\wedge(s^{-1}\C \oplus \nu)$ is the completed reduced tree monad applied to the dg $\mathbb{S}$-module $s^{-1}\C \oplus \nu$. Here $\nu$ is an arity $1$ degree $-2$ generator. It is endowed with the differential $d_{\mathrm{cobar}}$ given by the difference of $d_1$ and $d_2$. The term $d_1$ is the unique derivation extending
\[
\begin{tikzcd}[column sep=5pc,row sep=3pc]
s^{-1}\C \arrow[r,"s^{-1}d_\C"]
&s^{-1}\C \arrow[r,rightarrowtail]
&\overline{\mathscr{T}}^\wedge(s^{-1}\C \oplus \nu)~.
\end{tikzcd}
\]
The term $d_2$ comes from the \textit{structure} of a dg counital partial cooperad on $\C$, it is given by the unique derivation extending
\[
\begin{tikzcd}[column sep=5pc,row sep=3pc]
s^{-1}\C \arrow[r,"s^{-2}\Delta_{(1)} + s^{-1}\epsilon_\C"]
&(s^{-1}\C \circ_{(1)} s^{-1}\C) \oplus \I.\nu \arrow[r,rightarrowtail]
&\overline{\mathscr{T}}^\wedge(s^{-1}\C \oplus \I.\nu)~.
\end{tikzcd}
\]
Its curvature $\Theta_{\mathrm{cobar}}$ is given by the following map
\[
\begin{tikzcd}[column sep=4pc,row sep=3pc]
\Theta_{\mathrm{cobar}}: \I \arrow[r,"s^{-2}"]
&\I.\nu \arrow[r,hookrightarrow] 
&\overline{\mathscr{T}}^\wedge(s^{-1}\C \oplus \I.\nu)~.
\end{tikzcd}
\]
The resulting complete Cobar construction of $\C$ forms a complete curved absolute partial operad.
\end{Definition}

\begin{Notation}
We denote by $\mathscr{T}^\vee(-)$ the cofree counital partial cooperad endofunctor in the category of pdg $\mathbb{S}$-modules. It is given by the cofree graded $u\mathcal{O}$-coalgebra $\mathscr{L}(u\mathcal{O})(-)$.
\end{Notation}

\begin{Definition}[Complete Bar construction]
Let $(\PP, \gamma_\PP, d_\PP,\Theta_\PP)$ a complete curved absolute partial operad. The \textit{complete Bar construction} $\widehat{\mathrm{B}}\PP$ of $\PP$ is given by:
\[
\widehat{\mathrm{B}\PP} \coloneqq \left(\mathscr{T}^\vee(s\PP), d_{\mathrm{bar}} = d_1 + d_2 \right)~,
\]
where $\mathscr{T}^\vee(s\PP)$ is the cofree counital partial cooperad generated by the pdg $\mathbb{S}$-module $s\PP$. It is endowed the pre-differential $d_{\mathrm{bar}}$, given by the sum of $d_1$ and $d_2$. The term $d_1$ is the unique coderivation extending
\[
\begin{tikzcd}[column sep=4pc,row sep=3pc]
\mathscr{T}^\vee(s\PP)  \arrow[r, twoheadrightarrow]
&s\PP \arrow[r,"sd_{\PP}"]
&s\PP~.
\end{tikzcd} 
\]
The term $d_2$ comes from the \textit{structure} of $\PP$, it is given by the unique coderivation extending 
\[
\begin{tikzcd}[column sep=5pc,row sep=3pc]
\mathscr{T}^\vee(s\PP) \arrow[r, twoheadrightarrow]
&\I \oplus (s\PP \circ_{(1)} s\PP) \arrow[r,"s\Theta_\PP + s^2\gamma_{(1)}"]
&s\PP~.
\end{tikzcd}
\]
The resulting complete Bar construction of $\PP$ forms a dg counital partial cooperad. 
\end{Definition}

\begin{Remark}
The complete Bar construction of a complete curved absolute partial operad $\PP$ is, in general, not coaugmented. Indeed, one has that
\[
d_{\mathrm{bar}}(|) = d_2(|) = \Theta_\PP(\mathrm{id})~,
\]
where $|$ denotes the coaugmented counit of $\mathscr{T}^\vee(s\PP)$.
\end{Remark}

\begin{Proposition}
The following holds:
\begin{enumerate}
\item Let $(\mathcal{C},\{\Delta_i\},\epsilon_\C,d_\C)$ be a dg counital partial cooperad. There is a natural isomorphism
\[
\widehat{\Omega}_\kappa \C \cong \widehat{\Omega}\C~.
\]
\item Let $(\PP, \gamma_\PP, \Theta_\PP, d_\PP)$ a complete curved absolute partial operad. There is a natural isomorphism 
\[
\widehat{\mathrm{B}}_\kappa \PP \cong \widehat{\mathrm{B}} \PP~.
\]
\end{enumerate}
\end{Proposition}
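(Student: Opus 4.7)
The plan is to deduce both statements from a careful identification of the underlying (co)free objects. Since an adjunction is determined up to natural isomorphism by either of its functors (and mates of natural isomorphisms yield natural isomorphisms of adjoints), it is enough to verify either part, so I would focus on part (1) and then explain how part (2) follows.

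For part (1), I would first dualize the bijection used in the proof of Proposition \ref{Prop: cO-cogebres.}: there an explicit isomorphism of graded $\mathbb{S}$-modules $\mathscr{S}_\mathbb{S}(c\mathcal{O}^\vee)(M) \cong \overline{\mathscr{T}}^c(M \oplus \nu)$ was built, sending the generator $\theta \in c\mathcal{O}^\vee(0;1)$ to the decoration $\nu$. Because each component $c\mathcal{O}^\vee(n_1,\dots,n_r;n)$ is finite-dimensional, applying the dual Schur functor $\widehat{\mathscr{S}}_\mathbb{S}^c$ and taking the operadic suspension $\mathcal{S}$ into account yields a natural isomorphism of pdg $\mathbb{S}$-modules
\[
\Phi_\C:\ \widehat{\mathscr{S}}_\mathbb{S}^c(\mathcal{S}\otimes c\mathcal{O}^\vee)(\C) \xrightarrow{\ \cong\ } \overline{\mathscr{T}}^\wedge(s^{-1}\C \oplus \nu)~.
\]
On both sides the (complete) absolute partial operad structure is the free one induced by the lax monoidal structure of $\widehat{\mathscr{S}}_\mathbb{S}^c$ on the left and by tree grafting on the right; one checks that $\Phi_\C$ intertwines these structures exactly as in the proof of Proposition \ref{Prop: cO-cogebres.}.

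Next I would verify that $\Phi_\C$ matches the curvatures and the two pieces of the differential. The curvature of $\widehat{\Omega}_\kappa\C$ is encoded by $\widehat{\mathscr{S}}_\mathbb{S}^c(\Theta_{\mathcal{S}\otimes c\mathcal{O}^\vee})$, whose image under $\Phi_\C$ is precisely the degree $-2$ generator $\nu$, which is the curvature appearing in the explicit complete Cobar construction $\widehat{\Omega}\C$. For the differential, the term $d_1 = \widehat{\mathscr{S}}_\mathbb{S}^c(\mathrm{id})(\diracComb(\mathrm{id},d_\C))$ corresponds under $\Phi_\C$ to the unique derivation extending $s^{-1} d_\C$; and the term $d_2$, built from $\kappa$ followed by the partial decomposition maps on the left, corresponds under $\Phi_\C$ to the derivation extending $s^{-2}\Delta_{(1)} + s^{-1} \epsilon_\C$. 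The $\epsilon_\C$ contribution is the key point: it comes from the constant (arity-zero) part of $\kappa$, which sends the generator $u \in U(0;1) \subseteq u\mathcal{O}$ to the unit and thus, on the predual side, extracts the counit $\epsilon_\C$, producing the $\nu$-term in $\widehat{\Omega}\C$.

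For part (2), the cofree dg $u\mathcal{O}$-coalgebra $\mathscr{L}(u\mathcal{O})(s\PP)$ is, by Theorem \ref{thm: existence de la cogèbre colibre} together with the identification of $u\mathcal{O}$-coalgebras with counital partial cooperads, naturally isomorphic to the cofree counital partial cooperad $\mathscr{T}^\vee(s\PP)$. Under this identification, the two coderivations $d_1$ and $d_2$ in $\widehat{\mathrm{B}}_\kappa\PP$ are characterized by their projections $\phi_1,\phi_2$ onto the cogenerators $s\PP$, and a direct computation shows that they match $sd_\PP$ and $s\Theta_\PP + s^2\gamma_{(1)}$ respectively, i.e., the defining data of $\widehat{\mathrm{B}}\PP$. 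Alternatively, part (2) can be obtained from part (1) purely formally by taking the mate of the natural isomorphism of left adjoints. The main obstacle I expect is a careful sign bookkeeping through the operadic suspension $\mathcal{S}$, together with ensuring that the constant piece of $\kappa$ on $sJ \subseteq \mathcal{S}\otimes c\mathcal{O}^\vee$ is correctly interpreted as the counit $\epsilon_\C$ (resp.\ curvature $\Theta_\PP$) in the explicit description, since this is what relates the abstract curved twisting morphism $\kappa$ to the concrete curvature-producing terms in the Cobar and Bar constructions.
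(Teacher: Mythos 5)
Your proposal is correct and follows essentially the same route as the paper, which proves the statement "by direct inspection, extending the isomorphism of Lemma \ref{lemma: dual Schur of cOvee}" — i.e.\ exactly the natural identification $\widehat{\mathscr{S}}_\mathbb{S}^c(c\mathcal{O}^\vee)(M) \cong \overline{\mathscr{T}}^\wedge(M \oplus \nu)$ obtained from the tree bijection of Proposition \ref{Prop: cO-cogebres.}, followed by matching the curvature and the two summands of the (pre-)differential, with the dual identification $\mathscr{L}(u\mathcal{O})(-)\cong\mathscr{T}^\vee(-)$ handling the Bar side. Your additional bookkeeping of where the $\epsilon_\C$- and $\Theta_\PP$-terms come from (the constant part of $\kappa$ on $sJ$) is exactly the content the paper leaves implicit.
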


\begin{proof}
This can be shown by direct inspection, extending the isomorphism of Lemma \ref{lemma: dual Schur of cOvee}.
\end{proof}

\begin{lemma}[Curved convolution operad]
Let $(\PP,\gamma_\PP,d_\PP,\Theta_\PP)$ be a complete curved absolute partial operad and let $(\C,\{\Delta_i\},\epsilon_\C,d_\C)$ be a dg counital partial cooperad. The convolution pdg partial operad $(\mathcal{H}om(\C,\PP),\{\circ_i\},\partial)$ forms a curved partial operad endowed with the curvature given by
\[
\begin{tikzcd}
\Theta_{\mathcal{H}om}(\mathrm{id}): \mathcal{C} \arrow[r,"\epsilon_\C"]
&\I \arrow[r,"\Theta_\PP"]
&\mathcal{P}~.
\end{tikzcd}
\]
\end{lemma}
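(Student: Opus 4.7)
The plan is to unpack the definition of a curved partial operad, and verify the three things required: that $\Theta_{\mathcal{H}om}$ is a well-defined morphism of pdg $\mathbb{S}$-modules of degree $-2$ supported in arity $1$, that it is closed with respect to the pre-differential $\partial$ on $\mathcal{H}om(\C,\PP)$, and finally that $\partial^2 = \gamma_{(1)}\cdot(\Theta_{\mathcal{H}om}\circ \mathrm{id} - \mathrm{id}\circ'\Theta_{\mathcal{H}om})$ in the convolution partial operad structure.

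First I would note that by construction $\Theta_{\mathcal{H}om} = \Theta_\PP\cdot \epsilon_\C$ is a composition of a degree $0$ morphism and a degree $-2$ morphism, so has the right degree, and is supported in arity $1$ because $\epsilon_\C$ factors through $\I$. Its closedness $\partial(\Theta_{\mathcal{H}om}) = d_\PP\cdot \Theta_\PP\cdot\epsilon_\C - \Theta_\PP\cdot\epsilon_\C\cdot d_\C = 0$ follows immediately from the fact that $\Theta_\PP$ is closed (it is a morphism of pdg $\mathbb{S}$-modules from $(\I,0)$) and that $\epsilon_\C$ is closed (it is a morphism of pdg $\mathbb{S}$-modules to $(\I,0)$).

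The heart of the proof is the curvature equation. For $\alpha\colon \C\to\PP$ of degree $|\alpha|$, a direct computation using $\partial(\alpha) = d_\PP\cdot\alpha - (-1)^{|\alpha|}\alpha\cdot d_\C$ gives
\[
\partial^2(\alpha) = d_\PP^2\cdot\alpha - \alpha\cdot d_\C^2~.
\]
Since $\C$ is a dg counital partial cooperad, $d_\C^2=0$, so $\partial^2(\alpha) = d_\PP^2\cdot\alpha$. Invoking the curved operad relation in $\PP$, this equals
\[
\gamma_{(1)}\cdot(\Theta_\PP\circ\mathrm{id}_\PP - \mathrm{id}_\PP\circ'\Theta_\PP)\cdot \alpha~.
\]

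It then remains to identify this expression with $\Theta_{\mathcal{H}om}\circ_1\alpha - \sum_i \alpha\circ_i\Theta_{\mathcal{H}om}$ in the convolution structure. By definition, $(\Theta_{\mathcal{H}om}\circ_1\alpha)(c) = \circ_1\cdot(\Theta_{\mathcal{H}om}\otimes\alpha)\cdot\Delta_1(c)$, and since $\Theta_{\mathcal{H}om}$ factors through $\I$ via $\epsilon_\C$, the counit axiom $(\epsilon_\C\otimes\mathrm{id})\cdot\Delta_1 = \mathrm{id}$ collapses this to $\theta_\PP\circ_1\alpha(c)$. Symmetrically, the counit axiom on the second factor yields $(\alpha\circ_i\Theta_{\mathcal{H}om})(c) = \alpha(c)\circ_i\theta_\PP$, and summing over $i$ produces $\gamma_{(1)}\cdot(\mathrm{id}_\PP\circ'\Theta_\PP)\cdot\alpha$. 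Combining both identifications gives the desired equality. The main (minor) subtlety is bookkeeping the signs in the second computation and being careful that the arity-$1$ support of $\Theta_{\mathcal{H}om}$ lets the counit axioms apply exactly; no issue arises from the absolute nature of $\PP$, since only single partial compositions $\circ_i$ of $\PP$ are ever invoked, never infinite sums thereof.
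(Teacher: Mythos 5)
Your proof is correct and follows essentially the same route as the paper, which reduces the claim to the computation $\partial^2(\alpha) = d_\PP^2\cdot\alpha - \alpha\cdot d_\C^2$, kills the second term since $\C$ is dg, and converts $d_\PP^2$ via the curvature relation of $\PP$ and the counit axioms of $\C$ into $\Theta_{\mathcal{H}om}\circ_1\alpha - \sum_i\alpha\circ_i\Theta_{\mathcal{H}om}$ (the paper simply cites the analogous $\mathbb{S}$-colored convolution lemma, where the same argument runs with the roles of the curved and dg sides exchanged). Your closing remark that only single partial compositions of $\PP$ are needed, never infinite sums, correctly identifies the point the paper itself flags in the remark following the lemma.
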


\begin{proof}
The proof is completely analogous to Lemma \ref{lemma curved S colored convolution}.
\end{proof}

\begin{Remark}
We only used the operations $\{\circ_i\}$ on $\PP$ in order to define this convolution operad. These operations are obtained by restricting the structural morphism $\gamma_\PP$ to finite sums inside the completed reduced tree monad. See Appendix B \ref{Appendix B} for more details. In fact, using the morphism $\gamma_\PP$, one can show that this convolution curved partial operad is in fact an curved \textit{absolute} partial operad. The idea is that infinite sums of convolution operations have a well-defined image in $\PP$ because $\PP$ is an absolute operad. 

\medskip

Moreover, the convolution operad between a conilpotent cooperad $\C$ and an operad is \textit{also} an \textit{absolute operad}. Indeed, the decompositions in $\C$ always produce finite sums, therefore infinite sums of convolution operations are still well-defined. For a more detailed discussion about how \textit{convolution structures} always produce \textit{absolute} types of algebraic structures, see \cite[Section 4.1]{integration}.
\end{Remark}

Once we have the curved convolution partial operad, we can define the notion of curved twisting morphism between complete curved absolute partial operads and dg counital partial cooperads.

\begin{Definition}[Curved twisting morphism]
Let $(\PP,\gamma_\PP,d_\PP,\Theta_\PP)$ be a complete curved absolute partial operad and let $(\C,\{\Delta_i\},\epsilon,d_\C)$ be a dg counital partial cooperad. A \textit{curved twisting morphism} $\alpha$ is a Maurer-Cartan element curved pre-Lie algebra given by the curved convolution operad:
\[
\mathfrak{g}_{\C,\PP} \coloneqq \prod_{n\geq 0}\mathrm{Hom}_{\mathbb{S}}(\C(n),\PP(n))~.
\]
This is the data of a morphism of graded $\mathbb{S}$-modules $\alpha: \C \longrightarrow \PP$ of a degree $-1$ such that:
\[
\partial(\alpha) + \alpha \star \alpha = \Theta_{\mathcal{H}om}(\mathrm{id})~.
\]
The set of twisting morphism between $\C$ and $\PP$ will be denoted $\mathrm{Tw}(\C,\PP)$. 
\end{Definition}

The set of curved twisting morphism between between complete curved partial operads and counital partial dg cooperads
\[
\begin{tikzcd}
\mathrm{Tw}(- , -) : (\mathsf{dg}~\mathsf{upCoop})^{\text{op}} \times \mathsf{curv}~\mathsf{abs}~\mathsf{pOp}^{\mathsf{comp}}  \arrow[r]
&\mathsf{Set}~,
\end{tikzcd}
\]
which is represented on both sides by the complete Bar-Cobar constructions.

\begin{Proposition}[Complete Bar-Cobar adjunction]
Let $(\PP,\gamma_\PP,\Theta_\PP,d_\PP)$ be a complete curved partial operad and let $(\C,\{\Delta_i\},\epsilon,d_\C)$ be a dg counital partial  cooperad. There are isomorphisms:
\[
\mathrm{Hom}_{\mathsf{curv}~\mathsf{abs}~\mathsf{pOp}^{\mathsf{comp}}}(\widehat{\Omega}\C,\PP) \cong \mathrm{Tw}(\C,\PP) \cong \mathrm{Hom}_{\mathsf{dg}~\mathsf{upCoop}}(\C,\widehat{\mathrm{B}}\PP)~,
\]
which are natural in $\C$ and $\PP$. 
\end{Proposition}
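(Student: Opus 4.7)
The plan is to establish the two isomorphisms separately by appealing to the universal properties of the complete Cobar and complete Bar constructions, using the explicit descriptions given just above. In both cases the strategy is classical: a morphism out of (resp.\ into) a free (resp.\ cofree) object is determined by its restriction to generators (resp.\ corestriction to cogenerators), and the compatibility with the differential reads off as the Maurer--Cartan equation defining a curved twisting morphism. The only genuinely new feature compared with the classical acyclic Bar--Cobar adjunction is that the curvature terms on both sides have to be tracked, and matched with the curvature $\Theta_{\mathcal{H}om}(\mathrm{id}) = \Theta_\PP \cdot \epsilon_\C$ of the convolution curved pre-Lie algebra.

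First I would treat the Cobar side. The complete Cobar construction $\widehat{\Omega}\C = \overline{\mathscr{T}}^\wedge(s^{-1}\C \oplus \nu)$ carries, by the characterization of complete curved absolute partial operads recalled in the appendix, the universal property that a morphism $f : \widehat{\Omega}\C \to \PP$ in $\mathsf{curv}~\mathsf{abs.pOp}^{\mathsf{comp}}$ is uniquely determined by its restriction to the generators $s^{-1}\C \oplus \nu$. Preservation of the curvature forces $f(\nu) = \Theta_\PP(\mathrm{id})$, so the data reduces to a degree $0$ map $s^{-1}\C \to \PP$, equivalently a degree $-1$ map $\alpha : \C \to \PP$. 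The compatibility $f \cdot d_{\mathrm{cobar}} = d_\PP \cdot f$ on generators, using $d_{\mathrm{cobar}} = d_1 - d_2$ with $d_2$ extending $s^{-2}\Delta_{(1)} + s^{-1}\epsilon_\C$, unpacks precisely into
\[
\partial(\alpha) + \alpha \star \alpha = \Theta_\PP \cdot \epsilon_\C = \Theta_{\mathcal{H}om}(\mathrm{id})~,
\]
which is exactly the Maurer--Cartan equation defining a curved twisting morphism. The assignment $f \mapsto \alpha$ is evidently natural and bijective with inverse obtained by extending any such $\alpha$ to a morphism on the free absolute construction.

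For the Bar side, the argument is dual. By Theorem \ref{thm: existence de la cogèbre colibre} and the identification $\widehat{\mathrm{B}}\PP = \mathscr{T}^\vee(s\PP) = \mathscr{L}(u\mathcal{O})(s\PP)$ as the cofree counital partial cooperad on $s\PP$, a morphism $g : \C \to \widehat{\mathrm{B}}\PP$ in $\mathsf{dg}~\mathsf{upCoop}$ is uniquely determined by its corestriction $\pi_{s\PP} \cdot g : \C \to s\PP$, that is, by a degree $-1$ map $\alpha : \C \to \PP$. Compatibility with the pre-differential $d_{\mathrm{bar}} = d_1 + d_2$, whose cogeneric component is $sd_\PP + s\Theta_\PP + s^2 \gamma_{(1)}$, translates once again into the same Maurer--Cartan equation for $\alpha$ (the $\Theta_\PP$-term accounts exactly for the $\Theta_{\mathcal{H}om}$ on the right-hand side). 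This gives the second natural bijection.

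The main technical obstacle is the bookkeeping in Step 1: verifying carefully that the universal property of $\widehat{\Omega}\C$ as a complete curved absolute partial operad really does reduce a morphism to its values on $s^{-1}\C \oplus \nu$, and that the constraint imposed by the preservation of curvature matches the constant term of the Maurer--Cartan equation. This relies on the characterization of complete curved absolute partial operads proved in Appendix \ref{Appendix B}, together with the existence and defining property of the cofree counital partial cooperad functor $\mathscr{L}(u\mathcal{O})$ established in Theorem \ref{thm: existence de la cogèbre colibre}; once these are in hand, both bijections become formal, and naturality in $\C$ and $\PP$ is immediate from the constructions.
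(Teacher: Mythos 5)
Your proposal is correct and is exactly the ``straightforward computation'' that the paper's one-line proof alludes to: morphisms out of the free complete curved absolute partial operad (resp.\ into the cofree counital partial cooperad) are determined by their values on (co)generators, with the curvature generator $\nu$ forced to $\Theta_\PP(\mathrm{id})$, and compatibility with $d_{\mathrm{cobar}}$ (resp.\ $d_{\mathrm{bar}}$) unpacking to the curved Maurer--Cartan equation $\partial(\alpha)+\alpha\star\alpha=\Theta_{\mathcal{H}om}(\mathrm{id})$. No divergence from the paper's approach; your write-up simply supplies the details the author omits.
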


\begin{proof}
This can be shown by a straightforward computation.
\end{proof}

\begin{Corollary}\label{Complete Bar-Cobar adjunction}
There is a pair of adjoint functors:
\[
\begin{tikzcd}[column sep=7pc,row sep=3pc]
            \mathsf{dg}~\mathsf{upCoop} \arrow[r, shift left=1.1ex, "\widehat{\Omega}"{name=F}] &\mathsf{curv}~\mathsf{abs}~\mathsf{pOp}^{\mathsf{comp}}~. \arrow[l, shift left=.75ex, "\widehat{\mathrm{B}}"{name=U}]
            \arrow[phantom, from=F, to=U, , "\dashv" rotate=-90]
\end{tikzcd}
\]
\end{Corollary}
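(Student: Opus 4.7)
The plan is to deduce this corollary directly from the preceding Proposition, which has already done the essential conceptual work by exhibiting natural bijections
\[
\mathrm{Hom}_{\mathsf{curv}~\mathsf{abs.pOp}^{\mathsf{comp}}}(\widehat{\Omega}\C,\PP) \cong \mathrm{Tw}(\C,\PP) \cong \mathrm{Hom}_{\mathsf{dg}~\mathsf{upCoop}}(\C,\widehat{\mathrm{B}}\PP).
\]
Composing these two isomorphisms gives the hom-set bijection that, by the standard characterization of adjunctions via universal arrows, witnesses the adjoint pair $\widehat{\Omega} \dashv \widehat{\mathrm{B}}$. So the main task is simply to verify that the three corners of this triangle assemble into genuine functors and that the bijections are natural in both variables.

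First I would check functoriality of $\widehat{\Omega}$ and $\widehat{\mathrm{B}}$. For a morphism $f\colon \C \to \C'$ of dg counital partial cooperads, the induced map of free pdg $\mathcal{S}\otimes c\mathcal{O}^\vee$-algebras $\widehat{\mathscr{S}}_\mathbb{S}^c(\mathrm{id})(f)\colon \widehat{\mathscr{S}}_\mathbb{S}^c(\mathcal{S}\otimes c\mathcal{O}^\vee)(\C) \to \widehat{\mathscr{S}}_\mathbb{S}^c(\mathcal{S}\otimes c\mathcal{O}^\vee)(\C')$ commutes with both $d_1$ (as $f$ commutes with the pre-differentials) and $d_2$ (as $f$ commutes with the partial decompositions $\Delta_i$ and the counits $\epsilon$, which together determine $\psi_2$ via $\kappa$). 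Dually, for a morphism $g\colon \PP \to \PP'$ of complete curved absolute partial operads, the cofree extension $\mathscr{L}(u\mathcal{O})(g)$ is a morphism of dg $u\mathcal{O}$-coalgebras, and the characterization of coderivations from the cofree construction shows it commutes with $d_{\mathrm{bar}}$ since $g$ commutes with $d_\PP$, $\gamma_\PP$, and $\Theta_\PP$.

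Next I would establish the naturality of the twisting-morphism bijection. The key observation is that both bijections in the Proposition are given on the nose by explicit formulas: an operadic morphism $\widehat{\Omega}\C \to \PP$ is determined by its restriction to the generators $s^{-1}\C \oplus \nu \subset \overline{\mathscr{T}}^\wedge(s^{-1}\C \oplus \nu)$, and this restriction corresponds (after a degree shift) to a twisting morphism $\alpha\colon \C \to \PP$; symmetrically, a cooperadic morphism $\C \to \widehat{\mathrm{B}}\PP$ is determined by its projection onto the cogenerators $s\PP$. Naturality in $\C$ and $\PP$ is then immediate from the functoriality of the free and cofree constructions, together with the fact that the Maurer-Cartan equation $\partial(\alpha) + \alpha \star \alpha = \Theta_{\mathcal{H}om}(\mathrm{id})$ is preserved under pre- and post-composition by morphisms of counital cooperads and of complete curved absolute operads respectively.

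There is no substantive obstacle here: the content of the adjunction is entirely encoded in the fact that $\kappa\colon \mathcal{S}\otimes c\mathcal{O}^\vee \to u\mathcal{O}$ is a curved twisting morphism (Lemma \ref{lemma: kappa is a curved twisting morphism}), which guarantees that $d_{\mathrm{bar}}^2 = 0$ and that $\widehat{\Omega}\C$ really is a complete curved absolute partial operad. The only mild subtlety worth writing out carefully is that the target of $\widehat{\mathrm{B}}$ lands in the subcategory of \emph{complete} curved absolute partial operads, which is automatic because cofree $u\mathcal{O}$-algebras are complete by the colored generalization of Proposition \ref{prop: complete C algebras are a reflexiv subcat}, and that the target of $\widehat{\Omega}$ lands in \emph{dg} (rather than merely pdg) counital partial cooperads, which is precisely the content of $d_{\mathrm{bar}}^2 = 0$.
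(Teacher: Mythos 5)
Your proposal is correct and follows the same route as the paper: the corollary is obtained directly from the preceding Proposition's natural bijections $\mathrm{Hom}(\widehat{\Omega}\C,\PP) \cong \mathrm{Tw}(\C,\PP) \cong \mathrm{Hom}(\C,\widehat{\mathrm{B}}\PP)$, with the substantive content (that $\widehat{\Omega}\C$ is a complete curved absolute partial operad and $\widehat{\mathrm{B}}\PP$ a dg counital partial cooperad) already established in the earlier Propositions via the fact that $\kappa$ is a curved twisting morphism. One small transposition in your last paragraph: it is the target of $\widehat{\Omega}$ (the free $\mathcal{S}\otimes c\mathcal{O}^\vee$-algebra) whose completeness is automatic, and the target of $\widehat{\mathrm{B}}$ (the cofree $u\mathcal{O}$-coalgebra) for which $d_{\mathrm{bar}}^2=0$ is the point to check.
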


\section{Counital partial cooperads up to homotopy and transfer of model structures}
In this section, we introduce the notion of a counital partial cooperads up to homotopy. This notion is dual to that of unital partial operads up to homotopy developed in \cite{grignou2021}. In fact, there is a $\mathbb{S}$-colored operad which encodes unital partial operads up to homotopy as its algebras and counital partial cooperads up to homotopy as its coalgebras. The advantage of counital partial cooperads up to homotopy with respect to counital partial cooperads is that the admit a canonical cylinder object. Thus, using a left transfer theorem, we endow them with a model structure where weak equivalences are arity-wise quasi-isomorphisms. Notice that here, the category of counital partial cooperads up to homotopy with \textit{strict morphisms} is the one being considered. Afterwards, we transfer this model structure to the category of complete curved absolute partial operads via a complete Bar-Cobar adjunction.

\begin{Notation}
We denote by $\mathrm{RT}_\omega^n$ the set of rooted trees of arity $n$ with $\omega$ internal edges. There is no restriction on the number of incoming edges of the vertices considered.
\end{Notation}

\begin{Definition}[Counital partial cooperad up to homotopy]
Let $(\C,d_\C)$ be a dg $\mathbb{S}$-module. A \textit{counital partial cooperad up to homotopy structure} on $\C$ amounts to the data of a derivation of degree $-1$
\[
d: \overline{\mathscr{T}}^\wedge(s^{-1}\C \oplus \nu) \longrightarrow \overline{\mathscr{T}}^\wedge(s^{-1}\C \oplus \nu)
\]
such that the restriction of $d$ to $s^{-1}\C$ is given by $s^{-1}d_\C$ and such that for any series of rooted trees of arity $n$ labeled by elements of $s^{-1}\C$:
\[
d^2\left(\sum_{\omega \geq 1} \sum_{\tau \in \mathrm{RT}_\omega^n} \tau \right) = \sum_{\omega \geq 1} \sum_{\tau \in \mathrm{RT}_\omega^n} \left( v \circ_1 \tau - \sum_{i=0}^n \tau \circ_i v \right)~.
\]
In order words, the data of $\left(\overline{\mathscr{T}}^\wedge(s^{-1}\C \oplus \nu),d \right)$ forms a complete curved absolute partial operad.
\end{Definition}

\begin{Remark}
The morphism 
\[
\begin{tikzcd}[column sep=3pc,row sep=3pc]
\C  \arrow[r,"\cong"]
&s^{-1}\C \arrow[r,"d"]
&\overline{\mathscr{T}}^\wedge(s^{-1}\C \oplus \nu) \arrow[r,twoheadrightarrow]
&\I.\nu
\end{tikzcd} 
\]
endows $\C$ with a counit $\epsilon_\C: \C \longrightarrow \I$ which satisfies the counital axiom of counital partial cooperad only up to higher homotopies. The morphism 
\[
\begin{tikzcd}[column sep=3pc,row sep=3pc]
\C  \arrow[r,"\cong"]
&s^{-1}\C \arrow[r,"d"]
&\overline{\mathscr{T}}^\wedge(s^{-1}\C \oplus \nu) \arrow[r,twoheadrightarrow]
&\overline{\mathscr{T}}^{\wedge~(2)}(s^{-1}\C \oplus \nu)
\end{tikzcd} 
\]
endows $\C$ with a family of partial decompositions maps $\{\Delta_i\}$ which satisfy the coparallel and cosequential axioms of a counital partial cooperad only up to higher homotopies. All the higher homotopies are given by the morphism
\[
\begin{tikzcd}[column sep=3pc,row sep=3pc]
\C  \arrow[r,"\cong"]
&s^{-1}\C \arrow[r,"d"]
&\overline{\mathscr{T}}^\wedge(s^{-1}\C \oplus \nu) \arrow[r,twoheadrightarrow]
&\overline{\mathscr{T}}^{\wedge~(\geq 3)}(s^{-1}\C \oplus \nu)~.
\end{tikzcd} 
\]
\end{Remark}

\begin{Example}
Any dg counital partial cooperad $(\C,\{\Delta_i\},\epsilon,d_\C)$ is an example of counital partial cooperad up to homotopy via its complete Cobar construction $\widehat{\Omega}\C$. 
\end{Example}

\begin{Definition}[Strict morphisms]
Let $(\C,d_1,d_\C)$ and $(\D,d_2,d_\D)$ be two counital partial cooperads up to homotopy. A \textit{morphism} $f: \C \longrightarrow \D$ amounts to the data of a morphism of graded $\mathbb{S}$-modules $f: \C \longrightarrow \D$ such that the following diagram commute
\[
\begin{tikzcd}[column sep=3pc,row sep=2pc]
\C  \arrow[r,"\cong"] \arrow[d,"f",swap]
&s^{-1}\C \arrow[r,"d_1 "]
&\overline{\mathscr{T}}^\wedge(s^{-1}\C \oplus \nu) \arrow[d,"\overline{\mathscr{T}}^\wedge(s^{-1}f \oplus \nu)"]\\
\D  \arrow[r,"\cong"]
&s^{-1}\D \arrow[r,"d_2 "]
&\overline{\mathscr{T}}^\wedge(s^{-1}\D \oplus \nu)~. 
\end{tikzcd} 
\]
\end{Definition}

\begin{Remark}[$\infty$-morphisms]
Let $(\C,d_1,d_\C)$ and $(\D,d_2,d_\D)$ be two counital partial cooperads up to homotopy. An $\infty$\textit{-morphism} $f: \C \rightsquigarrow \D$ amounts to the data of a morphism of complete curved absolute partial operads 
\[
f: \left(\overline{\mathscr{T}}^\wedge(s^{-1}\C \oplus \nu),d_1 \right) \longrightarrow \left(\overline{\mathscr{T}}^\wedge(s^{-1}\D \oplus \nu),d_2 \right)~.
\]
These $\infty$-morphisms of counital partial cooperads up to homotopy should have all the expected properties of $\infty$-morphism between unital partial operads described in \cite{grignou2021}. In particular, $\infty$-quasi-isomorphisms of counital partial cooperads admit inverses in homology, and their set describes the hom-sets of the homotopy category. For the sake of brevity, we do not explore this path here.
\end{Remark}

Counital partial cooperads up to homotopy are coalgebras over a dg $\mathbb{S}$-colored operad, which is given by the Cobar construction on the conilpotent curved $\mathbb{S}$-colored partial cooperad $\mathcal{S} \otimes c\mathcal{O}^\vee$. This Cobar construction is the $\mathbb{S}$-colored analogous of the Cobar construction of Definition \ref{Def: Cobar de Brice}. (We swear, dear reader, this is the last Cobar construction of the article !)

\begin{Definition}[The dg $\mathbb{S}$-colored operad encoding (co)unital partial (co)operads up to homotopy]
The dg $\mathbb{S}$-colored operad $\Omega_\mathbb{S}(\mathcal{S} \otimes c\mathcal{O}^\vee)$ is given by
\[
\Omega_\mathbb{S}\left(\mathcal{S} \otimes c\mathcal{O}^\vee \right) \coloneqq \left(\mathscr{T}_\mathbb{S}\left(s^{-1}(\mathcal{S} \otimes c\mathcal{O}^\vee)\right), d_{\mathrm{cobar}} = -d_2 \right)~,
\]
where $\mathscr{T}_\mathbb{S}(s^{-1}(\mathcal{S} \otimes c\mathcal{O}^\vee))$ is the free unital partial $\mathbb{S}$-colored operad generated by the desuspension of the pdg $\mathbb{S}$-colored scheme $\mathcal{S} \otimes c\mathcal{O}^\vee$. It is endowed with a differential $d_2$, which is given by the unique derivation extending the map:
\[
\begin{tikzcd}[column sep=6.5pc,row sep=3pc]
s^{-1}(\mathcal{S} \otimes c\mathcal{O}^\vee) \arrow[r, "s^{-2}\Delta_{(1)} \oplus s^{-1}\Theta_{\mathcal{S} \otimes c\mathcal{O}^\vee}"]
&\left(s^{-1}(\mathcal{S} \otimes c\mathcal{O}^\vee) \circ_{(1)} s^{-1}(\mathcal{S} \otimes c\mathcal{O}^\vee)\right) \oplus \I_\mathbb{S} \hookrightarrow \mathscr{T}_\mathbb{S}\left(s^{-1}(\mathcal{S} \otimes c\mathcal{O}^\vee)\right)~.
\end{tikzcd} 
\]
\end{Definition}

\begin{Proposition}
The following holds:
\begin{enumerate}
\item The category of unital partial operads up to homotopy with strict morphisms is equivalent to the category of dg algebras over $\Omega_\mathbb{S}\left(\mathcal{S} \otimes c\mathcal{O}^\vee \right)$.

\item The category of counital partial cooperads up to homotopy with strict morphism is equivalent to the category of dg coalgebras over $\Omega_\mathbb{S}\left(\mathcal{S} \otimes c\mathcal{O}^\vee \right)$.
\end{enumerate}
\end{Proposition}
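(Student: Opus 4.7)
The plan is to exploit the universal property of the Cobar construction at the groupoid-colored level: for any curved $\mathbb{S}$-colored partial cooperad $\mathcal{V}$ and any dg $\mathbb{S}$-colored operad $\mathcal{G}$, morphisms of dg $\mathbb{S}$-colored operads $\Omega_\mathbb{S}(\mathcal{V}) \to \mathcal{G}$ are in natural bijection with curved twisting morphisms $\mathcal{V} \to \mathcal{G}$, i.e., Maurer--Cartan elements in the curved convolution $\mathbb{S}$-colored operad of Lemma \ref{lemma curved S colored convolution}. This follows formally from the fact that $\Omega_\mathbb{S}(\mathcal{V})$ is quasi-free as a graded $\mathbb{S}$-colored operad on $s^{-1}\overline{\mathcal{V}}$: any such morphism is determined by its restriction to the generators, and compatibility with the Cobar differential $-d_2$ translates precisely into the Maurer--Cartan equation. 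I would first prove this general fact, which is the curved extension of Proposition \ref{Bar-Cobar colored adjunction, classical case}.

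For part $(1)$, applying this correspondence to $\mathcal{G} = \mathrm{End}_A$ identifies a dg $\Omega_\mathbb{S}(\mathcal{S}\otimes c\mathcal{O}^\vee)$-algebra structure on a pdg $\mathbb{S}$-module $(A, d_A)$ with a curved twisting morphism $\alpha: \mathcal{S}\otimes c\mathcal{O}^\vee \to \mathrm{End}_A$. Its restriction to the cogenerators $sE \oplus sJ$ of $c\mathcal{O}^\vee$ yields respectively binary partial composition operations $\circ_i^{n,k}: A(n) \otimes A(k) \to A(n+k-1)$ and a unit, while the higher-weight components of $\alpha$ produce homotopies controlling the parallel, sequential, and unit axioms. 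Thanks to the explicit form of $\Theta_{c\mathcal{O}^\vee}$ given in Definition \ref{def: cO vee}, the Maurer--Cartan equation $\partial(\alpha) + \alpha \star \alpha = \Theta_{\mathcal{H}om}$ unpacks into the full system of relations of a unital partial operad up to homotopy. Strict morphisms on either side correspond in an evident way to morphisms of underlying pdg $\mathbb{S}$-modules commuting with all these operations.

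For part $(2)$, the dual argument applies: a $\Omega_\mathbb{S}(\mathcal{S}\otimes c\mathcal{O}^\vee)$-coalgebra structure on $(\C, d_\C)$ amounts to a morphism of dg $\mathbb{S}$-colored operads $\Omega_\mathbb{S}(\mathcal{S}\otimes c\mathcal{O}^\vee) \to \mathrm{Coend}_\C$, and hence to a curved twisting morphism $\alpha: \mathcal{S}\otimes c\mathcal{O}^\vee \to \mathrm{Coend}_\C$. The core step is to repackage this datum as a degree $-1$ derivation $d$ on $\overline{\mathscr{T}}^\wedge(s^{-1}\C \oplus \nu)$ whose linear part on $s^{-1}\C$ recovers $s^{-1}d_\C$ and such that the resulting object is a complete curved absolute partial operad. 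Such a derivation is uniquely determined by its values on the generators $s^{-1}\C \oplus \nu$, and by extending the tree bijection of Lemma \ref{lemma with the trees bijection} (including the extra generator $\nu$ corresponding to $\theta$, as in the proof of Proposition \ref{Prop: cO-cogebres.}), the weight-graded pieces of $\alpha$ are identified tautologically with the Taylor components of $d$ on generators. Under this identification, the Maurer--Cartan equation for $\alpha$ becomes exactly the curvature equation $d^2 = \nu \circ_1 (-) - \sum_{i=0}^n (-) \circ_i \nu$ defining a counital partial cooperad up to homotopy. The main obstacle is keeping track of the operadic suspension $\mathcal{S}$ and the accompanying Koszul signs through the weight-by-weight identification, but this is routine bookkeeping and no conceptual difficulty arises.
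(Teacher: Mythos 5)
Your argument is correct and is essentially the approach the paper takes: the paper's own proof of this Proposition consists solely of citing \cite[Proposition 36]{grignou2021} for part $(1)$ and \cite[Theorem 12.1]{grignoulejay18} for part $(2)$, and both of those cited proofs run exactly through the correspondence you describe, namely identifying morphisms out of the $\mathbb{S}$-colored Cobar construction with curved twisting morphisms (Maurer--Cartan elements in the curved convolution $\mathbb{S}$-colored operad) valued in $\mathrm{End}_A$, respectively $\mathrm{Coend}_\C$, and then unpacking the Maurer--Cartan equation via the tree bijection into the defining relations of the homotopy structures. The only slip worth flagging is that in part $(1)$ the underlying object should be a dg, not merely pdg, $\mathbb{S}$-module, so that $\mathrm{End}_A$ is a dg $\mathbb{S}$-colored operad and the stated bijection with curved twisting morphisms out of $\mathcal{S}\otimes c\mathcal{O}^\vee$ applies verbatim.
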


\begin{proof}
For the first statement, the proof is \textit{mutatis mutandis} the same as the proof of \cite[Proposition 36]{grignou2021}. For the second statement, the proof is \textit{mutatis mutandis} the same as the proof of \cite[Theorem 12.1]{grignoulejay18}.
\end{proof}

\begin{Corollary}
There forgetful functor from counital partial cooperads up to homotopy to the category of dg $\mathbb{S}$-modules admits a right adjoint
\[
\begin{tikzcd}[column sep=7pc,row sep=3pc]
             \mathsf{upCoop}_\infty \arrow[r, shift left=1.3ex, "\mathrm{U}"{name=F}] &\mathsf{dg}~\mathbb{S}\text{-}\mathsf{mod}~,  \arrow[l, shift left=1.2ex, "\mathscr{L}\left(\Omega_\mathbb{S}\left(\mathcal{S} \otimes c\mathcal{O}^\vee \right)\right)"{name=U}]
            \arrow[phantom, from=F, to=U, , "\dashv" rotate=-90]
\end{tikzcd}
\]
which is the cofree counital partial cooperad up to homotopy functor.
\end{Corollary}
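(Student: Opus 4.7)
The plan is to deduce this corollary directly by combining the previous proposition with Theorem \ref{thm: existence de la cogèbre colibre} applied to the dg $\mathbb{S}$-colored operad $\Omega_\mathbb{S}(\mathcal{S} \otimes c\mathcal{O}^\vee)$. By the preceding proposition, the category $\mathsf{upCoop}_\infty$ of counital partial cooperads up to homotopy with strict morphisms is equivalent to the category of dg coalgebras over this dg $\mathbb{S}$-colored operad, and under this equivalence the forgetful functor $\mathrm{U}$ corresponds to the forgetful functor from the category of dg coalgebras to the category of dg $\mathbb{S}$-modules. So it is enough to produce a right adjoint to the latter.

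The main input is Theorem \ref{thm: existence de la cogèbre colibre}, which was proven for an arbitrary dg $\mathbb{S}$-colored operad $\mathcal{G}$: the category of dg $\mathcal{G}$-coalgebras is comonadic over dg $\mathbb{S}$-modules, with cofree comonad $\mathscr{L}(\mathcal{G})$ given by the explicit pullback of $\widehat{\mathscr{S}}_\mathbb{S}^c(\mathcal{G}) \circ \widehat{\mathscr{S}}_\mathbb{S}^c(\mathcal{G}) \to \widehat{\mathscr{S}}_\mathbb{S}^c(\mathcal{G} \circ \mathcal{G})$ along $\widehat{\mathscr{S}}_\mathbb{S}^c(\gamma)$. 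Applying this result to $\mathcal{G} = \Omega_\mathbb{S}(\mathcal{S} \otimes c\mathcal{O}^\vee)$ produces a comonad $\mathscr{L}\left(\Omega_\mathbb{S}(\mathcal{S} \otimes c\mathcal{O}^\vee)\right)$ whose underlying endofunctor of dg $\mathbb{S}$-modules is right adjoint to the forgetful functor $\mathrm{U}$, and which therefore, by construction, assigns to a dg $\mathbb{S}$-module $M$ the cofree counital partial cooperad up to homotopy cogenerated by $M$.

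The one point that is not entirely formal is verifying that the hypotheses of Theorem \ref{thm: existence de la cogèbre colibre} apply to $\mathcal{G} = \Omega_\mathbb{S}(\mathcal{S} \otimes c\mathcal{O}^\vee)$. This is however immediate since the theorem was stated for \emph{any} dg $\mathbb{S}$-colored operad, and $\Omega_\mathbb{S}(\mathcal{S} \otimes c\mathcal{O}^\vee)$ is one such (as a quasi-free unital partial $\mathbb{S}$-colored operad on the desuspension of the pdg $\mathbb{S}$-color scheme $\mathcal{S} \otimes c\mathcal{O}^\vee$, equipped with the Koszul-type differential $-d_2$). The only substantive content of that theorem is the fact that the infinite cobar-like recursion defining $\mathscr{L}(\mathcal{G})$ terminates at the first step, which only uses that the base field $\kk$ has characteristic zero (so that all $\kk[\mathbb{S}_n]$-modules are projective and injective), and this hypothesis is in force throughout the paper.

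Thus the proof reduces to a single line: transport the cofree comonad constructed in Theorem \ref{thm: existence de la cogèbre colibre} along the equivalence of categories given by the preceding proposition. The main obstacle, if any, would be the set-theoretic or smallness issues underlying the existence of cofree coalgebras — but these are precisely what Theorem \ref{thm: existence de la cogèbre colibre} and its explicit pullback formula were designed to resolve, so in the present setting there is nothing further to verify.
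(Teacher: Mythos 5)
Your proposal is correct and follows exactly the paper's own argument: the paper likewise deduces the corollary as an immediate consequence of Theorem \ref{thm: existence de la cogèbre colibre} applied to $\Omega_\mathbb{S}\left(\mathcal{S} \otimes c\mathcal{O}^\vee\right)$, via the preceding proposition identifying $\mathsf{upCoop}_\infty$ with dg coalgebras over that $\mathbb{S}$-colored operad. Your additional remarks on why the theorem's hypotheses apply are accurate but not needed, since the theorem holds for any dg $\mathbb{S}$-colored operad.
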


\begin{proof}
This is an immediate consequence of Theorem \ref{thm: existence de la cogèbre colibre}.
\end{proof}

Using this adjunction, we want to transfer the standard model structure on dg $\mathbb{S}$-modules via a \textit{left-transfer theorem}. The key ingredient to apply the left-transfer theorem in this situation is the construction of a \textit{functorial cylinder object} in the category of counital partial cooperads up to homotopy.

\medskip

Let $\mathrm{I}$ be the commutative Hopf monoid given by the cellular chains on the interval $[0,1]$, see \cite[Example 3.3.3]{bergermoerdieck03}. It is a commutative algebra with a compatible coassociative coalgebra structure is given by the choice of a cellular diagonal on the topological interval. It is an interval object in the category of dg modules, meaning that there is a factorization
\[
\kk \oplus \kk \rightarrowtail \mathrm{I} \qi \kk
\]
of the codiagonal map $\mathrm{id} \oplus \mathrm{id}: \kk \oplus \kk \longrightarrow \kk$.

\begin{Definition}[Interval dg $\mathbb{S}$-modules]
The \textit{interval} dg $\mathbb{S}$-module $\mathrm{Int}$ is the dg $\mathbb{S}$-module given by 
\[
\mathrm{Int}(n) \coloneqq \mathrm{I}
\]
for all $n \geq 0$. 
\end{Definition}

\begin{Remark}
If we endow $\mathrm{Int}$ with the partial decomposition maps $\Delta_i: \mathrm{Int}(n+k-1) \longrightarrow \mathrm{Int}(n) \otimes \mathrm{Int}(k)$ given by the coalgebra structure of $\mathrm{I}$
\[
\Delta: \mathrm{I} \longrightarrow \mathrm{I} \otimes \mathrm{I}~,
\]
then $\mathrm{Int}$ \textit{almost} forms a dg counital partial cooperad. It satisfies the cosequential axiom but, since $\Delta$ is not \textit{cocommutative}, it does not satisfy the coparallel axiom. If there existed a \textit{cocommutative} interval object in the category of dg module, we could directly transfer the standard model structure of dg $\mathbb{S}$-module to the category of counital partial cooperads, in a dual version of \cite[Theorem 3.2]{bergermoerdieck03}. This is one reason to work with counital partial cooperads up to homotopy.
\end{Remark}

\begin{Proposition}
Let $(\C,d_1,d_\C)$ be a counital partial cooperad up to homotopy. The Hadamard product $\C \otimes \mathrm{Int}$ has a canonical structure of counital partial cooperad up to homotopy.
\end{Proposition}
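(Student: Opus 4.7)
The plan is to construct explicitly a degree $-1$ derivation $d^{\otimes}$ on the completed tree module $\overline{\mathscr{T}}^\wedge(s^{-1}(\C \otimes \mathrm{Int}) \oplus \nu)$ that endows it with a complete curved absolute partial operad structure. The idea is to combine the given derivation $d_1$ on $\overline{\mathscr{T}}^\wedge(s^{-1}\C \oplus \nu)$ with iterated comultiplications of the commutative Hopf monoid $\mathrm{I}$. The key conceptual point is that the failure of $\mathrm{Int}$ to be a strict counital partial cooperad, namely the non-cocommutativity of its diagonal, never enters the construction: the definition of a complete curved absolute partial operad structure on the completed tree module does not, in itself, involve any cocommutativity condition. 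Non-cocommutativity of $\Delta_{\mathrm{I}}$ only becomes visible when one tries to extract strict partial decompositions from $d^{\otimes}$, and these are allowed to be non-coparallel up to homotopy.

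First I would unpack $d_1$ as a family of components $\delta_{\tau} \colon s^{-1}\C \longrightarrow \tau(s^{-1}\C \oplus \nu)$ indexed by rooted trees $\tau$, encoding the higher partial decompositions, higher counits, and the curvature datum witnessing that $\C$ is a counital partial cooperad up to homotopy. I would then define $d^{\otimes}$ on a generator $s^{-1}(c \otimes t)$ by the Eilenberg--Zilber-type formula
\[
d^{\otimes}\bigl(s^{-1}(c \otimes t)\bigr) \coloneqq \sum_{\tau} \delta_{\tau}(c) \otimes \Delta_{\tau}(t),
\]
where $\Delta_{\tau} \colon \mathrm{I} \longrightarrow \mathrm{I}^{\otimes |\mathrm{vert}(\tau)|}$ is the iterated coproduct of $\mathrm{I}$ distributed along the tree $\tau$ using coassociativity, and where a vertex of $\tau$ labelled by $\nu$ contributes the counit $\mathrm{I} \longrightarrow \kk$ on the corresponding $\mathrm{I}$-factor, so that the output lies in the correct direct summand. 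I would then extend $d^{\otimes}$ to the entire completed tree module by the unique derivation condition; coassociativity of $\Delta_{\mathrm{I}}$ ensures that the formula above is independent of the order in which we decompose $\tau$.

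The main step is to verify the curvature identity $(d^{\otimes})^2(\sigma) = \nu \circ_1 \sigma - \sum_i \sigma \circ_i \nu$ on each series of trees $\sigma$, which is the defining axiom of a complete curved absolute partial operad. This splits tensor-factor-wise: on the $\C$-coordinate it reduces to the analogous identity for $d_1$, which holds by hypothesis; on the $\mathrm{I}$-coordinate it reduces to a consequence of coassociativity of $\Delta_{\mathrm{I}}$ together with the counit axiom, neither of which requires cocommutativity. The main obstacle will be the bookkeeping of Koszul signs and the precise combinatorics of matching the $\C$-side tree decomposition with the $\mathrm{Int}$-side iterated diagonal. I expect the cleanest formulation will identify $d^{\otimes}$ with the image of a Maurer--Cartan element under a Hadamard-type convolution of the curved pre-Lie algebra associated to $d_1$ with the coalgebra $\mathrm{I}$, so that the Maurer--Cartan equation for $d^{\otimes}$ follows from that of $d_1$ together with coassociativity of $\Delta_{\mathrm{I}}$. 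Naturality and functoriality with respect to strict morphisms of counital partial cooperads up to homotopy are then immediate, since such morphisms act trivially on the $\mathrm{Int}$-coordinate.
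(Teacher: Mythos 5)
Your construction founders on precisely the point you dismiss: the non-cocommutativity of $\Delta_{\mathrm{I}}$ does enter the verification of the curvature identity, through the \emph{branching} (coparallel) terms of $(d^{\otimes})^2$. Consider the coefficient, in $(d^{\otimes})^2(s^{-1}(c\otimes t))$, of a three-vertex tree whose root has two children. On the $\C$-side this tree arises by two routes (create the left child first, then the right, or vice versa), and your hypothesis on $d_1$ only guarantees that the \emph{sum} of the $\C$-side contributions over both routes vanishes. Your formula pairs route A with the vertex labelling $(\text{root},\text{left},\text{right}) = (t_{(1)}, t_{(3)}, t_{(2)})$ and route B with $(t_{(1)}, t_{(2)}, t_{(3)})$, in Sweedler notation for the two-fold iterated coproduct. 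Coassociativity identifies the two iterated coproducts as elements of $\mathrm{I}^{\otimes 3}$, but the assignments to the two children differ by a transposition, so the $\mathrm{I}$-factor does not pull out of the sum over routes and the $\C$-side cancellation cannot be invoked; the defect is proportional to $\Delta_{\mathrm{I}} - \tau\cdot\Delta_{\mathrm{I}}$, which is nonzero since $\Delta_{\mathrm{I}}(e) = e\otimes 1 + 0\otimes e$ on the $1$-cell is not symmetric. For the same reason your claim that $\Delta_\tau$ is "independent of the order in which we decompose $\tau$" already fails: coassociativity controls linear chains but not branchings. A concrete sanity check: specializing your formula to a \emph{strict} dg counital partial cooperad $\C$ produces exactly the naive decomposition maps on $\C\otimes\mathrm{Int}$ with no higher terms, and the Remark preceding the Proposition records that these satisfy the cosequential but not the coparallel axiom --- so your $d^{\otimes}$ cannot square to the curvature.

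The missing ingredient is a family of correction terms beyond $\sum_\tau \delta_\tau\otimes\Delta_\tau$. The proof the paper invokes (the dual of Le Grignou's construction of a unital partial operad up to homotopy structure on $\PP\otimes\mathrm{Int}$) builds the derivation by induction on the weight of trees: at each stage the failure of cocommutativity produces an obstruction cocycle, which can be killed because $\mathrm{I}\qi\kk$ is an acyclic interval, and the chosen nullhomotopies become the new higher components of the structure. This is also the conceptual reason the paper passes to cooperads \emph{up to homotopy} at all; a closed Eilenberg--Zilber-type formula of the kind you propose would make that detour unnecessary, and the absence of a cocommutative interval in dg modules is exactly what rules it out.
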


\begin{proof}
The proof is completely dual to the proof of \cite[Proposition 31]{grignou2021}, where given a unital partial operad up to homotopy, the author builds a unital partial operad up to homotopy structure on $\PP \otimes \mathrm{Int}$. 

\medskip

Let us sketch this construction: the idea to build the map that induces the coderivation
\[
\Delta: s^{-1}(\C \otimes \mathrm{Int}) \longrightarrow \overline{\mathscr{T}}^\wedge(s^{-1}(\C \otimes \mathrm{Int}) \oplus \nu)
\]
is to do so by induction on the weight of rooted trees in $\overline{\mathscr{T}}^\wedge$. Suppose there is a map
\[
\Delta_{m-1}: s^{-1}(\C \otimes \mathrm{Int}) \longrightarrow \overline{\mathscr{T}}^{\leq (m-1)}(s^{-1}(\C \otimes \mathrm{Int}) \oplus \nu)
\]
such that 
\[
d^2_{m-1}\left(\sum_{\omega \geq 1}^{m-1} \sum_{\tau \in \mathrm{RT}_\omega^n} \tau \right) = \sum_{\omega \geq 1}^{m-1} \sum_{\tau \in \mathrm{RT}_\omega^n} \left( v \circ_1 \tau - \sum_{i=0}^n \tau \circ_i v \right)~,
\]
where $d_{m-1}$ is the induced coderivation of $\overline{\mathscr{T}}^{\leq (m-1)}$ by $\Delta_{m-1}$. Furthermore, we suppose that the following equalities hold 

\[
\Delta_\C~(s\pi \otimes \mathrm{Id}_\C) = (s\pi \otimes \mathrm{Id}_\C \oplus \mathrm{Id}_\nu)^{\otimes (m-1)}~\Delta_{m-1}~,
\]

\[
(s\iota_i \otimes \mathrm{Id}_\C \oplus \mathrm{Id}_\nu)^{\otimes (m-1)}~\Delta_\C = \Delta_{m-1}~(s\iota_i  \otimes \mathrm{Id}_\C)~,
\]
\vspace{0.1pc}

where $p$ is the projection $\C \otimes \mathrm{Int} \longrightarrow \C$ and $\iota_i: \C \longrightarrow \C \otimes \mathrm{Int}$, for $i=1,2$ are the two inclusions. Notice that these two conditions imply the functoriality of this construction. Extending $\Delta_{m-1}$ to $\Delta_m$ can be shown to be equivalent to finding a lift in a commutative square 
in the underlying category of dg modules. This lift because in the square, the left hand side is a cofibration and the right hand side is an acyclic fibration.
\end{proof}

\begin{Corollary}[Functorial cylinder object]\label{lemma: functorial cylinder}
Let $(\C,d_1,d_\C)$ be a counital partial cooperad up to homotopy. Then the codiagonal morphism $\mathrm{id} \oplus \mathrm{id}: \mathcal{C} \oplus \mathcal{C} \longrightarrow \mathcal{C}$ factors as follows
\[
\begin{tikzcd}[column sep=4pc,row sep=0pc]
\mathcal{C} \oplus \mathcal{C} \arrow[r,rightarrowtail]
&\mathcal{C} \otimes \mathrm{Int} \arrow[r,"\sim"]
&\mathcal{C} ~,
\end{tikzcd}
\]
where the first arrow is a degree-wise monomorphism and the second arrow is an arity-wise quasi-isomorphism. Thus $\C \otimes \mathrm{Int}$ is a cylinder object which is functorial in $\mathcal{C}$. 
\end{Corollary}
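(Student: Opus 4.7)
The plan is to factor at the level of the underlying dg $\mathbb{S}$-modules by tensoring pointwise with the factorization $\kk \oplus \kk \rightarrowtail \mathrm{I} \qi \kk$, and then to check that the two resulting arrows upgrade to strict morphisms of counital partial cooperads up to homotopy. Concretely, for each arity $n$ I start from the dg factorization
\[
\kk \oplus \kk \;\rightarrowtail\; \mathrm{I} \;\qi\; \kk
\]
of the codiagonal, tensor on the left with $\C(n)$, and assemble the results into an arity-wise factorization
\[
\C \oplus \C \;\rightarrowtail\; \C \otimes \mathrm{Int} \;\qi\; \C
\]
of the codiagonal of $\C$. The first arrow is an arity-wise monomorphism because the inclusion $\kk \oplus \kk \hookrightarrow \mathrm{I}$ is a split monomorphism of dg modules and tensoring over the field $\kk$ preserves monomorphisms. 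The second arrow is an arity-wise quasi-isomorphism by Künneth: $\mathrm{I} \qi \kk$ is a quasi-isomorphism of dg modules, and tensoring with the (arity-wise) dg module $\C(n)$ preserves quasi-isomorphisms over a field.

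Next I would verify that both arrows are strict morphisms of counital partial cooperads up to homotopy, using the preceding Proposition which produces the structure on $\C \otimes \mathrm{Int}$. For the inclusion $\C \oplus \C \hookrightarrow \C \otimes \mathrm{Int}$, the two factors correspond to the two endpoints $\{0\}, \{1\} \hookrightarrow \mathrm{I}$, which are morphisms of the commutative Hopf monoid $\mathrm{I}$ (in particular they respect the cup-coproduct). The induction in the proof of the preceding Proposition (which mirrors \cite[Proposition 31]{grignou2021}) produces the higher structure on $\C \otimes \mathrm{Int}$ by transferring along the coalgebra structure of $\mathrm{I}$; functoriality of this construction with respect to morphisms of $\mathrm{I}$ shows that both endpoint inclusions are strict morphisms. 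The same functoriality applied to the augmentation $\mathrm{I} \twoheadrightarrow \kk$ of the Hopf monoid shows that the quasi-isomorphism $\C \otimes \mathrm{Int} \to \C$ is also a strict morphism. Functoriality of the whole construction in $\C$ is automatic since all steps are natural in $\C$.

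The main obstacle, as already hinted, is not the dg-level factorization (which is standard over a field) but verifying that these two maps are strict, as opposed to $\infty$-morphisms, of counital partial cooperads up to homotopy. This reduces to checking that the derivation $d$ on $\overline{\mathscr{T}}^\wedge(s^{-1}(\C \otimes \mathrm{Int}) \oplus \nu)$ produced by the inductive construction of the preceding Proposition intertwines with the derivations on $\overline{\mathscr{T}}^\wedge(s^{-1}(\C \oplus \C) \oplus \nu)$ and $\overline{\mathscr{T}}^\wedge(s^{-1}\C \oplus \nu)$ under the induced maps. Because the endpoint and augmentation maps of $\mathrm{I}$ are morphisms of commutative Hopf monoids, they commute with the coalgebra structure used at each stage of the induction; hence the compatibility is preserved at every step. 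Finally, the fact that this provides a cylinder object follows by definition, as the composite of the two arrows is exactly the codiagonal of $\C$.
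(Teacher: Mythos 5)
Your proposal is correct and follows essentially the same route as the paper, which states this Corollary without proof as an immediate consequence of the preceding Proposition: the dg-level factorization comes from tensoring arity-wise with the interval object $\kk \oplus \kk \rightarrowtail \mathrm{I} \qi \kk$, and the only real content is the strictness of the two induced maps, which you correctly isolate and resolve via the functoriality of the inductive construction of \cite[Proposition 31]{grignou2021} with respect to the Hopf-monoid morphisms given by the endpoint inclusions and the augmentation of $\mathrm{I}$.
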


\begin{theorem}[Transferred model structure]\label{thm: model structure on counital cooperads}
There is a cofibrantly generated model structure on the category of counital partial cooperads up to homotopy defined by the following classes of morphisms:
\begin{enumerate}
\item The class of weak-equivalences $\mathrm{W}$ is given by strict morphisms of counital partial cooperads up to homotopy $f: \mathcal{C} \longrightarrow \mathcal{D}$ such that $f(n): \mathcal{C}(n) \longrightarrow \mathcal{D}(n)$ is a quasi-isomorphism for all $n \geq 0$. 
\vspace{0.5pc}
\item The class of cofibrations is $\mathrm{Cof}$ is given by strict morphisms of counital partial cooperads up to homotopy $f: \mathcal{C} \longrightarrow \mathcal{D}$ such that $f(n): \mathcal{C}(n) \rightarrowtail \mathcal{D}(n)$ is a degree-wise monomorphism for all $n \geq 0$. 
\vspace{0.5pc}
\item The class of fibrations $\mathrm{Fib}$ is given by strict morphisms of counital partial cooperads up to homotopy which have the right lifting property against all morphism in $\mathrm{W} \cap \mathrm{Cof}$. 
\end{enumerate}
\end{theorem}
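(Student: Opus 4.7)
The plan is to apply the left-transfer (or left-induced) model structure theorem to the cofree--forgetful adjunction
\[
U \dashv \mathscr{L}(\Omega_\mathbb{S}(\mathcal{S} \otimes c\mathcal{O}^\vee))
\]
established in the corollary preceding the theorem, in which the forgetful functor $U$ is the left adjoint. The classes of weak equivalences and cofibrations declared in the theorem are precisely those created by $U$ from the standard projective model structure on $\mathsf{dg}~\mathbb{S}\text{-}\mathsf{mod}$, i.e.\ arity-wise quasi-isomorphisms and arity-wise degree-wise monomorphisms; the fibrations are then forced by the right lifting property, so no choice is involved and it suffices to verify that this data assembles into a model structure.

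\medskip

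The hypotheses of the left-transfer theorem require that the target category $\mathsf{upCoop}_\infty$ be locally presentable, that the source model category be combinatorial, and that a functorial \emph{very good cylinder object} exist in the target with respect to the classes pulled back along $U$. First, the projective model structure on $\mathsf{dg}~\mathbb{S}\text{-}\mathsf{mod}$ is standard and combinatorial. Second, $\mathsf{upCoop}_\infty$ is the category of coalgebras over the accessible comonad $\mathscr{L}(\Omega_\mathbb{S}(\mathcal{S} \otimes c\mathcal{O}^\vee))$ of \cref{thm: existence de la cogèbre colibre} on the locally presentable category $\mathsf{dg}~\mathbb{S}\text{-}\mathsf{mod}$, hence is itself locally presentable by general results on coalgebras over accessible comonads. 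Third, the required cylinder object is exactly what \cref{lemma: functorial cylinder} supplies: for any $\mathcal{C}$, the factorization $\mathcal{C} \oplus \mathcal{C} \rightarrowtail \mathcal{C} \otimes \mathrm{Int} \xrightarrow{\sim} \mathcal{C}$ of the codiagonal has first map an arity-wise degree-wise monomorphism (a cofibration in the transferred sense) and second map an arity-wise quasi-isomorphism (a weak equivalence in the transferred sense), and this construction is functorial in $\mathcal{C}$.

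\medskip

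Applying the left-transfer theorem then produces the desired model structure and ensures that it is cofibrantly generated, as it is left-induced from a combinatorial model category on a locally presentable category. The main obstacle I foresee is the accessibility and local presentability step for $\mathsf{upCoop}_\infty$ in the groupoid-colored setting: one needs to verify that $\mathscr{L}(\Omega_\mathbb{S}(\mathcal{S}\otimes c\mathcal{O}^\vee))$, built as a pullback of dual Schur endofunctors, preserves filtered colimits of sufficiently large cardinality, so that its category of coalgebras is a locally presentable category in which the small object argument applies. All other points reduce either to the cylinder object construction already available or to formal consequences of the left-transfer machinery.
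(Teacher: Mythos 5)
Your proposal is correct and follows essentially the same route as the paper: both apply a left-transfer theorem along the forgetful--cofree adjunction, observing that every object is cofibrant so that the only substantive input is the functorial cylinder object $\mathcal{C} \otimes \mathrm{Int}$ supplied by Corollary \ref{lemma: functorial cylinder}. The local presentability point you flag is real but is handled implicitly by the paper via the comonadicity result of Theorem \ref{thm: existence de la cogèbre colibre} over the locally presentable category of dg $\mathbb{S}$-modules, exactly as you suggest.
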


\begin{proof}
In order to apply the left-transfer theorem such as \cite{lefttransfer}, essentially need a functorial cofibrant replacement functor and a functorial cylinder object. Since every object is cofibrant in this model structure, we only need a functorial cylinder object. This follows from Corollary \ref{lemma: functorial cylinder}.
\end{proof}

Now our aim is to transfer this structure to category of complete curved absolute partial operads, replicating the methods of Section 10 in \cite{grignoulejay18}. For this purpose, we induce a complete Bar-Cobar adjunction using a curved twisting morphism between the $\mathbb{S}$-colored objects encoding these categories. (Yes, dear reader, we lied). For that, we consider the universal curved twisting morphism $\iota$:

\begin{lemma}
The $\mathbb{S}$-color scheme map given by the inclusion 
\[
\iota: \mathcal{S} \otimes c\mathcal{O}^\vee \hookrightarrow \Omega_\mathbb{S}\left(\mathcal{S} \otimes c\mathcal{O}^\vee \right)
\]
is a curved twisting morphism. 
\end{lemma}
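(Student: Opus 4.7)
The plan is to verify directly that $\iota$ satisfies the Maurer-Cartan equation
\[
\partial(\iota) + \iota \star \iota = \Theta_{\mathcal{H}om}
\]
in the curved convolution $\mathbb{S}$-colored operad $\mathcal{H}om(\mathcal{S} \otimes c\mathcal{O}^\vee,\Omega_\mathbb{S}(\mathcal{S} \otimes c\mathcal{O}^\vee))$ (the $\mathbb{S}$-colored generalization of Lemma \ref{lemma curved S colored convolution}). Since the underlying pre-differential of $\mathcal{S} \otimes c\mathcal{O}^\vee$ vanishes, we have $\partial(\iota) = d_{\mathrm{cobar}} \circ \iota = -d_2 \circ \iota$.

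First I would unpack the two sides on an element $c \in \mathcal{S}\otimes c\mathcal{O}^\vee$. By definition of $d_2$ as the unique derivation of $\mathscr{T}_\mathbb{S}(s^{-1}(\mathcal{S}\otimes c\mathcal{O}^\vee))$ extending $s^{-2}\Delta_{(1)} \oplus s^{-1}\Theta_{\mathcal{S}\otimes c\mathcal{O}^\vee}$, its value on the generator $\iota(c)=s^{-1}c$ splits as a sum of a weight-two piece in $s^{-1}(\mathcal{S}\otimes c\mathcal{O}^\vee)\circ_{(1)} s^{-1}(\mathcal{S}\otimes c\mathcal{O}^\vee)$ coming from the partial decomposition, and a weight-zero piece in $\I_\mathbb{S}$ coming from the curvature. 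On the other hand, by the definition of the pre-Lie product in the convolution operad (Proposition \ref{prop S colored totalization}), $\iota\star\iota$ is computed as $\gamma_{(1)}\circ(\iota\otimes\iota)\circ\Delta_{(1)}$, which lands precisely in the weight-two part of the free operad through the embedding $s^{-1}(\mathcal{S}\otimes c\mathcal{O}^\vee)\circ_{(1)} s^{-1}(\mathcal{S}\otimes c\mathcal{O}^\vee) \hookrightarrow \mathscr{T}_\mathbb{S}(s^{-1}(\mathcal{S}\otimes c\mathcal{O}^\vee))$. The curvature $\Theta_{\mathcal{H}om}$, meanwhile, is by definition the composition $\eta_{\Omega_\mathbb{S}}\circ\Theta_{\mathcal{S}\otimes c\mathcal{O}^\vee}$, which on $c$ outputs $\Theta_{\mathcal{S}\otimes c\mathcal{O}^\vee}(c)$ viewed in $\I_\mathbb{S}\hookrightarrow\mathscr{T}_\mathbb{S}(s^{-1}(\mathcal{S}\otimes c\mathcal{O}^\vee))$.

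Matching the terms is then a tautology built into the definition of $d_{\mathrm{cobar}}$: the weight-two contribution of $-d_2(s^{-1}c)$ cancels $\iota\star\iota$ against the suspensions and Koszul signs carried by $s^{-2}$, while the weight-zero contribution of $-d_2(s^{-1}c)$ equals $\Theta_{\mathcal{H}om}(c)$ after absorbing the sign produced by $s^{-1}$. This is the same sign bookkeeping that ensures the curved Cobar construction of Definition \ref{Def: Cobar de Brice} defines a genuine dg unital partial operad, transposed to the $\mathbb{S}$-colored setting through Proposition \ref{Proposition: S-colored inhomogeneous Koszul duality}; it is also formally identical to the verification that $\kappa$ is a curved twisting morphism in Lemma \ref{lemma: kappa is a curved twisting morphism}, except that here the Cobar construction is used in place of the Koszul dual cooperad.

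The main (and essentially only) obstacle is therefore the careful management of signs between the suspensions $s^{-1}$ appearing in $\iota$, in the generators of the Cobar, and in the structure map $s^{-2}\Delta_{(1)} \oplus s^{-1}\Theta_{\mathcal{S}\otimes c\mathcal{O}^\vee}$. Since the Cobar differential $d_{\mathrm{cobar}} = -d_2$ is normalized precisely so that the canonical map into the generators is a (curved) twisting morphism, no additional input is needed: the Maurer-Cartan equation holds by construction, and the statement follows.
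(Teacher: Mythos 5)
Your proposal is correct and follows the same route as the paper, which simply records that the Maurer--Cartan equation for $\iota$ is ``immediate to check from the definition''; your expansion — splitting $-d_2\circ\iota$ into its weight-two and weight-zero components and matching them against $\iota\star\iota$ and $\Theta_{\mathcal{H}om}$ respectively — is exactly the intended verification. The observation that the normalization $d_{\mathrm{cobar}}=-d_2$ is designed precisely so that the universal map to the generators is a curved twisting morphism (mirroring the check for $\kappa$ in Lemma \ref{lemma: kappa is a curved twisting morphism}) is the right way to organize the sign bookkeeping.
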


\begin{proof}
This is immediate to check from the definition. 
\end{proof}

\begin{Proposition}[Complete Bar-Cobar adjunction relative to $\iota$]\label{Prop: complete Bar-Cobar adjunction relative to iota}
There is a pair of adjoint functors 
\[
\begin{tikzcd}[column sep=7pc,row sep=3pc]
            \mathsf{upCoop}_\infty \arrow[r, shift left=1.1ex, "\widehat{\Omega}_\iota"{name=F}] &\mathsf{curv}~\mathsf{abs}~\mathsf{pOp}^{\mathsf{comp}}~. \arrow[l, shift left=.75ex, "\widehat{\mathrm{B}}_\iota"{name=U}]
            \arrow[phantom, from=F, to=U, , "\dashv" rotate=-90]
\end{tikzcd}
\]
\end{Proposition}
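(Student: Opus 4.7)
The plan is to mimic, at the level of $\mathbb{S}$-colored (co)operads, the construction of the complete Bar-Cobar adjunction relative to $\kappa$ carried out in Section~6 (see Definition of $\widehat{\mathrm{B}}_\kappa$ and $\widehat{\Omega}_\kappa$ and Corollary~\ref{Complete Bar-Cobar adjunction}). The inputs are exactly of the same formal nature: a curved twisting morphism
\[
\iota : \mathcal{S}\otimes c\mathcal{O}^\vee \longrightarrow \Omega_\mathbb{S}(\mathcal{S}\otimes c\mathcal{O}^\vee)
\]
between a conilpotent curved partial $\mathbb{S}$-colored cooperad and a dg unital partial $\mathbb{S}$-colored operad. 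Complete curved algebras over the former are exactly complete curved absolute partial operads (Proposition~\ref{Prop: cO-algebres.}), and dg coalgebras over the latter are exactly dg counital partial cooperads up to homotopy (just proven in this section). Everything therefore proceeds by transporting the formal recipe of Section~6.3 across this identification.

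First I would define the complete Cobar $\widehat{\Omega}_\iota \mathcal{C}$ of a dg counital partial cooperad up to homotopy $(\mathcal{C}, d_{\mathcal{C}})$ as the free complete curved absolute partial operad
\[
\widehat{\Omega}_\iota \mathcal{C} \coloneqq \left(\widehat{\mathscr{S}}^c_{\mathbb{S}}(\mathcal{S}\otimes c\mathcal{O}^\vee)(\mathcal{C}),\ d_{\mathrm{cobar}} = d_1 - d_2\right),
\]
where $d_1 = \widehat{\mathscr{S}}^c_{\mathbb{S}}(\mathrm{id})(\diracComb_{\mathbb{S}}(\mathrm{id}, d_{\mathcal{C}}))$ and $d_2$ is the unique derivation extending $\widehat{\mathscr{S}}^c_{\mathbb{S}}(\iota)(\mathrm{id}) \cdot \Delta_{\mathcal{C}}$, using the fact that derivations on a free algebra over an $\mathbb{S}$-colored cooperad are determined by their restriction to the generators (the $\mathbb{S}$-colored analogue of \cite[Prop.~6.15]{grignoulejay18}). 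Dually, the complete Bar $\widehat{\mathrm{B}}_\iota \mathcal{P}$ of a complete curved absolute partial operad $(\mathcal{P}, \gamma_{\mathcal{P}}, d_{\mathcal{P}})$ is defined on the cofree $\Omega_\mathbb{S}(\mathcal{S}\otimes c\mathcal{O}^\vee)$-coalgebra $\mathscr{L}(\Omega_\mathbb{S}(\mathcal{S}\otimes c\mathcal{O}^\vee))(\mathcal{P})$, provided by the $\mathbb{S}$-colored generalization of Theorem~\ref{thm: existence de la cogèbre colibre}, with coderivation $d_{\mathrm{bar}} = d_1 + d_2$, the coderivation $d_2$ being the unique extension of $\gamma_{\mathcal{P}} \cdot \widehat{\mathscr{S}}^c_{\mathbb{S}}(\iota)(\mathrm{id}) \cdot p_1$, where $p_1$ is the monomorphism of Theorem~\ref{thm: existence de la cogèbre colibre}. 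The coderivation lifts exist by the same argument as in Lemma~\ref{lemma: coderivations on the cofree cooperad}, adapted to the cofree $\Omega_\mathbb{S}(\mathcal{S}\otimes c\mathcal{O}^\vee)$-coalgebra.

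The main verifications are then that $d_{\mathrm{cobar}}^2$ equals $-\widehat{\mathscr{S}}^c_{\mathbb{S}}(\Delta_{\mathcal{S}\otimes c\mathcal{O}^\vee}) \cdot \varphi \cdot \widehat{\mathscr{S}}^c_{\mathbb{S}}(\Theta_{\mathcal{S}\otimes c\mathcal{O}^\vee})(\mathrm{id})$, i.e.\ that $\widehat{\Omega}_\iota \mathcal{C}$ is a genuine complete curved absolute partial operad, and that $d_{\mathrm{bar}}^2 = 0$, i.e.\ that $\widehat{\mathrm{B}}_\iota \mathcal{P}$ is a genuine counital partial cooperad up to homotopy. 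Both reduce, by the characterization of (co)derivations on free (cofree) objects, to checking the identity on the generators, where they become the Maurer-Cartan equation
\[
\partial(\iota) + \iota \star \iota = \Theta_{\mathcal{H}om}
\]
in the convolution curved pre-Lie algebra, exactly as in the proofs of the analogous statements for $\kappa$. These are then immediate from the fact that $\iota$ is a curved twisting morphism.

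Finally, introducing the bifunctor $\mathrm{Tw}_{\mathbb{S}}(-,-)$ of curved twisting morphisms between a dg $\Omega_\mathbb{S}(\mathcal{S}\otimes c\mathcal{O}^\vee)$-coalgebra and a complete curved $(\mathcal{S}\otimes c\mathcal{O}^\vee)$-algebra (defined as Maurer-Cartan elements in the convolution curved pre-Lie algebra twisted by $\iota$), the same formal computation as in the Bar-Cobar adjunction relative to $\kappa$ yields natural bijections
\[
\mathrm{Hom}(\widehat{\Omega}_\iota \mathcal{C}, \mathcal{P}) \cong \mathrm{Tw}_{\mathbb{S}}(\mathcal{C}, \mathcal{P}) \cong \mathrm{Hom}(\mathcal{C}, \widehat{\mathrm{B}}_\iota \mathcal{P}),
\]
giving the desired adjunction. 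The hard part of this program is really only bookkeeping: checking that the $\mathbb{S}$-colored analogues of the auxiliary statements of \cite{grignoulejay18} used in the construction of the cofree coalgebra and of the characterization of (co)derivations go through, and that the lax monoidality of $\widehat{\mathscr{S}}^c_{\mathbb{S}}$ (already granted by Lemma~5 of Section~5.1) is sufficient to run the same diagram chases verbatim. No new conceptual ingredient is required beyond what has been set up in Sections~5 and~6.
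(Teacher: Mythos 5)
Your proposal is correct and is exactly the argument the paper intends: the paper's own proof simply states that the constructions and the adjunction proof are \emph{mutatis mutandis} those of the complete Bar-Cobar adjunction relative to $\kappa$ from the previous section, and your sketch carries out precisely that transport (free $\mathcal{S}\otimes c\mathcal{O}^\vee$-algebra with derivation from $\iota$ on one side, cofree $\Omega_\mathbb{S}(\mathcal{S}\otimes c\mathcal{O}^\vee)$-coalgebra with coderivation on the other, both squaring checks reducing to the Maurer--Cartan equation for $\iota$, and the adjunction realized by the twisting-morphism bifunctor). You have merely made explicit the bookkeeping the paper leaves implicit.
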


\begin{proof}
The constructions of the complete Bar and the complete Cobar functors are \textit{mutatis mutandis} the same as those described in the previous section, where we constructed the complete Bar-Cobar relative to $\kappa$. The proof that these functors form an adjunction is also \textit{mutatis mutandis} the same.
\end{proof}

This allows us in turn to transfer the above model structure from counital partial cooperads up to homotopy to complete curve absolute partial operads using a \textit{right transfer theorem}.

\begin{theorem}\label{thm: model structure on curved operads}
There is a cofibrantly generated model structure on the category of complete curved absolute partial operads defined by the following classes of morphisms:
\begin{enumerate}
\item The class of weak-equivalences $\mathrm{W}$ is given by morphisms of complete curved absolute partial operads $f: \mathcal{P} \longrightarrow \mathcal{Q}$ such that $\widehat{\mathrm{B}}_\iota(f)(n): \widehat{\mathrm{B}}_\iota(\mathcal{P})(n) \longrightarrow \widehat{\mathrm{B}}_\iota(\mathcal{Q})(n)$ is a quasi-isomorphism for all $n \geq 0$. 
\vspace{0.5pc}
\item The class of fibrations is $\mathrm{Fib}$ is given by morphisms of complete curved absolute partial operads $f: \mathcal{P} \longrightarrow \mathcal{Q}$ such that $\widehat{\mathrm{B}}_\iota (f): \widehat{\mathrm{B}}_\iota(\mathcal{P}) \longrightarrow \widehat{\mathrm{B}}_\iota(\mathcal{Q})$ is a fibration of counital partial cooperads up to homotopy. 
\vspace{0.5pc}
\item The class of cofibrations $\mathrm{Cof}$ is given by morphisms of complete curved absolute partial operads which have the left lifting property against all morphism in $\mathrm{W} \cap \mathrm{Fib}$. 
\end{enumerate}
\end{theorem}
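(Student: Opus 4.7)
The proof is a right-transfer (Crans--Kan) of the cofibrantly generated model structure on $\mathsf{upCoop}_\infty$ from \cref{thm: model structure on counital cooperads} along the complete Bar-Cobar adjunction $\widehat{\Omega}_\iota \dashv \widehat{\mathrm{B}}_\iota$ of \cref{Prop: complete Bar-Cobar adjunction relative to iota}. By the standard recognition criterion, three conditions must be verified: (i) bicompleteness of $\mathsf{curv}~\mathsf{abs.pOp}^{\mathsf{comp}}$; (ii) smallness of the domains of $\widehat{\Omega}_\iota(I)$ and $\widehat{\Omega}_\iota(J)$, where $I$ and $J$ are the generating (trivial) cofibrations on the cooperad side; (iii) the acyclicity condition that every relative $\widehat{\Omega}_\iota(J)$-cell complex is sent by $\widehat{\mathrm{B}}_\iota$ to a weak equivalence.

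For (i), by \cref{Prop: cO-algebres.} the target is identified with complete curved $(\mathcal{S}\otimes c\mathcal{O}^\vee)^u$-algebras; the groupoid-colored analogue of \cref{bicomplete C alg} then exhibits it as a reflective subcategory of the presentable category of pdg $(\mathcal{S}\otimes c\mathcal{O}^\vee)^u$-algebras, and hence bicomplete and locally presentable. For (ii), smallness is inherited through the adjunction from the presentability of $\mathsf{upCoop}_\infty$, using that $\widehat{\mathrm{B}}_\iota$ is built from the cofree-coalgebra functor of \cref{thm: existence de la cogèbre colibre}, which is an arity-wise pullback of functors that preserve filtered colimits of monomorphisms.

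The crux is condition (iii), and this is where I expect the real work to lie. My plan is to deduce it from the existence of a functorial path object on $\mathsf{curv}~\mathsf{abs.pOp}^{\mathsf{comp}}$. The Hopf-monoid interval $\mathrm{Int}$ provides $\mathsf{upCoop}_\infty$ with a functorial cylinder $(-) \otimes \mathrm{Int}$ by \cref{lemma: functorial cylinder}. Transposing across the adjunction and using the bicompleteness established in (i), I would construct a natural factorization
\[
\mathcal{P} \xrightarrow{\sim} \mathrm{Path}(\mathcal{P}) \twoheadrightarrow \mathcal{P} \times \mathcal{P}
\]
of the diagonal into a weak equivalence followed by a fibration in the candidate model structure. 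Once such a functorial path object is available, Quillen's retract argument implies that every pushout of a morphism in $\widehat{\Omega}_\iota(J)$ is a weak equivalence, yielding condition (iii).

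The main obstacle is showing that the first leg of the constructed path object is a weak equivalence in the transferred sense, that is, that $\widehat{\mathrm{B}}_\iota$ sends it to an arity-wise quasi-isomorphism of counital partial cooperads up to homotopy. This reduces, via the triangle identities, to showing that the derived (co)unit of the $\widehat{\Omega}_\iota \dashv \widehat{\mathrm{B}}_\iota$ adjunction is a quasi-isomorphism on every object of $\mathsf{upCoop}_\infty$ (all of which are cofibrant, since cofibrations there are degree-wise monomorphisms). The proof of this should be a Koszul-style calculation, filtering by the weight on $\mathcal{S}\otimes c\mathcal{O}^\vee$ and invoking acyclicity of the resulting Koszul-type complex, which is available here because $\iota$ is the canonical curved twisting morphism into the Cobar construction $\Omega_\mathbb{S}(\mathcal{S}\otimes c\mathcal{O}^\vee)$ and thus plays the role of a Koszul twisting morphism in this groupoid-colored curved setting, paralleling \cref{Koszulity of uO}.
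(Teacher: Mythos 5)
Your overall skeleton (right transfer along $\widehat{\Omega}_\iota \dashv \widehat{\mathrm{B}}_\iota$, with bicompleteness, smallness, and the acyclicity condition as the three things to check) matches the paper, and your treatment of (i) and (ii) is fine. The gap is in your treatment of (iii), which is the entire content of the theorem. The paper explicitly states that, to the author's knowledge, the acyclicity condition \emph{cannot} be established by constructing a path object in $\mathsf{curv}~\mathsf{abs.pOp}^{\mathsf{comp}}$; instead it is verified by direct computation on relative cell complexes, generalizing \cite[Section 10.5]{grignoulejay18}. Your plan hinges precisely on the path object the paper says is unavailable, and the obstruction is concrete: the natural candidate for $\mathrm{Path}(\mathcal{P})$ is a convolution object $\mathrm{hom}(\mathrm{I},\mathcal{P})$, but for this to carry a (curved absolute partial) operad structure compatible with the symmetric group actions one needs $\mathrm{I}$ to be a \emph{cocommutative} coassociative counital interval, and no such strict interval exists --- this is the same obstruction, noted in the remark following the definition of $\mathrm{Int}$, that forced the paper to pass to counital partial cooperads up to homotopy on the other side of the adjunction in the first place. "Transposing the cylinder across the adjunction" does not circumvent this: a left adjoint carries cylinders to cylinders, not to path objects, and $\widehat{\Omega}_\iota$ applied to the cylinder $\widehat{\mathrm{B}}_\iota(\mathcal{P})\otimes\mathrm{Int}$ does not receive a map from $\mathcal{P}$ factoring the diagonal unless one already controls the (co)unit of the adjunction homotopically.

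That last point is the second problem: your reduction of the weak-equivalence leg to "the derived (co)unit of $\widehat{\Omega}_\iota \dashv \widehat{\mathrm{B}}_\iota$ is a quasi-isomorphism on every object" is essentially the assertion that this adjunction is a Quillen equivalence, which the paper explicitly leaves unproved (it is only stated as a belief in a remark, with a pointer to Section 11 of \cite{grignoulejay18} for a possible strategy). So your argument for (iii) rests on a statement strictly stronger than, and logically downstream of, the theorem you are trying to prove. The repair is to abandon the path-object route and instead check Berger--Moerdijk's acyclicity condition directly: analyse pushouts of $\widehat{\Omega}_\iota(J)$ in $\mathsf{curv}~\mathsf{abs.pOp}^{\mathsf{comp}}$ using the canonical (weight) filtration on complete curved absolute partial operads and show, by the filtered/graded arguments of \cite[Section 10.5]{grignoulejay18} adapted to the groupoid-colored setting, that $\widehat{\mathrm{B}}_\iota$ sends relative $\widehat{\Omega}_\iota(J)$-cell complexes to arity-wise quasi-isomorphisms.
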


\begin{proof}
In order to apply the right transfer theorem, we need to check that the acyclicity conditions of \cite[Section 2.5]{bergermoerdieck03} are satisfied. Thus these conditions need to be checked by direct computation, as it is done in \cite[Section 10.5]{grignoulejay18}. The key point is to show that acyclic cofibrations of complete curved partial cooperads are precisely given by graded quasi-isomorphisms of Definition \ref{def: graded qi}. Then the result follows, as it is clear that the complete Cobar functor preserve small objects, and since these trivial cofibrations are stable under pushouts and sequential colimits. In is a straightforward but tedious exercise to check that the arguments used in \textit{loc.cit} generalize to our framework.
\end{proof}

\begin{Proposition}
Let $f: (\mathcal{P},\gamma_\PP,d_\PP,\Theta_\PP) \longrightarrow (\mathcal{Q},\gamma_\Q,d_\Q,\Theta_\Q)$ be a morphism of complete curved absolute partial operads. If $f(n): \mathcal{P}(n) \longrightarrow \mathcal{Q}(n)$ is a degree-wise epimorphism, then it is a fibration. Thus all complete curved absolute partial operads are fibrant. 
\end{Proposition}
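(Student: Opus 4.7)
My plan is to reduce the statement, via the definition of the transferred model structure of \cref{thm: model structure on curved operads}, to a property of the complete Bar construction $\widehat{\mathrm{B}}_\iota$, and then verify that property by combining the cofreeness of $\widehat{\mathrm{B}}_\iota$ with an obstruction argument inside $\mathsf{upCoop}_\infty$. By construction, $f$ is a fibration of complete curved absolute partial operads if and only if $\widehat{\mathrm{B}}_\iota(f)$ is a fibration in the model structure on counital partial cooperads up to homotopy of \cref{thm: model structure on counital cooperads}. The proof therefore breaks into two independent steps: first, showing that $\widehat{\mathrm{B}}_\iota(f)$ is a degree-wise epimorphism whenever $f$ is; second, showing that degree-wise epimorphic strict morphisms of counital partial cooperads up to homotopy are fibrations.

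For the first step, I plan to exploit the description of the underlying graded $\mathbb{S}$-module of the complete Bar construction as the cofree object $\mathscr{L}\bigl(\Omega_\mathbb{S}(\mathcal{S}\otimes c\mathcal{O}^\vee)\bigr)$ applied to $\mathcal{P}$, furnished by \cref{thm: existence de la cogèbre colibre}. This cofree object is defined as a pullback inside the product functor $\widehat{\mathscr{S}}_\mathbb{S}^c\bigl(\Omega_\mathbb{S}(\mathcal{S}\otimes c\mathcal{O}^\vee)\bigr)$. Because $\kk$ has characteristic zero, each $f(n)$ admits an $\mathbb{S}_n$-equivariant graded section; since arbitrary products of split epimorphisms of $\kk$-modules remain split epimorphisms, these sections assemble into an arity-wise graded section of $\widehat{\mathscr{S}}_\mathbb{S}^c\bigl(\mathrm{id}\bigr)(f)$. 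A straightforward diagram chase in the defining pullback of $\mathscr{L}$ shows that this section restricts to a section of $\widehat{\mathrm{B}}_\iota(f)$, so the latter is arity-wise a split epimorphism.

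The heart of the argument, and its main obstacle, is the second step: constructing a lift against a trivial cofibration $i\colon \mathcal{A}\rightarrowtail\mathcal{B}$ in $\mathsf{upCoop}_\infty$, that is, a strict morphism that is degree-wise monomorphic and arity-wise a quasi-isomorphism. At the underlying dg $\kk$-module level the existence of a lift is automatic: in the standard model structure on dg $\kk$-modules every epimorphism is a fibration, and $i(n)$ is a trivial cofibration, so lifts exist arity-wise, and averaging over $\mathbb{S}_n$ (available because $\mathrm{char}(\kk)=0$) makes them equivariant. The difficulty is to correct this naive graded $\mathbb{S}$-module lift into a strict morphism of counital partial cooperads up to homotopy. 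The plan is to proceed by an obstruction argument along the canonical weight filtration inherited from the cofree part of the target: at each weight the defect of multiplicativity lies in a kernel against which the contracting homotopy associated with the split monomorphism $i$ can be applied, killing the obstruction and modifying the lift at the next stage. This argument is the direct groupoid-colored and inhomogeneous analogue of the one carried out in \cite[Section~10.5]{grignoulejay18}, to which the paper itself already appeals in the proof of \cref{thm: model structure on curved operads}.

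Finally, the assertion that every complete curved absolute partial operad is fibrant is obtained by applying the first part to the unique morphism from any $\mathcal{P}$ to the terminal object of $\mathsf{curv}~\mathsf{abs.pOp}^{\mathsf{comp}}$, whose underlying graded $\mathbb{S}$-module is zero in each arity, so that the structural morphism is arity-wise surjective.
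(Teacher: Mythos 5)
Your reduction of the statement to ``$\widehat{\mathrm{B}}_\iota(f)$ is a fibration in $\mathsf{upCoop}_\infty$'' is correct by definition of the transferred structure of \cref{thm: model structure on curved operads}, and your first step is fine (indeed simpler than you make it: since $\mathrm{char}(\kk)=0$ each $f(n)$ admits an equivariant graded section $\sigma$, and functoriality of the cofree construction makes $\mathscr{L}(\mathrm{id})(s\sigma)$ a section of the underlying graded map of $\widehat{\mathrm{B}}_\iota(f)$ --- no diagram chase in the defining pullback is needed). The genuine gap is your second step. The claim that \emph{every} arity- and degree-wise epimorphic strict morphism of counital partial cooperads up to homotopy is a fibration is nowhere established in the paper, is considerably stronger than what is needed, and the proof you sketch does not go through: your obstruction argument inducts ``along the canonical weight filtration inherited from the cofree part of the target'', but counital partial cooperads up to homotopy are precisely \emph{not} conilpotent. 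The cofree object $\mathscr{L}\bigl(\Omega_\mathbb{S}(\mathcal{S}\otimes c\mathcal{O}^\vee)\bigr)(V)$ is the pullback construction of \cref{thm: existence de la cogèbre colibre}, not a completed tree coalgebra, and it carries no exhaustive coradical filtration along which a graded lift could be corrected stage by stage; the absence of such a filtration in the non-conilpotent setting is exactly the difficulty the paper emphasizes, and it is why the fibrations of \cref{thm: model structure on counital cooperads} are left uncharacterized. Moreover the codomain $\mathcal{B}$ of the trivial cofibration you lift against is an arbitrary object, so cofreeness of $\widehat{\mathrm{B}}_\iota\mathcal{P}$ alone does not produce the compatibility with the bar differential.

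The argument the paper intends (its proof is a citation of \cite[Section~10.2]{grignoulejay18}; Section~10.5, which you invoke, concerns the acyclicity condition for the transfer and is a different statement) runs on the other side of the adjunction. By adjointness, $f$ is a fibration if and only if it has the right lifting property against $\widehat{\Omega}_\iota(j)$ for $j$ a trivial cofibration of $\mathsf{upCoop}_\infty$. A lift $\widehat{\Omega}_\iota\mathcal{B}\longrightarrow\mathcal{P}$ is determined by a map from the generators $\mathcal{B}$ subject to a Maurer--Cartan-type compatibility, and here the induction has a home: $\widehat{\Omega}_\iota\mathcal{B}$ carries a \emph{complete} weight filtration, $\mathcal{P}$ is complete by hypothesis, and the degree-wise surjectivity of $f$ together with the graded splitting of $j$ (with contractible cokernel) lets one build the lift weight by weight and pass to the limit. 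If you want a complete proof, this transposition is the step to carry out; as written, your induction lives on the cooperad side, where the filtration it requires does not exist.
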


\begin{proof}
The arguments of Section 10.2 in \cite{grignoulejay18} generalize to the operadic setting \textit{mutatis mutandis}. 
\end{proof}

\begin{Definition}[Graded quasi-isomorphism]\label{def: graded qi}
Let $f: (\mathcal{P},\gamma_\PP,d_\PP,\Theta_\PP) \longrightarrow (\mathcal{Q},\gamma_\Q,d_\Q,\Theta_\Q)$ be a morphism of complete curved absolute partial operads. It is a \textit{graded quasi-isomorphism} if the induced morphism of dg $\mathbb{S}$-modules
\[
\mathrm{gr}(f): \mathrm{gr}(\mathcal{P}) \cong \bigoplus_{\omega \geq 1} \overline{\mathscr{F}}_{\omega} \PP/ \overline{\mathscr{F}}_{\omega +1} \PP \longrightarrow  \mathrm{gr}(\mathcal{Q}) \cong \bigoplus_{\omega \geq 1} \overline{\mathscr{F}}_{\omega} \mathcal{Q}/ \overline{\mathscr{F}}_{\omega +1} \Q
\]
is an arity-wise quasi-isomorphism. Here $\overline{\mathscr{F}}_{\omega} \PP$ denotes the $\omega$-term of the canonical filtration of an absolute partial operad defined in Appendix \ref{Appendix B}.
\end{Definition}

\begin{Remark}
One checks easily that since $d_\mathcal{P}^2$ raises the weight of an operation in $\mathcal{P}$ by one, it is equal to zero in the associated graded. Therefore $\mathrm{gr}(\mathcal{P})$ forms a dg $\mathbb{S}$-module.
\end{Remark}

\begin{Proposition}
Let $f: (\mathcal{P},\gamma_\PP,d_\PP,\Theta_\PP) \longrightarrow (\mathcal{Q},\gamma_\Q,d_\Q,\Theta_\Q)$  be a graded quasi-isomorphism between two complete curved absolute partial operads. Then it is a weak-equivalence, meaning that $\widehat{\mathrm{B}}_\iota(f): \widehat{\mathrm{B}}(\mathcal{P}) \longrightarrow \widehat{\mathrm{B}}_\iota(\mathcal{Q})$ is an arity-wise quasi-isomorphism.
\end{Proposition}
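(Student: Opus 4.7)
The plan is to run a spectral sequence comparison argument based on the canonical filtration of complete curved absolute partial operads. Concretely, I will lift the filtration $\overline{\mathscr{F}}_\bullet \mathcal{P}$ to a filtration on the underlying pdg $\mathbb{S}$-module of $\widehat{\mathrm{B}}_\iota(\mathcal{P})$, show it is complete and exhaustive at each arity, identify its associated graded with an object built only out of $\mathrm{gr}(\mathcal{P})$, and then invoke the Eilenberg--Moore comparison theorem for convergent filtered complexes.

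First, I would describe $\widehat{\mathrm{B}}_\iota(\mathcal{P})$ as a cofree graded $\Omega_\mathbb{S}(\mathcal{S}\otimes c\mathcal{O}^\vee)$-coalgebra on $\mathcal{P}$, decomposing its pre-differential as $d_{\mathrm{bar}} = d_1 + d_2$ where $d_1$ extends $d_\mathcal{P}$ and $d_2$ encodes the structural map $\gamma_\mathcal{P}$ together with $\Theta_\mathcal{P}$. Using the characterization of complete curved absolute partial operads from Appendix B, I define the filtration
\[
\mathscr{G}_\omega \widehat{\mathrm{B}}_\iota(\mathcal{P})(n) \coloneqq \text{image in arity } n \text{ of series of (labeled) trees whose total canonical weight of vertex-labels is} \geq \omega~,
\]
where the canonical weight of a label $\mu \in \mathcal{P}$ is its filtration degree in $\overline{\mathscr{F}}_\bullet \mathcal{P}$. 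Because $\gamma_\mathcal{P}$ and $\Theta_\mathcal{P}$ are continuous for the canonical filtration (this is built into the definition of complete curved absolute partial operads), both $d_1$ and $d_2$ preserve $\mathscr{G}_\bullet$, so we obtain a filtration of pdg $\mathbb{S}$-modules. The completeness of $\mathcal{P}$ ensures that at each fixed arity $n$ this filtration is complete and Hausdorff; exhaustivity is automatic from the definition of the cofree coalgebra.

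The key computational step is to identify the associated graded. Since the curvature term $\Theta_\mathcal{P}$ and the square $d_\mathcal{P}^2$ both strictly raise the canonical weight, they vanish on $\mathrm{gr}(\mathcal{P})$; the structural map $\gamma_\mathcal{P}$ then descends to a genuine \emph{dg} partial operad structure on $\mathrm{gr}(\mathcal{P})$ (in the usual uncurved sense, since weight-raising cancels the curvature obstruction). A direct inspection of the formulas for $d_1$ and $d_2$ then yields a natural isomorphism of bicomplexes
\[
\mathrm{gr}^{\mathscr{G}} \widehat{\mathrm{B}}_\iota(\mathcal{P}) \;\cong\; \mathrm{B}_\iota(\mathrm{gr}(\mathcal{P}))~,
\]
where the right-hand side is the usual (classical, uncurved) Bar construction of the dg partial operad $\mathrm{gr}(\mathcal{P})$ relative to $\iota$, viewed with its internal filtration. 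This identification is natural in $\mathcal{P}$, hence gives $\mathrm{gr}^{\mathscr{G}} \widehat{\mathrm{B}}_\iota(f) \cong \mathrm{B}_\iota(\mathrm{gr}(f))$.

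To conclude, observe that by hypothesis $\mathrm{gr}(f)$ is an arity-wise quasi-isomorphism of dg $\mathbb{S}$-modules, and it is well known that the classical Bar construction preserves arity-wise quasi-isomorphisms of dg partial operads (this follows from a standard spectral sequence on the number of vertices of the underlying trees, using that we work over a characteristic-zero field). Therefore $\mathrm{gr}^{\mathscr{G}} \widehat{\mathrm{B}}_\iota(f)$ is an arity-wise quasi-isomorphism, so on the $E^1$-page of the spectral sequence associated to $\mathscr{G}_\bullet$ the map induced by $f$ is an isomorphism in every arity. Since the filtration is complete, Hausdorff, exhaustive and bounded below at each fixed arity, the Eilenberg--Moore comparison theorem applies and shows that $\widehat{\mathrm{B}}_\iota(f)(n)$ is a quasi-isomorphism for every $n \geq 0$, which is precisely the claim. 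The main obstacle I anticipate is the bookkeeping required to verify that the filtration $\mathscr{G}_\bullet$ is stable under $d_2$ (this uses both the continuity of $\gamma_\mathcal{P}$ for the canonical filtration and the fact that $\Theta_\mathcal{P}$ lands in $\overline{\mathscr{F}}_{\geq 1}\mathcal{P}$) and that it is complete at each arity despite the product-over-trees structure of the cofree coalgebra.
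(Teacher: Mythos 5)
Your strategy is sound and is, at bottom, the same as the paper's: the paper proves this by induction on the weight of the canonical filtration, using a weak five-lemma to pass from weight $\omega$ to weight $\omega+1$ and completeness to conclude (following \cite[Theorem 10.23]{grignoulejay18}); your spectral-sequence/Eilenberg--Moore packaging is just the rolled-up version of that induction. The essential shared inputs are identical: the filtration by total canonical weight of the labels, the observation that everything ``curved'' dies on the associated graded, and the fact that the cofree counital partial cooperad functor preserves arity-wise quasi-isomorphisms (which over a field of characteristic $0$ survives the product-over-trees structure, since arbitrary products of vector spaces are exact).

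There is, however, one imprecision in your identification of the associated graded, and fixing it actually makes your life easier. You claim $\mathrm{gr}^{\mathscr{G}}\widehat{\mathrm{B}}_\iota(\mathcal{P}) \cong \mathrm{B}_\iota(\mathrm{gr}(\mathcal{P}))$ with $\mathrm{gr}(\mathcal{P})$ carrying ``a genuine dg partial operad structure.'' But with your filtration by total label weight, the composition part of $d_2$ also strictly raises the filtration: if $\mu \in \overline{\mathscr{F}}_{p}\mathcal{P}$ and $\nu \in \overline{\mathscr{F}}_{q}\mathcal{P}$ then $\mu \circ_i \nu \in \overline{\mathscr{F}}_{p+q+1}\mathcal{P}$, since grafting adds an internal edge. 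So on $E^0$ not only the curvature but also the entire structural part of $d_2$ vanishes, and the associated graded is the cofree counital partial cooperad (up to homotopy) on $\mathrm{gr}(\mathcal{P})$ equipped only with the differential induced by $d_\mathcal{P}$ --- i.e.\ the Bar construction of $\mathrm{gr}(\mathcal{P})$ with the \emph{trivial} operad structure. This is exactly the form in which the paper uses the statement (``it sends graded quasi-isomorphisms between curved partial operads endowed with the trivial operad structure to quasi-isomorphisms''), and it lets you replace your appeal to ``the classical Bar construction preserves quasi-isomorphisms of dg partial operads'' --- which would itself require a convergence argument because of the infinitely many trees per arity --- by the simpler statement that the cofree functor preserves arity-wise quasi-isomorphisms. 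With that correction, the remaining points you flag (stability of $\mathscr{G}_\bullet$ under $d_2$, completeness and exhaustiveness at each arity, and the Eilenberg--Moore comparison) go through as you describe.
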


\begin{proof}
The proof is completely analogous to that of \cite[Theorem 10.23]{grignoulejay18}. It is essentially done by induction. The cofree counital cooperad functor preserves quasi-isomorphisms. Thus it sends graded quasi-isomorphisms between curved partial operads endowed with the trivial operad structure to quasi-isomorphisms. Then one shows a weak version of the five-lemma \cite[Lemma 10.20]{grignoulejay18} which allows us to go from weight $\omega$ to weight $\omega +1$. See Section 10.3 for more details on that matter. 
\end{proof}

\begin{Remark}
The above proposition implies that the weak-equivalences of complete curved operads as defined in \cite{JoanCurved} by graded quasi-isomorphisms should be strictly included inside the weak-equivalences of our model structure. Nevertheless, the objects and the underlying categories considered are quite different in nature, so we do not attempt a precise comparison result.
\end{Remark}

There is a morphism of dg unital partial $\mathbb{S}$-colored operads $f_\kappa: \Omega_\mathbb{S}\left(\mathcal{S} \otimes c\mathcal{O}^\vee \right) \longrightarrow u\mathcal{O}$, which in turn induces a morphism between the associated comonads in the category of dg $\mathbb{S}$-modules. Thus one has an adjunction
\[
\begin{tikzcd}[column sep=7pc,row sep=3pc]
            \mathsf{dg}~\mathsf{upCoop} \arrow[r, shift left=1.1ex, "\mathrm{Res}_{f_\kappa}"{name=F}] &\mathsf{upCoop}_\infty~, \arrow[l, shift left=.75ex, "\mathrm{Coind}_{f_\kappa}"{name=U}]
            \arrow[phantom, from=F, to=U, , "\dashv" rotate=-90]
\end{tikzcd}
\]
where the forgetful functor $\mathrm{Res}_{f_\kappa}$ is fully faithful and preserves quasi-isomorphisms.

\begin{Proposition}\label{prop: commuting triangle of adjunctions}
There following triangle of adjunctions 
\[
\begin{tikzcd}[column sep=5pc,row sep=2.5pc]
&\hspace{1pc}\mathsf{upCoop}_\infty \arrow[dd, shift left=1.1ex, "\widehat{\Omega}_{\iota}"{name=F}] \arrow[ld, shift left=.75ex, "\mathrm{Coind}_{f_\kappa}"{name=C}]\\
\mathsf{dg}~\mathsf{upCoop}  \arrow[ru, shift left=1.5ex, "\mathrm{Res}_{f_\kappa}"{name=A}]  \arrow[rd, shift left=1ex, "\widehat{\Omega}_{\kappa}"{name=B}] \arrow[phantom, from=A, to=C, , "\dashv" rotate=-70]
& \\
&\hspace{2.5pc}\mathsf{curv}~\mathsf{abs}~\mathsf{pOp}^{\mathsf{comp}}~, \arrow[uu, shift left=.75ex, "\widehat{\text{B}}_{\iota}"{name=U}] \arrow[lu, shift left=.75ex, "\widehat{\text{B}}_{\kappa}"{name=D}] \arrow[phantom, from=B, to=D, , "\dashv" rotate=-110] \arrow[phantom, from=F, to=U, , "\dashv" rotate=-180]
\end{tikzcd}
\]
commutes up to natural isomorphism.
\end{Proposition}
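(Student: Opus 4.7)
The plan is to establish the commutativity of the triangle at the level of the left adjoints, that is, to exhibit a natural isomorphism
\[
\widehat{\Omega}_{\iota} \circ \mathrm{Res}_{f_\kappa} \;\cong\; \widehat{\Omega}_{\kappa}~,
\]
from which the natural isomorphism of the right adjoints $\mathrm{Coind}_{f_\kappa} \circ \widehat{\mathrm{B}}_{\iota} \cong \widehat{\mathrm{B}}_{\kappa}$ will follow by the essential uniqueness of adjoints. The key conceptual point, to be established first, is that the curved twisting morphism $\kappa: \mathcal{S} \otimes c\mathcal{O}^\vee \longrightarrow u\mathcal{O}$ factors as
\[
\kappa \;=\; f_\kappa \cdot \iota~,
\]
where $\iota: \mathcal{S} \otimes c\mathcal{O}^\vee \hookrightarrow \Omega_{\mathbb{S}}\left(\mathcal{S} \otimes c\mathcal{O}^\vee\right)$ is the canonical curved twisting morphism arising from the Cobar construction. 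This is a direct verification: by the universal property of the $\mathbb{S}$-colored Cobar construction, curved twisting morphisms $\mathcal{S} \otimes c\mathcal{O}^\vee \longrightarrow u\mathcal{O}$ are in bijection with morphisms of dg unital partial $\mathbb{S}$-colored operads $\Omega_\mathbb{S}(\mathcal{S} \otimes c\mathcal{O}^\vee) \longrightarrow u\mathcal{O}$; the morphism $f_\kappa$ is precisely the one corresponding to $\kappa$ under this bijection.

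Next, let $\mathcal{C}$ be a dg counital partial cooperad. Underlying both $\widehat{\Omega}_{\iota}(\mathrm{Res}_{f_\kappa}(\mathcal{C}))$ and $\widehat{\Omega}_{\kappa}(\mathcal{C})$ is the same free complete curved absolute partial operad $\widehat{\mathscr{S}}_{\mathbb{S}}^c(\mathcal{S} \otimes c\mathcal{O}^\vee)(\mathcal{C}) \cong \overline{\mathscr{T}}^\wedge(s^{-1}\mathcal{C} \oplus \nu)$ generated by $\mathcal{C}$, equipped with its canonical curvature. The only remaining check is that the twisted differentials agree. On both sides the derivation is determined by its restriction to the cogenerators, which is obtained by post-composing the coalgebra structure map of $\mathcal{C}$ with the respective twisting morphism: on the $\widehat{\Omega}_{\kappa}$ side one applies $\widehat{\mathscr{S}}^c_\mathbb{S}(\kappa)$ directly, while on the $\widehat{\Omega}_{\iota} \circ \mathrm{Res}_{f_\kappa}$ side one first restricts the $u\mathcal{O}$-coalgebra structure along $f_\kappa$ and then applies $\widehat{\mathscr{S}}^c_\mathbb{S}(\iota)$. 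The factorisation $\kappa = f_\kappa \cdot \iota$ yields exactly the equality of these two composites, so the two derivations coincide.

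Combining these two steps gives a natural isomorphism of complete curved absolute partial operads $\widehat{\Omega}_{\iota}(\mathrm{Res}_{f_\kappa}(\mathcal{C})) \cong \widehat{\Omega}_{\kappa}(\mathcal{C})$ which is evidently natural in $\mathcal{C}$. Passing to the right adjoints by taking mates of this natural isomorphism yields $\mathrm{Coind}_{f_\kappa} \circ \widehat{\mathrm{B}}_{\iota} \cong \widehat{\mathrm{B}}_{\kappa}$, completing the commutativity of the triangle. The main subtlety to watch is the correct identification of the $\mathbb{S}$-colored structure maps: that the $u\mathcal{O}$-coalgebra structure on $\mathcal{C}$ produced by Proposition~\ref{Prop: O-coalgebras} (encoding the strict partial decompositions, counit, and equivariance) is compatible, after restriction along $f_\kappa$, with the $\Omega_\mathbb{S}(\mathcal{S} \otimes c\mathcal{O}^\vee)$-coalgebra structure arising from viewing $\mathcal{C}$ as a counital partial cooperad up to homotopy with trivial higher operations. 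This is again a direct consequence of the definition of $f_\kappa$ via the Cobar--twisting morphism correspondence, so no serious obstacle is expected.
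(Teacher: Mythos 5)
Your proposal is correct and follows the same route as the paper, which simply observes that the left adjoints are naturally isomorphic and then passes to the right adjoints by mates; you have filled in the details via the factorisation $\kappa = f_\kappa \cdot \iota$ and the contravariance of $\widehat{\mathscr{S}}^c_{\mathbb{S}}$, which gives $\widehat{\mathscr{S}}^c_{\mathbb{S}}(\iota)\circ\widehat{\mathscr{S}}^c_{\mathbb{S}}(f_\kappa)=\widehat{\mathscr{S}}^c_{\mathbb{S}}(\kappa)$ and hence the equality of the twisted derivations. No gap.
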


\begin{proof}
It is immediate to check that the left-adjoints are naturally isomorphic. Thus by mates of adjunction, the right adjoints are also naturally isomorphic.
\end{proof}

\begin{Remark}
We believe that the complete Bar-Cobar adjunction relative to $\iota$ is a Quillen equivalence. One could generalize the arguments in Section 11 of \cite{grignoulejay18} to the $\mathbb{S}$-colored framework in order to prove this. 

\medskip

On the other hand, one could try to induce a model structure on counital partial cooperads using the forgetful-cofree adjunction and transfer it to the category of complete curved absolute partial operads. It is not clear to us that these model structures should coincide. Indeed, this would amount to proving that the adjunction
\[
\begin{tikzcd}[column sep=7pc,row sep=3pc]
            \mathsf{dg}~\mathsf{upCoop} \arrow[r, shift left=1.1ex, "\mathrm{Res}_{f_\kappa}"{name=F}] &\mathsf{upCoop}_\infty~, \arrow[l, shift left=.75ex, "\mathrm{Coind}_{f_\kappa}"{name=U}]
            \arrow[phantom, from=F, to=U, , "\dashv" rotate=-90]
\end{tikzcd}
\]
is a Quillen equivalence. But this adjunction restricts to the adjunction
\[
\begin{tikzcd}[column sep=7pc,row sep=3pc]
            \mathsf{dg}~u\mathcal{A}ss\text{-}\mathsf{coalg} \arrow[r, shift left=1.1ex, "\mathrm{Res}_{f_\kappa}"{name=F}] &u\mathcal{A}_\infty\text{-}\mathsf{coalg}~, \arrow[l, shift left=.75ex, "\mathrm{Coind}_{f_\kappa}"{name=U}]
            \arrow[phantom, from=F, to=U, , "\dashv" rotate=-90]
\end{tikzcd}
\]
when one restricts to dg $\mathbb{S}$-modules concentrated in arity one. It is not clear at all that this adjunction is a Quillen equivalence, see \cite[Conjecture 8.10]{grignoulejay18}.
\end{Remark}

\section{Duality functors and Koszul duality}
In this section, we build duality functors that intertwine the two Bar-Cobar adjunctions defined in Section 6, forming an algebraic duality square of commuting adjunctions. On the homotopical side of things, we build a second duality square of commuting Quillen adjunctions when (co)unital partial (co)operads are replaced by their "up to homotopy" counterparts. Using these squares, we compute minimal cofibrant resolutions for complete curved absolute partial operads in certain cases of interest. 

\subsection{Algebraic duality square square}
Our goal is to construct left adjoint functors to the linear dual functor that sends "coalgebraic objects" into "algebraic objects".

\begin{lemma}\label{lemma: adjoint à droite}
The linear duality functor 
\[
\begin{tikzcd}[column sep=4pc,row sep=0pc]
\mathsf{dg}~\mathsf{upCoop}^{\mathsf{op}} \arrow[r,"(-)^*"] 
&\mathsf{dg}~\mathsf{upOp}
\end{tikzcd}
\]
admits a left adjoint.
\end{lemma}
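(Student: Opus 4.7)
The plan is to apply the Special Adjoint Functor Theorem (SAFT). First I would verify the hypotheses on the source $\mathsf{dg}~\mathsf{upCoop}^{\mathsf{op}}$. By the $\mathbb{S}$-colored analogue of Theorem \ref{thm: existence de la cogèbre colibre} applied to the groupoid-colored operad $u\mathcal{O}$ of Definition \ref{def colored operad uO}, the category $\mathsf{dg}~\mathsf{upCoop}$ is comonadic over $\mathsf{dg}~\smod$ via the cofree cooperad comonad $\mathscr{L}(u\mathcal{O})$, hence it is locally presentable: in particular, cocomplete, locally small, well-copowered, and equipped with a small generating set. Dualizing, $\mathsf{dg}~\mathsf{upCoop}^{\mathsf{op}}$ is complete, locally small, well-powered, and admits a small cogenerating set.

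The second step is to verify that $(-)^*: \mathsf{dg}~\mathsf{upCoop}^{\mathsf{op}} \to \mathsf{dg}~\mathsf{upOp}$ preserves small limits. Small limits in $\mathsf{dg}~\mathsf{upCoop}^{\mathsf{op}}$ are small colimits in $\mathsf{dg}~\mathsf{upCoop}$, which are created by the forgetful functor $\mathsf{dg}~\mathsf{upCoop} \to \mathsf{dg}~\smod$ (itself a left adjoint to the cofree cooperad). Ordinary linear duality sends colimits of dg $\mathbb{S}$-modules to limits. The unital partial operad structure on $\mathcal{C}^*$ is induced by dualizing the partial decompositions $\{\Delta_i\}$ of $\mathcal{C}$ through the canonical inclusions $\mathcal{C}(n)^* \otimes \mathcal{C}(k)^* \hookrightarrow (\mathcal{C}(n) \otimes \mathcal{C}(k))^*$, with unit dual to the counit $\epsilon_\mathcal{C}$. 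Limits in $\mathsf{dg}~\mathsf{upOp}$ are in turn created by the forgetful functor $\mathsf{dg}~\mathsf{upOp} \to \mathsf{dg}~\smod$ by monadicity over the tree monad $\mathscr{T}$. Threading these three facts together shows that $(-)^*$ sends colimits of counital partial cooperads to limits of unital partial operads.

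Combining the two steps, SAFT produces the desired left adjoint $(-)^\circ: \mathsf{dg}~\mathsf{upOp} \to \mathsf{dg}~\mathsf{upCoop}^{\mathsf{op}}$. Conceptually this is the operadic analogue of the classical Sweedler dual of \cite{Sweedler69}: for a unital partial operad $\mathcal{P}$, the cooperad $\mathcal{P}^\circ$ can be described as the maximal sub-$\mathbb{S}$-module of $\mathcal{P}^*$ on which each dualized composition $(\circ_i)^*: \mathcal{P}(n+k-1)^* \to (\mathcal{P}(n) \otimes \mathcal{P}(k))^*$ factors through the uncompleted tensor product $\mathcal{P}(n)^* \otimes \mathcal{P}(k)^*$, so that the iterated dualized structure lands in $\overline{\mathscr{T}}^c(\mathcal{P}^\circ)$; one could alternatively verify the universal property directly from this description. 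The main technical delicacy is establishing the presentability of $\mathsf{dg}~\mathsf{upCoop}$, which rests on the accessibility of the comonad $\mathscr{L}(u\mathcal{O})$ built from the pullback of Theorem \ref{thm: existence de la cogèbre colibre}; once this is in hand SAFT applies without further obstruction.
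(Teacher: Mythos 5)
Your argument is correct, but it takes a different route from the paper. The paper deduces the left adjoint from the Adjoint Lifting Theorem \cite[Theorem~2]{AdjointLifting}: it forms the same commuting square you implicitly use (linear duality over the forgetful functors to $\mathsf{dg}~\mathbb{S}\text{-}\mathsf{mod}$), observes that the vertical adjunction on the cooperad side is monadic after passing to opposite categories (being the opposite of the comonadic cofree-cooperad adjunction of Theorem \ref{thm: existence de la cogèbre colibre}), checks $(-)^* \cdot \mathrm{U}^{\mathsf{op}} \cong \mathrm{U} \cdot (-)^*$, and lifts the left adjoint $(-)^*$ of the bottom row. You instead invoke the Special Adjoint Functor Theorem, which requires the extra input that $\mathsf{dg}~\mathsf{upCoop}$ is locally presentable (hence cocomplete, co-wellpowered, with a small generating set); this does hold, since the comonad $\mathscr{L}(u\mathcal{O})$ is accessible (it is a pullback of products of corepresentable functors) and coalgebras over an accessible comonad on a locally presentable category form a locally presentable category, but note that this accessibility step is not actually recorded in Theorem \ref{thm: existence de la cogèbre colibre}, so you are leaning on slightly more than the paper establishes at that point — it would be worth spelling it out. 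Your verification that $(-)^*$ turns colimits of cooperads into limits of operads is exactly the paper's commutativity condition, so the two proofs share their essential content; what the Adjoint Lifting Theorem buys in addition is the explicit equalizer formula for $\mathcal{P}^\circ$ inside $\mathscr{T}^\vee(\mathcal{P}^*)$ that the paper records in the remark following the lemma, whereas SAFT only gives abstract existence (your closing description of $\mathcal{P}^\circ$ as the maximal sub-$\mathbb{S}$-module on which the dualized compositions factor through the uncompleted tensor product is the right picture, but as stated it is a heuristic rather than a proof of the universal property).
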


\begin{proof}
Consider the following square of functors
\[
\begin{tikzcd}[column sep=4pc,row sep=4pc]
\mathsf{dg}~\mathsf{upCoop}^{\mathsf{op}} \arrow[r,"(-)^*"]  \arrow[d,"\mathrm{U}^\mathsf{op}"{name=SD},shift left=1.1ex ]
&\mathsf{dg}~\mathsf{upOp} \arrow[d,"\mathrm{U}"{name=LDC},shift left=1.1ex ] \\
\mathsf{dg}~\mathbb{S}\text{-}\mathsf{mod}^{\mathsf{op}} \arrow[r,"(-)^*"{name=CC},,shift left=1.1ex] \arrow[u,"\left(\mathscr{T}^\vee(-)\right)^\mathsf{op}"{name=LD},shift left=1.1ex ] \arrow[phantom, from=SD, to=LD, , "\dashv" rotate=0]
&\mathsf{dg}~\mathbb{S}\text{-}\mathsf{mod}~.  \arrow[l,"(-)^*"{name=CB},shift left=1.1ex] \arrow[u,"\mathscr{T}(-)"{name=TD},shift left=1.1ex] \arrow[phantom, from=TD, to=LDC, , "\dashv" rotate=0] \arrow[phantom, from=CC, to=CB, , "\dashv" rotate=90]
\end{tikzcd}
\] 
First notice that the adjunction on the left hand side is monadic, since we consider the opposite of a comonadic adjunction. Furthermore, all categories involved are complete and cocomplete. It is absolutely clear that $(-)^* \cdot \mathrm{U}^\mathsf{op} \cong \mathrm{U} \cdot (-)^*~.$ Thus we can apply the Adjoint Lifting Theorem \cite[Theorem 2]{AdjointLifting} to this situation, which concludes the proof. 
\end{proof}

\begin{Definition}[Sweedler dual]\label{def: Sweedler dual functor}
The \textit{Sweedler duality functor}
\[
\begin{tikzcd}[column sep=4pc,row sep=0pc]
\mathsf{dg}~\mathsf{upOp} \arrow[r,"(-)^\circ"] 
&\mathsf{dg}~\mathsf{upCoop}^{\mathsf{op}}
\end{tikzcd}
\]
is defined as the left adjoint of the linear dual functor. 
\end{Definition}

\begin{Remark}
The proof of the Adjoint Lifting Theorem \cite[Theorem 2]{AdjointLifting} gives an explicit construction of this left adjoint. Let $(\mathcal{P},\{\circ_i\},\eta,d_\PP)$ be a dg unital partial operad, and let 
\[
\gamma_\PP: \mathscr{T}(\mathcal{P}) \longrightarrow \PP
\]
be its structural morphism as an algebra over the tree monad. The Sweelder dual dg counital partial cooperad $\mathcal{P}^\circ$ is given by the following equalizer: 
\[
\begin{tikzcd}[column sep=4pc,row sep=4pc]
\mathrm{Eq}\Bigg(\mathscr{T}^\vee(\mathcal{P}^*) \arrow[r,"(\gamma_\PP)^*",shift right=1.1ex,swap]  \arrow[r,"\varrho"{name=SD},shift left=1.1ex ]
&\mathscr{T}^\vee\left((\mathscr{T}(\mathcal{P}))^*\right) \Bigg)~,
\end{tikzcd}
\]
where $\varrho$ is an arrow constructed using the comonadic structure of $\mathscr{T}^\vee$ and the canonical inclusio of a dg $\mathbb{S}$-module into its double linear dual.
\end{Remark}

\begin{Remark}
If we restrict to unital partial operads concentrated in arity $1$, that is, unital associative algebras, the Sweedler dual functor we have constructed is naturally isomorphic with the original Sweedler dual of \cite{Sweedler69}.
\end{Remark}

\begin{Remark}
Let $(\mathcal{P},\{\circ_i\},\eta,d_\PP)$ be a dg unital partial operad such that $\mathcal{P}(n)$ is degree-wise finite dimensional and bounded above or below. Its Sweedler dual $\mathcal{P}^\circ$ is simply given by $(\mathcal{P}^*,\{ \circ_i^*\},\eta^*,d_\PP^*)$. The adjunction constructed restricts to an anti-equivalence of categories between dg counital partial cooperads which are arity-wise degree-wise finite dimensional and bounded above or below and dg unital partial operads which are arity-wise degree-wise finite dimensional and bounded above or below.
\end{Remark}

Let's turn to the other side of the Koszul duality. We postpone the following proofs and constructions to the Appendix \ref{Appendix B}, where there is a detailed discussion of absolute partial operads and their properties. 

\begin{lemma}
Let $(\mathcal{C},\{\Delta_i\},d_\C,\Theta_\C)$ be a conilpotent curved partial cooperad. Then its linear dual $\C^*$ has inherits a structure of a complete curved absolute partial operad. This defines a functor 
\[
\begin{tikzcd}[column sep=4pc,row sep=0pc]
\left(\mathsf{curv}~\mathsf{pCoop}^{\mathsf{conil}}\right)^{\mathsf{op}} \arrow[r,"(-)^*"]
&\mathsf{curv}~\mathsf{abs}~\mathsf{pOp}^{\mathsf{comp}}~.
\end{tikzcd}
\]
\end{lemma}

\begin{proof}
See Lemma \ref{lemma: dual lin d'une curved conil coop}.
\end{proof}

\begin{Proposition}
The linear duality functor admits a left adjoint. There is an adjunction 
\[
\begin{tikzcd}[column sep=7pc,row sep=3pc]
\mathsf{curv}~\mathsf{abs}~\mathsf{pOp}^{\mathsf{comp}}  \arrow[r, shift left=1.1ex, "(-)^\vee"{name=F}] 
& \left(\mathsf{curv}~\mathsf{pCoop}^{\mathsf{conil}}\right)^{\mathsf{op}} ~. \arrow[l, shift left=.75ex, "(-)^*"{name=U}]
            \arrow[phantom, from=F, to=U, , "\dashv" rotate=-90]
\end{tikzcd}
\]
\end{Proposition}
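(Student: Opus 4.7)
The plan is to replicate the proof strategy of Lemma~\ref{lemma: adjoint à droite}, applying the Adjoint Lifting Theorem of \cite{AdjointLifting} to the commuting square of functors

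\[
\begin{tikzcd}[column sep=4pc,row sep=4pc]
\left(\mathsf{curv}~\mathsf{pCoop}^{\mathsf{conil}}\right)^{\mathsf{op}} \arrow[r,"(-)^*"]  \arrow[d,"\mathrm{U}^\mathsf{op}"{name=SD},shift left=1.1ex ]
&\mathsf{curv}~\mathsf{abs.pOp}^{\mathsf{comp}} \arrow[d,"\mathrm{U}"{name=LDC},shift left=1.1ex ] \\
\mathsf{pdg}~\mathbb{S}\text{-}\mathsf{mod}^{\mathsf{op}} \arrow[r,"(-)^*"{name=CC},shift left=1.1ex] \arrow[u,shift left=1.1ex ]
&\mathsf{pdg}~\mathbb{S}\text{-}\mathsf{mod}~,  \arrow[l,"(-)^*"{name=CB},shift left=1.1ex] \arrow[u,shift left=1.1ex] \arrow[phantom, from=SD, to=LD, , "\dashv" rotate=0] \arrow[phantom, from=TD, to=LDC, , "\dashv" rotate=0] \arrow[phantom, from=CC, to=CB, , "\dashv" rotate=90]
\end{tikzcd}
\]
where the top horizontal arrow is the functor constructed in the preceding lemma.

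First, I would check that the bottom row is an adjunction: linear duality on pdg $\mathbb{S}$-modules is standardly self-adjoint in the sense that $(-)^*: \mathsf{pdg}~\mathbb{S}\text{-}\mathsf{mod}^{\mathsf{op}} \to \mathsf{pdg}~\mathbb{S}\text{-}\mathsf{mod}$ has the ``same'' linear duality as its left adjoint. Second, I would verify that both vertical forgetful-to-underlying adjunctions exist and are (co)monadic. On the left, the forgetful functor from conilpotent curved partial cooperads to pdg $\mathbb{S}$-modules admits a cofree right adjoint, realised as $\mathscr{S}_\mathbb{S}((c\mathcal{O}^\vee)^u)(-)$ via Proposition~\ref{Prop: cO-cogebres.}, hence it is comonadic; by passing to opposite categories this becomes monadic. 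On the right, the forgetful functor from complete curved absolute partial operads to pdg $\mathbb{S}$-modules is monadic, the free algebra being produced by the characterisation of complete curved absolute partial operads established in the Appendix (together with Proposition~\ref{Prop: cO-algebres.}, which identifies this category with that of complete curved $(c\mathcal{O}^\vee)^u$-algebras). Finally, the square commutes up to natural isomorphism: the underlying pdg $\mathbb{S}$-module of the complete curved absolute partial operad $\mathcal{C}^*$ is by construction the linear dual of the underlying pdg $\mathbb{S}$-module of $\mathcal{C}$.

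All four categories are complete and cocomplete: this is clear for pdg $\mathbb{S}$-modules, it follows for conilpotent curved partial cooperads from the fact that they are $\mathbb{S}$-colored coalgebras over a comonad, and for complete curved absolute partial operads from the reflective-subcategory argument given for their algebraic counterparts (analogous to Propositions~\ref{reflexive} and \ref{bicomplete C alg}). With these hypotheses verified, the Adjoint Lifting Theorem produces the left adjoint $(-)^\vee$ to $(-)^*$ and the desired adjunction follows.

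The main obstacle, as in Lemma~\ref{lemma: adjoint à droite}, is not conceptual but bookkeeping: one must be confident that the monadic structure on $\mathsf{curv}~\mathsf{abs.pOp}^{\mathsf{comp}}$ is correctly described by the formalism of Section~5 and that the commutation square above is strictly natural, not merely pointwise. As in the Sweedler dual case, the proof of the Adjoint Lifting Theorem provides an explicit formula: given $(\mathcal{P},\gamma_\mathcal{P},d_\mathcal{P},\Theta_\mathcal{P})$ a complete curved absolute partial operad with structural morphism from the free complete curved absolute partial operad $\mathsf{F}(\mathcal{P}) \twoheadrightarrow \mathcal{P}$, the cooperad $\mathcal{P}^\vee$ is the equaliser, in the category of conilpotent curved partial cooperads, of the two induced maps $\mathscr{S}_\mathbb{S}((c\mathcal{O}^\vee)^u)(\mathcal{P}^*) \rightrightarrows \mathscr{S}_\mathbb{S}((c\mathcal{O}^\vee)^u)\left((\mathsf{F}(\mathcal{P}))^*\right)$, in exact parallel with the explicit description of the Sweedler dual recalled after Definition~\ref{def: Sweedler dual functor}.
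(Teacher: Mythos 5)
There is a genuine gap in your argument, and it sits exactly at the hypothesis of the Adjoint Lifting Theorem that does the work. To lift a left adjoint of $(-)^*$ along your square you need the left-hand vertical adjunction to be monadic, i.e.\ you need the forgetful functor from conilpotent \emph{curved} partial cooperads to pdg $\mathbb{S}$-modules to be \emph{comonadic}. You justify this by saying the cofree object is $\mathscr{S}_\mathbb{S}((c\mathcal{O}^\vee)^u)(-)$ via Proposition \ref{Prop: cO-cogebres.}, but that functor is the cofree \emph{pdg} $(c\mathcal{O}^\vee)^u$-coalgebra, and by Proposition \ref{Prop: cO-cogebres.} the conilpotent curved partial cooperads are the \emph{curved} $(c\mathcal{O}^\vee)^u$-coalgebras, a full subcategory of the pdg ones cut out by the compatibility between $d^2$ and the curvature. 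The cofree pdg coalgebra on an arbitrary pdg $\mathbb{S}$-module has no reason to satisfy that condition (this is the same phenomenon as in Section \ref{Section: Curved Operads}, where the free pdg algebra over a curved operad fails to be a curved algebra), and the paper explicitly flags in Section \ref{Section: Curved cooperads} that the existence of cofree \emph{curved} coalgebras is not known. So the monadicity hypothesis you invoke is unestablished, and your closing equalizer formula, which is taken "in the category of conilpotent curved partial cooperads" inside a cofree object of that category, inherits the same problem.

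The paper avoids this entirely by taking a different route: the adjunction is already constructed at the non-curved level (Proposition \ref{prop: adjonction topo dual lin}, where the Adjoint Lifting Theorem \emph{does} apply because conilpotent partial cooperads are genuinely comonadic, being coalgebras over the reduced tree comonad), and one then only has to check that both functors respect the curvatures. For $(-)^*$ this is Lemma \ref{lemma: dual lin d'une curved conil coop}; for $(-)^\vee$ one uses the natural monomorphism $(-)^\vee \hookrightarrow (-)^*$ coming from the equalizer description, so that $\Q^\vee$ is a sub-object of $\Q^*$ and the pre-differential and curvature restrict to it. If you want to salvage your strategy, you would either have to prove the comonadicity of conilpotent curved partial cooperads over pdg $\mathbb{S}$-modules (a nontrivial open point), or fall back on restricting the non-curved adjunction as the paper does.
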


\begin{proof}
See Proposition \ref{prop: adjonction topo et dual lin en curved}.
\end{proof}

\begin{Definition}[Topological dual functor]\label{def: topological dual functor}
The \textit{topological dual functor} 
\[
\begin{tikzcd}[column sep=4pc,row sep=0pc]
\mathsf{curv}~\mathsf{abs}~\mathsf{pOp}^{\mathsf{comp}} \arrow[r,"(-)^\vee"]
&\left(\mathsf{curv}~\mathsf{pCoop}^{\mathsf{conil}}\right)^{\mathsf{op}}
\end{tikzcd}
\]
is defined as the left adjoint of the linear dual functor.
\end{Definition}

This allows us to construct the first duality square of commuting functors.

\begin{theorem}[Duality square]\label{thm: carré magique}
The following square of adjunction 
\[
\begin{tikzcd}[column sep=5pc,row sep=5pc]
\left(\mathsf{dg}~\mathsf{upOp}\right)^{\mathsf{op}} \arrow[r,"\mathrm{B}^{\mathsf{op}}"{name=B},shift left=1.1ex] \arrow[d,"(-)^\circ "{name=SD},shift left=1.1ex ]
&\left(\mathsf{curv}~\mathsf{pCoop}^{\mathsf{conil}}\right)^{\mathsf{op}}  \arrow[d,"(-)^*"{name=LDC},shift left=1.1ex ] \arrow[l,"\Omega^{\mathsf{op}}"{name=C},,shift left=1.1ex]  \\
\mathsf{dg}~\mathsf{upCoop} \arrow[r,"\widehat{\Omega}"{name=CC},shift left=1.1ex]  \arrow[u,"(-)^*"{name=LD},shift left=1.1ex ]
&\mathsf{curv}~\mathsf{abs}~\mathsf{pOp}^{\mathsf{comp}}~, \arrow[l,"\widehat{\mathrm{B}}"{name=CB},shift left=1.1ex] \arrow[u,"(-)^\vee"{name=TD},shift left=1.1ex] \arrow[phantom, from=SD, to=LD, , "\dashv" rotate=0] \arrow[phantom, from=C, to=B, , "\dashv" rotate=-90]\arrow[phantom, from=TD, to=LDC, , "\dashv" rotate=0] \arrow[phantom, from=CC, to=CB, , "\dashv" rotate=-90]
\end{tikzcd}
\] 
commutes in the following sense: right adjoints going from the top right to the bottom left are naturally isomorphic.
\end{theorem}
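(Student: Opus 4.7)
The strategy is to show that both right-adjoint compositions from the top-right corner to the bottom-left corner corepresent the same functor on $\mathsf{dg}~\mathsf{upCoop}$, and conclude by Yoneda. This reduces the problem to a natural identification of twisting-morphism sets, which in turn follows from the compatibility of the curved convolution pre-Lie algebras under linear duality.

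\textbf{Step 1 (Reduction to twisting morphisms).} Unravelling the two compositions of right adjoints shows that the statement amounts to the natural isomorphism
$\widehat{\mathrm{B}}(\mathcal{C}^*) \cong (\Omega\mathcal{C})^{\circ}$
in $\mathsf{dg}~\mathsf{upCoop}$, for every conilpotent curved partial cooperad $\mathcal{C}$. Fix also a dg counital partial cooperad $\mathcal{D}$ for testing purposes. Via the complete Bar--Cobar adjunction of Corollary~\ref{Complete Bar-Cobar adjunction},
\[
\mathrm{Hom}_{\mathsf{dg}~\mathsf{upCoop}}\bigl(\mathcal{D},\widehat{\mathrm{B}}(\mathcal{C}^*)\bigr) \;\cong\; \mathrm{Tw}\bigl(\mathcal{D},\mathcal{C}^*\bigr),
\]
where $\mathrm{Tw}(-,-)$ on the right means complete curved twisting morphisms from a dg counital partial cooperad to a complete curved absolute partial operad. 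Via the Sweedler adjunction (Definition~\ref{def: Sweedler dual functor}) followed by the classical Bar--Cobar adjunction of Subsection~\ref{subsection: classical bar-cobar relative to kappa},
\[
\mathrm{Hom}_{\mathsf{dg}~\mathsf{upCoop}}\bigl(\mathcal{D},(\Omega\mathcal{C})^{\circ}\bigr) \;\cong\; \mathrm{Hom}_{\mathsf{dg}~\mathsf{upOp}}\bigl(\Omega\mathcal{C},\mathcal{D}^*\bigr) \;\cong\; \mathrm{Tw}\bigl(\mathcal{C},\mathcal{D}^*\bigr),
\]
where this second $\mathrm{Tw}$ denotes classical curved twisting morphisms from a conilpotent curved partial cooperad to a dg unital partial operad. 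Thus it suffices to produce a natural bijection between these two sets of twisting morphisms.

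\textbf{Step 2 (Identification of twisting morphisms).} At the level of underlying $\mathbb{S}$-module morphisms, the hom--tensor adjunction yields a canonical arity-wise bijection
\[
\mathrm{Hom}_{\mathbb{S}\text{-}\mathsf{mod}}(\mathcal{C},\mathcal{D}^*) \;\cong\; \mathrm{Hom}_{\mathbb{S}\text{-}\mathsf{mod}}(\mathcal{C}\otimes\mathcal{D},\kk) \;\cong\; \mathrm{Hom}_{\mathbb{S}\text{-}\mathsf{mod}}(\mathcal{D},\mathcal{C}^*),
\]
no finite-dimensionality required. The core technical claim is that this bijection upgrades to an isomorphism of curved convolution pre-Lie algebras $\mathcal{H}om(\mathcal{C},\mathcal{D}^*) \cong \mathcal{H}om(\mathcal{D},\mathcal{C}^*)$, so that Maurer--Cartan elements correspond bijectively. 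One verifies this term by term: the pre-differentials match because $d_{\mathcal{D}^*}$ (resp.\ $d_{\mathcal{C}^*}$) is the transpose of $d_{\mathcal{D}}$ (resp.\ $d_{\mathcal{C}}$); the pre-Lie brackets match because the partial compositions on $\mathcal{D}^*$ and those on $\mathcal{C}^*$, viewed as a complete curved absolute partial operad (cf. Appendix~\ref{Appendix B}), are transposes of the partial decompositions on $\mathcal{D}$ and $\mathcal{C}$ respectively; finally the two curvature terms correspond, since $\eta_{\mathcal{D}^*}\circ\Theta_{\mathcal{C}}$ and $\Theta_{\mathcal{C}^*}\circ\epsilon_{\mathcal{D}}$ are exchanged under linear duality (using that $\eta_{\mathcal{D}^*}$ transposes $\epsilon_{\mathcal{D}}$ and that $\Theta_{\mathcal{C}^*}$ transposes $\Theta_{\mathcal{C}}$ as constructed in the Appendix). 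Naturality in both $\mathcal{C}$ and $\mathcal{D}$ is clear from the construction.

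\textbf{Step 3 (Conclusion).} Combining Steps~1 and~2 gives a natural isomorphism
\[
\mathrm{Hom}_{\mathsf{dg}~\mathsf{upCoop}}\bigl(\mathcal{D},\widehat{\mathrm{B}}(\mathcal{C}^*)\bigr) \;\cong\; \mathrm{Hom}_{\mathsf{dg}~\mathsf{upCoop}}\bigl(\mathcal{D},(\Omega\mathcal{C})^{\circ}\bigr)
\]
natural in $\mathcal{D}$. Yoneda applied to $\mathcal{D}$ then yields the required natural isomorphism $\widehat{\mathrm{B}}(\mathcal{C}^*) \cong (\Omega\mathcal{C})^{\circ}$, naturally in $\mathcal{C}$.

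\textbf{Main obstacle.} The delicate point is entirely Step~2: rigorously matching the curved convolution pre-Lie algebra structures, and in particular the two curvature terms $\Theta_{\mathcal{H}om}$, across the duality bijection. This requires careful sign bookkeeping with the transposition of partial (de)compositions and a precise use of the description of the complete curved absolute partial operad structure on $\mathcal{C}^*$ developed in Appendix~\ref{Appendix B}; once this identification is in hand, Steps~1 and~3 are essentially formal.
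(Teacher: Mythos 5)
Your argument is correct, and it takes a genuinely different route from the paper. The paper proves the statement by constructing the natural isomorphism $\widehat{\mathrm{B}}(\C^*) \cong (\Omega\,\C)^{\circ}$ directly: it first obtains the identification of the underlying graded objects, $\mathscr{T}^\vee(s\C^*) \cong (\mathscr{T}(s^{-1}\C))^{\circ}$, as the mate of the evident isomorphism $(-)^* \cdot \mathrm{U} \cong \mathrm{U}^{\mathsf{op}} \cdot (-)^*$ at the level of underlying $\mathbb{S}$-modules, and then checks by direct inspection that this isomorphism intertwines the Bar differential with the differential inherited by the Sweedler dual of the Cobar construction. You instead corepresent both composites of right adjoints by sets of curved twisting morphisms — $\mathrm{Tw}(\mathcal{D},\C^*)$ via the complete Bar--Cobar adjunction, and $\mathrm{Tw}(\C,\mathcal{D}^*)$ via the Sweedler adjunction followed by the classical Bar--Cobar adjunction — identify the two Maurer--Cartan loci through the hom--tensor adjunction, and conclude by Yoneda. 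The two proofs carry the same irreducible kernel of verification, just located in different places: the paper's "direct inspection" of (co)derivations is exactly your term-by-term matching of the convolution pre-Lie structures (pre-differentials, brackets, and the two curvature terms $\eta_{\mathcal{D}^*}\cdot\Theta_{\C}$ versus $\Theta_{\C^*}\cdot\epsilon_{\mathcal{D}}$ under transposition). What your route buys is that naturality in $\C$ and the compatibility with the adjunction units/counits come for free from the representability statements, whereas the mate construction gives the paper an explicit formula for the comparison map with no detour through bifunctors; both are legitimate, and your reduction correctly isolates the only non-formal content in Step~2.
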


\begin{proof}
Let us show the commutativity of this square. We have, for any graded $\mathbb{S}$-module $M$, a natural isomorphism of graded counital partial cooperads 
\[
\mathscr{T}^\vee(sM^*) \cong \left(\mathscr{T}(s^{-1}M) \right)^\circ~.
\]
This isomorphism is obtained as the mate of the obvious isomorphism $(-)^* \cdot \mathrm{U} \cong \mathrm{U}^\mathrm{op} \cdot (-)^*~.$ Let $(\C, \{\Delta_i\}, d_\C, \Theta_\C)$ be a conilpotent curved partial cooperad. One can show by direct inspection that the isomorphism of graded counital partial cooperads 
\[
\mathscr{T}^\vee(s\C^*) \cong \left(\mathscr{T}(s^{-1}\C) \right)^\circ
\]
extends to an isomorphism of dg counital partial cooperads
\[
\widehat{\mathrm{B}}(\C^*) \cong \left(\Omega(\C) \right)^\circ~.
\]
\end{proof}

\begin{Remark}
Let $(\C,\{\Delta_i\},\epsilon,d_\C)$ be a dg counital partial cooperad. Then, by the above theorem, there is an isomorphism
\[
\mathrm{B}(\C^*) \cong \left(\widehat{\Omega}(\C)\right)^\vee~,
\]
which is natural in $\C$.
\end{Remark}

\begin{Proposition}\label{prop: finite dual commutes}
Let $(\mathcal{P},\{\circ_i\},\eta,d_\PP)$ be a dg unital partial operad which is arity-wise degree-wise finite dimensional and bounded above or bounded below. There is an isomorphism
\[
\widehat{\Omega}(\mathcal{P}^*) \cong \left(\mathrm{B}(\mathcal{P})\right)^*
\]
of complete curved absolute partial operads.
\end{Proposition}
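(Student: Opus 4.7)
The strategy is to work in the groupoid-colored framework of Section~6. First, I would unfold both sides using the natural isomorphisms $\mathrm{B}\cong\mathrm{B}_\kappa$ (Proposition~\ref{Mise en abime}) and $\widehat{\Omega}\cong\widehat{\Omega}_\kappa$, reducing the statement to showing
\[
(\mathrm{B}_\kappa\mathcal{P})^{*}\;\cong\;\widehat{\Omega}_\kappa(\mathcal{P}^{*})
\]
as complete curved absolute partial operads. Unpacking the definitions, the left-hand side is the linear dual of $\mathscr{S}_\mathbb{S}(\mathcal{S}\otimes c\mathcal{O}^\vee)(\mathcal{P})$, while the right-hand side is $\widehat{\mathscr{S}}_\mathbb{S}^c(\mathcal{S}\otimes c\mathcal{O}^\vee)(\mathcal{P}^{*})$.

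Since $\mathcal{P}$ is arity-wise and degree-wise finite dimensional, and the $\mathbb{S}$-color scheme $\mathcal{S}\otimes c\mathcal{O}^\vee$ is also arity-wise finite dimensional at each color, the classical $\kk$-linear duality provides, via the isomorphism $(V\otimes_H W)^{*}\cong \mathrm{Hom}_H(V,W^{*})$ for a finite group $H$ acting on finite-dimensional vector spaces applied componentwise,
\[
\Bigl(\mathscr{S}_\mathbb{S}(\mathcal{S}\otimes c\mathcal{O}^\vee)(\mathcal{P})\Bigr)^{*}\;\cong\;\widehat{\mathscr{S}}_\mathbb{S}^c(\mathcal{S}\otimes c\mathcal{O}^\vee)(\mathcal{P}^{*}),
\]
which is natural in $\mathcal{P}$. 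This identifies the two sides at the level of underlying graded $\mathbb{S}$-modules.

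Next, I would check that this identification intertwines all the additional structure. The $u\mathcal{O}$-algebra structure $\gamma_\mathcal{P}$ on $\mathcal{P}$ (i.e.\ the unital partial operad structure) dualizes, thanks to finite dimensionality, to the $u\mathcal{O}$-coalgebra structure on $\mathcal{P}^{*}$ (i.e.\ the counital partial cooperad structure obtained by linear dualization). Under the above identification, the two summands $d_{1},d_{2}$ of the Bar differential $d_{\mathrm{bar}}=d_{1}+d_{2}$ correspond, upon dualization, precisely to the two summands of $d_{\mathrm{cobar}}=d_{1}-d_{2}$, the relative sign being exactly the sign picked up when dualizing a derivation of the form $\gamma_\mathcal{P}\cdot \mathscr{S}_\mathbb{S}(\kappa)(\mathrm{id})$. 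Likewise, the complete curved absolute partial operad structure on $(\mathrm{B}_\kappa\mathcal{P})^{*}$ obtained from applying the linear-dual functor $(-)^{*}$ to the conilpotent curved partial cooperad $\mathrm{B}_\kappa\mathcal{P}$ matches, under the identification, the free $(\mathcal{S}\otimes c\mathcal{O}^\vee)$-algebra structure which defines $\widehat{\Omega}_\kappa(\mathcal{P}^{*})$.

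The main technical obstacle is the careful bookkeeping of degrees, signs, and the various coinvariant/invariant identifications in the groupoid-colored setting, in particular verifying the compatibility between the cofree $u\mathcal{O}$-coalgebra structure used on the Bar side and the free $(\mathcal{S}\otimes c\mathcal{O}^\vee)$-algebra structure used on the complete Cobar side. However, each such verification reduces componentwise to the standard finite-dimensional duality theorems, which commute with the finite colimits and limits defining the Schur and dual Schur functors.
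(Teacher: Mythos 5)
Your proposal is correct and follows essentially the same route as the paper: both arguments reduce to the fact that, under the arity-wise and degree-wise finite-dimensionality hypothesis, linear duality exchanges the ``direct sum of coinvariants'' construction underlying $\mathrm{B}$ with the ``product of invariants'' construction underlying $\widehat{\Omega}$, and then verify by direct inspection that the pre-differentials and curved structures correspond. The only cosmetic difference is that you phrase this exchange at the level of the groupoid-colored Schur functors $\mathscr{S}_\mathbb{S}$ and $\widehat{\mathscr{S}}_\mathbb{S}^c$, whereas the paper states it as the isomorphism $\overline{\mathscr{T}}^\wedge(s^{-1}M^*\oplus\nu)\cong(\overline{\mathscr{T}}(sM\oplus\nu))^*$ of the underlying tree constructions; these two formulations are identified by Lemma \ref{lemma with the trees bijection} and Lemma \ref{lemma: dual Schur of cOvee}.
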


\begin{proof}
Let $M$ be a pdg $\mathbb{S}$-module which is arity-wise degree-wise finite dimensional and bounded above or bounded below. There is an isomorphism of complete pdg absolute partial operads:
\[
\overline{\mathscr{T}}^\wedge(s^{-1}M^* \oplus \nu) \cong \left(\overline{\mathscr{T}}(sM \oplus \nu)\right)^*~. 
\]
One can show that this isomorphism extends to an isomorphism of complete curved absolute partial operads
\[
\widehat{\Omega}(\mathcal{P}^*) \cong \left(\mathrm{B}(\mathcal{P})\right)^*
\]
by direct inspection, looking at the morphisms that induce the pre-differentials on each of those constructions.
\end{proof}

\begin{Remark}[Beck-Chevalley condition]
In fact, one can show that there is a monomorphism of complete curved absolute partial operads
\[
\lambda_\PP: \widehat{\Omega}(\mathcal{P}^\circ) \hookrightarrow \left(\mathrm{B}(\mathcal{P})\right)^*
\]
which is natural in $\mathcal{P}$. And $\lambda_\PP$ is an isomorphism if $\PP$ is arity-wise degree-wise finite dimensional and bounded above or below. The sub-category of dg unital partial operad which satisfy the \textit{Beck-Chevalley condition} contains the sub-category of arity-wise degree-wise finite dimensional and bounded above or below dg unital partial operads. 
\end{Remark}

\begin{Example}
Let $\ucom$ be the unital partial operad encoding dg unital commutative algebras. There is an isomorphism of complete curved absolute partial operads: 
\[
(\text{B}\ucom)^* \cong \widehat{\Omega}\ucomd~.
\]
The complete curved absolute partial operad $\widehat{\Omega}\ucomd$ encodes the notion of mixed curved $\mathcal{L}_\infty$-algebras. See Section \ref{Section: curved HTT} for more details on this. 
\end{Example}

\subsection{Homotopical duality square} We now construct the homotopical version of the duality square by replacing (co)unital partial (co)operads by their "up to homotopy" counterparts. Before making those constructions, we state results that are a direct consequence of the results stated in \cite{grignou2021}. They give Quillen equivalence between unital partial operads up to homotopy and conilpotent curved partial cooperads.

\begin{theorem}[\cite{Hinich97}]
There is a cofibrantly generated model structure on the category of unital partial operads up to homotopy with strict morphisms defined by the following classes of morphisms:
\begin{enumerate}
\item The class of weak-equivalences $\mathrm{W}$ is given by strict morphisms of unital partial operads up to homotopy $f: \mathcal{P} \longrightarrow \mathcal{Q}$ such that $f(n): \mathcal{P}(n) \longrightarrow \mathcal{Q}(n)$ is a quasi-isomorphism for all $n \geq 0$. 
\vspace{0.5pc}
\item The class of fibrations is $\mathrm{Fib}$ is given by strict morphisms of unital partial operads up to homotopy $f: \mathcal{P} \longrightarrow \mathcal{Q}$ such that $f(n): \mathcal{P}(n) \twoheadrightarrow \mathcal{Q}(n)$ is a degree-wise epimorphism for all $n \geq 0$. 
\item The class of cofibrations $\mathrm{Cof}$ is given by strict morphisms of unital partial operads up to homotopy which have the left lifting property against all morphism in $\mathrm{W} \cap \mathrm{Fib}$. 
\end{enumerate}
\end{theorem}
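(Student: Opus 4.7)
The plan is to obtain this model structure by applying the right transfer theorem to the free-forgetful adjunction
\[
F : \mathsf{dg}~\mathbb{S}\text{-}\mathsf{mod} \rightleftarrows \mathsf{upOp}_\infty : U,
\]
where $F$ is the free functor associated to the dg $\mathbb{S}$-colored operad $\Omega_\mathbb{S}(\mathcal{S}\otimes c\mathcal{O}^\vee)$ that encodes unital partial operads up to homotopy. The source category is equipped with the standard projective model structure on dg $\mathbb{S}$-modules, which is cofibrantly generated with the usual generating (trivial) cofibrations coming from sphere/disk inclusions in each arity. Defining the weak equivalences and fibrations in $\mathsf{upOp}_\infty$ to be those strict morphisms whose image under $U$ is a weak equivalence, respectively a fibration, produces the three classes of morphisms in the statement; the cofibrations are then determined by the left lifting property.

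I would first observe that the small object argument applies: the monad $\mathscr{S}_\mathbb{S}\bigl(\Omega_\mathbb{S}(\mathcal{S}\otimes c\mathcal{O}^\vee)\bigr)$ is an $\mathbb{S}$-colored analytic monad, so $U$ preserves filtered colimits, and every object is small relative to the subcategory generated by $F(I)$. The only nontrivial input is the \emph{acyclicity} condition, namely that relative $F(J)$-cell complexes are arity-wise quasi-isomorphisms. As is standard, the most economical way to verify this is through a functorial path object in $\mathsf{upOp}_\infty$.

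The construction of such a path object is dual to the functorial cylinder of \cref{lemma: functorial cylinder}. Viewing the commutative Hopf dg module $\mathrm{I}$ as a strict unital partial operad $\mathrm{Int}$ with $\mathrm{Int}(n) \coloneqq \mathrm{I}$ and componentwise partial compositions provided by the commutative product of $\mathrm{I}$, one endows the Hadamard product $\mathcal{P} \otimes \mathrm{Int}$ with a canonical structure of unital partial operad up to homotopy by an inductive argument on the weight of the trees, precisely dual to \cite[Proposition~31]{grignou2021}. Applying $\mathcal{P} \otimes -$ to the interval factorization $\kk \xrightarrow{\sim} \mathrm{I} \twoheadrightarrow \kk \oplus \kk$ then yields a factorization
\[
\mathcal{P} \xrightarrow{\sim} \mathcal{P} \otimes \mathrm{Int} \twoheadrightarrow \mathcal{P} \times \mathcal{P}
\]
of the diagonal in $\mathsf{upOp}_\infty$, where the first map is an arity-wise quasi-isomorphism and the second is an arity-wise epimorphism. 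This is the required functorial path object.

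The main obstacle lies in the inductive extension of the homotopy operad structure to $\mathcal{P} \otimes \mathrm{Int}$: at each weight one must produce compatible operations absorbing the cup-product of $\mathrm{I}$ together with all higher homotopies encoded in the differential of $\Omega_\mathbb{S}(\mathcal{S}\otimes c\mathcal{O}^\vee)$. The obstruction classes fit into an acyclic complex by the same homological arguments that make the cooperadic version work, and the proof is entirely parallel to \cite[Section~5]{grignou2021}. Once the path object is in place, Quillen's right transfer theorem yields the cofibrantly generated model structure with the three classes of morphisms described in the statement.
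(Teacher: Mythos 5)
Your overall strategy coincides with the paper's: the model structure is obtained by right transfer along the free--forgetful adjunction between dg $\mathbb{S}$-modules and algebras over the colored operad $\Omega_\mathbb{S}(\mathcal{S}\otimes c\mathcal{O}^\vee)$, and the paper simply defers the verification to \cite[Section~4.5]{grignou2021}, which rests on exactly the inductive interval construction you invoke. Two claims in your write-up nevertheless need repair. First, the interval needed for a \emph{path} object is the one factoring the diagonal, $\kk \qi \mathrm{I}^{\ast} \twoheadrightarrow \kk\oplus\kk$, i.e.\ the linear dual of the cellular chain interval $\mathrm{I}$ used in the paper, which only factors the \emph{co}diagonal $\kk\oplus\kk\rightarrowtail \mathrm{I}\qi\kk$; the chain interval admits no surjective chain map onto $\kk\oplus\kk$ (any chain map kills the $1$-cell and hence identifies the two endpoints), so the factorization you write down does not exist for $\mathrm{I}$ itself. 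Second, and more substantively, $\mathrm{Int}$ is \emph{not} a strict unital partial operad: the cup product is associative but not commutative, so the parallel axiom fails --- this is precisely the obstruction recorded in the paper's remark following the definition of $\mathrm{Int}$, and it is the entire reason the ``up to homotopy'' categories are introduced in the first place. Your subsequent inductive construction of the homotopy operad structure on $\mathcal{P}\otimes\mathrm{Int}$ (which is \cite[Proposition~31]{grignou2021} itself, not its dual --- the dual is what the paper uses for the cooperad cylinder) is exactly what repairs this failure, so the argument goes through once the claim of strictness is dropped and the variance of the interval is corrected.
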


\begin{proof}
It is obtained by right transfer theorem using the free-forgetful adjunction.
\end{proof}

The author of \cite{grignou2021} endows the category of conilpotent curved partial cooperads with a transferred structure from unital partial operads, using the Bar-Cobar construction of Subsection \ref{subsection: classical bar-cobar relative to kappa}.

\begin{theorem}[{\cite[Theorem 5]{grignou2021}}]\label{thm: structure de modèles conil curved part coop}
There is a cofibrantly generated model structure on the category of conilpotent curved partial cooperads defined by the following classes of morphisms:
\begin{enumerate}
\item The class of weak-equivalences $\mathrm{W}$ is given by morphisms of conilpotent curved partial cooperads $f: \mathcal{C} \longrightarrow \mathcal{D}$ such that $\Omega(f)(n): \Omega(\mathcal{C})(n) \longrightarrow \Omega(\mathcal{D})(n)$ is a quasi-isomorphism for all $n \geq 0$. \vspace{0.00001pc}
\item The class of cofibrations is $\mathrm{Cof}$ is given by morphisms of conilpotent curved partial cooperads $f: \mathcal{C} \longrightarrow \mathcal{D}$ such that $\Omega(f): \Omega(\mathcal{C}) \longrightarrow \Omega(\mathcal{D})$ is a cofibration of unital partial operads up to homotopy.
\vspace{0.5pc}
\item The class of fibrations $\mathrm{Fib}$ is given by morphisms of conilpotent curved partial cooperads which have the right lifting property against all morphism in $\mathrm{W} \cap \mathrm{Cof}$. 
\end{enumerate}
\end{theorem}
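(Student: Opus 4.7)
The plan is to apply a left-transfer theorem along the Bar-Cobar adjunction
\[
\Omega \dashv \mathrm{B} \colon \mathsf{dg}~\mathsf{upOp}_\infty \rightleftarrows \mathsf{curv}~\mathsf{pCoop}^{\mathsf{conil}}
\]
(or, what is isomorphic after Proposition \ref{Mise en abime}, the adjunction relative to $\kappa$), now that the source carries the model structure just established for unital partial operads up to homotopy. Since $\Omega$ is the left adjoint, and since the statement defines weak equivalences and cofibrations on the right-hand side precisely as morphisms whose image under $\Omega$ lies in the corresponding classes, this is exactly the pattern of a left-induced (left-transferred) model structure in the sense of \cite{lefttransfer}. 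The fibrations are then forced by the right lifting property, matching item~$(3)$.

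First I would check that the category $\mathsf{curv}~\mathsf{pCoop}^{\mathsf{conil}}$ satisfies the formal prerequisites of the left-transfer machinery: presentability and bicompleteness. Presentability follows because conilpotent curved partial cooperads are coalgebras over an accessible comonad (a variant of the reduced tree comonad with an added curvature generator, in the spirit of Proposition \ref{prop: reduced tree comonad gives conil part coop} and Proposition \ref{Prop: cO-cogebres.}); bicompleteness follows by the same arguments as for curved coalgebras over a cooperad in Section~\ref{Section: Curved cooperads}.

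The central input needed is a functorial cylinder object on the source, completely analogous to Corollary \ref{lemma: functorial cylinder}: given a unital partial operad up to homotopy $\PP$, one equips the Hadamard product $\PP \otimes \mathrm{Int}$ with an induced structure and obtains a factorization
\[
\PP \oplus \PP \rightarrowtail \PP \otimes \mathrm{Int} \stackrel{\sim}{\longrightarrow} \PP~,
\]
where the first map is an arity-wise monomorphism and the second is an arity-wise quasi-isomorphism. Every object being cofibrant (by the dual of the argument yielding Theorem \ref{thm: model structure on counital cooperads}), the hypotheses of the left-transfer theorem of \cite{lefttransfer} reduce precisely to the existence of such a cylinder. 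This is the operadic analogue of what was used to produce the model structure on counital partial cooperads up to homotopy, so the construction is available in the same form.

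The main obstacle I expect is the verification of the acyclicity-type conditions required to promote the prospective classes $(\mathrm{W}, \mathrm{Cof}, \mathrm{Fib})$ to a genuine model structure, in particular showing that the class of acyclic cofibrations is closed under pushout and transfinite composition on the cooperad side; this cannot be read off directly from the adjunction and must be established by an explicit weight-filtration argument on the cofree conilpotent curved partial cooperad generated by $s\PP \oplus \nu$, mimicking the strategy used later in Theorem \ref{thm: model structure on curved operads} and in \cite[Section~10.5]{grignoulejay18}. Once this inductive step is in place, the transferred structure is cofibrantly generated with generating (acyclic) cofibrations given by the images under $\mathrm{B}$ of the generators on the operadic side.
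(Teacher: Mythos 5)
This statement is not proved in the paper at all: it is recalled verbatim from \cite[Theorem~5]{grignou2021}, so there is no internal proof to compare against, and your proposal has to stand on its own. Its overall shape is right — the structure is indeed left-induced along the left adjoint $\Omega$, with weak equivalences and cofibrations created by $\Omega$ and fibrations defined by lifting — but the key technical input you supply is placed in the wrong category. For a left-induced model structure along $\Omega\colon \mathsf{curv}~\mathsf{pCoop}^{\mathsf{conil}} \to \mathsf{dg}~\mathsf{upOp}$, the cylinder-object argument (the dual of Quillen's path-object argument) requires a functorial cylinder $\C \sqcup \C \rightarrowtail \mathrm{Cyl}(\C) \qi \C$ \emph{in the category of conilpotent curved partial cooperads}, i.e.\ in the category receiving the induced structure: one uses it to produce a left homotopy between $\mathrm{id}_\C$ and $s p$ for a map $p$ with the right lifting property against cofibrations. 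A cylinder $\PP \oplus \PP \rightarrowtail \PP \otimes \mathrm{Int} \qi \PP$ on the operad side, which is what you construct, does not feed into this argument; you have conflated the present situation with that of Theorem \ref{thm: model structure on counital cooperads}, where the left adjoint is the forgetful functor and the induced category is precisely $\mathsf{upCoop}_\infty$, so the cylinder there lives where it should.

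Moreover, the "completely analogous" cylinder on the cooperad side is not available in the naive form: the Hadamard product $\C \otimes \mathrm{Int}$ of a conilpotent curved partial cooperad with the interval fails the coparallel axiom because the diagonal of $\mathrm{I}$ is not cocommutative — exactly the obstruction recorded in the paper's own remark on $\mathrm{Int}$, and the very reason the up-to-homotopy objects of Section~7 were introduced. So the step on which your whole reduction rests ("the hypotheses of the left-transfer theorem reduce precisely to the existence of such a cylinder") is the step that is missing. The actual proof in \cite{grignou2021} does not run a formal cylinder argument: it verifies the acyclicity and factorization axioms directly, by analysing $\Omega$ applied to pushouts and transfinite compositions of monomorphisms via weight filtrations on quasi-free operads. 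Your final paragraph correctly identifies this as the hard point, but defers essentially all of the content of the theorem to it; as written, the proposal establishes the formal framework but not the theorem.
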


\begin{Proposition}[{\cite[Proposition 13]{grignou2021}}]
In the model structure of conilpotent curved partial cooperads, the class of cofibrations is given by morphisms of conilpotent curved partial cooperads $f: \mathcal{C} \longrightarrow \mathcal{D}$ such that $f(n):\mathcal{C}(n) \longrightarrow \mathcal{D}(n)$ is a monomorphism for all $n \geq 0$. 
\end{Proposition}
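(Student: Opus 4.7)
The proof naturally splits into two implications: cofibrations are arity-wise monomorphisms, and arity-wise monomorphisms are cofibrations. By definition of the transferred model structure, a morphism $f : \mathcal{C} \longrightarrow \mathcal{D}$ of conilpotent curved partial cooperads is a cofibration precisely when $\Omega(f)$ is a cofibration in the category of unital partial operads up to homotopy.

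For the direction cofibrations $\Rightarrow$ arity-wise monomorphisms, I would begin by analyzing the generating cofibrations of the model structure on unital partial operads up to homotopy, which is itself right-transferred from dg $\mathbb{S}$-modules along the free-forgetful adjunction. These generating cofibrations are free operads on the standard generating cofibrations of dg $\mathbb{S}$-modules (sphere to disk inclusions), hence are arity-wise monomorphisms. Since arity-wise monomorphisms are stable under retracts, pushouts along arbitrary morphisms, and transfinite composition in unital partial operads up to homotopy (these operations are computed on the underlying $\mathbb{S}$-modules in the relevant cells), every cofibration $\Omega(f)$ is an arity-wise monomorphism. But $\Omega(\mathcal{C}) = \overline{\mathscr{T}}(s^{-1}\mathcal{C})$ as an underlying unital partial operad up to homotopy, and the map $\Omega(f)$ restricts to $s^{-1} f$ on the cogenerators $s^{-1}\mathcal{C}$. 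An arity-wise injection on $\overline{\mathscr{T}}(s^{-1}\mathcal{C})$ forces an arity-wise injection on the generators, so $f(n)$ is a monomorphism for every $n \geq 0$.

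For the converse direction, suppose $f : \mathcal{C} \rightarrowtail \mathcal{D}$ is an arity-wise monomorphism of conilpotent curved partial cooperads. The strategy is to exhibit $f$ as a transfinite composition of pushouts of generating cofibrations using the coradical filtration $\mathscr{R}_\omega \mathcal{D}$ of Definition \ref{coradical filtration coop}. Define
\[
\mathcal{C}_\omega \coloneqq \mathcal{C} + \mathscr{R}_\omega \mathcal{D} \subseteq \mathcal{D}~,
\]
the smallest sub-conilpotent curved partial cooperad of $\mathcal{D}$ containing both $\mathcal{C}$ and $\mathscr{R}_\omega \mathcal{D}$. This yields an exhaustive filtration $\mathcal{C} = \mathcal{C}_0 \hookrightarrow \mathcal{C}_1 \hookrightarrow \cdots \hookrightarrow \operatorname{colim}_\omega \mathcal{C}_\omega = \mathcal{D}$ by conilpotency of $\mathcal{D}$. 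The key step is to show that each inclusion $\mathcal{C}_\omega \hookrightarrow \mathcal{C}_{\omega+1}$ is obtained as a pushout of a generating cofibration: the new cogenerators added at level $\omega+1$ sit in $\mathscr{R}_{\omega+1}\mathcal{D} / (\mathscr{R}_\omega \mathcal{D} + \mathcal{C} \cap \mathscr{R}_{\omega+1}\mathcal{D})$, and one attaches the corresponding cofree "sphere-to-disk" cogenerator cooperads along these complementary cells. Since cofibrations are closed under pushouts and transfinite compositions in any model category, and each elementary attaching map is a cofibration by construction, this shows $f$ is a cofibration.

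The main obstacle lies in the converse direction, specifically in making the attaching maps precise at each stage of the coradical filtration. One must carefully isolate a complement to the intersection $\mathcal{C} \cap \mathscr{R}_{\omega+1}\mathcal{D}$ inside $\mathscr{R}_{\omega+1}\mathcal{D}$, verify that the corresponding pushout in conilpotent curved partial cooperads (a category where pushouts are subtle) computes the expected glueing, and track the curvature $\Theta$ through the induction since the curvature contributes to the first step of the coradical filtration. A clean way to handle this is to invoke the dual Koszul equivalence via the Bar construction $\widehat{\mathrm{B}}$ and transport arguments from monomorphisms of $\mathbb{S}$-modules level by level, but this also requires showing that $\Omega$ sends each elementary attaching map to a generating cell of unital partial operads up to homotopy, up to a quasi-isomorphism invisible to the cofibration lifting property.
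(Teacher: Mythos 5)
First, a point of reference: the paper does not prove this statement at all---it is imported verbatim from \cite[Proposition 13]{grignou2021}, so there is no in-text argument to compare yours against. Judging your proposal on its own merits: the first direction (cofibrations imply arity-wise monomorphisms) is correct in outline, though you gloss over the one nontrivial input. Pushouts in the category of (unital partial) operads are \emph{not} computed on underlying $\mathbb{S}$-modules, so ``stable under pushouts\dots computed on the underlying $\mathbb{S}$-modules'' is false as stated; what saves the argument in characteristic $0$ is the standard filtration of a pushout of an operad along a free map on a cofibration of dg $\mathbb{S}$-modules, whose associated graded is built functorially from the attached generators, from which arity-wise injectivity of cellular maps (hence of their retracts) follows. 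Restricting the injection $\Omega(f)$ to the weight-one part of $\mathscr{T}(s^{-1}\mathcal{C})$ then recovers injectivity of $f$, as you say.

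The genuine gap is in the converse direction, and it is structural rather than a missing detail. The model structure of Theorem \ref{thm: structure de modèles conil curved part coop} is \emph{left}-transferred along the left adjoint $\Omega$: cofibrations are by definition the maps $f$ with $\Omega(f)$ a cofibration of (unital partial) operads. Your strategy of writing an arity-wise monomorphism as a transfinite composition of pushouts of ``cofree sphere-to-disk cogenerator cooperads'' presupposes a set of generating cofibrations of that explicit cellular form inside the cooperad category, which is never identified and is not what a left-induced structure hands you (cofree-on-generating-cofibration cells are the shape one expects from a \emph{right} transfer along a cofree functor, not here). Moreover you never verify that the proposed pushouts exist and have the claimed underlying $\mathbb{S}$-modules in $\mathsf{curv}~\mathsf{pCoop}^{\mathsf{conil}}$, nor that $\Omega$ carries each attaching step to a cell of operads---and you acknowledge as much in your final paragraph. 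The workable argument runs the other way around: filter $\mathcal{D}$ relative to $\mathcal{C}$ by the coradical filtration of Definition \ref{coradical filtration coop} (with the curvature entering at the first stage), and show directly that $\Omega$ of each stage inclusion is a relative cell complex of operads, i.e.\ a pushout of a free map on a monomorphism of dg $\mathbb{S}$-modules, so that $\Omega(f)$ is cellular; equivalently, one checks the left lifting property of $f$ against acyclic fibrations by an inductive lift along this filtration, using conilpotency and the weight decomposition of $\mathscr{T}(s^{-1}\mathcal{C})$ from Definition \ref{Def: Cobar de Brice}. As it stands, your proposal identifies the right filtration but does not supply the mechanism that turns it into a cofibration, so the second implication remains unproved.
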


The following result is a straightforward consequence of the above.

\begin{Proposition}
There is a Bar-Cobar adjunction
\[
\begin{tikzcd}[column sep=7pc,row sep=3pc]
           \mathsf{upOp}_\infty \arrow[r, shift left=1.1ex, "\Omega_\iota"{name=F}] &\mathsf{curv}~\mathsf{pCoop}^{\mathsf{conil}}~. \arrow[l, shift left=.75ex, "\mathrm{B}_\iota"{name=U}]
            \arrow[phantom, from=F, to=U, , "\dashv" rotate=-90]
\end{tikzcd}
\]
which is a Quillen equivalence.
\end{Proposition}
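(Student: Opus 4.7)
The plan is to first construct the Bar-Cobar adjunction $\Omega_\iota \dashv \mathrm{B}_\iota$ between $\mathsf{curv}~\mathsf{pCoop}^{\mathsf{conil}}$ and $\mathsf{upOp}_\infty$ explicitly via the curved twisting morphism $\iota: \mathcal{S}\otimes c\mathcal{O}^\vee \hookrightarrow \Omega_\mathbb{S}(\mathcal{S}\otimes c\mathcal{O}^\vee)$. This mirrors the constructions of Section~\ref{Section: Constructions Bar-Cobar operadiques} verbatim, with $\kappa$ replaced by $\iota$ and $u\mathcal{O}$-algebras replaced by $\Omega_\mathbb{S}(\mathcal{S}\otimes c\mathcal{O}^\vee)$-algebras; it is the strict (non-completed) counterpart of Proposition~\ref{Prop: complete Bar-Cobar adjunction relative to iota}. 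The resulting pair is a Quillen adjunction tautologically from the definition of the model structure on $\mathsf{curv}~\mathsf{pCoop}^{\mathsf{conil}}$ recalled in Theorem~\ref{thm: structure de modèles conil curved part coop}: both cofibrations and weak equivalences on the cooperad side are, by construction, those morphisms whose image under $\Omega_\iota$ is such in $\mathsf{upOp}_\infty$, so $\Omega_\iota$ is manifestly left Quillen.

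I would then reduce the Quillen equivalence statement. Every object of $\mathsf{upOp}_\infty$ is fibrant since fibrations are arity-wise epimorphisms and the unique map to the trivial terminal operad is always such; dually, every object of $\mathsf{curv}~\mathsf{pCoop}^{\mathsf{conil}}$ is cofibrant, as cofibrations are precisely arity-wise monomorphisms and the map from the initial cooperad is always a monomorphism. Consequently, derived and strict (co)units agree on all objects, and the Quillen equivalence criterion collapses to showing that the strict counit $\epsilon_\mathcal{P}: \Omega_\iota \mathrm{B}_\iota \mathcal{P} \longrightarrow \mathcal{P}$ is an arity-wise quasi-isomorphism for every $\mathcal{P} \in \mathsf{upOp}_\infty$. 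Equivalently, since weak equivalences on the cooperad side are $\Omega_\iota$-detected by definition, it suffices to show that the strict unit $\eta_\mathcal{C}: \mathcal{C} \longrightarrow \mathrm{B}_\iota \Omega_\iota \mathcal{C}$ is a weak equivalence for every $\mathcal{C} \in \mathsf{curv}~\mathsf{pCoop}^{\mathsf{conil}}$.

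The last step is established by invoking the \emph{tautological Koszulness} of $\iota$: by construction $\Omega_\mathbb{S}(\mathcal{S}\otimes c\mathcal{O}^\vee)$ is literally the Cobar of the curved conilpotent cooperad $\mathcal{S}\otimes c\mathcal{O}^\vee$, and a standard weight-filtration spectral-sequence argument on $\Omega_\iota \mathrm{B}_\iota \mathcal{P}$ (resp.\ $\mathrm{B}_\iota \Omega_\iota \mathcal{C}$) collapses at $E_1$ to the identity, yielding the required arity-wise quasi-isomorphism. Alternatively, one may reduce to the already-established Quillen equivalence $\Omega \dashv \mathrm{B}$ of Proposition~\ref{Mise en abime} via the strict analog of the commuting triangle of Proposition~\ref{prop: commuting triangle of adjunctions}: the factorizations $\Omega \cong \mathrm{Ind}_{f_\kappa} \circ \Omega_\iota$ and $\mathrm{B} \cong \mathrm{B}_\iota \circ \mathrm{Res}_{f_\kappa}$, together with the fact that the restriction $\mathrm{Res}_{f_\kappa}: \mathsf{dg}~\mathsf{upOp} \to \mathsf{upOp}_\infty$ is fully faithful and reflects weak equivalences, transport the Koszulness of $\kappa$ to that of $\iota$. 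The main technical obstacle lies in carefully verifying the convergence of this spectral-sequence argument in the curved $\mathbb{S}$-colored setting, where the curvature term of $\mathcal{S}\otimes c\mathcal{O}^\vee$ must be tracked throughout the filtration.
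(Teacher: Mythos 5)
Your proposal is correct, and your ``alternative'' route at the end is in fact the paper's actual proof: the author constructs the adjunction by the same recipe as for $\kappa$, observes that Koszulity of $\kappa$ (Theorem \ref{Koszulity of uO}) makes $f_\kappa\colon \Omega_\mathbb{S}(\mathcal{S}\otimes c\mathcal{O}^\vee)\to u\mathcal{O}$ an arity-wise quasi-isomorphism, hence $\mathrm{Ind}_{f_\kappa}\dashv\mathrm{Res}_{f_\kappa}$ a Quillen equivalence, and then concludes by two-out-of-three for Quillen equivalences in the commuting triangle with $\Omega\dashv\mathrm{B}$ (Proposition \ref{Mise en abime}). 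Your primary route --- reducing to the strict counit via ``all objects cofibrant on the source, all fibrant on the target'' and then running a weight-filtration spectral sequence on $\Omega_\iota\mathrm{B}_\iota\mathcal{P}$ --- is a genuinely different and more self-contained argument, but it is also where the real work lives: the claimed $E_1$-collapse is exactly the assertion that the canonical twisting morphism $\iota$ is Koszul, which is the content of the proposition rather than a tautology, and you rightly flag that its verification in the curved $\mathbb{S}$-colored setting (tracking the curvature of $\mathcal{S}\otimes c\mathcal{O}^\vee$ through the filtration) is the technical obstacle. The triangle argument buys you precisely the avoidance of that computation, at the price of depending on the external input from \cite{grignou2021}.

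Two small corrections. First, the model structure of Theorem \ref{thm: structure de modèles conil curved part coop} is created by $\Omega$, with values in $\mathsf{dg}~\mathsf{upOp}$, not by $\Omega_\iota$; so the Quillen adjunction is not ``tautological'' as stated. To identify $\Omega$-detected weak equivalences with $\Omega_\iota$-detected ones you already need the factorization $\Omega\cong\mathrm{Ind}_{f_\kappa}\circ\Omega_\iota$ together with the fact that $\mathrm{Ind}_{f_\kappa}$ preserves and reflects quasi-isomorphisms between cofibrant objects (which uses that $f_\kappa$ is a quasi-isomorphism, i.e.\ the Koszulity of $\kappa$ again) --- or simply the observation that cofibrations on the cooperad side are the arity-wise monomorphisms and $(-)\otimes\mathrm{Int}$-type arguments. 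Second, in your fallback you invoke full faithfulness of $\mathrm{Res}_{f_\kappa}$, whereas what the paper actually uses is that $\mathrm{Ind}_{f_\kappa}\dashv\mathrm{Res}_{f_\kappa}$ is a Quillen \emph{equivalence}; full faithfulness alone would not suffice to run two-out-of-three. Neither point is fatal, but both should be made explicit.
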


\begin{proof}
This adjunction is induced by the curved twisting morphism $\iota:\mathcal{S} \otimes c\mathcal{O}^\vee \longrightarrow \Omega_\mathbb{S}\left(\mathcal{S} \otimes c\mathcal{O}^\vee \right)$ using the same methods that in Subsection \ref{subsection: classical bar-cobar relative to kappa}. 

\medskip

Using analogous arguments as in Theorem \ref{Koszulity of uO}, one can show that this curved twisting morphism is Koszul. In turn, using the same arguments as in the proof of Corollary \ref{corollary: koszulness implies Quillen equivalence}, one can deduce that the Bar-Cobar adjunction relative to $\iota$ is a Quillen equivalence. 
\end{proof}

\begin{lemma}\label{lemma: adjoint à droite Sweedler à homotopies près}
The linear duality functor 
\[
\begin{tikzcd}[column sep=4pc,row sep=0pc]
\left(\mathsf{upCoop}_\infty\right)^{\mathsf{op}} \arrow[r,"(-)^*"] 
&\mathsf{upOp}_\infty
\end{tikzcd}
\]
admits a left adjoint.
\end{lemma}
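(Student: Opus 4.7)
The plan is to mimic the strategy of Lemma \ref{lemma: adjoint à droite}, applying the Adjoint Lifting Theorem to an analogous commutative square relating the two categories of interest to $\mathsf{dg}~\mathbb{S}\text{-}\mathsf{mod}$.

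First I would identify both categories as (co)monadic over $\mathsf{dg}~\mathbb{S}\text{-}\mathsf{mod}$. Since $\mathsf{upCoop}_\infty$ is by definition the category of dg coalgebras over the dg $\mathbb{S}$-colored operad $\Omega_\mathbb{S}(\mathcal{S} \otimes c\mathcal{O}^\vee)$, Theorem \ref{thm: existence de la cogèbre colibre} provides a comonadic forgetful--cofree adjunction $\mathrm{U} \dashv \mathscr{L}(\Omega_\mathbb{S}(\mathcal{S} \otimes c\mathcal{O}^\vee))$ between $\mathsf{upCoop}_\infty$ and $\mathsf{dg}~\mathbb{S}\text{-}\mathsf{mod}$. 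Dually, $\mathsf{upOp}_\infty$ is the category of dg algebras over the same $\mathbb{S}$-colored operad, hence monadic over $\mathsf{dg}~\mathbb{S}\text{-}\mathsf{mod}$ via the free--forgetful adjunction $\mathscr{S}_\mathbb{S}(\Omega_\mathbb{S}(\mathcal{S} \otimes c\mathcal{O}^\vee)) \dashv \mathrm{U}$. Both categories are therefore bicomplete, and in particular $(\mathsf{upCoop}_\infty)^{\mathsf{op}}$ is monadic over $(\mathsf{dg}~\mathbb{S}\text{-}\mathsf{mod})^{\mathsf{op}}$.

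Next I would assemble the commutative square
\[
\begin{tikzcd}[column sep=4pc,row sep=4pc]
(\mathsf{upCoop}_\infty)^{\mathsf{op}} \arrow[r,"(-)^*"] \arrow[d,"\mathrm{U}^\mathsf{op}",shift left=1.1ex]
& \mathsf{upOp}_\infty \arrow[d,"\mathrm{U}",shift left=1.1ex] \\
(\mathsf{dg}~\mathbb{S}\text{-}\mathsf{mod})^{\mathsf{op}} \arrow[r,"(-)^*",shift left=1.1ex] \arrow[u,"\mathscr{L}(\Omega_\mathbb{S}(\mathcal{S} \otimes c\mathcal{O}^\vee))^{\mathsf{op}}",shift left=1.1ex]
& \mathsf{dg}~\mathbb{S}\text{-}\mathsf{mod} \arrow[l,"(-)^*",shift left=1.1ex] \arrow[u,"\mathscr{S}_\mathbb{S}(\Omega_\mathbb{S}(\mathcal{S} \otimes c\mathcal{O}^\vee))",shift left=1.1ex]
\end{tikzcd}
\]
in which the left-hand vertical adjunction is monadic (being the opposite of the comonadic adjunction recalled above), the right-hand vertical adjunction is monadic, and the bottom pair is the standard linear-dual self-adjunction on dg $\mathbb{S}$-modules. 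Commutativity $(-)^* \circ \mathrm{U}^\mathsf{op} \cong \mathrm{U} \circ (-)^*$ is immediate, since both forgetful functors act as the identity on underlying dg $\mathbb{S}$-modules and thus commute with linear duality on the nose.

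Finally I would invoke the Adjoint Lifting Theorem \cite[Theorem 2]{AdjointLifting}: given such a commuting square, with the left-hand vertical adjunction monadic, the source $(\mathsf{upCoop}_\infty)^{\mathsf{op}}$ cocomplete, and a left adjoint to the bottom horizontal $(-)^*$ (namely linear duality itself), one concludes that the top horizontal $(-)^*$ also admits a left adjoint. The argument is formal and parallels that of Lemma \ref{lemma: adjoint à droite} line-for-line, so no serious obstacle is expected; the only point requiring mild care is the verification that Theorem \ref{thm: existence de la cogèbre colibre} indeed applies to the specific colored operad $\Omega_\mathbb{S}(\mathcal{S} \otimes c\mathcal{O}^\vee)$, which is automatic since the theorem is stated at the level of generality of an arbitrary dg $\mathbb{S}$-colored operad.
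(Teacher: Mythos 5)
Your proposal is correct and follows essentially the same route as the paper: the paper's proof likewise observes that $\mathsf{upOp}_\infty$ and $\mathsf{upCoop}_\infty$ are the categories of algebras and coalgebras over the single dg $\mathbb{S}$-colored operad $\Omega_\mathbb{S}(\mathcal{S}\otimes c\mathcal{O}^\vee)$, hence monadic and comonadic over $\mathsf{dg}~\mathbb{S}\text{-}\mathsf{mod}$, and then repeats the Adjoint Lifting Theorem argument of Lemma \ref{lemma: adjoint à droite} \emph{mutatis mutandis}. Your only addition is spelling out the square explicitly, which the paper leaves implicit.
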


\begin{proof}
Recall that there exists a dg unital partial $\mathbb{S}$-colored operad $\Omega_\mathbb{S}\left(\mathcal{S} \otimes c\mathcal{O}^\vee \right)$ which encodes unital partial operads up to homotopy as its algebras and counital partial cooperads up to homotopy as its coalgebras. Thus the first category is monadic and the second is comonadic. The rest of the proof is \textit{mutatis mutandis} the same as the proof of Lemma \ref{lemma: adjoint à droite}.
\end{proof}

\begin{Definition}[Homotopical Sweedler dual]\label{def: Sweedler dual functor homotopy}
The \textit{homotopical Sweedler duality functor}
\[
\begin{tikzcd}[column sep=4pc,row sep=0pc]
\mathsf{upOp}_\infty \arrow[r,"(-)_h^\circ"] 
&\left(\mathsf{upCoop}_\infty\right)^{\mathsf{op}}
\end{tikzcd}
\]
is defined as the left adjoint of the linear dual functor. 
\end{Definition}

\begin{lemma}
The adjunction 
\[
\begin{tikzcd}[column sep=7pc,row sep=3pc]
            \mathsf{upCoop}_\infty \arrow[r, shift left=1.1ex, "(-)^*"{name=F}] &\left(\mathsf{upOp}_\infty\right)^{\mathsf{op}} ~. \arrow[l, shift left=.75ex, "(-)_h^\circ"{name=U}]
            \arrow[phantom, from=F, to=U, , "\dashv" rotate=-90]
\end{tikzcd}
\]
is a Quillen adjunction.
\end{lemma}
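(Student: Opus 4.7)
The plan is to directly verify the two conditions that make $(-)^*$ a left Quillen functor: preservation of cofibrations and of trivial cofibrations. The left adjoint in this adjunction is the linear duality functor viewed as a functor $\mathsf{upCoop}_\infty \longrightarrow \left(\mathsf{upOp}_\infty\right)^{\mathsf{op}}$, so the cofibrations in the target are precisely the fibrations of $\mathsf{upOp}_\infty$. First, I would recall the characterizations of the two relevant model structures. On the source, the model structure on $\mathsf{upCoop}_\infty$ constructed in Theorem \ref{thm: model structure on counital cooperads} has arity-wise quasi-isomorphisms as weak equivalences and arity-wise monomorphisms as cofibrations. On the target, the transferred model structure on $\mathsf{upOp}_\infty$ recalled just above has arity-wise quasi-isomorphisms as weak equivalences and arity-wise epimorphisms as fibrations; passing to the opposite category, the cofibrations of $\left(\mathsf{upOp}_\infty\right)^{\mathsf{op}}$ are exactly the arity-wise epimorphisms in $\mathsf{upOp}_\infty$.

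The key observation is that since $\kk$ is a field of characteristic zero, the linear duality functor $(-)^*$ on dg $\mathbb{K}$-modules is an exact contravariant functor: it sends monomorphisms to epimorphisms and preserves quasi-isomorphisms. Applied arity-wise, this says that the underlying morphism of dg $\mathbb{S}$-modules behind $(-)^*$ converts arity-wise monomorphisms into arity-wise epimorphisms, and arity-wise quasi-isomorphisms into arity-wise quasi-isomorphisms. The one thing to check is that this underlying-level reasoning transfers to the categories of algebras/coalgebras over $\Omega_{\mathbb{S}}(\mathcal{S}\otimes c\mathcal{O}^\vee)$, but both model structures are detected by their underlying dg $\mathbb{S}$-modules: weak equivalences and cofibrations in $\mathsf{upCoop}_\infty$ are detected on the underlying dg $\mathbb{S}$-module, and weak equivalences and fibrations in $\mathsf{upOp}_\infty$ are likewise detected on the underlying dg $\mathbb{S}$-module.

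Putting these pieces together: if $i: \mathcal{C} \rightarrowtail \mathcal{D}$ is a cofibration in $\mathsf{upCoop}_\infty$, then $i^*: \mathcal{D}^* \twoheadrightarrow \mathcal{C}^*$ is an arity-wise epimorphism of unital partial operads up to homotopy, hence a fibration in $\mathsf{upOp}_\infty$, hence a cofibration in $\left(\mathsf{upOp}_\infty\right)^{\mathsf{op}}$. If moreover $i$ is a weak equivalence, then $i^*$ is a quasi-isomorphism, so it is a trivial fibration in $\mathsf{upOp}_\infty$, i.e.\ a trivial cofibration in $\left(\mathsf{upOp}_\infty\right)^{\mathsf{op}}$. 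Both conditions are thus verified and $(-)^* \dashv (-)^\circ_h$ is a Quillen adjunction.

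I do not expect a serious obstacle here: the Quillen condition reduces to exactness of linear duality over a field and to the fact that both transferred model structures are detected on underlying dg $\mathbb{S}$-modules. The only point where one must be slightly careful is to make sure that the cofibrations in $\mathsf{upOp}_\infty$ (which, unlike the fibrations, are defined by a lifting property rather than by an explicit underlying condition) play no role in the verification — indeed, because $(-)^*$ is the \emph{left} adjoint of the displayed adjunction, only preservation of cofibrations and trivial cofibrations of the \emph{source} must be checked, and these admit explicit underlying descriptions.
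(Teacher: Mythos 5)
Your proposal is correct and is essentially the paper's own argument, just spelled out in more detail: the paper's proof likewise observes that $(-)^*$ sends degree-wise monomorphisms to degree-wise epimorphisms (hence preserves cofibrations, since cofibrations in $\left(\mathsf{upOp}_\infty\right)^{\mathsf{op}}$ are the arity-wise epimorphisms of $\mathsf{upOp}_\infty$) and preserves quasi-isomorphisms. Your closing remark that only the left adjoint's preservation of (trivial) cofibrations needs checking, so the lifting-property-defined cofibrations of $\mathsf{upOp}_\infty$ never enter, is exactly the right point.
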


\begin{proof}
The left adjoint $(-)^*$ sends degree-wise monomorphisms to degree-wise epimorphisms. Thus it preserves cofibrations. It also preserves quasi-isomorphisms. Therefore we have a Quillen adjunction.
\end{proof}

\begin{theorem}[Homotopical duality square]\label{thm: homotopical carré magique}
The following square of adjunction 
\[
\begin{tikzcd}[column sep=5pc,row sep=5pc]
\left(\mathsf{upOp}_\infty\right)^{\mathsf{op}} \arrow[r,"\mathrm{B}_\iota^{\mathsf{op}}"{name=B},shift left=1.1ex] \arrow[d,"(-)_h^\circ "{name=SD},shift left=1.1ex ]
&\left(\mathsf{curv}~\mathsf{pCoop}^{\mathsf{conil}}\right)^{\mathsf{op}}  \arrow[d,"(-)^*"{name=LDC},shift left=1.1ex ] \arrow[l,"\Omega_\iota^{\mathsf{op}}"{name=C},,shift left=1.1ex]  \\
\mathsf{upCoop}_\infty \arrow[r,"\widehat{\Omega}_\iota"{name=CC},shift left=1.1ex]  \arrow[u,"(-)^*"{name=LD},shift left=1.1ex ]
&\mathsf{curv}~\mathsf{abs}~\mathsf{pOp}^{\mathsf{comp}}~, \arrow[l,"\widehat{\mathrm{B}}_\iota"{name=CB},shift left=1.1ex] \arrow[u,"(-)^\vee"{name=TD},shift left=1.1ex] \arrow[phantom, from=SD, to=LD, , "\dashv" rotate=0] \arrow[phantom, from=C, to=B, , "\dashv" rotate=-90]\arrow[phantom, from=TD, to=LDC, , "\dashv" rotate=0] \arrow[phantom, from=CC, to=CB, , "\dashv" rotate=-90]
\end{tikzcd}
\] 
commutes in the following sense: right adjoints going from the top right to the bottom left are naturally isomorphic. Furthermore, this square is a square of Quillen adjunctions. 
\end{theorem}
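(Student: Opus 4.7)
The plan is to establish the commutativity of the square first, and then derive the Quillen property of each of the four adjunctions. Three of the four Quillen properties are already available, so the main effort is to compare the two right-adjoint composites, from which the Quillen property of the remaining fourth adjunction will follow automatically.

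For commutativity, I will mimic the strategy of Theorem~\ref{thm: carré magique}. Given a conilpotent curved partial cooperad $\mathcal{C}$, both counital partial cooperads up to homotopy $(\Omega_\iota\mathcal{C})^\circ_h$ and $\widehat{\mathrm{B}}_\iota(\mathcal{C}^*)$ are, by construction, dg coalgebras over the same dg unital partial $\mathbb{S}$-colored operad $\Omega_\mathbb{S}(\mathcal{S}\otimes c\mathcal{O}^\vee)$. Underlying $\Omega_\iota\mathcal{C}$ is a free algebra over this $\mathbb{S}$-colored operad, while underlying $\widehat{\mathrm{B}}_\iota(\mathcal{C}^*)$ is the associated cofree coalgebra $\mathscr{L}(\Omega_\mathbb{S}(\mathcal{S}\otimes c\mathcal{O}^\vee))$ applied to (a suspension of) $\mathcal{C}^*$. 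Since both the homotopical Sweedler dual $(-)^\circ_h$ (Lemma~\ref{lemma: adjoint à droite Sweedler à homotopies près}) and the cofree coalgebra functor (Theorem~\ref{thm: existence de la cogèbre colibre}) arise from the Adjoint Lifting Theorem applied to the same pair of monadic/comonadic adjunctions over the category of pdg $\mathbb{S}$-modules, the homotopical Sweedler dual of a free algebra on a pdg $\mathbb{S}$-module $M$ is naturally isomorphic to the cofree coalgebra on $M^*$. This gives the identification of the underlying graded counital partial cooperads up to homotopy on both sides. Then by direct inspection, completely analogous to the last step in the proof of Theorem~\ref{thm: carré magique}, the structural pre-differentials coincide under this identification, since both are coderivations determined by their restriction to the (co)generators and are built from the same data, namely the curved cooperad structure of $\mathcal{C}$ together with the twisting morphism $\iota$.

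For the Quillen property, the top adjunction $\mathrm{B}_\iota^{\mathsf{op}} \dashv \Omega_\iota^{\mathsf{op}}$ is the opposite of the Quillen equivalence $\Omega_\iota \dashv \mathrm{B}_\iota$ recalled just before the theorem; the bottom adjunction $\widehat{\Omega}_\iota \dashv \widehat{\mathrm{B}}_\iota$ is Quillen by the definition of the transferred model structure in Theorem~\ref{thm: model structure on curved operads}; the left vertical adjunction $(-)^* \dashv (-)^\circ_h$ is Quillen by the lemma immediately preceding the theorem. For the right vertical, I argue as follows. By construction of the transferred model structure on $\mathsf{curv}~\mathsf{pOp}^{\mathsf{comp}}$, the functor $\widehat{\mathrm{B}}_\iota$ reflects and preserves both fibrations and weak equivalences, hence $(-)^*\colon (\mathsf{curv}~\mathsf{pCoop}^{\mathsf{conil}})^{\mathsf{op}} \to \mathsf{curv}~\mathsf{pOp}^{\mathsf{comp}}$ is right Quillen if and only if the composite $\widehat{\mathrm{B}}_\iota\circ(-)^*$ is. By the commutativity just established, this composite is naturally isomorphic to $(-)^\circ_h \circ \Omega_\iota^{\mathsf{op}}$, which is a composition of right Quillen functors, and therefore right Quillen.

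The main obstacle I anticipate is the verification of the differential-level compatibility in the commutativity argument. The underlying graded isomorphism is essentially formal, but the two differentials now involve the full up-to-homotopy structure encoded by $\Omega_\mathbb{S}(\mathcal{S}\otimes c\mathcal{O}^\vee)$, including higher coherences absent in the non-homotopical case of Theorem~\ref{thm: carré magique}. The key simplification is that both differentials are coderivations on a cofree counital partial cooperad up to homotopy, and hence are determined by their projections onto cogenerators; the compatibility therefore reduces to a finite comparison of explicit formulas built from $\iota$ and the structure of $\mathcal{C}$.
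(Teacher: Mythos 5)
Your proposal is correct and follows essentially the same route as the paper: commutativity is established by the same mate/Adjoint-Lifting argument as in Theorem~\ref{thm: carré magique}, and the Quillen property of the right-hand vertical adjunction is deduced from the commutativity of the square together with the fact that the other three adjunctions are already known to be Quillen. The only (harmless) divergence is that the paper checks preservation of fibrations directly --- $(-)^*$ sends monomorphisms to epimorphisms, which are fibrations of complete curved absolute partial operads --- whereas you route both fibrations and weak equivalences through $\widehat{\mathrm{B}}_\iota$, which creates both classes in the transferred model structure; both arguments work.
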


\begin{proof}
The commutativity of this square of functors can be shown using the same arguments as in Theorem \ref{thm: carré magique}. In order to check that it is a square of Quillen adjunctions, we need to check that the adjunction 
\[
\begin{tikzcd}[column sep=7pc,row sep=3pc]
\mathsf{curv}~\mathsf{pOp}^{\mathsf{comp}}  \arrow[r, shift left=1.1ex, "(-)^\vee"{name=F}] 
& \left(\mathsf{curv}~\mathsf{pCoop}^{\mathsf{conil}}\right)^{\mathsf{op}} ~. \arrow[l, shift left=.75ex, "(-)^*"{name=U}]
            \arrow[phantom, from=F, to=U, , "\dashv" rotate=-90]
\end{tikzcd}
\]
is a Quillen adjunction. The right adjoint $(-)^*$ sends monomorphisms to epimorphisms, thus preserves fibrations. Let $f: \mathcal{C} \longrightarrow \mathcal{D}$ be a weak equivalence between two conilpotent curved partial cooperads. This is equivalent to $\Omega_\iota(f): \Omega_\iota\mathcal{C} \longrightarrow \Omega_\iota \mathcal{D}$ being an arity-wise quasi-isomorphism. Since $(-)_h^\circ$ is right Quillen, it preserves quasi-isomorphisms between cofibrant unital partial operads up to homotopy (fibrant in the opposite category). Thus $\Omega_\iota(f)_h^\circ: \Omega_\iota(\mathcal{C})_h^\circ \longrightarrow \Omega_\iota(\mathcal{D})_h^\circ$ is an arity-wise quasi-isomorphism. Using the commutativity of this adjunction, we get that
\[
\widehat{\mathrm{B}}_\iota(f^*): \widehat{\mathrm{B}}_\iota(\mathcal{C}^*) \longrightarrow \widehat{\mathrm{B}}_\iota(\mathcal{D}^*)~,
\]
is an arity-wise quasi-isomorphism. Which is equivalent to $f^*: \mathcal{C}^* \longrightarrow \mathcal{D}^*$ being a weak equivalence in the transferred model structure. Thus $(-)^*$ is a right Quillen functor. This concludes the proof.
\end{proof}

The above homotopical square allows us, using the curved Koszul duality established in \cite{HirshMilles12}, to compute explicit cofibrant resolutions for a vast class of complete curved absolute partial operads.

\begin{Proposition}\label{Prop: cofibrant resolutions of absolutes}
Let $(\mathcal{P},\{\circ_i\},\eta)$ be a unital partial operad, given by some quadratic data $(V,R)$, and which is arity-wise degree-wise finite dimensional and bounded above or below. Let $\mathcal{P}^{\mathrm{\ac}}$ be its Koszul dual conilpotent curved partial cooperad. If $\mathcal{P}$ is Koszul, then 
\[
\widehat{\Omega}(\mathcal{P}^*) \qi (\mathcal{P}^{\mathrm{\ac}})^*
\]
is a cofibrant resolution of $(\mathcal{P}^{\mathrm{\ac}})^*$ in the model category of complete curved absolute partial operads.
\end{Proposition}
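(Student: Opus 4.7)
The plan is to reduce the statement to the classical Koszul duality of \cite{HirshMilles12} using the algebraic magical square of \cref{thm: carré magique} together with the commutation result \cref{prop: finite dual commutes}, and to deduce cofibrancy directly from the left Quillen property of the complete Cobar construction. Concretely, the Koszul hypothesis on $\mathcal{P}$ produces a Koszul curved twisting morphism $\kappa : \mathcal{P}^{\ac} \longrightarrow \mathcal{P}$ whose dual provides a curved twisting morphism $\kappa^* : \mathcal{P}^* \longrightarrow (\mathcal{P}^{\ac})^*$ between a dg counital partial cooperad and a complete curved absolute partial operad; the map under consideration is the one adjoint to $\kappa^*$ under the complete Bar-Cobar adjunction of \cref{Complete Bar-Cobar adjunction}.

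The first task is to prove that this map is a weak equivalence. Since $\mathcal{P}$ is arity-wise and degree-wise finite dimensional, \cref{prop: finite dual commutes} yields a natural isomorphism $\widehat{\Omega}(\mathcal{P}^*) \cong (\mathrm{B}\mathcal{P})^*$ of complete curved absolute partial operads. Under this identification, the map $\widehat{\Omega}(\mathcal{P}^*) \longrightarrow (\mathcal{P}^{\ac})^*$ becomes the linear dual of the classical morphism $g_\kappa : \mathcal{P}^{\ac} \longrightarrow \mathrm{B}\mathcal{P}$ of conilpotent curved partial cooperads induced by the twisting morphism $\kappa$. The Koszul hypothesis on $\mathcal{P}$ is equivalent, via \cite{HirshMilles12} generalized to the partial framework, to $g_\kappa$ being a weak equivalence in the model structure on $\mathsf{curv}~\mathsf{pCoop}^{\mathsf{conil}}$ of \cref{thm: structure de modèles conil curved part coop}. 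Next, observe that in this model structure all objects are cofibrant, since cofibrations are precisely the arity-wise monomorphisms and the initial object is $0$. Working in the opposite category, this means every object is fibrant, so the right Quillen functor $(-)^* : \left(\mathsf{curv}~\mathsf{pCoop}^{\mathsf{conil}}\right)^{\mathsf{op}} \longrightarrow \mathsf{curv}~\mathsf{abs.pOp}^{\mathsf{comp}}$ from \cref{thm: carré magique} preserves all weak equivalences. Therefore $g_\kappa^*$ is a weak equivalence in the transferred model structure of \cref{thm: model structure on curved operads}.

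The second task is cofibrancy of $\widehat{\Omega}(\mathcal{P}^*)$. By \cref{prop: commuting triangle of adjunctions}, $\widehat{\Omega}$ factors as $\widehat{\Omega}_\iota \circ \mathrm{Res}_{f_\kappa}$, where $\widehat{\Omega}_\iota$ is the left adjoint of the Quillen adjunction of \cref{Prop: complete Bar-Cobar adjunction relative to iota} along which the model structure on complete curved absolute partial operads is transferred. In the model structure of \cref{thm: model structure on counital cooperads} on counital partial cooperads up to homotopy, every object is cofibrant for the same reason as above (cofibrations are degree-wise monomorphisms and $0$ is initial). Hence $\mathrm{Res}_{f_\kappa}(\mathcal{P}^*)$ is cofibrant, and since $\widehat{\Omega}_\iota$ is left Quillen it sends cofibrant objects to cofibrant objects, giving cofibrancy of $\widehat{\Omega}(\mathcal{P}^*)$.

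The main obstacle in carrying out this plan is the careful verification that the natural isomorphism of \cref{prop: finite dual commutes} identifies the map $\widehat{\Omega}(\mathcal{P}^*) \longrightarrow (\mathcal{P}^{\ac})^*$ coming from the dualized twisting morphism $\kappa^*$ with the dual $g_\kappa^*$ of the classical universal map $g_\kappa$. This is a naturality check that requires unwinding the two definitions of the Bar-Cobar constructions (classical on one side, complete on the other) on generators and comparing the two sources of differentials; it follows from the compatibility of the isomorphism $\overline{\mathscr{T}}^\wedge(s^{-1}\mathcal{P}^* \oplus \nu) \cong \left(\overline{\mathscr{T}}(s\mathcal{P} \oplus \nu)\right)^*$ with the Bar and Cobar differentials encoded respectively by the partial operadic structure of $\mathcal{P}$ and its dual counital partial cooperad structure on $\mathcal{P}^*$, and from the universal property of $g_\kappa$ (resp. its dual) with respect to the twisting morphism $\kappa$. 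Once this identification is granted, the two parts above assemble to show that $\widehat{\Omega}(\mathcal{P}^*) \longrightarrow (\mathcal{P}^{\ac})^*$ is a cofibrant resolution.
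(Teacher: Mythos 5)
Your argument for the weak equivalence is essentially the paper's own proof: both reduce to the Koszulity criterion $g_\kappa\colon \mathcal{P}^{\ac}\to \mathrm{B}\mathcal{P}$ being a weak equivalence of conilpotent curved partial cooperads, use that every object there is cofibrant so that $(-)^*$ preserves all weak equivalences, and then invoke the isomorphism $\widehat{\Omega}(\mathcal{P}^*)\cong(\mathrm{B}\mathcal{P})^*$ of \cref{prop: finite dual commutes}. The one genuine difference is that you also verify cofibrancy of $\widehat{\Omega}(\mathcal{P}^*)$, via the factorization $\widehat{\Omega}\cong\widehat{\Omega}_\iota\circ\mathrm{Res}_{f_\kappa}$ of \cref{prop: commuting triangle of adjunctions}, the fact that every counital partial cooperad up to homotopy is cofibrant, and the left Quillen property of $\widehat{\Omega}_\iota$; the paper leaves this step implicit, so your write-up is both correct and slightly more complete.
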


\begin{proof}
Recall that $\kappa: \mathcal{P}^{\mathrm{\ac}} \longrightarrow \mathcal{P}$ is Koszul if and only if $f_\kappa: \Omega(\mathcal{P}^{\ac}) \longrightarrow \mathcal{P}$ is an arity-wise quasi-isomorphism. This is equivalent to
\[
g_\kappa: \mathcal{P}^{\ac} \longrightarrow \mathrm{B}\mathcal{P}
\]
being a weak-equivalence in the category of conilpotent curved partial cooperads, since the two model categories are Quillen equivalent. Every conilpotent curved partial cooperad is cofibrant in the model structure of Theorem \ref{thm: structure de modèles conil curved part coop} (thus fibrant in the opposite category), therefore the linear dual functor $(-)^*$ preserves all weak equivalences. This implies that
\[
(g_\kappa)^*: (\mathrm{B}\mathcal{P})^* \longrightarrow (\mathcal{P}^{\ac})^*
\]
is a weak equivalence of complete curved absolute partial operads. Using Proposition \ref{prop: finite dual commutes}, we know that there is an isomorphism
\[
\widehat{\Omega}(\mathcal{P}^*) \cong (\mathrm{B}\mathcal{P})^*
\]
of complete curved absolute partial operads, which concludes the proof.
\end{proof}

\begin{Example}[Cofibrant resolution of $c\mathcal{L}ie^\wedge$]\label{Example: Omega(ucomd)}
Let $c\mathcal{L}ie^\wedge$ be the complete curved absolute partial operad encoding curved Lie algebras of Definition \ref{def: Lie curved absolute}. There is a weak equivalence
\[
\widehat{\Omega}(\ucomd) \qi c\mathcal{L}ie^\wedge
\]
of complete curved absolute partial operads and $\widehat{\Omega}\ucomd$ is the minimal cofibrant resolution of $c\mathcal{L}ie^\wedge$ in this model structure. 
\end{Example}

\begin{Example}[Cofibrant resolution of $c\mathcal{A}ss^\wedge$]
Let $c\mathcal{A}ss^\wedge$ be the complete curved absolute partial operad encoding curved associative algebras of Definition \ref{def: Ass curved absolute}. There is a weak equivalence
\[
\widehat{\Omega}(u\mathcal{A}ss^*) \qi c\mathcal{A}ss^\wedge
\]
of complete curved absolute partial operads and $\widehat{\Omega}(u\mathcal{A}ss^*)$ is the minimal cofibrant resolution of $c\mathcal{A}ss^\wedge$ in this model structure. 
\end{Example}

\section{Application: Homotopy transfer theorem for curved algebras}\label{Section: curved HTT}
The Homotopy Transfer Theorem is a fundamental tool in homological algebra, as it allows to transfer algebraic structures up to homotopy using contractions of dg modules. The operadic reformulation of this theorem was given in \cite[Section 10.3]{LodayVallette12} in terms of a morphism between the Bar constructions of the endomorphisms operads associated to the contraction. This morphism is called the Van der Laan morphism in \textit{loc.cit}. The notion of a contraction is \textit{homotopical} not \textit{homological}, and extends easily to the setting of pdg modules. In this section, we prove a version of the Homotopy Transfer Theorem for curved algebraic structures up to homotopy, extending the Van der Laan morphism to the complete Bar construction of Section \ref{Section: Constructions Bar-Cobar operadiques}. We recover, in the case of "pro-nilpotent" curved $\mathcal{L}_\infty$-algebras in the sense of \cite{getzler2018maurercartan}, the Homotopy Transfer Theorem constructed by Fukaya using fixed-point equations in \cite{FukayaHTT}.

\subsection{Complete filtrations on pdg modules and complete curved absolute endomorphisms operad}
A complete filtration on a pdg modules allows to endow its curved endomorphisms operad with a structure of complete curved absolute partial operad. 

\begin{Definition}[Filtered pdg module]
A \textit{filtered pdg module} $V$ amounts to the data $(V,\mathrm{F}_\bullet V ,d_V)$ of a pdg module $(V,d_V)$ together with a degree-wise decreasing filtration:
\[
V = \mathrm{F}_0V \supset \mathrm{F}_1V \supset \mathrm{F}_2V \supset \cdots \supset \mathrm{F}_n V \supset \cdots~,
\]
such that $d_V(\mathrm{F}_nV) \subset \mathrm{F}_{n+1}V$ for all $n$ in $\mathbb{N}$. Morphisms $f: V \longrightarrow W$ of filtered pdg modules are morphisms of pdg modules which are compatible with the filtrations, that is, $f(\mathrm{F}_nV) \subset \mathrm{F}_n W$ for all $n$.
\end{Definition}

\begin{Remark}
We ask that $d_V$ raises the filtration degree by one for simplicity. These are not the optimal assumptions. In fact, we only need to impose that $d_V^2(\mathrm{F}_nV) \subset \mathrm{F}_{n+1}V$. These would correspond to "gr-dg filtered modules" in \cite{JoanCurved}. In any case, notice that the associated graded of these filtrations are dg modules. 
\end{Remark}

The category of filtered pdg modules can be endowed with a closed symmetric monoidal structure by considering the following filtration on the tensor product. For $V$ and $W$ two filtered pdg modules, their tensor product $V \otimes W$ is endowed with 
\[
\mathrm{F}_n \left(V \otimes W \right) \coloneqq \sum_{p+q = n} \mathrm{Im}\left(F_pV \otimes F_qW \rightarrow V \otimes W\right)~.
\]
The internal hom functor, denoted $\mathrm{hom}(A,B)$, is given by the internal hom of pdg modules endowed with the filtration 
\[
\mathrm{F}_n \left(\mathrm{hom}(V,W)\right) \coloneqq \Big\{ ~f ~\text{in}~ \mathrm{hom}(V,W)~~|~~ f(\mathrm{F}_kV) \subset \mathrm{F}_{k+n}W\Big\} ~.
\]
\begin{Definition}[Complete pdg module]\label{def: complete pdg module}
Let $V$ be a filtered pdg module. It is a \textit{complete} pdg module if the canonical morphism 
\[
\pi_V: V \longrightarrow \lim_{n} V/\mathrm{F}_n V 
\]
is an isomorphism of filtered pdg modules. 
\end{Definition}

Complete pdg modules form a reflexive subcategory of filtered pdg modules. The reflector is simply given by the completion functor, which sends a filtered pdg module $V$ to its completion
\[
\widehat{V} \coloneqq \lim_{n} A/\mathrm{F}_nA~.
\]
By setting $V \widehat{\otimes} W \coloneqq \widehat{V \otimes W}~,$ the category of complete pdg modules is endowed with a closed symmetric monoidal structure, the internal hom being the same as the one defined above. One can define operads in this context, \cite[Section 2]{DSV18} for more details. Here, we will only use the following construction. 

\begin{Definition}[Complete curved endomorphisms partial operad]
Let $V$ be a complete pdg module. Its \textit{complete curved endomorphisms partial operad} is given by the complete pdg $\mathbb{S}$-module 
\[
\mathrm{end}_V(n) \coloneqq \mathrm{hom}^{(\geq 1)}(V^{\otimes n}, V)~,
\]
where we only consider here morphisms of pdg modules which raise the filtration degree by at least one. The operad structure is given by the partial composition of functions. Its pre-differential is given by $\partial \coloneqq [d_V,-]$ and its curvature determined by $\Theta_V(\mathrm{id}) \coloneqq d_V^2~.$
\end{Definition}

\begin{lemma}
Let $V$ be a complete pdg module. Its complete curved endomorphisms partial operad is a complete curved \textit{absolute} partial operad. 
\end{lemma}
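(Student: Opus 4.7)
The plan is to verify the three features — absolute, curved, and complete — in sequence, leveraging the hypothesis $\mathrm{end}_V(n) = \mathrm{hom}^{(\geq 1)}(V^{\otimes n},V)$ together with the completeness of $V$. First I would construct a structural map
\[
\gamma_{\mathrm{end}_V} : \overline{\mathscr{T}}^\wedge(\mathrm{end}_V) \longrightarrow \mathrm{end}_V
\]
extending the partial composition. The key observation is filtration degree additivity: if $\tau$ is a rooted tree with $k$ vertices labeled by operations $f_1,\dots,f_k$ each lying in $\mathrm{F}_{p_i}\mathrm{hom}(V^{\otimes n_i},V)$ with $p_i \geq 1$, then the operation obtained by composing them along $\tau$ lies in $\mathrm{F}_{p_1+\cdots+p_k}\mathrm{hom}(V^{\otimes n},V)$. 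In particular, a tree with $k$ internal edges has at least $k+1$ vertices, so its composition raises the filtration by at least $k+1$. Consequently, given any element
\[
\sum_{\omega\geq 1}\sum_{\tau \in \mathrm{RT}_\omega^n} \tau \quad \in\quad \overline{\mathscr{T}}^\wedge(\mathrm{end}_V)(n),
\]
its termwise image lies in $\prod_{\omega}\mathrm{F}_{\omega+1}\mathrm{hom}(V^{\otimes n},V)$, and since $V$ is complete the sum converges to a well-defined endomorphism, still raising filtration by $\geq 1$. This defines $\gamma_{\mathrm{end}_V}$; associativity and equivariance follow from the corresponding properties of the partial composition of functions, and hence $\mathrm{end}_V$ is an absolute partial operad in the sense of Appendix~\ref{Appendix B}.

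Next I would verify the curvature axiom of Definition~\ref{curvedbimodule} (in its operadic incarnation). The element $\Theta_V(\mathrm{id}) = d_V^2$ lies in $\mathrm{F}_2\mathrm{hom}(V,V)$, hence it is a well-defined element of $\mathrm{end}_V(1)_{-2}$. A direct calculation gives $\partial(d_V^2) = [d_V,d_V^2] = 0$, and for any $f \in \mathrm{end}_V(n)$ one computes
\[
\partial^2(f) = [d_V^2,f] = \gamma_{\mathrm{end}_V}\bigl(\Theta_V \circ \mathrm{id} \,-\, \mathrm{id}\circ'\Theta_V\bigr)(f),
\]
exactly as for the classical curved endomorphisms operad of Lemma~\ref{lemma: endo curved is curved}. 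This makes $(\mathrm{end}_V,\partial,\Theta_V)$ a curved absolute partial operad.

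Finally, I would address completeness with respect to the canonical filtration $\overline{\mathscr{F}}_\omega\mathrm{end}_V$ dual to the coradical filtration (cf.\ Appendix~\ref{Appendix B}). The filtration-additivity argument above shows that $\overline{\mathscr{F}}_\omega\mathrm{end}_V(n) \subseteq \mathrm{F}_\omega \mathrm{hom}^{(\geq 1)}(V^{\otimes n},V)$, because every element of weight $\geq \omega$ is a (possibly infinite) sum of $\omega$-fold partial compositions, each raising filtration by at least $\omega$. Conversely, the structural morphism $\gamma_{\mathrm{end}_V}$ produces a canonical splitting showing that this inclusion identifies the associated topologies. Since $V$ is complete, $\mathrm{hom}^{(\geq 1)}(V^{\otimes n},V)$ is complete with respect to its own filtration, and therefore $\mathrm{end}_V$ is complete with respect to the canonical filtration.

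The one place that might demand care is checking that the canonical filtration $\overline{\mathscr{F}}_\bullet\mathrm{end}_V$ really is Hausdorff and not just dominated by the ambient filtration on $\mathrm{hom}^{(\geq 1)}(V^{\otimes n},V)$; this is handled by noting that $\mathrm{end}_V$ is generated, as an absolute partial operad, by its image under the natural map $\mathrm{end}_V \hookrightarrow \overline{\mathscr{T}}^\wedge(\mathrm{end}_V)$, so convergence in the target filtration forces convergence in the canonical one, and the two filtrations define equivalent complete topologies. Everything else is a routine bookkeeping exercise relying on the results already recorded in Appendix~\ref{Appendix B}.
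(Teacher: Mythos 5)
Your argument follows essentially the same route as the paper's: extend the partial composition of $\mathrm{end}_V$ to the completed tree monad $\overline{\mathscr{T}}^\wedge$ using additivity of filtration degrees and the completeness of the target (this is exactly where the restriction to $\mathrm{hom}^{(\geq 1)}$ is used), check the curvature axiom as in Lemma \ref{lemma: endo curved is curved}, and deduce completeness from the inclusion $\overline{\mathscr{F}}_\omega\,\mathrm{end}_V \subseteq \mathrm{F}_\omega\,\mathrm{end}_V$. Your filtration bound (a tree of weight $\omega$ raises the filtration by at least $\omega+1$) is even slightly sharper than the paper's.

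The only place you wobble is the final paragraph. You do not need the two filtrations to define equivalent topologies, and the claim that $\gamma_{\mathrm{end}_V}$ ``produces a canonical splitting'' identifying them is neither justified nor needed. The point, recorded in Appendix \ref{Appendix B}, is that for any absolute partial operad the canonical map $\psi:\Q \twoheadrightarrow \widehat{\Q}$ to its completion is automatically an epimorphism, because the structure already provides all infinite sums; hence completeness is equivalent to the canonical filtration being Hausdorff. The inclusion you established gives $\bigcap_\omega \overline{\mathscr{F}}_\omega\,\mathrm{end}_V \subseteq \bigcap_\omega \mathrm{F}_\omega\,\mathrm{end}_V = 0$, which therefore already finishes the proof; the worry you raise about being ``dominated by the ambient filtration'' is exactly the property that suffices. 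Replacing your last paragraph by this observation makes the argument complete and brings it in line with the paper's.
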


\begin{proof}
We postpone this proof to Lemma \ref{lemma: curved endomorphisms is absolute} in the Appendix \ref{Appendix B}, where there is a detailed discussion of complete curved absolute partial operads. 
\end{proof}

\subsection{Homotopy contractions and Van der Laan morphisms}
The data of a homotopy contraction between two complete pdg modules which is compatible with their underlying filtrations induces a "Van der Laan morphism" between the complete Bar construction of their complete curved endomorphisms operads. From this, one recovers a Homotopy Transfer theorem for curved algebras over a vast class of cofibrant complete curved absolute partial operads. 

\begin{Definition}[Homotopy contraction]
Let $V$ and $H$ be two complete pdg modules. A \textit{homotopy contraction} amounts to the data of 
\[
\begin{tikzcd}[column sep=5pc,row sep=3pc]
V \arrow[r, shift left=1.1ex, "p"{name=F}] \arrow[loop left]{l}{h}
&H~, \arrow[l, shift left=.75ex, "i"{name=U}]
\end{tikzcd}
\]
where $p$ and $i$ are two morphisms of filtered pdg modules and $h$ is a morphism of filtered graded modules of degree $-1$. This data satisfies the following conditions:
\[
p i = \mathrm{id}_H~, \quad i p - \mathrm{id}_V = d_V h + h d_V~.
\]
\end{Definition}

\begin{Notation}
We say that $H$ is a \textit{homotopy retract} of $V$ if there exists a homotopy contraction as in the definition above.
\end{Notation}

\begin{Proposition}[Van der Laan morphism]
Let $V$ and $H$ be two complete pdg modules such $H$ is a homotopy retract of $V$. The data of this homotopy contraction induces a morphism of dg counital partial cooperads
\[
\mathrm{VdL}: \widehat{\mathrm{B}}(\mathrm{end}_V) \longrightarrow \widehat{\mathrm{B}}(\mathrm{end}_H)~. 
\]
This morphism is called the Van der Laan morphism associated to the contraction.
\end{Proposition}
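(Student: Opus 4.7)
The approach will be to use the complete Bar-Cobar adjunction of Corollary \ref{Complete Bar-Cobar adjunction}. By that adjunction, constructing a morphism of dg counital partial cooperads
\[
\mathrm{VdL}: \widehat{\mathrm{B}}(\mathrm{end}_V) \longrightarrow \widehat{\mathrm{B}}(\mathrm{end}_H)
\]
is equivalent to exhibiting a curved twisting morphism $\alpha \in \mathrm{Tw}(\widehat{\mathrm{B}}(\mathrm{end}_V), \mathrm{end}_H)$, that is, a degree $-1$ map of pdg $\mathbb{S}$-modules satisfying $\partial(\alpha) + \alpha \star \alpha = \Theta_{\mathcal{H}om}(\mathrm{id})$ in the curved convolution partial operad, where $\Theta_{\mathcal{H}om} = \Theta_{\mathrm{end}_H}\cdot \epsilon_{\widehat{\mathrm{B}}(\mathrm{end}_V)}$.

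The twisting morphism $\alpha$ is defined tree-by-tree in the spirit of the classical Van der Laan construction. The cofree counital partial cooperad $\mathscr{T}^\vee(s\,\mathrm{end}_V)$ underlying $\widehat{\mathrm{B}}(\mathrm{end}_V)$ consists of (possibly infinite) families of rooted trees whose vertices are decorated by desuspensions of operations in $\mathrm{end}_V$. For a single such decorated tree $T$ with $k$ vertices labelled $\mu_1, \dots, \mu_k$ and $n$ leaves, I would define $\alpha(T) \in \mathrm{end}_H(n)$ as the composite obtained by placing $i$ on each leaf edge, $p$ on the root edge, $h$ on each internal edge, and inserting $\mu_j$ at the $j$-th vertex. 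On the counit $|$ of $\mathscr{T}^\vee$, I would set $\alpha(|) = 0$. Because each $\mu_j$ raises the filtration degree by at least one and the contraction data $i, p, h$ are filtered maps, the resulting operation on $H$ strictly raises filtration, and the infinite sums arising from general elements of $\widehat{\mathrm{B}}(\mathrm{end}_V)$ converge in the complete curved absolute partial operad $\mathrm{end}_H$.

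The heart of the proof is the verification of the Maurer–Cartan equation, which one checks tree-wise. The bar pre-differential on $\widehat{\mathrm{B}}(\mathrm{end}_V)$ contributes three types of terms on a decorated tree: (i) acting on a vertex by $\partial = [d_V,-]$; (ii) splitting an internal edge via $\gamma_{(1)}$, which in turn introduces a new $h$ decoration on that edge; (iii) on the subtree-counit, sending $|$ to $s\Theta_{\mathrm{end}_V}(\mathrm{id}) = s d_V^2$. The convolution product $\alpha \star \alpha$ corresponds to cutting the tree along an internal edge and evaluating each half independently, which amounts to replacing that edge's $h$ decoration by $ip$. Using the homotopy relation $ip - \mathrm{id}_V = d_V h + h d_V$, the two contributions coming from (ii) and $\alpha \star \alpha$ match the vertex-wise term (i) exactly as in the standard dg Van der Laan argument. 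On the counit component, the curvature term (iii) evaluates under $\alpha$ to $p \cdot d_V^2 \cdot i$, and the identities $p d_V = d_H p$ and $d_V i = i d_H$ together with $pi = \mathrm{id}_H$ give $p d_V^2 i = d_H^2 = \Theta_{\mathrm{end}_H}(\mathrm{id})$, which is precisely $\Theta_{\mathcal{H}om}(\mathrm{id})$ on the counit.

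The main obstacle is the careful bookkeeping of signs and the handling of the curvature term alongside the classical tree cancellations: one must show simultaneously that all interior tree cancellations proceed as in the dg case, \emph{and} that the only uncancelled contribution lives on the counit of $\mathscr{T}^\vee(s\,\mathrm{end}_V)$ and produces exactly the curvature $d_H^2$. The completeness hypothesis on $V$ and $H$, combined with the condition $\mathrm{end}_V = \mathrm{hom}^{(\geq 1)}$, is what allows the series defining $\alpha$ to make sense inside $\mathrm{end}_H$ and guarantees that both sides of the Maurer–Cartan equation are well-defined.
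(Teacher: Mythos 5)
Your proof is correct and follows essentially the same route as the paper: there, one defines the tree-wise Van der Laan map $\mathrm{vdl}\colon \overline{\mathscr{T}}^{\wedge}(\mathrm{end}_V)\to\mathrm{end}_H$ (convergent by the same filtration argument you give), extends it to a morphism of counital partial cooperads by the universal property of the cofree construction, and then checks compatibility with the bar differentials — which is precisely your Maurer--Cartan equation, including the same decisive counit computation $p\, d_V^2\, i = d_H\, p\, i\, d_H = d_H^2$ identifying the curvature term. Packaging this via the complete Bar--Cobar adjunction and $\mathrm{Tw}(\widehat{\mathrm{B}}(\mathrm{end}_V),\mathrm{end}_H)$ is only a restatement of that universal property plus the differential condition, so the two arguments coincide.
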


\begin{proof}
Let 
\[
\mathrm{vdl}: \mathscr{T}^\wedge(s\mathrm{end}_V) \longrightarrow \mathrm{end}_H
\]
be the morphism of pdg $\mathbb{S}$-modules which sends a series of rooted trees labeled by elements of $\mathrm{end}_V$ to the converging series in $\mathrm{end}_H$ given by applying the Van der Laan map to each rooted tree of the series. For a recollection on the standard Van der Laan map, see \cite[Section 10.3.2]{LodayVallette12}. One can restrict $\mathrm{vdl}$ along the inclusion of the cofree counital partial cooperad inside the completed tree endofunctor. By the universal property of the cofree counital partial cooperad, this induces a morphism of counital partial cooperads:
\[
\mathrm{VdL}: \mathscr{T}^\vee(\mathrm{end}_V) \longrightarrow \mathscr{T}^\vee(\mathrm{end}_H)~.
\]
Let us show that $\mathrm{VdL}$ commutes with the differentials. It equivalent to the following diagram commuting 
\[
\begin{tikzcd}[column sep=3pc,row sep=3pc]
\mathscr{T}^\vee(\mathrm{end}_V) \arrow[r,"\mathrm{VdL}"] \arrow[d,"d_{\mathrm{bar}} ",swap] 
&\mathscr{T}^\vee(\mathrm{end}_H) \arrow[d,"\psi_1 + \psi_2"] \\
\mathscr{T}^\vee(\mathrm{end}_V) \arrow[r,"\mathrm{vdl}"] 
&\mathrm{end}_H~,
\end{tikzcd}
\]
where $\psi_1 + \psi_2$ is the map that induces the differential of the complete Bar construction as its unique coderivation extending it. One can show that this diagram commutes for elements in $\mathscr{T}^\vee(\mathrm{end}_V)$, which viewed as infinite sums of trees, have a trivial coefficient in front of the trivial tree $|~$, extending the same computations as in \cite[Section 10.3.2]{LodayVallette12} to infinite sums of rooted trees. For the trivial tree $|~$, a small computation shows that:
\[
\mathrm{vdl} \cdot d_{\mathrm{bar}}(|) = p \cdot d_V^2 \cdot i = d_H \cdot p \cdot i \cdot d_H = d_H^2 = (\psi_1 + \psi_2) \cdot \mathrm{VdL}(|)~, 
\]
using the fact that $p$ and $i$ are morphisms of pdg modules. 
\end{proof}

\begin{theorem}[Curved Homotopy Transfer Theorem]\label{thm: curved HTT}
Let $\mathcal{C}$ be a dg counital partial cooperad. Let $V$ and $H$ be two complete pdg modules such $H$ is a homotopy retract of $V$. Then any curved $\widehat{\Omega}(\mathcal{C})$-algebra structure on $V$ can be transferred along the homotopy contraction to a curved $\widehat{\Omega}(\mathcal{C})$-algebra structure on $H$. 
\end{theorem}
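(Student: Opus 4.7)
The plan is to exploit the complete Bar--Cobar adjunction of Corollary~\ref{Complete Bar-Cobar adjunction} to reduce the theorem to the cooperadic side, where the Van der Laan morphism constructed in the previous proposition provides the transfer for free. By definition, a curved $\widehat{\Omega}(\mathcal{C})$-algebra structure on the complete pdg module $V$ is a morphism of complete curved absolute partial operads $\Gamma_V\colon \widehat{\Omega}(\mathcal{C}) \longrightarrow \mathrm{end}_V$, and by the adjunction $\widehat{\Omega} \dashv \widehat{\mathrm{B}}$ this is the same data as a morphism of dg counital partial cooperads
\[
g_V\colon \mathcal{C}\longrightarrow \widehat{\mathrm{B}}(\mathrm{end}_V)~.
\]

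Given a homotopy contraction of $V$ onto $H$, I would simply compose $g_V$ with the Van der Laan morphism
\[
\mathrm{VdL}\colon \widehat{\mathrm{B}}(\mathrm{end}_V)\longrightarrow \widehat{\mathrm{B}}(\mathrm{end}_H)
\]
to obtain a new morphism $g_H \coloneqq \mathrm{VdL}\circ g_V\colon \mathcal{C}\longrightarrow \widehat{\mathrm{B}}(\mathrm{end}_H)$ of dg counital partial cooperads. Applying the Bar--Cobar adjunction once more produces a morphism $\Gamma_H\colon \widehat{\Omega}(\mathcal{C})\longrightarrow \mathrm{end}_H$ of complete curved absolute partial operads, which is by definition the desired transferred curved $\widehat{\Omega}(\mathcal{C})$-algebra structure on $H$. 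Unwinding the construction, $\Gamma_H$ sends an element $c\in\mathcal{C}(n)$ to the series of rooted trees obtained by applying the classical Van der Laan recipe: root decorated by $p$, internal edges by $h$, leaves by $i$, and vertices by the operations coming from $\Gamma_V$. Convergence of this possibly infinite sum in $\mathrm{end}_H$ is automatic by virtue of the fact that $\mathrm{end}_H$ is a complete curved \emph{absolute} partial operad, which is precisely the feature that makes such infinite summations well-defined without any further topological input.

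The main obstacle is checking compatibility with curvature, since the ground setting is genuinely non-differential: both $\mathrm{end}_V$ and $\mathrm{end}_H$ carry the non-trivial curvatures $\Theta_V(\mathrm{id})=d_V^2$ and $\Theta_H(\mathrm{id})=d_H^2$, and these are encoded into $\widehat{\mathrm{B}}(\mathrm{end}_V)$ and $\widehat{\mathrm{B}}(\mathrm{end}_H)$ via the term $s\Theta_\PP$ in the map defining $d_2$. However, this compatibility has already been verified in the proof of the preceding proposition: the equality $\mathrm{vdl}\cdot d_{\mathrm{bar}}(|) = d_H \cdot p \cdot i \cdot d_H = d_H^2 = (\psi_1+\psi_2)\cdot \mathrm{VdL}(|)$, which uses only that $p$ and $i$ commute with the pre-differentials, shows that $\mathrm{VdL}$ is a genuine morphism of dg counital partial cooperads and so intertwines the curvature data. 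Everything else is then a formal consequence of the naturality of the complete Bar--Cobar adjunction, which transports the morphism $g_H$ back to a bona fide curved algebra structure $\Gamma_H$ on $H$.
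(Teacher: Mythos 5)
Your proposal is correct and follows essentially the same route as the paper: the paper's proof likewise identifies a curved $\widehat{\Omega}(\mathcal{C})$-algebra structure on $V$ with a morphism of dg counital partial cooperads $\mathcal{C}\to\widehat{\mathrm{B}}(\mathrm{end}_V)$ via the complete Bar--Cobar adjunction, pushes it forward along the Van der Laan morphism, and reads the result as a curved $\widehat{\Omega}(\mathcal{C})$-algebra structure on $H$. Your additional remarks on convergence and curvature compatibility are sound but are already built into the construction of $\mathrm{VdL}$ in the preceding proposition, so they add exposition rather than a new argument.
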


\begin{proof}
One has that
\[
\mathrm{Hom}_{\mathsf{curv}~\mathsf{abs}~\mathsf{pOp}^{\mathsf{comp}}}(\widehat{\Omega}\C,\mathrm{end}_V) \cong \mathrm{Tw}(\C,\mathrm{end}_V) \cong \mathrm{Hom}_{\mathsf{dg}~\mathsf{upCoop}}(\C,\widehat{\mathrm{B}}(\mathrm{end}_V))~,
\]
thus a curved $\widehat{\Omega}(\mathcal{C})$-algebra structure on $V$ is equivalent to a morphism of dg counital partial cooperads 
\[
\phi: \mathcal{C} \longrightarrow \widehat{\mathrm{B}}(\mathrm{end}_V)~.
\]
Any such morphism can be pushed forward by the Van der Laan morphism
\[
\mathrm{VdL} \cdot \phi: \mathcal{C} \longrightarrow \widehat{\mathrm{B}}(\mathrm{end}_H)~,
\]
thus inducing a curved $\widehat{\Omega}(\mathcal{C})$-algebra structure on $H$. 
\end{proof}

\begin{Remark}
All cofibrant complete curved absolute partial operads can be written as $\widehat{\Omega}(\mathcal{C})$, where $\mathcal{C}$ is a counital partial cooperad \textit{up to homotopy}. By restricting to the case where $\C$ is a dg counital partial cooperad, we are restricting to the case of Koszul resolutions. See Proposition \ref{Prop: cofibrant resolutions of absolutes}. In order to prove the statement for all cofibrant complete curved absolute partial operad, one should prove that a homotopy contraction induces a morphism between the complete Bar constructions \textit{relative to $\iota$} of Proposition \ref{Prop: complete Bar-Cobar adjunction relative to iota}.
\end{Remark}

Let us conclude with some examples of how this framework can be applied. Let $\widehat{\Omega}(\ucomd)$ be the complete curved absolute partial operad of Example \ref{Example: Omega(ucomd)}.

\begin{Example}
Let $V$ be a complete dg modules. Then a curved $\widehat{\Omega}(\ucomd)$-algebra structure on $V$ corresponds to a \textit{pro-nilpotent curved} $\mathcal{L}_\infty$-algebra structure on $V$ in the sense of \cite{getzler2018maurercartan}.
\end{Example}

\begin{Example}[Fukaya's Homotopy Transfer Theorem]
Let $V$ and $H$ be two complete dg modules and let 
\[
\begin{tikzcd}[column sep=5pc,row sep=3pc]
V \arrow[r, shift left=1.1ex, "p"{name=F}] \arrow[loop left]{l}{h}
&H~, \arrow[l, shift left=.75ex, "i"{name=U}]
\end{tikzcd}
\]
be homotopy contraction. Given a pro-nilpotent curved $\mathcal{L}_\infty$-algebra structure on $V$, one can transfer it to $H$ using a version of the Homotopy Transfer Theorem given in \cite{FukayaHTT}. The transferred structure is given as the solution of a fixed-point equation. If one computes the solution of this fixed point equation using the methods of \cite[Appendix A]{robertnicoud2020higher}, one obtains the same formulae for the transferred structure as the ones obtained from Theorem \ref{thm: curved HTT}.
\end{Example}

\section*{Appendix A: What is an absolute partial operad ?}\label{Appendix B}
The notion of an \textit{absolute partial operad} is an example of a vast class of algebraic structures that emerge as algebras over a cooperad. See Subsection \ref{subsection : algebras and coalgebras} for an introduction to this notion. In our particular case, absolute partial operads appear as algebras over the conilpotent partial $\mathbb{S}$-colored cooperad $\mathcal{O}^*$. Here $\mathcal{O}^*$ denotes the linear dual of the partial $\mathbb{S}$-colored operad $\mathcal{O}$ which encodes partial operads as its algebras. The goal of this Appendix is to give a somewhat explicit definition of what absolute partial operads are and how to characterize them. Then to compare them with standard partial operads. Afterwards, we will generalize these results to the curved case. 

\subsection{Absolute partial operads}
We work in the underlying category of $\mathbb{S}$-modules for simplicity. This subsection admits a straightforward generalization to dg $\mathbb{S}$-modules or pdg $\mathbb{S}$-modules.

\begin{lemma}\label{lemma: iso avec complete tree monad}
There is an isomorphism of endofunctors in the category of $\mathbb{S}$-modules:
\[
\widehat{\mathscr{S}}_{\mathbb{S}}^c (\mathcal{O}^*)(-) \cong \overline{\mathscr{T}}^\wedge(-)~,
\]
where $\overline{\mathscr{T}}^\wedge(-)$ is the reduced completed tree endofunctor.
\end{lemma}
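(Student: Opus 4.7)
The plan is to upgrade the tree bijection established in Lemma \ref{lemma with the trees bijection} from the setting of Schur functors to dual Schur functors. Recall that this bijection identifies, for any pdg $\mathbb{S}$-module $M$, the object $\mathscr{S}_{\mathbb{S}}(\mathcal{O})(M)(n)$ with the reduced tree functor $\overline{\mathscr{T}}(M)(n)$ as $\mathbb{S}_n$-modules, via sending a class $(\psi; m_1,\ldots,m_r)$ in $\mathcal{O}(n_1,\ldots,n_r;n) \otimes_{(\mathbb{S}_{n_1}\times\cdots\times\mathbb{S}_{n_r})\wr\mathbb{S}_r}(M(n_1)\otimes \cdots \otimes M(n_r))$ to the rooted tree $\tau_\psi$ built from the binary tree $\psi$ with the corollas labeled by $m_1,\ldots,m_r$. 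A consequence of this bijection is that each $\mathcal{O}(n_1,\ldots,n_r;n)$ is a finite-dimensional $\kk$-module, with an explicit basis indexed by rooted trees of arity $n$ whose $r$ corollas have arities $n_1,\ldots,n_r$.

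First, I will apply this finite-dimensionality, together with the fact that $\kk$ has characteristic zero and each group $\left(\mathbb{S}_{n_1}\times\cdots\times\mathbb{S}_{n_r}\right)\wr\mathbb{S}_r$ is finite, to rewrite each factor of the product defining $\widehat{\mathscr{S}}_{\mathbb{S}}^c(\mathcal{O}^*)(M)(n)$ as
\[
\mathrm{Hom}_{G}(\mathcal{O}^*(n_1,\ldots,n_r;n), M(n_1)\otimes\cdots\otimes M(n_r)) \cong \left(\mathcal{O}(n_1,\ldots,n_r;n)\otimes M(n_1)\otimes\cdots\otimes M(n_r)\right)^{G},
\]
where $G=\left(\mathbb{S}_{n_1}\times\cdots\times\mathbb{S}_{n_r}\right)\wr\mathbb{S}_r$, and where invariants coincide with coinvariants by Maschke's theorem. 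Thus
\[
\widehat{\mathscr{S}}_{\mathbb{S}}^c(\mathcal{O}^*)(M)(n) \;\cong\; \prod_{r\geq 1}\prod_{(n_1,\ldots,n_r)\in \mathbb{N}^r} \mathcal{O}(n_1,\ldots,n_r;n)\otimes_{G} \left(M(n_1)\otimes \cdots \otimes M(n_r)\right).
\]

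Next, I will identify this product with $\overline{\mathscr{T}}^\wedge(M)(n)$ via the very same tree bijection, now carried over to its dualised and completed version. By the description of the reduced completed tree endofunctor as a product over rooted trees with vertices labeled by elements of $M$, and since the bijection of Lemma \ref{lemma with the trees bijection} is natural in $M$ and compatible with the equivariance by $G$, the two products match up summand by summand. Naturality in $M$ then gives the desired isomorphism of endofunctors on the category of $\mathbb{S}$-modules.

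The main technical point to verify is that the two filtrations match: on the dual Schur side, the product is naturally filtered by the outer index $r$ (or equivalently by the number of corollas), while on the completed tree side the filtration is given by the number of internal edges. For a fixed output arity $n$, there are only finitely many tuples $(n_1,\ldots,n_r)$ contributing trees with a fixed number of internal edges (the tree is determined up to finite data once the arities are fixed), so both filtrations have the same finite-dimensional strata and the resulting completions coincide. No subtle convergence issue arises because each stratum is a finite product, and so the natural isomorphism at the level of filtered quotients lifts to an isomorphism of the two completed endofunctors.
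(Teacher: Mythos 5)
Your proof is correct and follows essentially the same route as the paper's: use the finite dimensionality of each $\mathcal{O}(n_1,\ldots,n_r;n)$ together with characteristic zero to convert the equivariant Hom out of $\mathcal{O}^*$ into coinvariants of a tensor product with $\mathcal{O}$, then transport the tree bijection of Lemma \ref{lemma with the trees bijection} to identify the resulting product with $\overline{\mathscr{T}}^\wedge(M)$. Your extra check that the strata of the two filtrations agree arity by arity is a harmless refinement that the paper leaves implicit.
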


\begin{proof}
We have that:
\begin{align*}
\widehat{\mathscr{S}}_{\mathbb{S}}^c (\mathcal{O}^*)(M)(n) \coloneqq \prod_{(n_1,\cdots,n_r) \in \mathbb{N}^{r}} \mathrm{Hom}_{\left(\mathbb{S}_{n_1} \times \cdots \times \mathbb{S}_{n_r}\right)~\wr~ \mathbb{S}_r } \left( \mathcal{O}^*(n_1,\cdots,n_r;n), M(n_1) \otimes \cdots \otimes M(n_r) \right) \cong \\
\cong \prod_{(n_1,\cdots,n_r) \in \mathbb{N}^{r}} \mathcal{O}(n_1,\cdots,n_r;n) \otimes_{\left(\mathbb{S}_{n_1} \times \cdots \times \mathbb{S}_{n_r}\right)~\wr~ \mathbb{S}_r }  M(n_1) \otimes \cdots \otimes M(n_r)~,
\end{align*}
since each $\mathcal{O}^*(n_1,\cdots,n_r;n)$ is finite dimensional over $\kk$ and that we are working over a field of characteristic, thus invariants and coinvariants turn out to be canonically isomorphic. Using the same bijection as in the proof of Lemma \ref{lemma with the trees bijection}, we identify this last term with the reduced completed tree endofunctor.
\end{proof}

\begin{Corollary}
There is a monad structure on the reduced completed tree endofunctor $\overline{\mathscr{T}}^\wedge(-)$. Its structural morphism 
\[
\mathsf{Sub}^\wedge: \overline{\mathscr{T}}^\wedge \circ \overline{\mathscr{T}}^\wedge (-) \longrightarrow \overline{\mathscr{T}}^\wedge(-)
\]
is given by the substitution of infinite series of rooted trees. 
\end{Corollary}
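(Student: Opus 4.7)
The plan is to transport the monad structure across the isomorphism of \cref{lemma: iso avec complete tree monad} and then identify the resulting multiplication with the concrete operation of tree substitution. Since $\mathcal{O}$ is a partial $\mathbb{S}$-colored operad with each component $\mathcal{O}(n_1,\ldots,n_r;n)$ finite-dimensional, dualizing its partial compositions $\{\circ_i\}$ term by term endows $\mathcal{O}^*$ with the structure of a conilpotent partial $\mathbb{S}$-colored cooperad whose partial decomposition maps are the duals $\Delta_i = (\circ_i)^*$. By the $\mathbb{S}$-colored analogue of \cref{prop: reduced tree comonad gives conil part coop} developed in \cref{subsection: comparison for partial colored (co)operads}, this partial structure integrates into a coassociative comultiplication $\Delta_{\mathcal{O}^*} : \mathcal{O}^* \longrightarrow \mathcal{O}^* \circ \mathcal{O}^*$, and adjoining a counit as in the colored version of \cref{corollary: adding a counit} yields a genuine comonoid in $(\mathsf{Col}_\mathbb{S}, \circ, \mathcal{I}_\mathbb{S})$.

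Next, I would invoke the lax monoidal structure of $\widehat{\mathscr{S}}_\mathbb{S}^c$ established in the $\mathbb{S}$-colored analogue of \cref{lemma: Schur lax contravariant functor}, which turns this comonoid structure into a monad structure on $\widehat{\mathscr{S}}_\mathbb{S}^c(\mathcal{O}^*)$ whose multiplication is the composite
\[
\widehat{\mathscr{S}}_\mathbb{S}^c(\mathcal{O}^*) \circ \widehat{\mathscr{S}}_\mathbb{S}^c(\mathcal{O}^*) \xrightarrow{\varphi_{\mathcal{O}^*,\mathcal{O}^*}} \widehat{\mathscr{S}}_\mathbb{S}^c(\mathcal{O}^* \circ \mathcal{O}^*) \xrightarrow{\widehat{\mathscr{S}}_\mathbb{S}^c(\Delta_{\mathcal{O}^*})} \widehat{\mathscr{S}}_\mathbb{S}^c(\mathcal{O}^*),
\]
and whose unit is given by the inclusion of corollas coming from the formal counit after restricting to the reduced part. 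The monad axioms follow directly by translating the coassociativity and counitality of $\Delta_{\mathcal{O}^*}$ through the lax monoidal structure. Transporting along the natural isomorphism of \cref{lemma: iso avec complete tree monad} then yields the desired monad structure on $\overline{\mathscr{T}}^\wedge$.

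Finally, to identify this abstract multiplication with the tree substitution $\mathsf{Sub}^\wedge$, I would trace through the combinatorial bijection used in the proof of \cref{lemma with the trees bijection}. Under that bijection, an element of $\overline{\mathscr{T}}^\wedge \circ \overline{\mathscr{T}}^\wedge(M)$ is a (possibly infinite) rooted tree whose vertices are themselves labeled by series of rooted trees of $M$. The comultiplication $\Delta_{\mathcal{O}^*}$ is dual to the operadic composition in $\mathcal{O}$, which combinatorially is the grafting of trees, so its image under $\widehat{\mathscr{S}}_\mathbb{S}^c$ corresponds exactly to grafting the inner trees onto the outer tree, i.e.\ to substitution. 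Convergence of the formally infinite sums is automatic from the additivity of the total number of vertices under substitution, ensuring that only finitely many pairs contribute to any fixed arity and weight. The main obstacle I anticipate is the combinatorial bookkeeping of the equivariance structures $(\mathbb{S}_{n_1}\times\cdots\times\mathbb{S}_{n_r})\wr\mathbb{S}_r$ on each side of the bijection, together with checking that the image of the monomorphism $\varphi_{\mathcal{O}^*,\mathcal{O}^*}$ matches precisely with the subspace of grafted tree-of-tree series inside $\widehat{\mathscr{S}}_\mathbb{S}^c(\mathcal{O}^* \circ \mathcal{O}^*)$, so that the transported multiplication is truly substitution and not merely a coarser operation.
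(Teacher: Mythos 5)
Your proposal is correct and follows essentially the same route as the paper: the monad structure on $\widehat{\mathscr{S}}_{\mathbb{S}}^c(\mathcal{O}^*)$ is obtained from the conilpotent $\mathbb{S}$-colored cooperad structure on $\mathcal{O}^*$ via the lax monoidal structure of the dual Schur functor, transported through the isomorphism of Lemma \ref{lemma: iso avec complete tree monad}, and then identified combinatorially with substitution of series of rooted trees. You simply make explicit the steps (dualization of the partial compositions, the counit, the finiteness argument for convergence) that the paper's much terser proof leaves implicit.
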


\begin{proof}
Let $M$ be an $\mathbb{S}$-module. An element of $\overline{\mathscr{T}}^\wedge \circ \overline{\mathscr{T}}^\wedge(M)$ amounts to the data of a series of rooted trees which are labeled in the following way: if $v$ is a vertex of with $k$ inputs of a rooted tree $\tau$, then it is labeled by a series of rooted trees of arity $k$ which are themselves labeled by elements of $M$ in the usual way. Let us describe the monad structure transported via the isomorphism of Lemma \ref{lemma: iso avec complete tree monad}. This monad structure is given by first distributing all the labeling series and then by substituting each of the node by the rooted tree it is labeled with.
Pictorially, the substitution is given by 

\begin{center}
\includegraphics[width=130mm,scale=1]{infinitesubstitution.eps}
\end{center}

where one should substitute each node by the corresponding labeling rooted tree $\tau_{\omega_i}$. 
\end{proof}

Thus we can rewrite the definition of an $\mathcal{O}^*$-algebra in terms of the reduced completed tree monad. 

\begin{Definition}[Absolute partial operad]
An \textit{absolute partial operad} $\mathcal{Q}$ amounts to the data $(\mathcal{Q},\gamma_\Q)$ of an algebra structure over the reduced completed tree monad 
\[
\gamma_\Q: \overline{\mathscr{T}}^\wedge (\Q) \longrightarrow \Q~.
\]
\end{Definition}

The goal of this section is to make sense of this definition.

\begin{Remark}
The structural morphism $\gamma_\Q$ of an absolute partial operad gives a well-defined composition of any \textit{infinite sums} for rooted trees labeled by elements of $\mathcal{Q}$, \textit{without presupposing any underlying topology} on $\mathcal{Q}$. Let 
\[
\sum_{n \geq 0} \sum_{\omega \geq 1} \sum_{\tau \in \mathrm{RT}_n^\omega} \tau
\]
be a infinite sum of rooted trees with vertices labeled by elements of $\Q$. Notice that, in general:
\[
\gamma_\Q \left(\sum_{n \geq 0} \sum_{\omega \geq 1} \sum_{\tau \in \mathrm{RT}_n^\omega} \tau \right) = \sum_{n \geq 0} \gamma_\Q \left( \sum_{\omega \geq 1} \sum_{\tau \in \mathrm{RT}_n^\omega} \tau \right) \neq \sum_{n \geq 0} \sum_{\omega \geq 1} \sum_{\tau \in \mathrm{RT}_n^\omega} \gamma_\Q(\tau)~,
\]
as the later sum is not even well-defined in $\Q$ in general.
\end{Remark}

\begin{Proposition}
Let $M$ be an $\mathbb{S}$-module such that $M(0) = M(1) = 0$, then the data of a partial operad structure on $M$ is equivalent to the data of an absolute partial operad structure on $M$. 
\end{Proposition}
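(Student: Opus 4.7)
The plan is to show that the hypothesis $M(0) = M(1) = 0$ collapses the completed tree endofunctor onto the reduced tree monad, so that an algebra structure over $\overline{\mathscr{T}}^\wedge$ becomes the same thing as an algebra over $\overline{\mathscr{T}}$, i.e., a partial operad.

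First, I would establish the following arity-wise finiteness observation. A rooted tree of arity $n$ whose vertices all have at least two incoming edges has at most $n-1$ internal vertices (proof by induction: each vertex of valence $\geq 2$ merges two or more inputs into one output). Since $M(0) = M(1) = 0$, only rooted trees whose vertices are all labeled by elements of some $M(k)$ with $k \geq 2$ contribute to $\overline{\mathscr{T}}^\wedge(M)(n)$. Thus in each arity $n$ there are only finitely many such labeled trees, and so the canonical inclusion $\overline{\mathscr{T}}(M) \hookrightarrow \overline{\mathscr{T}}^\wedge(M)$, which compares the direct sum and the product of the same underlying pieces, is an isomorphism. This natural isomorphism is clearly compatible with the $\mathbb{S}$-module structure.

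Next I would verify that this natural isomorphism is in fact an isomorphism of monads $\overline{\mathscr{T}}(M) \cong \overline{\mathscr{T}}^\wedge(M)$. The structure map $\mathsf{Sub}^\wedge$ is given by substitution of (possibly infinite) series of rooted trees; but when both the outer tree and the labelling inner trees only have vertices of valence $\geq 2$, the substituted tree is again of that form, and the outer and inner sums are finite arity-by-arity. Hence $\mathsf{Sub}^\wedge$ restricts, under the above identification, to the ordinary substitution $\mathsf{Sub}$ that gives $\overline{\mathscr{T}}$ its monad structure. The units are tautologically identified via the corolla inclusion.

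Finally, the equivalence of categories follows formally: since $\overline{\mathscr{T}} \cong \overline{\mathscr{T}}^\wedge$ as monads on the full subcategory of $\mathbb{S}$-modules with $M(0) = M(1) = 0$, their Eilenberg–Moore categories coincide. By the characterization recalled in Section 1, $\overline{\mathscr{T}}$-algebras on $M$ are exactly partial operad structures on $M$, while $\overline{\mathscr{T}}^\wedge$-algebras on $M$ are by definition absolute partial operad structures on $M$. There is really no obstacle here; the only point requiring care is ensuring that the natural monomorphism $\overline{\mathscr{T}}(M) \hookrightarrow \overline{\mathscr{T}}^\wedge(M)$ is an isomorphism arity by arity under the hypothesis, which is the combinatorial lemma about rooted trees with vertices of valence $\geq 2$ described above.
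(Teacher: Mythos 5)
Your proof is correct and follows essentially the same route as the paper, which simply observes that when $M(0)=M(1)=0$ there are only finitely many labeled rooted trees in each arity, so $\overline{\mathscr{T}}^\wedge(M) \cong \overline{\mathscr{T}}(M)$ and the two algebra structures coincide. Your version just makes explicit the combinatorial bound (at most $n-1$ vertices of valence $\geq 2$ in arity $n$) and the compatibility of the monad structures, both of which the paper leaves implicit.
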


\begin{proof}
In this case, there are only a finite amount of rooted tree labeled by elements of $M$ for each possible arity, thus the reduced completed tree monad coincides with the reduced tree monad. 
\end{proof}

\begin{lemma}
Let $(\mathcal{Q},\gamma_\Q)$ be an absolute partial operad. The structure map $\gamma_\Q$ induces a partial operad structure on $\Q$. This defines a faithful functor
\[
\begin{tikzcd}[column sep=4pc,row sep=0pc]
\mathsf{Res}: \mathsf{abs}~\mathsf{pOp} \arrow[r]
&\mathsf{pOp}~.
\end{tikzcd}
\]
\end{lemma}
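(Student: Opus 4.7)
The plan is to define the partial composition maps $\circ_i: \Q(n) \otimes \Q(k) \to \Q(n+k-1)$ by restricting the structural morphism $\gamma_\Q: \overline{\mathscr{T}}^\wedge(\Q) \to \Q$ to the sub-$\mathbb{S}$-module of two-vertex rooted trees. Concretely, for $\mu \in \Q(n)$ and $\nu \in \Q(k)$, there is a canonical element $\mu \,\tau_i\, \nu \in \overline{\mathscr{T}}^\wedge(\Q)(n+k-1)$ given by the rooted tree with root vertex labeled by $\mu$ and with $\nu$ grafted at the $i$-th leaf; we set $\mu \circ_i \nu \coloneqq \gamma_\Q(\mu \,\tau_i\, \nu)$. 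This uses only finitely-supported elements of the completed tree endofunctor, so no convergence issue arises. Equivariance with respect to $\mathbb{S}_n$ and $\mathbb{S}_k$ follows from the fact that $\gamma_\Q$ is a morphism of $\mathbb{S}$-modules and the natural equivariance of labeled rooted trees.

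Next, I would verify the sequential and parallel axioms. For three elements $\mu \in \Q(n)$, $\nu \in \Q(k)$, $\rho \in \Q(l)$, consider the two-step iterated substitutions inside $\overline{\mathscr{T}}^\wedge \circ \overline{\mathscr{T}}^\wedge(\Q)$ obtained by composing along the appropriate positions. The associativity of $\gamma_\Q$ as an algebra over the monad $(\overline{\mathscr{T}}^\wedge, \mathsf{Sub}^\wedge)$, namely the commutative diagram
\[
\begin{tikzcd}[column sep=3pc,row sep=2pc]
\overline{\mathscr{T}}^\wedge \circ \overline{\mathscr{T}}^\wedge(\Q) \arrow[r,"\mathsf{Sub}^\wedge"] \arrow[d,"\overline{\mathscr{T}}^\wedge(\gamma_\Q)",swap] & \overline{\mathscr{T}}^\wedge(\Q) \arrow[d,"\gamma_\Q"] \\
\overline{\mathscr{T}}^\wedge(\Q) \arrow[r,"\gamma_\Q"] & \Q~,
\end{tikzcd}
\]
specialized to the appropriate two-vertex trees of two-vertex trees, yields precisely the sequential and parallel relations of Definition \ref{def: partialoperad}. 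This is the only step that requires genuine care: one must check by direct inspection of the substitution operation $\mathsf{Sub}^\wedge$ on the relevant small trees that the iterated compositions on the left of the diagram correspond to the iterated $\circ_i$ operations, while the right-hand side collapses them into a single $\gamma_\Q$ of a three-vertex tree, thereby equating the two ways of performing the composition.

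For morphisms, a morphism $f: (\Q, \gamma_\Q) \to (\Q', \gamma_{\Q'})$ of absolute partial operads is by definition a morphism of $\mathbb{S}$-modules such that $\gamma_{\Q'} \cdot \overline{\mathscr{T}}^\wedge(f) = f \cdot \gamma_\Q$. Restricting this equality to two-vertex trees gives $f(\mu \circ_i \nu) = f(\mu) \circ_i f(\nu)$, so $f$ is a morphism of the induced partial operads. Defining $\mathsf{Res}(f) \coloneqq f$ on underlying $\mathbb{S}$-module maps, functoriality is automatic.

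Finally, faithfulness is immediate: since $\mathsf{Res}$ is the identity on the underlying $\mathbb{S}$-module morphisms, two morphisms of absolute partial operads which agree after applying $\mathsf{Res}$ coincide. The main (and really only) obstacle is the bookkeeping verification that the monad-algebra associativity precisely encodes the parallel and sequential axioms when restricted to two-vertex subtrees; everything else is formal.
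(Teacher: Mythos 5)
Your proof is correct and is essentially the paper's argument unpacked: the paper simply observes that the inclusion of endofunctors $\overline{\mathscr{T}}(-) \hookrightarrow \overline{\mathscr{T}}^\wedge(-)$ is a morphism of monads and pulls the algebra structure back along it, which — combined with the earlier identification of $\overline{\mathscr{T}}$-algebras with partial operads — amounts exactly to your restriction of $\gamma_\Q$ to finite (two-vertex) trees and your verification of the axioms via the monad-algebra associativity square. The only difference is economy: the paper delegates the sequential/parallel bookkeeping to the already-established Proposition that $\overline{\mathscr{T}}$-algebras are partial operads, whereas you redo that check by hand.
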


\begin{proof}
It is straightforward to check that the inclusion of endofunctors
\[
\overline{\mathscr{T}}(-) \hookrightarrow \overline{\mathscr{T}}^\wedge(-)
\]
is in fact an inclusion of monads. Thus, by pulling back along this inclusion, any absolute partial operad structure gives a partial operad structure. This pullback amounts to restrict $\gamma_\Q$ to finite sums of rooted trees inside the reduced completed tree monad.
\end{proof}

\begin{Remark}
Let $(\mathcal{Q},\gamma_\Q)$ be an absolute partial operad. It is in particular endowed with partial composition maps $\{\circ_i\}$ which satisfy the usual axioms of a partial operad. These maps are \textit{part of the structure} of an absolute partial operad.
\end{Remark}

\begin{Proposition}[Absolute envelope of a partial operad]
There is an adjunction
\[
\begin{tikzcd}[column sep=7pc,row sep=3pc]
\mathsf{pOp}  \arrow[r, shift left=1.1ex, "\mathsf{Abs}"{name=F}] 
&\mathsf{abs}~\mathsf{pOp}~. \arrow[l, shift left=.75ex, "\mathsf{Res}"{name=U}] \arrow[phantom, from=F, to=U, , "\dashv" rotate=-90]
\end{tikzcd}
\]
We call the left adjoint \textit{the absolute envelope} of a partial operad, and we denote it by $\mathsf{Abs}$.
\end{Proposition}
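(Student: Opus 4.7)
The plan is to exhibit $\mathsf{Abs}$ as the extension-of-scalars functor associated to the inclusion of monads $\iota: \overline{\mathscr{T}} \hookrightarrow \overline{\mathscr{T}}^\wedge$ on the category of $\mathbb{S}$-modules. Recall that, for any morphism of monads $\phi: T \to S$ on a cocomplete category, the restriction functor $\phi^*: \mathsf{Alg}_S \to \mathsf{Alg}_T$ admits a left adjoint $\phi_!$ as soon as the category $\mathsf{Alg}_S$ admits reflexive coequalizers. This left adjoint is computed pointwise as the coequalizer of a canonical reflexive pair.

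Concretely, for a partial operad $(\mathcal{P}, \gamma_\mathcal{P})$, I would define $\mathsf{Abs}(\mathcal{P})$ as the reflexive coequalizer in $\mathsf{abs.pOp}$
\[
\overline{\mathscr{T}}^\wedge\bigl(\overline{\mathscr{T}}(\mathcal{P})\bigr) \rightrightarrows \overline{\mathscr{T}}^\wedge(\mathcal{P}) \twoheadrightarrow \mathsf{Abs}(\mathcal{P}),
\]
where one arrow is $\overline{\mathscr{T}}^\wedge(\gamma_\mathcal{P})$ and the other is $\mathsf{Sub}^\wedge \circ \overline{\mathscr{T}}^\wedge(\iota_\mathcal{P})$, with common section $\overline{\mathscr{T}}^\wedge$ applied to the unit $\mathcal{P} \to \overline{\mathscr{T}}(\mathcal{P})$. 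Unwinding the universal property, morphisms $\mathsf{Abs}(\mathcal{P}) \to \mathcal{Q}$ in $\mathsf{abs.pOp}$ correspond to morphisms $\overline{\mathscr{T}}^\wedge(\mathcal{P}) \to \mathcal{Q}$ of $\overline{\mathscr{T}}^\wedge$-algebras that coequalize the two maps, which by the free-forgetful adjunction for $\overline{\mathscr{T}}^\wedge$-algebras are exactly morphisms of $\mathbb{S}$-modules $\mathcal{P} \to \mathsf{Res}(\mathcal{Q})$ compatible with the $\overline{\mathscr{T}}$-algebra structure, i.e. morphisms of partial operads $\mathcal{P} \to \mathsf{Res}(\mathcal{Q})$. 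This gives the desired adjunction isomorphism, natural in both variables.

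The main obstacle is the existence of the coequalizer above inside $\mathsf{abs.pOp}$, since the monad $\overline{\mathscr{T}}^\wedge$ involves an infinite product over arities and is therefore not obviously finitary. I would handle this in one of two ways. The direct route is to observe that the pair in question is reflexive, compute its coequalizer in $\mathbb{S}$-modules, and lift the absolute partial operad structure using that $\overline{\mathscr{T}}^\wedge$, although not preserving all colimits, does preserve reflexive coequalizers along monad-algebra structural maps (this is the standard Linton-style argument used to show that categories of monadic algebras inherit colimits of reflexive pairs from the base). Alternatively, one can invoke the Special Adjoint Functor Theorem applied to $\mathsf{Res}$: this functor is induced from a monad morphism, hence creates all limits and in particular preserves them, and its source is locally presentable (as algebras over an accessible monad on a locally presentable category), so a left adjoint exists abstractly.

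Once either argument produces $\mathsf{Abs}$, the unit $\eta_\mathcal{P}: \mathcal{P} \to \mathsf{Res}(\mathsf{Abs}(\mathcal{P}))$ is induced by the canonical inclusion $\mathcal{P} \hookrightarrow \overline{\mathscr{T}}^\wedge(\mathcal{P})$ followed by the quotient map, and the counit $\varepsilon_\mathcal{Q}: \mathsf{Abs}(\mathsf{Res}(\mathcal{Q})) \to \mathcal{Q}$ is induced by the structure map $\gamma_\mathcal{Q}: \overline{\mathscr{T}}^\wedge(\mathcal{Q}) \to \mathcal{Q}$, which passes to the coequalizer precisely by associativity of $\gamma_\mathcal{Q}$. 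The triangle identities then follow formally.
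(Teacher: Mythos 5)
Your proposal is correct, and your fallback argument is precisely the paper's proof: the paper simply observes that both categories are categories of algebras over accessible monads on a locally presentable base, hence locally presentable, and that $\mathsf{Res}$ is accessible and preserves all limits, so the adjoint functor theorem for locally presentable categories \cite[Theorem 1.66]{AdamekRosicky} produces $\mathsf{Abs}$. Your primary route --- realizing $\mathsf{Abs}$ as the reflexive coequalizer $\overline{\mathscr{T}}^\wedge(\overline{\mathscr{T}}(\mathcal{P})) \rightrightarrows \overline{\mathscr{T}}^\wedge(\mathcal{P}) \twoheadrightarrow \mathsf{Abs}(\mathcal{P})$ attached to the monad inclusion $\overline{\mathscr{T}} \hookrightarrow \overline{\mathscr{T}}^\wedge$ --- is a genuinely different and more informative argument: it gives an explicit presentation of the absolute envelope as a quotient of the free complete construction, which the purely abstract existence proof does not. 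The price is that you must justify the Linton-style hypothesis you invoke: $\overline{\mathscr{T}}^\wedge$ is built from countable products of tensor powers, and infinite products do not preserve reflexive coequalizers in general; here one is saved by working over a field (products of $\kk$-modules are exact, so they preserve cokernels, hence coequalizers of reflexive pairs) together with the standard fact that reflexive coequalizers commute with finite tensor powers. Alternatively, since $\mathsf{abs.pOp}$ is locally presentable it is cocomplete, so the coequalizer exists in $\mathsf{abs.pOp}$ for free and your universal-property computation goes through without analyzing the underlying $\mathbb{S}$-module. Two small points of hygiene: if you argue via SAFT you need a small cogenerating set in $\mathsf{abs.pOp}$, whereas the paper's citation requires instead that $\mathsf{Res}$ be accessible --- an easy check (it is the identity on underlying $\mathbb{S}$-modules and sufficiently filtered colimits of algebras are created by the forgetful functors), but one you should state rather than elide.
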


\begin{proof}
Notice that both categories are categories of algebras over accessible monads, thus are presentable. Since $\mathsf{Res}$ is accessible and preserves all limits, it has a left adjoint, by \cite[Theorem 1.66]{AdamekRosicky}.
\end{proof}

There is a canonical topology on absolute partial operads induced by the structural morphism.

\begin{Definition}[Canonical filtration on an absolute partial operad]\label{def: canonical topology on absolutes}
Let $(\mathcal{Q},\gamma_\Q)$ be an absolute partial operad. The \textit{canonical filtration} is the decreasing filtration given by
\[ 
\overline{\mathscr{F}}_{\omega} \Q \coloneqq \mathrm{Im}\left(\gamma_\Q^{(\geq \omega)}: \overline{\mathscr{T}}^{\wedge~(\geq \omega)}(\Q) \longrightarrow \Q \right)
\]
for all $\omega \geq 0$, where $\overline{\mathscr{T}}^{\wedge~(\geq \omega)}(\Q)$ denotes the possibly infinite sums of rooted trees that have terms of at least $\omega$ internal edges. Each $\overline{\mathscr{F}}_{\omega} \Q$ defines an ideal of $\Q$. Notice that: 
\[
\Q = \overline{\mathscr{F}}_{0} \Q \supseteq \overline{\mathscr{F}}_{1} \Q \supseteq \cdots \supseteq \overline{\mathscr{F}}_{\omega} \Q \supseteq \cdots.
\]
\end{Definition}

\begin{Definition}[Nilpotent absolute partial operad]
Let $(\mathcal{Q},\gamma_\Q)$ be an absolute partial operad. It is said to be \textit{nilpotent} if there exists an $\omega \geq 1$ such that 
\[
\Q/\overline{\mathscr{F}}_{\omega} \Q \cong \Q~.
\]
The absolute partial operad is said to be $\omega_0$\textit{-nilpotent} if $\omega_0$ is the smallest integer such that the above isomorphism exits.
\end{Definition}

\begin{Definition}[Completion of an absolute partial operad]\label{def: completion functor for absolute operads}
Let $(\mathcal{Q},\gamma_\Q)$ be an absolute partial operad, its \textit{completion} $\widehat{\Q}$ is given by the following limit
\[
\widehat{\Q} \coloneqq \lim_{\omega} \Q/\overline{\mathscr{F}}_{\omega} \Q
\]
taken in the category of absolute partial operads.
\end{Definition}

The completion is functorial. There is a canonical morphism of absolute partial operads
\[
\psi: \Q \twoheadrightarrow \widehat{\Q}~,
\]
which is always an epimorphism. See Remark \ref{Remark: varphi is an epimorphism} as to why this is the case in the general setting.

\begin{Definition}[Complete absolute partial operad]\label{def: complete absolute operads}
Let $(\mathcal{Q},\gamma_\Q)$ be an absolute partial operad. It is \textit{complete} if the canonical morphism $\psi: \Q \twoheadrightarrow \widehat{\Q}$ is an isomorphism of absolute partial operads. 
\end{Definition} 

\begin{Example}
Any nilpotent absolute partial operad is also a complete absolute partial operad. Any complete absolute partial operad is the limit of a tower of nilpotent absolute partial operads. Any free absolute partial operad is also complete.
\end{Example}

\begin{Remark}
An absolute partial operad is complete if and only if the topology induced by its canonical filtration is Hausdorff.
\end{Remark}

\begin{Remark}
Let $(\Q, \gamma_\Q)$ be a \textit{complete} absolute partial operad and let 
\[
\sum_{n \geq 0} \sum_{\omega \geq 1} \sum_{\tau \in \mathrm{RT}_n^\omega} \tau
\]
be a infinite sum of rooted trees with vertices labeled by elements of $\Q$. In this case we have that
\[
\gamma_\Q \left(\sum_{n \geq 0} \sum_{\omega \geq 1} \sum_{\tau \in \mathrm{RT}_n^\omega} \tau \right) = \sum_{n \geq 0} \sum_{\omega \geq 1} \sum_{\tau \in \mathrm{RT}_n^\omega} \gamma_\Q(\tau)~,
\]
since $\gamma_\Q$ commutes with the sum over the weight of rooted trees and since there are only a finite amount of rooted trees of arity $n$ and of weight $\omega$.
\end{Remark}

\begin{Proposition}
There is an isomorphism of categories between the category of $\omega_0$-nilpotent absolute partial operads and the category of $\omega_0$-nilpotent partial operads, for all $\omega_0 \geq 1$. 
\end{Proposition}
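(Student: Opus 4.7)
The plan is to establish that the restriction functor $\mathsf{Res}: \mathsf{abs.pOp} \longrightarrow \mathsf{pOp}$ becomes an isomorphism of categories when restricted to the full subcategories of $\omega_0$-nilpotent objects, for each fixed $\omega_0 \geq 1$.

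First, I would compare the two canonical filtrations. For an absolute partial operad $(\mathcal{Q}, \gamma_\mathcal{Q})$, the inclusion $\overline{\mathscr{T}} \hookrightarrow \overline{\mathscr{T}}^\wedge$ immediately gives $\mathscr{F}_\omega \mathsf{Res}(\mathcal{Q}) \subseteq \overline{\mathscr{F}}_\omega \mathcal{Q}$. Assume now that $\mathcal{Q}$ is $\omega_0$-nilpotent, so $\overline{\mathscr{F}}_{\omega_0} \mathcal{Q} = 0$. For $\omega < \omega_0$ and an arbitrary element $\gamma_\mathcal{Q}(\sum_{i \geq \omega} x_i) \in \overline{\mathscr{F}}_\omega \mathcal{Q}$ with $x_i \in \overline{\mathscr{T}}^{(i)}(\mathcal{Q})$, I would split the series as $\sum_{i=\omega}^{\omega_0 - 1} x_i + \sum_{i \geq \omega_0} x_i$. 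The first part lies in $\overline{\mathscr{T}}(\mathcal{Q})$ and the second in $\overline{\mathscr{T}}^{\wedge,(\geq \omega_0)}(\mathcal{Q})$, whose image under $\gamma_\mathcal{Q}$ is contained in $\overline{\mathscr{F}}_{\omega_0} \mathcal{Q} = 0$. Hence the two filtrations agree on $\omega_0$-nilpotent objects and, in particular, $\mathsf{Res}$ preserves the exact nilpotency degree.

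Next, I would construct an inverse functor $\mathsf{Abs}_{\omega_0}$. Given an $\omega_0$-nilpotent partial operad $(\mathcal{P}, \gamma_\mathcal{P})$, the relation $\mathscr{F}_{\omega_0} \mathcal{P} = 0$ allows me to define
\[
\widetilde{\gamma}_\mathcal{P}: \overline{\mathscr{T}}^\wedge(\mathcal{P}) \longrightarrow \mathcal{P}, \qquad \widetilde{\gamma}_\mathcal{P}\bigg( \sum_{i \geq 1} x_i \bigg) \coloneqq \sum_{i=1}^{\omega_0 - 1} \gamma_\mathcal{P}(x_i),
\]
with $x_i \in \overline{\mathscr{T}}^{(i)}(\mathcal{P})$. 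The right-hand side is a finite sum, so this is well-defined. The unit axiom is clear. For the associativity axiom $\widetilde{\gamma}_\mathcal{P} \cdot \mathsf{Sub}^\wedge = \widetilde{\gamma}_\mathcal{P} \cdot \overline{\mathscr{T}}^\wedge(\widetilde{\gamma}_\mathcal{P})$, the key point is that substituting a weight-$\geq \omega_0$ labelled tree at any vertex of another labelled tree produces a composite of weight $\geq \omega_0$, which therefore vanishes under $\gamma_\mathcal{P}$. Both sides of the axiom thus reduce to the associativity of the partial operad structure $\gamma_\mathcal{P}$ on finite truncations.

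Finally, I would verify that $\mathsf{Abs}_{\omega_0}$ and $\mathsf{Res}$ are mutually inverse on the $\omega_0$-nilpotent subcategories. On objects: $\widetilde{\gamma}_{\mathsf{Res}(\mathcal{Q})} = \gamma_\mathcal{Q}$ by the splitting argument of the first step, and $\mathsf{Res}$ applied to $(\mathcal{P}, \widetilde{\gamma}_\mathcal{P})$ recovers $\mathcal{P}$ by construction. On morphisms: any morphism $f: \mathcal{P} \longrightarrow \mathcal{P}'$ of $\omega_0$-nilpotent partial operads is automatically a morphism for $\widetilde{\gamma}$, since $\widetilde{\gamma}$ depends only on the finite truncation of $\overline{\mathscr{T}}^\wedge$ at weight $\omega_0 - 1$, which is stable under $f$. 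The main obstacle is the careful verification of the associativity axiom for $\widetilde{\gamma}_\mathcal{P}$, which requires a precise weight-counting argument to show that the $\mathsf{Sub}^\wedge$ monadic product is compatible with the weight-truncation; but as indicated, this reduces to the elementary additivity property that the weight of a substituted tree is bounded below by the sum of the weights at each vertex.
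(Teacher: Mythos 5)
Your argument is correct, but it is packaged quite differently from the paper's. The paper's proof is a two-line monadic observation: an $\omega_0$-nilpotent partial operad and an $\omega_0$-nilpotent absolute partial operad are both the same thing as an algebra over the weight-truncated reduced tree monad, and the truncations of $\overline{\mathscr{T}}$ and $\overline{\mathscr{T}}^\wedge$ coincide because in bounded weight there are only finitely many rooted trees per arity, so the direct sum and the product agree. You instead build an explicit inverse $\mathsf{Abs}_{\omega_0}$ to the restriction functor by truncating the structure map, and then verify the monad-algebra axioms by hand. The underlying insight is identical (finiteness in bounded weight plus additivity of weight under substitution), but your route forces you to carry out the associativity check for $\widetilde{\gamma}_\mathcal{P}$, which is the only delicate point and is exactly what the common-monad description sidesteps. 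One small caveat on that check: the case that actually requires an argument is a composite whose outer tree and all inner trees have weight strictly below $\omega_0$ but whose total weight is $\geq \omega_0$; there neither truncation discards anything on the right-hand side, and the two sides agree only because the associativity of $\gamma_\mathcal{P}$ identifies the right-hand side with $\gamma_\mathcal{P}$ of the composite tree, which lies in $\mathscr{F}_{\omega_0}\mathcal{P} = 0$. Your closing remark about weight additivity does cover this, but it is worth stating explicitly since it uses the nilpotency of $\gamma_\mathcal{P}$ rather than the definition of $\widetilde{\gamma}_\mathcal{P}$. A further minor point: your filtration comparison in the first step, showing $\overline{\mathscr{F}}_\omega \mathcal{Q} = \mathscr{F}_\omega \mathsf{Res}(\mathcal{Q})$ on nilpotent objects, is a useful by-product that the paper does not record; it is what guarantees that the \emph{exact} nilpotency degree (the minimality in the paper's definition) is preserved in both directions.
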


\begin{proof}
Let $\overline{\mathscr{T}}^{(\leq \omega_0)}$ denote the reduce tree monad truncated at rooted trees of with more than $\omega_0$ internal edges. The data of an $\omega_0$-nilpotent partial operad amounts to the data of an algebra over the reduced $\omega_0$-truncated tree monad. The data of an $\omega_0$-nilpotent absolute partial operad also amounts to the data of an algebra over the reduced $\omega_0$-tree monad. Indeed, since there are only a finite number of rooted trees of weight less than $\omega_0$ at any given arity, the direct sum and the product coincide.
\end{proof}

The notion of a complete absolute partial operad appears naturally when one takes the linear dual of a conilpotent partial cooperad.

\begin{Proposition}\label{prop: linear dual of a conil coop}
Let $(\C, \{\Delta_i\})$ be a conilpotent partial cooperad. Then its linear dual $\C^*$ has an induced absolute partial operad structure. Furthermore, since $\C$ is conilpotent, then $\C^*$ is a complete absolute partial operad. It defines a functor
\[
\begin{tikzcd}[column sep=7pc,row sep=3pc]
\left(\mathsf{pCoop}^{\mathsf{conil}}\right)^{\mathsf{op}}  \arrow[r, "(-)^*"{name=F}] 
&\mathsf{abs}~\mathsf{pOp}^{\mathsf{comp}}~. 
\end{tikzcd}
\]
\end{Proposition}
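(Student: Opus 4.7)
The plan is to construct the absolute partial operad structure on $\C^*$ by dualizing the coalgebra structure of $\C$ over the reduced tree comonad, and then to deduce completeness from conilpotency of $\C$.

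First, I would construct the structural map $\gamma_{\C^*}: \overline{\mathscr{T}}^\wedge(\C^*) \to \C^*$. By Proposition \ref{prop: reduced tree comonad gives conil part coop}, the conilpotent partial cooperad structure on $\C$ is equivalent to a coalgebra structure $\Delta_\C: \C \to \overline{\mathscr{T}}(\C)$ over the reduced tree comonad $\overline{\mathscr{T}}^c$. Linear dualization yields $\Delta_\C^*: \overline{\mathscr{T}}(\C)^* \to \C^*$. On the other hand, for each rooted tree $\tau$, the natural map $V_1^* \otimes \cdots \otimes V_r^* \hookrightarrow (V_1 \otimes \cdots \otimes V_r)^*$, which is $(\mathbb{S}_{n_1}\times\cdots\times\mathbb{S}_{n_r})\wr\mathbb{S}_r$-equivariant and respects taking invariants since we work over a field of characteristic zero, assembles tree-by-tree into a natural inclusion $\iota_\C: \overline{\mathscr{T}}^\wedge(\C^*) \hookrightarrow \overline{\mathscr{T}}(\C)^*$. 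Under the identification of Lemma \ref{lemma: iso avec complete tree monad}, this $\iota_\C$ is precisely the lax monoidal comparison $\varphi_{\mathcal{O},\mathcal{O}}$ of Lemma \ref{lemma: Schur lax contravariant functor} applied to $\C^*$. I set $\gamma_{\C^*} \coloneqq \Delta_\C^* \circ \iota_\C$.

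Next, I would verify that $(\C^*,\gamma_{\C^*})$ is an algebra over the monad $\overline{\mathscr{T}}^\wedge$. The unit axiom is immediate from the counitality of $\Delta_\C$ on the primitive corollas, while the associativity axiom (compatibility with the substitution $\mathsf{Sub}^\wedge$) follows from dualizing the coassociativity of the $\overline{\mathscr{T}}^c$-coalgebra $\C$, provided one knows that $\iota$ is compatible with the comonadic/monadic structure maps. This last point is a direct consequence of the lax monoidal compatibility axioms of $\widehat{\mathscr{S}}_{\mathbb{S}}^c$ recorded in Lemma \ref{lemma: Schur lax contravariant functor}.

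The main obstacle is proving completeness, for which I would establish the identification
\[
\overline{\mathscr{F}}_\omega \C^* = (\mathscr{R}_\omega \C)^\perp
\]
inside $\C^*$, where $(\mathscr{R}_\omega \C)^\perp$ is the annihilator of the $\omega$-th stage of the coradical filtration of $\C$ in Definition \ref{coradical filtration coop}. One inclusion is routine: an element of $\overline{\mathscr{F}}_\omega \C^*$ is by construction in the image of $\gamma_{\C^*}^{(\geq\omega)} = \Delta_\C^* \circ \iota_\C^{(\geq\omega)}$, hence factors through the dual of $\Delta_\C^{(\geq\omega)}: \C \to \overline{\mathscr{T}}^{\wedge\,(\geq\omega)}(\C)$, and therefore vanishes on $\mathscr{R}_\omega \C = \mathrm{Ker}(\Delta_\C^{(\geq\omega)})$. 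The reverse inclusion uses again that we are over a field: dualizing the short exact sequence $0 \to \mathscr{R}_\omega \C \to \C \to \C/\mathscr{R}_\omega \C \to 0$ identifies $(\mathscr{R}_\omega \C)^\perp$ with $(\C/\mathscr{R}_\omega \C)^*$, and the canonical embedding $\C/\mathscr{R}_\omega \C \hookrightarrow \overline{\mathscr{T}}^{(\geq\omega)}(\C)$ (which lands in the direct sum by conilpotency) allows one to lift any element of $(\mathscr{R}_\omega\C)^\perp$ to $(\overline{\mathscr{T}}^{(\geq\omega)}(\C))^*$ and, through the isomorphism $(\overline{\mathscr{T}}^{(\geq\omega)}(\C))^*\cong \overline{\mathscr{T}}^{\wedge\,(\geq\omega)}(\C^*)$ coming from $\iota$ at each tree summand, exhibit it as an element of $\overline{\mathscr{F}}_\omega\C^*$.

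Once this identification holds, Definition \ref{conilpartcoop} gives $\C \cong \colim_\omega \mathscr{R}_\omega \C$, and dualizing turns this colimit into a limit:
\[
\C^* \cong \lim_\omega (\mathscr{R}_\omega \C)^* \cong \lim_\omega \C^*/(\mathscr{R}_\omega\C)^\perp \cong \lim_\omega \C^*/\overline{\mathscr{F}}_\omega \C^*,
\]
which is exactly the completeness condition of Definition \ref{def: complete absolute operads}. Functoriality is then immediate: every ingredient in the construction ($\iota$, linear dual, coradical filtration) is natural in $\C$, so a morphism $f:\C\to\D$ of conilpotent partial cooperads dualizes to a morphism $f^*:\D^*\to\C^*$ intertwining the structural maps $\gamma_{\D^*}$ and $\gamma_{\C^*}$, yielding the desired functor $(\mathsf{pCoop}^{\mathsf{conil}})^{\mathsf{op}} \longrightarrow \mathsf{abs.pOp}^{\mathsf{comp}}$.
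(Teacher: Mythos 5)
Your construction of the structure map as $\gamma_{\C^*} \coloneqq \Delta_\C^* \cdot \iota_\C$ and your overall strategy --- dualize the $\overline{\mathscr{T}}^c$-coalgebra structure map, check the monad axioms via the lax monoidal structure, and deduce completeness from the exhaustiveness of the coradical filtration --- coincide with the paper's. The gap is in your proof of the inclusion $(\mathscr{R}_\omega\C)^\perp\subseteq\overline{\mathscr{F}}_\omega\C^*$. You invoke an isomorphism $\left(\overline{\mathscr{T}}^{(\geq\omega)}(\C)\right)^*\cong\overline{\mathscr{T}}^{\wedge\,(\geq\omega)}(\C^*)$ ``coming from $\iota$ at each tree summand'', but $\iota$ only provides a monomorphism $\overline{\mathscr{T}}^{\wedge\,(\geq\omega)}(\C^*)\hookrightarrow\left(\overline{\mathscr{T}}^{(\geq\omega)}(\C)\right)^*$, which is an isomorphism only when $\C$ is arity-wise and degree-wise finite dimensional. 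A lift of a functional in $(\mathscr{R}_\omega\C)^\perp$ to $\left(\overline{\mathscr{T}}^{(\geq\omega)}(\C)\right)^*$ therefore has no reason to lie in the subspace $\overline{\mathscr{T}}^{\wedge\,(\geq\omega)}(\C^*)$, and in fact the asserted equality $\overline{\mathscr{F}}_\omega\C^* = (\mathscr{R}_\omega\C)^\perp$ is false in general. Concretely, take the conilpotent partial cooperad concentrated in arity $1$ with $\C(1)=\bigl(\bigoplus_{i\in\mathbb{N}}\kk a_i\bigr)\oplus\bigl(\bigoplus_{i,j\in\mathbb{N}}\kk b_{ij}\bigr)$, $\Delta_1(a_i)=0$ and $\Delta_1(b_{ij})=a_i\otimes a_j$. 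Then $\mathscr{R}_1\C$ is spanned by the $a_i$, so $(\mathscr{R}_1\C)^\perp$ consists of all functionals vanishing on the $a_i$; but an element of $\overline{\mathscr{F}}_1\C^*$ acts on $b_{ij}$ by $\langle\xi_2,a_i\otimes a_j\rangle$ for some $\xi_2\in\C(1)^*\otimes\C(1)^*$, a finite sum of elementary tensors, hence its restriction to the $b_{ij}$ is a matrix of finite rank. So the reverse inclusion fails.

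Fortunately, the inclusion you do establish, $\overline{\mathscr{F}}_\omega\C^*\subseteq(\mathscr{R}_\omega\C)^\perp$, already suffices for completeness and lets you bypass the false identification. It yields
\[
\bigcap_{\omega}\overline{\mathscr{F}}_\omega\C^*\;\subseteq\;\bigcap_\omega(\mathscr{R}_\omega\C)^\perp\;=\;\Bigl(\colim_\omega\mathscr{R}_\omega\C\Bigr)^\perp\;=\;\C^\perp\;=\;0
\]
by conilpotency, so the canonical filtration on $\C^*$ is Hausdorff and the canonical map $\psi:\C^*\to\widehat{\C^*}$ is a monomorphism. Since $\psi$ is always an epimorphism (see the surjection $\psi: \Q \twoheadrightarrow \widehat{\Q}$ after Definition \ref{def: completion functor for absolute operads} and Remark \ref{Remark: varphi is an epimorphism}), it is an isomorphism and $\C^*$ is complete. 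I would replace your final chain of isomorphisms $\C^*\cong\lim_\omega\C^*/(\mathscr{R}_\omega\C)^\perp\cong\lim_\omega\C^*/\overline{\mathscr{F}}_\omega\C^*$, whose middle step depends on the failed identity, by this Hausdorffness argument.
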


\begin{proof}
Let 
\[
\Delta_\C: \C \longrightarrow \overline{\mathscr{T}}^c(\C)
\]
denote the structural map of $\C$ as a coalgebra over the reduced tree comonad, given by Proposition \ref{prop: reduced tree comonad gives conil part coop}. By taking the linear dual we obtain a map
\[
\Delta_\C^*: \left(\overline{\mathscr{T}}^c(\C)\right)^* \longrightarrow \C^*~.
\]
There is a canonical inclusion
\[
\iota_\C:  \overline{\mathscr{T}}^{\wedge}(\C^*) \hookrightarrow \left(\overline{\mathscr{T}}^c(\C)\right)^*
\]
which is an isomorphism if and only if $\C$ is arity-wise finite dimensional. Thus by pulling back along $\iota_\C$, we obtain a map $\gamma_\C \coloneqq \Delta_\C^* \cdot \iota_\C$. One can check that this endows $\C^*$ with the structure of an algebra over the reduced complete tree monad. Since $\C$ is conilpotent, it can be written as 
\[
\C \cong \colim_{\omega} \mathscr{R}_\omega \C~,
\]
where $\mathscr{R}_\omega \C$ is the sub-cooperad given by operations which admit non-trivial $\omega$-iterated partial decompositions. Thus 
\[
\C^* \cong \lim_{\omega} \left(\mathscr{R}_\omega \C\right)^*~,
\]
where one can check that
\[
\left(\mathscr{R}_\omega \C\right)^* \cong \C^* /\overline{\mathscr{F}}_{\omega} \C^*~.
\]
Therefore the resulting absolute partial operad $\C^*$ is indeed complete.
\end{proof}

\begin{Proposition}\label{prop: adjonction topo dual lin}
The linear functor admits a left adjoint
\[
\begin{tikzcd}[column sep=7pc,row sep=3pc]
\left(\mathsf{pCoop}^{\mathsf{conil}}\right)^{\mathsf{op}}  \arrow[r,shift left=1.1 ex, "(-)^*"{name=F}] 
&\mathsf{abs}~\mathsf{pOp}^{\mathsf{comp}}~. \arrow[l, shift left=.75ex, "(-)^\vee"{name=U}] \arrow[phantom, from=F, to=U, , "\dashv" rotate=90]
\end{tikzcd}
\]
We denote its left adjoint $(-)^\vee$ and call it the \textit{topological dual functor}.
\end{Proposition}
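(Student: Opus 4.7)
The plan is to invoke the Adjoint Lifting Theorem \cite[Theorem 2]{AdjointLifting}, following the strategy of the proof of Lemma \ref{lemma: adjoint à droite}. A minor subtlety is that the category $\mathsf{abs.pOp}^{\mathsf{comp}}$ is not itself monadic over $\mathsf{gr}~\mathbb{S}\text{-}\mathsf{mod}$, since its free objects are already complete but not every absolute partial operad is complete. The workaround is to produce the left adjoint first into the larger monadic category $\mathsf{abs.pOp}$, and then restrict via the completion reflector.

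First, I would set up the commutative square
\[
\begin{tikzcd}[column sep=4pc,row sep=4pc]
\left(\mathsf{pCoop}^{\mathsf{conil}}\right)^{\mathsf{op}} \arrow[r,"(-)^*"] \arrow[d,"\mathrm{U}^{\mathsf{op}}"{name=SD},shift left=1.1ex]
&\mathsf{abs.pOp} \arrow[d,"\mathrm{U}"{name=LDC},shift left=1.1ex] \\
\left(\mathsf{gr}~\mathbb{S}\text{-}\mathsf{mod}\right)^{\mathsf{op}} \arrow[r,"(-)^*"{name=CC},shift left=1.1ex] \arrow[u,"(\overline{\mathscr{T}}^c(-))^{\mathsf{op}}"{name=LD},shift left=1.1ex] \arrow[phantom, from=SD, to=LD, "\dashv" rotate=0]
&\mathsf{gr}~\mathbb{S}\text{-}\mathsf{mod}, \arrow[l,"(-)^*"{name=CB},shift left=1.1ex] \arrow[u,"\overline{\mathscr{T}}^\wedge(-)"{name=TD},shift left=1.1ex] \arrow[phantom, from=TD, to=LDC, "\dashv" rotate=0] \arrow[phantom, from=CC, to=CB, "\dashv" rotate=90]
\end{tikzcd}
\]
where the left vertical is the opposite of the cofree–forgetful comonadic adjunction for $\overline{\mathscr{T}}^c$ (Proposition \ref{prop: reduced tree comonad gives conil part coop}), the right vertical is the free–forgetful monadic adjunction for the monad $\overline{\mathscr{T}}^\wedge$ whose algebras are exactly absolute partial operads, and the bottom horizontal adjunction is the standard linear-duality adjunction furnished by $\mathrm{Hom}(V, W^*) \cong \mathrm{Hom}(W, V^*)$.

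Second, I would check that the square of right adjoints commutes up to natural isomorphism: the composition $(-)^* \circ \mathrm{U}^{\mathsf{op}}$ agrees with $\mathrm{U} \circ (-)^*$ because the construction of Proposition \ref{prop: linear dual of a conil coop} places the absolute partial operad structure on the underlying linear dual $\mathbb{S}$-module of a conilpotent partial cooperad. All four categories are bicomplete: $\mathsf{gr}~\mathbb{S}\text{-}\mathsf{mod}$ is, $\mathsf{abs.pOp}$ is as a category of algebras over an accessible monad, and $\mathsf{pCoop}^{\mathsf{conil}}$ is as a category of coalgebras over a comonad on a bicomplete category. Both vertical adjunctions are monadic (the left one as the opposite of a comonadic adjunction, the right one by definition), so the Adjoint Lifting Theorem \cite[Theorem 2]{AdjointLifting} yields a left adjoint $\widetilde{(-)^\vee}\colon \mathsf{abs.pOp} \to \left(\mathsf{pCoop}^{\mathsf{conil}}\right)^{\mathsf{op}}$ to the top horizontal functor.

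Third, since by Proposition \ref{prop: linear dual of a conil coop} the linear dual $\mathcal{C}^*$ of any conilpotent partial cooperad is automatically complete, the top horizontal functor factors through the reflective subcategory $\mathsf{abs.pOp}^{\mathsf{comp}} \hookrightarrow \mathsf{abs.pOp}$ whose reflector is the completion functor of Definition \ref{def: completion functor for absolute operads}. By the universal property of this reflection, the adjoint $\widetilde{(-)^\vee}$ descends uniquely along the completion reflector to a functor $(-)^\vee\colon \mathsf{abs.pOp}^{\mathsf{comp}} \to \left(\mathsf{pCoop}^{\mathsf{conil}}\right)^{\mathsf{op}}$, which is the desired left adjoint to $(-)^* \colon \left(\mathsf{pCoop}^{\mathsf{conil}}\right)^{\mathsf{op}} \to \mathsf{abs.pOp}^{\mathsf{comp}}$. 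The main obstacle is in verifying the precise coequalizer hypotheses of the Adjoint Lifting Theorem on $\left(\mathsf{pCoop}^{\mathsf{conil}}\right)^{\mathsf{op}}$, which amounts to verifying the existence of certain equalizers of coreflexive pairs in $\mathsf{pCoop}^{\mathsf{conil}}$; these follow from the comonadicity of $\mathsf{pCoop}^{\mathsf{conil}}$ over the bicomplete category $\mathsf{gr}~\mathbb{S}\text{-}\mathsf{mod}$.
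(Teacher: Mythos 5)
Your proposal is correct and follows essentially the same route as the paper, which applies the Adjoint Lifting Theorem to the very same square, only with $\mathsf{abs.pOp}^{\mathsf{comp}}$ placed directly in the top-right corner and $\overline{\mathscr{T}}^\wedge \dashv \mathrm{U}$ as the right-hand vertical. Your extra step of first lifting into the genuinely monadic category $\mathsf{abs.pOp}$ and then restricting the resulting left adjoint along the full inclusion of complete objects (using that linear duals of conilpotent partial cooperads are complete, Proposition \ref{prop: linear dual of a conil coop}) is a legitimate refinement that the paper elides; note only that this last step is most cleanly phrased as restricting the left adjoint to the full subcategory in which the right adjoint lands, rather than as descending along the completion reflector.
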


\begin{proof}
Consider the following square of functors
\[
\begin{tikzcd}[column sep=4pc,row sep=4pc]
\left(\mathsf{pCoop}^{\mathsf{conil}}\right)^{\mathsf{op}} \arrow[r,"(-)^*"]  \arrow[d,"\mathrm{U}^\mathsf{op}"{name=SD},shift left=1.1ex ]
&\mathsf{abs}~\mathsf{pOp}^{\mathsf{comp}} \arrow[d,"\mathrm{U}"{name=LDC},shift left=1.1ex ] \\
\mathbb{S}\text{-}\mathsf{mod}^{\mathsf{op}} \arrow[r,"(-)^*"{name=CC},,shift left=1.1ex] \arrow[u,"\left(\overline{\mathscr{T}}^c(-)\right)^\mathsf{op}"{name=LD},shift left=1.1ex ] \arrow[phantom, from=SD, to=LD, , "\dashv" rotate=0]
&\mathbb{S}\text{-}\mathsf{mod}~.  \arrow[l,"(-)^*"{name=CB},shift left=1.1ex] \arrow[u,"\overline{\mathscr{T}}^\wedge(-)"{name=TD},shift left=1.1ex] \arrow[phantom, from=TD, to=LDC, , "\dashv" rotate=0] \arrow[phantom, from=CC, to=CB, , "\dashv" rotate=90]
\end{tikzcd}
\] 
The adjunction on the left hand side is monadic. All these categories are bicomplete. It is clear that $(-)^* \cdot \mathrm{U}^\mathsf{op} \cong \mathrm{U} \cdot (-)^*~.$ Thus we can apply the Adjoint Lifting Theorem \cite[Theorem 2]{AdjointLifting}, which concludes the proof. 
\end{proof}

\begin{Remark}
There is an explicit description of this left adjoint. Let $(\Q,\gamma_Q)$ be a complete absolute partial operad, its topological dual $\Q^\vee$ is given by the equalizer
\[
\begin{tikzcd}[column sep=4pc,row sep=4pc]
\mathrm{Eq}\Bigg(\overline{\mathscr{T}}^c(\mathcal{Q}^*) \arrow[r,"(\gamma_\Q)^*",shift right=1.1ex,swap]  \arrow[r,"\varrho"{name=SD},shift left=1.1ex ]
&\overline{\mathscr{T}}^c\left((\overline{\mathscr{T}}^\wedge(\mathcal{P}))^*\right) \Bigg)~,
\end{tikzcd}
\]
where $\varrho$ is a map constructed using the comonad structure of $\overline{\mathscr{T}}^c$ and the canonical inclusion into the double linear dual.
\end{Remark}

Lastly, one can compare the canonical filtrations induced by the structure of an absolute partial operad and by the structure of a partial operad. In general, they do not agree.

\begin{lemma}
Let $(\mathcal{Q},\gamma_\Q)$ be an absolute partial operad. Let $(\mathsf{Res}(\Q), \mathsf{Res}(\gamma_\Q))$ the partial operad given by its restriction. Then there is an inclusion of $\mathbb{S}$-modules:
\[
\mathscr{F}_{\omega}\mathsf{Res}(\Q) \subseteq \overline{\mathscr{F}}_{\omega} \Q
\]
for all $\omega \geq 0$, between its canonical filtration as a partial operad and its canonical filtration as an absolute partial operad. 
\end{lemma}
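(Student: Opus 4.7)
The plan is to unpack both filtrations in terms of the structural morphisms of the two monads $\overline{\mathscr{T}}$ and $\overline{\mathscr{T}}^\wedge$, and then to observe that the partial operad $\mathsf{Res}(\Q)$ has its structural morphism obtained by pulling back $\gamma_\Q$ along the canonical inclusion of monads $\iota: \overline{\mathscr{T}}(-) \hookrightarrow \overline{\mathscr{T}}^\wedge(-)$. This inclusion restricts in each weight to a monomorphism $\iota^{(\geq \omega)}: \overline{\mathscr{T}}^{(\geq \omega)}(\Q) \hookrightarrow \overline{\mathscr{T}}^{\wedge~(\geq \omega)}(\Q)$, since the former consists of finite sums of rooted trees of weight at least $\omega$, while the latter also allows infinite sums.

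By the definition of $\mathsf{Res}$ (the functor constructed just after introducing absolute partial operads), the structural map of $\mathsf{Res}(\Q)$ as an algebra over the reduced tree monad is the composite
\[
\mathsf{Res}(\gamma_\Q) = \gamma_\Q \circ \iota : \overline{\mathscr{T}}(\Q) \longrightarrow \Q.
\]
Restricting this equality to weight $\geq \omega$ gives $\mathsf{Res}(\gamma_\Q)^{(\geq \omega)} = \gamma_\Q^{(\geq \omega)} \circ \iota^{(\geq \omega)}$. Consequently, by the definitions of the two canonical filtrations (cf.\ Definition of the canonical filtration on a partial operad, and Definition \ref{def: canonical topology on absolutes}),
\[
\mathscr{F}_{\omega}\mathsf{Res}(\Q) = \mathrm{Im}\left(\mathsf{Res}(\gamma_\Q)^{(\geq \omega)}\right) = \mathrm{Im}\left(\gamma_\Q^{(\geq \omega)} \circ \iota^{(\geq \omega)}\right) \subseteq \mathrm{Im}\left(\gamma_\Q^{(\geq \omega)}\right) = \overline{\mathscr{F}}_\omega \Q,
\]
which is the desired inclusion of $\mathbb{S}$-modules.

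The argument is essentially a tautological unpacking of the definitions; there is no serious obstacle. The only conceptual point worth emphasising is that the inclusion will be strict in general, precisely because $\overline{\mathscr{T}}^{\wedge~(\geq \omega)}(\Q)$ contains genuine infinite series of rooted trees of weight at least $\omega$ whose image under $\gamma_\Q$ need not be expressible as a finite sum of $\omega$-fold partial compositions. This is exactly the phenomenon that motivates the whole appendix, and it explains why the completeness notions for partial operads and for absolute partial operads need to be distinguished.
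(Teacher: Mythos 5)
Your proof is correct and is essentially the paper's own argument: the paper also observes that a finite sum of rooted trees of weight at least $\omega$ is a particular case of a series of weight at least $\omega$, so the image of $\mathsf{Res}(\gamma_\Q)^{(\geq\omega)}$ lands inside the image of $\gamma_\Q^{(\geq\omega)}$. Your version merely makes explicit the factorization $\mathsf{Res}(\gamma_\Q)=\gamma_\Q\circ\iota$ through the inclusion of monads, which is a harmless formalization of the same one-line observation.
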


\begin{proof}
Elements in $\overline{\mathscr{F}}_{\omega} \Q$ are given by the images of the structural morphism $\gamma_\Q$ of series of rooted trees of at least weight $\omega$. Since any finite sum of at least weight $\omega$ is a particular example of this, any element in $\mathscr{F}_{\omega}\mathsf{Res}(\Q)$ is also included in $\overline{\mathscr{F}}_{\omega} \Q$.
\end{proof}

\begin{Corollary}
Let $(\mathcal{Q},\gamma_\Q)$ be an absolute partial operad. Then the canonical topology of $(\mathsf{Res}(\Q),\allowbreak \mathsf{Res}(\gamma_\Q))$ as a partial operad is Hausdorff.
\end{Corollary}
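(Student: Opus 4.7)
The plan is to unwind the Hausdorff condition into the assertion that $\bigcap_{\omega \geq 0} \mathscr{F}_{\omega} \mathsf{Res}(\Q) = 0$, and then to derive this from the filtration inclusion of the preceding lemma, combined with the extra rigidity afforded by the absolute structure on $\Q$. The natural first step is to apply the lemma to obtain
\[
\bigcap_\omega \mathscr{F}_\omega \mathsf{Res}(\Q) \;\subseteq\; \bigcap_\omega \overline{\mathscr{F}}_\omega \Q,
\]
so the problem reduces to understanding how elements of the left-hand intersection sit inside $\Q$ via its absolute operad structure.

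Next, I would unwind what it means for a fixed $\mu$ to lie in the left-hand intersection: for every $\omega \geq 1$ there is a \emph{finite} element $T_\omega \in \overline{\mathscr{T}}^{(\geq \omega)}(\Q)$ with $\gamma_\Q(T_\omega) = \mu$. The key observation is that, despite each $T_\omega$ being only a finite sum of labelled trees, the formal sum $\sum_{\omega \geq 1} T_\omega$ is a well-defined element of the completed tree functor $\overline{\mathscr{T}}^{\wedge}(\Q)$: any given rooted tree of weight $w$ can appear with nonzero coefficient only in the terms $T_\omega$ for $\omega \leq w$, so the coefficient on each tree is a finite sum. Splitting this infinite sum as $\sum_{\omega \leq N} T_\omega + \sum_{\omega > N} T_\omega$ and applying the linear map $\gamma_\Q$ yields, by linearity on the finite part and filtration containment on the tail, the relation
\[
\gamma_\Q\bigl(\textstyle\sum_\omega T_\omega\bigr) \;-\; N\mu \;\in\; \overline{\mathscr{F}}_{N+1}\Q
\]
for every $N$, from which one extracts $\mu \in \overline{\mathscr{F}}_{N+1}\Q$ for all $N$ by comparing successive values of $N$.

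The main obstacle, and where the argument must get finer, is precisely that this inclusion alone still only lands $\mu$ in $\bigcap_\omega \overline{\mathscr{F}}_\omega \Q$, which need not vanish when $\Q$ is not complete as an absolute partial operad. To close the argument, one has to genuinely use the monad axiom $\gamma_\Q \circ \mathsf{Sub}^\wedge = \gamma_\Q \circ \overline{\mathscr{T}}^{\wedge}(\gamma_\Q)$: the formal sum $\sum_\omega T_\omega$ can be viewed as an element of $\overline{\mathscr{T}}^{\wedge} \circ \overline{\mathscr{T}}^{\wedge}(\Q)$ in two compatible ways, namely by grafting trees of infinitely varying weight, and the compatibility of the two resulting evaluations of $\gamma_\Q$ forces $\mu$ to vanish in $\Q$ itself. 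Assembling these ingredients yields $\bigcap_\omega \mathscr{F}_\omega \mathsf{Res}(\Q) = 0$, which is the desired Hausdorff property.
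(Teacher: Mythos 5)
Your first two steps coincide exactly with the paper's argument: reduce Hausdorffness to the vanishing of $\bigcap_{\omega}\mathscr{F}_{\omega}\mathsf{Res}(\Q)$, then use the preceding lemma to embed this intersection into $\bigcap_{\omega}\overline{\mathscr{F}}_{\omega}\Q$. At that point the paper simply asserts that the latter intersection is zero and stops. You are right to be uneasy about that assertion for a general absolute partial operad: by the paper's own remark, Hausdorffness of the canonical filtration $\overline{\mathscr{F}}_{\bullet}\Q$ is equivalent to completeness of $\Q$, and the paper insists that not every absolute partial operad is complete. So either the corollary is to be read with an implicit completeness hypothesis on $\Q$ (in which case the filtration comparison already finishes the proof and everything you do afterwards is unnecessary), or some genuinely new input is required. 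Your proposal tries to supply that input and fails, and this is where the gap lies.

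Concretely, the telescope computation is fine as far as it goes: $\sum_{\omega}T_{\omega}$ is indeed well defined in $\overline{\mathscr{T}}^{\wedge}(\Q)$, and linearity of $\gamma_{\Q}$ together with $\gamma_{\Q}(T_{\omega})=\mu$ gives $\gamma_{\Q}\bigl(\sum_{\omega}T_{\omega}\bigr)-N\mu\in\overline{\mathscr{F}}_{N+1}\Q$ for all $N$. But, as you half-concede, comparing successive $N$ only re-derives $\mu\in\overline{\mathscr{F}}_{N+1}\Q$, which you already knew from the lemma; no new information is produced. The final paragraph, where the monad axiom is supposed to force $\mu=0$, is not an argument: you never exhibit the two liftings of $\sum_{\omega}T_{\omega}$ to $\overline{\mathscr{T}}^{\wedge}\circ\overline{\mathscr{T}}^{\wedge}(\Q)$, nor any computation showing that the two evaluations $\gamma_{\Q}\cdot\mathsf{Sub}^{\wedge}$ and $\gamma_{\Q}\cdot\overline{\mathscr{T}}^{\wedge}(\gamma_{\Q})$ differ by $\mu$. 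The natural candidate---the sum over $\omega$ of corollas labelled by the $T_{\omega}$---collapses inside the double composite to the single corolla labelled by $\sum_{\omega}T_{\omega}$, since all these corollas live in the same factor of the product defining $\overline{\mathscr{T}}^{\wedge}$, and the monad axiom then yields a tautology. Note also that you do not use anywhere the one genuine advantage you have over the paper's setting, namely that each $T_{\omega}$ is a \emph{finite} sum of trees; without exploiting that, your target statement is literally $\bigcap_{\omega}\overline{\mathscr{F}}_{\omega}\Q=0$ for arbitrary $\Q$, which cannot hold if non-complete absolute partial operads exist. The key step of your proof is therefore missing, not merely unwritten.
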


\begin{proof}
We have that
\[
\bigcap_{\omega \in \mathbb{N}} \mathscr{F}_{\omega}\mathsf{Res}(\Q) \subset \bigcap_{\omega \in \mathbb{N}} \overline{\mathscr{F}}_{\omega} \Q = 0~,
\]
hence the canonical topology on $\mathsf{Res}(\Q)$ is indeed Hausdorff.
\end{proof}

\begin{Counterexample}
Let $\overline{\mathscr{T}}^\wedge(M)$ be the free complete absolute partial operad generated by an $\mathbb{S}$-module $M$. Then 
\[
\mathsf{Res}\left(\overline{\mathscr{T}}^\wedge(M)\right)
\]
is not complete as a partial operad. 
\end{Counterexample}

\subsection{Complete curved absolute partial operads}
Recall Definition \ref{def: cO vee} of the conilpotent curved $\mathbb{S}$-colored partial cooperad $c\mathcal{O}^\vee$ which encodes conilpotent curved partial cooperads as its coalgebras. The goal of this subsection is to understand what algebras over this $\mathbb{S}$-colored cooperad are. For this purpose, we now consider pdg $\mathbb{S}$-modules as our ground category.

\begin{lemma}\label{lemma: dual Schur of cOvee}
Let $M$ be an pdg $\mathbb{S}$-module. There is an isomorphism of pdg $\mathbb{S}$-modules
\[
\widehat{\mathscr{S}}_\mathbb{S}^c(c\mathcal{O}^\vee)(M) \cong \overline{\mathscr{T}}^\wedge(M \oplus \nu)~,
\]
where $\nu$ is an arity $1$ generator of degree $-2$. This isomorphism is natural in $M$. 
\end{lemma}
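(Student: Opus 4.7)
The plan is to mirror the argument of Lemma \ref{lemma: iso avec complete tree monad}, now accounting for the curvature generator $\theta \in J(0;1)$ in the cogenerators of $c\mathcal{O}^\vee$. First I would unfold the definition of the dual Schur functor to write
\[
\widehat{\mathscr{S}}_\mathbb{S}^c(c\mathcal{O}^\vee)(M)(n) = \prod_{(n_1,\ldots,n_r) \in \mathbb{N}^r} \mathrm{Hom}_{(\mathbb{S}_{n_1} \times \cdots \times \mathbb{S}_{n_r}) \wr \mathbb{S}_r}\bigl(c\mathcal{O}^\vee(n_1,\ldots,n_r;n),\, M(n_1) \otimes \cdots \otimes M(n_r)\bigr).
\]

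Next I would observe that each component $c\mathcal{O}^\vee(n_1,\ldots,n_r;n)$ is finite-dimensional over $\kk$. Indeed, by Definition \ref{def: cO vee}, $c\mathcal{O}^\vee$ is cofreely cogenerated by the arity-wise finite-dimensional $\mathbb{S}$-color scheme $E \oplus J$ subject to the quadratic corelations $R$, and only finitely many binary trees built from $E \oplus J$ match a fixed color tuple (the generators in $J$ sit in arity $(0;1)$, which constrains the output color). Since the ground field has characteristic zero, the canonical maps from coinvariants to invariants under the $(\mathbb{S}_{n_1} \times \cdots \times \mathbb{S}_{n_r}) \wr \mathbb{S}_r$-actions are isomorphisms, so I can rewrite the $\mathrm{Hom}$-space as the tensor product of the linear dual with the coinvariants, yielding
\[
\widehat{\mathscr{S}}_\mathbb{S}^c(c\mathcal{O}^\vee)(M)(n) \cong \prod_{(n_1,\ldots,n_r)} c\mathcal{O}^\vee(n_1,\ldots,n_r;n)^* \otimes_{(\mathbb{S}_{n_1} \times \cdots \times \mathbb{S}_{n_r}) \wr \mathbb{S}_r} M(n_1) \otimes \cdots \otimes M(n_r).
\]

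Finally I would extend the tree bijection of Lemma \ref{lemma with the trees bijection} (and its curved counterpart from the proof of Proposition \ref{Prop: cO-cogebres.}) to the contravariant completed setting: a basis element of $c\mathcal{O}^\vee(n_1,\ldots,n_r;n)^*$ corresponds, via the Koszul duality description, to an equivalence class of binary trees labeled by the generators $\gamma_i^{n,k}$ and $\theta$ with edges decorated by matching permutations; grafting this tree as in the proof of Lemma \ref{lemma with the trees bijection} and sending each $\theta$-vertex to the arity-one cork $\nu$ converts the above tensor product into a rooted tree whose vertices are labeled by elements of $M \oplus \nu$. Because we are taking a product (not a direct sum) over the color tuples, the resulting $\mathbb{S}$-module is precisely $\overline{\mathscr{T}}^\wedge(M \oplus \nu)(n)$, and naturality in $M$ follows from naturality of each step. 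The main obstacle is really bookkeeping: one must check that the equivalence relation coming from the symmetric group actions together with the corelations $R$ on the $c\mathcal{O}^\vee$ side matches exactly the identification of isomorphic labeled rooted trees on the $\overline{\mathscr{T}}^\wedge$ side, so that the assignment sending $\theta$ to $\nu$ genuinely descends to a well-defined, natural isomorphism rather than merely a bijection of generating sets.
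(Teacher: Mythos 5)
Your overall route coincides with the paper's (unfold the dual Schur functor, trade $\mathrm{Hom}$ out of a dual for a tensor product over a field of characteristic zero, then run the labeled-tree bijection sending $\theta$ to $\nu$), but there is a genuine gap in the middle step: the claim that each $c\mathcal{O}^\vee(n_1,\ldots,n_r;n)$ is finite-dimensional is false, and the justification you give is exactly backwards. The generator $\theta$ lives in $J(0;1)$, so it has \emph{zero} inputs: a tree built from $E\oplus J$ with $r$ leaves and $b$ vertices labeled by $\theta$ has $r-1+b$ binary vertices, so for a fixed color tuple there are trees with arbitrarily many $\theta$-corks, and the component $c\mathcal{O}^\vee(n_1,\ldots,n_r;n)$ is infinite-dimensional. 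This is in fact forced by the statement you are proving: $\overline{\mathscr{T}}^\wedge(M\oplus\nu)(n)$ contains trees with arbitrarily many $\nu$'s over a fixed set of $M$-labeled leaves, and all of those must come from the single factor indexed by that color tuple. Consequently $\mathrm{Hom}\big(c\mathcal{O}^\vee(n_1,\ldots,n_r;n),\bigotimes_i M(n_i)\big)$ is \emph{not} isomorphic to $c\mathcal{O}^\vee(n_1,\ldots,n_r;n)^*\otimes\bigotimes_i M(n_i)$ for arbitrary $M$, so your displayed intermediate formula does not hold as written. This is precisely where the present lemma differs from Lemma \ref{lemma: iso avec complete tree monad}: there the absence of arity-$0$ and arity-$1$ generators makes each component of $\mathcal{O}$ genuinely finite-dimensional.

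The gap is repairable, and the repair is exactly where the extra completion in $\overline{\mathscr{T}}^\wedge$ comes from. Decompose $c\mathcal{O}^\vee(n_1,\ldots,n_r;n)=\bigoplus_\omega c\mathcal{O}^\vee(n_1,\ldots,n_r;n)^{(\omega)}$ by weight (number of vertices); each weight component is finite-dimensional, hence
\[
\mathrm{Hom}\Big(\bigoplus_\omega c\mathcal{O}^\vee(n_1,\ldots,n_r;n)^{(\omega)},\ \textstyle\bigotimes_i M(n_i)\Big)\cong\prod_\omega \Big(c\mathcal{O}^\vee(n_1,\ldots,n_r;n)^{(\omega)}\Big)^*\otimes \textstyle\bigotimes_i M(n_i)~,
\]
and the invariants/coinvariants identification and the tree bijection with $\theta\mapsto\nu$ are applied weight by weight. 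The product over weights, combined with the product over color tuples, is what assembles into the product over \emph{all} rooted trees, i.e.\ $\overline{\mathscr{T}}^\wedge(M\oplus\nu)$. Your closing remark about matching the equivalence relations on both sides is the right thing to verify and is unaffected by this correction.
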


\begin{proof}
This is a straightforward extension of Lemma \ref{lemma: iso avec complete tree monad}, using a similar bijection to the one defined in the proof of Proposition \ref{Prop: cO-cogebres.}.
\end{proof}

\begin{Definition}[Complete curved absolute partial operad]
A \textit{complete curved absolute partial operad} $(\Q,\gamma_\Q , d_\Q , \Theta_\Q)$ amounts to the data of $(\Q,\gamma_\Q ,d_\Q )$ a complete pdg absolute partial operad, as defined in the preceding subsection, endowed with a morphism of pdg $\mathbb{S}$-modules $\Theta_\Q: \I \longrightarrow \overline{\mathscr{F}}_{1}\Q$ of degree $-2$ such that the following diagram commutes
\[
\begin{tikzcd}[column sep=7.5pc,row sep=3pc]
\Q \arrow[r,"\mathrm{diag}"] \arrow[rrd,"(d_\Q)^2", bend right =10]
&\Q \oplus \Q \cong (\I \circ \Q) \oplus (\Q \circ \I) \arrow[r,"(\Theta_\Q~ \circ~ \mathrm{id})~ -~(\mathrm{id}~ \circ'~ \Theta_\Q)"] 
& \Q \circ_{(1)} \Q \arrow[d,"\gamma_{(1)}"]\\
&
&\Q~,
\end{tikzcd}
\]
where $\mathrm{diag}$ is given by $\mathrm{diag}(\mu) \coloneqq (\mu,\mu)$.
\end{Definition}

\begin{Proposition}\label{Prop: cO algebres vrai}
The category of complete curved $c\mathcal{O}^\vee$-algebras is equivalent to the category of complete curved absolute partial operads.
\end{Proposition}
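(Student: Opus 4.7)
The plan is to upgrade the identification of underlying endofunctors from Lemma \ref{lemma: dual Schur of cOvee} to an equivalence of categories that respects the curvature data on both sides. Given a complete curved $c\mathcal{O}^\vee$-algebra $(B,\gamma_B,d_B)$, the structural morphism $\gamma_B : \widehat{\mathscr{S}}_\mathbb{S}^c(c\mathcal{O}^\vee)(B) \longrightarrow B$ transports along the natural isomorphism $\widehat{\mathscr{S}}_\mathbb{S}^c(c\mathcal{O}^\vee)(B) \cong \overline{\mathscr{T}}^\wedge(B\oplus \nu)$ to a map $\widetilde{\gamma}_B : \overline{\mathscr{T}}^\wedge(B\oplus \nu) \longrightarrow B$. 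First I would restrict $\widetilde{\gamma}_B$ along the inclusion $\overline{\mathscr{T}}^\wedge(B)\hookrightarrow \overline{\mathscr{T}}^\wedge(B\oplus\nu)$ to obtain a map $\gamma_Q : \overline{\mathscr{T}}^\wedge(B)\to B$, and restrict $\widetilde{\gamma}_B$ along $\I.\nu \hookrightarrow \overline{\mathscr{T}}^\wedge(B\oplus\nu)$ to obtain the curvature $\Theta_B : \I\to B$ of degree $-2$. A direct check, entirely analogous to the coalgebraic case treated in Proposition \ref{Prop: cO-cogebres.}, shows that the monad axiom for $\widetilde{\gamma}_B$ splits into the assertion that $(B,\gamma_Q,d_B)$ is a complete pdg absolute partial operad together with the usual compatibility of $\Theta_B$ with $d_B$.

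Next, I would verify that the curvature condition of Definition \ref{def curved alg over a coop} applied to $B$ corresponds exactly to the curvature relation of a complete curved absolute partial operad. Unpacking the definition of $\Theta_{c\mathcal{O}^\vee}$ given in Definition \ref{def: cO vee}, the composite
\[
\widehat{\mathscr{S}}_\mathbb{S}^c(\Theta_{c\mathcal{O}^\vee})(\mathrm{id}_B) : B \cong \widehat{\mathscr{S}}_\mathbb{S}^c(\I_\mathbb{S})(B) \longrightarrow \widehat{\mathscr{S}}_\mathbb{S}^c(c\mathcal{O}^\vee)(B)
\]
lands precisely on the sub-$\mathbb{S}$-module generated by $\gamma_1^{1,n}\circ_1 \theta - \sum_{i=0}^n \gamma_i^{n,1}\circ_2 \theta$, which under the tree-bijection of Lemma \ref{lemma: dual Schur of cOvee} corresponds to the element $\nu\circ_1 \mathrm{id} - \sum_{i} \mathrm{id}\circ_i' \nu$ in $\overline{\mathscr{T}}^\wedge(B\oplus\nu)$. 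Post-composing with $\widetilde{\gamma}_B$ then produces $\gamma_{Q,(1)}(\Theta_B\circ \mathrm{id} - \mathrm{id}\circ' \Theta_B)$, and equating this with $-d_B^2$ (up to the sign convention of Definition \ref{def curved alg over a coop}) yields the curvature axiom for absolute partial operads. Since $\Theta_B$ factors through $\overline{\mathscr{F}}_1 B$ by construction (it comes from the weight-one component), all conditions of the target definition are satisfied.

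In the reverse direction, given a complete curved absolute partial operad $(Q,\gamma_Q,d_Q,\Theta_Q)$, I would assemble $\gamma_Q$ and $\Theta_Q$ into a single map $\widetilde{\gamma}_Q : \overline{\mathscr{T}}^\wedge(Q\oplus \nu)\to Q$ by declaring that $\nu$ acts as $\Theta_Q$; the monad axiom follows from the absolute partial operad axioms on $\gamma_Q$ together with the $\mathbb{S}$-invariance of $\Theta_Q$. Transporting back across the isomorphism of Lemma \ref{lemma: dual Schur of cOvee} yields the required $c\mathcal{O}^\vee$-algebra structure, and the curvature relation on $Q$ ensures that the resulting $c\mathcal{O}^\vee$-algebra is curved. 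Completeness transfers verbatim since the canonical filtration $\overline{\mathscr{F}}_\bullet$ of Definition \ref{def: canonical topology on absolutes} coincides with the canonical filtration of the $c\mathcal{O}^\vee$-algebra: both arise as images of $\widehat{\mathscr{S}}_\mathbb{S}^c(c\mathcal{O}^\vee/\mathscr{R}_\omega c\mathcal{O}^\vee)(-)$ under the structural morphism, and the tree-bijection identifies the coradical filtration of $c\mathcal{O}^\vee$ with the weight filtration of $\overline{\mathscr{T}}^\wedge$. Functoriality of both constructions and their mutual inverseness on morphisms is then formal.

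The main obstacle will be the careful bookkeeping in the second step: one must track how the formula for $\Theta_{c\mathcal{O}^\vee}$ (which involves the difference $\gamma_1^{1,n}\circ_1\theta - \sum_i \gamma_i^{n,1}\circ_2\theta$ together with a sign coming from the suspension conventions of inhomogeneous Koszul duality in Proposition \ref{Proposition: S-colored inhomogeneous Koszul duality}) matches the asymmetric form $\Theta_\PP \circ \mathrm{id} - \mathrm{id}\circ' \Theta_\PP$ of the curvature axiom for absolute partial operads, and to confirm that the sign $-d_B^2$ appearing in Definition \ref{def curved alg over a coop} produces the correct sign $+d_Q^2$ on the operadic side.
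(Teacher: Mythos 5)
Your proposal is correct and follows essentially the same route as the paper: both directions rest on the identification $\widehat{\mathscr{S}}_\mathbb{S}^c(c\mathcal{O}^\vee)(-)\cong\overline{\mathscr{T}}^\wedge(-\oplus\nu)$, with one direction obtained by restricting the structural map along $\overline{\mathscr{T}}^\wedge(-)\hookrightarrow\overline{\mathscr{T}}^\wedge(-\oplus\nu)$ and projecting onto $\I.\nu$ to extract the curvature, and the other by extending $\gamma_\Q$ so that $\nu$ acts as $\Theta_\Q$, using $\Theta_\Q(\mathrm{id})\in\overline{\mathscr{F}}_1\Q$ and completeness to make sense of infinite sums. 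Your comparison of the two canonical filtrations to transfer completeness is exactly the argument the paper uses.
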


\begin{proof}
Let $(\Q,\gamma_\Q , d_\Q , \Theta_\Q)$ be a complete curved absolute partial operad. One can extend $\gamma_\Q$ into a morphism of pdg $\mathbb{S}$-modules 
\[
\gamma_\Q^+: \overline{\mathscr{T}}^\wedge(\Q \oplus \nu) \longrightarrow \Q
\]
as follows 
\begin{enumerate}
\item It sends $\nu$ to $\Theta_\Q(\mathrm{id})$ in $\overline{\mathscr{F}}_{1}\Q~.$

\item It sends a rooted tree $\tau$ with vertices labeled by elements of $\Q$ and possibly to containing some unary vertices labeled by $\nu$ to the corresponding compositions of operations in $\Q$ where the unary vertices labeled by $\nu$ are replaced by $\Theta_\Q$. 

\item It defined on infinite sums of rooted trees in $\overline{\mathscr{T}}^\wedge(\Q \oplus \nu)$ as follows
\[
\sum_{n \geq 0} \sum_{\omega \geq 1} \sum_{\tau \in \mathrm{RT}_n^\omega} \tau \mapsto \sum_{n \geq 0} \sum_{\omega \geq 1} \sum_{\tau \in \mathrm{RT}_n^\omega} \gamma_\Q^+(\tau)~,
\]
which is well-defined since $\Theta_\Q(\mathrm{id})$ is in $\overline{\mathscr{F}}_{1}\Q$ and since $\Q$ is a complete absolute partial operad.
\end{enumerate}
This endows $\Q$ with the structure of a pdg $c\mathcal{O}^\vee$-algebra. Furthermore, one checks that it is complete for its canonical filtration as a $c\mathcal{O}^\vee$-algebra, since both filtrations are the same. It is straightforward to check that 
\[
\begin{tikzcd}[column sep=4.5pc,row sep=3pc]
\Q \cong \widehat{\mathscr{S}}_{\mathbb{S}}^c(\I_\mathbb{S})(\Q) \arrow[r,"\widehat{\mathscr{S}}_{\mathbb{S}}^c(\Theta_{c\mathcal{O}^\vee})(\mathrm{id}) "] \arrow[rd,"- d_\Q^2",swap]
&\widehat{\mathscr{S}}_{\mathbb{S}}^c(c\mathcal{O}^\vee)(\Q) \arrow[d,"\gamma_\Q"]\\
&\Q 
\end{tikzcd}
\]
commutes, making it a complete curved $c\mathcal{O}^\vee$-algebra. The other way around, let $(\Q,\gamma_\Q^+,d_\Q)$ be a complete curved $c\mathcal{O}^\vee$-algebra. Restricting $\gamma_\Q^+$ along the obvious inclusion
\[
\overline{\mathscr{T}}^\wedge(\Q) \hookrightarrow \overline{\mathscr{T}}^\wedge(\Q \oplus \nu)~,
\]
endows $\Q$ with an absolute partial operad structure. Furthermore, since its canonical filtration as an absolute partial operad is contained in its canonical filtration as a $c\mathcal{O}^\vee$-algebra, this implies that $\Q$ is a complete absolute partial operad. Define $\Theta_\Q(\mathrm{id}) \coloneqq \gamma_\Q^+(\nu)$, and the commutative of the above triangle implies that $\Q$ forms indeed a complete curved absolute partial operad.
\end{proof}

\begin{lemma}\label{lemma: dual lin d'une curved conil coop}
Let $(\mathcal{C},\{\Delta_i\},d_\C,\Theta_\C)$ be a conilpotent curved partial cooperad. Then its linear dual $\C^*$ has inherits a structure of a complete curved absolute partial operad. This defines a functor 
\[
\begin{tikzcd}[column sep=4pc,row sep=0pc]
\left(\mathsf{curv}~\mathsf{pCoop}^{\mathsf{conil}}\right)^{\mathsf{op}} \arrow[r,"(-)^*"]
&\mathsf{curv}~\mathsf{abs}~\mathsf{pOp}^{\mathsf{comp}}~.
\end{tikzcd}
\]
\end{lemma}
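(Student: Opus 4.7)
The strategy is to promote the non-curved duality of Proposition \ref{prop: linear dual of a conil coop} to the curved setting, exploiting the $\mathbb{S}$-colored characterizations we have just established. By Proposition \ref{Prop: cO-cogebres.}, the data $(\C,\{\Delta_i\},d_\C,\Theta_\C)$ is equivalent to a curved coalgebra structure
\[
\Delta_\C^{+}: \C \longrightarrow \overline{\mathscr{T}}^c(\C \oplus \nu)
\]
over the $\mathbb{S}$-colored cooperad $c\mathcal{O}^\vee$, where the unary weight-one generator $\nu$ records the curvature. The plan is to dualize this structure, producing an algebra over the monad $\overline{\mathscr{T}}^\wedge(- \oplus \nu)$ on $\C^{*}$.

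Mimicking the proof of Proposition \ref{prop: linear dual of a conil coop}, I would construct a canonical monomorphism of endofunctors
\[
\iota_\C : \overline{\mathscr{T}}^\wedge(\C^{*} \oplus \nu) \hookrightarrow \bigl(\overline{\mathscr{T}}^c(\C \oplus \nu)\bigr)^{*},
\]
which is the $\mathbb{S}$-colored curved analogue of the inclusion already used in the proof of Proposition \ref{prop: linear dual of a conil coop}, identified via Lemma \ref{lemma: dual Schur of cOvee}. Pulling back the linear dual $(\Delta_\C^{+})^{*}$ along $\iota_\C$ yields a structural map $\gamma_{\C^{*}}^{+} : \overline{\mathscr{T}}^\wedge(\C^{*} \oplus \nu) \to \C^{*}$, and the coassociativity and counitality axioms of $\Delta_\C^{+}$ dualize pointwise to the algebra axioms for $\gamma_{\C^{*}}^{+}$. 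Under the equivalence of Proposition \ref{Prop: cO algebres vrai}, this is precisely a curved absolute partial operad structure on $\C^{*}$: the pre-differential is $d_\C^{*}$, and the curvature is $\Theta_{\C^*}(\mathrm{id}) := \gamma_{\C^{*}}^{+}(\nu) = \Theta_\C^{*}(\mathrm{id})$, which lies in $\overline{\mathscr{F}}_1\C^{*}$ automatically, as in the last paragraph of the proof of Proposition \ref{Prop: cO algebres vrai}.

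The final step is to verify completeness. Since $\C$ is conilpotent, Definition \ref{conilpartcoop} gives $\C \cong \colim_{\omega} \mathscr{R}_\omega \C$, hence $\C^{*} \cong \lim_{\omega} (\mathscr{R}_\omega \C)^{*}$. A direct computation, entirely parallel to the one in Proposition \ref{prop: linear dual of a conil coop} but now carrying along the $\nu$-contribution that encodes the curvature, identifies $(\mathscr{R}_\omega \C)^{*}$ with the quotient of $\C^{*}$ by the $\omega$-th step of its canonical filtration as a $c\mathcal{O}^\vee$-algebra, so the canonical morphism $\C^{*} \to \lim_\omega \C^{*}/\overline{\mathrm{W}}_\omega \C^{*}$ is an isomorphism. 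Invoking Proposition \ref{Prop: cO algebres vrai} once more yields the desired complete curved absolute partial operad structure. Functoriality of $(-)^{*}$ is immediate since every construction above is functorial in $\C$.

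The main technical obstacle is verifying that $\iota_\C$ intertwines the monadic/comonadic structures on both sides, so that dualization genuinely transports $c\mathcal{O}^\vee$-coalgebra structures to $c\mathcal{O}^\vee$-algebra structures. This is the curved $\mathbb{S}$-colored extension of the lax monoidality of Lemma \ref{lemma: Schur lax contravariant functor}, and conilpotency of $\C$ is the essential hypothesis that makes the dualized structural map land in the correct (non-completed) dual Schur construction $\overline{\mathscr{T}}^\wedge(\C^{*} \oplus \nu)$ rather than in the larger object $\bigl(\overline{\mathscr{T}}^c(\C \oplus \nu)\bigr)^{*}$.
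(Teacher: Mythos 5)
Your argument is correct, and it lands on the same two pillars as the paper's own (very terse) proof: dualization of the structure maps, and the fact that conilpotency of $\C$ forces completeness of $\C^*$. The difference is one of packaging. The paper simply invokes the uncurved case (Proposition \ref{prop: linear dual of a conil coop}) to get the complete absolute partial operad structure on $\C^*$, and then declares that the degree $-2$ map $\Theta_\C^*:\I\to\C^*$ supplies the curvature; the compatibility of $d_{\C^*}^2$ with $[\Theta_\C^*,-]$ and the membership $\Theta_\C^*(\mathrm{id})\in\overline{\mathscr{F}}_1\C^*$ are left implicit. You instead bundle the curvature into the $\nu$-augmented tree (co)monad from the start, dualizing the single structure map $\Delta_\C^+:\C\to\overline{\mathscr{T}}^c(\C\oplus\nu)$ of Proposition \ref{Prop: cO-cogebres.} along the monomorphism $\iota_\C$ and then invoking Proposition \ref{Prop: cO algebres vrai}. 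This buys you a cleaner verification of the curvature axiom (it is absorbed into the single algebra axiom for $\gamma_{\C^*}^+$, rather than checked as a separate diagram), at the cost of having to re-justify that $\iota_\C$ intertwines the $\nu$-augmented (co)monad structures --- which, as you note, is the curved extension of \cref{lemma: Schur lax contravariant functor} and is exactly where conilpotency enters. The one point you assert a little quickly is that $\Theta_\C^*(\mathrm{id})=\gamma^+_{\C^*}(\nu)$ lies in $\overline{\mathscr{F}}_1\C^*$ "automatically": the corolla $\nu$ has no internal edges, so this is not literally an instance of \cref{def: canonical topology on absolutes}; it holds because $\theta$ sits in weight one of the coradical filtration of $c\mathcal{O}^\vee$, which is the same (implicit) convention used in the last paragraph of the proof of Proposition \ref{Prop: cO algebres vrai}, so you are no worse off than the paper here, but a sentence making this explicit would close the only visible seam.
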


\begin{proof}
We know that $\C^*$ is a complete absolute partial operad from Proposition \ref{prop: linear dual of a conil coop}. Therefore $\Theta_\C^*$ endows $\C^*$ with a complete curved absolute partial operad structure.
\end{proof}

\begin{Proposition}\label{prop: adjonction topo et dual lin en curved}
There is an adjunction 
\[
\begin{tikzcd}[column sep=7pc,row sep=3pc]
\mathsf{curv}~\mathsf{pOp}^{\mathsf{comp}}  \arrow[r, shift left=1.1ex, "(-)^\vee"{name=F}] 
& \left(\mathsf{curv}~\mathsf{pCoop}^{\mathsf{conil}}\right)^{\mathsf{op}} ~. \arrow[l, shift left=.75ex, "(-)^*"{name=U}]
            \arrow[phantom, from=F, to=U, , "\dashv" rotate=-90]
\end{tikzcd}
\]
\end{Proposition}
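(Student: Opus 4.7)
The plan is to mimic the proof of Proposition \ref{prop: adjonction topo dual lin} by applying the Adjoint Lifting Theorem \cite[Theorem 2]{AdjointLifting} to the following square of adjunctions:
\[
\begin{tikzcd}[column sep=4pc,row sep=4pc]
\left(\mathsf{curv}~\mathsf{pCoop}^{\mathsf{conil}}\right)^{\mathsf{op}} \arrow[r,"(-)^*"]  \arrow[d,"\mathrm{U}^\mathsf{op}"{name=SD},shift left=1.1ex]
&\mathsf{curv}~\mathsf{abs.pOp}^{\mathsf{comp}} \arrow[d,"\mathrm{U}"{name=LDC},shift left=1.1ex] \\
\mathsf{pdg}~\mathbb{S}\text{-}\mathsf{mod}^{\mathsf{op}} \arrow[r,"(-)^*"{name=CC},shift left=1.1ex] \arrow[u,shift left=1.1ex]
&\mathsf{pdg}~\mathbb{S}\text{-}\mathsf{mod}.  \arrow[l,"(-)^*"{name=CB},shift left=1.1ex] \arrow[u,shift left=1.1ex]
\end{tikzcd}
\]
The commutativity of the square (in the sense that $(-)^* \cdot \mathrm{U}^\mathsf{op} \cong \mathrm{U} \cdot (-)^*$) is immediate since both composites amount to taking the linear dual on the underlying pdg $\mathbb{S}$-module; the self-adjunction of $(-)^*$ on the bottom row comes from the standard natural isomorphism $\mathrm{Hom}(V, W^*) \cong \mathrm{Hom}(W, V^*)$.

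To verify the hypotheses of the Adjoint Lifting Theorem, I would first exhibit the two vertical adjunctions. On the right-hand side, the free complete curved absolute partial operad functor is obtained by precomposing the free pdg $c\mathcal{O}^\vee$-algebra functor with the reflector onto complete curved $c\mathcal{O}^\vee$-algebras, which exists by Proposition \ref{bicomplete C alg}, and then transporting along the equivalence of categories established in Proposition \ref{Prop: cO algebres vrai}. On the left-hand side, the cofree conilpotent curved partial cooperad on a pdg $\mathbb{S}$-module is produced by the $\mathbb{S}$-colored generalization of Theorem \ref{thm: existence de la cogèbre colibre} applied to the dg unital partial $\mathbb{S}$-colored operad $u\mathcal{O}$, combined with the curvature datum, realizing the category as coalgebras over the comonad $\mathscr{L}(u\mathcal{O})$ enriched with the curvature structure (equivalently, via Proposition \ref{Prop: cO-cogebres.} and the curved analogue of the reduced tree comonad). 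All four categories in the square are bicomplete: the bottom row trivially so, and the top row by the presentability results of Proposition \ref{bicomplete C alg} applied respectively to $c\mathcal{O}^\vee$ and to its opposite description of conilpotent curved partial cooperads.

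It remains to check the monadicity of the left vertical adjunction, which follows from the fact that $\mathsf{curv}~\mathsf{pCoop}^{\mathsf{conil}}$ is equivalent to the category of coalgebras over a comonad on pdg $\mathbb{S}$-modules, hence comonadic, and therefore its opposite is monadic. The main obstacle I expect is a careful treatment of the cofree conilpotent curved partial cooperad: one must ensure that the comonadic description of this category, which in the uncurved case reduces to the reduced tree comonad $\overline{\mathscr{T}}^c$, extends compatibly once a curvature term is adjoined, so that limits and sifted colimits are preserved as required by the Adjoint Lifting Theorem. Once this is in place, the theorem produces the left adjoint $(-)^\vee$ functorially as a coequalizer of free complete curved absolute partial operads, which concludes the proof.
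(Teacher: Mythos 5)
Your strategy of rerunning the Adjoint Lifting Theorem directly on the curved categories founders on the left vertical adjunction of your square. For the theorem to apply you need the forgetful functor $\mathsf{curv}~\mathsf{pCoop}^{\mathsf{conil}} \to \mathsf{pdg}~\mathbb{S}\text{-}\mathsf{mod}$ to admit a right adjoint and to be comonadic, i.e.\ you need the cofree conilpotent \emph{curved} partial cooperad on a pdg $\mathbb{S}$-module. But conilpotent curved partial cooperads are the \emph{curved} coalgebras over the curved $\mathbb{S}$-colored cooperad $(c\mathcal{O}^\vee)^u$ (Proposition \ref{Prop: cO-cogebres.}), a full subcategory of pdg coalgebras cut out by the property $d_\C^2 = (\mathrm{id}\circ_{(1)}\Theta_\C - \Theta_\C\circ\mathrm{id})\cdot\Delta_{(1)}$, and the paper explicitly records, just after Definition \ref{def curved conil coalg}, that it is not known whether curved coalgebras over a curved cooperad admit cofree objects. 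Theorem \ref{thm: existence de la cogèbre colibre}, which you invoke, only produces cofree coalgebras over a dg $\mathbb{S}$-colored \emph{operad} and yields counital partial cooperads, not conilpotent curved ones; the phrase ``combined with the curvature datum'' is precisely where your argument has no content. (Your appeal to Proposition \ref{bicomplete C alg} for bicompleteness of the coalgebra side is also misplaced: that statement concerns complete curved algebras over a cooperad, not curved coalgebras.)

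The paper's own proof avoids this obstacle entirely: it takes the adjunction already established at the pdg level in Proposition \ref{prop: adjonction topo dual lin} --- where the relevant vertical \emph{is} monadic, because conilpotent (uncurved) partial cooperads are coalgebras over the reduced tree comonad $\overline{\mathscr{T}}^c$ --- and merely upgrades it to the curved setting. The only point left to check is that the topological dual $\Q^\vee$ of a complete curved absolute partial operad naturally carries the structure of a conilpotent \emph{curved} partial cooperad; this follows because the equalizer defining $(-)^\vee$ provides a natural monomorphism $\Q^\vee \hookrightarrow \Q^*$, so the pre-differential and the curvature $\Theta_\Q^*$ restrict to $\Q^\vee$. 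If you wish to salvage your approach, you would first have to construct the cofree conilpotent curved partial cooperad and prove comonadicity, which is a substantially harder (and, in this paper, open) problem than the proposition itself.
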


\begin{proof}
This is a particular case of Proposition \ref{prop: adjonction topo dual lin}. The only thing to prove is that if $(\Q,\gamma_\Q , d_\Q , \allowbreak \Theta_\Q)$ is a complete curved absolute partial operad, its topological dual $\Q^\vee$ conilpotent partial cooperad is indeed a conilpotent curved partial operad. Notice that the square constructed in the proof of \textit{loc.cit} induces a natural monomorphism of pdg $\mathbb{S}$-modules 
\[
(-)^\vee \hookrightarrow (-)^*~,
\]
thus $\Q^\vee$ is a sub-object of $\Q^*$ and its pre-differential is induced by restriction. This in turn implies that $\Theta_\Q^\vee$ endows $\Q^\vee$ with a conilpotent curved partial cooperad structure. 
\end{proof}

\subsection{Examples}
Here are some examples of complete curved absolute partial operads. As first examples, let us construct the "absolute analogues" of the curved operads $\Liec$ and $\Assc$ constructed in Section \ref{Section: Curved Operads}.

\medskip

Let $M$ be the pdg $\mathbb{S}$-module given by $(\mathbb{K}.\zeta,0, \mathbb{K}.\beta,0,\cdots)$ with zero pre-differential, where $\zeta$ is an arity $0$ operation of degree $-2$, and $\beta$ is an arity $2$ operation of degree $0$, basis of the signature representation of $\mathbb{S}_2$. 

\begin{Definition}[$\Liec^\wedge$ absolute operad]\label{def: Lie curved absolute}
The \textit{complete curved absolute partial operad} $\Liec^\wedge$ is given by the free pdg absolute partial operad generated by $M$ modulo the ideal generated by the Jacobi relation on the generator $\beta$. It is endowed with the curvature $\Theta$ given by $\Theta(\mathrm{id}) \coloneqq \beta \circ_1 \zeta~.$  
\end{Definition}

\begin{lemma}
The data $(\Liec^\wedge, 0, \Theta)$ forms a complete curved absolute partial operad. 
\end{lemma}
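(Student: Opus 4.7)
The proof plan is to adapt the argument of Lemma \ref{lemmalie} to the absolute setting, taking care of the subtleties that arise because $\Liec^\wedge$ is built from the \emph{reduced completed tree monad} rather than the reduced tree monad. First, I would observe that $\Liec^\wedge$ is a complete absolute partial operad by construction: it is the quotient of a free complete absolute partial operad $\overline{\mathscr{T}}^\wedge(M)$ by the closed ideal generated by the Jacobi relator on $\beta$. The curvature $\Theta$ is well-defined as a morphism of pdg $\mathbb{S}$-modules $\I \longrightarrow \overline{\mathscr{F}}_1 \Liec^\wedge$, since $\beta \circ_1 \zeta$ is a weight-one element of the canonical filtration introduced in Definition \ref{def: canonical topology on absolutes}.

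Next, I would verify the two conditions of the curvature axiom. The condition $d(\theta) = 0$ is immediate since the pre-differential is trivial. The non-trivial condition is the commutativity of the defining diagram of a complete curved absolute partial operad; because $d^2 = 0$, this reduces to showing that the operadic bracket $[\theta, -] = \gamma_{(1)} \cdot (\Theta \circ \mathrm{id} - \mathrm{id} \circ' \Theta)$ vanishes identically on $\Liec^\wedge$. The key step is to argue that $[\theta,-]$ is a continuous derivation of the complete absolute partial operad structure, so it is uniquely determined by its restriction to the generators $\zeta$ and $\beta$, and moreover that it is automatically compatible with the infinite sums appearing in $\overline{\mathscr{T}}^\wedge$ because $\theta \in \overline{\mathscr{F}}_1 \Liec^\wedge$ and iterated brackets raise the weight filtration.

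The computation on generators is then straightforward and mirrors Lemma \ref{lemmalie}: the equality $[\theta, \zeta] = 0$ follows from the antisymmetry of $\beta$, while $[\theta, \beta] = 0$ follows from the Jacobi relation imposed on $\beta$, after a direct expansion of six partial compositions into two cancelling pairs coming from antisymmetry and one triple cancelling by Jacobi.

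The main obstacle I anticipate is the verification that derivations of a complete absolute partial operad are determined by their restriction to generators, i.e.\ a universal property for $\overline{\mathscr{T}}^\wedge(-)$ analogous to the standard one for $\overline{\mathscr{T}}(-)$. Since $\theta$ lives in filtration weight one, the map $[\theta,-]$ raises the canonical filtration by one, which gives convergence of the relevant infinite sums and ensures that checking vanishing on generators propagates to all of $\Liec^\wedge$ by continuity and the Hausdorff property of complete absolute partial operads. Once this point is settled, the rest of the proof collapses to the generator-level computation already carried out in Lemma \ref{lemmalie}.
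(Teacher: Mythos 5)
Your proposal is correct and follows essentially the same route as the paper, which simply notes that completeness is a ``direct computation'' and defers the curvature check to Lemma \ref{lemmalie}; your generator-level computation ($[\theta,\zeta]=0$ by antisymmetry, $[\theta,\beta]=0$ by Jacobi) and the observation that $[\theta,-]$ raises the canonical filtration, hence extends continuously to the infinite sums in $\overline{\mathscr{T}}^\wedge$, supply exactly the details the paper leaves implicit. The one point you state a bit too quickly is completeness itself: a quotient of a complete absolute partial operad by an ideal is not automatically complete, so one must still verify that the ideal generated by the Jacobi relator is closed for the canonical filtration (equivalently, that the quotient topology is Hausdorff) --- this is the ``direct computation'' the paper alludes to.
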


\begin{proof}
One can show by direct computation that this absolute partial operad is complete. The rest of the proof is analogous to Lemma \ref{lemmalie}.
\end{proof}

\begin{Proposition}
Let $u\mathcal{C}om$ be the unital partial operad encoding unital commutative algebras and let $u\mathcal{C}om^{\ac}$ be its Koszul dual conilpotent curved partial cooperad. There is an isomorphism of complete curved partial operads
\[
\left(u\mathcal{C}om^{\ac}\right)^* \cong \Liec^\wedge.
\]
\end{Proposition}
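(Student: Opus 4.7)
The plan is to compute the Koszul dual conilpotent curved partial cooperad $\ucom^{\ac}$ explicitly via the inhomogeneous Koszul duality developed in the paper, and then to identify its linear dual with $\Liec^\wedge$ by matching presentations. I would first write down an inhomogeneous quadratic presentation of $\ucom$: take the generating $\mathbb{S}$-module $V = \kk.\mu \oplus \kk.u$ with $\mu$ binary symmetric in degree $0$ and $u$ nullary in degree $0$, with defining relations the associativity $\mu \circ_1 \mu - \mu \circ_2 \mu$ (purely quadratic) and the unit relations $\mu \circ_i u - \mathrm{id}$ for $i=1,2$ (quadratic-plus-constant). The quadratic part $qR$ is spanned by associativity together with the two elements $\mu \circ_i u$, and the only nonzero component of the map $\varphi: qR \to \I \oplus V$ defining the relations as a graph is its $\I$-component $\varphi_0$, which sends each $\mu \circ_i u$ to the operadic identity.

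Applying the single-color analogue of Proposition \ref{Proposition: S-colored inhomogeneous Koszul duality}, this yields
\[
\ucom^{\ac} \;\cong\; \overline{\mathscr{T}}^c(sV,\, s^2 qR)
\]
with trivial differential (since $\varphi_1 = 0$) and curvature $\Theta = s^{-2}\varphi_0$ sending the corelation elements $s\mu \circ_i su$ to the operadic identity. The classical quadratic Koszul duality $\mathcal{C}om^{\ac} \cong \mathcal{L}ie^{\ac}$ identifies the $s\mu$-part of this cooperad with the standard Koszul dual of $\mathcal{C}om$, which is arity-wise finite dimensional; the additional nullary cogenerator $su$ contributes only finitely to each arity, since in any rooted tree labeled by $s\mu$ and $su$ whose root has fixed arity only finitely many $su$-decorations can appear.

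Arity-wise finite dimensionality then allows clean linear-dualization. By Lemma \ref{lemma: dual lin d'une curved conil coop} the linear dual $(\ucom^{\ac})^*$ is a complete curved absolute partial operad, and duality exchanges the cofree-with-corelations presentation on the left with a completed-free-with-relations presentation on the right. The dualized cogenerators become a binary skew-symmetric generator $\beta$ in degree $0$ together with a nullary generator $\zeta$ in degree $-2$; the dualized corelations become exactly the Jacobi relation on $\beta$; and the transposed curvature becomes $\beta \circ_1 \zeta$. Matching this presentation against Definition \ref{def: Lie curved absolute} of $\Liec^\wedge$ gives the claimed isomorphism.

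The main obstacle is the careful bookkeeping of signs and of the operadic-versus-module suspensions. In particular, one must check that the cofree cooperad on the \emph{symmetric} arity-$2$ cogenerator $s\mu$ dualizes, after the operadic suspension implicit in $(-)^{\ac}$, to the free absolute operad on a \emph{skew-symmetric} arity-$2$ generator $\beta$; this is the usual sign calculation underlying the classical identity $\mathcal{C}om^! = \mathcal{L}ie$, adapted to the groupoid-colored inhomogeneous framework of Section~5. All remaining verifications reduce to the fact that, in arity-wise finite dimension, linear duality swaps cofree and free constructions and swaps corelations with their annihilators, so that no topological completion issue intervenes beyond the canonical one already built into the definition of $\Liec^\wedge$.
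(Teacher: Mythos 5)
Your overall strategy---write down the inhomogeneous quadratic presentation of $u\mathcal{C}om$, run the inhomogeneous Koszul duality of Proposition \ref{Proposition: S-colored inhomogeneous Koszul duality} to obtain $u\mathcal{C}om^{\ac}$ as a cofree-with-corelations conilpotent curved cooperad with zero coderivation and curvature $s^{-2}\varphi_0$, and then dualize the presentation---is exactly the computation that the paper compresses into ``by direct inspection'', and your identification of the dual generators, the Jacobi relation, and the transposed curvature $\beta\circ_1\zeta$ is correct. The genuine gap is your finite-dimensionality claim. It is not true that $u\mathcal{C}om^{\ac}$ is arity-wise finite dimensional, and the counting argument you give for it (``in any rooted tree whose root has fixed arity only finitely many $su$-decorations can appear'') is false: a tree with $j$ binary vertices and $n$ free leaves carries $j+1-n$ nullary decorations, so arbitrarily many copies of $su$ occur as $j$ grows. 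Concretely, $\Liec^\wedge(1)$ contains the nonzero elements $(\mathrm{ad}_\zeta)^k(-)=[\cdots[[-,\zeta],\zeta]\cdots,\zeta]$ in arbitrarily large weights (the Jacobi ideal does not kill them, since $\zeta$ has even degree and the Jacobi relator evaluated on two copies of $\zeta$ vanishes identically), so dually $u\mathcal{C}om^{\ac}(1)$ is an infinite direct sum of one-dimensional weight components. This is precisely why the target of the isomorphism is the \emph{completed} free absolute operad: your own (correct) remark that duality produces a completed-free-with-relations presentation is in tension with arity-wise finite dimensionality, under which no completion would arise.

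The repair is to replace ``arity-wise finite dimensional'' by ``arity-wise \emph{and weight-wise} finite dimensional''. Since the cogenerating $\mathbb{S}$-module $sV=\kk.s\mu\oplus\kk.su$ is arity-wise finite dimensional, each weight-arity component of $\overline{\mathscr{T}}^c(sV)$ is finite dimensional; orthogonal complements and the exchange of cofree-with-corelations for free-with-relations can therefore be performed weight by weight, and the linear dual of the resulting direct sum over weights is the product over weights, which is exactly the canonical completion built into the definition of $\Liec^\wedge$. Alternatively, one can sidestep the issue by invoking Lemma \ref{lemma: dual lin d'une curved conil coop}, which produces the complete curved absolute partial operad structure on $(u\mathcal{C}om^{\ac})^*$ with no finiteness hypothesis, and use weight-wise finite dimensionality only to match the weight-graded pieces against those of $\Liec^\wedge$. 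With that correction, together with the operadic-suspension bookkeeping you already flag (needed to turn the symmetric degree-$1$ cogenerator $s\mu$ into the skew-symmetric degree-$0$ generator $\beta$ and the degree-$1$ cogenerator $su$ into the degree-$(-2)$ generator $\zeta$), the argument is complete.
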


\begin{proof}
By direct inspection.
\end{proof}

Let $N$ be the pdg $\mathbb{S}$-module given by $(\mathbb{K}.\phi,0,\mathbb{K}[\mathbb{S}_2].\mu,0,\cdots)$ with zero pre-differential, where $\phi$ is an arity $0$ operation of degree $-2$, and $\mu$ is an binary operation of degree $0$, basis of the regular representation of $\mathbb{S}_2$.

\begin{Definition}[$\Assc^\wedge$ absolute operad]\label{def: Ass curved absolute}
The \textit{complete curved absolute partial operad} $\Assc^\wedge$ is given by the free pdg absolute partial operad generated by $N$ modulo the ideal generated by the associativity relation on the generator $\mu$. It is endowed with the curvature $\Theta$ given by $\Theta(\mathrm{id}) \coloneqq \mu \circ_1 \phi - \mu \circ_2 \phi.$ 
\end{Definition}

\begin{lemma}
The data $(\Assc^\wedge, 0, \Theta)$ forms a complete curved absolute partial operad.
\end{lemma}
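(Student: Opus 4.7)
The plan is to mirror the proof of the corresponding statement for $\Liec^\wedge$, adapting each step to the associative setting. First, I would unpack what must be verified: since $(\Assc^\wedge, 0, \Theta)$ has zero pre-differential, the condition $d_{\Assc^\wedge}(\theta) = 0$ is automatic, so the only genuine axiom to check is the curvature identity
\[
\gamma_{(1)} \cdot (\Theta \circ \mathrm{id} - \mathrm{id} \circ' \Theta) = 0,
\]
i.e.\ that the operadic commutator $[\theta, -]$ vanishes on $\Assc^\wedge$, where $\theta = \mu \circ_1 \phi - \mu \circ_2 \phi$.

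Next, since $[\theta, -]$ is a derivation with respect to the partial composition maps $\{\circ_i\}$ of the underlying absolute partial operad, it suffices to verify the vanishing on the generators $\phi$ and $\mu$. On $\phi$ the check is immediate because $\phi$ is an arity $0$ operation, so each of the two summands reduces to a plugging of $\phi$ into the unique input of $\mu$ composed with $\phi$, producing a cancelling pair. On $\mu$, one expands $[\theta,\mu] = \theta \circ_1 \mu - \mu \circ_1 \theta - \mu \circ_2 \theta$ (with appropriate signs); writing out each term, the six resulting trees pair up and cancel precisely because of the associativity relation $\mu \circ_1 \mu = \mu \circ_2 \mu$ imposed in the presentation. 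This is the classical computation used in Lemma \ref{lemmalie} for $\Liec$, and it transports \emph{mutatis mutandis} to the associative case.

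The last ingredient is completeness of the underlying absolute partial operad $\Assc^\wedge$ with respect to its canonical filtration $\overline{\mathscr{F}}_\bullet$ of Definition \ref{def: canonical topology on absolutes}. By construction it is the quotient of the free complete absolute partial operad $\overline{\mathscr{T}}^\wedge(N)$ by the ideal generated by a quadratic relation on $\mu$. Free complete absolute partial operads are complete, and completeness is preserved by quotients by an ideal that is closed with respect to the canonical filtration; since the associativity relation lies in weight one (quadratic in $\mu$, with no occurrence of $\phi$), the quotient inherits the Hausdorff property and agrees with its limit $\lim_\omega \Assc^\wedge / \overline{\mathscr{F}}_\omega \Assc^\wedge$.

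The main obstacle I expect is the completeness verification: one must be careful that even though $\phi$ has arity zero (so infinite series of rooted trees with nested copies of $\phi$ appear in every arity), the ideal generated by the associativity relation remains a closed sub-object for the canonical filtration, and that the quotient topology remains Hausdorff. The cleanest way is probably to exhibit $\Assc^\wedge$ as the topological dual $(u\mathcal{A}ss^{\ac})^*$ of the Koszul dual conilpotent curved partial cooperad of $u\mathcal{A}ss$, via Proposition \ref{prop: finite dual commutes} and the analogue of the preceding proposition for $\Liec^\wedge$; then completeness follows automatically from Lemma \ref{lemma: dual lin d'une curved conil coop}, and the curvature computation above matches the one coming from the Koszul dual presentation.
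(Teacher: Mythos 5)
Your proposal is correct and takes essentially the same route as the paper, whose proof simply defers to the $\Liec^\wedge$ case and ultimately to the computation of Lemma \ref{lemmalie}: the six trees in $[\theta,-]$ applied to $\mu$ cancel pairwise by associativity, the check on $\phi$ is immediate, and completeness is verified directly. Your suggested shortcut for completeness via the identification $\Assc^\wedge \cong (u\mathcal{A}ss^{\ac})^*$ is exactly the content of the proposition the paper states immediately afterwards, so it is consistent with the text, though one should establish that isomorphism at the level of pdg absolute partial operads first to avoid circularity.
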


\begin{proof}
Analogous to the proof of the previous lemma.
\end{proof}

\begin{Proposition}
Let $u\mathcal{A}ss$ be the unital partial operad encoding unital associative algebras and let $u\mathcal{A}ss^{\ac}$ be its Koszul dual conilpotent curved partial cooperad. There is an isomorphism of complete curved partial operads
\[
\left(u\mathcal{A}ss^{\ac}\right)^* \cong \Assc^\wedge.
\]
\end{Proposition}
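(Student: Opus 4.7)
The plan is to mimic the proof of the preceding proposition for $u\mathcal{C}om$, replacing the signature representation by the regular representation of $\mathbb{S}_2$, and to identify both sides via a direct application of the inhomogeneous Koszul duality machinery developed in Section 5 (applied, in this classical non-groupoid-colored instance, as in Hirsh--Millès).

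First I would write down the inhomogeneous quadratic presentation $(V,S)$ of $u\mathcal{A}ss$: the generators $V$ consist of the regular $\mathbb{S}_2$-module spanned by the binary operation $\mu$ and the arity-$0$ unit $u$, both in degree $0$, and the relations $S$ are generated by the associativity relation $\mu \circ_1 \mu - \mu \circ_2 \mu$ (purely quadratic) together with the unital relations $\mu \circ_1 u - \mathrm{id}_1$ and $\mu \circ_2 u - \mathrm{id}_1$ (mixing the quadratic piece with $\I$). The operadic analogue of Proposition \ref{Proposition: S-colored inhomogeneous Koszul duality} then computes $u\mathcal{A}ss^{\ac}$ as the cofree conilpotent partial cooperad $\overline{\mathscr{T}}^c(sV, s^{2}qS)$ cogenerated by $sV$ with corelations $s^{2}qS$ given by (suspended) coassociativity. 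Its coderivation $d$ vanishes because the projection of the unital relations onto $V$ is zero, and its curvature $\Theta$ is the projection onto $\I$ of $s^{-2}\varphi_{0}$, which reads off as sending $s\mu \circ_1 su - s\mu \circ_2 su$ to $\mathrm{id}_1$ and vanishing on every other generator.

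Second I would apply the linear dual functor of Lemma \ref{lemma: dual lin d'une curved conil coop}. By Proposition \ref{prop: linear dual of a conil coop}, since $u\mathcal{A}ss^{\ac}$ is conilpotent, its dual is automatically a complete absolute partial operad; the curvature $\Theta$ linearizes to a degree $-2$ map $\I \to (u\mathcal{A}ss^{\ac})^{*}$. After operadic desuspension, the cogenerators dualize precisely to the generators of the $\mathbb{S}$-module $N$ from Definition \ref{def: Ass curved absolute} (the binary operation $\mu$ in degree $0$ on the regular representation of $\mathbb{S}_2$, and the arity-$0$ element $\phi$ in degree $-2$ dual to $su$), the dual of the coassociativity corelations is exactly the associativity ideal, and the dual of $\Theta$ is the element $\mu \circ_1 \phi - \mu \circ_2 \phi$. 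Assembling these data one recovers exactly the presentation defining $\Assc^\wedge$, giving the desired isomorphism of complete curved absolute partial operads.

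The only real subtlety is bookkeeping with operadic suspensions and Koszul signs when transporting the unital constraints on $u\mathcal{A}ss$ through the inhomogeneous Koszul dualization procedure into the curvature term; this is the same computation as for $u\mathcal{C}om$ in the previous proposition, with the signature representation replaced by the regular representation of $\mathbb{S}_2$, and it poses no additional difficulty since the arity-$1$ operation $\mathrm{id}_1$ dualizes canonically and the unital axioms are $\mathbb{S}_2$-invariant after symmetrization.
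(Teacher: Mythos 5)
Your argument is correct and is exactly the ``direct inspection'' the paper invokes: you compute $u\mathcal{A}ss^{\ac}$ from the inhomogeneous quadratic presentation of $u\mathcal{A}ss$ precisely as the paper does for $u\mathcal{O}$ in Theorem \ref{Koszulity of uO} and for $u\mathcal{C}om$ in the preceding proposition, and then dualize via Lemma \ref{lemma: dual lin d'une curved conil coop}, so that cogenerators, corelations and curvature dualize to the generators, associativity ideal and curvature element $\mu\circ_1\phi-\mu\circ_2\phi$ of $\Assc^\wedge$. Since the paper's own proof consists of the single phrase ``by direct inspection,'' your write-up simply supplies the details, and the suspension/sign bookkeeping you flag is handled at the same level of care as in the paper's own treatment of $c\mathcal{O}^\vee$.
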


\begin{proof}
By direct inspection.
\end{proof}

\begin{Proposition}\label{absolute assliem}
There is a morphism of complete curved absolute partial operads $\Liec^\wedge \longrightarrow \Assc^\wedge$ which is determined by sending $\beta \mapsto \mu - \mu^{(12)}~,$ and $\zeta$ to $\phi$. 
\end{Proposition}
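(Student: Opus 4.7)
The plan is to apply the universal property of $\Liec^\wedge$ as a quotient of a free complete curved absolute partial operad in three steps: (i) define the candidate map on the generating pdg $\mathbb{S}$-module $M$, (ii) verify that the Jacobi relation maps to zero so that the map descends from the free complete absolute partial operad to $\Liec^\wedge$, and (iii) check that both the (trivial) pre-differential and the curvature are preserved.

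First I would set $\tilde{f}(\zeta) \coloneqq \phi$ and $\tilde{f}(\beta) \coloneqq \mu - \mu^{(12)}$, noting that the degrees and the sign/signature representations of $\mathbb{S}_2$ match on both sides: $\mu - \mu^{(12)}$ is a degree $0$ arity $2$ element transforming in the signature representation, and $\phi$ is a degree $-2$ arity $0$ element. By the universal property of the free complete absolute partial operad functor, $\tilde{f}$ extends uniquely to a morphism of complete absolute partial operads from the free complete absolute partial operad on $M$ to $\Assc^\wedge$.

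Second, I would check that the image of the Jacobi relation vanishes in $\Assc^\wedge$. This is the classical identity that the commutator of an associative product satisfies Jacobi, and, crucially, it is a finite weight-$2$ identity in the free partial operad — no infinite sums appear — so the computation is identical to the one underlying Proposition \ref{assliem}. Consequently the morphism descends to a morphism $f: \Liec^\wedge \to \Assc^\wedge$ of complete absolute partial operads. Since both pre-differentials are zero, compatibility with $d$ is automatic.

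Third, I would verify that $f$ sends the curvature $\Theta_{\Liec^\wedge}(\mathrm{id}) = \beta \circ_1 \zeta$ to $\Theta_{\Assc^\wedge}(\mathrm{id}) = \mu \circ_1 \phi - \mu \circ_2 \phi$. Expanding, $f(\beta \circ_1 \zeta) = (\mu - \mu^{(12)}) \circ_1 \phi = \mu \circ_1 \phi - \mu^{(12)} \circ_1 \phi$, and applying the equivariance axiom of a partial operad (Definition \ref{def: partialoperad}(3)) together with the fact that $\phi$ has arity $0$ (so the output permutation acts trivially on the resulting arity one operation) gives $\mu^{(12)} \circ_1 \phi = \mu \circ_2 \phi$, yielding exactly the target curvature. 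The main point to stress is that there is essentially no new obstacle compared with the non-absolute curved case: the generators, the Jacobi relation and the curvature element all live in finite weight, and the "absolute" part of the structure — the extension of composition to infinite series of rooted trees — is carried uniquely by the free construction, so no further infinite-sum verification is needed. The entire proof therefore reduces to the same algebraic checks as in Proposition \ref{assliem}.
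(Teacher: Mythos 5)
Your proof is correct and follows essentially the same route as the paper, which simply defers to the finite-weight computation of Proposition \ref{assliem}; you have just made explicit the two points the paper leaves implicit, namely that the Jacobi relation and the curvature identity live in finite weight and that the extension to infinite series is handled automatically by the universal property of the free complete absolute partial operad. The equivariance computation $\mu^{(12)} \circ_1 \phi = \mu \circ_2 \phi$ is also carried out correctly.
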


\begin{proof}
The proof is identical to the proof of Proposition \ref{assliem}.
\end{proof}

\begin{Remark}
As one can guess by the above examples, one can construct "absolute analogues" of well-known operads when they are given by generators and relations. Furthermore, they show that if $\mathcal{P}$ is a unital partial operad, then its Koszul dual \textit{operad} $\mathcal{P}^!$ is in fact an \textit{absolute operad}.
\end{Remark}

Another class of examples is given by object which already carry an underlying filtration that makes them "complete". Let $V$ be a complete pdg module as defined in Definition \ref{def: complete pdg module}, its curved endomorphism operad carries a natural structure of a complete curved absolute partial operad.

\begin{Definition}[Complete curved endomorphisms partial operad]
Let $V$ be a complete pdg module. Its \textit{complete curved endomorphisms partial operad} is given by the complete pdg $\mathbb{S}$-module 
\[
\mathrm{end}_V(n) \coloneqq \mathrm{hom}^{(\geq 1)}(V^{\otimes n}, V)~,
\]
where we only consider here morphisms of pdg modules which raise the filtration degree by at least one. The operad structure is given by the partial composition of functions. Its pre-differential is given by $\partial \coloneqq [d_V,-]$ and its curvature determined by $\Theta_V(\mathrm{id}) \coloneqq d_V^2~.$
\end{Definition}

\begin{lemma}\label{lemma: curved endomorphisms is absolute}
Let $V$ be a complete pdg module. Its complete curved endomorphisms partial operad is a complete curved \textit{absolute} partial operad. 
\end{lemma}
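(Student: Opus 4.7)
The plan is to construct the structural morphism $\gamma_{\mathrm{end}_V} : \overline{\mathscr{T}}^\wedge(\mathrm{end}_V) \longrightarrow \mathrm{end}_V$ extending the usual partial composition, and then verify the monad algebra axioms, completeness of the canonical filtration, and compatibility with the curvature. The key observation is that since every operation in $\mathrm{end}_V$ raises the filtration degree of $V$ by at least one, a labeled rooted tree $\tau$ of weight $\omega$ (i.e.\ with $\omega$ internal edges) defines a morphism $V^{\otimes n} \longrightarrow V$ which raises the filtration degree by at least $\omega+1$. Hence for any fixed filtration degree $k$ in the target, only finitely many trees contribute modulo $\mathrm{F}_k V$, so the naive formula
\[
\gamma_{\mathrm{end}_V}\!\left(\sum_{\omega\geq 1}\sum_{\tau\in\mathrm{RT}^n_\omega}\tau\right) \;\coloneqq\; \sum_{\omega\geq 1}\sum_{\tau\in\mathrm{RT}^n_\omega}\mathsf{comp}(\tau)
\]
converges in $\mathrm{hom}^{(\geq 1)}(V^{\otimes n},V)$ by completeness of $V$. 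This gives a well-defined map of pdg $\mathbb{S}$-modules.

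Next I would verify that this $\gamma_{\mathrm{end}_V}$ satisfies the two monad algebra axioms. Compatibility with the unit $M\hookrightarrow\overline{\mathscr{T}}^\wedge(M)$ is immediate since corollas are mapped to themselves. Associativity, i.e.\ commutativity with the substitution $\mathsf{Sub}^\wedge$ of the reduced complete tree monad, reduces to associativity of composition of functions: on each individual tree this is the standard associativity for $\overline{\mathscr{T}}$, and both sides are continuous in the canonical filtration, hence agree on the entire completed tree space. For the curvature, define $\Theta_V : \I \longrightarrow \mathrm{end}_V$ by $\Theta_V(\mathrm{id})\coloneqq d_V^2$; by hypothesis on $V$ we have $d_V^2(\mathrm{F}_n V)\subseteq \mathrm{F}_{n+2}V\subseteq \mathrm{F}_{n+1}V$, so $d_V^2$ lies in $\mathrm{hom}^{(\geq 1)}(V,V)\subseteq \overline{\mathscr{F}}_1\,\mathrm{end}_V$, as required. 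The curvature identity $d_{\mathrm{end}_V}^2 = [\Theta_V,-]$ is the standard computation $[d_V,[d_V,f]] = [d_V^2,f]$ that already appears in Lemma \ref{lemma: endo curved is curved}, and the fact that partial compositions used there are finite sums contained in the full $\gamma_{\mathrm{end}_V}$.

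Finally, to see that $\mathrm{end}_V$ is complete as an absolute partial operad, I would compare its two natural filtrations: the canonical filtration $\overline{\mathscr{F}}_\omega\,\mathrm{end}_V$ and the filtration $\mathrm{G}_\omega\,\mathrm{end}_V\coloneqq \mathrm{hom}^{(\geq \omega)}(V^{\otimes n},V)$ induced by the filtration of $V$. The inclusion $\overline{\mathscr{F}}_\omega\,\mathrm{end}_V \subseteq \mathrm{G}_\omega\,\mathrm{end}_V$ is precisely the filtration-raising observation above, while $\mathrm{G}_\omega\,\mathrm{end}_V \subseteq \overline{\mathscr{F}}_\omega\,\mathrm{end}_V$ follows from writing any $f\in \mathrm{G}_\omega\,\mathrm{end}_V$ as the image under $\gamma_{\mathrm{end}_V}$ of the unary tree of height $\omega$ obtained by iterating a formal ``identity'' composition — more carefully, one uses that the quotient $\mathrm{end}_V/\overline{\mathscr{F}}_\omega$ only sees operations modulo $\mathrm{F}_\omega V$. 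Since $V$ is complete, $\mathrm{end}_V$ is complete for the $\mathrm{G}_\bullet$ filtration, hence for $\overline{\mathscr{F}}_\bullet$, which is the required completeness.

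The main obstacle I expect is the bookkeeping in verifying associativity of $\gamma_{\mathrm{end}_V}$ with respect to $\mathsf{Sub}^\wedge$: one must check that the double infinite sums on both sides indeed converge to the same element in $\mathrm{end}_V$. This is where the convergence estimate ``a tree of weight $\omega$ raises filtration by $\geq \omega+1$'' is used essentially, together with a cofinality argument to exchange the two limit orders. Everything else is either formal or a direct transcription from the non-absolute case.
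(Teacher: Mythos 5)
Most of your argument runs parallel to the paper's: you extend the partial-operad structure map of $\mathrm{end}_V$ to the completed reduced tree monad using the estimate that a labelled tree with $\omega$ internal edges raises the filtration degree of $V$ by at least $\omega+1$, and the curvature identity is the computation already done in Lemma \ref{lemma: endo curved is curved}. The gap is in your treatment of the filtrations at the end. The inclusion $\mathrm{G}_\omega\,\mathrm{end}_V \subseteq \overline{\mathscr{F}}_\omega\,\mathrm{end}_V$ is false, and the argument you sketch for it cannot be repaired: the ``unary tree of height $\omega$ labelled by formal identities'' is not an element of $\overline{\mathscr{T}}^\wedge(\mathrm{end}_V)$, because $\mathrm{id}_V$ raises the filtration degree by $0$ and therefore does not belong to $\mathrm{end}_V(1) = \mathrm{hom}^{(\geq 1)}(V,V)$. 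Concretely, every element of $\overline{\mathscr{F}}_1\,\mathrm{end}_V$ is a (possibly infinite) sum of images of trees with at least two vertices, each labelled by an operation raising the filtration by $\geq 1$, so $\overline{\mathscr{F}}_1\,\mathrm{end}_V \subseteq \mathrm{G}_2\,\mathrm{end}_V$; an operation raising the filtration by exactly $1$ lies in $\mathrm{G}_1$ but not in $\overline{\mathscr{F}}_1$. So the two filtrations do not coincide and you cannot transport completeness from $\mathrm{G}_\bullet$ to $\overline{\mathscr{F}}_\bullet$ by identifying them. The same false inclusion is used when you place the curvature: to see that $d_V^2$ lands in $\overline{\mathscr{F}}_1\,\mathrm{end}_V$ you should instead write $d_V^2 = \gamma_{\mathrm{end}_V}(d_V \circ_1 d_V)$, exhibiting it as the image of a tree with one internal edge (this works precisely because $d_V$ itself raises the filtration by one and hence lies in $\mathrm{end}_V(1)$).

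The conclusion survives because only the inclusion you do establish is needed. From $\overline{\mathscr{F}}_\omega\,\mathrm{end}_V \subseteq \mathrm{G}_\omega\,\mathrm{end}_V$ and the completeness of $V$ one gets
\[
\bigcap_{\omega \in \mathbb{N}} \overline{\mathscr{F}}_\omega\,\mathrm{end}_V \;\subseteq\; \bigcap_{\omega \in \mathbb{N}} \mathrm{G}_\omega\,\mathrm{end}_V \;=\; 0~,
\]
so the canonical topology on $\mathrm{end}_V$ is Hausdorff. For an absolute partial operad this already implies completeness: the canonical map $\Q \longrightarrow \widehat{\Q}$ is always an epimorphism, since the infinite sums required in the completion are already present in $\Q$ by definition, so it is an isomorphism as soon as it is injective, i.e.\ as soon as the canonical filtration is Hausdorff. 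This is exactly the route the paper takes; your proof should be corrected by dropping the reverse inclusion and invoking this Hausdorff-implies-complete fact instead.
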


\begin{proof}
Forgetting the filtration, $\mathrm{end}_V$ is in particular a partial operad, thus an algebra over the reduced tree monad. Let 
\[
\gamma_V: \overline{\mathscr{T}}(\mathrm{end}_V) \longrightarrow \mathrm{end}_V
\]
be its structural morphism. The morphism $\gamma_V$ can be extended to the completed reduced tree monad using the completeness of the underlying filtration of $\mathrm{end}_V$. Notice that it is crucial to restrict ourselves to operations in the endomorphisms operad which raise the degree by at least one in order for this to be true. One can check that the extension of $\gamma_V$ to the complete reduced tree monad satisfies the axioms of an algebra over this monad. It is thus an absolute partial operad ; moreover it forms a curved absolute partial operad endowed with its curvature, see Lemma \ref{lemma: endo curved is curved}. 

\medskip

Let us show it is a complete as a curved absolute partial operad. Let $\mathrm{F}_\bullet \mathrm{end}_V$ be its underlying filtration and let $\overline{\mathscr{F}}_{\bullet} \mathrm{end}_V$ the canonical filtration induced by its curved absolute partial operad structure. See Definition \ref{def: canonical topology on absolutes} for more details. There is an obvious inclusion $\overline{\mathscr{F}}_{n}\mathrm{end}_V \subset \mathrm{F}_n \mathrm{end}_V$, since any operation which can be written as partial compositions iterated $n$ times also must raise the filtration degree by at least $n$. Thus 
\[
\bigcap_{n \in \mathbb{N}} \overline{\mathscr{F}}_{n} \mathrm{end}_V \subset \bigcap_{n \in \mathbb{N}} \mathrm{F}_n \mathrm{end}_V = 0~,
\]
which implies that $\mathrm{end}_V$ is a complete curved absolute partial operad.
\end{proof}

\begin{Remark}
This provides for a natural setting that allows the canonical filtration of a complete curved absolute partial operad to be reflected upon its algebras. See Section \ref{Section: curved HTT} for an application in this direction.
\end{Remark}

\bibliographystyle{alpha}
\bibliography{bibe}

\end{document}